\newcommand{\NN}{\mathbb N}
\newcommand{\CC}{\mathbb C}
\newcommand{\RR}{\mathbb R}
\newcommand{\ZZ}{\mathbb Z}
\newcommand{\EE}{\mathcal E}
\newcommand{\DD}{\mathcal D}
\newcommand{\SSS}{\mathcal S}
\newcommand{\supp}{\operatorname{supp}}
\newcommand{\Op}{\operatorname{Op}}
\newcommand{\loc}{\operatorname{loc}}
\newcommand{\comp}{\operatorname{comp}}
\newcommand{\Char}{\operatorname{Char}}
\newcommand{\pr}{\operatorname{pr}}
\newcommand{\phg}{\operatorname{phg}}
\newcommand{\M}{\mathbf{M}}
\theoremstyle{plain}
\newtheorem{theorem}{Theorem}[section]
\newtheorem{proposition}[theorem]{Proposition}
\newtheorem{lemma}[theorem]{Lemma}
\newtheorem{corollary}[theorem]{Corollary}
\theoremstyle{remark}
\newtheorem{remark}[theorem]{Remark}
\theoremstyle{definition}
\newtheorem{definition}[theorem]{Definition}
\newtheorem{example}[theorem]{Example}
\numberwithin{equation}{section}
\begin{document}

\author[S. Pilipovi\' c]{Stevan Pilipovi\' c}
\thanks{The work of S. Pilipovi\'c was partially supported by the Serbian Academy of Sciences and Arts, project, F10.}
\address{Department of Mathematics and Informatics,
University of Novi Sad, Trg Dositeja Obradovi\'{c}a 4, 21000 Novi Sad, Serbia}
\email{stevan.pilipovic@dmi.uns.ac.rs}

\author[B. Prangoski]{Bojan Prangoski}
\thanks{The work of B. Prangoski was partially supported by the bilateral project ``Microlocal analysis and applications'' funded by the Macedonian Academy of Sciences and Arts and the Serbian Academy of Sciences and Arts.}
\address{Department of Mathematics, Faculty of Mechanical
Engineering-Skopje, Ss. Cyril and Methodius University in Skopje, Karposh 2 b.b., 1000 Skopje, Macedonia}
\email{bprangoski@yahoo.com}

\title[Spaces of distributions with Sobolev wave front in a fixed conic set]{Spaces of distributions with Sobolev wave front in a fixed conic set: compactness, pullback by smooth maps and the compensated compactness theorem}

\keywords{Microlocal analysis on manifolds, Sobolev wave front set, pullback by smooth maps, pseudo-differential operators, microlocal defect measures, compensated compactness theorem}

\subjclass[2010]{46E35, 58J40}

\frenchspacing
\begin{abstract}
We consider the space $\DD'^r_L(M;E)$ of distributional sections of the smooth complex vector bundle $E\rightarrow M$ whose Sobolev wave front set of order $r\in\RR$ lies in the closed conic subset $L$ of $T^*M\backslash0$. We introduce a locally convex topology on it to study the continuity of the pullback by smooth maps and generalise the result of H\"ormander about the pullback on the space of distributions with $\mathcal{C}^{\infty}$ wave front set in $L$. We employ an idea of G\'erard \cite{G1} to extend the Kolmogorov-Riesz compactness theorem to $\DD'^r_L(M;E)$ and we characterise its relatively compact subsets. We study the continuity properties of pseudo-differential operators when acting on $\DD'^r_L(M;E)$, $r\in\RR$, and we generalise the Rellich's lemma. As an application of our results, we extend the microlocal defect measures of G\'erard and Tartar to sequences in $\DD'^0_L(M;E)$ and we show a microlocal variant of the compensated compactness theorem.
\end{abstract}

\maketitle
{\tiny
\tableofcontents
}

\section{Introduction}

The Sobolev wave front set was first introduced and employed by Duistermaat and H\"ormander in \cite{dui-hor} in their analysis of propagation of singularities of solutions of PDEs. Together with the $\mathcal{C}^{\infty}$ wave front set previously introduced by H\"ormander, it became an indispensable tool for classifying the singularities of solutions of PDEs; see \cite{has-mel-vas,mel-vasy-wun,vasy,vasy-wro} and the references therein for various generalisations and their applications. In \cite[Section 8.2]{hor} (see also \cite[Subsection 2.5]{hormander}) H\"ormander introduced a notion of convergence of sequences in the space $\DD'_L(U)$ consisting of all distributions on $U\subseteq \RR^n$ whose $\mathcal{C}^{\infty}$ wave front set is contained in the closed conic subset $L$ of $U\times(\RR^n\backslash\{0\})$ and then showed that the pullback of smooth functions by a smooth map $f:O\rightarrow U$ can be extended to a sequentially continuous map $f^*:\DD'_L(U)\rightarrow \DD'_{f^*L}(O)$ whenever $f^*L$ (the pullback of $L$) does not intersect the set of normals $\mathcal{N}_f$ of $f$. If one writes the natural locally convex topology on $\DD'_L(U)$ that induces this convergence of sequences (see \cite[Chapter 1]{dui-book}), it turns out that the pullback is not continuous: see \cite{D1} for a counterexample. Recently, in \cite{D1,BD}, the authors modified the topology so that the pullback by a smooth map becomes continuous (and not just sequentially continuous). Furthermore, they conducted an extensive analysis of the topological properties of $\DD'_L(U)$ and its dual. We also point out two arXiv preprints \cite{dab-1,dab-2} where the author investigates the topological properties of spaces of distributions whose Sobolev wave front sets of all orders are included in a fix conic set and the article \cite{dap-r-scl} where the authors study the Besov wave front set. An important problem to consider is what happens with the pullback if one considers it as a map on the space $\DD'^r_L(U)$ consisting of distributions that have Sobolev wave front set of order $r\in\RR$ in $L$. Notice that this is a refinement of the result on $\DD'_L(U)$ because of the well-known algebraic identity $\DD'_L(U)=\bigcap_{r\in\RR}\DD'^r_L(U)$. One expects that there will be a loss of regularity since this happens in the simplest examples when considering restrictions of $H^r_{\loc}$ (the local Sobolev space of order $r\in\RR$) to lower dimensional hyperplanes; see \cite[Appendix B]{hor2}. This is one of our primary goals in the article. In fact, we will introduce a locally convex topology on the spaces $\DD'^r_L(M)$, $r\in\RR$, with $M$ a smooth manifold and $L$ a closed conic subset of the cotangent bundle without the zero section $T^*M\backslash0$, and show the following:
\begin{itemize}
\item[$(i)$] The pullback $f^*:\DD'^{r_2}_L(N)\rightarrow \DD'^{r_1}_{f^*L}(M)$, with $f:M\rightarrow N$ smooth and satisfying $L\cap\mathcal{N}_f=\emptyset$, is well-defined and continuous for appropriately chosen $r_2$ and $r_1$. When $f$ has constant rank, we show that the conditions on $r_2$ and $r_1$ are essentially optimal.
\item[$(ii)$] The algebraic identity $\DD'_L(M)=\bigcap_{r\in\RR}\DD'^r_L(M)$ becomes topological when we interpret the right hand side as the projective limit. The algebraic identities $\DD'^r_{\emptyset}(M)=H^r_{\loc}(M)$ and $\DD'^r_{T^*M\backslash0}(M)=\DD'(M)$ also become topological.
\item[$(iii)$] Given a properly supported pseudo-differential operator $A$ of order $r_0$, the linear map $A:\DD'^r_L(M)\rightarrow \DD'^{r-r_0}_L(M)$, which is well-defined by the results of Duistermaat and H\"ormander \cite{dui-hor}, becomes continuous. When $A$ is elliptic (or better yet, non-characteristic in $(T^*M\backslash0)\backslash L$), we show a priori estimates for $A$ which reduce to the well-known a priori estimates for elliptic $A:H^r_{\loc}(M)\rightarrow H^{r-r_0}_{\loc}(M)$.
\end{itemize}
Inspired by ideas from \cite[Chapter 7]{GS} and \cite{BD}, we also identify the strong dual of $\DD'^r_L(M)$: it consists of all compactly supported Sobolev distributions of order $-r$ (technically speaking, they are distribution densities) that have $\mathcal{C}^{\infty}$ wave front set included in $\{(p,\xi)\in T^*M\backslash0\,|\, (p,-\xi)\not\in L\}$. As a consequence of $(iii)$, we also show continuity properties of pseudo-differential operators when acting on these spaces.\\
\indent In \cite{G1}, G\'erard introduced a wave front set associated with a sequence in $L^2_{\loc}$ which contains the directions in the cotangent bundle where the sequence does not behave like a relatively compact subset of $L^2_{\loc}$ (see \cite{ND} for a related concept). The idea in \cite{G1} is to define the complement as the directions where the sequence satisfies the Kolmogorov-Riesz compactness criterium \cite{HH} for $L^2_{\loc}$. In the context of the topology we introduce on $\DD'^r_L(M)$, we realised that this wave front set does something more. We define it for any bounded subset of $\DD'(M)$ and, instead of the compactness criterium for $L^2_{\loc}$, we employ the $H^r_{\loc}$ compactness criterium. Then we show that:
\begin{itemize}
\item[$(iv)$] This wave front set with index $r$ of a bounded set $B$ in $\DD'(M)$ is the closed conic subset $L'$ of $T^*M\backslash0$ such that $B$ is a relatively compact subset of $\DD'^r_{L'}(M)$. In effect, it gives a convenient and computational characterisation of the relatively compact subsets of $\DD'^r_L(M)$.
\end{itemize}
This generalises the Kolmogorov-Riesz compactness theorem to $\DD'^r_L(M)$ and it allows us to show:
\begin{itemize}
\item[$(v)$] A generalisation of the Rellich lemma for the spaces $\DD'^r_L(M)$, $r\in\RR$.
\end{itemize}
We point out that we show these results on general smooth complex vector bundles over manifolds.\\
\indent Our results give a robust parallel between $\DD'^r_L(M)$ and $H^r_{\loc}(M)$ when it comes down to the characterisation of the compact sets and the continuity properties of pseudo-differential operators so one can show microlocal variants of various results by employing similar convergence and continuity arguments as in the local Sobolev case. Important such examples are some of the weak convergence methods employed in the theory of nonlinear PDEs \cite{E} \footnote{For the reader familiar with the theory, we point out that one can always extract convergent subsequence of a relatively compact sequence in $\DD'^r_L(M)$ since we show that the bounded subsets of $\DD'^r_L(M)$ are metrisable although $\DD'^r_L(M)$ in general is not.}. We showcase the theory we developed by employing it to improve two closely related such results: the microlocal defect measures and the compensated compactness theorem.\\
\indent The microlocal defect measures were independently developed by G\'erard \cite{G1,Ger} and Tartar \cite{Tar} who introduced them under the name H-measures. Nowadays they are indispensable tool in the study of linear and nonlinear PDEs; see for example \cite{ant-laz,d-lr-le,d-m,F,G3,Pan,R} and the references therein. We also refer to the works of Lions \cite{lions1,lions2,lions3,lions4} where he introduced the related concentration-compactness method for solving minimisation problems in the calculus of variations. Broadly speaking, for a bounded sequence $(u_k)_{k\in\ZZ_+}$ in $L^2_{\loc}(U)$ which converges to $u\in L^2_{\loc}(U)$ in $\DD'(U)$, the microlocal defect measure associated to $(u_k)_{k\in\ZZ_+}$ is a positive Radon measure on $U\times \mathbb{S}^{n-1}$ whose support contains the directions where $\{u_k\}_{k\in\ZZ_+}$ does not behave like a relatively compact set in $L^2_{\loc}(U)$. Our results naturally lend themselves for generalising this to the case when one only knows that $u,u_k\in\DD'^0_L(M)$, $k\in\ZZ_+$, and $\{u_k\}_{k\in\ZZ_+}$ is bounded in $\DD'^0_L(M)$. In this case, the microlocal defect measure is only defined in the directions outside of $L$ and we show that $u_{k_j}\rightarrow u$ in $\DD'^0_{L'}(M)$ where $L'$ is the union of $L$ and the directions in the support of the microlocal defect measure and $(u_{k_j})_{j\in\ZZ_+}$ is a subsequence of $(u_k)_{k\in\ZZ_+}$.\\
\indent The second place where we are going to apply our results is in showing a microlocal extension of the compensated compactness theorem. The theorem was first proved by Murat \cite{mur1,mur2} and Tartar \cite{Tar-0} (see also \cite{hanouzet,hanou-jol}) and letter improved by G\'erard \cite{Ger}. Without going into details, the theorem states the following. Let $u,u_k\in L^2_{\loc}$, $k\in\ZZ_+$, be as above but with values in $\CC^N$ and let $A$ and $Q$ be properly supported polyhomogeneous matrix valued $\Psi$DOs of orders $r$ and $0$ and principal symbols $a$ and $q$ respectively. If $\{Au_k\}_{k\in\ZZ_+}$ is relatively compact in $H^{-r}_{\loc}$ and $q$ satisfies a weak Legendre-Hadamard condition $q(x,\xi)z\cdot \overline{z}= 0$ for those $(x,\xi)$ and $z\in\CC^N$ which satisfy $a(x,\xi)z=0$, then $Qu_k\cdot \overline{u}_k\rightarrow Qu\cdot \overline{u}$ as distributions. As before, our generalisation is that one can take $u$ and $u_k$ to be in $\DD'^0_L$. We point out that this is not a trivial generalisation since even the products $Qu_k\cdot \overline{u}_k$ and $Qu\cdot \overline{u}$ are meaningless a priori; moreover, one can not employ H\"ormander's definition of products of distributions to defined them because we only have information on the Sobolev wave front sets of $u$ and $u_k$. However, as a byproduct of our results, we show that the sesquilinear map $(v_1,v_2)\mapsto Qv_1\cdot \overline{v}_2$ on smooth functions uniquely extends to a hypocontinuous sesquilinear map on $\DD'^0_L$ if $Q$ is of order $-\infty$ at every point of $L$ and then we show that $Qu_k\cdot \overline{u}_k\rightarrow Qu\cdot \overline{u}$ as distributions. We prove the compensated compactness theorem as well as our generalisation of the microlocal defect measures on general smooth complex vector bundles. We point out that there are variants of the compensated compactness theorem in the $L^p$ setting \cite{bal-f-t-tes,mis-mit,pan2}, but we leave their microlocalisations for future research.\\
\indent The paper is organised as follows. We collect in Section \ref{sec-not-gen-foreveryt} the notations and necessary facts we are going to employ throughout the rest of the article. In Section \ref{Sec-comp}, we introduce the topology on $\DD'^r_L$ and show several key results in open sets of $\RR^n$. Sections \ref{sec-formain-result-onman} and \ref{sec-psido-on-despacesanddual} contain our main results on the spaces $\DD'^r_L$, $r\in\RR$, on general smooth complex vector bundles as well as the continuity properties of pseudo-differential operators when acting on these spaces. Section \ref{application} is devoted to the applications. Finally, in Appendix \ref{app-for-cou-foroptthpulbacsmom} we show the optimality of the loss in regularity in our theorem on the pullback by smooth maps.

\section{Preliminaries}\label{sec-not-gen-foreveryt}

We fix the constants in the Fourier transform as $\mathcal{F}f(\xi)=\int_{\RR^n} e^{-ix \xi}f(x)dx$, $f\in L^1(\RR^n)$. Throughout the article, $B(x,r)$ will stand for the open ball with centre at $x$ and radius $r>0$. As standard, $\langle x\rangle:=(1+|x|^2)^{1/2}$, $x\in\RR^n$. For any open set $O$ in a topological space, $K\subset\subset O$ means that $K$ is compact and $K\subseteq O$. If $B$ is any subset of a topological space, we denote by $\mathbf{1}_B$ the indicator function of $B$, i.e. $\mathbf{1}_B(x)=1$ if $x\in B$ and $\mathbf{1}_B(x)=0$ when $x\not\in B$. For a $\mathcal{C}^1$ map $f:O\rightarrow \RR^n$, with $O$ an open set in $\RR^m$, we denote by $f'(x)$ the derivative of $f$ at $x\in O$ and, when $n=m$, we set $|f'|(x):=|\det(f'(x))|$, $x\in O$. As standard, we denote $\RR_+:=\{t\in\RR\,|\, t>0\}$ and, for $B\subseteq \RR^n$, we set $\RR_+B:=\{tx\in\RR^n\,|\, t>0,\, x\in B\}$; clearly $\RR_+ B$ is a cone in $\RR^n$. Given a smooth manifold $M$, $T^*M\backslash 0$ stands for the cotangent bundle $T^*M$ without the image of the zero section. A subset $L$ of $T^*M\backslash0$ is said to be conic if it satisfies $(p,\xi)\in L$, $\xi\in T^*_pM$, implies $(p,t\xi)\in L$, for all $t>0$. Given a conic subset $L$ of $T^*M\backslash0$, we denote by $L^c$ the complement of $L$ in $T^*M\backslash0$. When $M$ is an open subset $U$ of $\RR^n$ we will always canonically identify $T^*U$ with $U\times\RR^n$ and consequently, $T^*U\backslash0=U\times (\RR^n\backslash\{0\})$ and, for a conic set $L$, $L^c=U\times(\RR^n\backslash\{0\})\backslash L$. Furthermore, for a conic subset $L$ of $T^*M\backslash0$ (or of $U\times(\RR^n\backslash\{0\})$), we denote
$\check{L}:=\{(p,\xi)\in T^*M\backslash0\,|\, (p,-\xi)\in L\}$.\\
\indent Let $U$ be an open set in $\RR^n$. The $\mathcal{C}^{\infty}$ wave front set of $u\in\DD'(U)$ (as standard, we will always abbreviate it as the wave front set) is the closed conic subset $WF(u)$ of $U\times(\RR^m\backslash\{0\})$ defined as follows: $(x,\xi)\in U\times (\RR^n\backslash\{0\})$ does not belong to $WF(u)$ if there are an open cone $V\subseteq \RR^n$ containing $\xi$ and $\varphi\in\DD(U)$ satisfying $\varphi(x)\neq0$ such that $\mathfrak{q}_{\nu;\varphi,V}(u):=\|\langle\cdot\rangle^{\nu} \mathcal{F}(\varphi u)\|_{L^{\infty}(V)}<\infty$, $\nu>0$; see \cite[Section 8.1]{hor}. Given a closed conic subset $L$ of $U\times(\RR^n\backslash\{0\})$, set $\DD'_L(U):=\{u\in\DD'(U)\,|\, WF(u)\subseteq L\}$. Throughout the article $\DD'_L(U)$ will always carry the following locally convex topology introduced in \cite{BD}: the topology defined by all continuous seminorms on $\DD'(U)$ together with all seminorms $\mathfrak{q}_{\nu;\varphi,V}$ where $\varphi\in\DD(U)$ and $V$ is a closed cone in $\RR^n$ such that $(\supp\varphi\times V)\cap L=\emptyset$; we refer to \cite{BD,D1} for the topological properties of $\DD'_L(U)$.\\
\indent We denote by $H^r(\RR^n)$ the standard Sobolev space of order $r\in\RR$ on $\RR^n$, i.e. $H^r(\RR^n):=\{u\in\SSS'(\RR^n)\,|\, \langle\cdot\rangle^r\mathcal{F}u\in L^2(\RR^n)\}$.\\
\indent Given two locally convex spaces $X$ and $Y$ (from now, always abbreviated as l.c.s.) we denote by $\mathcal{L}(X,Y)$ the space of continuous linear operators from $X$ into $Y$ and we denote by $\mathcal{L}_b(X,Y)$ this space equipped with the strong operator topology. When $X=Y$, we simply write $\mathcal{L}(X)$ and $\mathcal{L}_b(X)$. Of course, $X'$ is the dual of $X$ and $X'_b$ stands for $X'$ equipped with the strong dual topology.

\subsection{Distributions on manifolds}\label{subsec-dist-on-manifolds-vecbund}

For later use, we collect standard definitions and classical results about distributions on smooth manifolds; throughout the rest of the section, we employ the Einstein summation convention.\\
\indent Let $M$ be a smooth $m$-dimensional manifold\footnote{Manifolds are always assumed to be second-countable.}. The $\sigma$-algebra of Lebesgue measurable sets on $M$ can be unambiguously defined by declaring a set $X$ to be Lebesgue measurable if $x(X\cap O)$ is Lebesgue measurable for each chart $(O,x)$; the notion of a negligible set (nullset) in $M$ is unambiguous since they are diffeomorphism invariant.\\
\indent Let $(E,\pi_E,M)$ be a smooth complex vector bundle of rank $k$; all the vector bundles throughout the article will be complex and smooth so we will never emphasise this - the only exception to this are the tangent and cotangent vector bundles $TM$ and $T^*M$ which are real and smooth. We denote by $E_p$ the fiber over $p\in M$, i.e. $E_p:=\pi^{-1}_E(\{p\})$, and, for an open set $O$ in $M$, $E_O$ stands for the restriction of $E$ to $O$. For $l\in\NN\cup\{\infty\}$, we denote by $\Gamma^l(E)$ the Fr\'echet spaces of $l$-times continuously differentiable sections, while $\Gamma^l_c(E)$ stands for the space of compactly supported $l$-times continuously differentiable sections equipped with its standard strict $(LF)$-space topology; when $l<\infty$, $\Gamma^l_c(E)$ is in fact a strict $(LB)$-space. For $K\subset\subset M$ and $l\in \NN\cup\{\infty\}$, we denote $\Gamma^l_K(E):=\{\varphi\in\Gamma^l(E)\,|\, \supp\varphi\subseteq K\}$ and recall that it is a closed subspace of both $\Gamma^l(E)$ and of $\Gamma^l_c(E)$ and they induce the same Fr\'echet topology on it; when $l<\infty$, $\Gamma^l_K(E)$ is in fact a Banach space. When $l=\infty$, we will simply denote these spaces by $\Gamma(E)$, $\Gamma_c(E)$ and $\Gamma_K(E)$ respectively. Given another vector bundle $(F,\pi_F, M)$ of rank $k_1$, $(L(E,F),\pi_{L(E,F)},M)$ stands for the smooth complex vector bundle of rank $kk_1$ whose fibres are $L(E,F)_p:=\mathcal{L}(E_p,F_p)$, $p\in M$. We denote by $E'$ the dual bundle to $E$, i.e. $E':=L(E,\CC_M)$ where $\CC_M:=M\times\CC$ is the trivial line bundle. As standard, $DM$ stands for the complex $1$-density bundle over $M$ with fibres $DT_pM$, $p\in M$. With $E$ as before, we denote by $E^{\vee}$ the functional dual bundle over $M$: it is the smooth complex vector bundle with total space $E^{\vee}:=L(E,DM)$. Notice that $(E^{\vee})^{\vee}$ is canonically isomorphic with $E$. The space of distributional sections $\DD'(M;E)$ of $E$ is the strong dual of $\Gamma_c(E^{\vee})$ and the space of distributions $\DD'(M)$ on $M$ is the strong dual of $\Gamma_c(DM)$. Similarly, the space of distributional sections with compact support $\EE'(M;E)$ is the strong dual of $\Gamma(E^{\vee})$ and the space of distributions with compact support $\EE'(M)$ is the strong dual of $\Gamma(DM)$. When $E$ is the trivial $k$-bundle $\CC_M^k:=M\times \CC^k$ over $M$, $(\CC^k_M)^{\vee}$ can be identified with the Whitney sum bundle $\underbrace{DM\oplus \ldots \oplus DM}_k$:
$$
(\CC_M^k)^{\vee}_p\ni T\mapsto (p,T(1,0,\ldots,0),\ldots, T(0,\ldots,0,1))\in \{p\}\times\underbrace{DT_pM\oplus \ldots \oplus DT_pM}_k.
$$
Employing this identification, we have
$$
\DD'(M;\CC_M)=\DD'(M)\,\, \mbox{and, in general,}\,\, \DD'(M;\CC^k_M)=\DD'(M)^k=\underbrace{\DD'(M)\times \ldots \times \DD'(M)}_k;
$$
analogously, $\EE'(M;\CC_M^k)=\EE'(M)^k$. The continuous inclusion $L^p_{\operatorname{loc}}(M;E)\rightarrow \DD'(M;E)$, $1\leq p\leq \infty$, is given by $f\mapsto \langle f,\cdot\rangle$, with $\langle f,\varphi\rangle:=\int_M [f,\varphi]$, $\varphi\in\Gamma_c(E^{\vee})$, where $[f,\varphi]$ stands for the measurable section of $DM$ given by $[f,\varphi]_p:=\varphi_p(f_p)$, a.a. $p\in M$.\\
\indent As standard, we denote by $H^r_{\loc}(M;E)$ and $H^r_{\comp}(M;E)$ the spaces of local and compact Sobolev sections of $E$ of order $r$. We recall that $H^r_{\loc}(M;E)$ is a reflexive Fr\'echet space, $H^r_{\comp}(M;E)$ is a reflexive strict $(LB)$-space, the strong dual of $H^r_{\loc}(M;E)$ is $H^{-r}_{\comp}(M;E^{\vee})$ and the strong dual of $H^r_{\comp}(M;E)$ is $H^{-r}_{\loc}(M;E^{\vee})$. For $K\subset\subset M$, we denote $H^r_K(M;E):=\{u\in H^r_{\loc}(M;E)\,|\, \supp u\subseteq K\}$ and we recall that $H^r_K(M;E)$ is a closed subspace both of $H^r_{\loc}(M;E)$ and of $H^r_{\comp}(M;E)$, both of them induce the same topology on $H^r_K(M;E)$ and with this topology $H^r_K(M;E)$ is a Banach space. Furthermore, $H^0_{\loc}(M;E)=L^2_{\loc}(M;E)$ and $H^0_{\comp}(M;E)=L^2_{\comp}(M;E)$.\\
\indent If $u\in \DD'(M)$ and $(O,x)$ is a chart on $M$ then we define $u_x\in\DD'(x(O))$ by
\begin{equation}\label{ind-tri-man-caseford}
\langle u_x,\phi\rangle:=\langle u, (\phi\circ x)\lambda^x\rangle,\quad \phi\in\DD(x(O)),
\end{equation}
where $\lambda^x$ is the unique section of $DO$ which satisfies $\lambda^x(\frac{\partial}{\partial x^1},\ldots,\frac{\partial}{\partial x^m})=1$; we will always denote this section by $\lambda^x$. Let $(O,x)$ be a chart in $M$ and $\Phi_x:\pi^{-1}_E(O)\rightarrow O\times\CC^k$ a local trivialisation of $E$ over $O$. Denote by $\Psi_x:\pi^{-1}_{DM}(O)\rightarrow O\times \CC$ the local trivialisation of $DM$ over $O$ given by $\Psi_x(z\lambda^x_p)=(p,z)$, $p\in O$, $z\in\CC$. Let $\Phi_{x,p}: E_p\rightarrow \CC^k$ and $\Psi_{x,p}: DT_pM\rightarrow \CC$, $p\in O$, be the induced isomorphisms. We define the frame $(\sigma^1,\ldots,\sigma^k)$ for $E^{\vee}$ over $O$ by $(\sigma^j)_p:=\Psi_{x,p}^{-1}\circ \epsilon^j\circ \Phi_{x,p}$, $p\in O$, $j=1,\ldots,k$, where $\epsilon^j:\CC^k\rightarrow \CC$ is the map $\epsilon^j(z^1,\ldots,z^k)=z^j$; we call $(\sigma^1,\ldots,\sigma^k)$ the induced frame by $\Phi_x$. With its help, for each $u\in \DD'(M;E)$, we can define $u_{\Phi_x}^j\in \DD'(x(O))$, $j=1,\ldots,k$, by
\begin{equation}\label{ind-tri-loc-nestk}
\langle u_{\Phi_x}^j,\phi\rangle:=\langle u,(\phi\circ x) \sigma^j\rangle,\quad \phi\in\DD(x(O)).
\end{equation}
Notice that $\langle u,\varphi\rangle=\langle u_{\Phi_x}^j,\varphi_j\circ x^{-1}\rangle$, $u\in \DD'(M;E)$, $\varphi\in \Gamma_c(E^{\vee}_O)$,\footnote{$(E^{\vee})_O=(E_O)^{\vee}$ and we denote it simply by $E^{\vee}_O$.} where $\varphi=\varphi_j\sigma^j$. If $(U,y)$ is another chart on $M$ that has non-empty intersection with $O$ and over which $E$ locally trivialises via $\Phi_y:\pi_E^{-1}(U)\rightarrow U\times \CC^k$, then
\begin{equation}\label{tra-map-bun-exchkl}
u_{\Phi_y}^j=(\tau^j_l\circ y^{-1})(x\circ y^{-1})^*u_{\Phi_x}^l\quad \mbox{in}\quad \DD'(y(O\cap U)),
\end{equation}
where $\tau=(\tau^j_l)_{j,l}:O\cap U\rightarrow \operatorname{GL}(k,\CC)$ is the transition map: it is the unique smooth map that satisfies $\Phi_y\circ\Phi_x^{-1}(p,z)=(p,\tau(p)z)$, $p\in O\cap U$, $z\in\CC^k$. If $(\rho^1,\ldots,\rho^k)$ is the frame for $E^{\vee}$ over $U$ induced by $\Phi_y$, it holds that
\begin{equation}\label{cha-fra-fun-duabuch}
\rho^j=(|(y\circ x^{-1})'|\circ x)\tau^j_l\sigma^l\quad \mbox{on}\quad O\cap U.
\end{equation}
If $N$ is another manifold and $f:N\rightarrow M$ a smooth map, $f^*E$ stands for the pullback bundle. It is a smooth complex vector bundle of rank $k$ over $N$ with total space $f^*E:=\bigcup_{p\in N} \{p\}\times E_{f(p)}$ and equipped with the topology induced from $N\times E$. The fibre over $p$ is $\{p\}\times E_{f(p)}$, the projection is $\pi_{f^*E}(p,e)=p$, $e\in E_{f(p)}$, and the local trivialisations are defined as follows. For each local trivialisation $\Phi:\pi^{-1}_E(O)\rightarrow O\times \CC^k$ of $E$ define a local trivialisation $f^*\Phi:\pi^{-1}_{f^*E}(f^{-1}(O))\rightarrow f^{-1}(O)\times \CC^k$, $f^*\Phi(p,e)=(p,\Phi_{f(p)}e)$. The pullback map $f^*:\Gamma^l(E)\rightarrow \Gamma^l(f^*E)$, $f^*\varphi(p):=(p,\varphi\circ f(p))$, $p\in N$, is well-defined and continuous for all $l\in\NN\cup\{\infty\}$.

\subsection{Pseudo-differential operators}

As standard, for $r\in\RR$, $0\leq \delta\leq \rho\leq 1$ and $\delta<1$, we denote by $S^r_{\rho,\delta}(\RR^{2n})$ the H\"ormander class of global symbols on $\RR^{2n}$ \cite{hormander1}: it is the Fr\'echet space of all $a\in\mathcal{C}^{\infty}(\RR^{2n})$ which satisfy $\sup_{x,\xi\in\RR^n}\langle\xi\rangle^{-r+\rho|\alpha|-\delta|\beta|}|\partial^{\alpha}_{\xi}\partial^{\beta}_x a(x,\xi)|<\infty$, $\alpha,\beta\in\NN^n$. The pseudo-differential operator with symbol $a\in S^r_{\rho,\delta}(\RR^{2n})$ is defined by
\begin{equation}\label{def-ofp-ope-onthewholer}
\Op(a)\varphi(x):=\frac{1}{(2\pi)^n}\int_{\RR^n}e^{ix\xi}a(x,\xi)\mathcal{F}\varphi(\xi) d\xi,\quad \varphi\in\SSS(\RR^n).
\end{equation}
It is a continuous operator on $\SSS(\RR^n)$ and it extends to a continuous operator on $\SSS'(\RR^n)$. Furthermore, when $r=0$, $\Op(a)$ is continuous on $L^2(\RR^n)$ and the mapping $S^0_{\rho,\delta}(\RR^{2n})\rightarrow \mathcal{L}_b(L^2(\RR^n))$, $a\mapsto \Op(a)$, is continuous \cite[Theorem 2.5.1, p. 110, and Theorem 2.3.18, p. 100]{lernerB}. With only a few exceptions, we will mostly employ the case when $\rho=1$ and $\delta=0$ and we will always denote this space by $S^r(\RR^{2n})$ for short.\\
\indent We collect the notations and facts we need from the local theory of pseudo-differential operators; we refer to \cite{dui-hor,hormander,hor2} for the complete account. If $U$ is an open set in $\RR^n$, the local symbol space $S^r_{\loc}(U\times\RR^n)$ is the Fr\'echet space of all $a\in\mathcal{C}^{\infty}(U\times\RR^n)$ which satisfy
\begin{equation}\label{est-for-sym-spaceonrwhols}
\sup_{x\in K,\, \xi\in\RR^n}\langle\xi\rangle^{-r+|\alpha|}|\partial^{\alpha}_{\xi}\partial^{\beta}_x a(x,\xi)|<\infty,\quad \alpha,\beta\in\NN^n,\, K\subset\subset U
\end{equation}
(we will only use the variant when $\rho=1$ and $\delta=0$). Furthermore, $S^{-\infty}_{\loc}(U\times\RR^n):=\bigcap_{r\in\RR}S^r_{\loc}(U\times\RR^n)$ equipped with its natural Fr\'echet space topology. More generally, if $V$ is an open cone in $\RR^n$, we denote by $S^r_{\loc}(U\times V)$ the space of all $a\in\mathcal{C}^{\infty}(U\times V)$ which satisfy \eqref{est-for-sym-spaceonrwhols} but on $K\times V'$ for every $K\subset\subset U$ and every cone $V'\subseteq V$ such that $V'\cap \mathbb{S}^{n-1}\subset\subset V$; when $V=\RR^n$, this space coincides with the above definition of $S^r_{\loc}(U\times\RR^n)$. As before, set $S^{-\infty}_{\loc}(U\times V):=\bigcap_{r\in\RR}S^r_{\loc}(U\times V)$. For $r\in\RR\cup\{-\infty\}$ and $K\subset\subset U$, we denote $S^r_K(U\times\RR^n):=\{a\in S^r_{\loc}(U\times\RR^n)\,|\, \supp a\subseteq K\times\RR^n\}$; it is a closed subspace of $S^r_{\loc}(U\times\RR^n)$ (and of $S^r(\RR^{2n})$). Furthermore, we denote $S^r_c(U\times\RR^n):=\bigcup_{K\subset\subset U}S^r_K(U\times\RR^n)$ ($S^r_c(U\times\RR^n)$ can be equipped with a natural $(LF)$-space topology but we will not need this fact); clearly $S^r_c(U\times\RR^n)\subseteq S^r(\RR^{2n})$. For $a\in S^r_{\loc}(U\times\RR^n)$, the operator $\Op(a)$ is defined as in \eqref{def-ofp-ope-onthewholer} but with $U$ in place of $\RR^n_x$ and $\varphi\in\DD(U)$. Then $\Op(a):\DD(U)\rightarrow \mathcal{C}^{\infty}(U)$ is well-defined and continuous and it extends to a well-defined and continuous mapping $\Op(a):\EE'(U)\rightarrow \DD'(U)$. Every continuous operator $T:\EE'(U)\rightarrow \mathcal{C}^{\infty}(U)$ is called a regularising operator; in view of the Schwartz kernel theorem, these are the operators whose kernels are smooth on $U\times U$. The space of regularising operators on $U$ is denoted by $\Psi^{-\infty}(U)$. A pseudo-differential operator of order $r\in\RR\cup\{-\infty\}$ is an operator $A:\DD(U)\rightarrow \mathcal{C}^{\infty}(U)$ of the form $A=\Op(a)+T$ with $a\in S^r_{\loc}(U\times\RR^n)$ and $T\in\Psi^{-\infty}(U)$; $a$ is called the symbol of $A$ and is unique modulo $S^{-\infty}_{\loc}(U\times\RR^n)$. The kernel of any $\Psi$DO is always smooth outside of the diagonal. The space of pseudo-differential operators on $U$ of order $r$ is denoted by $\Psi^r(U)$. For $r\in\RR$, the operator $A\in\Psi^r(U)$ is said to be polyhomogeneous of order $r$ if some (or, equivalently any) symbol $a$ of $A$ has an asymptotic expansion $a\sim \sum_{j=0}^{\infty} a_j$, where $a_j\in S^{r-j}_{\loc}(U\times\RR^n)$, $j\in\NN$, are positively homogeneous of degree $r-j$ when $|\xi|>1$, i.e. $a_j(x,t\xi)=t^{r-j}a_j(x,\xi)$, $x\in U$, $|\xi|>1$, $t>1$. The space of polyhomogeneous $\Psi$DOs of order $r\in\RR$ on $U$ is denote by $\Psi^r_{\phg}(U)$. The space of polyhomogeneous symbols of order $r$, i.e. the symbols that have asymptotic expansion as above, is denoted by $S^r_{\loc,\phg}(U\times\RR^n)$.\\
\indent Let $M$ be a smooth manifold of dimension $m$. A continuous operator $A:\DD(M)\rightarrow \mathcal{C}^{\infty}(M)$ is said to be pseudo-differential operator of order $r\in\RR\cup\{-\infty\}$ if its kernel is smooth outside of the diagonal in $M\times M$ and for every $p\in M$ there is a coordinate chart $(O,x)$ containing $p$ such that the operator $A_x:\DD(x(O))\rightarrow \mathcal{C}^{\infty}(x(O))$, $A_x(\phi):= A(\phi\circ x)\circ x^{-1}$, belongs to $\Psi^r(x(O))$; when this is the case, one can show this holds for all charts on $M$ and this definition of $\Psi$DO coincides with the one above when $M$ is an open subset of $\RR^m$. The space of all $\Psi$DOs of order $r\in\RR\cup\{-\infty\}$ on $M$ is denoted by $\Psi^r(M)$. Finally, the space of symbols of order $r\in\RR\cup\{-\infty\}$ on $M$ is denoted by $S^r_{\loc}(T^*M)$ and it consists of all $a\in\mathcal{C}^{\infty}(T^*M)$ such that for every chart $(O,x)$ it holds that $(\kappa^{-1})^*a\in S^r_{\loc}(x(O)\times\RR^m)$ where $\kappa$ is the chart induced total local trivialisation of $T^*M$ over $O$:
\begin{equation}\label{loc-tri-ofb-undimatotcotbusks}
\kappa:\pi_{T^*M}^{-1}(O)\rightarrow x(O)\times\RR^m,\quad \kappa(p,\xi_j dx^j|_p):=(x(p),\xi_1,\ldots,\xi_m).
\end{equation}
When $M$ is an open subset of $\RR^m$, this definition coincides with the one above.\\
\indent Let $E$ and $F$ be two vector bundles over $M$ of rank $k$ and $k'$ respectively. A continuous operator $A:\Gamma_c(E)\rightarrow\Gamma(F)$ is said to be pseudo-differential operator of order $r\in\RR\cup\{-\infty\}$ if its kernel is smooth outside of the diagonal in $M\times M$ and if for every $p\in M$ there is a coordinate chart $(O,x)$ containing $p$ over which both $E$ and $F$ locally trivialise via $\Phi:\pi_E^{-1}(O)\rightarrow O\times \CC^k$ and $\Phi':\pi_F^{-1}(O)\rightarrow O\times \CC^{k'}$ and such that the operators $A_{\Phi,\Phi',j}^l:\DD(O)\rightarrow \mathcal{C}^{\infty}(O)$, $A_{\Phi,\Phi',j}^l(\varphi):= s'^l(A(\varphi e_j))$, $j=1,\ldots,k$, $l=1,\ldots,k'$, belong to $\Psi^r(O)$ where $(e_1,\ldots,e_k)$ is the local frame for $E$ over $O$ induced by $\Phi$ and $(s'^1,\ldots,s'^{k'})$ is the local frame for the dual bundle $F'$ induced by $\Phi'$. When this is the case, one can show this holds for all charts on $M$ over which both $E$ and $F$ locally trivialise and for any local trivialisations of $E$ and $F$. This definition of a $\Psi$DO coincides with the one above on manifolds when $E=F=\CC_M$. The space of all pseudo-differential operators of order $r\in\RR\cup\{-\infty\}$ between the bundles $E$ and $F$ is denoted by $\Psi^r(M;E,F)$. Every $A\in\Psi^r(M;E,F)$ extends to a continuous operator $A:\EE'(M;E)\rightarrow\DD'(M;F)$. When $A$ is properly supported (i.e., both projections from the support of the kernel of $A$ in $M\times M$ to $M$ are proper maps), $A:\Gamma_c(E)\rightarrow \Gamma_c(F)$ is well-defined and continuous and it extends to a well-defined and continuous operator $\Gamma(E)\rightarrow\Gamma(F)$, $\EE'(M;E)\rightarrow \EE'(M;F)$ and $\DD'(M;E)\rightarrow \DD'(M;F)$; furthermore, it also extends to a well-defined and continuous operator $H^{r'}_{\loc}(M;E)\rightarrow H^{r'-r}_{\loc}(M;F)$ and $H^{r'}_{\comp}(M;E)\rightarrow H^{r'-r}_{\comp}(M;F)$, $r'\in\RR$.\\
\indent For $r\in\RR\cup\{-\infty\}$, the space of symbols $S^r_{\loc}(T^*M;E,F)$ consists of all $a\in \Gamma(\pi_{T^*M}^*L(E,F))$ such that for every chart $(O,x)$ over which $E$ and $F$ locally trivialise via $\Phi$ and $\Phi'$ as above, the functions $a^l_{\Phi,\Phi',j}: T^*O\rightarrow\CC$, $j=1,\ldots,k$, $l=1,\ldots,k'$, defined by $a_{|O}=a^l_{\Phi,\Phi',j} \pi_{T^*M}^*(e'^j\otimes s_l)$ belong to $S^r_{\loc}(T^*O)$ where $(e'^1,\ldots,e'^k)$ and $(s_1,\ldots,s_{k'})$ are the local frames for $E'$ and $F$ over $O$ induced by $\Phi$ and $\Phi'$ and $(\pi_{T^*M}^*(e'^j\otimes s_l))_{j,l}$ is the pullback local frame for $\pi_{T^*M}^*L(E,F)$ over $T^*O$.\\
\indent Let $A\in\Psi^r(M;E,F)$. For every chart $(O,x)$ on $M$ over which $E$ and $F$ locally trivialise, there are $\widetilde{a}^l_j\in S^r_{\loc}(x(O)\times \RR^m)$ and $\widetilde{T}^l_j\in\Psi^{-\infty}(x(O))$, $j=1,\ldots,k$, $l=1,\ldots,k'$, such that
$$
(A\varphi)_{|O}=\Op(\widetilde{a}^l_j)(\varphi^j\circ x^{-1})\circ x\,s_l+\widetilde{T}^l_j(\varphi^j\circ x^{-1})\circ x\,s_l,\quad \varphi\in\Gamma_c(E_O),\, \varphi=\varphi^je_j,
$$
where $(e_1,\ldots,e_k)$ and $(s_1,\ldots,s_{k'})$ are the local frames over $O$ for $E$ and $F$ induced by their local trivialisations respectively. We call $\{\widetilde{a}^l_j\}_{l,j}$ symbols of the coordinate representation of $A$; for fixed chart and local trivialisations of $E$ and $F$, $\{\widetilde{a}^l_j\}_{l,j}$ are unique modulo $S^{-\infty}_{\loc}(x(O)\times\RR^m)$. The principal symbol of order $r\in\RR\cup\{-\infty\}$ is a surjective linear map $\boldsymbol{\sigma}^r:\Psi^r(M;E,F)\rightarrow S^r_{\loc}(T^*M;E,F)/S^{r-1}_{\loc}(T^*M;E,F)$ whose kernel is $\Psi^{r-1}(M;E,F)$. With $\{\widetilde{a}^l_j\}_{l,j}$ and $(\pi_{T^*M}^*(e'^j\otimes s_l))_{j,l}$ as above and $\kappa$ given by \eqref{loc-tri-ofb-undimatotcotbusks}, $a:= \kappa^*(\widetilde{a}^l_j) \pi_{T^*M}^*(e'^j\otimes s_l)\in S^r_{\loc}(T^*O;E_O,F_O)$ and $a\in\boldsymbol{\sigma}^r(A)|_{T^*O}$. If $A$ is properly supported and $A'\in\Psi^{r'}(M;F,H)$ is properly supported with $r'\in\RR\cup\{-\infty\}$ and $H$ a vector bundle over $M$, then $A'A\in\Psi^{r+r'}(M;E,H)$, $A'A$ is properly supported and $\boldsymbol{\sigma}^{r'+r}(A'A)=\boldsymbol{\sigma}^{r'}(A')\boldsymbol{\sigma}^r(A)$.\\
\indent Let $r\in\RR$. We say that $A\in\Psi^r(M;E,F)$ is of order $r'\in\RR\cup\{-\infty\}$, $r'\leq r$, at $(p,\xi)\in T^*M\backslash0$ if there are a chart $(O,x)$ about $p$ over which $E$ and $F$ trivialise and an open cone $V\subseteq \RR^m$ containing $\left(\xi(\frac{\partial}{\partial x^1}|_p),\ldots,\xi(\frac{\partial}{\partial x^m}|_p)\right)$ such that $\widetilde{a}^l_j\in S^{r'}_{\operatorname{loc}}(x(O)\times V)$, $j=1,\ldots,k$, $l=1\ldots, k'$, where $\{\widetilde{a}^l_j\}_{l,j}$ are symbols of the coordinate representation of $A$. The definition is independent of local coordinates, local trivialisations of $E$ and $F$ and of the choice of $\{\widetilde{a}^l_j\}_{l,j}$. We say that $A$ is of order $r'$ in a conic subset $L$ of $T^*M\backslash0$ if $A$ is of order $r'$ at every point of $L$; if $L=T^*M\backslash 0$, then $A\in\Psi^{r'}(M;E,F)$. We say that $A\in\Psi^r(M;E,F)$ is polyhomogeneous of order $r$ if for every point there is a chart $(O,x)$ over which both $E$ and $F$ trivialise such that $\{\widetilde{a}^l_j\}_{l,j}\subseteq S^r_{\loc,\phg}(x(O)\times\RR^m)$ where $\{\widetilde{a}^l_j\}_{l,j}$ are symbols of the coordinate representation of $A$. When this is the case, this is valid on all charts on $M$ over which both $E$ and $F$ locally trivialise and for any local trivialisations of $E$ and $F$. The space of polyhomogeneous $\Psi$DOs of order $r$ is denote by $\Psi^r_{\phg}(M;E,F)$. If $A$ and $A'\in\Psi^{r'}_{\phg}(M;F,H)$ are properly supported, then $A'A\in\Psi^{r'+r}_{\phg}(M;E,H)$.\\
\indent Assume now that $E$ and $F$ have the same rank $k$ and $r\in\RR$. The operator $A\in\Psi^r(M;E,F)$ is said to be elliptic if there is $b\in S^{-r}_{\operatorname{loc}}(T^*M;F,E)$ such that $ba-\operatorname{I}_E\in S^{-1}_{\loc}(M;E,E)$ and $ab-\operatorname{I}_F\in S^{-1}_{\loc}(M;F,F)$ for some (or, equivalently all) $a\in\boldsymbol{\sigma}^r(A)$; here $\operatorname{I}_E\in\Gamma(\pi_{T^*M}^*L(E,E))$ is given by $\operatorname{I}_E(p,\xi):=((p,\xi),\operatorname{Id})\in\{(p,\xi)\}\times\mathcal{L}(E_p,E_p)$ and $\operatorname{I}_F$ is defined analogously. If in addition $A$ is properly supported, then there exists a properly supported $A'\in\Psi^{-r}(M;F,E)$, called a parametrix for $A$, such that $A'A-\operatorname{Id}\in\Psi^{-\infty}(M;E,E)$ and $AA'-\operatorname{Id}\in\Psi^{-\infty}(M;F,F)$ and $A'$ is unique modulo $\Psi^{-\infty}(M;F,E)$. The operator $A\in\Psi^r(M;E,F)$ is said to be non-characteristic at $(p,\xi)\in T^*M\backslash 0$ if there is $b\in S^{-r}_{\operatorname{loc}}(T^*M;F,E)$ such that both $ba-\operatorname{I}_E$ and $ab-\operatorname{I}_F$ are of order $-1$ in a conic neighbourhood of $(p,\xi)$, for some (or, equivalently all) $a\in\boldsymbol{\sigma}^r(A)$. The set of characteristic points of $A$ is denoted by $\Char A$; it is a closed conic subset of $T^*M\backslash 0$. The operator $A$ is elliptic if and only if $\Char A=\emptyset$. We will need the following result; its proof is analogous to the proof of \cite[Theorem 18.1.24', p. 88]{hor2} and we omit it.

\begin{lemma}\label{lemma-for-parmetrixalsmforsonlyindirc}
Let $A\in\Psi^r(M;E,F)$ be properly supported and $(p,\xi)\not\in\Char A$. Then there is a properly supported $A'\in\Psi^{-r}(M;F,E)$ such that both $A'A-\operatorname{Id}$ and $AA'-\operatorname{Id}$ are of order $-\infty$ in a conic neighbourhood of $(p,\xi)$.
\end{lemma}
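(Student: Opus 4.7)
The plan is to construct a microlocal parametrix by the standard iterative symbol construction, confined to a conic neighbourhood of $(p,\xi)$, exactly as in \cite[Theorem 18.1.24']{hor2} but keeping track of vector-bundle indices. Since the conclusion is local in $(p,\xi)$, one may work in a chart $(O,x)$ about $p$ over which both $E$ and $F$ locally trivialise. By the non-characteristic hypothesis there exist $a\in\boldsymbol{\sigma}^r(A)$ and $b_0\in S^{-r}_{\loc}(T^*M;F,E)$ such that $b_0a-\operatorname{I}_E$ and $ab_0-\operatorname{I}_F$ are of order $-1$ on an open conic neighbourhood $\Gamma$ of $(p,\xi)$. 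Fix a symbol $\chi\in S^0_{\loc}(T^*M)$ (a smooth base cutoff times a $0$-homogeneous fibre cutoff for $|\xi|>1$) which equals $1$ on a conic neighbourhood $\Gamma_1$ of $(p,\xi)$ with $\overline{\Gamma_1}\subset\Gamma$ and is supported in $\Gamma$.

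First I construct a properly supported left parametrix $A'_L$. Let $B_0\in\Psi^{-r}(M;F,E)$ be a properly supported representative with principal symbol $\chi b_0$; this exists by passing to the chart, forming $\Op(\chi b_0)$, cutting its kernel near the diagonal to make it properly supported, and patching with a partition of unity. Then the symbol of $B_0A-\operatorname{Id}$ restricted to $\Gamma_1$ is $\chi(b_0a)-\operatorname{I}_E\in S^{-1}_{\loc}$. Inductively, suppose properly supported $B_0,\ldots,B_{j-1}$ with $B_k\in\Psi^{-r-k}(M;F,E)$ have been chosen so that $R_j:=\bigl(\sum_{k=0}^{j-1}B_k\bigr)A-\operatorname{Id}$ is of order $-j$ in a conic neighbourhood $\Gamma_j\subseteq \Gamma_1$ of $(p,\xi)$. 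Pick a symbol $r_j$ of $R_j$, let $B_j$ be a properly supported $\Psi$DO with principal symbol $-\chi b_0 r_j\in S^{-r-j}_{\loc}$; the composition formula and the identity $b_0a\equiv \operatorname{I}_E$ modulo $S^{-1}_{\loc}$ on $\Gamma$ show that $R_{j+1}$ is of order $-j-1$ on a slightly shrunken conic neighbourhood $\Gamma_{j+1}\subseteq \Gamma_j$. A Borel-type asymptotic summation adapted to the coordinate symbols of $B_j$ then produces a properly supported $A'_L\in\Psi^{-r}(M;F,E)$ with $A'_L\sim\sum_j B_j$, so $A'_LA-\operatorname{Id}$ is of order $-\infty$ in a conic neighbourhood of $(p,\xi)$.

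An analogous construction, using the right identity $ab_0-\operatorname{I}_F\in S^{-1}_{\loc}$ on $\Gamma$ and iterating on the right, yields a properly supported right parametrix $A'_R\in\Psi^{-r}(M;F,E)$ with $AA'_R-\operatorname{Id}$ of order $-\infty$ in a conic neighbourhood of $(p,\xi)$. Writing
$$
A'_L-A'_R=A'_L(\operatorname{Id}-AA'_R)+(A'_LA-\operatorname{Id})A'_R,
$$
and using that the composition of a properly supported $\Psi$DO with one that is of order $-\infty$ in a conic neighbourhood $V$ of $(p,\xi)$ is again of order $-\infty$ in a (possibly smaller) conic neighbourhood of $(p,\xi)$, we see that $A'_L-A'_R$ is itself of order $-\infty$ in a conic neighbourhood of $(p,\xi)$. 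Consequently $A':=A'_L$ satisfies both required conditions.

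The main obstacle is the book-keeping in the induction: since $\Psi$DOs are nonlocal, each multiplication by $\chi$ and each composition step can shrink the conic neighbourhood where the desired symbolic order bound is valid, so one must work with a nested decreasing family $\Gamma_1\supset \Gamma_2\supset\cdots$ of conic neighbourhoods all containing a fixed neighbourhood $V\ni(p,\xi)$, and, if necessary, replace $\chi$ at each step by a cutoff $\chi_j$ supported in $\Gamma_j$. This is precisely the device used in \cite[Theorem 18.1.24']{hor2}, and it transfers verbatim to the present vector-bundle setting via the coordinate representations $\{\widetilde{a}^l_j\}_{l,j}$ introduced above.
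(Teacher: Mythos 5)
Your construction is exactly the argument the paper has in mind: it states the lemma without proof, noting that the proof is analogous to \cite[Theorem 18.1.24', p.~88]{hor2}, and your iterative microlocal parametrix construction (cutoff symbol $\chi b_0$, Neumann-type induction with shrinking conic neighbourhoods all containing a fixed $V\ni(p,\xi)$, asymptotic summation, and the standard identity comparing left and right parametrices) is precisely that proof carried over to the vector-bundle setting via coordinate representations. The proposal is correct and follows essentially the same route as the paper.
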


\section{Spaces of distributions with Sobolev wave front in a fixed conic set: topological properties, characterisation of compact sets and pullback by smooth maps}\label{Sec-comp}

Let $U$ be an open subset of $\RR^n$. Following H\"ormander \cite{hor1}, for $u\in\DD'(U)$, we define the Sobolev wave front set $WF^r(u)$ of order $r\in\RR$ as follows. The point $(x,\xi)\in U\times (\RR^n\backslash\{0\})$ does not belong $WF^r(u)$ if there are an open cone $V\subseteq \RR^n$ containing $\xi$ and $\varphi\in\DD(U)$ satisfying $\varphi(x)\neq0$ such that $\|\langle\cdot\rangle^r \mathcal{F}(\varphi u)\|_{L^2(V)}<\infty$; $WF^r(u)$ is a closed conic subset of $U\times (\RR^n\backslash\{0\})$ (see \cite[Definition 8.2.5, p. 188, and Proposition 8.2.6, p. 189]{hor1}). The original definition is due to Duistermaat and H\"ormander \cite[p. 201]{dui-hor} and is via pseudo-differential operators but this amounts to the same thing in view of \cite[Proposition 8.2.6, p. 189]{hor1}. Given a closed conic subset $L$ of $U\times(\RR^n\backslash\{0\})$, we define
\begin{equation}\label{def-spa-with-wfinconcsl}
\DD'^r_L(U):=\{u\in\DD'(U)\,|\, WF^r(u)\subseteq L\}.
\end{equation}
Notice that if $L=\emptyset$, then \eqref{def-spa-with-wfinconcsl} is $H^r_{\loc}(U)$ and when $L=U\times(\RR^n\backslash\{0\})$, \eqref{def-spa-with-wfinconcsl} is the whole $\DD'(U)$. Our goal in this section is to introduce a useful locally convex topology on $\DD'^r_L(U)$ which will make the pullback by smooth maps continuous, the above identities topological and which will be compatible with the topology on $\DD'_L(U)$ form \cite{BD} in the sense that $\DD'_L(U)=\bigcap_{r\in\RR}\DD'^r_L(U)$ topologically (the right hand side should be read as a projective limit; see Proposition \ref{hor-spa-for-fixdsmwavfrsw} below).

\subsection{The Sobolev compactness wave front set and the topology of \texorpdfstring{$\DD'^r_L(U)$}{D'rL(U)}}

Let $B$ be a bounded subset of $\EE'(U)$; notice that $\bigcup_{u\in B}\supp u$ is relatively compact in $U$. For $r\in\RR$, we define the set $\Sigma_c^r(B)\subseteq \RR^n\backslash\{0\}$ as follows. The point $\eta\in\RR^n\backslash\{0\}$ \textit{does not} belongs to $\Sigma_c^r(B)$ if there is an open cone $V\subseteq \RR^n\backslash\{0\}$ containing $\eta$ such that
\begin{equation}
\sup_{u\in B} \int_{\xi\in V,\, |\xi|>R}|\mathcal{F}u(\xi)|^2\langle\xi\rangle^{2r}d\xi\rightarrow 0,\quad \mbox{as}\quad R\rightarrow \infty.
\end{equation}
Clearly, $\Sigma^r_c(B)$ is a closed cone in $\RR^n\backslash\{0\}$. Employing classical arguments as in the proof of \cite[Lemma 8.1.1, p. 253]{hor}, it is straightforward to show the following result (see the proof of Lemma \ref{lem-for-cha-wfofset-comwfl} below for a similar type of argument).

\begin{lemma}\label{lemma-for-inc-of-sig}
With $r$ and $B$ as above, it holds that $\Sigma^r_c(\varphi B)\subseteq \Sigma^r_c(B)$, $\varphi\in\DD(U)$.
\end{lemma}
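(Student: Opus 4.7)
The plan is to argue by contraposition. Fix $\eta \notin \Sigma^r_c(B)$, let $V$ be an open cone around $\eta$ witnessing this, and choose a smaller open cone $V' \ni \eta$ with $\overline{V'} \cap \mathbb{S}^{n-1} \subset V$. A standard cone-separation computation gives $\delta > 0$ such that $|\xi - \zeta|^2 \geq \delta(|\xi|^2 + |\zeta|^2)$ for all $\xi \in V'$ and $\zeta \in \RR^n \setminus V$; this implies $\langle \xi - \zeta\rangle^{-N} \leq C_N \langle\xi\rangle^{-N/2}\langle\zeta\rangle^{-N/2}$ on the same set. Because $B$ is bounded in $\EE'(U)$, Banach--Steinhaus furnishes $K\subset\subset U$ and $s_0\in\RR$ so that $B$ is contained and bounded in $H^{-s_0}_K(\RR^n)$; in particular the Fourier transforms $\mathcal{F}u$, $u\in B$, are smooth functions with a uniform polynomial growth bound.

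Since $\varphi u \in\EE'(\RR^n)$, I would use $\mathcal{F}(\varphi u) = (2\pi)^{-n}\mathcal{F}\varphi * \mathcal{F}u$ and split the convolution as $I_1(\xi) + I_2(\xi)$ according to whether the dummy variable $\zeta$ lies in $V$ or in $\RR^n\setminus V$. For $I_2$, the cone-separation estimate combined with the Schwartz decay of $\mathcal{F}\varphi$ yields $|I_2(\xi)|\leq C_N\langle\xi\rangle^{-N/2}$ uniformly on $B$ (the $\langle\zeta\rangle^{-N/2}$ factor integrates against the polynomial growth of $\mathcal{F}u$), so for $N$ arbitrarily large $\int_{V',|\xi|>R}\langle\xi\rangle^{2r}|I_2|^2\,d\xi\to 0$ as $R\to\infty$, uniformly in $u\in B$.

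For $I_1$, Peetre's inequality gives $\langle\xi\rangle^r \leq 2^{|r|/2}\langle\xi-\zeta\rangle^{|r|}\langle\zeta\rangle^r$. Setting $\psi(\omega):=\langle\omega\rangle^{|r|}|\mathcal{F}\varphi(\omega)|$ (still Schwartz) and applying Cauchy--Schwarz in $\zeta$ gives
\[
\bigl(\langle\xi\rangle^r|I_1(\xi)|\bigr)^2 \leq C\int_V \psi(\xi-\zeta)\,\langle\zeta\rangle^{2r}|\mathcal{F}u(\zeta)|^2\,d\zeta.
\]
Integrating in $\xi$ over $\{\xi\in V',|\xi|>R\}$ and swapping order of integration,
\[
\int_{V',|\xi|>R}\langle\xi\rangle^{2r}|I_1|^2\,d\xi \leq C\int_V \psi_R(\zeta)\,\langle\zeta\rangle^{2r}|\mathcal{F}u(\zeta)|^2\,d\zeta,\qquad \psi_R(\zeta):=\int_{V',|\xi|>R}\psi(\xi-\zeta)\,d\xi.
\]
I would then split the right-hand side at $|\zeta|=R/2$: for $|\zeta|>R/2$, $\psi_R\leq\|\psi\|_{L^1}$ and the remaining integral vanishes uniformly on $B$ by the hypothesis $\eta\notin\Sigma^r_c(B)$; for $|\zeta|\leq R/2$, $|\xi-\zeta|\geq R/2$ forces $\psi_R(\zeta)\leq C_NR^{-N}$ for any $N$, which together with the uniform $H^{-s_0}_K$-bound on $B$ produces a contribution of size $C_N R^{-N}$ times a power of $R$, hence vanishing.

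The main obstacle is keeping every estimate uniform in $u\in B$. This is why both ingredients are genuinely needed: the directional assumption $\eta\notin\Sigma^r_c(B)$ handles the high-frequency on-cone part of the convolution, while the global $\EE'$-boundedness (via Banach--Steinhaus) controls the off-cone part and the low-frequency tail. Everything else is a routine adaptation of the classical pseudolocal computation behind \cite[Lemma 8.1.1]{hor}.
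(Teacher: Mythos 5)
Your argument is correct, and the uniformity over $u\in B$ — the crux of the statement — is maintained at every step: the cone separation between $\overline{V'}$ and $\RR^n\setminus V$, the uniform polynomial bound on $\mathcal{F}u$ obtained from equicontinuity of the bounded set $B\subseteq\EE'(U)$, and the hypothesis $\eta\notin\Sigma^r_c(B)$ each enter exactly where they are needed. The paper does not write this proof out: it appeals to the classical argument of \cite[Lemma 8.1.1, p. 253]{hor} and to the analogous computation inside the proof of Lemma \ref{lem-for-cha-wfofset-comwfl}, so the natural point of comparison is that model computation. There the convolution $\mathcal{F}\varphi*\mathcal{F}u$ is estimated via Minkowski's integral inequality and the domain is split in terms of the cut-off frequency, i.e. according to $|\xi|\geq|\eta|/c$ versus $|\xi|<|\eta|/c$, with the inclusion $\{\xi\in V'-\{\eta\}\,|\,|\xi+\eta|\geq|\eta|/c\}\subseteq V$ feeding the main term into the hypothesis. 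You instead split in the frequency variable of $\mathcal{F}u$ ($\zeta\in V$ versus $\zeta\notin V$), kill the off-cone part by the pseudolocal decay $\langle\xi-\zeta\rangle^{-N}\leq C_N\langle\xi\rangle^{-N/2}\langle\zeta\rangle^{-N/2}$, and treat the on-cone part by Cauchy--Schwarz plus Fubini (a Schur-type bound), with a further split at $|\zeta|=R/2$ so that the low-frequency contribution is crushed by the Schwartz decay of $\psi_R$ against the polynomial bound on $\mathcal{F}u$. Both routes use the same ingredients and give the same conclusion; the paper's Minkowski-based packaging has the incidental advantage that it is literally the computation reused later (e.g. in Proposition \ref{pro-for-top-imbedingthforc} and Proposition \ref{seq-den-comsmf}), while yours avoids Minkowski's integral inequality altogether and isolates the two sources of uniformity more transparently. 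No gap to report.
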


Let now $B$ be a bounded subset of $\DD'(U)$. For every $x\in U$, we define
\begin{equation}
\Sigma_{c,x}^r(B):=\bigcap_{\varphi\in\DD(U),\, \varphi(x)\neq0}\Sigma^r_c(\varphi B).
\end{equation}
Clearly, $\Sigma_{c,x}^r(B)$ is a closed cone in $\RR^n\backslash\{0\}$. The compactness of the unit sphere together with Lemma \ref{lemma-for-inc-of-sig} yield the following result (cf. \cite[p. 253]{hor}).

\begin{lemma}\label{lemma-for-clos-wavef-se}
Let $r\in\RR$, $x\in U$ and $B$ a bounded subset of $\DD'(U)$. If $\Sigma_{c,x}^r(B)\subseteq V$ for some open cone $V\subseteq \RR^n\backslash\{0\}$, then there exists an open neighbourhood $U'\subseteq U$ of $x$ such that $\Sigma^r_c(\varphi B)\subseteq V$, $\varphi\in \DD(U')$.
\end{lemma}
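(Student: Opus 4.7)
The plan is the classical Hörmander-style compactness argument on the unit sphere, with Lemma \ref{lemma-for-inc-of-sig} providing the multiplicative-stability input.

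First, I would unpack the hypothesis. Since by definition $\Sigma^r_{c,x}(B)=\bigcap_{\varphi\in\DD(U),\,\varphi(x)\neq0}\Sigma^r_c(\varphi B)$, the inclusion $\Sigma^r_{c,x}(B)\subseteq V$ says that for every $\eta\in\mathbb{S}^{n-1}\setminus V$ there exists $\psi_\eta\in\DD(U)$ with $\psi_\eta(x)\neq 0$ and $\eta\not\in\Sigma^r_c(\psi_\eta B)$. Unfolding the definition of $\Sigma^r_c$, this produces an open cone $W_\eta\subseteq\RR^n\setminus\{0\}$ containing $\eta$ with $W_\eta\cap\Sigma^r_c(\psi_\eta B)=\emptyset$. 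Compactness of $\mathbb{S}^{n-1}\setminus V$ then yields a finite subcover $W_{\eta_1},\ldots,W_{\eta_N}$; since each $W_{\eta_j}$ is a positively homogeneous cone, one concludes $\bigcup_{j=1}^N W_{\eta_j}\supseteq(\RR^n\setminus\{0\})\setminus V$.

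Next, I would construct the neighbourhood. Writing $\psi_j:=\psi_{\eta_j}$, the set $U':=U\cap\bigcap_{j=1}^N\{y\in U\,|\,\psi_j(y)\neq 0\}$ is open and contains $x$. For any $\varphi\in\DD(U')$ and any $j$, the function $\chi_j:=\varphi/\psi_j$ (extended by zero off $\supp\varphi$) lies in $\DD(U')$ and satisfies $\varphi=\chi_j\psi_j$ on $U$; hence $\varphi B\subseteq \chi_j(\psi_j B)$.

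The conclusion is then routine: Lemma \ref{lemma-for-inc-of-sig} applied to the bounded set $\psi_j B\subseteq \EE'(U)$ gives $\Sigma^r_c(\varphi B)=\Sigma^r_c(\chi_j(\psi_j B))\subseteq \Sigma^r_c(\psi_j B)\subseteq W_{\eta_j}^c$ for every $j$, so intersecting over $j$ yields $\Sigma^r_c(\varphi B)\subseteq \big(\bigcup_{j}W_{\eta_j}\big)^c$, and this complement meets $\RR^n\setminus\{0\}$ only inside $V$. The main conceptual point—and the only place where care is needed—is the bookkeeping in the second step: one must extract the finite subcover \emph{before} choosing $U'$, so that the finitely many $\psi_j$'s are simultaneously nonvanishing on an open neighbourhood of $x$, thereby making the division $\varphi/\psi_j$ produce a genuine element of $\DD(U')$. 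All analytic content—the stability of $\Sigma^r_c$ under multiplication by test functions—is delegated to Lemma \ref{lemma-for-inc-of-sig}.
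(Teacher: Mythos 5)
Your proof is correct and is exactly the argument the paper has in mind: the paper omits the details, invoking only the compactness of the unit sphere and Lemma \ref{lemma-for-inc-of-sig} (the classical H\"ormander argument), and your write-up — covering $\mathbb{S}^{n-1}\setminus V$ by finitely many cones $W_{\eta_j}$ attached to functions $\psi_j$ nonvanishing at $x$, taking $U'$ where all $\psi_j\neq0$, and writing $\varphi=(\varphi/\psi_j)\psi_j$ so that Lemma \ref{lemma-for-inc-of-sig} gives $\Sigma^r_c(\varphi B)\subseteq\Sigma^r_c(\psi_j B)\subseteq W_{\eta_j}^c$ for every $j$ — is precisely that argument, with the correct observation that the finite subcover must be extracted before choosing $U'$.
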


\begin{definition}\label{def-com-wav-forsetdefonbds}
Let $r\in\RR$ and let $B$ be a bounded subset of $\DD'(U)$. The \textit{Sobolev compactness wave front set of order $r$} of $B$ is
\begin{equation*}
WF^r_c(B):=\{(x,\xi)\in U\times (\RR^n\backslash\{0\})\, |\, \xi\in\Sigma^r_{c,x}(B)\}.
\end{equation*}
\end{definition}

When $r=0$, this was introduced by G\'erard \cite[Definition 1.1]{G1} as a bookkeeping device to keep track of the frequencies where a weakly convergent sequence in $L^2_{\loc}(U)$ fails to have a convergent subsequence. We will show that $WF^r_c(\cdot)$ is an effective tool for characterising the relatively compact subsets of $\DD'^r_L(U)$ once we introduce the locally convex topology on it. For the moment, we point out that $WF^r_c(B)$ is a closed conic subset of $U\times(\RR^n\backslash\{0\})$ in view of Lemma \ref{lemma-for-clos-wavef-se}, and, if $B$ has only one element $u$ (i.e., $B=\{u\}$), then $WF^r_c(B)$ is exactly the Sobolev wave front set $WF^r(u)$ of $u$ of order $r$; in this case we will also simply write $\Sigma^r_x(u)$. Furthermore, Lemma \ref{lemma-for-inc-of-sig} implies that $WF^r_c(a_1B_1+a_2B_2)\subseteq WF^r_c(B_1)\cup WF^r_c(B_2)$ for any $a_1,a_2\in\mathcal{C}^{\infty}(U)$ and any bounded subsets $B_1$ and $B_2$ of $\DD'(U)$; if in addition $B_1\subseteq B_2$ then $WF^r_c(B_1)\subseteq WF^r_c(B_2)$.

\begin{lemma}\label{lem-for-cha-wfofset-comwfl}
Let $B$ be a bounded subset of $\DD'(U)$ and let $L$ be a closed conic subset of $U\times (\RR^n\backslash\{0\})$. Then $WF^r_c(B)\subseteq L$ if and only if for every $\varphi\in\DD(U)$ and every closed cone $V\subseteq \RR^n$ satisfying $(\supp\varphi\times V)\cap L=\emptyset$ it holds that
\begin{equation}\label{cond-for-wfset-comto-be-equss}
\sup_{u\in B}\int_{V,\, |\xi|>R} |\mathcal{F}(\varphi u)(\xi)|^2\langle \xi\rangle^{2r} d\xi\rightarrow 0,\quad \mbox{as}\quad R\rightarrow\infty.
\end{equation}
\end{lemma}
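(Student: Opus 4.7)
The proof splits into the two implications, with the reverse direction essentially immediate and the forward direction the substantive one, proceeding by a separation-plus-partition-of-unity argument built on Lemma \ref{lemma-for-clos-wavef-se}.

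For the reverse implication, I argue by contrapositive. Given $(x_0,\xi_0) \notin L$, the fact that $L$ is closed and conic produces an open neighborhood $U' \subseteq U$ of $x_0$ and an open cone $V' \subseteq \RR^n \setminus \{0\}$ around $\xi_0$ with $(U'\times V') \cap L = \emptyset$. Choosing $\varphi \in \DD(U')$ with $\varphi(x_0)\neq 0$ and a closed cone $V'' \subseteq V'$ (e.g., $V'' = \RR_+\overline{B(\xi_0,\epsilon)}$ for small $\epsilon$) with $\xi_0 \in \operatorname{int} V''$, the hypothesis applied to this pair yields the uniform decay on $V''$, hence on the open cone $\operatorname{int} V''$ containing $\xi_0$. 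This shows $\xi_0 \notin \Sigma^r_c(\varphi B)$, and hence $\xi_0 \notin \Sigma^r_{c,x_0}(B)$, so $(x_0,\xi_0)\notin WF^r_c(B)$.

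For the forward implication, fix $\varphi \in \DD(U)$ and the closed cone $V$ with $(\supp\varphi\times V)\cap L=\emptyset$. From $WF^r_c(B)\subseteq L$, I get $\Sigma^r_{c,x}(B) \cap V = \emptyset$ for every $x \in \supp\varphi$. Since the two closed cones have compact, disjoint traces on $\mathbb{S}^{n-1}$, I separate them by open conic neighborhoods: for each $x \in \supp\varphi$, there exist open cones $\tilde V_x \supseteq \Sigma^r_{c,x}(B)$ and $W_x \supseteq V$ with $\tilde V_x \cap W_x = \emptyset$. Lemma \ref{lemma-for-clos-wavef-se} then supplies an open neighborhood $U_x \ni x$ such that $\Sigma^r_c(\psi B) \subseteq \tilde V_x$, in particular $\Sigma^r_c(\psi B)\cap W_x=\emptyset$, for every $\psi \in \DD(U_x)$. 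Cover the compact set $\supp\varphi$ by finitely many such $U_{x_1},\dots,U_{x_N}$ and take a smooth partition of unity $(\chi_j)_{j=1}^N$ subordinate to this cover with $\sum_j \chi_j \equiv 1$ on $\supp\varphi$, so $\varphi = \sum_j \chi_j\varphi$ with $\chi_j\varphi \in \DD(U_{x_j})$.

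For each $j$, every $\eta \in W_{x_j} \supseteq V$ lies outside $\Sigma^r_c(\chi_j\varphi B)$, so the definition of the latter set furnishes an open conic neighborhood of $\eta$ on which the uniform decay holds for $\chi_j\varphi B$. Covering the compact set $V \cap \mathbb{S}^{n-1}$ by finitely many such neighborhoods and summing the corresponding weighted $L^2$ integrals yields uniform decay on all of $V$ for $\chi_j\varphi B$. A final application of the triangle inequality in $L^2(V\cap\{|\xi|>R\};\langle\xi\rangle^{2r}d\xi)$,
$$
\bigl\|\langle\cdot\rangle^r \mathcal{F}(\varphi u)\bigr\|_{L^2(V\cap\{|\xi|>R\})} \le \sum_{j=1}^N \bigl\|\langle\cdot\rangle^r \mathcal{F}(\chi_j\varphi u)\bigr\|_{L^2(V\cap\{|\xi|>R\})},
$$
combines the $N$ pieces to give \eqref{cond-for-wfset-comto-be-equss}. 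The main technical obstacle is the separation-and-patching bookkeeping: cleanly separating the two closed cones by disjoint open cones at each base point, invoking Lemma \ref{lemma-for-clos-wavef-se} compatibly with the partition of unity, and translating the pointwise-in-direction decay coming from the definition of $\Sigma^r_c$ into a single uniform decay estimate on the closed cone $V$ via compactness of $V\cap \mathbb{S}^{n-1}$; each ingredient is classical Fourier analysis, but the quantifier-juggling has to be done with some care.
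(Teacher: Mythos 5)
Your argument is correct, but it is organised quite differently from the paper's proof. You run the whole forward implication through Lemma \ref{lemma-for-clos-wavef-se}: separating the closed cones $\Sigma^r_{c,x}(B)$ and $V$ by disjoint open cones, obtaining neighbourhoods $U_x$ on which \emph{every} cutoff $\psi\in\DD(U_x)$ produces a set $\Sigma^r_c(\psi B)$ disjoint from $V\backslash\{0\}$, and then concluding from the definition of $\Sigma^r_c$, compactness of $V\cap\mathbb{S}^{n-1}$ and the triangle inequality in weighted $L^2$. The paper never invokes Lemma \ref{lemma-for-clos-wavef-se} at this point; it instead selects cutoffs $\varphi_{(y,\eta)}$ pointwise in $(y,\eta)\in\supp\varphi\times V$, builds the partition of unity out of finite products of these, and then must transfer the known decay of $\mathcal{F}(\varphi_{j,l}u)$ on the cone $V'_{j,l}$ to the pieces $\psi_j=\chi_{j,l}\varphi_{j,l}$ by an explicit convolution/Minkowski estimate (the $I_1$/$I_2$ splitting, using the polynomial bound on $\mathcal{F}(\varphi_{j,l}u)$ that comes from the boundedness of $B$ in $\DD'(U)$). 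What your route buys is brevity and a cleaner quantifier structure: no quantitative estimate appears in your proof at all, because the analytic content --- that multiplication by a cutoff does not enlarge $\Sigma^r_c$, at the price of shrinking the cone --- is exactly Lemma \ref{lemma-for-inc-of-sig}, on which Lemma \ref{lemma-for-clos-wavef-se} rests. What the paper's route buys is self-containedness: the convolution estimate written out in this proof is precisely the ``classical argument'' whose details are omitted in Lemma \ref{lemma-for-inc-of-sig}, and it serves as the template re-used later (e.g.\ in the proof of Proposition \ref{pro-for-top-imbedingthforc} and Proposition \ref{seq-den-comsmf}); in your organisation that estimate is never written anywhere, so your proof is complete only insofar as you take Lemmas \ref{lemma-for-inc-of-sig} and \ref{lemma-for-clos-wavef-se} as black boxes --- which is legitimate, since both are stated before the present lemma and do not depend on it. Two cosmetic points: dispose of the degenerate cases at the outset ($\varphi=0$, $V\subseteq\{0\}$, or $L=U\times(\RR^n\backslash\{0\})$, in which case $V$ can only be $\{0\}$ or $\emptyset$ for nonzero $\varphi$), and in the separation step note that when $\Sigma^r_{c,x}(B)=\emptyset$ you may simply take for $\tilde V_x$ any small open cone disjoint from $W_x$ so that Lemma \ref{lemma-for-clos-wavef-se} still applies.
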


\begin{proof} If $L=U\times (\RR^n\backslash\{0\})$, then when $\varphi\in\DD(U)\backslash\{0\}$ it holds that $V\subseteq \{0\}$ and the claim in the lemma is trivial. Assume that $L^c\neq \emptyset$. If \eqref{cond-for-wfset-comto-be-equss} is satisfied for every $\varphi\in\DD(U)$ and $V$ as in the lemma, then clearly $WF^r_c(B)\subseteq L$. Assume now that $WF^r_c(B)\subseteq L$ and let $\varphi\in\DD(U)\backslash\{0\}$ and the closed cone $V\subseteq \RR^n$, $V\backslash\{0\}\neq \emptyset$, are such that $(\supp\varphi\times V)\cap L=\emptyset$. Set $K:=\supp\varphi$. For each $(y,\eta)\in K\times V$ there is $\varphi_{(y,\eta)}\in\DD(U)$ with $\varphi_{(y,\eta)}(y)\neq0$ such that $\eta\not\in\Sigma^r_c(\varphi_{(y,\eta)} B)$. In view of Lemma \ref{lemma-for-inc-of-sig}, we can assume that $\varphi_{(y,\eta)}$ is nonnegative and $\varphi_{(y,\eta)}(y)=1$. There is an open cone $V'_{(y,\eta)}\subseteq \RR^n\backslash\{0\}$ containing $\eta$ such that
$$
\sup_{u\in B}\int_{V'_{(y,\eta)},\, |\xi|>R} |\mathcal{F}(\varphi_{(y,\eta)} u)(\xi)|^2\langle \xi\rangle^{2r} d\xi\rightarrow 0,\quad \mbox{as}\quad R\rightarrow\infty.
$$
Pick an open cone $V_{(y,\eta)}\subseteq \RR^n\backslash\{0\}$ such that $\eta\in V_{(y,\eta)}$ and $\overline{V_{(y,\eta)}}\backslash\{0\}\subseteq V'_{(y,\eta)}$. Since $V\cap\mathbb{S}^{n-1}$ is compact, we infer that for each $y\in K$ there are a finite number of open cones $V_{y,l}:=V_{(y,\eta^{(l)})}$, $l=1,\ldots, k_y$, whose union covers $V$. For each $y\in K$, set $\varphi_y:=\varphi_{(y,\eta^{(1)})}\cdot\ldots\cdot\varphi_{(y,\eta^{(k_y)})}$ and notice that $\varphi_y\in\DD(U)$ and $\varphi_y(y)=1$. For every $y\in K$, denote by $O_y$ the open set $\{x\in U\,|\, \varphi_y(x)>1/2\}$. As $K$ is compact, there are a finite number of open sets $O_j:=O_{y^{(j)}}$, $j=1,\ldots, m$, whose union covers $K$. Set $\varphi_j:=\varphi_{y^{(j)}}$, $j=1,\ldots,m$, and $\psi=\varphi_1+\ldots+\varphi_m$. Clearly $\psi\in\DD(U)$ and $\psi>1/2$ on a neighbourhood of $K$. Hence $\psi_j:=(\varphi/\psi)\varphi_j\in\DD(U)$ and $\varphi=\psi_1+\ldots+\psi_m$. For each $j\in \{1,\ldots,m\}$, set $k_j:=k_{y^{(j)}}$ and additionally $V_{j,l}:=V_{y^{(j)},l}$, $V'_{j,l}:=V'_{(y^{(j)},\eta^{(l)})}$ and $\varphi_{j,l}:=\varphi_{(y^{(j)},\eta^{(l)})}$, $l=1,\ldots,k_j$. We infer
\begin{equation*}
\left(\int_{V,\, |\xi|>R} |\mathcal{F}(\varphi u)(\xi)|^2\langle \xi\rangle^{2r} d\xi\right)^{1/2}\\
\leq \sum_{j=1}^m \sum_{l=1}^{k_j}\left(\int_{V_{j,l},\, |\xi|>R} |\mathcal{F}(\psi_j u)(\xi)|^2\langle \xi\rangle^{2r} d\xi\right)^{1/2}.
\end{equation*}
We show that for every $j\in\{1,\ldots,m\}$ and $l\in\{1,\ldots,k_j\}$,
\begin{equation}\label{sho-for-dec-sum-int-for-lemcharc}
\sup_{u\in B}\int_{V_{j,l},\, |\xi|\geq R} |\mathcal{F}(\psi_j u)(\xi)|^2\langle \xi\rangle^{2r} d\xi\rightarrow 0,\quad \mbox{as}\quad R\rightarrow\infty,
\end{equation}
which will complete the proof. Fix such $j$ and $l$. Set\footnote{Here and throughout the rest of the article we employ the principle of vacuous (empty) products; i.e., $\prod_{j=1}^0q_j=\prod_{j\in\emptyset} q_j=1$.} $\chi_{j,l}:=(\varphi/\psi)\prod_{l'\neq l}\varphi_{j,l'}\in \DD(U)$ and notice that $\psi_j=\chi_{j,l}\varphi_{j,l}$. There is $c\in(0,1)$ such that
\begin{equation}\label{ine-for-cones-subset-ofothe}
\{\eta\in\RR^n\,|\,\exists \xi\in V_{j,l}\,\, \mbox{such that}\,\,|\xi-\eta|\leq c|\xi|\}\subseteq V'_{j,l}.
\end{equation}
Since $\mathcal{F}(\psi_j u)=(2\pi)^{-n}\mathcal{F}(\chi_{j,l})*\mathcal{F}(\varphi_{j,l}u)$, the Minkowski integral inequality gives
\begin{align*}
&\left(\int_{V_{j,l},\, |\xi|\geq R} |\mathcal{F}(\psi_j u)(\xi)|^2\langle \xi\rangle^{2r} d\xi\right)^{1/2}\\
&\leq \frac{1}{(2\pi)^n}\int_{\RR^n_{\eta}}|\mathcal{F}\chi_{j,l}(\eta)|\left(\int_{V_{j,l},\, |\xi|\geq R} |\mathcal{F}(\varphi_{j,l} u)(\xi-\eta)|^2\langle \xi\rangle^{2r}d\xi\right)^{1/2}d\eta\\
&\leq  \frac{2^{|r|}}{(2\pi)^n}\int_{\RR^n_{\eta}}|\mathcal{F}\chi_{j,l}(\eta)|\langle\eta\rangle^{|r|}\left(\int_{V_{j,l},\, |\xi|\geq R} |\mathcal{F}(\varphi_{j,l} u)(\xi-\eta)|^2\langle \xi-\eta\rangle^{2r}d\xi\right)^{1/2}d\eta\\
&\leq 2^{|r|}(2\pi)^{-n}(I_1+I_2),
\end{align*}
with
\begin{align*}
I_1&:=\int_{\RR^n_{\eta}}|\mathcal{F}\chi_{j,l}(\eta)|\langle\eta\rangle^{|r|}\left(\int_{\substack{\xi\in V_{j,l}\\ |\xi|\geq \max\{|\eta|/c,R\}}} |\mathcal{F}(\varphi_{j,l} u)(\xi-\eta)|^2\langle \xi-\eta\rangle^{2r}d\xi\right)^{1/2}d\eta,\\
I_2&:=\int_{\RR^n_{\eta}}|\mathcal{F}\chi_{j,l}(\eta)|\langle\eta\rangle^{|r|}\left(\int_{\substack{\xi\in V_{j,l}\\ R\leq |\xi|<|\eta|/c}} |\mathcal{F}(\varphi_{j,l} u)(\xi-\eta)|^2\langle \xi-\eta\rangle^{2r}d\xi\right)^{1/2}d\eta.
\end{align*}
We change variables in the inner integral in $I_1$ to obtain
\begin{align*}
I_1&= \int_{\RR^n_{\eta}}|\mathcal{F}\chi_{j,l}(\eta)|\langle\eta\rangle^{|r|}\left(\int_{\substack{\xi\in V_{j,l}-\{\eta\}\\ |\xi+\eta|\geq \max\{|\eta|/c,R\}}} |\mathcal{F}(\varphi_{j,l} u)(\xi)|^2\langle \xi\rangle^{2r}d\xi\right)^{1/2}d\eta\\
&\leq \int_{\RR^n_{\eta}}|\mathcal{F}\chi_{j,l}(\eta)|\langle\eta\rangle^{|r|}\left(\int_{V'_{j,l},\,|\xi|\geq(1-c)R} |\mathcal{F}(\varphi_{j,l} u)(\xi)|^2\langle \xi\rangle^{2r}d\xi\right)^{1/2}d\eta\\
&= \|\langle\cdot \rangle^{|r|}\mathcal{F}\chi_{j,l}\|_{L^1(\RR^n)} \left(\int_{V'_{j,l},\,|\xi|\geq(1-c)R} |\mathcal{F}(\varphi_{j,l} u)(\xi)|^2\langle \xi\rangle^{2r}d\xi\right)^{1/2},
\end{align*}
where in the inequality we employed the fact
\begin{equation}\label{inc-for-ine-for-int-iskrt}
\{\xi\in V_{j,l}-\{\eta\}\,|\, |\xi+\eta|\geq \max\{|\eta|/c,R\}\}\subseteq \{\xi\in V'_{j,l}\,|\, |\xi|\geq (1-c)R\}.
\end{equation}
To verify it, first notice that \eqref{ine-for-cones-subset-ofothe} gives $\{\xi\in V_{j,l}-\{\eta\}\,|\, |\xi+\eta|\geq |\eta|/c\}\subseteq V'_{j,l}$. When $\xi$ belong to the left-hand side of \eqref{inc-for-ine-for-int-iskrt}, we have $|\xi|\geq (c^{-1}-1)|\eta|$ and consequently
$$
|\xi|\geq R-|\eta|\geq R-c|\xi|/(1-c),\quad \mbox{whence}\quad |\xi|\geq (1-c)R,
$$
which proves \eqref{inc-for-ine-for-int-iskrt}. To bound $I_2$, notice that since $B$ is bounded in $\DD'(U)$ there are $C'_1,s>0$ such that $|\mathcal{F}(\varphi_{j,l} u)(\xi)|\leq C'_1\langle \xi\rangle^s$, $\xi\in\RR^n$, $u\in B$. Hence
\begin{align*}
I_2&\leq C'_1\int_{|\eta|>cR}|\mathcal{F}\chi_{j,l}(\eta)|\langle\eta\rangle^{|r|}\left(\int_{\substack{\xi\in V_{j,l}\\ R\leq|\xi|<|\eta|/c}} \langle \xi-\eta\rangle^{2(s+|r|)}d\xi\right)^{1/2}d\eta\\
&\leq C'_1 (1+c^{-1})^{s+|r|}c^{-n}\|\langle\cdot\rangle^{-n}\|_{L^2(\RR^n)}\int_{|\eta|>cR}| \mathcal{F}\chi_{j,l}(\eta)|\langle\eta\rangle^{2|r|+s+n}d\eta.
\end{align*}
These bounds for $I_1$ and $I_2$ imply the validity of \eqref{sho-for-dec-sum-int-for-lemcharc} and the proof of the lemma is complete.
\end{proof}

Applying the lemma when $B$ is a singleton, we deduce the following corollary.

\begin{corollary}\label{lemma-for-sem-wav-fr-set-spa}
The distribution $u\in\DD'(U)$ belongs to $\DD'^r_L(U)$ if and only if for every $\varphi\in\DD(U)$ and every closed cone $V\subseteq \RR^n$ satisfying $(\supp \varphi \times V)\cap L=\emptyset$ it holds that
\begin{equation*}
\int_V |\mathcal{F}(\varphi u)(\xi)|^2\langle \xi\rangle^{2r} d\xi<\infty.
\end{equation*}
\end{corollary}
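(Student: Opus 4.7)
The plan is to specialise the preceding Lemma to the singleton $B=\{u\}$ and then handle the small gap between the \emph{tail-vanishing} condition that the Lemma produces and the \emph{finiteness} condition asserted in the corollary.

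First I would observe that when $B=\{u\}$ the set $B$ is automatically bounded in $\DD'(U)$, and by the remark immediately following Definition~\ref{def-com-wav-forsetdefonbds} the Sobolev compactness wave front set reduces to the ordinary Sobolev wave front set, i.e.\ $WF^r_c(\{u\})=WF^r(u)$. Hence the membership $u\in\DD'^r_L(U)$ is, by the very definition \eqref{def-spa-with-wfinconcsl}, equivalent to $WF^r_c(\{u\})\subseteq L$. By Lemma~\ref{lem-for-cha-wfofset-comwfl}, this is in turn equivalent to
\begin{equation*}
\int_{V,\,|\xi|>R}|\mathcal{F}(\varphi u)(\xi)|^2\langle\xi\rangle^{2r}d\xi\to 0\quad\text{as }R\to\infty,
\end{equation*}
for every $\varphi\in\DD(U)$ and every closed cone $V\subseteq\RR^n$ with $(\supp\varphi\times V)\cap L=\emptyset$ (the supremum over $B$ disappears since $B$ is a singleton).

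The remaining step is to identify this tail-decay condition with the finiteness condition $\int_V|\mathcal{F}(\varphi u)(\xi)|^2\langle\xi\rangle^{2r}d\xi<\infty$. One direction is immediate by dominated convergence: if the full integral over $V$ converges, then the tails over $V\cap\{|\xi|>R\}$ tend to zero as $R\to\infty$. For the converse, I would use that $\varphi u\in\EE'(U)$, so by the Paley--Wiener theorem $\mathcal{F}(\varphi u)$ is a smooth function of polynomial growth; in particular it is bounded on every set of the form $\{\xi\in V:|\xi|\leq R_0\}$. If the tail integral tends to zero, picking $R_0$ with the tail at $R_0$ finite and combining with the bound on the bounded piece gives $\int_V|\mathcal{F}(\varphi u)(\xi)|^2\langle\xi\rangle^{2r}d\xi<\infty$.

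There is no real obstacle here; the entire content is already packed into Lemma~\ref{lem-for-cha-wfofset-comwfl}, and the only subtlety is the trivial but worth-stating observation that for a \emph{single} distribution the uniform tail decay simplifies to ordinary tail decay, which in turn is equivalent to integrability of $|\mathcal{F}(\varphi u)|^2\langle\xi\rangle^{2r}$ over $V$ because $\mathcal{F}(\varphi u)$ is smooth.
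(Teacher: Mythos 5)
Your proof is correct and follows essentially the same route as the paper, which simply applies Lemma \ref{lem-for-cha-wfofset-comwfl} to the singleton $B=\{u\}$ (using that $WF^r_c(\{u\})=WF^r(u)$). Your extra step reconciling the tail-decay condition with finiteness of the integral — via Paley--Wiener boundedness of $\mathcal{F}(\varphi u)$ on the bounded part of $V$ — is a correct elaboration of a point the paper leaves implicit.
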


With the help of the corollary, we define the following set of seminorms on $\DD'^r_L(U)$:
\begin{equation*}
\mathfrak{p}_{r;\varphi,V}(u):=\left(\int_V |\mathcal{F}(\varphi u)(\xi)|^2\langle \xi\rangle^{2r} d\xi\right)^{1/2},
\end{equation*}
where $\varphi\in \DD(U)$ and $V$ are as in Corollary \ref{lemma-for-sem-wav-fr-set-spa}. We equip $\DD'^r_L(U)$ with the locally convex topology induced by all continuous seminorms on $\DD'(U)$ together with all seminorms $\mathfrak{p}_{r;\varphi,V}$ where $\varphi$ and $V$ are as in Corollary \ref{lemma-for-sem-wav-fr-set-spa}. Clearly, $H^r_{\loc}(U)\subseteq\DD'^r_L(U)\subseteq \DD'(U)$ continuously.

\begin{remark}
If $L=U\times (\RR^n\backslash\{0\})$, then $\DD'^r_L(U)=\DD'(U)$ topologically since in this case $V$ can only be $\{0\}$ or $\emptyset$ when $\varphi$ is not the zero function and hence $\mathfrak{p}_{r;\varphi,V}=0$.\\
\indent When $L=\emptyset$, $\DD'^r_{\emptyset}(U)= H^r_{\loc}(U)$ topologically since one can take $V=\RR^n$ in this case.
\end{remark}

All of the important topological properties of $\DD'^r_L(U)$ will follow from the following proposition. To set the stage, we need the following objects. Assume that $L^c\neq \emptyset$. Pick a countable dense subset $\{(x^{(j)},\xi^{(j)})\}_{j\in\ZZ_+}$ of $L^c$ and denote $\omega^{(j)}:=\xi^{(j)}/|\xi^{(j)}|\in\mathbb{S}^{n-1}$, $j\in\ZZ_+$. Set\footnote{Here we employ $\operatorname{dist}(x,\emptyset)=\infty$, for any element $x$.}
$$
s_j:=\min\{1,\operatorname{dist}(x^{(j)},\partial U),\operatorname{dist}((x^{(j)},\omega^{(j)}),L)\}>0.
$$
Clearly, $(\overline{B(x^{(j)},s_j/2)}\times \overline{B(\omega^{(j)},s_j/2)})\cap L=\emptyset$ and $B(x^{(j)},s_j)\subseteq U$, $j\in\ZZ_+$. For each $j,k\in\ZZ_+$, we set:
\begin{gather*}
O_{j,k}:=B(x^{(j)},s_j/(5k)),\quad O'_{j,k}:=B(x^{(j)},s_j/(4k)),\quad O''_{j,k}:=B(x^{(j)},s_j/(3k));\\
V_{j,k}:=\RR_+ B(\omega^{(j)},s_j/(5k)),\quad V'_{j,k}:=\RR_+ B(\omega^{(j)},s_j/(4k)),\quad V''_{j,k}:=\RR_+ B(\omega^{(j)},s_j/(3k)).
\end{gather*}
Notice that
$$
\overline{O_{j,k}}\times \overline{V_{j,k}}\subseteq O'_{j,k}\times (V'_{j,k}\cup\{0\}),\quad \overline{O'_{j,k}}\times \overline{V'_{j,k}}\subseteq O''_{j,k}\times (V''_{j,k}\cup\{0\}),\quad (\overline{O''_{j,k}}\times \overline{V''_{j,k}})\cap L=\emptyset.
$$
For each $j,k\in\ZZ_+$, pick $\phi_{j,k}\in\DD(\RR^n)$ and $\widetilde{\phi}_{j,k}\in\mathcal{C}^{\infty}(\RR^n)$ which satisfy the following conditions:
\begin{itemize}
\item[$(a)$] $0\leq \phi_{j,k}\leq 1$ and $0\leq \widetilde{\phi}_{j,k}\leq 1$;
\item[$(b)$] $\phi_{j,k}=1$ on $\overline{O_{j,k}}$ and $\supp\phi_{j,k}\subseteq O'_{j,k}$;
\item[$(c)$] $\widetilde{\phi}_{j,k}=1$ on $\overline{V}_{j,k}\backslash B(0,2)$ and $\supp\widetilde{\phi}_{j,k}\subseteq V'_{j,k}\backslash \overline{B(0,1)}$;
\item[$(d)$] $\|\partial^{\alpha} \widetilde{\phi}_{j,k}\|_{L^{\infty}(\RR^n)}<\infty$, $\alpha\in\NN^n$.
\end{itemize}

\begin{proposition}\label{pro-for-top-imbedingthforc}
Let $L$ be a closed conic subset of $U\times(\RR^n\backslash\{0\})$ satisfying $L^c\neq \emptyset$. Let $O_{j,k}$, $O'_{j,k}$, $O''_{j,k}$, $V_{j,k}$, $V'_{j,k}$, $V''_{j,k}$, $\phi_{j,k}$ and $\widetilde{\phi}_{j,k}$ be as above. Then the mapping
\begin{gather*}
\mathcal{I}:\DD'^r_L(U)\rightarrow \DD'(U)\times (L^2(\RR^n)^{\ZZ_+\times \ZZ_+}),\quad \mathcal{I}(u)=(u,\mathbf{f}_u),\,\, \mbox{where}\\
\mathbf{f}_u(j,k):=\langle\cdot\rangle^r \widetilde{\phi}_{j,k}\mathcal{F}(\phi_{j,k} u),\, j,k\in\ZZ_+,
\end{gather*}
is a well-defined topological imbedding with closed image.
\end{proposition}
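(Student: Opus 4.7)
The plan is to verify four things in order: (a) $\mathcal{I}$ is well-defined and injective; (b) $\mathcal{I}$ is continuous; (c) every continuous seminorm on $\DD'^r_L(U)$ is dominated by a continuous seminorm of the product pulled back through $\mathcal{I}$, i.e.\ $\mathcal{I}^{-1}:\mathcal{I}(\DD'^r_L(U))\to\DD'^r_L(U)$ is continuous; (d) $\mathcal{I}(\DD'^r_L(U))$ is closed in the product. Injectivity is immediate from the first component.

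For (a) and (b), fix $j,k\in\ZZ_+$. The geometric nesting $\supp\phi_{j,k}\subseteq\overline{O'_{j,k}}\subseteq O''_{j,k}$ and $\overline{V'_{j,k}}\subseteq V''_{j,k}\cup\{0\}$, combined with $(\overline{O''_{j,k}}\times\overline{V''_{j,k}})\cap L=\emptyset$ and $L\subseteq U\times(\RR^n\setminus\{0\})$, give $(\supp\phi_{j,k}\times\overline{V'_{j,k}})\cap L=\emptyset$. Hence Corollary \ref{lemma-for-sem-wav-fr-set-spa} makes $\mathfrak{p}_{r;\phi_{j,k},\overline{V'_{j,k}}}$ a continuous seminorm on $\DD'^r_L(U)$, and because $|\widetilde{\phi}_{j,k}|\leq\mathbf{1}_{V'_{j,k}}$ the pointwise bound $\|\mathbf{f}_u(j,k)\|_{L^2(\RR^n)}\leq\mathfrak{p}_{r;\phi_{j,k},\overline{V'_{j,k}}}(u)$ simultaneously shows $\mathbf{f}_u(j,k)\in L^2$ and the continuity of the $(j,k)$-component of $\mathcal{I}$; continuity of the first component is the built-in inclusion $\DD'^r_L(U)\hookrightarrow\DD'(U)$.

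The heart of the proof is (c). Given $\varphi\in\DD(U)$ and a closed cone $V$ with $(\supp\varphi\times V)\cap L=\emptyset$, the task is to dominate $\mathfrak{p}_{r;\varphi,V}$ by finitely many seminorms from the image, and the approach mirrors the proof of Lemma \ref{lem-for-cha-wfofset-comwfl}. Since $\supp\varphi\times(V\cap\mathbb{S}^{n-1})$ is a compact subset of $L^c$ and $\{(x^{(j)},\omega^{(j)})\}_{j\in\ZZ_+}$ accumulates densely on the natural image of $L^c$ in $U\times\mathbb{S}^{n-1}$, each $(y,\eta)$ in this compact set lies in some $O_{j,k}\times V_{j,k}$ (pick $(x^{(j)},\omega^{(j)})$ close to $(y,\eta)$, then $k$ small relative to $s_j$), and one extracts a finite subcover $\{O_{j_l,k_l}\times V_{j_l,k_l}\}_{l=1}^N$. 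A subordinate partition of unity $\psi_l\in\DD(O_{j_l,k_l})$ with $\varphi=\sum_l\psi_l$ near $\supp\varphi$ satisfies $\psi_l=\psi_l\phi_{j_l,k_l}$, hence
\[
\mathcal{F}(\psi_l u)=(2\pi)^{-n}\mathcal{F}\psi_l*\mathcal{F}(\phi_{j_l,k_l}u).
\]
Applying Minkowski's integral inequality and splitting $\RR^n_\eta$ into $\{|\xi+\eta|\geq|\eta|/c\}$ and its complement (with $c\in(0,1)$ chosen as in \eqref{ine-for-cones-subset-ofothe} so that $V_{j_l,k_l}$ sits strictly inside $V'_{j_l,k_l}$) decomposes each contribution into a principal piece bounded by $C_l\|\mathbf{f}_u(j_l,k_l)\|_{L^2}$ (using $\widetilde\phi_{j_l,k_l}\equiv 1$ on $V_{j_l,k_l}\setminus B(0,2)$ to recognise the $L^2$ norm of $\mathbf{f}_u(j_l,k_l)$, plus an easy low-frequency remainder absorbed into a $\DD'(U)$-seminorm on $\phi_{j_l,k_l}u$) and a Schwartz-type tail estimate controlled by a polynomial-growth bound on $\mathcal{F}(\phi_{j_l,k_l}u)$, i.e.\ another continuous seminorm on $\DD'(U)$. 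Summing over $l$ yields the required domination.

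For (d), take a convergent net $(u_\alpha,\mathbf{f}_{u_\alpha})\to(u,\mathbf{g})$ in the product. From $u_\alpha\to u$ in $\DD'(U)$ one obtains pointwise convergence $\mathcal{F}(\phi_{j,k}u_\alpha)(\xi)=\langle u_\alpha,e^{-i\xi\cdot}\phi_{j,k}\rangle\to\mathcal{F}(\phi_{j,k}u)(\xi)$ with locally uniform polynomial bounds, so $\mathcal{F}(\phi_{j,k}u_\alpha)\to\mathcal{F}(\phi_{j,k}u)$ in $\DD'(\RR^n)$, and multiplication by $\langle\cdot\rangle^r\widetilde\phi_{j,k}\in\mathcal{C}^\infty(\RR^n)$ preserves this. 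Comparing with the $L^2$ limit of the same component forces $\mathbf{g}(j,k)=\langle\cdot\rangle^r\widetilde\phi_{j,k}\mathcal{F}(\phi_{j,k}u)\in L^2(\RR^n)$. To check $u\in\DD'^r_L(U)$, pick $(x_0,\xi_0)\in L^c$ and choose by density $(j,k)$ with $x_0\in O_{j,k}$ and $\xi_0/|\xi_0|\in V_{j,k}\cap\mathbb{S}^{n-1}$; since $\widetilde\phi_{j,k}\equiv 1$ on $V_{j,k}\setminus B(0,2)$, the $L^2$ membership of $\mathbf{f}_u(j,k)$ gives $\int_{V_{j,k}\cap\{|\xi|>2\}}|\mathcal{F}(\phi_{j,k}u)|^2\langle\xi\rangle^{2r}d\xi<\infty$, and the bounded-frequency part is trivially finite since $\phi_{j,k}u\in\EE'(U)$, so $(x_0,\xi_0)\notin WF^r(u)$. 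The hard step is clearly (c), which amounts to repackaging the convolution/splitting estimate of Lemma \ref{lem-for-cha-wfofset-comwfl} through the fixed ``coordinate basis'' $\{(\phi_{j,k},\widetilde\phi_{j,k})\}_{j,k\in\ZZ_+}$; the other three steps are largely bookkeeping on the nested cone geometry together with Corollary \ref{lemma-for-sem-wav-fr-set-spa}.
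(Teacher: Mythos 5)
Your steps (a), (b) and (d) are fine and essentially match the paper (which omits (a)--(b) as routine and proves closedness of the image just as you do). The problem is step (c), which is precisely the hard part of the statement, and as written it does not go through. You cover the compact set $\supp\varphi\times(V\cap\mathbb{S}^{n-1})$ by finitely many products $O_{j_l,k_l}\times V_{j_l,k_l}$ and then use only a \emph{spatial} partition of unity $\psi_l\in\DD(O_{j_l,k_l})$. For the $l$-th term you must estimate $\|\langle\cdot\rangle^{r}\mathcal{F}(\psi_l u)\|_{L^2(V)}$, where the frequency integral runs over all of $V$ (indeed over a conic enlargement of $V$ after the convolution splitting), not over $V_{j_l,k_l}$. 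But the only datum the image of $\mathcal{I}$ provides for the index $(j_l,k_l)$ is $\|\langle\cdot\rangle^{r}\widetilde{\phi}_{j_l,k_l}\mathcal{F}(\phi_{j_l,k_l}u)\|_{L^2}$, and $\widetilde{\phi}_{j_l,k_l}$ is supported in the thin cone $V'_{j_l,k_l}$, which in general is a tiny subcone of $V$. So your ``principal piece bounded by $C_l\|\mathbf{f}_u(j_l,k_l)\|_{L^2}$'' is false: the spatial index of the cut-off and the frequency index of the cone decouple. The finiteness of $\int_{\widetilde V}|\mathcal{F}(\phi_{j_l,k_l}u)|^2\langle\xi\rangle^{2r}d\xi$ for each fixed $u$ (via Corollary \ref{lemma-for-sem-wav-fr-set-spa}) is not enough; openness onto the image requires a quantitative bound by finitely many of the \emph{fixed} seminorms $\|\mathbf{f}_u(j,k)\|_{L^2}$ plus a $\DD'(U)$-seminorm.

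One cannot simply copy Lemma \ref{lem-for-cha-wfofset-comwfl} here, because in that lemma the localizers $\varphi_y$ are built ad hoc as products adapted to the given $\varphi$ and $V$, whereas in the present proposition the localizers must be members of the prescribed countable family $\{\phi_{j,k}\}$. If you also decompose the frequency cone, $V\backslash\{0\}\subseteq\bigcup V_{j,k}$, you are forced to confront cross terms $\|\langle\cdot\rangle^{r}\widetilde{\phi}_{j,k}\mathcal{F}(\psi_{j',k'}u)\|_{L^2}$ with $(j,k)\neq(j',k')$, and $\supp\psi_{j',k'}\not\subseteq O_{j,k}$ in general. This is exactly the difficulty the paper's proof is organised around: after enlarging $\supp\varphi$ and $V$ by a fixed $\varepsilon$ and restricting to indices with $k\geq 15/\varepsilon$ (the chain \eqref{inc-cl-for-the-sse}), one proves the distance estimate \eqref{dist-tosetL}, which guarantees a \emph{new} index $m$ with $s_m\geq\delta$ such that $O'_{j',k'}\subseteq O_{m,1}$ and $V'_{j,k}\subseteq V_{m,2}$ (the inclusions \eqref{two-inc-for-dec-int-imprs}); only then can the cross term be rewritten using $\psi_{j',k'}=\psi_{j',k'}\phi_{m,1}$ and $\widetilde{\phi}_{j,k}=\widetilde{\phi}_{j,k}\widetilde{\phi}_{m,2}$ outside $B(0,2)$, and the convolution splitting (with the cone inclusion \eqref{ine-for-cones-subset-ofothe} at the index $m$, passing from scale $2$ to scale $1$) yields a bound by $\|\mathbf{f}_u(m,1)\|_{L^2}$ plus a continuous $\DD'(U)$-seminorm. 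Your proposal skips this reduction entirely, and without it the domination in (c) — hence the openness of $\mathcal{I}$ onto its image — is not established.
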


\begin{proof} The proof that $\mathcal{I}$ is a well-defined continuous injection is straightforward and we omit it (cf. Corollary \ref{lemma-for-sem-wav-fr-set-spa}). We now prove that $\mathcal{I}$ is open mapping onto its image. It suffices to show that for each $\varphi\in\DD(U)\backslash\{0\}$ and every closed cone $V\subseteq \RR^n$, $V\backslash\{0\}\neq \emptyset$, satisfying $(\supp \varphi \times V)\cap L=\emptyset$, there is a finite $J\subseteq \ZZ_+\times \ZZ_+$, a continuous seminorm $\mathfrak{p}$ on $\DD'(U)$ and $C>0$ such that
\begin{equation}\label{ine-sem-for-openmap=forthi}
\mathfrak{p}_{r;\varphi,V}(u)\leq C\mathfrak{p}(u)+C\sum_{(j,k)\in J}\|\langle \cdot\rangle^r\widetilde{\phi}_{j,k}\mathcal{F}(\phi_{j,k} u)\|_{L^2(\RR^n)},\quad u\in\DD'^r_L(U).
\end{equation}
Fix such $\varphi$ and $V$. There is $0<\varepsilon<1$ such that
$$
\big((\supp\varphi+\overline{B(0,\varepsilon)})\times ((V\cap \mathbb{S}^{n-1})+\overline{B(0,\varepsilon)})\big)\cap L=\emptyset\quad \mbox{and}\quad \supp\varphi+\overline{B(0,\varepsilon)}\subseteq U.
$$
Define the closed cones $V'$ and $V''$ by
$$
V'':=\RR_+\big((V\cap \mathbb{S}^{n-1})+\overline{B(0,\varepsilon)}\big)\cup\{0\}\quad \mbox{and}\quad V':=\RR_+\big((V\cap \mathbb{S}^{n-1})+\overline{B(0,\varepsilon/15)}\big)\cup\{0\}
$$
and the compact sets $K''$, $K'$ and $K$ by
$$
K'':=\supp\varphi+\overline{B(0,\varepsilon)},\quad K':=\supp\varphi+\overline{B(0,\varepsilon/15)}\quad \mbox{and}\quad K:=\supp\varphi.
$$
Notice that
$$
K\times V\subseteq K'\times V'\subseteq K''\times V''\subseteq U\times V'' \quad\mbox{and}\quad (K''\times V'')\cap L=\emptyset.
$$
Set $\widetilde{J}:=\{(j,k)\in\ZZ_+\times \ZZ_+\,|\,  k\geq 15/\varepsilon,\, (O_{j,k}\times B(\omega^{(j)},s_j/(5k)))\cap (K\times V)\neq\emptyset\}$. We claim that
\begin{equation}\label{inc-cl-for-the-sse}
\widetilde{J}\neq \emptyset\quad\mbox{and}\quad K\times (V\backslash\{0\})\subseteq \bigcup_{(j,k)\in \widetilde{J}} O_{j,k}\times V_{j,k}\subseteq\bigcup_{(j,k)\in \widetilde{J}} \overline{O'_{j,k}}\times \overline{V'_{j,k}}\subseteq K'\times V'.
\end{equation}
We show $\widetilde{J}\neq \emptyset$ and the first inclusion simultaneously. Let $(x,\xi)\in K\times (V\backslash\{0\})$ and set $\omega:=\xi/|\xi|\in V\cap \mathbb{S}^{n-1}$ and $s:=\min\{1,\operatorname{dist}(x,\partial U), \operatorname{dist}((x,\omega),L)\}>0$. Pick any integer $k\geq 15/\varepsilon$ (the condition $k\geq 15/\varepsilon$ is important for the last inclusion in \eqref{inc-cl-for-the-sse} and the subsequent part of the proof) and set $m:=10k+4$. Since $(\overline{B(x,s/m)}\times \overline{B(\omega,s/m)})\cap L=\emptyset$, there is $j\in\ZZ_+$ such that $|x-x^{(j)}|<s/m$ and $|\omega-\xi^{(j)}|<s/m$. The second inequality yields
$$
|1-|\xi^{(j)}||=||\omega|-|\xi^{(j)}||\leq |\omega-\xi^{(j)}|<s/m
$$
which implies $|\xi^{(j)}|> 3/4$. It also gives $||\xi^{(j)}|^{-1}-1|< s/(m|\xi^{(j)}|)<4s/(3m)$. Since
\begin{align*}
\operatorname{dist}((x,\omega),L)&\leq |(x,\omega)-(x^{(j)},\omega^{(j)})|+\operatorname{dist}((x^{(j)},\omega^{(j)}),L)\quad \mbox{and}\\
\operatorname{dist}(x,\partial U)&\leq |x-x^{(j)}|+\operatorname{dist}(x^{(j)},\partial U),
\end{align*}
we infer
\begin{align*}
s&\leq |x-x^{(j)}|+|\omega-\omega^{(j)}|+s_j\leq \frac{s}{m}+\left|\omega-\frac{\omega}{|\xi^{(j)}|}\right|+\left|\omega^{(j)}-\frac{\omega}{|\xi^{(j)}|}\right|+s_j\\
&= \frac{s}{m} +\left|1-\frac{1}{|\xi^{(j)}|}\right|+\frac{|\xi^{(j)}-\omega|}{|\xi^{(j)}|}+s_j< \frac{4s}{m}+s_j
\end{align*}
and hence $s<ms_j/(m-4)$. Consequently,
$$
\left|\frac{\omega}{|\xi^{(j)}|}-\frac{\xi^{(j)}}{|\xi^{(j)}|}\right|<\frac{s}{m|\xi^{(j)}|}<\frac{2s}{m}<\frac{2s_j}{m-4}= \frac{s_j}{5k}.
$$
We deduce $\omega/|\xi^{(j)}|\in B(\omega^{(j)},s_j/5k)\cap V$. We also have $|x-x^{(j)}|<s/m<s_j/(5k)$ and hence $x\in O_{j,k}\cap K$. Thus, $\widetilde{J}\neq\emptyset$. The above considerations also immediately yield $(x,\xi)\in O_{j,k}\times V_{j,k}$ which shows the first inclusion in \eqref{inc-cl-for-the-sse}.\\
\indent To prove the last inclusion in \eqref{inc-cl-for-the-sse}, let $(x,\xi)\in \overline{O'_{j,k}}\times\overline{V'_{j,k}}$ for some $(j,k)\in\widetilde{J}$. Then $|x-x^{(j)}|\leq s_j/(4k)$ and there are $t\geq 0$ and $\xi'$ such that $\xi=t \xi'$ and $|\xi'-\omega^{(j)}|\leq s_j/(4k)$. As $(j,k)\in\widetilde{J}$, there are $\widetilde{x}\in K$ and $\widetilde{\xi}\in V\backslash\{0\}$ such that $|\widetilde{x}-x^{(j)}|<s_j/(5k)$ and $|\widetilde{\xi}-\omega^{(j)}|<s_j/(5k)$. Similarly as before, the last inequality immediately gives $|\widetilde{\xi}|> 1/2$. These inequalities also imply $|x-\widetilde{x}|\leq s_j/(4k)+s_j/(5k)<s_j/(2k)\leq \varepsilon/30$ and hence $x\in K'$. Similarly, $|\xi'-\widetilde{\xi}|<\varepsilon/30$ and hence $|\xi'/|\widetilde{\xi}|-\widetilde{\xi}/|\widetilde{\xi}||< \varepsilon/(30|\widetilde{\xi}|)< \varepsilon/15$. We infer $\xi'\in V'$. Consequently $\xi\in V'$ and the proof of the last inclusion in \eqref{inc-cl-for-the-sse} is complete.\\
\indent Now, a standard compactness argument yields the existence of a finite $J_0\subseteq \widetilde{J}$ such that
$$
K\times (V\backslash\{0\})\subseteq \bigcup_{(j,k)\in J_0} O_{j,k}\times V_{j,k}\subseteq\bigcup_{(j,k)\in \widetilde{J}} \overline{O'_{j,k}}\times \overline{V'_{j,k}}\subseteq K'\times V'.
$$
Set $\phi:=\sum_{(j,k)\in J_0}\phi_{j,k}\in \DD(\bigcup_{(j,k)\in J_0} O'_{j,k})$ and notice that $\phi\geq 1$ on $\bigcup_{(j,k)\in J_0}\overline{O_{j,k}}$. Hence $\varphi/\phi\in \DD(U)$. We also denote $\psi_{j,k}:=(\varphi/\phi)\phi_{j,k}\in \DD(O'_{j,k})$, $(j,k)\in J_0$, and notice that $\sum_{(j,k)\in J_0}\psi_{j,k}=\varphi$. As $V\backslash\{0\}\subseteq \bigcup_{(j,k)\in J_0} V_{j,k}$, we infer
\begin{align}
\mathfrak{p}_{r;\varphi,V}(u)&\leq \|\langle\cdot\rangle^r\mathcal{F}(\varphi u)\|_{L^2(B(0,2))}+\sum_{(j,k)\in J_0}\|\langle\cdot\rangle^r \widetilde{\phi}_{j,k}\mathcal{F}(\varphi u)\|_{L^2(\RR^n\backslash B(0,2))}\nonumber \\
&\leq \|\langle\cdot\rangle^r\mathcal{F}(\varphi u)\|_{L^2(B(0,2))}+\sum_{\substack{(j,k)\in J_0\\ (j',k')\in J_0}}\|\langle\cdot\rangle^r \widetilde{\phi}_{j,k}\mathcal{F}(\psi_{j',k'} u)\|_{L^2(\RR^n\backslash B(0,2))}.\label{sum-for-bou-ftoplimbed}
\end{align}
Let $(j,k),(j',k')\in J_0$ be arbitrary but fixed. The last inclusion in \eqref{inc-cl-for-the-sse} implies that there is $m\in\ZZ_+$ such that $(x^{(m)},\xi^{(m)})\in O_{j',k'}\times B(\omega^{(j)},s_j/(5k))$; hence
\begin{equation}\label{ine-for-xxi-for-bel}
|x^{(j')}-x^{(m)}|< s_{j'}/(5k')\leq \varepsilon/75,\quad |\omega^{(j)}-\xi^{(m)}|< s_{j}/(5k)\leq \varepsilon/75.
\end{equation}
The second bound yields $|1-|\xi^{(m)}||\leq |\omega^{(j)}-\xi^{(m)}|\leq \varepsilon/75$ and hence
\begin{equation}\label{ine-for-dif-ome-onsphdi}
|\omega^{(j)}-\omega^{(m)}|\leq |\omega^{(j)}-\xi^{(m)}|+\left|\xi^{(m)}-\frac{\xi^{(m)}}{|\xi^{(m)}|}\right|\leq \frac{\varepsilon}{75} +||\xi^{(m)}|-1|\leq \frac{2\varepsilon}{75}.
\end{equation}
We claim that
\begin{equation}\label{dist-tosetL}
\operatorname{dist}((x^{(m)},\xi^{(m)}),L)\geq \delta,\quad \mbox{with}\quad \delta:=67\varepsilon/150\in (0,1).
\end{equation}
Assume that \eqref{dist-tosetL} does not hold. There is $(y,\eta)\in L$ such that $|(x^{(m)},\xi^{(m)})-(y,\eta)|< \delta$ and hence
\begin{align*}
\left|(x^{(j')},\omega^{(j)})-(y,\eta)\right| \leq \left|(x^{(j')},\omega^{(j)})-(x^{(m)},\xi^{(m)})\right| +\left|(x^{(m)},\xi^{(m)})-(y,\eta)\right|< 2\varepsilon/75 +\delta.
\end{align*}
Consequently, $|x^{(j')}-y|\leq 2\varepsilon/75+\delta$ and $|\omega^{(j)}-\eta|\leq 2\varepsilon/75+\delta$. Since $x^{(j')}\in K'$, the first inequality implies that $y\in K''$. Pick $\widetilde{\xi}\in B(\omega^{(j)},s_j/(5k))\cap V$ (such $\widetilde{\xi}$ exists by the way we defined $\widetilde{J}$). Similarly as before, one easily shows that $|\widetilde{\xi}|>1/2$. As $|\widetilde{\xi}-\eta|\leq |\widetilde{\xi}-\omega^{(j)}|+|\omega^{(j)}-\eta|\leq \varepsilon/25+\delta$, we infer
$$
\left|\frac{\widetilde{\xi}}{|\widetilde{\xi}|}-\frac{\eta}{|\widetilde{\xi}|}\right|\leq \frac{\varepsilon}{25|\widetilde{\xi}|} +\frac{\delta}{|\widetilde{\xi}|}<\frac{2\varepsilon}{25} +\frac{67\varepsilon}{75}<\varepsilon.
$$
Thus, $\eta\in V''$. We deduce $(y,\eta)\in (K''\times V'')\cap L$ which is a contradiction. Hence, \eqref{dist-tosetL} holds true. Analogously, assuming $\operatorname{dist}(x^{(m)},\partial U)<\delta$ leads to $\operatorname{dist}(x^{(j')},\partial U)<\varepsilon/2$ which is in contradiction with $K''\subseteq U$; whence $\operatorname{dist}(x^{(m)},\partial U)\geq \delta$. Consequently $s_m\geq \delta$. We show that
\begin{equation}\label{two-inc-for-dec-int-imprs}
O'_{j',k'}\subseteq O_{m,1}\quad \mbox{and}\quad V'_{j,k}\subseteq V_{m,2}.
\end{equation}
The first inclusion follows from the following (cf. \eqref{ine-for-xxi-for-bel})
$$
x\in O'_{j',k'}\,\,\, \Rightarrow \,\,\, |x-x^{(m)}|\leq |x-x^{(j')}|+|x^{(j')}-x^{(m)}|< s_{j'}/(4k)+\varepsilon/75<\varepsilon/30< s_m/5.
$$
To verify the second inclusion, it suffices to show that $B(\omega^{(j)},s_j/(4k))\subseteq B(\omega^{(m)}, s_m/10)$. Let $\xi\in B(\omega^{(j)},s_j/(4k))$. In view of \eqref{ine-for-dif-ome-onsphdi}, we infer
$$
|\xi-\omega^{(m)}|\leq |\xi-\omega^{(j)}|+|\omega^{(j)}-\omega^{(m)}|< s_j/(4k)+2\varepsilon/75\leq \varepsilon/60+2\varepsilon/75 <s_m/10,
$$
which yields that $\xi\in B(\omega^{(m)}, s_m/10)$. The inclusions \eqref{two-inc-for-dec-int-imprs} imply $\widetilde{\phi}_{j,k}=\widetilde{\phi}_{j,k}\widetilde{\phi}_{m,2}$ on $\RR^n\backslash B(0,2)$ and $\psi_{j',k'}=\psi_{j',k'}\phi_{m,1}$ on $\RR^n$. We infer
$$
\|\langle\cdot\rangle^r \widetilde{\phi}_{j,k}\mathcal{F}(\psi_{j',k'} u)\|_{L^2(\RR^n\backslash B(0,2))} \leq \|\langle\cdot\rangle^r \widetilde{\phi}_{m,2}\mathcal{F}(\psi_{j',k'} \phi_{m,1}u)\|_{L^2(\RR^n\backslash B(0,2))},\quad u\in \DD'^r_L(U).
$$
Since $\overline{V'_{m,2}}\backslash\{0\}\subseteq V_{m,1}$, there is $0<c<1$ such that
\begin{equation}\label{sub-sfc-int-inefin}
\{\eta\in\RR^n\,|\, \exists \xi\in V'_{m,2}\,\, \mbox{such that}\,\, |\xi-\eta|\leq c|\xi|\}\subseteq V_{m,1}.
\end{equation}
Let $u\in \DD'^r_L(U)$ be arbitrary but fixed. As $\mathcal{F}(\psi_{j',k'} \phi_{m,1} u)=(2\pi)^{-n}\mathcal{F}\psi_{j',k'}*\mathcal{F}(\phi_{m,1} u)$, the Minkowski integral inequality and a change of variables give
\begin{align*}
\|&\langle\cdot\rangle^r \widetilde{\phi}_{m,2}\mathcal{F}(\psi_{j',k'} \phi_{m,1}u)\|_{L^2(\RR^n\backslash B(0,2))}\\
&\leq \frac{1}{(2\pi)^n}\int_{\RR^n_{\eta}}|\mathcal{F}\psi_{j',k'}(\eta)|\left(\int_{V'_{m,2}} |\mathcal{F}(\phi_{m,1} u)(\xi-\eta)|^2\langle \xi\rangle^{2r} d\xi\right)^{1/2}d\eta\\
&\leq \frac{2^{|r|}}{(2\pi)^n} \int_{\RR^n_{\eta}}|\mathcal{F}\psi_{j',k'}(\eta)|\langle \eta\rangle^{|r|}\left(\int_{\xi\in V'_{m,2}-\{\eta\}} |\mathcal{F}(\phi_{m,1} u)(\xi)|^2\langle \xi\rangle^{2r} d\xi\right)^{1/2}d\eta\\
&\leq 2^{|r|}(2\pi)^{-n}(I_1+I_2),
\end{align*}
where
\begin{align*}
I_1&:= \int_{\RR^n_{\eta}}|\mathcal{F}\psi_{j',k'}(\eta)|\langle \eta\rangle^{|r|}\left(\int_{\substack{\xi\in V'_{m,2}-\{\eta\}\\ |\xi+\eta|\geq |\eta|/c}} |\mathcal{F}(\phi_{m,1} u)(\xi)|^2\langle \xi\rangle^{2r} d\xi\right)^{1/2}d\eta,\\
I_2&:=\int_{\RR^n_{\eta}}|\mathcal{F}\psi_{j',k'}(\eta)|\langle \eta\rangle^{|r|}\left(\int_{\substack{\xi\in V'_{m,2}-\{\eta\}\\ |\xi+\eta|< |\eta|/c}} |\mathcal{F}(\phi_{m,1} u)(\xi)|^2\langle \xi\rangle^{2r} d\xi\right)^{1/2}d\eta.
\end{align*}
To estimate $I_1$ notice that \eqref{sub-sfc-int-inefin} yields $\{\xi\in V'_{m,2}-\{\eta\}\,|\, |\xi+\eta|\geq |\eta|/c\}\subseteq V_{m,1}$ and hence
\begin{align*}
I_1&\leq \|\langle \cdot\rangle^{|r|}\mathcal{F}\psi_{j',k'}\|_{L^1(\RR^n)} \left(\int_{V_{m,1}} |\mathcal{F}(\phi_{m,1} u)(\xi)|^2\langle \xi\rangle^{2r} d\xi\right)^{1/2}\\
&\leq \|\langle \cdot\rangle^{|r|}\mathcal{F}\psi_{j',k'}\|_{L^1(\RR^n)}\left(\|\langle\cdot\rangle^r \mathcal{F}(\phi_{m,1} u)\|_{L^2(B(0,2))} + \|\langle\cdot\rangle^r\widetilde{\phi}_{m,1}\mathcal{F}(\phi_{m,1} u)\|_{L^2(\RR^n)}\right).
\end{align*}
Since the mapping $\DD'(U)\rightarrow \mathcal{C}^{\infty}(\RR^n)$, $u\mapsto\mathcal{F}(\phi_{m,1} u)$, is continuous, there is a continuous seminorm $\widetilde{\mathfrak{p}}$ on $\DD'(U)$ such that
$$
I_1\leq C_1\widetilde{\mathfrak{p}}(u)+C_2\|\langle\cdot\rangle^r\widetilde{\phi}_{m,1}\mathcal{F}(\phi_{m,1} u)\|_{L^2(\RR^n)}.
$$
To estimate $I_2$, we make the following\\
\\
\noindent\textbf{Claim.} Let $f$ be a nonnegative measurable function on $\RR^n$ which satisfies $\langle\cdot\rangle^{\nu} f\in L^1(\RR^n)$, for all $\nu>0$. For each $r\in\RR$, $t>0$ and $\chi\in\DD(U)$, the mapping
\begin{equation}\label{sem-cont-ondp-for-ine-c}
\mathfrak{p}:\DD'(U)\rightarrow [0,\infty),\quad \mathfrak{p}(u)=\int_{\RR^n_{\eta}}f(\eta)\left(\int_{\substack{\xi\in \RR^n\\ |\xi+\eta|< t|\eta|}} |\mathcal{F}(\chi u)(\xi)|^2\langle \xi\rangle^{2r} d\xi\right)^{1/2}d\eta,
\end{equation}
is a well-defined continuous seminorm on $\DD'(U)$.\\
\\
\noindent We defer its proof for later and continue with the proof of the proposition. The claim implies that $I_2$ is bounded from above by a continuous seminorm on $\DD'(U)$ of the same type as \eqref{sem-cont-ondp-for-ine-c}. In view of \eqref{sum-for-bou-ftoplimbed} we conclude the validity of \eqref{ine-sem-for-openmap=forthi}; notice that the term $\|\langle\cdot\rangle^r\mathcal{F}(\varphi u)\|_{L^2(B(0,2))}$ in \eqref{sum-for-bou-ftoplimbed} is a continuous seminorm on $\DD'(U)$ (since $\DD'(U)\rightarrow \mathcal{C}^{\infty}(\RR^n)$, $u\mapsto\mathcal{F}(\varphi u)$, is continuous). This completes the proof that $\mathcal{I}$ is a topological imbedding.\\
\indent We now show that the range of $\mathcal{I}$ is closed. Let $(u_{\mu})_{\mu\in\Lambda}$ be a net in $\DD'^r_L(U)$ such that $\mathcal{I}(u_{\mu})=(u_{\mu},\textbf{f}_{u_{\mu}})$ converges to $(u,\mathbf{g})\in \DD'(U)\times (L^2(\RR^n)^{\ZZ_+\times \ZZ_+})$ in the topology of $\DD'(U)\times (L^2(\RR^n)^{\ZZ_+\times \ZZ_+})$. Hence $u_{\mu}\rightarrow u$ in $\DD'(U)$ and, for each $(j,k)\in\ZZ_+\times \ZZ_+$, $\langle\cdot\rangle^r\widetilde{\phi}_{j,k}\mathcal{F}(\phi_{j,k} u_{\mu})\rightarrow g_{j,k}$ in $L^2(\RR^n)$. The former implies that $\widetilde{\phi}_{j,k}\mathcal{F}(\phi_{j,k} u_{\mu})\rightarrow \widetilde{\phi}_{j,k}\mathcal{F}(\phi_{j,k} u)$ in $\mathcal{C}^{\infty}(\RR^n)$ and consequently $\langle\cdot\rangle^r\widetilde{\phi}_{j,k}\mathcal{F}(\phi_{j,k} u)=g_{j,k}\in L^2(\RR^n)$. It is straightforward to show that this implies $WF^r(u)\subseteq L$ (cf. the proof of the first inclusion in \eqref{inc-cl-for-the-sse}) and thus $u\in \DD'^r_L(U)$. This completes the proof that the range of $\mathcal{I}$ is closed.\\
\\
\noindent \textbf{Proof of Claim.} Set $\widetilde{K}:=\supp\chi$. We show that $\mathfrak{p}$ is well-defined and bounded on bounded subsets of $\DD'(U)$. Let $B$ be a bounded subset of $\DD'(U)$. Hence $B$ is equicontinuous (as $\DD'(U)$ is barrelled) and consequently also equicontinuous as a subset of $\mathcal{L}(\DD_{\widetilde{K}},\CC)$. Whence, there are $C>0$ and $k\in\ZZ_+$ such that $|\langle u,\varphi\rangle|\leq C\sup_{|\alpha|\leq k}\|\partial^{\alpha}\varphi\|_{L^{\infty}(\RR^n)}$, $\varphi\in \DD_{\widetilde{K}}$, $u\in B$. This implies
\begin{equation}\label{equ-ine0for-est-equiconmap-fordpp}
|\mathcal{F}(\chi u)(\xi)|=|\langle u,\chi e^{-i\,\cdot\, \xi}\rangle|\leq C2^k\langle \xi\rangle^k\sup_{|\alpha|\leq k} \|\partial^{\alpha}\chi\|_{L^{\infty}(\RR^n)},\quad \xi\in\RR^n,\, u\in B.
\end{equation}
For each $\eta\in\RR^n$, $\{\xi\in\RR^n\,|\,|\xi+\eta|<t|\eta|\}\subseteq B(0,(1+t)|\eta|)$ and thus
\begin{equation*}
\mathfrak{p}(u)\leq C_1\int_{\RR^n_{\eta}}f(\eta)\left(\int_{\xi\in B(0,(1+t)|\eta|)} \langle \xi\rangle^{2(|r|+k)} d\xi\right)^{1/2}d\eta\leq  C_2\|\langle \cdot\rangle^{|r|+k+n} f\|_{L^1(\RR^n)}.
\end{equation*}
Hence, $\mathfrak{p}$ is a well-defined seminorm on $\DD'(U)$ and it is bounded on bounded subsets of $\DD'(U)$. Since $\DD'(U)$ is bornological, $\mathfrak{p}$ is continuous and the proof is complete.
\end{proof}

\begin{remark}\label{rem-for-cov-ofcomconsubbytheses}
Employing the same arguments as in the proof of the first inclusion in \eqref{inc-cl-for-the-sse}, one shows that $L^c=\bigcup_{j\in\ZZ_+} O_{j,k}\times V_{j,k}$ for each fixed $k\in\ZZ_+$.
\end{remark}

\begin{remark}\label{rem-for-hyp-conmultmapfirsk}
Arguing as in Lemma \ref{lem-for-cha-wfofset-comwfl} and employing similar reasoning as in the proof of the above claim, one shows that the bilinear map $\mathcal{C}^{\infty}(U)\times\DD'^r_L(U)\rightarrow \DD'^r_L(U)$, $(\chi,u)\mapsto \chi u$, is well-defined and hypocontinuous.
\end{remark}

\begin{corollary}\label{rem-for-sem-refd}
Let $L$ be a closed conic subset of $U\times(\RR^n\backslash\{0\})$. Then $\DD'^r_L(U)$ is complete, semi-reflexive and a strictly webbed space in the sense of De Wilde.
\end{corollary}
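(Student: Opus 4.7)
The plan is to leverage Proposition \ref{pro-for-top-imbedingthforc}, which exhibits $\DD'^r_L(U)$ as topologically isomorphic to a closed subspace of the product space $X := \DD'(U) \times L^2(\RR^n)^{\ZZ_+ \times \ZZ_+}$. All three properties to be verified — completeness, semi-reflexivity, and being strictly webbed — are inherited by closed subspaces, so it suffices to establish them for $X$.

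First, I would dispose of the degenerate case $L = U \times (\RR^n \setminus \{0\})$ separately: here we observed that $\DD'^r_L(U) = \DD'(U)$ topologically, and $\DD'(U)$ (the strong dual of the nuclear Fréchet–Schwartz space $\DD(U)$) is well known to be complete, reflexive, and strictly webbed. For the main case $L^c \neq \emptyset$, the imbedding $\mathcal{I}$ of Proposition \ref{pro-for-top-imbedingthforc} is available. I would then verify the three properties for the two factors: $\DD'(U)$ as just noted, and $L^2(\RR^n)$ as a separable Hilbert space, which is complete, reflexive, and (being Fréchet, in fact Banach) strictly webbed via its canonical web.

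Next, I would pass to the countable product. Completeness of a product of complete l.c.s. is immediate. Semi-reflexivity is preserved under arbitrary products: a product of semi-reflexive spaces is semi-reflexive (equivalently, each bounded set has weakly compact closure, which is stable under products by Tychonoff). For the strictly webbed property, I would cite the stability results of De Wilde: countable products of strictly webbed spaces are strictly webbed (see, e.g., the standard stability theorems for webbed spaces). Therefore $X = \DD'(U) \times L^2(\RR^n)^{\ZZ_+\times\ZZ_+}$ is complete, semi-reflexive, and strictly webbed.

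Finally, I would invoke the closed-subspace stability of each property: a closed subspace of a complete l.c.s.\ is complete; a closed subspace of a semi-reflexive space is semi-reflexive (bounded sets remain relatively weakly compact in the subspace by the Hahn–Banach theorem, since the subspace is weakly closed); and a closed subspace of a strictly webbed space is strictly webbed (the restriction of the web yields a strict web on the subspace). Since $\mathcal{I}(\DD'^r_L(U))$ is a closed subspace of $X$ by Proposition \ref{pro-for-top-imbedingthforc}, the corollary follows. I expect no genuine obstacle here — the only point requiring some care is citing the precise stability statements for strictly webbed spaces under products and closed subspaces, for which De Wilde's classical references suffice.
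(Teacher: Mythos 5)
Your proposal is correct and follows essentially the same route as the paper: embed $\DD'^r_L(U)$ via Proposition \ref{pro-for-top-imbedingthforc} as a closed subspace of $\DD'(U)\times (L^2(\RR^n)^{\ZZ_+\times\ZZ_+})$ and invoke the stability of completeness, semi-reflexivity and the strictly webbed property under countable products and closed subspaces (the paper cites the corresponding theorems in K\"othe, including that $\DD'(U)$ is strictly webbed). Your explicit treatment of the degenerate case $L=U\times(\RR^n\backslash\{0\})$, which the imbedding does not cover, is a minor but welcome addition.
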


\begin{proof}
The fact that $\DD'^r_L(U)$ is complete is an immediate consequence of the proposition and it is semi-reflexive in view of \cite[Theorem 5 and Theorem 6, p. 299]{kothe1}. Furthermore, \cite[Theorem 1, p. 61, and Theorem 6, p. 62]{kothe2} implies that $\DD'^r_L(U)$ is a strictly webbed space in the sense of De Wilde ($\DD'(U)$ is strictly webbed in view of \cite[Theorem 13, p. 64]{kothe2}).
\end{proof}

The fact that $\DD'^r_L(U)$ is a strictly webbed space allows one to employ De Wilde's closed graph and open mapping theorems (see \cite[Chapter 35]{kothe2}) when considering maps to and from $\DD'^r_L(U)$. As a consequence of Proposition \ref{pro-for-top-imbedingthforc}, we now derive the following important characterisation of the relatively compact subsets of $\DD'^r_L(U)$ which we announced at the beginning of the subsection.

\begin{corollary}\label{car-of-comset-by-wavefrse}
Let $L$ be a closed conic subset of $U\times(\RR^n\backslash\{0\})$ and let $B$ be a bounded subset of $\DD'(U)$. Then $B$ is a relatively compact subset $\DD'^r_L(U)$ if and only if $WF^r_c(B)\subseteq L$.
\end{corollary}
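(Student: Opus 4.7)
The plan is to combine Lemma \ref{lem-for-cha-wfofset-comwfl} with sequential arguments made available by Proposition \ref{pro-for-top-imbedingthforc}. I first observe that bounded subsets of $\DD'^r_L(U)$ are metrisable: the embedding $\mathcal{I}$ of Proposition \ref{pro-for-top-imbedingthforc} identifies $\DD'^r_L(U)$ topologically with a closed subspace of $\DD'(U)\times L^2(\RR^n)^{\ZZ_+\times\ZZ_+}$; bounded subsets of $\DD'(U)$ are metrisable (since $\DD(U)$ is separable and barrelled) and the second factor is metrisable as a countable product of metrisable spaces, so bounded subsets of the product (lying in a product of two metrisable subsets) are metrisable, whence so are bounded subsets of $\DD'^r_L(U)$. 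Consequently, for $B$ bounded in $\DD'^r_L(U)$, relative compactness is equivalent to sequential relative compactness.

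For the forward implication I argue by contradiction: suppose $B$ is relatively compact but $WF^r_c(B)\not\subseteq L$. Lemma \ref{lem-for-cha-wfofset-comwfl} then furnishes $\varphi\in\DD(U)$ and a closed cone $V\subseteq\RR^n$ with $(\supp\varphi\times V)\cap L=\emptyset$, $\epsilon>0$, radii $R_n\to\infty$ and $u_n\in B$ satisfying
\[
\int_{V,\,|\xi|>R_n}|\mathcal{F}(\varphi u_n)(\xi)|^2\langle\xi\rangle^{2r}d\xi\geq\epsilon.
\]
By metrisability I extract $u_{n_k}\to u$ in $\DD'^r_L(U)$. The triangle inequality in $L^2\bigl(V\cap\{|\xi|>R_{n_k}\},\langle\xi\rangle^{2r}d\xi\bigr)$ gives
\[
\left(\int_{V,\,|\xi|>R_{n_k}}|\mathcal{F}(\varphi u_{n_k})|^2\langle\xi\rangle^{2r}\right)^{1/2}\leq\mathfrak{p}_{r;\varphi,V}(u_{n_k}-u)+\left(\int_{V,\,|\xi|>R_{n_k}}|\mathcal{F}(\varphi u)|^2\langle\xi\rangle^{2r}\right)^{1/2}.
\]
Both summands tend to zero: the first by convergence in $\DD'^r_L(U)$, the second because $u\in\DD'^r_L(U)$ makes $\int_V|\mathcal{F}(\varphi u)|^2\langle\xi\rangle^{2r}$ finite by Corollary \ref{lemma-for-sem-wav-fr-set-spa}, so its tails vanish as $R_{n_k}\to\infty$. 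This contradicts the lower bound $\epsilon$.

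For the backward implication, Lemma \ref{lem-for-cha-wfofset-comwfl} applied to $B$ yields $B\subseteq\DD'^r_L(U)$ and uniform boundedness of $\mathfrak{p}_{r;\varphi,V}$ over $B$; together with $B$ bounded in $\DD'(U)$ this places $B$ in a bounded subset of $\DD'^r_L(U)$. By metrisability it suffices to extract $\DD'^r_L$-convergent subsequences. Given $(u_n)\subseteq B$, the Montel property of $\DD'(U)$ yields a subsequence $u_{n_k}\to u$ in $\DD'(U)$. Fatou's lemma applied to $\int_V|\mathcal{F}(\varphi u_{n_k})|^2\langle\xi\rangle^{2r}$ (uniformly bounded by the hypothesis) together with the pointwise convergence of $\mathcal{F}(\varphi u_{n_k})$ to $\mathcal{F}(\varphi u)$ shows $u\in\DD'^r_L(U)$. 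Convergence of $\mathfrak{p}_{r;\varphi,V}(u_{n_k}-u)$ to zero follows by splitting the integral at $|\xi|=R$: the tail is uniformly small in $k$ by $WF^r_c(B)\subseteq L$ and $u\in\DD'^r_L(U)$, while the low-frequency part tends to zero since $\varphi u_{n_k}\to\varphi u$ in $\EE'(U)$ and the Fourier transform $\EE'(U)\to\mathcal{C}^\infty(\RR^n)$ is continuous, giving uniform convergence of $\mathcal{F}(\varphi u_{n_k})$ on $\overline{B(0,R)}$.

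I expect the metrisability observation to be the subtlest step; once in hand, both implications reduce to routine $\epsilon/2$ arguments combining convergence in $\DD'(U)$ with the uniform tail bounds supplied by Lemma \ref{lem-for-cha-wfofset-comwfl}.
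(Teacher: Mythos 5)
Your proposal is correct, but it follows a genuinely different route from the paper. The paper never argues with sequences: it feeds the relative compactness through the embedding $\mathcal{I}$ of Proposition \ref{pro-for-top-imbedingthforc} and characterises relative compactness of each component set $\{\langle\cdot\rangle^r\widetilde{\phi}_{j,k}\mathcal{F}(\phi_{j,k}u)\,|\,u\in B\}$ in $L^2(\RR^n)$ via the Kolmogorov--Riesz theorem — in the backward direction this requires verifying the $L^2$-translation equicontinuity \eqref{wha-sho-wfo-con-map-comims} by a Taylor-expansion estimate, and then concluding with Tychonoff and the closedness of the image of $\mathcal{I}$. You instead first establish metrisability of bounded subsets of $\DD'^r_L(U)$ (a fact the paper proves only later, in Corollary 4.6 of the manifold section, by exactly your product-embedding argument, so there is no circularity) and then run direct sequential $\epsilon$-arguments: Montel extraction in $\DD'(U)$, uniform tails from Lemma \ref{lem-for-cha-wfofset-comwfl}, Fatou for membership in $\DD'^r_L(U)$, and locally uniform convergence of $\mathcal{F}(\varphi u_{n_k})$ for the low frequencies. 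What each buys: the paper's route is sequence-free and reuses the Kolmogorov--Riesz machinery it wants to advertise as being generalised; yours avoids Kolmogorov--Riesz and the translation-equicontinuity computation entirely, exploiting instead that distributional convergence of $\varphi u_{n_k}$ already gives locally uniform convergence of the Fourier transforms, at the price of the metrisability lemma. One small caveat: your parenthetical justification that bounded subsets of $\DD'(U)$ are metrisable ``since $\DD(U)$ is separable and barrelled'' only yields metrisability of equicontinuous sets for the weak-$*$ topology; you should also note that the strong and weak-$*$ topologies coincide on bounded subsets of $\DD'(U)$ (because $\DD(U)$ is Montel, so bounded sets are precompact — this is the citation the paper uses in Corollary 4.6), otherwise the metrisability claim in the strong topology is not yet justified. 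With that sentence added, the argument is complete.
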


\begin{remark}
The corollary can be viewed as a generalisation of the Kolmogorov-Riesz compactness theorem \cite{HH}. Indeed, taking $L=\emptyset$, the corollary together with Lemma \ref{lem-for-cha-wfofset-comwfl} immediately give the following well known characterisation of relatively compact subsets of $H^r_{\loc}(U)$. A bounded subset $B$ of $\DD'(U)$ is a relatively compact subset of $H^r_{\loc}(U)$ if and only if
$$
\lim_{R\rightarrow\infty}\sup_{u\in B}\int_{|\xi|>R}\langle\xi\rangle^{2r}|\mathcal{F}(\varphi u)(\xi)|^2d\xi=0,\quad \mbox{for all}\,\,\varphi\in\DD(U).
$$
\end{remark}

\begin{proof}[Proof of Corollary \ref{car-of-comset-by-wavefrse}] The claim is trivial when $L=U\times(\RR^n\backslash \{0\})$ (since $\DD'(U)$ is Montel). Assume that $L^c\neq\emptyset$. If $B$ is relatively compact in $\DD'^r_L(U)$, Proposition \ref{pro-for-top-imbedingthforc} implies that for each $(j,k)\in\ZZ_+\times \ZZ_+$, $\{\langle\cdot\rangle^r\widetilde{\phi}_{j,k}\mathcal{F}(\phi_{j,k} u)\,|\, u\in B\}$ is relatively compact in $L^2(\RR^n)$ and the Kolmogorov-Riesz compactness theorem \cite[Theorem 5]{HH} yields
\begin{equation*}
\sup_{u\in B} \int_{\xi\in V_{j,k},\, |\xi|>R} |\mathcal{F}(\phi_{j,k} u)(\xi)|^2\langle\xi\rangle^{2r}d\xi \rightarrow 0,\quad \mbox{as}\quad R\rightarrow\infty,\quad \mbox{for each}\,\, j,k\in\ZZ_+.
\end{equation*}
It is straightforward to verify that the latter implies $WF^r_c(B)\subseteq L$ (cf. Remark \ref{rem-for-cov-ofcomconsubbytheses}).\\
\indent Assume now $WF^r_c(B)\subseteq L$; hence $B\subseteq \DD'^r_L(U)$. In view of Lemma \ref{lem-for-cha-wfofset-comwfl}, we infer
\begin{equation*}
\sup_{u\in B} \int_{|\xi|>R} \widetilde{\phi}_{j,k}(\xi)^2 |\mathcal{F}(\phi_{j,k} u)(\xi)|^2\langle\xi\rangle^{2r}d\xi \rightarrow 0,\quad \mbox{as}\quad R\rightarrow\infty,\quad \mbox{for each}\,\, j,k\in\ZZ_+.
\end{equation*}
We claim that this implies that
\begin{equation}\label{rel-com-set-for-comargofawfs}
\{\langle\cdot\rangle^r\widetilde{\phi}_{j,k} \mathcal{F}(\phi_{j,k} u)\,|\, u\in B\}\,\, \mbox{is relatively compact in}\,\, L^2(\RR^n)\,\, \mbox{for each}\,\, j,k\in\ZZ_+.
\end{equation}
Once we show \eqref{rel-com-set-for-comargofawfs} the claim in the corollary follows from Tychonoff's theorem and Proposition \ref{pro-for-top-imbedingthforc} ($B$ is relatively compact in $\DD'(U)$ since $\DD'(U)$ is Montel). Since $B$ is bounded in $\DD'(U)$, we immediately deduce that
$$
\sup_{u\in B} \int_{|\xi|\leq R} \widetilde{\phi}_{j,k}(\xi)^2 |\mathcal{F}(\phi_{j,k} u)(\xi)|^2\langle\xi\rangle^{2r}d\xi <\infty,\quad \mbox{for all}\,\, R>0,\,\,j,k\in\ZZ_+,
$$
and hence the sets in \eqref{rel-com-set-for-comargofawfs} are bounded in $L^2(\RR^n)$. In view of the Kolmogorov-Riesz compactness theorem \cite[Theorem 5]{HH}, to verify \eqref{rel-com-set-for-comargofawfs} it suffices to show that for each $j,k\in\ZZ_+$ it holds that
\begin{equation}\label{wha-sho-wfo-con-map-comims}
\sup_{u\in B}\|\langle\cdot+\eta\rangle^r\widetilde{\phi}_{j,k}(\cdot+\eta) \mathcal{F}(\phi_{j,k} u)(\cdot+\eta)- \langle\cdot\rangle^r\widetilde{\phi}_{j,k} \mathcal{F}(\phi_{j,k} u)\|_{L^2(\RR^n)}\rightarrow 0,\quad \mbox{as}\quad \eta\rightarrow 0.
\end{equation}
We Taylor expand $\langle \xi+\eta\rangle^r\widetilde{\phi}_{j,k}(\xi+\eta) \mathcal{F}(\phi_{j,k} u)(\xi+\eta)$ at $\xi$ up to order $0$ to infer
\begin{align*}
|&\langle \xi+\eta\rangle^r\widetilde{\phi}_{j,k}(\xi+\eta) \mathcal{F}(\phi_{j,k} u)(\xi+\eta)-\langle \xi\rangle^r\widetilde{\phi}_{j,k}(\xi) \mathcal{F}(\phi_{j,k} u)(\xi)|\\
&\leq |\eta|\sum_{l=1}^n\int_0^1 |r(\xi_l+t\eta_l)\langle\xi+t\eta\rangle^{r-2}\widetilde{\phi}_{j,k}(\xi+t\eta) \mathcal{F}(\phi_{j,k} u)(\xi+t\eta)\\
&{}\qquad\qquad\qquad+ \langle \xi+t\eta\rangle^r \partial_l\widetilde{\phi}_{j,k}(\xi+t\eta)\mathcal{F}(\phi_{j,k} u)(\xi+t\eta)\\
&{}\qquad\qquad\qquad+ \langle\xi+t\eta\rangle^r\widetilde{\phi}_{j,k}(\xi+t\eta)\partial_l\mathcal{F}(\phi_{j,k} u)(\xi+t\eta)|dt.
\end{align*}
Set $\phi_{j,k;l}(x):=x_l\phi_{j,k}(x)$, $l=1,\ldots,n$. We infer
\begin{align*}
|&\langle \xi+\eta\rangle^r\widetilde{\phi}_{j,k}(\xi+\eta) \mathcal{F}(\phi_{j,k} u)(\xi+\eta)-\langle \xi\rangle^r\widetilde{\phi}_{j,k}(\xi) \mathcal{F}(\phi_{j,k} u)(\xi)|\\
&\leq n|r||\eta|\int_0^1 \langle\xi+t\eta\rangle^{r-1}\widetilde{\phi}_{j,k}(\xi+t\eta)|\mathcal{F}(\phi_{j,k} u)(\xi+t\eta)|dt\\
&{}\quad+|\eta|\sum_{l=1}^n\int_0^1\langle \xi+t\eta\rangle^r|\partial_l\widetilde{\phi}_{j,k}(\xi+t\eta)||\mathcal{F}(\phi_{j,k} u)(\xi+t\eta)|dt\\
&{}\quad+|\eta|\sum_{l=1}^n\int_0^1\langle \xi+t\eta\rangle^r\widetilde{\phi}_{j,k}(\xi+t\eta) |\mathcal{F}(\phi_{j,k;l} u)(\xi+t\eta)|dt.
\end{align*}
Employing the Minkowski integral inequality, a change of variables and the property $(d)$ of $\widetilde{\phi}_{j,k}$ we obtain
\begin{align*}
\|&\langle\cdot+\eta\rangle^r\widetilde{\phi}_{j,k}(\cdot+\eta) \mathcal{F}(\phi_{j,k} u)(\cdot+\eta)- \langle\cdot\rangle^r\widetilde{\phi}_{j,k} \mathcal{F}(\phi_{j,k} u)\|_{L^2(\RR^n)}\\
&\leq n|r||\eta|\int_0^1\left(\int_{\RR^n}\langle\xi+t\eta\rangle^{2r-2}\widetilde{\phi}_{j,k}(\xi+t\eta)^2|\mathcal{F}(\phi_{j,k} u)(\xi+t\eta)|^2d\xi\right)^{1/2}dt\\
&{}\quad +|\eta|\sum_{l=1}^n\int_0^1\left(\int_{\RR^n}\langle \xi+t\eta\rangle^{2r}|\partial_l\widetilde{\phi}_{j,k}(\xi+t\eta)|^2|\mathcal{F}(\phi_{j,k} u)(\xi+t\eta)|^2 d\xi\right)^{1/2}dt\\
&{}\quad +|\eta|\sum_{l=1}^n\int_0^1\left(\int_{\RR^n}\langle \xi+t\eta\rangle^{2r}\widetilde{\phi}_{j,k}(\xi+t\eta)^2|\mathcal{F}(\phi_{j,k;l} u)(\xi+t\eta)|^2 d\xi\right)^{1/2}dt\\
&\leq C|\eta|\|\langle\cdot\rangle^r\mathcal{F}(\phi_{j,k} u)\|_{L^2(\overline{V'_{j,k}})}+|\eta|\sum_{l=1}^n \|\langle\cdot\rangle^r\mathcal{F}(\phi_{j,k;l} u)\|_{L^2(\overline{V'_{j,k}})}.
\end{align*}
Lemma \ref{lem-for-cha-wfofset-comwfl} together with the fact that $B$ is bounded in $\DD'(U)$ implies that
$$
\sup_{u\in B}\|\langle\cdot\rangle^r\mathcal{F}(\phi_{j,k} u)\|_{L^2(\overline{V'_{j,k}})}<\infty\quad \mbox{and}\quad \sup_{u\in B}\|\langle\cdot\rangle^r\mathcal{F}(\phi_{j,k;l} u)\|_{L^2(\overline{V'_{j,k}})}<\infty,\, l=1,\ldots,n.
$$
Consequently, the above estimates verify \eqref{wha-sho-wfo-con-map-comims} and the proof is complete.
\end{proof}

We end this subsection with the following result on the existence of a sequence of smoothing operators on $\DD'^r_L(U)$ that approximate the identity operator on $\DD'^r_L(U)$.

\begin{proposition}\label{seq-den-comsmf}
Let $\psi_j\in\DD(U)$, $j\in\ZZ_+$, be such that $0\leq \psi_j\leq 1$, $\psi_j=1$ on the compact $\{x\in U\,|\, |x|\leq j,\, \operatorname{dist}(x,\partial U)\geq 3/j\}$ and $\supp\psi_j\subseteq \{x\in U\,|\, \operatorname{dist}(x,\partial U)> 2/j\}$. Let $\chi$ be a nonnegative function in $\DD(\RR^n)$ satisfying $\supp\chi\subseteq \{x\in\RR^n\,|\, |x|\leq 1\}$ and $\int_{\RR^n} \chi(x) dx=1$ and set $\chi_j(x):=j^n\chi(jx)$, $x\in\RR^n$, $j\in\ZZ_+$. For each $j\in\ZZ_+$, the operators
\begin{equation}\label{ope-app-ide-simc}
P_j:\DD'(U)\rightarrow\DD(U),\quad P_ju=\chi_j*(\psi_j u),
\end{equation}
are well-defined and continuous. Furthermore, for each $r\in\RR$ and each closed conic subset $L$ of $U\times (\RR^n\backslash\{0\})$, $\{P_j\}_{j\in\ZZ_+}$ is a bounded subset of $\mathcal{L}_b(\DD'^r_L(U))$ and $P_j\rightarrow \operatorname{Id}$ in $\mathcal{L}_p(\DD'^r_L(U))$ (the index $p$ stands for the topology of precompact convergence). In particular, $\DD(U)$ is sequentially dense in $\DD'^r_L(U)$.
\end{proposition}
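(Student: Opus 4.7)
I plan to verify five claims in order: (a) each $P_j:\DD'(U)\to\DD(U)$ is well-defined and continuous; (b) $P_j u\to u$ in $\DD'(U)$ for every $u\in\DD'(U)$; (c) $\{P_j\}_{j\in\ZZ_+}$ is equicontinuous as a family of operators on $\DD'^r_L(U)$ (hence bounded in $\mathcal{L}_b(\DD'^r_L(U))$); (d) for every $u\in\DD'^r_L(U)$, $P_ju\to u$ in $\DD'^r_L(U)$; and (e) the classical fact that on an equicontinuous set of linear operators the topologies of simple convergence and precompact convergence coincide upgrades (d) to $P_j\to\operatorname{Id}$ in $\mathcal{L}_p(\DD'^r_L(U))$, and sequential density of $\DD(U)$ is then immediate because $P_ju\in\DD(U)$. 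Items (a) and (b) are routine: $\psi_j u\in\EE'(U)$, the support of $\chi_j*\psi_j u$ lies in $\supp\psi_j+\overline{B(0,1/j)}\subset\subset U$, and $\chi_j$ is an approximate identity while $\psi_j\equiv 1$ eventually on every fixed compactum of $U$.

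For the nontrivial steps (c) and (d), I fix $\varphi\in\DD(U)$ and a closed cone $V\subseteq\RR^n$ with $(\supp\varphi\times V)\cap L=\emptyset$. I choose an enlarged cutoff $\widetilde{\varphi}\in\DD(U)$ equal to $1$ on an open neighbourhood $W$ of $\supp\varphi$ with $\supp\widetilde{\varphi}\subset\subset U$, and a closed cone $V'\supseteq V$ with $(\supp\widetilde{\varphi}\times V')\cap L=\emptyset$. I pick $j_0$ so large that $\psi_j\equiv 1$ on $\supp\widetilde{\varphi}$ and $\supp\varphi+\overline{B(0,1/j)}\subseteq W$ whenever $j\geq j_0$. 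The support of $\chi_j(x-\cdot)$ at $x\in\supp\varphi$ is then contained in $W$, which forces
\begin{equation*}
\varphi\cdot P_ju=\varphi\cdot(\chi_j*\widetilde{\varphi}u),\qquad\mathcal{F}(\varphi P_ju)=(2\pi)^{-n}\mathcal{F}\varphi*\bigl(\mathcal{F}\chi_j\cdot\mathcal{F}(\widetilde{\varphi}u)\bigr),\quad j\geq j_0.
\end{equation*}

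For (c), combining the uniform bound $|\mathcal{F}\chi_j(\xi)|=|\mathcal{F}\chi(\xi/j)|\leq\|\chi\|_{L^1}=1$ with the Minkowski-plus-splitting convolution estimate used in the proof of Proposition \ref{pro-for-top-imbedingthforc} (split on $|\xi+\eta|\gtrless|\eta|/c$, bound $I_1$ by a multiple of $\mathfrak{p}_{r;\widetilde{\varphi},V'}(u)$, and bound $I_2$ via the Claim inside that proof by a continuous seminorm on $\DD'(U)$), I obtain
\begin{equation*}
\mathfrak{p}_{r;\varphi,V}(P_ju)\leq C\,\mathfrak{p}_{r;\widetilde{\varphi},V'}(u)+C\,\widetilde{\mathfrak{q}}(u),\quad j\geq j_0,
\end{equation*}
with $C$ and $\widetilde{\mathfrak{q}}$ independent of $j$. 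The finitely many remaining $P_1,\ldots,P_{j_0-1}$ are each continuous on $\DD'^r_L(U)$ since they factor through $\DD(U)\subseteq H^r_{\loc}(U)\hookrightarrow\DD'^r_L(U)$; together with the uniform tail estimate this gives equicontinuity of the whole family.

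For (d), using $\widetilde{\varphi}\equiv 1$ on $\supp\varphi$, for $j\geq j_0$ one has
\begin{equation*}
\mathcal{F}\bigl(\varphi(P_ju-u)\bigr)=(2\pi)^{-n}\mathcal{F}\varphi*G_j,\quad G_j:=(\mathcal{F}\chi_j-1)\mathcal{F}(\widetilde{\varphi}u).
\end{equation*}
Since $\mathcal{F}\chi_j(\xi)=\mathcal{F}\chi(\xi/j)\to 1$ pointwise and $|G_j|\leq 2|\mathcal{F}(\widetilde{\varphi}u)|$, Lebesgue's dominated convergence theorem on the $\eta$-integral gives $(\mathcal{F}\varphi*G_j)(\xi)\to 0$ pointwise, while the estimate from (c) applied to $|\mathcal{F}(\widetilde{\varphi}u)|$ shows that the pointwise dominant $2|\mathcal{F}\varphi|*|\mathcal{F}(\widetilde{\varphi}u)|$ lies in $L^2(V,\langle\xi\rangle^{2r}d\xi)$. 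A second application of dominated convergence, on the $\xi$-integral over $V$, then yields $\mathfrak{p}_{r;\varphi,V}(P_ju-u)\to 0$. With (b) this is (d), and (e) follows at once. The principal technical obstacle is the uniform-in-$j$ estimate in (c): one must renounce any control on $\mathcal{F}\chi_j$ beyond $|\mathcal{F}\chi_j|\leq 1$, enlarge the cutoff to $\widetilde{\varphi}$ so that the Fourier-side factor depends only on $u$, and then re-use the convolution machinery from the proof of Proposition \ref{pro-for-top-imbedingthforc} verbatim; once (c) is in place, (d) is a soft consequence of it together with dominated convergence.
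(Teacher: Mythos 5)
Your proof is correct, and while its hard-analysis core (enlarging the cutoff to $\widetilde{\varphi}$, using only $|\mathcal{F}\chi_j|\leq 1$, and the Minkowski-plus-cone-splitting estimate from Proposition \ref{pro-for-top-imbedingthforc}) coincides with the paper's, your route to the $\mathcal{L}_p$-convergence is genuinely different. The paper proves only boundedness of $\{P_j\}_{j\in\ZZ_+}$ in $\mathcal{L}_b(\DD'^r_L(U))$ (an estimate on bounded sets) and then establishes the uniform convergence on each precompact set $A$ directly, through a quantitative $\varepsilon$-argument that relies on Corollary \ref{car-of-comset-by-wavefrse} and Lemma \ref{lem-for-cha-wfofset-comwfl} to obtain a uniform Fourier tail bound over $A$. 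You instead upgrade the bounded-set estimate to an honest equicontinuity estimate — which is legitimate, since the $I_2$-term is controlled uniformly in $j$ by the continuous seminorm of the Claim inside the proof of Proposition \ref{pro-for-top-imbedingthforc}, not merely on bounded sets — then obtain the pointwise convergence $P_ju\to u$ in $\DD'^r_L(U)$ softly by two applications of dominated convergence, and finally invoke the classical fact that on an equicontinuous set the topologies of simple and precompact convergence coincide (\cite[Theorem 4.5, p.~85]{Sch}, the same theorem the paper uses elsewhere). This simplifies the second half: the compactness characterisation of Corollary \ref{car-of-comset-by-wavefrse} is not needed there, and, since $\DD'^r_L(U)$ is not known to be barrelled, equicontinuity cannot be extracted from pointwise boundedness for free, so your direct equicontinuity estimate is precisely the extra ingredient that makes the abstract upgrade available; the paper's approach, in exchange, yields the explicit uniform estimate over precompact sets and is the template it reuses for the $\EE'$-spaces in Proposition \ref{lem-for-den-ofdine'lddd}.

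Two small points to tighten. First, the auxiliary cone must be chosen with $V\backslash\{0\}\subseteq \operatorname{int} V'$ and $(\supp\widetilde{\varphi}\times V')\cap L=\emptyset$ (not merely $V'\supseteq V$); otherwise the constant $c\in(0,1)$ in the splitting $|\xi+\eta|\gtrless|\eta|/c$ need not exist. Second, the topology of $\DD'^r_L(U)$ also contains all continuous seminorms of $\DD'(U)$, so equicontinuity with respect to these should be noted explicitly; it follows from the pointwise convergence $P_ju\to u$ on the barrelled Montel space $\DD'(U)$ via Banach--Steinhaus, exactly as in the first paragraph of the paper's proof. Neither point affects the validity of your argument.
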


\begin{proof} Clearly, \eqref{ope-app-ide-simc} are well-defined and continuous. It is a well-known fact that $P_j\rightarrow \operatorname{Id}$ in $\mathcal{L}_{\sigma}(\DD'(U))$ (the index $\sigma$ stands for the topology of simple convergence). The Banach–Steinhaus theorem \cite[Theorem 4.5, p. 85]{Sch} together with the fact that $\DD'(U)$ is Montel implies that the convergence holds in $\mathcal{L}_b(\DD'(U))$ and consequently $\{P_j\}_{j\in\ZZ_+}$ is bounded in $\mathcal{L}_b(\DD'(U))$. This proves the claim when $L=U\times (\RR^n\backslash\{0\})$.\\
\indent Assume that $L^c\neq\emptyset$. Because of the above, to prove the boundedness of $\{P_j\}_{j\in\ZZ_+}$ in $\mathcal{L}_b(\DD'^r_L(U))$ it remains to show that for every bounded set $B$ in $\DD'^r_L(U)$ and every seminorm $\mathfrak{p}_{r;\varphi,V}$, it holds that $\sup_{j\in\ZZ_+}\sup_{u\in B}\mathfrak{p}_{r;\varphi,V}(P_j u)<\infty$. Let $B$ be a bounded subset of $\DD'^r_L(U)$ and let $\varphi\in\DD(U)\backslash\{0\}$ and the closed cone $V\subseteq \RR^n$, $V\backslash\{0\}\neq\emptyset$, are such that $(\supp\varphi\times V)\cap L=\emptyset$. Employing a standard compactness argument, one can find $\widetilde{\varphi}\in\DD(U)$ satisfying $\widetilde{\varphi}=1$ on a neighbourhood of $\supp\varphi$ and a closed cone $\widetilde{V}\subseteq \RR^n$ with $V\backslash\{0\}\subseteq \operatorname{int} \widetilde{V}$ such that $(\supp\widetilde{\varphi}\times \widetilde{V})\cap L=\emptyset$. There is $j_0\in\ZZ_+$ such that $u_j:=\chi_j*(\widetilde{\varphi} u)\in\DD(U)$, for all $u\in B$, $j\geq j_0$. There is $j'\geq j_0$ such that $\psi_j=\widetilde{\varphi}=1$ on $\supp\varphi +\{x\in\RR^n\,|\, |x|\leq 2/j\}$, for all $j\geq j'$. Consequently,
$$
\varphi P_ju-\varphi u_j=\varphi(\chi_j*((\psi_j-\widetilde{\varphi})u))=0,\quad j\geq j',\,\, u\in B.
$$
Hence, it suffices to show that $\sup_{j\geq j'}\sup_{u\in B}\mathfrak{p}_{r;\varphi,V}(u_j)<\infty$. Since $|\mathcal{F}\chi_j(\xi)|\leq 1$, $\xi\in\RR^n$, $j\in\ZZ_+$, we infer (for $j\geq j'$)
\begin{equation}\label{ine-for-ni-witresj}
|\mathcal{F}(\varphi u_j)(\xi)|=(2\pi)^{-n}|(\mathcal{F}\varphi)*(\mathcal{F}(\chi_j) \mathcal{F}(\widetilde{\varphi} u))(\xi)|\leq \frac{1}{(2\pi)^n}\int_{\RR^n}|\mathcal{F}\varphi(\eta)||\mathcal{F}(\widetilde{\varphi} u)(\xi-\eta)| d\eta.
\end{equation}
There is $c\in(0,1)$ such that
\begin{equation}\label{sub-con-for}
\{\eta\in\RR^n\,|\, \exists \xi\in V\backslash\{0\}\,\, \mbox{such that}\,\, |\xi-\eta|\leq c|\xi|\}\subseteq \operatorname{int}\widetilde{V}.
\end{equation}
In the same way as in the proof of Proposition \ref{pro-for-top-imbedingthforc} (and Lemma \ref{lem-for-cha-wfofset-comwfl}) we employ the Minkowski integral inequality together with \eqref{ine-for-ni-witresj} to infer $\mathfrak{p}_{r;\varphi,V}(u_j)\leq 2^{|r|}(2\pi)^{-n}(I_1+I_2)$ with
\begin{align*}
I_1&:=\int_{\RR^n_{\eta}}|\mathcal{F}\varphi(\eta)|\langle\eta\rangle^{|r|}\left(\int_{\substack{\xi\in V\\ |\xi|\geq |\eta|/c}} |\mathcal{F}(\widetilde{\varphi} u)(\xi-\eta)|^2\langle \xi-\eta\rangle^{2r}d\xi\right)^{1/2}d\eta,\\
I_2&:=\int_{\RR^n_{\eta}}|\mathcal{F}\varphi(\eta)|\langle\eta\rangle^{|r|}\left(\int_{\substack{\xi\in V\\|\xi|<|\eta|/c}} |\mathcal{F}(\widetilde{\varphi} u)(\xi-\eta)|^2\langle \xi-\eta\rangle^{2r}d\xi\right)^{1/2}d\eta.
\end{align*}
Again, as in the proof of Proposition \ref{pro-for-top-imbedingthforc}, \eqref{sub-con-for} implies
\begin{equation}\label{for-con-ine-est-w-eneeforbsk}
\{\xi\in V-\{\eta\}\,|\, |\xi+\eta|\geq |\eta|/c\}\subseteq \widetilde{V}
\end{equation}
which in turn yields that $I_1\leq \|\langle\cdot\rangle^{|r|}\mathcal{F}\varphi\|_{L^1(\RR^n)} \mathfrak{p}_{r;\widetilde{\varphi},\widetilde{V}}(u)$. Since $B$ is bounded in $\DD'(U)$ (as it is bounded in $\DD'^r_L(U)$), there are $C,l>0$ such that $|\mathcal{F}(\widetilde{\varphi} u)(\xi)|\leq C\langle \xi\rangle^l$, $\xi\in\RR^n$, $u\in B$. This immediately implies that $I_2$ is uniformly bounded for all $u\in B$. We conclude $\sup_{j\geq j'}\sup_{u\in B}\mathfrak{p}_{r;\varphi,V}(u_j)<\infty$ which completes the proof of the boundedness of $\{P_j\}_{j\in\ZZ_+}$ in $\mathcal{L}_b(\DD'^r_L(U))$.\\
\indent It remains to prove that $P_j\rightarrow \operatorname{Id}$ in $\mathcal{L}_p(\DD'^r_L(U))$. Since $P_j\rightarrow \operatorname{Id}$ in $\mathcal{L}_b(\DD'(U))$, it suffices to show that for every precompact subset $A$ of $\DD'^r_L(U)$, every $\varphi\in\DD(U)\backslash\{0\}$ and every closed cone $V\subseteq \RR^n$, $V\backslash\{0\}\neq \emptyset$, satisfying $(\supp \varphi\times V)\cap L=\emptyset$, it holds that $\sup_{u\in A}\mathfrak{p}_{r;\varphi,V}(P_ju-u)\rightarrow 0$, as $j\rightarrow \infty$. Fix such $A$, $\varphi$ and $V$. Let $\widetilde{\varphi}$, $\widetilde{V}$, $c$ and $j'$ be as in the first part of the proof. Again, we denote $u_j:=\chi_j*(\widetilde{\varphi}u)\in\DD(U)$, $j\geq j'$, and point out that $\varphi P_j u=\varphi u_j$, $j\geq j'$, $u\in A$. Similarly as above, we have $\mathfrak{p}_{r;\varphi,V}(P_ju-u)\leq 2^{|r|}(2\pi)^{-n}(I_1+ I_2)$ with
\begin{align*}
I_1&:=\int_{\RR^n_{\eta}}|\mathcal{F}\varphi(\eta)|\langle\eta\rangle^{|r|}\left(\int_{\substack{\xi\in V\\ |\xi|\geq |\eta|/c}} |\mathcal{F}\chi_j(\xi-\eta)-1|^2|\mathcal{F}(\widetilde{\varphi} u)(\xi-\eta)|^2\langle \xi-\eta\rangle^{2r}d\xi\right)^{1/2}d\eta,\\
I_2&:=\int_{\RR^n_{\eta}}|\mathcal{F}\varphi(\eta)|\langle\eta\rangle^{|r|}\left(\int_{\substack{\xi\in V\\ |\xi|<|\eta|/c}} |\mathcal{F}\chi_j(\xi-\eta)-1|^2|\mathcal{F}(\widetilde{\varphi} u)(\xi-\eta)|^2\langle \xi-\eta\rangle^{2r}d\xi\right)^{1/2}d\eta.
\end{align*}
There are $C_1,l>0$ such that $|\mathcal{F}(\widetilde{\varphi}u)(\xi)|\leq C_1\langle \xi\rangle^l$, $\xi\in\RR^n$, $u\in A$. Let $\varepsilon>0$ be arbitrary but fixed. In view of Corollary \ref{car-of-comset-by-wavefrse}, $WF^r_c(A)\subseteq L$ and hence Lemma \ref{lem-for-cha-wfofset-comwfl} yields that there is $R>1$ such that
\begin{equation}
\sup_{u\in A} \left(\int_{\widetilde{V},\, |\xi|\geq R} |\mathcal{F}(\widetilde{\varphi}u)(\xi)|^2\langle \xi\rangle^{2r}d\xi\right)^{1/2} \leq \varepsilon/(2^{|r|+3}\|\langle \cdot\rangle^{|r|}\mathcal{F}\varphi\|_{L^1(\RR^n)}).
\end{equation}
We choose $R$ large enough so that we additionally have
$$
\int_{|\eta|\geq Rc/(c+1)}|\mathcal{F}\varphi(\eta)|\langle \eta\rangle^{2|r|+l+n}d\eta\leq \varepsilon\cdot\left(2^{|r|+3}C_1(1+1/c)^{|r|+l+n}\|\langle\cdot\rangle^{-n}\|_{L^2(\RR^n)}\right)^{-1}.
$$
Since $\mathcal{F}\chi_j=\mathcal{F}\chi(\cdot/j)$ and $\mathcal{F}\chi(0)=1$, dominated convergence implies that there is $j'_1\geq j'$ such that
$$
\|(\mathcal{F}\chi_j-1)\langle\cdot\rangle^{|r|+l}\|_{L^2(B(0,R))}\leq \varepsilon/(C_1 2^{|r|+2}\|\langle\cdot\rangle^{|r|}\mathcal{F}\varphi\|_{L^1(\RR^n)}),\quad j\geq j'_1.
$$
Let $j\geq j'_1$ and $u\in A$ be arbitrary. To estimate $I_1$, we change variables in the inner integral and employ \eqref{for-con-ine-est-w-eneeforbsk} to obtain (recall, $\|\mathcal{F}\chi_j\|_{L^{\infty}(\RR^n)}=1$)
\begin{align*}
I_1&\leq \int_{\RR^n_{\eta}}|\mathcal{F}\varphi(\eta)|\langle\eta\rangle^{|r|}\left(\int_{\widetilde{V}} |\mathcal{F}\chi_j(\xi)-1|^2|\mathcal{F}(\widetilde{\varphi} u)(\xi)|^2\langle \xi\rangle^{2r}d\xi\right)^{1/2}d\eta\\
&\leq 2\|\langle\cdot\rangle^{|r|}\mathcal{F}\varphi\|_{L^1(\RR^n)}\left(\int_{\widetilde{V},\, |\xi|\geq R} |\mathcal{F}(\widetilde{\varphi} u)(\xi)|^2\langle \xi\rangle^{2r}d\xi\right)^{1/2}\\
&{}\quad+C_1\|\langle\cdot\rangle^{|r|}\mathcal{F}\varphi\|_{L^1(\RR^n)}\left(\int_{B(0,R)} |\mathcal{F}\chi_j(\xi)-1|^2\langle \xi\rangle^{2(|r|+l)}d\xi\right)^{1/2}\\
&\leq \varepsilon/2^{|r|+1}.
\end{align*}
We estimate $I_2$ as follows:
\begin{align*}
I_2&\leq \int_{\RR^n_{\eta}}|\mathcal{F}\varphi(\eta)|\langle\eta\rangle^{|r|}\left(\int_{|\xi+\eta|<|\eta|/c} |\mathcal{F}\chi_j(\xi)-1|^2|\mathcal{F}(\widetilde{\varphi} u)(\xi)|^2\langle \xi\rangle^{2r}d\xi\right)^{1/2}d\eta\\
&\leq \int_{\RR^n_{\eta}}|\mathcal{F}\varphi(\eta)|\langle\eta\rangle^{|r|}\left(\int_{|\xi|<(1+1/c)|\eta|} |\mathcal{F}\chi_j(\xi)-1|^2|\mathcal{F}(\widetilde{\varphi} u)(\xi)|^2\langle \xi\rangle^{2r}d\xi\right)^{1/2}d\eta\\
&\leq \int_{\RR^n_{\eta}}|\mathcal{F}\varphi(\eta)|\langle\eta\rangle^{|r|}\left(\int_{|\xi|<\min\{(1+1/c)|\eta|,R\}} |\mathcal{F}\chi_j(\xi)-1|^2|\mathcal{F}(\widetilde{\varphi} u)(\xi)|^2\langle \xi\rangle^{2r}d\xi\right)^{1/2}d\eta\\
&{}\quad+ \int_{\RR^n_{\eta}}|\mathcal{F}\varphi(\eta)|\langle\eta\rangle^{|r|}\left(\int_{R\leq |\xi|<(1+1/c)|\eta|} |\mathcal{F}\chi_j(\xi)-1|^2|\mathcal{F}(\widetilde{\varphi} u)(\xi)|^2\langle \xi\rangle^{2r}d\xi\right)^{1/2}d\eta\\
&\leq C_1\|\langle \cdot\rangle^{|r|}\mathcal{F}\varphi\|_{L^1(\RR^n)}\left(\int_{B(0,R)} |\mathcal{F}\chi_j(\xi)-1|^2\langle \xi\rangle^{2(|r|+l)}d\xi\right)^{1/2}\\
&{}\quad+ 2C_1 \int_{|\eta|\geq Rc/(c+1)}|\mathcal{F}\varphi(\eta)|\langle\eta\rangle^{|r|}\left(\int_{R\leq |\xi|<(1+1/c)|\eta|} \frac{\langle \xi\rangle^{2(|r|+l+n)}}{\langle\xi\rangle^{2n}}d\xi\right)^{1/2}d\eta\\
&\leq \frac{\varepsilon}{2^{|r|+2}}+2C_1(1+1/c)^{|r|+l+n}\|\langle\cdot\rangle^{-n}\|_{L^2(\RR^n)}\int_{|\eta|\geq Rc/(c+1)}|\mathcal{F}\varphi(\eta)|\langle\eta\rangle^{2|r|+l+n}d\eta\leq \frac{\varepsilon}{2^{|r|+1}}.
\end{align*}
Combining these estimates for $I_1$ and $I_2$ we deduce $\sup_{u\in A}\mathfrak{p}_{r;\varphi,V}(P_ju-u)\leq \varepsilon$, $j\geq j'_1$, which completes the proof of the proposition.
\end{proof}

\subsection{The dual of \texorpdfstring{$\DD'^r_L(U)$}{D'rL(U)}}\label{sub-sec-for-dualityee}

Notice that (cf. Proposition \ref{seq-den-comsmf})
\begin{equation}\label{inc-for-d'lrford}
H^r_{\loc}(U)\subseteq \DD'^r_L(U)\subseteq \DD'(U)\quad \mbox{continuously and densely}
\end{equation}
and hence the dual of $\DD'^r_L(U)$ is a space of distributions on $U$. In fact \eqref{inc-for-d'lrford} gives
\begin{equation}\label{equ-for-dua-ofthedspacins}
\DD(U)\subseteq (\DD'^r_L(U))'_b\subseteq H^{-r}_{\comp}(U)\quad \mbox{continuously}.
\end{equation}
Our goal is to identify the space $(\DD'^r_L(U))'_b$. For this purpose, given an open conic subset $W$ of $U\times(\RR^n\backslash\{0\})$, we define the space
\begin{equation*}
\EE'^r_W(U):=\{u\in H^r_{\comp}(U)\,|\, WF(u)\subseteq W\}.
\end{equation*}
To introduce a locally convex topology on it, we consider the following auxiliary space. Let $L$ be a closed conic subset of $U\times (\RR^n\backslash\{0\})$ and $K$ a compact subset of $U$ satisfying $\pr_1(L)\subseteq K$, where $\pr_1$ stands for the projection on the first variable. We define
\begin{equation*}
\EE'^r_{L;K}(U):=\{u\in H^r_K(U)\,|\, WF(u)\subseteq L\}.\footnote{The reason for the condition $\pr_1(L)\subseteq K$ is the fact $\pr_1(WF(u))=\operatorname{sing}\supp u\subseteq \supp u$.}
\end{equation*}
The distribution $u\in H^r_K(U)$ belongs to $\EE'^r_{L;K}(U)$ if and only if for every $\varphi\in\DD(U)$ and every closed cone $\emptyset\neq V\subseteq \RR^n$ satisfying $(\supp \varphi \times V)\cap L=\emptyset$ it holds that (cf. \cite[Section 8.1]{hor})
\begin{equation*}
\mathfrak{q}_{\nu;\varphi,V}(u):=\sup_{\xi\in V}\langle\xi \rangle^{\nu}|\mathcal{F}(\varphi u)(\xi)|<\infty,\quad \mbox{for all}\,\, \nu>0.
\end{equation*}
We equip $\EE'^r_{L;K}(U)$ with the locally convex topology induced by the norm on $H^r_K(U)$ together with all seminorms $\mathfrak{q}_{\nu;\varphi,V}$ where $\varphi$, $V$ and $\nu$ are as above. Clearly, $\DD_K\subseteq \EE'^r_{L;K}(U)\subseteq H^r_K(U)$ continuously.

\begin{proposition}\label{pro-for-esp-closincinimbinprdsp}
Let $K\subset\subset U$ and let $L$ be a closed conic subset of $U\times(\RR^n\backslash\{0\})$ satisfying $\pr_1(L)\subseteq K$. Let $O_{j,k}$, $O'_{j,k}$, $O''_{j,k}$, $V_{j,k}$, $V'_{j,k}$, $V''_{j,k}$, $\phi_{j,k}$ and $\widetilde{\phi}_{j,k}$ be as in Proposition \ref{pro-for-top-imbedingthforc}. Then the mapping
\begin{gather*}
\EE'^r_{L;K}(U)\rightarrow H^r_K(U)\times (\SSS(\RR^n)^{\ZZ_+\times \ZZ_+}),\quad u\mapsto(u,\mathbf{f}_u),\,\, \mbox{where}\\
\mathbf{f}_u(j,k):=\widetilde{\phi}_{j,k}\mathcal{F}(\phi_{j,k} u),\, j,k\in\ZZ_+,
\end{gather*}
is a well-defined topological imbedding with closed image. Consequently, $\EE'^r_{L;K}(U)$ is a reflexive Fr\'echet space.
\end{proposition}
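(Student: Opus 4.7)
The plan is to mimic the proof of Proposition \ref{pro-for-top-imbedingthforc} almost verbatim, replacing the $L^2(\RR^n)$-with-Sobolev-weight seminorms by the Schwartz seminorms $\sup_{\xi}\langle\xi\rangle^{\nu}|\cdot|$, and using the $H^r_K(U)$-norm (which is itself part of the topology on $\EE'^r_{L;K}(U)$) in place of a generic continuous seminorm on $\DD'(U)$. Well-definedness of the map is immediate: for $u\in \EE'^r_{L;K}(U)$, the condition $(\overline{O''_{j,k}}\times \overline{V''_{j,k}})\cap L=\emptyset$ together with $WF(u)\subseteq L$ gives $\widetilde{\phi}_{j,k}\mathcal{F}(\phi_{j,k}u)\in\SSS(\RR^n)$, and continuity is direct from the definition of $\mathfrak{q}_{\nu;\varphi,V}$.

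For openness onto the image, I need to bound each $\mathfrak{q}_{\nu;\varphi,V}(u)$ by finitely many Schwartz seminorms of the components plus the $H^r_K(U)$-norm. Given $\varphi$ and $V$, I reproduce the covering argument \eqref{inc-cl-for-the-sse} to obtain a finite $J_0\subseteq \ZZ_+\times\ZZ_+$ with $K\times(V\backslash\{0\})\subseteq \bigcup_{(j,k)\in J_0} O_{j,k}\times V_{j,k}$ contained in the good region $K'\times V'$, together with the associated partition of unity $\{\psi_{j,k}\}_{(j,k)\in J_0}$ with $\sum \psi_{j,k}=\varphi$. This reduces the task to estimating each cross term $\sup_{\xi}\langle\xi\rangle^{\nu}|\widetilde{\phi}_{j,k}(\xi)\mathcal{F}(\psi_{j',k'}u)(\xi)|$ for $(j,k),(j',k')\in J_0$. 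For fixed such pair, I reuse the auxiliary index $m\in\ZZ_+$ produced in the proof of Proposition \ref{pro-for-top-imbedingthforc}, giving $O'_{j',k'}\subseteq O_{m,1}$ and $V'_{j,k}\subseteq V_{m,2}$, hence $\psi_{j',k'}=\psi_{j',k'}\phi_{m,1}$ and $\widetilde{\phi}_{j,k}=\widetilde{\phi}_{j,k}\widetilde{\phi}_{m,2}$ outside $B(0,2)$. Writing $\mathcal{F}(\psi_{j',k'}\phi_{m,1}u)=(2\pi)^{-n}\mathcal{F}\psi_{j',k'}*\mathcal{F}(\phi_{m,1}u)$ and splitting the $\eta$-integral into $|\xi+\eta|\geq|\eta|/c$ and $|\xi+\eta|<|\eta|/c$ (exactly as in \eqref{sub-sfc-int-inefin}--\eqref{sem-cont-ondp-for-ine-c}), the first region lies in $V_{m,1}$ and produces a factor of the Schwartz seminorm of $\widetilde{\phi}_{m,1}\mathcal{F}(\phi_{m,1}u)$, while the second region lands in a ball where $|\mathcal{F}(\phi_{m,1}u)(\xi)|\leq C\langle\xi\rangle^{-r+n/2}\|u\|_{H^r_K(U)}$ by Cauchy-Schwarz against the Sobolev definition; this last point is the only genuine deviation from Proposition \ref{pro-for-top-imbedingthforc}, and the $H^r_K(U)$-norm there plays the role that a continuous seminorm on $\DD'(U)$ did before.

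Closedness of the range is handled as in Proposition \ref{pro-for-top-imbedingthforc}: if $(u_{\mu},\mathbf{f}_{u_{\mu}})\to (u,\mathbf{g})$ in $H^r_K(U)\times\SSS(\RR^n)^{\ZZ_+\times\ZZ_+}$, then $u\in H^r_K(U)$, convergence in $H^r_K(U)$ gives $u_{\mu}\to u$ in $\DD'(U)$, so $\widetilde{\phi}_{j,k}\mathcal{F}(\phi_{j,k}u_{\mu})\to \widetilde{\phi}_{j,k}\mathcal{F}(\phi_{j,k}u)$ in $\mathcal{C}^{\infty}(\RR^n)$; thus $g_{j,k}=\widetilde{\phi}_{j,k}\mathcal{F}(\phi_{j,k}u)\in\SSS(\RR^n)$ for all $(j,k)$, and combined with $L^c=\bigcup_{j\in\ZZ_+}O_{j,k}\times V_{j,k}$ (Remark \ref{rem-for-cov-ofcomconsubbytheses}) this forces $WF(u)\subseteq L$.

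For the concluding assertion, $H^r_K(U)$ is a Hilbert space and $\SSS(\RR^n)$ is a nuclear reflexive Fr\'echet space, so the countable product $H^r_K(U)\times\SSS(\RR^n)^{\ZZ_+\times\ZZ_+}$ is a reflexive Fr\'echet space; a closed subspace of a reflexive Fr\'echet space is again reflexive and Fr\'echet, so $\EE'^r_{L;K}(U)$ inherits both properties via the topological imbedding just constructed. The main obstacle is really bookkeeping: making sure the combinatorial/geometric cone covering argument from Proposition \ref{pro-for-top-imbedingthforc} goes through with sup-norm polynomial weights of arbitrary order $\nu$ (which is easier than the $L^2$ version because Schwartz seminorms are qualitatively stronger), and that the ``second piece'' of the convolution estimate can be absorbed into the $H^r_K(U)$-norm rather than into a seminorm of $\DD'(U)$.
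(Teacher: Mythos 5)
Your proposal is correct and follows essentially the same route as the paper: the paper likewise reduces the imbedding and closed-range claims to the proof of Proposition \ref{pro-for-top-imbedingthforc}, bounding the $I_2$-type term via $|\mathcal{F}(\psi u)(\xi)|\leq C\langle\xi\rangle^{|r|}\|\langle\cdot\rangle^{|r|}\mathcal{F}\psi\|_{L^2(\RR^n)}\|u\|_{H^r(\RR^n)}$ for $u\in H^r_K$ (your $\langle\xi\rangle^{-r+n/2}$ bound is a weaker but equally adequate polynomial estimate), and deduces reflexivity from the codomain being a countable product of reflexive Fr\'echet spaces. The only detail to make explicit is that continuity into $\SSS(\RR^n)$ also requires controlling derivatives of $\widetilde{\phi}_{j,k}\mathcal{F}(\phi_{j,k}u)$, which is done exactly as in the paper by writing $\partial^{\beta}\mathcal{F}(\phi_{j,k}u)=\mathcal{F}((-ix)^{\beta}\phi_{j,k}u)$ and using the seminorms $\mathfrak{q}_{\nu;\phi_{j,k,\beta},V'_{j,k}}$ with $\phi_{j,k,\beta}(x)=x^{\beta}\phi_{j,k}(x)$.
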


\begin{proof} Clearly, the map is injective. The continuity follows from the estimate
$$
\|\langle\cdot\rangle^{\nu}\partial^{\alpha}(\widetilde{\phi}_{j,k}\mathcal{F}(\phi_{j,k} u))\|_{L^{\infty}(\RR^n)}\leq C\sum_{\beta\leq \alpha}\mathfrak{q}_{\nu;\phi_{j,k,\beta},V'_{j,k}}(u),\quad\mbox{where}\quad \phi_{j,k,\beta}(x)=x^{\beta}\phi_{j,k}(x).
$$
The fact that the map is a topological imbedding with closed image can be shown in the same way as in the proof of Proposition \ref{pro-for-top-imbedingthforc} (to bound the appropriate variant of $I_2$, one employs the inequality $|\mathcal{F}(\psi u)(\xi)|\leq C\langle \xi\rangle^{|r|}\|\langle\cdot\rangle^{|r|}\mathcal{F}\psi\|_{L^2(\RR^n)}\|u\|_{H^r(\RR^n)}$, $\xi\in\RR^n$, $\psi\in\DD(U)$, $u\in H^r_K(\RR^n)$). From this, we immediately deduce that $\EE'^r_{L;K}(U)$ is a reflexive Fr\'echet space since the codomain of the map is a countable topological product of reflexive Fr\'echet spaces.
\end{proof}

Notice that
\begin{equation}\label{con-inc-for-setdualofwfssp}
\EE'^r_{L;K}(U)\subseteq \EE'^r_{\widetilde{L};\widetilde{K}}(U)\,\, \mbox{continuously if}\,\, L\subseteq \widetilde{L}\,\, \mbox{and}\,\, K\subseteq \widetilde{K}.
\end{equation}
Given an open conic subset $W$ of $U\times(\RR^n\backslash\{0\})$, denote by $\mathfrak{W}$ the set of all pairs $(L,K)$, where $K$ is a compact subset of $U$ and $L$ a closed conic subset of $U\times(\RR^n\backslash\{0\})$ satisfying $L\subseteq W$ and $\pr_1(L)\subseteq K$. Then $(\mathfrak{W},\leq)$ becomes a directed set with order $(L,K)\leq(\widetilde{L},\widetilde{K})$ if $L\subseteq \widetilde{L}$ and $K\subseteq \widetilde{K}$. Notice that $\EE'^r_W(U)=\bigcup_{(L,K)\in\mathfrak{W}} \EE'^r_{L;K}(U)$. We define the locally convex topology on $\EE'^r_W(U)$ by
$$
\EE'^r_W(U)=\lim_{\substack{\longrightarrow\\ (L,K)\in\mathfrak{W}}} \EE'^r_{L;K}(U),
$$
where the linking mappings in the inductive limit are the continuous inclusions \eqref{con-inc-for-setdualofwfssp}; notice that the inductive limit topology on $\EE'^r_W(U)$ is indeed Hausdorff since $\EE'^r_{L;K}(U)\subseteq H^r_{\comp}(U)$ continuously. For any sequence $(L_j,K_j)_{j\in\ZZ_+}\subseteq \mathfrak{W}$ which satisfies
\begin{equation}\label{inc-set-com-exchscoses}
L_j\subseteq \operatorname{int} L_{j+1},\,\, K_j\subseteq \operatorname{int} K_{j+1},\,\, \bigcup_{j\in\ZZ_+}L_j=W,\,\, \bigcup_{j\in\ZZ_+} K_j=U,
\end{equation}
it holds that
$$
\EE'^r_W(U)=\lim_{\substack{\longrightarrow\\ j\rightarrow \infty}} \EE'^r_{L_j;K_j}(U)\quad\mbox{topologically}
$$
(one can always find such sequence, cf. \cite[p. 1354]{BD}; when $W=\emptyset$, we can take $L_j=\emptyset$, $j\in\ZZ_+$). Consequently, $\EE'^r_W(U)$ is an $(LF)$-space and thus it is barrelled and bornological. Furthermore, we have the following continuous inclusions:
\begin{equation}\label{inc-for-dual-ofd'l}
\DD(U)\subseteq \EE'^r_W(U)\subseteq H^r_{\comp}(U)\quad\mbox{and}\quad \EE'^{r_1}_{W_1}(U)\subseteq \EE'^{r_2}_{W_2}(U),\, r_1\geq r_2,\, W_1\subseteq W_2,
\end{equation}

\begin{remark}\label{rem-for-ide-spawithorddspinrsetck}
Since the continuous inclusion $\DD_K\subseteq \EE'^r_{\emptyset;K}(U)$ is a bijection, the open mapping theorem for Fr\'echet spaces yields that $\EE'^r_{\emptyset;K}(U)=\DD_K$ topologically. Consequently, $\EE'^r_{\emptyset}(U)=\DD(U)$ topologically.\\
\indent Similarly, since the continuous inclusion $\EE'^r_{K\times (\RR^n\backslash\{0\});K}(U)\subseteq H^r_K(U)$ is a bijection, the open mapping theorem for Fr\'echet spaces shows that $\EE'^r_{K\times(\RR^n\backslash\{0\});K}(U)=H^r_K(U)$ topologically. Consequently, $\EE'^r_{U\times(\RR^n\backslash\{0\})}(U)=H^r_{\comp}(U)$ topologically.
\end{remark}

\begin{remark}\label{rem-for-con-ofmulonespaceofonesk}
The bilinear map $\mathcal{C}^{\infty}(U)\times \EE'^r_{L;K}(U)\rightarrow\EE'^r_{L;K}(U)$, $(\chi,u)\mapsto \chi u$, is well-defined and continuous. Indeed, the fact that it is well-defined is trivial and a standard closed graph argument implies that it is separately continuous since both spaces are Fr\'echet. Now, \cite[Theorem 1, p. 158]{kothe2} verifies its continuity. Consequently, \cite[Theorem 5, p. 159]{kothe2} shows that $\mathcal{C}^{\infty}(U)\times \EE'^r_W(U)\rightarrow\EE'^r_W(U)$, $(\chi,u)\mapsto \chi u$, is hypocontinuous.
\end{remark}

The space $\EE'^r_W(U)$ satisfies similar approximation result to Proposition \ref{seq-den-comsmf} for $\DD'^r_L(U)$.

\begin{proposition}\label{lem-for-den-ofdine'lddd}
Let $\chi_j$, $j\in\ZZ_+$, be as in Proposition \ref{seq-den-comsmf}. For every $K,\widetilde{K}\subseteq U$ satisfying $K\subset\subset \operatorname{int}\widetilde{K}$ and $\widetilde{K}\subset\subset U$ there is $j_0\in\ZZ_+$ such that $\chi_j*u\in\DD_{\widetilde{K}}$ for all $u\in\EE'^r_{L;K}(U)$, $j\geq j_0$ and $L$ a closed conic subset of $U\times(\RR^n\backslash\{0\})$ satisfying $\pr_1(L)\subseteq K$. Furthermore $\chi_j*u\rightarrow u$ in $\EE'^r_{L;\widetilde{K}}(U)$ for all $u\in\EE'^r_{L;K}(U)$.\\
\indent If $P_j$, $j\in\ZZ_+$, are the operators from Proposition \ref{seq-den-comsmf}, then $P_j\rightarrow \operatorname{Id}$ in $\mathcal{L}_p(\EE'^r_W(U))$ for any open conic subset $W$ of $U\times (\RR^n\backslash\{0\})$. In particular, $\DD(U)$ is sequentially dense in $\EE'^r_W(U)$.
\end{proposition}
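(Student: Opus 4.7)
The plan is to treat the three claims of the proposition in order, the middle seminorm convergence being the substantive part. First I would handle the support claim: since $\supp\chi_j\subseteq \overline{B(0,1/j)}$ and $\supp u\subseteq K$ for $u\in\EE'^r_{L;K}(U)$, one has $\chi_j*u\in \mathcal{C}^{\infty}(\RR^n)$ with $\supp(\chi_j*u)\subseteq K+\overline{B(0,1/j)}$, so choosing $j_0$ so large that $K+\overline{B(0,1/j_0)}\subseteq \widetilde{K}$ yields $\chi_j*u\in \DD_{\widetilde{K}}$ for all $j\geq j_0$, uniformly in the choice of $L$ and of $u$.

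Next I would verify that $\chi_j*u\to u$ in $\EE'^r_{L;\widetilde{K}}(U)$. Its Fr\'echet topology is generated by the norm on $H^r_{\widetilde{K}}(U)$ and by the seminorms $\mathfrak{q}_{\nu;\varphi,V}$ with $(\supp\varphi\times V)\cap L=\emptyset$. Convergence in the $H^r$-norm is the standard statement for mollifiers. For each seminorm, I would adapt the argument from Proposition \ref{seq-den-comsmf} as follows. Choose $\widetilde{\varphi}\in\DD(U)$ equal to $1$ on a neighborhood of $\supp\varphi$ with $\supp\widetilde{\varphi}$ small, together with a closed cone $\widetilde{V}$ satisfying $V\setminus\{0\}\subseteq \operatorname{int}\widetilde{V}$, such that $(\supp\widetilde{\varphi}\times \widetilde{V})\cap L=\emptyset$; this is possible because $L$ is closed and $(\supp\varphi\times V)\cap L=\emptyset$. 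Set $v:=\widetilde{\varphi}u$. Then $v\in\EE'(U)$ and $WF(v)\cap(\RR^n\times \widetilde{V})=\emptyset$, so $\mathcal{F}v$ is rapidly decreasing on $\widetilde{V}$ with decay constants controlled by $\mathfrak{q}_{M;\widetilde{\varphi},\widetilde{V}}(u)$. For $j$ large the support of $\chi_j*((1-\widetilde{\varphi})u)$ is disjoint from $\supp\varphi$, and since $\varphi\widetilde{\varphi}=\varphi$ we obtain
\[
\mathcal{F}(\varphi(\chi_j*u-u))(\xi)=(2\pi)^{-n}\int_{\RR^n} \mathcal{F}\varphi(\eta)\big(\mathcal{F}\chi((\xi-\eta)/j)-1\big)\mathcal{F}v(\xi-\eta)\,d\eta.
\]
I would split this integral at $|\eta|=c|\xi|$ for a $c\in(0,1)$ chosen so that $\xi\in V$ and $|\eta|\leq c|\xi|$ force $\xi-\eta\in \widetilde{V}$. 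On the inner region, the rapid decay of $\mathcal{F}v$ on $\widetilde{V}$ together with Peetre's inequality gives a bound $\mathrm{const}\cdot \langle\xi\rangle^{-M}$ for any $M$; on the outer region, the Schwartz decay of $\mathcal{F}\varphi$ combined with the Paley--Wiener polynomial bound $|\mathcal{F}v(\xi)|\leq C\|v\|_{H^r}\langle\xi\rangle^{|r|}$ again yields a bound $\mathrm{const}\cdot \langle\xi\rangle^{-M}$. Both bounds are uniform in $j$, so after multiplication by $\langle\xi\rangle^\nu$ the sup over $|\xi|>R$ can be made small by choosing $R$ large; on the remaining compact range $|\xi|\leq R$, the uniform convergence $\mathcal{F}\chi(\cdot/j)\to 1$ on compacts supplies the missing small-in-$j$ factor. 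Hence $\mathfrak{q}_{\nu;\varphi,V}(\chi_j*u-u)\to 0$.

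The main technical obstacle, which will also be required for the third claim, is that all these estimates must be uniform for $u$ in bounded subsets of $\EE'^r_{L;K}(U)$; this works because every constant appearing above is majorised by a continuous seminorm of $\EE'^r_{L;K}(U)$ (namely $\mathfrak{q}_{M;\widetilde{\varphi},\widetilde{V}}$ or the norm of $H^r_K$). For the third statement, observe that since $\psi_j=1$ on $K$ for $j$ large, $P_ju=\chi_j*u$ for $u\in\EE'^r_{L;K}(U)$; combined with the second claim and the continuity of $\EE'^r_{L;\widetilde{K}}(U)\hookrightarrow \EE'^r_W(U)$, this gives $P_j\to \operatorname{Id}$ in $\mathcal{L}_\sigma(\EE'^r_W(U))$. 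Since $\EE'^r_W(U)$ is an $(LF)$-space and hence barrelled, the Banach--Steinhaus theorem yields equicontinuity of $\{P_j\}_{j\in\ZZ_+}$, and a standard $\varepsilon$-net argument upgrading pointwise convergence plus equicontinuity to uniform convergence on precompact sets gives $P_j\to \operatorname{Id}$ in $\mathcal{L}_p(\EE'^r_W(U))$. The sequential density of $\DD(U)$ in $\EE'^r_W(U)$ is then immediate, as $P_ju\in\DD(U)$ for $u\in\EE'^r_W(U)$ and $j$ sufficiently large.
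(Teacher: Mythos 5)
Your proposal is correct and follows essentially the route the paper intends: the first two claims are proved by the same mollifier estimates as in Proposition \ref{seq-den-comsmf} (cutoff $\widetilde{\varphi}$, enlarged cone $\widetilde{V}$, frequency-side splitting of $\mathcal{F}\varphi*((\mathcal{F}\chi_j-1)\mathcal{F}v)$), and the passage from pointwise convergence to convergence in $\mathcal{L}_p(\EE'^r_W(U))$ is the same Banach--Steinhaus argument on the barrelled $(LF)$-space $\EE'^r_W(U)$. The only points to tidy are routine: on the range $|\xi|\leq R$ one must additionally split the $\eta$-integral (since $\xi-\eta$ is unbounded there) before invoking uniform convergence of $\mathcal{F}\chi(\cdot/j)$ on compacts, and $\psi_ju=u$ uses that $\psi_j=1$ on a neighbourhood of $K$ for large $j$; both hold.
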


\begin{proof} The proof of the first part is similar to the proof of Proposition \ref{seq-den-comsmf} and we omit it. This implies $P_j\rightarrow \operatorname{Id}$ in $\mathcal{L}_{\sigma}(\EE'^r_W(U))$ and, since $\EE'^r_W(U)$ is barrelled, the Banach-Steinhaus theorem \cite[Theorem 4.5, p. 85]{Sch} yields that the convergence holds in $\mathcal{L}_p(\EE'^r_W(U))$.
\end{proof}

We need the following preparatory lemma for the result on the duality (see the preliminaries for the meaning of $\check{L})$.

\begin{lemma}\label{lem-for-inc-ofeinddualsis}
Let $\emptyset\neq\widetilde{K}\subset\subset U$ and let $L$ and $\widetilde{L}$ be closed conic subsets of $U\times (\RR^n\backslash\{0\})$ satisfying $\widetilde{L}\subseteq \check{L}^c$ and $\pr_1(\widetilde{L})\subseteq \widetilde{K}$.
\begin{itemize}
\item[$(i)$] There are open subsets $U_1,\ldots,U_k$ of $U$, closed cones $V_1,\ldots,V_k$ in $\RR^n$ and $\phi_j\in\DD(U_j)$, $j=1,\ldots,k$, which satisfy the following conditions:
    \begin{gather}
    \widetilde{K}\subseteq \bigcup_{j=1}^k U_j,\quad (U_j\times (-V_j))\cap L=\emptyset\,\, \mbox{and}\,\,(U_j\times (\RR^n\backslash V_j))\cap \widetilde{L}=\emptyset,\,\, j=1,\ldots,k,\label{con-onc-for-eqsopessv}\\
    \phi_1^2+\ldots+\phi_k^2=1\,\, \mbox{on a neighbourhood of}\,\, \widetilde{K}\label{con-for-fun-paruniforintcon}.
    \end{gather}
\item[$(ii)$] If $U_j$, $V_j$ and $\phi_j$, $j=1,\ldots,k$, are as in $(i)$, then $\mathcal{F}(\phi_j u) \mathcal{F}^{-1}(\phi_jv)\in L^1(\RR^n)$, $j=1,\ldots,k$, for all $u\in\DD'^r_L(U)$ and $v\in\EE'^{-r}_{\widetilde{L};\widetilde{K}}(U)$, and, for each $v\in\EE'^{-r}_{\widetilde{L};\widetilde{K}}(U)$, the linear functional
\begin{equation}\label{lin-fun-for-dualityofewithdrs}
\DD'^r_L(U)\rightarrow\CC,\quad u\mapsto\sum_{j=1}^k\int_{\RR^n}\mathcal{F}(\phi_j u)(\xi)\mathcal{F}^{-1}(\phi_j v)(\xi) d\xi,
\end{equation}
is continuous. Furthermore, when $v$ varies in a bounded subset of $\EE'^{-r}_{\widetilde{L};\widetilde{K}}(U)$, \eqref{lin-fun-for-dualityofewithdrs} becomes an equicontinuous subset of $(\DD'^r_L(U))'$.
\end{itemize}
\end{lemma}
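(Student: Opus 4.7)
The plan is as follows. For part (i), I carry out a microlocal separation at each point of $\widetilde{K}$ followed by a finite cover and a squared partition of unity. Fix $x_0\in\widetilde{K}$: the closed subsets $A:=\{\xi\in\mathbb{S}^{n-1}:(x_0,\xi)\in\widetilde{L}\}$ and $B:=\{\xi\in\mathbb{S}^{n-1}:(x_0,-\xi)\in L\}$ of $\mathbb{S}^{n-1}$ are disjoint by the hypothesis $\widetilde{L}\subseteq\check{L}^c$, so I can pick nested cones: an open cone $W_{x_0}$ in $\RR^n$ with $A\subseteq W_{x_0}\cap\mathbb{S}^{n-1}$ and $\overline{W_{x_0}\cap\mathbb{S}^{n-1}}\cap B=\emptyset$, and a closed cone $V_{x_0}$ with $\overline{W_{x_0}}\setminus\{0\}\subseteq\operatorname{int} V_{x_0}$ and $V_{x_0}\cap B=\emptyset$. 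Closedness of $\widetilde{L}$ and $L$ then allows me to shrink a base neighbourhood $U_{x_0}$ of $x_0$ so that $\widetilde{L}\cap(U_{x_0}\times\RR^n)\subseteq U_{x_0}\times W_{x_0}$ (which already gives the condition $(U_{x_0}\times(\RR^n\setminus V_{x_0}))\cap\widetilde{L}=\emptyset$) and $(U_{x_0}\times(-V_{x_0}))\cap L=\emptyset$. By compactness of $\widetilde{K}$, extract a finite subcover $\{U_j\}_{j=1}^k$ with associated $V_j, W_j$; condition \eqref{con-for-fun-paruniforintcon} is then obtained by the standard trick of choosing $\chi_j\in\DD(U_j)$ with $\sum_i\chi_i^2>0$ on an open neighbourhood $W\supseteq\widetilde{K}$, choosing $\rho\in\DD(W)$ equal to $1$ near $\widetilde{K}$, and setting $\phi_j:=\rho\chi_j/(\sum_i\chi_i^2)^{1/2}$.

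For part (ii), I change variables $\xi\mapsto-\eta$ to rewrite the integrand in \eqref{lin-fun-for-dualityofewithdrs} as $(2\pi)^{-n}\mathcal{F}(\phi_j u)(-\eta)\mathcal{F}(\phi_j v)(\eta)$ and then split each $\eta$-integral as $\int_{V_j}+\int_{\RR^n\setminus V_j}$. On $V_j$: the condition $(U_j\times(-V_j))\cap L=\emptyset$ with $u\in\DD'^r_L(U)$ gives $\|\langle\cdot\rangle^r\mathcal{F}(\phi_j u)(-\cdot)\|_{L^2(V_j)}=\mathfrak{p}_{r;\phi_j,-V_j}(u)<\infty$ (after change of variables), and $\phi_j v\in H^{-r}_{\comp}(U)$ gives $\langle\cdot\rangle^{-r}\mathcal{F}(\phi_j v)\in L^2(\RR^n)$, so Cauchy--Schwarz yields an $L^1$-bound of size $C\,\mathfrak{p}_{r;\phi_j,-V_j}(u)\,\|\phi_j v\|_{H^{-r}}$. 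On $\RR^n\setminus V_j$: since $\RR^n\setminus W_j$ is a closed cone with $(\supp\phi_j\times(\RR^n\setminus W_j))\cap\widetilde{L}=\emptyset$ (by the strict containment from (i)) and $\RR^n\setminus V_j\subseteq\RR^n\setminus W_j$, the seminorm $\mathfrak{q}_{N;\phi_j,\RR^n\setminus W_j}(v)$ is continuous on $\EE'^{-r}_{\widetilde{L};\widetilde{K}}(U)$ and controls $\langle\eta\rangle^N|\mathcal{F}(\phi_j v)(\eta)|$ on $\RR^n\setminus V_j$ for every $N$; coupled with the polynomial growth $|\mathcal{F}(\phi_j u)(-\eta)|\leq C\langle\eta\rangle^{N_0}$ (which, via the equicontinuity estimate \eqref{equ-ine0for-est-equiconmap-fordpp}, is majorised by a continuous seminorm $\mathfrak{s}_j(u)$ on $\DD'(U)$), choosing $N>N_0+n$ makes the product $L^1$ with bound $C'_j\,\mathfrak{s}_j(u)\,\mathfrak{q}_{N;\phi_j,\RR^n\setminus W_j}(v)$. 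Summing over $j$ proves simultaneously the $L^1$-integrability, the well-definedness of \eqref{lin-fun-for-dualityofewithdrs}, and its continuity on $\DD'^r_L(U)$ for each fixed $v$.

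The equicontinuity for $v$ in a bounded subset $B$ of $\EE'^{-r}_{\widetilde{L};\widetilde{K}}(U)$ is then immediate from the shape of the final estimate $|\lambda_v(u)|\leq\sum_j\bigl[C'_j\,\mathfrak{p}_{r;\phi_j,-V_j}(u)\,\|\phi_j v\|_{H^{-r}}+C''_j\,\mathfrak{s}_j(u)\,\mathfrak{q}_{N;\phi_j,\RR^n\setminus W_j}(v)\bigr]$: this is a finite sum of products of continuous seminorms of $u$ on $\DD'^r_L(U)$ and continuous seminorms of $v$ on $\EE'^{-r}_{\widetilde{L};\widetilde{K}}(U)$, the latter being uniformly bounded on $B$. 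The main obstacle I foresee is precisely the cone-construction in (i): arranging the nested separation $A\subseteq W_{x_0}\subseteq\operatorname{int} V_{x_0}$ so that $\widetilde{L}|_{U_j}$ lies strictly inside $V_j$ is what makes $\RR^n\setminus W_j$ a closed cone disjoint from the wave front directions of $\widetilde{L}$ over $U_j$ and strictly containing $\RR^n\setminus V_j$; without this room, the Schwartz-type seminorm controlling the decay of $\mathcal{F}(\phi_j v)$ on the closed set $\RR^n\setminus V_j$ would be unavailable and the second-region estimate above would collapse.
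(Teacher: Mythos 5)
Your overall route is the paper's: microlocal separation of the $\widetilde{L}$- and $\check L$-directions at each point of $\widetilde{K}$, a finite cover and a squared partition of unity for (i), and for (ii) a split of the frequency integral into the region $-V_j$ (handled by Cauchy--Schwarz against $\mathfrak{p}_{r;\phi_j,-V_j}(u)$ and the $H^{-r}$-norm of $\phi_j v$) and its complement (handled by rapid decay of $\mathcal{F}(\phi_j v)$ against polynomial growth of $\mathcal{F}(\phi_j u)$). Your insertion of the intermediate open cone $W_j$ with $\overline{W_j}\setminus\{0\}\subseteq\operatorname{int}V_j$ is a good move: it makes $\RR^n\setminus W_j$ a \emph{closed} cone with $(\supp\phi_j\times(\RR^n\setminus W_j))\cap\widetilde{L}=\emptyset$, so $\mathfrak{q}_{N;\phi_j,\RR^n\setminus W_j}$ is an honest defining seminorm of $\EE'^{-r}_{\widetilde{L};\widetilde{K}}(U)$ controlling the decay on $\RR^n\setminus V_j$; the paper gets the same decay from its construction (where $\widetilde{L}$ over $U_j$ sits inside an open cone whose closure is $V_j$) but states it less explicitly. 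Part (i) of your argument, including the partition $\phi_j=\rho\chi_j/(\sum_i\chi_i^2)^{1/2}$, is correct.

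The gap is in the second-region estimate of (ii). You claim that $|\mathcal{F}(\phi_j u)(-\eta)|\leq C\langle\eta\rangle^{N_0}$ with the constant ``majorised by a continuous seminorm $\mathfrak{s}_j(u)$ on $\DD'(U)$'' for a \emph{fixed} $N_0$. This is false: the estimate \eqref{equ-ine0for-est-equiconmap-fordpp} is an equicontinuity bound valid on each bounded subset of $\DD'(U)$ with an order $k$ depending on that subset; there is no uniform order over all of $\DD'(U)$. Concretely, $u\mapsto\sup_{\eta}\langle\eta\rangle^{-N_0}|\mathcal{F}(\phi_j u)(\eta)|$ is not even finite on all of $\DD'(U)$ (take $u$ of order $>N_0$ near $\supp\phi_j$), and it is not dominated by any continuous seminorm, since the family $\{\langle\eta\rangle^{-N_0}\phi_j e^{-i\langle\cdot,\eta\rangle}\}_{\eta\in\RR^n}$ is unbounded in $\DD(U)$ (derivatives of order $>N_0$ grow in $\eta$). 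Consequently your ``final estimate'' does not hold as a pointwise inequality on $\DD'^r_L(U)$, and neither the continuity of \eqref{lin-fun-for-dualityofewithdrs} nor the equicontinuity over $v\in B$ can be read off from it. The repair is the paper's mechanism: fix the bounded set $B\subseteq\EE'^{-r}_{\widetilde{L};\widetilde{K}}(U)$, put the supremum over $v\in B$ inside, and treat $u\mapsto\sup_{v\in B}\int_{\RR^n\setminus V_j}|\mathcal{F}(\phi_j u)(-\eta)|\,|\mathcal{F}(\phi_j v)(\eta)|\,d\eta$ as a seminorm on $\DD'(U)$; it is finite and bounded on every bounded subset of $\DD'(U)$ (there \eqref{equ-ine0for-est-equiconmap-fordpp} does give a uniform order, and your $\mathfrak{q}_{N;\phi_j,\RR^n\setminus W_j}$-bound with $N$ large handles $v$ uniformly on $B$), hence continuous because $\DD'(U)$ is bornological. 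This yields simultaneously the $L^1$-statement, the continuity for fixed $v$, and the equicontinuity for $v\in B$; note also that the resulting proof of (ii), like yours, uses the strict cone nesting supplied by the construction in (i), not merely the conditions \eqref{con-onc-for-eqsopessv}.
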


\begin{proof} To show $(i)$, assume first $L^c\neq \emptyset$. For $x_0\in\pr_1(\widetilde{L})$, the compactness of the unit sphere implies that there are an open set $U'\subseteq U$ that contains $x_0$ and an open cone $V'\subseteq \RR^n$ such that $(U'\times (-\overline{V'}))\cap L=\emptyset$ and $\{\xi\in\RR^n\,|\, (x_0,\xi)\in \widetilde{L}\}\subseteq V'$. We claim that there is an open neighbourhood $U_0\subseteq U'$ of $x_0$ such that $(U_0\times (\RR^n\backslash V'))\cap \widetilde{L}=\emptyset$. To verify that this is true, assume the contrary. Then there are $x^{(j)}\in U'$ and $\eta^{(j)}\in \mathbb{S}^{n-1}\cap (\RR^n\backslash V')$, $j\in\ZZ_+$, such that $x^{(j)}\rightarrow x_0$ and $(x^{(j)},\eta^{(j)})\in \widetilde{L}$, $j\in\ZZ_+$. In view of the compactness of the unit sphere, there is a subsequence $(\eta^{(j_k)})_{k\in\ZZ_+}$ which converges to some $\eta_0\in\mathbb{S}^{n-1}\cap(\RR^n\backslash V')$. Hence $(x_0,\eta_0)\in \widetilde{L}$ and thus $\eta_0\in V'$ which is a contradiction. We showed that there are an open neighbourhood $U_0\subseteq U$ of $x_0$ and a closed cone $V_0$ in $\RR^n$ such that
\begin{equation}\label{con-onc-nei-forsplks}
(U_0\times (-V_0))\cap L=\emptyset\quad \mbox{and}\quad (U_0\times (\RR^n\backslash V_0))\cap \widetilde{L}=\emptyset.
\end{equation}
When $x_0\in \widetilde{K}\backslash \pr_1(\widetilde{L})$, we can take $V_0:=\{0\}$ and $U_0:=U\backslash \pr_1(\widetilde{L})$ for \eqref{con-onc-nei-forsplks} to be satisfied. Now, a compactness argument verifies the existence of $U_j$ and $V_j$, $j=1,\ldots,k$, as in $(i)$ which satisfy \eqref{con-onc-for-eqsopessv}. The existence of $\phi_j\in\DD(U_j)$, $j=1,\ldots,k$, which satisfy \eqref{con-for-fun-paruniforintcon} is straightforward. In the case when $L^c=\emptyset$, \eqref{con-onc-for-eqsopessv} is satisfied by taking $k=1$, $U_1\subseteq U$ an open neighbourhood of $\widetilde{K}$ and $V_1=\{0\}$; in this case, for \eqref{con-for-fun-paruniforintcon}, we take $\phi_1\in \DD(U_1)$ which equals $1$ on a neighbourhood of $\widetilde{K}$.\\
\indent To prove $(ii)$, let $B$ be a bounded subset of $\EE'^{-r}_{\widetilde{L};\widetilde{K}}(U)$ and let $U_j$, $V_j$ and $\phi_j$, $j=1,\ldots,k$, be as in $(i)$. For $v\in B$ and $u\in\DD'^r_L(U)$, we estimate as follows (cf. \eqref{con-onc-for-eqsopessv}):
\begin{align*}
\int_{\RR^n}&|\mathcal{F}(\phi_j u)(\xi)||\mathcal{F}^{-1}(\phi_j v)(\xi)|d\xi\\
&\leq \int_{-V_j}|\mathcal{F}(\phi_j u)(\xi)||\mathcal{F}(\phi_j v)(-\xi)|d\xi+\int_{\RR^n\backslash (-V_j)}|\mathcal{F}(\phi_j u)(\xi)||\mathcal{F}(\phi_j v)(-\xi)|d\xi\\
&\leq \mathfrak{p}_{r;\phi_j,-V_j}(u)\sup_{v\in B}\|\langle\cdot\rangle^{-r}\mathcal{F}(\phi_j v)\|_{L^2(\RR^n)}+\sup_{v\in B}\int_{\RR^n\backslash (-V_j)}|\mathcal{F}(\phi_j u)(\xi)||\mathcal{F}(\phi_j v)(-\xi)|d\xi.
\end{align*}
We claim that
$$
\DD'(U)\rightarrow[0,\infty),\quad f\mapsto\sup_{v\in B}\int_{\RR^n\backslash (-V_j)}|\mathcal{F}(\phi_j f)(\xi)||\mathcal{F}(\phi_j v)(-\xi)|d\xi,
$$
is a continuous seminorm on $\DD'(U)$. This can be proven by showing that it is bounded on bounded subsets on $\DD'(U)$ by employing similar technique as in the prove of the Claim in Proposition \ref{pro-for-top-imbedingthforc} ($\sup_{v\in B}\sup_{\xi\in\RR^n\backslash V_j}|\mathcal{F}(\phi_j v)(\xi)|\langle\xi\rangle^{\nu}<\infty$, for all $\nu>0$, in view of \eqref{con-onc-for-eqsopessv} and the boundedness of $B$); whence, it is continuous since $\DD'(U)$ is bornological. Now, all claims in $(ii)$ immediately follow from the above estimate.
\end{proof}

\begin{remark}
Of course, we can always take $\phi_1,\ldots,\phi_k$ to be nonnegative.
\end{remark}

Now, we are ready to show that the strong dual of $\DD'^r_L(U)$ is $\EE'^{-r}_{\check{L}^c}(U)$. In the first part of the proof, we employ some of the ideas used in \cite[Proposition 7.6, p. 80]{GS} and \cite[Lemma 3 and Proposition 7]{BD} where the dual of $\DD'_L(U)$ was identified. However, the ideas from these results can not be applied in the second part of the proof: here we employ pseudo-differential operator techniques and $L^2$-estimates.

\begin{theorem}\label{the-for-dua-fordwithespacewithwfs}
Let $L$ be a closed conic subset of $U\times (\RR^n\backslash\{0\})$ and $r\in\RR$.
\begin{itemize}
\item[$(i)$] The bilinear map $\DD(U)\times \mathcal{C}^{\infty}(U)\rightarrow \CC$, $(\varphi,\psi)\mapsto\langle \varphi,\psi\rangle=\int_U\varphi(x)\psi(x)dx$, uniquely extends to a separately continuous bilinear mapping $\EE'^{-r}_{\check{L}^c}(U)\times \DD'^r_L(U)\rightarrow \CC$, given by
    \begin{equation}\label{equ-for-dua-fornsped}
    \langle v,u\rangle=\sum_{j=1}^k\int_{\RR^n}\mathcal{F}(\phi_j u)(\xi)\mathcal{F}^{-1}(\phi_j v)(\xi) d\xi,\quad u\in\DD'^r_L(U),\, v\in\EE'^{-r}_{\widetilde{L};\widetilde{K}}(U),
    \end{equation}
    with $\phi_j$, $j=1,\ldots,k$, as in Lemma \ref{lem-for-inc-ofeinddualsis} $(i)$.
\item[$(ii)$] It holds that $(\DD'^r_L(U))'_b=\EE'^{-r}_{\check{L}^c}(U)$ topologically and the duality $\langle \EE'^{-r}_{\check{L}^c}(U), \DD'^r_L(U)\rangle$ is given by \eqref{equ-for-dua-fornsped}.
\end{itemize}
\end{theorem}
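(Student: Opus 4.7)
For part (i), the functional on the right of \eqref{equ-for-dua-fornsped} is exactly the one furnished by Lemma \ref{lem-for-inc-ofeinddualsis}(ii), which already gives continuity in $u$ and equicontinuity as $v$ ranges over bounded subsets of $\EE'^{-r}_{\widetilde L;\widetilde K}(U)$. Separate continuity in $v$ on each Fr\'echet step follows because Fr\'echet spaces are bornological, and the LF-structure transfers this to the full $\EE'^{-r}_{\check L^c}(U)$. The identification with the ordinary pairing $(\varphi,\psi)\mapsto\int_U\varphi\psi\,dx$ on $\DD(U)\times\DD(U)$ is a direct Parseval computation: for each $j$, $\int\mathcal{F}(\phi_j\varphi)(\xi)\mathcal{F}^{-1}(\phi_j\psi)(\xi)\,d\xi=\int\phi_j^2\varphi\psi\,dx$, and summation together with \eqref{con-for-fun-paruniforintcon} plus $\supp\psi\subseteq\widetilde K$ yields $\int_U\varphi\psi\,dx$. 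Uniqueness of the extension, and hence independence from the choice of partition $\{\phi_j,U_j,V_j\}$, follows from the sequential density of $\DD(U)$ in both $\DD'^r_L(U)$ and $\EE'^{-r}_{\check L^c}(U)$ (Propositions \ref{seq-den-comsmf} and \ref{lem-for-den-ofdine'lddd}).

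For part (ii), denote $\iota:\EE'^{-r}_{\check L^c}(U)\to(\DD'^r_L(U))'_b$, $v\mapsto\langle v,\cdot\rangle$. Injectivity is immediate from the density of $\DD(U)$ in $\DD'^r_L(U)$ and the fact that on test functions the pairing restricts to $\int_U v\varphi\,dx$. Continuity reduces by the LF-structure to continuity on each $\EE'^{-r}_{\widetilde L;\widetilde K}(U)$, and since these are Fr\'echet (hence bornological) it suffices to verify that bounded subsets are mapped to bounded (equivalently, equicontinuous) subsets of $(\DD'^r_L(U))'_b$---which is precisely Lemma \ref{lem-for-inc-ofeinddualsis}(ii).

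The essential work is surjectivity. Given $T\in(\DD'^r_L(U))'$, use the closed topological imbedding $\mathcal{I}:\DD'^r_L(U)\hookrightarrow\DD'(U)\times L^2(\RR^n)^{\ZZ_+\times\ZZ_+}$ from Proposition \ref{pro-for-top-imbedingthforc} and extend $T$ via Hahn--Banach to the ambient product space. The dual of the product is $\DD(U)\oplus\bigoplus_{(j,k)\in\ZZ_+\times\ZZ_+}L^2(\RR^n)$ (finite supports), so the extension has the shape
\begin{equation*}
T(u)=\langle u,\psi_0\rangle+\sum_{(j,k)\in J}\int_{\RR^n}h_{j,k}(\xi)\langle\xi\rangle^r\widetilde\phi_{j,k}(\xi)\mathcal{F}(\phi_{j,k}u)(\xi)\,d\xi
\end{equation*}
for some $\psi_0\in\DD(U)$, finite $J\subseteq\ZZ_+\times\ZZ_+$, and $h_{j,k}\in L^2(\RR^n)$. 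Set $g_{j,k}:=\langle\cdot\rangle^r\widetilde\phi_{j,k}h_{j,k}$ (which lies in $\langle\cdot\rangle^r L^2(\RR^n)$ and is supported in $V'_{j,k}$) and $w_{j,k}:=\mathcal{F}g_{j,k}\in H^{-r}(\RR^n)$, and define
\begin{equation*}
v:=\psi_0+\sum_{(j,k)\in J}\phi_{j,k}w_{j,k}\in H^{-r}_{\comp}(U).
\end{equation*}
The crucial step is the wave-front computation: writing $\mathcal{F}(\phi_{j,k}w_{j,k})(\eta)=\int\mathcal{F}\phi_{j,k}(\eta+\xi)g_{j,k}(\xi)\,d\xi$ and using $\supp g_{j,k}\subseteq V'_{j,k}$, one obtains rapid decay of $\mathcal{F}(\phi_{j,k}w_{j,k})(\eta)$ in every closed cone disjoint from $-V'_{j,k}$, hence $WF(\phi_{j,k}w_{j,k})\subseteq\supp\phi_{j,k}\times\overline{-V'_{j,k}}$; the condition $(\overline{O'_{j,k}}\times\overline{V'_{j,k}})\cap L=\emptyset$ then places this fibre in $\check L^c$, giving $v\in\EE'^{-r}_{\check L^c}(U)$. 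The Parseval calculation from part (i) yields $T(\varphi)=\langle v,\varphi\rangle$ on $\DD(U)$, and density extends it to $T=\langle v,\cdot\rangle$ on all of $\DD'^r_L(U)$.

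The main remaining obstacle is openness of $\iota$. The plan is to appeal to De Wilde's open mapping theorem, taking advantage of the strict webbedness of $\EE'^{-r}_{\check L^c}(U)$ as an LF-space and of $\DD'^r_L(U)$ (Corollary \ref{rem-for-sem-refd}); the delicate point is verifying the topological hypotheses on $(\DD'^r_L(U))'_b$. Alternatively, openness can be proved by hand: for $T$ in an equicontinuous (= strongly bounded) subset of $(\DD'^r_L(U))'_b$, a single dominating seminorm $\mathfrak{p}+\sum_l\mathfrak{p}_{r;\varphi_l,V_l}$ translates, via $\mathcal{I}$ and Hahn--Banach, into extension data $(\psi_0;h_{j,k})$ in a bounded subset of a fixed $\DD_{K_0}\oplus\bigoplus_{(j,k)\in J}L^2(\RR^n)$, and thus into $v$'s in a bounded subset of a single step $\EE'^{-r}_{\widetilde L;\widetilde K}(U)$ of the inductive limit. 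This uniform-extension argument is the technical heart of the openness step.
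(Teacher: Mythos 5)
Parts (i), injectivity, continuity of $\iota$, and your surjectivity argument are sound. Your surjectivity route is in fact different from the paper's: you extend $T$ by Hahn--Banach through the closed imbedding $\mathcal{I}$ of Proposition \ref{pro-for-top-imbedingthforc} and read off $v=\psi_0+\sum\phi_{j,k}\mathcal{F}g_{j,k}$, whereas the paper argues directly that any $v\in(\DD'^r_L(U))'$ lies in $H^{-r}_{\comp}(U)$ by \eqref{equ-for-dua-ofthedspacins} and then estimates $\mathcal{F}(\psi v)(\xi)=\langle v,e^{-i\xi\cdot}\psi\rangle$ against the dominating seminorms to get $WF(v)\subseteq\check L^c$; both are legitimate.

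The genuine gap is the openness step, and it is not a technicality you can defer. De Wilde's open mapping theorem for $\iota:\EE'^{-r}_{\check L^c}(U)\to(\DD'^r_L(U))'_b$ requires the \emph{target} to be ultrabornological (webbedness of the domain alone does not suffice), and nothing in the paper or in your argument establishes any barrelledness/bornologicity property of $(\DD'^r_L(U))'_b$; at this stage of the development one cannot even say it is an $(LF)$-space without already knowing the theorem (that is exactly how the manifold version, Theorem \ref{the-for-dua-ofdwitheonmanwithvectbundd}, later uses the present result). Your by-hand alternative founders on the identification ``equicontinuous $=$ strongly bounded'': that equivalence needs $\DD'^r_L(U)$ to be quasi-barrelled, which is not established (the space is only shown to be complete, semi-reflexive and webbed, and such Hörmander-type spaces are typically not barrelled). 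Moreover, even granting your uniform Hahn--Banach selection, it only shows that \emph{equicontinuous} families of functionals come from a bounded subset of a single step $\EE'^{-r}_{\widetilde L;\widetilde K}(U)$; openness for the strong topologies concerns polars of bounded subsets of $\DD'^r_L(U)$, which are in general much larger, so continuity of $\iota^{-1}$ does not follow. The paper's resolution is a separate, substantive argument (its STEP 2): one proves $(\EE'^{-r}_{\check L^c}(U))'=\DD'^r_L(U)$ (semi-reflexivity gives one inclusion) and, crucially, that $(\EE'^{-r}_{\check L^c}(U))'_b\to\DD'^r_L(U)$ is continuous, by writing $\mathfrak{p}_{r;\psi,V'}(u)=\sup_{\phi\in\widetilde B}|\langle u,\phi\rangle|$ with $\widetilde B=\{\psi\mathcal{F}\chi\,|\,\chi\in\DD(V'),\ \|\langle\cdot\rangle^{-r}\chi\|_{L^2}\le1\}$ and showing $\widetilde B$ is bounded in a fixed $\EE'^{-r}_{\widetilde L;\widetilde K}(U)$ via the $L^2$-continuity of the $\Psi$DO $\langle D\rangle^{-r}\psi\langle D\rangle^{r}$; then the composition $\EE'^{-r}_{\check L^c}(U)\to(\DD'^r_L(U))'_b\to((\EE'^{-r}_{\check L^c}(U))'_b)'_b$ is the canonical map into the strong bidual, which is a topological imbedding because the $(LF)$-space $\EE'^{-r}_{\check L^c}(U)$ \emph{is} barrelled, and this forces $\iota$ to be a topological isomorphism. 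Without this (or an equivalent) argument, the topological identity in (ii) remains unproved.
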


\begin{proof} We first address $(i)$. In view of Lemma \ref{lem-for-inc-ofeinddualsis} $(ii)$ and the density of $\DD(U)$ in $\DD'^r_L(U)$, the right-hand side in \eqref{equ-for-dua-fornsped} does not depend on any of the choices made in Lemma \ref{lem-for-inc-ofeinddualsis} $(i)$, i.e. on $\widetilde{L}$, $\widetilde{K}$, $U_j$, $V_j$ and $\phi_j$; whence $\EE'^{-r}_{\check{L}^c}(U)\times \DD'^r_L(U)\rightarrow \CC$ is well-defined. Notice that when $v\in\DD(U)$, the right-hand side in \eqref{equ-for-dua-fornsped} is exactly the duality $\langle \DD(U),\DD'(U)\rangle$ and thus coincides with $\int_U v(x)u(x)dx$ when $u\in\mathcal{C}^{\infty}(U)$. Lemma \ref{lem-for-inc-ofeinddualsis} $(ii)$ verifies the continuity of $\DD'^r_L(U)\rightarrow \CC$, $u\mapsto\langle v,u\rangle$, for each fixed $v$. When $u$ is fixed, Lemma \ref{lem-for-inc-ofeinddualsis} $(ii)$ shows that $\EE'^{-r}_{\widetilde{L};\widetilde{K}}(U)\rightarrow \CC$, $v\mapsto\langle v,u\rangle$, maps bounded sets into bounded sets and hence it is continuous since $\EE'^{-r}_{\widetilde{L};\widetilde{K}}(U)$ is Fr\'echet space (hence bornological). This immediately implies the continuity of $\EE'^{-r}_{\check{L}^c}(U)\rightarrow \CC$, $v\mapsto\langle v,u\rangle$, when $u$ is fixed and the proof of $(i)$ is complete.\\
\indent We turn our attention to $(ii)$. In view of \eqref{inc-for-dual-ofd'l} and Proposition \ref{lem-for-den-ofdine'lddd}, we infer
\begin{equation}\label{con-inl-for-espfordinsss}
H^r_{\loc}(U)\subseteq (\EE'^{-r}_{\check{L}^c}(U))'_b\subseteq \DD'(U)\quad \mbox{continuously}.
\end{equation}
We are going to show
\begin{gather}
(\EE'^{-r}_{\check{L}^c}(U))'=\DD'^r_L(U)\,\, \mbox{and}\,\, \EE'^{-r}_{\check{L}^c}(U)=(\DD'^r_L(U))'\,\, \mbox{algebraically and}\label{idenfotrduals}\\
\mbox{the identity mappings}\,\, (\EE'^{-r}_{\check{L}^c}(U))'_b\rightarrow\DD'^r_L(U)\,\, \mbox{and}\,\, \EE'^{-r}_{\check{L}^c}(U)\rightarrow (\DD'^r_L(U))'_b\,\, \mbox{are continuous.}\label{iden-for-dualpariiconts}
\end{gather}
This would immediately give the continuous inclusions $\EE'^{-r}_{\check{L}^c}(U)\rightarrow (\DD'^r_L(U))'_b\rightarrow ((\EE'^{-r}_{\check{L}^c}(U))'_b)'_b$, which, in view of the fact that the evaluation map into the strong bidual $\EE'^{-r}_{\check{L}^c}(U)\rightarrow ((\EE'^{-r}_{\check{L}^c}(U))'_b)'_b$ is a topological imbedding since $\EE'^{-r}_{\check{L}^c}(U)$ is barrelled, would yield that $\EE'^{-r}_{\check{L}^c}(U)\rightarrow (\DD'^r_L(U))'_b$ is a topological imbedding\footnote{Here we employed the fact that if the continuous injections between topological spaces $f:X_1\rightarrow X_2$ and $g:X_2\rightarrow X_3$ are such that $g\circ f:X_1\rightarrow X_3$ is a topological imbedding, then so is $f$.} and hence $\EE'^{-r}_{\check{L}^c}(U)=(\DD'^r_L(U))'_b$ topologically. Thus we need to show \eqref{idenfotrduals} and \eqref{iden-for-dualpariiconts}.\\
\indent \underline{STEP 1: $\EE'^{-r}_{\check{L}^c}(U)=(\DD'^r_L(U))'$ and the continuity of $\EE'^{-r}_{\check{L}^c}(U)\rightarrow (\DD'^r_L(U))'_b$.} The validity of $\EE'^{-r}_{\check{L}^c}(U)\subseteq (\DD'^r_L(U))'_b$ immediately follows from $(i)$. Its continuity is a consequence of Lemma \ref{lem-for-inc-ofeinddualsis} $(ii)$; indeed, the latter shows that the inclusion $\EE'^{-r}_{\widetilde{L};\widetilde{K}}(U)\subseteq (\DD'^r_L(U))'_b$ maps bounded sets into bounded sets and hence it is continuous (as $\EE'^{-r}_{\widetilde{L};\widetilde{K}}(U)$ is Fr\'echet) which, in turn, shows the continuity of $\EE'^{-r}_{\check{L}^c}(U)\subseteq (\DD'^r_L(U))'_b$. To verify that $\EE'^{-r}_{\check{L}^c}(U)=(\DD'^r_L(U))'$ as sets, let $v\in(\DD'^r_L(U))'$. In view of \eqref{equ-for-dua-ofthedspacins}, it suffices to show $WF(v)\subseteq \check{L}^c$. There is $C>0$, a bounded subset $B_0$ of $\DD(U)$ and $\psi_1,\ldots,\psi_k\in\DD(U)$ and closed cones $V_1,\ldots, V_k$ in $\RR^n$ satisfying $(\supp\psi_j\times V_j)\cap L=\emptyset$, $j=1,\ldots,k$, such that
$$
|\langle v,u\rangle|\leq C\sum_{j=1}^k\mathfrak{p}_{r;\psi_j,V_j}(u)+C\sup_{\varphi\in B_0}|\langle u,\varphi\rangle|,\quad u\in\DD'^r_L(U).
$$
We are going to show that $WF(v)\subseteq \bigcup_{j=1}^k\supp\psi_j\times (-V_j\backslash\{0\})$; as the latter is a subset of $\check{L}^c$, this will complete the proof of $\EE'^{-r}_{\check{L}^c}(U)=(\DD'^r_L(U))'$. Let $\psi\in\DD(U)$ and the closed cone $V$ in $\RR^n$ be such that $(\supp\psi\times V)\cap (\bigcup_{j=1}^k\supp\psi_j\times (-V_j\backslash\{0\}))=\emptyset$. For $\xi\in \RR^n$, define $\phi_{\xi}:=e^{-i\xi\,\cdot}\psi\in\DD(U)$. Since $\mathcal{F}(\psi v)(\xi)= \langle v,\phi_{\xi}\rangle$, $\xi\in\RR^n$, for $\nu>0$, we infer
$$
\sup_{\xi\in V}\langle \xi\rangle^{\nu}|\mathcal{F}(\psi v)(\xi)|\leq C\sum_{j=1}^k\sup_{\xi\in V}\langle\xi\rangle^{\nu}\mathfrak{p}_{r;\psi_j,V_j}(\phi_{\xi})+C\sup_{\xi\in V}\sup_{\varphi\in B_0}\langle\xi\rangle^{\nu}|\langle \phi_{\xi},\varphi\rangle|.
$$
Assume that $V\backslash\{0\}\neq \emptyset$. As $\langle\phi_{\xi},\varphi\rangle=\mathcal{F}(\psi\varphi)(\xi)$, we infer $\sup_{\xi\in V}\sup_{\varphi\in B_0}\langle\xi\rangle^{\nu}|\langle \phi_{\xi},\varphi\rangle|<\infty$. We only need to bound $\mathfrak{p}_{r;\psi_j,V_j}(\phi_{\xi})$ when $V_j\backslash\{0\}\neq \emptyset$. Notice that it is zero if $\supp\psi\cap\supp\psi_j=\emptyset$. When $\supp\psi\cap\supp\psi_j\neq\emptyset$, we have $V\cap(-V_j\backslash\{0\})=\emptyset$ and thus there is $c>0$ such that $|\xi/|\xi|-\eta/|\eta||>c$, $\xi\in V\backslash\{0\}$, $\eta\in -V_j\backslash\{0\}$. For such $\xi$ and $\eta$, we have
$$
|\xi-\eta|\geq \left|\frac{\xi}{|\xi|}-\frac{\eta}{|\eta|}\right||\xi|-\left|\frac{\eta}{|\eta|}-\frac{\eta}{|\xi|}\right||\xi|> c|\xi|-||\xi|-|\eta||\geq c|\xi|-|\xi-\eta|,
$$
and thus $|\xi-\eta|>c|\xi|/2$; analogously, $|\xi-\eta|>c|\eta|/2$. Consequently,
\begin{align*}
\sup_{\xi\in V}\langle\xi\rangle^{\nu}\mathfrak{p}_{r;\psi_j,V_j}=\sup_{\xi\in V}\left(\int_{-V_j}\langle\xi\rangle^{2\nu}\langle \eta\rangle^{2r}|\mathcal{F}(\psi\psi_j)(\xi-\eta)|^2d\eta\right)^{1/2}<\infty
\end{align*}
(as $\mathcal{F}(\psi\psi_j)\in\SSS(\RR^n)$). We deduce $\sup_{\xi\in V}\langle \xi\rangle^{\nu}|\mathcal{F}(\psi v)(\xi)|<\infty$. Since this trivially holds in the case when $V\backslash\{0\}=\emptyset$, the proof of $\EE'^{-r}_{\check{L}^c}(U)=(\DD'^r_L(U))'$ is complete.\\
\indent \underline{STEP 2: $(\EE'^{-r}_{\check{L}^c}(U))'=\DD'^r_L(U)$ and the continuity of $(\EE'^{-r}_{\check{L}^c}(U))'_b\rightarrow\DD'^r_L(U)$.} Since $\DD'^r_L(U)$ is semi-reflexive (cf. Corollary \ref{rem-for-sem-refd}), STEP 1 verifies the algebraic inclusion $\DD'^r_L(U)\subseteq (\EE'^{-r}_{\check{L}^c}(U))'$. It remains to show $(\EE'^{-r}_{\check{L}^c}(U))'_b\subseteq \DD'^r_L(U)$ continuously. This is trivial when $L^c=\emptyset$. Assume that $L^c\neq\emptyset$. We estimate the seminorm $\mathfrak{p}_{r;\psi,V}$ of the distributions in $(\EE'^{-r}_{\check{L}^c}(U))'$, where $\psi\in\DD(U)\backslash\{0\}$ and $V$ is a closed cone in $\RR^n$ such that $(\supp\psi\times V)\cap L=\emptyset$ and $V\backslash\{0\}\neq \emptyset$. Pick an open cone $V'\subseteq \RR^n$ such that $V\backslash\{0\}\subseteq V'$ and $(\supp\psi\times \overline{V'})\cap L=\emptyset$. We are going to show that for all $u\in(\EE'^{-r}_{\check{L}^c}(U))'$, the seminorm $\mathfrak{p}_{r;\psi,V'}(u)$ is bounded from above by a continuous seminorm on $(\EE'^{-r}_{\check{L}^c}(U))'_b$ of $v$. Since $\mathfrak{p}_{r;\psi,V}\leq\mathfrak{p}_{r;\psi,V'}$, this would imply $(\EE'^{-r}_{\check{L}^c}(U))'_b\subseteq \DD'^r_L(U)$ continuously; in view of \eqref{con-inl-for-espfordinsss} for every continuous seminorm $\mathfrak{p}$ on $\DD'(U)$, $\mathfrak{p}(u)$ is bounded by a continuous seminorm on $(\EE'^{-r}_{\check{L}^c}(U))'_b$ of $u$.\\
\indent Denoting $\widetilde{K}:=\supp\psi$ and $\widetilde{L}:=\supp\psi\times (-\overline{V'}\backslash\{0\})$, we infer $\widetilde{L}\subseteq \check{L}^c$ and $\pr_1(\widetilde{L})=\widetilde{K}$. Since $V'$ is open, for $u\in(\EE'^{-r}_{\check{L}^c}(U))'$, we have
\begin{equation}\label{ide-for-nor-nl2forsemind}
\mathfrak{p}_{r;\psi,V'}(u)=\sup\left\{\left|\int_{V'}\mathcal{F}(\psi u)(\xi)\chi(\xi) d\xi\right|\,\Bigg|\, \chi\in\DD(V'),\, \|\langle \cdot\rangle^{-r}\chi\|_{L^2(V')}\leq 1\right\}.
\end{equation}
For $\chi\in\DD(V')$ satisfying $\|\langle\cdot\rangle^{-r}\chi\|_{L^2(V')}\leq 1$, we infer
\begin{equation}\label{ide-for-new-parinfofoursf}
\left|\int_{V'}\mathcal{F}(\psi u)(\xi)\chi(\xi) d\xi\right|=\left|\int_{\RR^n}\mathcal{F}(\psi u)(\xi)\chi(\xi) d\xi\right|=|\langle u,\psi\mathcal{F}\chi\rangle|.
\end{equation}
Notice that $\psi\mathcal{F}\chi\in\EE'^{-r}_{\widetilde{L};\widetilde{K}}(U)$. We claim that $\widetilde{B}:=\{\psi\mathcal{F}\chi\,|\, \chi\in\DD(V'),\,\|\langle\cdot\rangle^{-r}\chi\|_{L^2(V')}\leq 1\}$ is a bounded subset of $\EE'^{-r}_{\widetilde{L};\widetilde{K}}(U)$. We have
\begin{align*}
\|\psi\mathcal{F}\chi\|_{H^{-r}(\RR^n)}^2&=\int_{\RR^n}|\mathcal{F}(\psi\mathcal{F}\chi)(\xi)|^2\langle\xi\rangle^{-2r}d\xi
= \int_{\RR^n}\left|\mathcal{F}\left(\langle D\rangle^{-r}(\psi \langle D\rangle^r \mathcal{F}(\langle\cdot\rangle^{-r}\chi))\right)(\xi)\right|^2d\xi\\
&=(2\pi)^n\left\|\langle D\rangle^{-r}(\psi \langle D\rangle^r \mathcal{F}(\langle\cdot\rangle^{-r}\chi))\right\|_{L^2(\RR^n)}^2.
\end{align*}
Since $\langle D\rangle^{-r} \psi\langle D\rangle^r$ is a pseudo-differential operator with symbol in $S^0(\RR^{2n})$, it is continuous on $L^2(\RR^n)$. Hence,
$$
\|\psi\mathcal{F}\chi\|_{H^{-r}(\RR^n)}\leq C'\|\mathcal{F}(\langle\cdot\rangle^{-r}\chi)\|_{L^2(\RR^n)}\leq C',\quad \chi\in\DD(V'),\, \|\langle \cdot\rangle^{-r}\chi\|_{L^2(V')}\leq 1.
$$
Let $\varphi\in\DD(U)\backslash\{0\}$ and the closed cone $V_0\subseteq \RR^n$, $V_0\backslash\{0\}\neq\emptyset$, be such that $(\supp\varphi\times V_0)\cap \widetilde{L}=\emptyset$. If $\supp\varphi\cap\supp\psi=\emptyset$, then $\mathfrak{q}_{\nu;\varphi;V_0}(\widetilde{B})=0$, for all $\nu>0$. Assume that $\supp\varphi\cap\supp\psi\neq\emptyset$. Then $V_0\cap (-\overline{V'}\backslash\{0\})=\emptyset$. Similarly as before, this implies that there is $c'>0$ such that $|\xi-\eta|>c'|\xi|$ and $|\xi-\eta|>c'|\eta|$, $\xi\in V_0\backslash\{0\}$, $\eta\in-\overline{V'}\backslash\{0\}$. For $\nu>0$, we have
\begin{align*}
\sup_{\xi\in V_0}\langle \xi\rangle^{\nu}|\mathcal{F}(\varphi\psi\mathcal{F}\chi)(\xi)|&=\sup_{\xi\in V_0}\left|\int_{\RR^n}\langle\xi\rangle^{\nu}\mathcal{F}(\varphi\psi)(\xi-\eta)\chi(-\eta) d\eta\right|\\
&\leq C_1\int_{-V'}\langle\eta\rangle^{-r-n}|\chi(-\eta)| d\eta\leq C_2,
\end{align*}
for all $\chi\in\DD(V')$ satisfying $\|\langle\cdot\rangle^{-r}\chi\|_{L^2(V')}\leq 1$. We conclude that $\widetilde{B}$ is a bounded subset of $\EE'^{-r}_{\widetilde{L},\widetilde{K}}(U)$. In view of \eqref{ide-for-nor-nl2forsemind} and \eqref{ide-for-new-parinfofoursf}, we deduce that $\mathfrak{p}_{r;\psi,V'}(u)= \sup_{\phi\in \widetilde{B}}|\langle u,\phi\rangle|$. This completes the proof of STEP 2 since the right-hand side is a continuous seminorm on $(\EE'^{-r}_{\check{L}^c}(U))'_b$.\\
\indent The fact that the duality $\langle \EE'^{-r}_{\check{L}^c}(U), \DD'^r_L(U)\rangle$ is given by \eqref{equ-for-dua-fornsped} follows from $(i)$. The proof of the theorem is complete.
\end{proof}

\subsection{Pullback by smooth maps}

Following H\"ormander \cite[Section 8.2]{hor}, \cite[Subsection 2.5]{hormander}, given a smooth map $f:M\rightarrow N$ between the manifolds $M$ and $N$, we denote by $\mathcal{N}_f$ the following conic subset of $T^*N$:
\begin{equation}\label{nor-for-map-mani-cas-nrs}
\mathcal{N}_f:=\{(f(p),\eta)\in T^*N\,|\, \eta\in T^*_{f(p)}N,\, df^*_p\eta=0\in T^*_pM\}.
\end{equation}
If $L$ is a closed conic subset of $T^*N\backslash 0$ which satisfies $L\cap \mathcal{N}_f=\emptyset$, then
\begin{equation}\label{pul-bac-for-con-sub-ctb}
f^*L:=\{(p,df^*_p\eta)\in T^*M\,|\, (f(p),\eta)\in L\}
\end{equation}
is a closed conic subset of $T^*M\backslash 0$. Given a smooth map $g:\widetilde{M}\rightarrow M$ and diffeomorphisms $\iota_1:M\rightarrow \widetilde{M}$ and $\iota_2:N\rightarrow \widetilde{N}$, it is straightforward to check that
\begin{equation}\label{equ-for-con-subwithdifandordmass}
(f\circ g)^*L=g^*f^*L,\quad \mathcal{N}_{\iota_2\circ f}=(\iota^{-1}_2)^*\mathcal{N}_f\quad \mbox{and}\quad \mathcal{N}_{f\circ\iota^{-1}_1}=\mathcal{N}_f.
\end{equation}
When $M$ and $N$ are two open subsets $O$ and $U$ of $\RR^m$ and $\RR^n$ respectively, employing the canonical identifications, \eqref{nor-for-map-mani-cas-nrs} and \eqref{pul-bac-for-con-sub-ctb} boil down to
\begin{align*}
\mathcal{N}_f&=\{(f(x),\eta)\in U\times \RR^n\,|\, x\in O,\,{}^tf'(x)\eta=0\},\\
f^*L&=\{(x,{}^tf'(x)\eta)\in O\times \RR^m\,|\, (f(x),\eta)\in L\}.
\end{align*}
We are now ready to state and prove the result on the pullback by smooth maps.

\begin{theorem}\label{the-pul-bac-for-smcrmdiff}
Let $O$ and $U$ be open subsets of $\RR^m$ and $\RR^n$ respectively, let $f:O\rightarrow U$ be a smooth map and let $L$ be a closed conic subset of $U\times (\RR^n\backslash\{0\})$ satisfying $L\cap \mathcal{N}_f=\emptyset$. The pullback $f^*:\mathcal{C}^{\infty}(U)\rightarrow \mathcal{C}^{\infty}(O)$, $f^*(u)=u\circ f$, uniquely extends to a well defined and continuous mapping $f^*:\DD'^{r_2}_L(U)\rightarrow \DD'^{r_1}_{f^*L}(O)$ when $r_2-r_1>n/2$ and $r_2>n/2$.\\
\indent If $f$ has constant rank $k\geq 1$, then this is valid when $r_2-r_1\geq (n-k)/2$ and $r_2>(n-k)/2$. When $f$ is a submersion, $f^*:\DD'^{r_2}_L(U)\rightarrow \DD'^{r_1}_{f^*L}(O)$ is well-defined and continuous even when $r_2\geq r_1$. Consequently, if $f$ is a diffeomorphism, then $f^*:\DD'^r_L(U)\rightarrow \DD'^r_{f^*L}(O)$ is a topological isomorphism for each $r\in\RR$.
\end{theorem}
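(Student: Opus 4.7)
The plan is to reduce the theorem to three canonical local situations by two factorisations. By partitions of unity on $O$ together with the functoriality relations \eqref{equ-for-con-subwithdifandordmass}, it suffices to work in arbitrarily small charts. The key factorisations are the \emph{graph trick} $f = \pi \circ \iota_f$, with $\iota_f : O \to O \times U$, $\iota_f(x) := (x, f(x))$, an embedding of constant rank $m$ and $\pi : O \times U \to U$ the projection, and the \emph{rank theorem}, which for $f$ of constant rank $k$ writes $f$ locally as $\iota_0 \circ s$ with $s : \RR^m \to \RR^k$ a coordinate submersion and $\iota_0 : \RR^k \hookrightarrow \RR^n$, $\iota_0(x') = (x', 0)$, the coordinate inclusion of codimension $n - k$. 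A direct computation of normal sets shows that $L \cap \mathcal{N}_f = \emptyset$ is preserved through both factorisations (for example $\pi^* L \cap \mathcal{N}_{\iota_f} = \emptyset$ and $\iota_f^* \pi^* L = f^* L$).

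For a diffeomorphism $f$ the statement is essentially change of variables, and a non-stationary phase estimate on the Fourier integral appearing in $\mathfrak{p}_{r; \varphi, V}(f^* u)$ yields a bound by $\mathfrak{p}_{r; \widetilde \varphi, \widetilde V}(u)$ modulo a continuous seminorm on $\DD'(U)$, with $\widetilde \varphi, \widetilde V$ associated to $\varphi, V$ via $f$ in the natural way, and no loss of Sobolev order. The submersion case reduces after local coordinates to the coordinate projection $\pi(y, z) = y$, in which $\pi^* u = u \otimes 1_W$. The identity
\begin{equation*}
\mathcal{F}\bigl((\chi_1 \otimes \chi_2)(u \otimes 1)\bigr)(\eta, \xi) = \mathcal{F}(\chi_1 u)(\eta)\, \mathcal{F}\chi_2(\xi),
\end{equation*}
combined with the Schwartz decay of $\mathcal{F}\chi_2$, lets one split any closed cone $V \subseteq \RR^m$ (in frequency space) at the seminorm scale into a part bounded away from $\{\xi = 0\}$, where Schwartz decay absorbs any weight $\langle(\eta, \xi)\rangle^{2r}$, and a part in a narrow cone around some $(\eta_0, 0)$, where $\langle(\eta, \xi)\rangle \le C \langle \eta \rangle$ and the integral factors. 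Both regions give $\mathfrak{p}_{r; \chi_1 \otimes \chi_2, V}(\pi^* u) \lesssim \mathfrak{p}_{r; \chi_1, V_\eta}(u)$ plus a continuous $\DD'(V)$-seminorm, without loss of regularity, and the inclusion $WF^r(\pi^* u) \subseteq \pi^* WF^r(u) \subseteq \pi^* L$.

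The heart of the proof is the coordinate inclusion $\iota_0$. Here $\iota_0^* v(x') = v(x', 0)$ yields $\mathcal{F}(\iota_0^* v)(\xi') = (2\pi)^{-(n-k)} \int \mathcal{F} v(\xi', \xi'')\, d\xi''$, and Cauchy--Schwarz gives
\begin{equation*}
|\mathcal{F}(\iota_0^* v)(\xi')|^2 \le C \Bigl(\int \langle(\xi', \xi'')\rangle^{-2 r_2}\, d\xi''\Bigr) \int |\mathcal{F} v(\xi', \xi'')|^2 \langle(\xi', \xi'')\rangle^{2 r_2}\, d\xi''.
\end{equation*}
The substitution $\xi'' = \langle \xi' \rangle \eta$ evaluates the first factor to $C_{r_2} \langle \xi' \rangle^{-2 r_2 + (n-k)}$ whenever $r_2 > (n-k)/2$; multiplying by $\langle \xi' \rangle^{2 r_1}$ with $r_1 \le r_2 - (n-k)/2$ and integrating yields the basic Sobolev restriction estimate. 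The same computation localised in the outer variable $\xi' \in V'$, together with a physical-space cutoff $\chi_0$, produces $\mathfrak{p}_{r_1; \chi_0, V'}(\iota_0^* v) \lesssim \mathfrak{p}_{r_2; \widetilde \chi, \widetilde V}(v)$ plus a continuous seminorm on $\DD'(U)$, where $\widetilde \chi \in \DD(U)$ is supported near $\iota_0(\supp \chi_0)$ and $\widetilde V \subseteq \RR^n$ has the form $\{(\xi', \xi'') : \xi' \in V',\ |\xi''| \le M |\xi'|\}$ for any large $M$. The hypothesis $L \cap \mathcal{N}_{\iota_0} = \emptyset$, which for coordinate inclusion means that $L$ avoids $\{(\xi', \xi'') : \xi' = 0\}$ over the image, is exactly what allows such $\widetilde V$ to be chosen with $(\supp \widetilde \chi \times \widetilde V) \cap L = \emptyset$, so that the right-hand seminorm is continuous on $\DD'^{r_2}_L(U)$.

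Combining the pieces, the constant rank statement follows by composing the submersion case (no loss) with the coordinate inclusion case (loss $(n-k)/2$), and the general smooth statement follows from the graph trick by applying the constant rank statement to $\iota_f$ (rank $m$ into $O \times U$ of total dimension $m + n$, hence loss $n/2$), preceded by the submersion $\pi^*$. The unique extension from $\DD(U)$ is justified by the sequential density of $\DD(U)$ in $\DD'^{r_2}_L(U)$ from Proposition \ref{seq-den-comsmf} together with the continuity estimates above. The main obstacle is the cone bookkeeping in the coordinate inclusion case: the cone $\widetilde V$ on the right-hand side must be both wide enough in the normal direction for the Cauchy--Schwarz step to go through and narrow enough to lie in $L^c$; the compactness of $\supp \chi_0$ combined with the openness of $L^c$ near $\iota_0^{-1}(\supp \chi_0) \times \{\xi' = 0\}^c$ makes this possible, but the verification is the most delicate technical point.
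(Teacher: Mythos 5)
Your architecture is genuinely different from the paper's and is, as a plan, viable: the paper proves the general case directly by estimating the oscillatory integrals $\widetilde{I}_{\psi_j\varphi}(\xi,\eta)$ with the stationary phase lemma (CASE 1), and treats the constant rank case by reducing, via the rank theorem and the diffeomorphism case, to the single model map $\hat{f}_0(x',x'')=(x',0_{n-k})$ whose estimate occupies the long Claim in CASE 3; you instead build everything from three model cases (diffeomorphisms, coordinate projections, coordinate inclusions) and obtain the general case from the constant rank case through the graph factorisation $f=\pi\circ\iota_f$, using \eqref{equ-for-con-subwithdifandordmass} and the computation $\pi^*L\cap\mathcal{N}_{\iota_f}=\emptyset\Leftrightarrow L\cap\mathcal{N}_f=\emptyset$. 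If carried out, this would even give the general statement under $r_2-r_1\geq n/2$ rather than $r_2-r_1>n/2$ (codimension of the graph is $n$), which does not conflict with the counterexamples in Appendix \ref{app-for-cou-foroptthpulbacsmom}. Your treatment of the inclusion model is also in the right spirit: the Cauchy--Schwarz step in $\xi''$ with weight $\langle\cdot\rangle^{-2r_2}$ is exactly where $r_2>(n-k)/2$ and $r_2-r_1\geq(n-k)/2$ enter, and you correctly identify that $L\cap\mathcal{N}_{\iota_0}=\emptyset$ is what lets the near-conormal frequencies $\{|\xi''|\gg|\xi'|\}$ be absorbed into a continuous seminorm on $\DD'^{r_2}_L(U)$.

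The genuine gap is the diffeomorphism case, on which your whole reduction scheme leans (every passage to local coordinates, the rank-theorem normal form, and the straightening of the graph all conjugate by diffeomorphisms). You claim that ``a non-stationary phase estimate on the Fourier integral appearing in $\mathfrak{p}_{r;\varphi,V}(f^*u)$'' gives the bound with no loss. Non-stationary phase only controls the region where the frequency $\xi\in G_1$ and the frequency $\eta$ of $u$ lie in separated cones (the phase gradient ${}^tf'(x)\eta-\xi$ is elliptic there); that is the ``modulo a continuous seminorm on $\DD'(U)$'' part. The main contribution comes from $\eta$ in the good cone, where $\xi$ can be arbitrarily close to ${}^tf'(x)\eta$, the phase has critical points, and no pointwise kernel decay is available; what is needed there is an $L^2\to L^2$ bound for the operator with kernel $\langle\xi\rangle^{r}\widetilde{I}_{\varphi}(\xi,\eta)\langle\eta\rangle^{-r}$, i.e.\ precisely the (microlocalised) invariance of $H^r$ under diffeomorphisms. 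This is what the paper's CASE 2 establishes via the dual formulation \eqref{equ-for-con-when-f-diffeops}, the change of variables $y=f(x)$, and the $L^2$-continuity of operators such as $\langle D\rangle^{-r}(|f^{-1\,\prime}|\Op(\widetilde a_j))$ with symbol in $S^0(\RR^{2m})$. As written, your step would fail; you must either import this $\Psi$DO/$L^2$ argument or replace it by an equivalent one (e.g.\ diffeomorphism invariance of local Sobolev spaces combined with a frequency-cone decomposition).

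Two further points are asserted but not argued, and they are exactly where the paper's Claim spends its effort: (i) in the inclusion case, the ``same computation localised in $\xi'\in V'$'' must split the $\eta$-integration of $\mathcal{F}(\phi u)(\eta)\mathcal{F}\chi_0(\xi'-\eta')$ into the good cone, the bad cone (where only the decay of $\mathcal{F}\chi_0$ in $\xi'-\eta'$ is available, and it gives nothing in $\eta'''$ — this forces the extra cone $\{|\eta'''|>C|\eta'|\}$ supplied by $L\cap\mathcal{N}_{\iota_0}=\emptyset$ and again uses $r_2>(n-k)/2$), and the near-conormal region; your Cauchy--Schwarz display cannot be applied directly to $\phi u$ since the weighted $L^2$ integral in $\xi''$ is infinite in the $L$-directions. (ii) Continuity into $\DD'^{r_1}_{f^*L}(O)$ also requires bounding the plain $\DD'(O)$-seminorms of $f^*u$ (the topology includes them, and this is needed even for the extension to be well defined); in the inclusion model this is again a separate estimate using $r_2>(n-k)/2$, which your proposal does not address.
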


\begin{remark}
We consider the trivial case when $f$ has constant rank $0$ in Remark \ref{rem-for-zer-rankmappulbc} below. Before we prove the theorem, we point out the following:\\
\indent $(i)$ When $L=\emptyset$ the theorem states that the pullback is well-defined and continuous map $f^*:H^{r_2}_{\loc}(U)\rightarrow H^{r_1}_{\loc}(O)$ with $r_1$ and $r_2$ as in the theorem. If $m<n$, $O=U\cap \RR^m$ viewed as an open subset of $\RR^m$ and $f:O\rightarrow U$ the canonical imbedding, then $f^*$ is just restriction and the theorem claims that $f^*:H^{r_2}_{\loc}(U)\rightarrow H^{r_1}_{\loc}(O)$ is well-defined and continuous when $r_2\geq r_1+(n-m)/2$ and $r_2>(n-m)/2$. Thus, we can view this case as a local version of the Sobolev imbedding theorem for restrictions to lower dimensional hyperplanes \cite[Theorem 4.12, p. 85]{adams}; this case is also shown in \cite[Appendix B, p. 476]{hor2}.\\
\indent $(ii)$ In the constant rank case, the theorem can not be much improved. In Appendix \ref{app-for-cou-foroptthpulbacsmom} we give examples of maps with constant rank where the theorem fails if $r_2<(n-k)/2$ or $r_2-r_1<(n-k)/2$. The only open problem is the case when $r_2= (n-k)/2$ and $r_2-r_1\geq (n-k)/2$ which is equivalent to $r_2=(n-k)/2$ and $r_1\leq0$.
\end{remark}

\begin{proof}[Proof of Theorem \ref{the-pul-bac-for-smcrmdiff}] Throughout the proof, for $y\in U$, we denote $L_y:=\{\eta\in\RR^n\backslash\{0\}\,|\, (y,\eta)\in L\}$. Notice that $L_y$ is a closed cone in $\RR^n\backslash\{0\}$ (which may be empty!). We employ analogous notations for closed conic subsets of $O\times (\RR^m\backslash\{0\})$.\\
\indent We first make the following observations for a general smooth map $f:O\rightarrow U$ satisfying $L\cap \mathcal{N}_f=\emptyset$. Let $x_0\in O$ be arbitrary but fixed and set $y_0:=f(x_0)\in U$. Let $G$ be an open cone in $\RR^m\backslash\{0\}$ such that $(f^* L)_{x_0}={}^t f'(x_0)L_{y_0}\subseteq G$. The compactness of $\mathbb{S}^{n-1}\cap L_{y_0}$ implies that there are closed cones $V'$ and $V$ in $\RR^n\backslash\{0\}$ such that $L_{y_0}\subseteq\operatorname{int}V'\subseteq V'\subseteq \operatorname{int} V$ and ${}^tf'(x_0) V\subseteq G$; consequently, ${}^tf'(x_0)\eta\neq 0$, $\eta\in V$ (when $f'(x_0)=0$ the condition $L\cap\mathcal{N}_f=\emptyset$ implies $L_{y_0}=\emptyset$ and the above is satisfied with $V=V'=\emptyset$). There is an open neighbourhood $U_0\subseteq U$ of $y_0$ such that $\bigcup_{y\in U_0} L_y\subseteq \operatorname{int} V'$. To see that this is true, assume the contrary. Then there is a sequence $(y^{(j)})_{j\in\ZZ_+}$ which tends to $y_0$ such that for each $j\in\ZZ_+$ there is $\eta^{(j)}\in L_{y^{(j)}}\backslash\operatorname{int} V'$ and $|\eta^{(j)}|=1$. The compactness of $\mathbb{S}^{n-1}$ implies that there is a subsequence $(\eta^{(j_k)})_{k\in\ZZ_+}$ which converges to some $\eta\in\mathbb{S}^{n-1}\backslash\operatorname{int} V'$. Since $(y^{(j_k)},\eta^{(j_k)})\in L$, $k\in\ZZ_+$, we infer $\eta\in L_{y_0}\backslash\operatorname{int} V'$ which is a contradiction since the latter set is empty. Consequently, the open neighbourhood $U_0$ exists. The continuity of $(x,\eta)\mapsto {}^t f'(x)\eta$ together with the compactness of $V\cap \mathbb{S}^{n-1}$ yields that there is a relatively compact open neighbourhood $O_0$ of $x_0$ such that $\overline{O_0}\subseteq O$ and ${}^tf'(x)\eta \in G$, $x\in \overline{O_0}$, $\eta\in V$. We take $O_0$ small enough so that $f(\overline{O_0})\subseteq U_0$. The fact ${}^tf'(x)\eta \in G$, $x\in \overline{O_0}$, $\eta\in V$, together with the continuity of the function $(x,\xi,\eta)\mapsto {}^t f'(x)\eta-\xi$ implies that there is $\varepsilon>0$ such that $|{}^t f'(x)\eta-\xi|\geq \varepsilon$ on the compact set $\overline{O_0} \times \{(\xi,\eta)\in (\RR^m\backslash G)\times (V\cup\{0\}) \,|\, |\xi|+|\eta|=1\}$. Consequently
\begin{equation}\label{ine-for-staphamethhor}
|{}^t f(x)\eta-\xi|\geq \varepsilon(|\xi|+|\eta|),\quad x\in\overline{O_0},\, \xi\in \RR^m\backslash G,\, \eta\in V\cup\{0\}.
\end{equation}
Given $\varphi\in\DD(O)$, we define
\begin{align*}
&I_{\varphi}:\RR^n\rightarrow \CC,\quad I_{\varphi}(\eta):=\int_O e^{i f(x)\eta} \varphi(x)dx,\\
&\widetilde{I}_{\varphi}:\RR^m\times \RR^n\rightarrow \CC,\quad \widetilde{I}_{\varphi}(\xi,\eta):=\int_O e^{i (f(x)\eta-x\xi)} \varphi(x)dx.
\end{align*}
Clearly $I_{\varphi}\in\DD_{L^{\infty}}(\RR^n)$, $\widetilde{I}_{\varphi}\in\DD_{L^{\infty}}(\RR^{m+n})$ and $\widetilde{I}_{\varphi}(0,\eta)=I_{\varphi}(\eta)$, $\eta\in\RR^n$. For $u\in \DD(U)$ and $\varphi\in\DD(O)$ it holds that
\begin{align*}
\langle f^*u,\varphi\rangle=\frac{1}{(2\pi)^n} \int_O\int_{\RR^n} e^{i f(x)\eta} \mathcal{F}u(\eta) \varphi(x)d\eta dx=\frac{1}{(2\pi)^n}\int_{\RR^n} \mathcal{F}u(\eta) I_{\varphi}(\eta)d\eta.
\end{align*}
\indent\underline{CASE 1: $f$ is smooth and satisfies $L\cap\mathcal{N}_f=\emptyset$.} We show that $f^*:\mathcal{C}^{\infty}(U)\rightarrow \mathcal{C}^{\infty}(O)$ uniquely extends to a well-defined continuous mapping $f^*:\DD'^{r_2}_L(U)\rightarrow\DD'^{r_1}_{f^*L}(O)$, when $r_2-r_1>n/2$ and $r_2>n/2$. Let $\varphi\in \DD(O)\backslash\{0\}$ and the closed cone $\emptyset\neq G_1\subseteq \RR^m$ be such that $(\supp\varphi\times G_1)\cap f^*L=\emptyset$ (when $f^*L=\RR^m\backslash\{0\}$, we take $G_1=\{0\}$). Set $G:=\RR^m\backslash G_1$. Then $G$ is an open cone in $\RR^m\backslash\{0\}$ and ${}^tf'(x)L_{f(x)}\subseteq G$, $x\in\supp\varphi$. We apply the above construction for this $G$ and each $x\in \supp\varphi$ to obtain the open neighbourhoods $O_x$ and $U_x$ of $x$ and $f(x)$ respectively having the above properties. As $\supp\varphi$ is compact there are finitely many such $O_j$ $j=1,\ldots,l$, whose union covers $\supp\varphi$. We denote by $U_j$, $j=1,\ldots,l$, the corresponding subsets of $U$ and by $V'_j$ and $V_j$, $j=1,\ldots,l$, the corresponding closed cones in $\RR^n\backslash\{0\}$ from the above construction. Let $\psi_j\in\DD(O_j)$, $0\leq \psi_j\leq 1$, $j=1,\ldots, l$, be such that $\sum_{j=1}^l\psi_j=1$ on a neighbourhood of $\supp\varphi$. Pick $\phi_j\in\DD(U_j)$ such that $\phi_j=1$ on a neighbourhood of $f(\overline{O_j})$, $j=1,\ldots,l$. For $u\in\mathcal{C}^{\infty}(U)$, we infer
\begin{align}
\mathcal{F}(\varphi f^*u)(\xi)&= \sum_{j=1}^l \langle f^*(\phi_ju), e^{-i\,\cdot\,\xi} \psi_j\varphi\rangle\label{equ-for-part-uni-nowrtsh-frtpl}\\
&= \frac{1}{(2\pi)^n} \sum_{j=1}^l\int_{\RR^n}\mathcal{F}(\phi_j u)(\eta)\widetilde{I}_{\psi_j\varphi}(\xi,\eta)d\eta= \frac{1}{(2\pi)^n}\sum_{j=1}^l(I_{1;j}(\xi)+I_{2;j}(\xi)),\nonumber
\end{align}
with
$$
I_{1;j}(\xi):=\int_{V_j}\mathcal{F}(\phi_j u)(\eta)\widetilde I_{\psi_j\varphi}(\xi,\eta)d\eta,\quad I_{2;j}(\xi):=\int_{\RR^n\backslash V_j}\mathcal{F}(\phi_j u)(\eta)\widetilde{I}_{\psi_j\varphi}(\xi,\eta)d\eta.
$$
Hence,
\begin{equation}\label{sum-for-sem-nor-for-wfsespa}
\mathfrak{p}_{r_1;\varphi, G_1}(f^*u)\leq \sum_{j=1}^l\|\langle \cdot\rangle^{r_1}I_{1;j}\|_{L^2(G_1)}+\sum_{j=1}^l\|\langle \cdot\rangle^{r_1}I_{2;j}\|_{L^2(\RR^m)}.
\end{equation}
In view of \eqref{ine-for-staphamethhor}, the stationary phase method \cite[Theorem 7.7.1, p. 216]{hor} verifies that for every $N>0$ there is $C_N>0$ such that $|\widetilde{I}_{\psi_j\varphi}(\xi,\eta)|\leq C_N(1+|\xi|+|\eta|)^{-N}$, $\xi\in G_1$, $\eta\in V_j$, $j=1,\ldots,l$. Hence, employing the same technique as in the proof of the Claim in the proof of Proposition \ref{pro-for-top-imbedingthforc}, one shows that
$$
\DD'(U)\rightarrow[0,\infty),\, u\mapsto \left(\int_{G_1}\left(\int_{V_j}\langle \xi\rangle^{r_1}|\mathcal{F}(\phi_j u)(\eta)||\widetilde I_{\psi_j\varphi}(\xi,\eta)|d\eta\right)^2d\xi\right)^{1/2},\,\, j=1,\ldots,l,
$$
are continuous seminorms on $\DD'(U)$. Hence, the first sum in \eqref{sum-for-sem-nor-for-wfsespa} is bounded by a continuous seminorm on $\DD'(U)$ of $u$. To estimate the second sum in \eqref{sum-for-sem-nor-for-wfsespa}, first notice that
$$
|I_{2;j}(\xi)|\leq \|\langle \cdot\rangle^{r_2}\mathcal{F}(\phi_j u)\|_{L^2(\RR^n\backslash\operatorname{int}V_j)}\left(\int_{\RR^n\backslash V_j} \langle\eta\rangle^{-2r_2}|\widetilde{I}_{\psi_j\varphi}(\xi,\eta)|^2d\eta\right)^{1/2}
$$
and hence,
\begin{equation}\label{int-for-spl-fortwpa}
\|\langle \cdot\rangle^{r_1}I_{2;j}\|_{L^2(\RR^m)}\leq \mathfrak{p}_{r_2;\phi_j,\RR^n\backslash\operatorname{int} V_j}(u) \left(\int_{\RR^m\times\RR^n}\langle \xi\rangle^{2r_1}\langle\eta\rangle^{-2r_2}|\widetilde I_{\psi_j\varphi}(\xi,\eta)|^2 d\xi d\eta\right)^{1/2}.
\end{equation}
Notice that $\widetilde{I}_{\psi_j\varphi}(\xi,\eta)=\mathcal{F}(e^{if(\cdot)\eta} \psi_j\varphi)(\xi)$, $\xi\in\RR^m$, $\eta\in\RR^n$. If $r_1\leq 0$ then the last integral in \eqref{int-for-spl-fortwpa} is bounded by
$$
\left(\int_{\RR^n}\langle\eta\rangle^{-2r_2}\|\mathcal{F}(e^{if(\cdot)\eta}\psi_j\varphi)\|^2_{L^2(\RR^m)}d\eta\right)^{1/2}= (2\pi)^{m/2}\|\psi_j\varphi\|_{L^2(\RR^m)}\|\langle\cdot\rangle^{-r_2}\|_{L^2(\RR^n)}
$$
which is finite since $r_2>n/2$. Assume now that $r_1>0$. As standard, denote $\lfloor r_1\rfloor=\max\{k\in\ZZ\,|\, k\leq r_1\}$ and pick $k\in\NN$ and $l\in\ZZ_+$ so that $r_1\leq\lfloor r_1\rfloor+k/l$ and $r_2-\lfloor r_1\rfloor-k/l>n/2$. We estimate as follows:
\begin{align*}
\langle \xi\rangle^{2lr_1}&|\mathcal{F}(e^{if(\cdot)\eta}\psi_j\varphi)(\xi)|^{2l}\\
&\leq (1+|\xi|^2)^{l\lfloor r_1\rfloor +k} |\mathcal{F}(e^{if(\cdot)\eta}\psi_j\varphi)(\xi)|^{2l}\\
&= |\mathcal{F}(e^{if(\cdot)\eta}\psi_j\varphi)(\xi)|^{2(l-1)}\sum_{|\alpha|\leq l\lfloor r_1\rfloor+k}\frac{(l\lfloor r_1\rfloor+k)!}{(l\lfloor r_1\rfloor+k-|\alpha|)!\alpha!} |\mathcal{F}(\partial^{\alpha}(e^{if(\cdot)\eta}\psi_j\varphi))(\xi)|^2\\
&\leq C'_1|\mathcal{F}(e^{if(\cdot)\eta}\psi_j\varphi)(\xi)|^{2(l-1)} \sum_{|\alpha|\leq l\lfloor r_1\rfloor+k}\sum_{|\beta|\leq|\alpha|}\langle\eta\rangle^{2|\beta|}|\mathcal{F}(e^{if(\cdot)\eta}\varphi_{\alpha,\beta,j})(\xi)|^2,
\end{align*}
for some $\varphi_{\alpha,\beta,j}\in \DD(O_j)$. Hence
\begin{multline*}
\langle \xi\rangle^{r_1}|\mathcal{F}(e^{if(\cdot)\eta}\psi_j\varphi)(\xi)|\\
\leq C'^{1/(2l)}_1\langle\eta\rangle^{\lfloor r_1\rfloor+k/l} \sum_{|\alpha|\leq l\lfloor r_1\rfloor+k}\sum_{|\beta|\leq|\alpha|}|\mathcal{F}(e^{if(\cdot)\eta}\varphi_{\alpha,\beta,j})(\xi)|^{1/l} |\mathcal{F}(e^{if(\cdot)\eta}\psi_j\varphi)(\xi)|^{(l-1)/l}.
\end{multline*}
We employ the H\"older inequality with $p=l$ and $q=l/(l-1)$ to infer
\begin{align*}
\|&\langle \cdot\rangle^{r_1}\mathcal{F}(e^{if(\cdot)\eta}\psi_j\varphi)\|_{L^2(\RR^m)}\\
&\leq C'^{1/(2l)}_1\langle\eta\rangle^{\lfloor r_1\rfloor+k/l}  \sum_{|\alpha|\leq l\lfloor r_1\rfloor+k}\sum_{|\beta|\leq|\alpha|}\|\mathcal{F}(e^{if(\cdot)\eta}\varphi_{\alpha,\beta,j})\|^{1/l}_{L^2(\RR^m)} \|\mathcal{F}(e^{if(\cdot)\eta}\psi_j\varphi)\|^{(l-1)/l}_{L^2(\RR^m)}\\
&= (2\pi)^{m/2}C'^{1/(2l)}_1\langle\eta\rangle^{\lfloor r_1\rfloor+k/l} \|\psi_j\varphi\|^{(l-1)/l}_{L^2(\RR^m)} \sum_{|\alpha|\leq l\lfloor r_1\rfloor+k}\sum_{|\beta|\leq|\alpha|}\|\varphi_{\alpha,\beta,j}\|^{1/l}_{L^2(\RR^m)} \leq C'_2\langle\eta\rangle^{\lfloor r_1\rfloor+k/l}
\end{align*}
(with the obvious modifications when $l=1$ and thus $q=\infty$). Consequently, the last integral in \eqref{int-for-spl-fortwpa} is bounded by
\begin{align*}
\left(\int_{\RR^n}\langle\eta\rangle^{-2r_2}\|\langle \cdot\rangle^{r_1} \mathcal{F}(e^{if(\cdot)\eta}\psi_j\varphi)\|^2_{L^2(\RR^m)} d\eta\right)^{1/2}\leq C'_2\left(\int_{\RR^n}\langle\eta\rangle^{-2r_2+2\lfloor r_1\rfloor+2k/l}d\eta\right)^{1/2}<\infty.
\end{align*}
Employing these bounds in \eqref{int-for-spl-fortwpa}, the inequality \eqref{sum-for-sem-nor-for-wfsespa} immediately gives
$$
\mathfrak{p}_{r_1;\varphi, G_1}(f^*u)\leq \mathfrak{p}(u)+C'\sum_{j=1}^l\mathfrak{p}_{r_2;\phi_j,\RR^n\backslash\operatorname{int} V_j}(u),\quad u\in\mathcal{C}^{\infty}(U),
$$
where $\mathfrak{p}$ is a continuous seminorm of $\DD'(U)$; the summands are continuous seminorms on $\DD'^{r_2}_L(U)$ since $(U_j\times (\RR^n\backslash\operatorname{int}V_j))\cap L=\emptyset$, $j=1,\ldots,l$.\\
\indent Next we show similar bounds for $\mathfrak{p}(f^*u)$ where $\mathfrak{p}$ is an arbitrary continuous seminorm on $\DD'(O)$. Once we show this, we can deduce that $f^*:\mathcal{C}^{\infty}(U)\rightarrow \mathcal{C}^{\infty}(O)$ is continuous when $\mathcal{C}^{\infty}(U)$ and $\mathcal{C}^{\infty}(O)$ are equipped with the topologies induced by $\DD'^{r_2}_L(U)$ and $\DD'^{r_1}_{f^*L}(O)$ respectively, which, in view of Proposition \ref{seq-den-comsmf}, yields that $f^*$ uniquely extends to a well-defined and continuous map $f^*:\DD'^{r_2}_L(U)\rightarrow \DD'^{r_1}_{f^*L}(O)$, when $r_2-r_1>n/2$ and $r_2>n/2$. Let $\mathfrak{p}$ be a continuous seminorm on $\DD'(O)$; without loss in generality we can assume that $\mathfrak{p}=\sup_{\chi\in B}|\langle\cdot,\chi\rangle|$ for some bounded subset $B$ of $\DD(O)$. There exists $K\subset\subset O$ such that $B$ is a bounded subset of $\DD_K$. In the same way as above, we find open sets $O_j\subseteq O$, $j=1,\ldots,l$, with compact closures in $O$ which cover $K$ and corresponding open set $U_j\subseteq U$ and closed cones $V'_j$, $V_j$ in $\RR^n\backslash\{0\}$, $j=1,\ldots,l$ (apply the above construction with $G=\RR^m\backslash\{0\}$). As above, $\psi_j\in\DD(O_j)$, $0\leq \psi_j\leq 1$, $j=1,\ldots,l$, denotes a partition of unity on a neighbourhood of $K$, and $\phi_j\in\DD(U_j)$ is such that $\phi_j=1$ on a neighbourhood of $f(\overline{O_j})$, $j=1,\ldots,l$. For $u\in\mathcal{C}^{\infty}(U)$ and $\chi\in B$, we have
\begin{align*}
|\langle f^*u,\chi\rangle|&\leq\sum_{j=1}^l|\langle f^*(\phi_j u),\psi_j\chi\rangle|\leq \frac{1}{(2\pi)^n}\sum_{j=1}^l\int_{\RR^n}|\mathcal{F}(\phi_j u)(\eta)||I_{\psi_j\chi}(\eta)| d\eta\\
&\leq \sum_{j=1}^l\int_{V_j}|\mathcal{F}(\phi_j u)(\eta)||I_{\psi_j\chi}(\eta)| d\eta+\sum_{j=1}^l\int_{\RR^n\backslash V_j}|\mathcal{F}(\phi_j u)(\eta)||I_{\psi_j\chi}(\eta)| d\eta.
\end{align*}
Employing \eqref{ine-for-staphamethhor} with $\xi=0$ in the stationary phase method \cite[Theorem 7.7.1, p. 216]{hor}, one verifies that for every $N>0$ there is $C_N>0$ such that
\begin{equation}\label{ine-stafor-nep-srt}
|I_{\psi_j\chi}(\eta)|\leq C_N\langle \eta\rangle^{-N},\quad \eta\in V_j,\,\chi\in B.
\end{equation}
Hence, by employing the same technique as in the proof of the Claim in the proof of Proposition \ref{pro-for-top-imbedingthforc}, one shows that for each $j\in\{1,\ldots,l\}$,
$$
\DD'(U)\rightarrow[0,\infty),\quad u\mapsto \sup_{\chi\in B}\int_{V_j}|\mathcal{F}(\phi_j u)(\eta)||I_{\psi_j\chi}(\eta)| d\eta,
$$
is a continuous seminorm on $\DD'(U)$. Furthermore, notice that
\begin{align*}
\int_{\RR^n\backslash V_j}|\mathcal{F}(\phi_j u)(\eta)||I_{\psi_j\chi}(\eta)| d\eta\leq \mathfrak{p}_{r_2;\phi_j,\RR^n\backslash\operatorname{int}V_j}(u) \left(\int_{\RR^n}|I_{\psi_j\chi}(\eta)|^2\langle \eta\rangle^{-2r_2} d\eta\right)^{1/2}
\end{align*}
and the very last integral is uniformly bounded for all $\chi\in B$ since $r_2>n/2$ and $\sup_{\chi\in B}\|I_{\psi_j\chi}\|_{L^{\infty}(\RR^n)}<\infty$. This shows that $\sup_{\chi\in B}|\langle f^*u,\chi\rangle|$ is bounded by a continuous seminorm on $\DD'^{r_2}_L(U)$ and the proof of CASE 1 is complete.\\
\indent \underline{CASE 2: $f$ is a local diffeomorphism.} \footnote{In this case $\mathcal{N}_f=f(O)\times\{0\}$ and thus $L\cap\mathcal{N}_f=\emptyset$ for any $L$.} We show that $f^*$ uniquely extends to a well-defined and continuous mapping $f^*:\DD'^r_L(U)\rightarrow\DD'^r_{f^*L}(O)$ for each $r\in\RR$. Notice that $n=m$. By employing partitions of unity, one can easily show that $f^*$ uniquely extends to a continuous mapping $f^*:\DD'(U)\rightarrow \DD'(O)$ (cf. \cite[Subsection 5.2]{D1}). Hence, it suffices to provide bounds for $\mathfrak{p}_{r;\varphi, G_1}(f^*u)$ where $\varphi\in \DD(O)\backslash\{0\}$ and the closed cone $\emptyset\neq G_1\subseteq \RR^m$ are such that $(\supp\varphi\times G_1)\cap f^*L=\emptyset$. We proceed the same as in CASE 1 to obtain \eqref{sum-for-sem-nor-for-wfsespa} (of course, with $r$ in place of $r_1$), but now in the construction of $O_j$, $j=1,\ldots,l$, we make them sufficiently small so that $f$ is a diffeomorphism from an open neighbourhood of $\overline{O_j}$ onto an open subset of $U_j$. As before, the first sum in \eqref{sum-for-sem-nor-for-wfsespa} is bounded by a continuous seminorm on $\DD'(U)$ of $u$. We estimate the second sum as follows. Fix $j\in\{1,\ldots,l\}$. We are going to show that
\begin{equation}\label{equ-for-con-when-f-diffeops}
\left|\int_{\RR^m} I_{2;j}(\xi)\chi(\xi)d\xi\right|\leq C\|\langle\cdot\rangle^{-r}\chi\|_{L^2(\RR^m)}\mathfrak{p}_{r;\phi_j,\RR^m\backslash\operatorname{int} V_j}(u),\quad \chi\in\DD(\RR^m);
\end{equation}
this immediately gives $\|\langle\cdot\rangle^rI_{2;j}\|_{L^2(\RR^m)}\leq C\mathfrak{p}_{r;\phi_j,\RR^m\backslash\operatorname{int} V_j}(u)$ which completes the proof of the existence, continuity and uniqueness of the extension $f^*:\DD'^r_L(U)\rightarrow\DD'^r_{f^*L}(O)$. For simpler notations, set $\varphi_j:=\psi_j\varphi\in\DD(O_j)$. Let $\chi\in\DD(\RR^m)$ and set $\widetilde{\chi}:=\langle\cdot\rangle^{-r}\chi\in\DD(\RR^m)$. Notice that
\begin{align*}
\left|\int_{\RR^m} I_{2;j}(\xi)\chi(\xi)d\xi\right|&\leq\int_{\RR^m\backslash\operatorname{int} V_j}|\mathcal{F}(\phi_j u)(\eta)|\left|\int_{O_j\times \RR^m}e^{if(x)\eta-ix\xi}\varphi_j(x)\chi(\xi)dx d\xi\right|d\eta\\
&=\int_{\RR^m\backslash \operatorname{int} V_j}|\mathcal{F}(\phi_j u)(\eta)|\left|\int_{O_j} e^{if(x)\eta}\varphi_j(x)\langle D\rangle^r\mathcal{F}\widetilde{\chi}(x)dx\right|d\eta.
\end{align*}
Pick $\widetilde{\psi}_j\in\DD(O_j)$ such that $\widetilde{\psi}_j=1$ on a neighbourhood of $\supp\psi_j$ and write $\varphi_j\langle D\rangle^r= \varphi_j\langle D\rangle^r\widetilde{\psi}_j+\varphi_j\langle D\rangle^r(1-\widetilde{\psi}_j)$. There are $a_j,a'_j\in S^r_c(O_j\times \RR^m)$ so that $\Op(a_j)=\varphi_j\langle D\rangle^r\widetilde{\psi}_j$ and $\Op(a'_j)=\varphi_j\langle D\rangle^r(1-\widetilde{\psi}_j)$. By construction, the kernel of $\Op(a_j)$ has compact support in $O_j\times O_j$. Since pseudo-differential operators have kernels that are smooth outside of the diagonal, $\Op(a'_j)\in\Psi^{-\infty}(O_j)$; consequently, $a'_j\in S^{-\infty}_{\loc}(O_j\times\RR^m)$ which yields $a'_j\in S^{-\infty}_c(O_j\times\RR^m)$. Choose $\widetilde{\psi}'_j\in\DD(O_j)$ so that $\widetilde{\psi}'_j=1$ on a neighbourhood of $\supp\widetilde{\psi}_j$ and notice that
\begin{align}
\Bigg|&\int_{\RR^m} I_{2;j}(\xi)\chi(\xi)d\xi\Bigg|\nonumber\\
&\leq \mathfrak{p}_{r;\phi_j;\RR^m\backslash\operatorname{int}V_j}(u) \left(\int_{\RR^m}\langle\eta\rangle^{-2r}\left|\int_{O_j} e^{if(x)\eta}\Op(a_j)(\widetilde{\psi}'_j\mathcal{F}\widetilde{\chi})(x)dx\right|^2d\eta\right)^{1/2}\nonumber\\
&{}\quad+\mathfrak{p}_{r;\phi_j;\RR^m\backslash\operatorname{int}V_j}(u)\left(\int_{\RR^m}\langle\eta\rangle^{-2r}\left|\int_{O_j} e^{if(x)\eta}\Op(a'_j)(\mathcal{F}\widetilde{\chi})(x)dx\right|^2d\eta\right)^{1/2}.\label{ter-for-coe-wit-conopeest}
\end{align}
In the first term we change variables $y=f(x)$. In view of \cite[Theorem 18.1.17, p. 81]{hor2}, $\Op(a_j)(\widetilde{\psi}'_j\mathcal{F}\widetilde{\chi})\circ f^{-1}=\Op(\widetilde{a}_j)((\widetilde{\psi}'_j\mathcal{F}\widetilde{\chi})\circ f^{-1})$ with $\widetilde{a}_j\in S^r_c(f(O_j)\times \RR^m)$. We infer
\begin{align*}
&\left(\int_{\RR^m}\langle\eta\rangle^{-2r}\left|\int_{O_j} e^{if(x)\eta}\Op(a_j)(\widetilde{\psi}'_j\mathcal{F}\widetilde{\chi})(x)dx\right|^2d\eta\right)^{1/2}\\
&= \left(\int_{\RR^m}\langle\eta\rangle^{-2r}\left|\int_{f(O_j)} e^{iy\eta}\Op(\widetilde{a}_j)((\widetilde{\psi}'_j\mathcal{F}\widetilde{\chi})\circ f^{-1})(y)|f^{-1\,'}(y)|dy\right|^2d\eta\right)^{1/2}\\
&=(2\pi)^m\left(\int_{\RR^m}\langle\eta\rangle^{-2r}\left|\mathcal{F}^{-1} \left(\Op(\widetilde{a}_j)((\widetilde{\psi}'_j\mathcal{F}\widetilde{\chi})\circ f^{-1})|f^{-1\,'}|\right)(\eta)\right|^2d\eta\right)^{1/2}\\
&=(2\pi)^{m/2}\left\|\langle D\rangle^{-r}\left(|f^{-1\,'}|\Op(\widetilde{a}_j)((\widetilde{\psi}'_j\mathcal{F}\widetilde{\chi})\circ f^{-1})\right)\right\|_{L^2(\RR^m)}\\
&\leq C_1 \|(\widetilde{\psi}'_j\mathcal{F}\widetilde{\chi})\circ f^{-1}\|_{L^2(\RR^m)}\leq C_2\|\mathcal{F}\widetilde{\chi}\|_{L^2(\RR^m)}=(2\pi)^{m/2}C_2\|\langle \cdot\rangle^{-r}\chi\|_{L^2(\RR^m)},
\end{align*}
where the second to last inequality follows from the fact that $\langle D\rangle^{-r}(|f^{-1\,'}|\Op(\widetilde{a}_j))$ is a $\Psi$DO with symbol in $S^0(\RR^{2m})$ (cf. \cite[Theorem 18.1.17, p. 81]{hor2}) and hence continuous on $L^2(\RR^m)$. It remains to show a similar estimate for the last integral in \eqref{ter-for-coe-wit-conopeest}. Pick $k\in\ZZ_+$ such that $4k+2r>m$. We change variables $y=f(x)$ and infer
\begin{align*}
&\left(\int_{\RR^m}\langle\eta\rangle^{-2r}\left|\int_{O_j} e^{if(x)\eta}\Op(a'_j)(\mathcal{F}\widetilde{\chi})(x)dx\right|^2d\eta\right)^{1/2}\\
&= \left(\int_{\RR^m}\langle\eta\rangle^{-2r-4k}\left|\int_{f(O_j)} e^{iy\eta}(\operatorname{Id}-\Delta)^k\left(|f^{-1\, '}(y)| \Op(a'_j)(\mathcal{F}\widetilde{\chi})(f^{-1}(y))\right)dy\right|^2d\eta\right)^{1/2}\\
&\leq C'_1\left\|(\operatorname{Id}-\Delta)^k\left(|f^{-1\, '}| \Op(a'_j)(\mathcal{F}\widetilde{\chi})\circ f^{-1}\right)\right\|_{L^{\infty}(f(O_j))}\\
&\leq C'_2 \sup_{|\alpha|\leq 2k}\|\partial^{\alpha}\Op(a'_j)(\mathcal{F}\widetilde{\chi})\|_{L^{\infty}(\RR^m)}\leq C'_3\|\mathcal{F}\widetilde{\chi}\|_{L^2(\RR^m)}=(2\pi)^{m/2}C'_3\|\langle\cdot\rangle^{-r}\chi\|_{L^2(\RR^m)},
\end{align*}
where the last inequality follows from the fact that $\partial^{\alpha}\Op(a'_j)$ is a pseudo-differential operator with symbol in $S^{-\infty}_c(\RR^{2m})$ for all $\alpha\in\NN^m$. This completes the proof of \eqref{equ-for-con-when-f-diffeops}.\\
\indent \underline{CASE 3: $f$ has constant rank $k\geq 1$ and satisfies $L\cap\mathcal{N}_f=\emptyset$.} Let $r_1,r_2\in\RR$ be such that $r_2-r_1\geq (n-k)/2$ and $r_2>(n-k)/2$; when $f$ is a submersion, we only assume that $r_2\geq r_1$. For $x\in\RR^m$, we denote $x=(x',x'')$, with $x'\in\RR^k$ and $x''\in\RR^{m-k}$. Similarly, for $\eta\in\RR^n$, we denote $\eta=(\eta',\eta''')$, with $\eta'\in\RR^k$ and $\eta'''\in\RR^{n-k}$. Furthermore, when it is important but not clear from the context, we will denote by $0_l$ the zero in $\RR^l$, $l\in\ZZ_+$. By the constant rank theorem \cite[Theorem 4.12, p. 81]{lee}, for each $x^{(0)}\in O$ there are open neighbourhoods $O_0\subseteq O$ of $x^{(0)}$ and $U_0\subseteq U$ of $f(x^{(0)})$ and diffeomorphisms $\kappa:O_0\rightarrow \widetilde{O}_0$ and $\iota:U_0\rightarrow \widetilde{U}_0$ satisfying $\kappa(x^{(0)})=0\in \widetilde{O}_0$ and $\iota(f(x^{(0)}))=0\in\widetilde{U}_0$ such that $f(O_0)\subseteq U_0$ and
\begin{equation}\label{for-ofm-contrankth-for-newmm}
\hat{f}_0(x):=\iota\circ f_{|O_0}\circ\kappa^{-1}(x)=(x',0_{n-k}),\quad x=(x',x'')\in \widetilde{O}_0;
\end{equation}
of course, when $f$ is a submersion then $k=n$ and $\hat{f}_0(x)=x'$, $x=(x',x'')\in \widetilde{O}_0$. We make the following\\
\\
\noindent \textbf{Claim.} Let $\widetilde{L}$ be a closed conic subset of $\widetilde{U}_0\times(\RR^n\backslash\{0\})$ which satisfies $\widetilde{L}\cap \mathcal{N}_{\hat{f}_0}=\emptyset$. Then the map $\hat{f}_0^*:\mathcal{C}^{\infty}(\widetilde{U}_0)\rightarrow\mathcal{C}^{\infty}(\widetilde{O}_0)$ is continuous when $\mathcal{C}^{\infty}(\widetilde{U}_0)$ and $\mathcal{C}^{\infty}(\widetilde{O}_0)$ are equipped with the topologies induced by $\DD'^{r_2}_{\widetilde{L}}(\widetilde{U}_0)$ and $\DD'^{r_1}_{\hat{f}^*_0\widetilde{L}}(\widetilde{O}_0)$ respectively.\\
\\
\noindent Before we prove the claim, we show how CASE 3 follows from it. Let $\varphi\in \DD(O)\backslash\{0\}$ and the closed cone $\emptyset\neq G_1\subseteq \RR^m$ be such that $(\supp\varphi\times G_1)\cap f^*L=\emptyset$. Arguing as in CASE 1, one can find open sets $O_1,\ldots, O_l\subseteq O$ each with compact closure in $O$, open sets $U_1,\ldots,U_l\subseteq U$ and diffeomorphisms $\kappa_j:O_j\rightarrow\widetilde{O}_j$ and $\iota_j:U_j\rightarrow \widetilde{U}_j$, $j=1,\ldots,l$, such that $\supp\varphi\subseteq \bigcup_{j=1}^l O_j$, $f(\overline{O_j})\subseteq U_j$ and $\hat{f}_j:=\iota_j\circ f_{|O_j}\circ\kappa_j^{-1}:\widetilde{O}_j\rightarrow \widetilde{U}_j$ is given by \eqref{for-ofm-contrankth-for-newmm}. Let $\psi_j\in\DD(O_j)$, $0\leq \psi_j\leq 1$, $j=1,\ldots, l$, be a partition of unity on a neighbourhood of $\supp\varphi$ and let $\phi_j\in\DD(U_j)$ be such that $\phi_j=1$ on a neighbourhood of $f(\overline{O_j})$. Set $\varphi_j:=\psi_j\varphi\in\DD(O_j)$, $f_j:=f_{|O_j}:O_j\rightarrow U_j$ and $L_j:=L\cap (U_j\times(\RR^n\backslash\{0\}))$, $j=1,\ldots,l$. For $u\in\mathcal{C}^{\infty}(U)$, we have (since $\phi_j u\in \DD(U_j)$)
$$
\mathcal{F}(\varphi f^* u)=\sum_{j=1}^l \mathcal{F}(\varphi_j f^*(\phi_j u))=\sum_{j=1}^l \mathcal{F}(\varphi_j \kappa_j^*\hat{f}_j^*\iota_j^{-1\,*}(\phi_j u))
$$
and consequently $\mathfrak{p}_{r_1;\varphi, G_1}(f^*u)\leq \sum_{j=1}^l\mathfrak{p}_{r_1;\varphi_j, G_1}(\kappa_j^*\hat{f}_j^*\iota_j^{-1\,*}(\phi_j u))$. Fix $j\in\{1,\ldots,l\}$. In view of CASE 2, $\kappa_j^*$ and $\iota_j^{-1\,*}$ uniquely extend to well-defined and continuous mappings $\kappa_j^*:\DD'^{r_1}_{\hat{f}_j^*\iota_j^{-1\,*}L_j}(\widetilde{O}_j)\rightarrow \DD'^{r_1}_{f^*_jL_j}(O_j)$ (notice that $L_j\cap \mathcal{N}_{f_j}=\emptyset$ and $\hat{f}_j^*\iota_j^{-1\, *}L_j=\kappa_j^{-1\, *}f_j^*L_j$) and $\iota_j^{-1\,*}:\DD'^{r_2}_{L_j}(U_j)\rightarrow\DD'^{r_2}_{\iota_j^{-1\, *}L_j}(\widetilde{U}_j)$. The Claim yields that $\hat{f}_j^*$ uniquely extends to a well-defined and continuous mapping $\hat{f}_j^*:\DD'^{r_2}_{\iota_j^{-1\, *}L_j}(\widetilde{U}_j)\rightarrow \DD'^{r_1}_{\hat{f}_j^*\iota_j^{-1\,*}L_j}(\widetilde{O}_j)$ ($\iota_j^{-1\,*}L_j\cap \mathcal{N}_{\hat{f}_j}=\emptyset$ in view of \eqref{equ-for-con-subwithdifandordmass}). Consequently, there are $\chi_{j'}\in \DD(U_j)$ and closed cones $V_{j'}\subseteq \RR^n$, $j'=1,\ldots, k'$ ($k'$ depends on $j$) satisfying $(\supp\chi_{j'}\times V_{j'})\cap L_j=\emptyset$, $j'=1,\ldots,k'$, a continuous seminorm $\mathfrak{p}$ on $\DD'(U_j)$ (whence, continuous on $\DD'(U)$ as well) and $C>0$ such that
\begin{equation}\label{sem-forabcc-lks}
\mathfrak{p}_{r_1;\varphi_j, G_1}(\kappa_j^*\hat{f}_j^*\iota_j^{-1\,*}(\phi_j u))\leq C\mathfrak{p}(\phi_j u)+C\sum_{j'=1}^{k'} \mathfrak{p}_{r_2;\chi_{j'},V_{j'}}(\phi_j u);
\end{equation}
notice that each term on the right-hand side is a continuous seminorm on $\DD'^{r_2}_L(U)$ of $u$ (as $\mathfrak{p}_{r_2;\chi_{j'},V_{j'}}(\phi_j u)=\mathfrak{p}_{r_2;\chi_{j'}\phi_j,V_{j'}}(u)$). It remains to show similar bounds for $\mathfrak{p}(f^*u)$, where $\mathfrak{p}$ is an arbitrary continuous seminorm on $\DD'(O)$. Without loss of generality, we can assume that $\mathfrak{p}=\sup_{\chi\in B}|\langle \cdot, \chi\rangle|$ for some bounded subset $B$ of $\DD'(O)$. There is a compact subset $K$ of $O$ such that $B$ is a bounded subset of $\DD_K$. Now, as before, one applies a partition of unity together with the Claim and CASE 2 to show analogous bounds for $\mathfrak{p}(u)$, $u\in\mathcal{C}^{\infty}(O)$, as in \eqref{sem-forabcc-lks}. This completes the proof of CASE 3 and the theorem.\\
\\
\noindent \textbf{Proof of Claim.} Notice that
\begin{align}
\mathcal{N}_{\hat{f}_0}&=\{((x',0_{n-k}),(0_k,\eta'''))\in \widetilde{U}_0\times\RR^n\,|\, \exists x''\in\RR^{m-k},\, (x',x'')\in\widetilde{O}_0\},\\
\hat{f}_0^*\widetilde{L}&=\{((x',x''),(\eta',0_{m-k}))\in \widetilde{O}_0\times(\RR^m\backslash \{0\})\,|\, \exists\eta'''\in\RR^{n-k},\, ((x',0_{n-k}),(\eta',\eta'''))\in\widetilde{L}\}.\label{equ-for-puba-of-coni-setfr}
\end{align}
Let $G_0$ be a closed cone in $\RR^m$ and let $\varphi\in\DD(\widetilde{O}_0)\backslash\{0\}$ be such that $(\supp\varphi\times G_0) \cap \hat{f}^*_0\widetilde{L}=\emptyset$; since our goal is to estimate $\mathfrak{p}_{r_1;\varphi,G_0}(\hat{f}^*_0u)$, we can assume that $G_0\backslash\{0\}\neq \emptyset$. For $\xi=(\xi',\xi'')\in G_0\cap\mathbb{S}^{m-1}$, a standard compactness argument shows that there are $\varepsilon_{\xi}>0$ and an open set $O_{\xi}\subseteq \widetilde{O}_0$ with compact closure in $\widetilde{O}_0$ such that $\supp\varphi\subseteq O_{\xi}$ and $(\overline{O_{\xi}}\times G_{\xi})\cap \hat{f}^*_0\widetilde{L}=\emptyset$ where $G_{\xi}\subseteq \RR^m$ is the open cone $\RR_+ (B(\xi',\varepsilon_{\xi})\times B(\xi'',\varepsilon_{\xi}))$; furthermore, when $\xi''\neq 0$, we can take $\varepsilon_{\xi}<|\xi''|/6$. Of course, when $k=m$, $G_{\xi}= \RR_+B(\xi,\varepsilon_{\xi})$. We employ another compactness argument to find open cones
$$
G_j:=\RR_+(B(\xi^{(j)\, '},\varepsilon_j)\times B(\xi^{(j)\,''},\varepsilon_j))\quad \mbox{and}\quad \widetilde{G}_j:=\RR_+(B(\xi^{(j)\, '},3\varepsilon_j)\times B(\xi^{(j)\,''},3\varepsilon_j))
$$
with $\xi^{(j)}=(\xi^{(j)\,'},\xi^{(j)\,''})\in G_0\cap\mathbb{S}^{m-1}$ and $\varepsilon_j\in(0,1/6)$, $j=1,\ldots, s$, and an open set $\widetilde{O}_1\subseteq \widetilde{O}_0$ with compact closure in $\widetilde{O}_0$ such that $\supp\varphi\times G_0\subseteq \widetilde{O}_1\times \bigcup_{j=1}^s G_j$ and $(\overline{\widetilde{O}_1}\times \bigcup_{j=1}^s \overline{\widetilde{G}_j})\cap \hat{f}^*_0\widetilde{L}=\emptyset$. Furthermore, when $\xi^{(j)\,''}\neq 0$, it holds that $\varepsilon_j<|\xi^{(j)\,''}|/6$. Again, when $k=m$, $G_j= \RR_+ B(\xi,\varepsilon_j)$ and $\widetilde{G}_j= \RR_+ B(\xi,3\varepsilon_j)$. Write $\{1,\ldots,s\}=J_1\cup J_2$ where $J_1$ contains all indexes $j$ such that $\xi^{(j)\,''}\neq0$ and $J_2=\{1,\ldots,s\}\backslash J_1$; when $k=m$, we set $J_1=\emptyset$. If $J_1\neq \emptyset$, there is $0<\varepsilon<1/2$ such that $\bigcup_{j\in J_1}\widetilde{G}_j\subseteq \{(\xi',\xi'')\in\RR^m\,|\, |\xi''|>\varepsilon|\xi'|\}=:\widetilde{G}_0$; when $J_1=\emptyset$, we set $\widetilde{G}_0:=\emptyset$. In view of \eqref{equ-for-puba-of-coni-setfr}, $(\widetilde{O}_1\times\overline{\widetilde{G}_0})\cap \hat{f}^*_0\widetilde{L}=\emptyset$. When $J_2\neq \emptyset$, for each $j\in J_2$, define the open cones $G'_j\subseteq \widetilde{G}'_j\subseteq \RR^k$ as $G'_j:=\RR_+ B(\xi^{(j)\,'},\varepsilon_j)$ and $\widetilde{G}'_j:=\RR_+ B(\xi^{(j)\,'},3\varepsilon_j)$. Then $G'_j\times\RR^{m-k}$ and $\widetilde{G}'_j\times\RR^{m-k}$ are open cones in $\RR^m$ such that $G_j\subseteq G'_j\times \RR^{m-k}$ and, in view of \eqref{equ-for-puba-of-coni-setfr}, we have $(\hat{f}_0(\overline{\widetilde{O}_1})\times \overline{\widetilde{G}'_j}\times \RR^{n-k})\cap \widetilde{L}=\emptyset$, $j\in J_2$. Another compactness argument implies that there is an open set $U_1\subseteq \widetilde{U}_0$ such that $\hat{f}(\overline{\widetilde{O}_1})\subseteq U_1$ and $(U_1\times \overline{\widetilde{G}'_j}\times \RR^{n-k})\cap \widetilde{L}=\emptyset$. When $k<n$, since $\hat{f}_0(\supp\varphi)\times (\{0_k\}\times\RR^{n-k})\subseteq \mathcal{N}_{\hat{f}_0}$ and $\mathcal{N}_{\hat{f}_0}\cap\widetilde{L}=\emptyset$, an analogous compactness argument shows that there are an open set $\widetilde{U}_1\subseteq \widetilde{U}_0$ satisfying $\hat{f}_0(\supp\varphi)\subseteq \widetilde{U}_1$ and points $\eta^{(1)\,'''},\ldots,\eta^{(l)\,'''}\in\mathbb{S}^{n-k-1}$ defining the open cones $V_j:=\RR_+(B(0_k,\varepsilon'_j)\times B(\eta^{(j)\,'''},\varepsilon'_j))\subseteq \RR^n$ with some $\varepsilon'_j\in(0,1/2)$, $j=1,\ldots,l$, such that $\hat{f}_0(\supp\varphi)\times (\{0_k\}\times(\RR^{n-k}\backslash\{0_{n-k}\}))\subseteq\widetilde{U}_1\times(\bigcup_{j=1}^l V_j)$ and $(\widetilde{U}_1\times(\bigcup_{j=1}^l V_j))\cap\widetilde{L}=\emptyset$. It is straightforward to verify that there is $C_0>1$ such that the open cone $\widetilde{V}_0:=\{(\eta',\eta''')\in\RR^n\,|\, |\eta'''|>C_0|\eta'|\}$ satisfies $\widetilde{U}_1\times(\overline{\widetilde{V}_0}\backslash\{0_n\})\subseteq \widetilde{U}_1\times(\bigcup_{j=1}^l V_j)$.\\
\indent Pick $\phi\in\DD(U_1)$ such that $\phi=1$ on $\hat{f}_0(\supp\varphi)$; when $k<n$ we choose $\phi$ such that it also satisfies $\supp\phi\subseteq \widetilde{U}_1\cap U_1$. For $u\in\mathcal{C}^{\infty}(\widetilde{U}_0)$ we compute
\begin{align}
\mathcal{F}(\varphi\hat{f}_0^*u)(\xi)&=\mathcal{F}(\varphi\hat{f}_0^*(\phi u))(\xi)=\frac{1}{(2\pi)^n}\int_{\RR^n} \int_{\widetilde{O}_1}e^{i\hat{f}_0(x)\eta-ix\xi} \varphi(x)\mathcal{F}(\phi u)(\eta) dxd\eta\nonumber\\
&=\frac{1}{(2\pi)^n}\int_{\RR^n}\mathcal{F}(\phi u)(\eta) \int_{\widetilde{O}_1}e^{-ix(\xi'-\eta',\xi'')} \varphi(x) dxd\eta\nonumber\\
&=\frac{1}{(2\pi)^n}\int_{\RR^n}\mathcal{F}(\phi u)(\eta) \mathcal{F}\varphi(\xi'-\eta',\xi'') d\eta\label{equ-for-ano-est-for-int-cla-int-the}\\
&=\frac{1}{(2\pi)^k}\int_{\RR^k}v(\xi'-\eta') \mathcal{F}\varphi(\eta',\xi'') d\eta',\label{equ-for-fut-estforintinefortt}
\end{align}
where we denoted $v(\eta'):=(2\pi)^{-n+k}\int_{\RR^{n-k}}\mathcal{F}(\phi u)(\eta',\eta''')d\eta'''$; when $k=n$, $\eta'=\eta$ and we set $v:=\mathcal{F}(\phi u)$. We infer
\begin{multline}\label{for-est-nor-tpfortnnes}
\mathfrak{p}_{r_1;\varphi, G_0}(\hat{f}_0^*u)\leq \left(\int_{\widetilde{G}_0} |\mathcal{F}(\varphi\hat{f}_0^*(\phi u))(\xi)|^2\langle \xi\rangle^{2r_1}d\xi\right)^{1/2}\\
+\sum_{j\in J_2} \left(\int_{G'_j\times\RR^{m-k}} |\mathcal{F}(\varphi\hat{f}_0^*(\phi u))(\xi)|^2\langle \xi\rangle^{2r_1}d\xi\right)^{1/2}.
\end{multline}
We first consider the integral over $\widetilde{G}_0$. We only look at the case $k<m$, since when $k=m$, $\widetilde{G}_0=\emptyset$. Notice that for every $N>0$ there is $C_N>0$ such that
\begin{equation}\label{est-for-fou-imafcnt}
|\mathcal{F}\varphi(\xi'-\eta',\xi'')|\leq C_N\langle\xi'\rangle^{-N}\langle\xi''\rangle^{-N}\langle\eta'\rangle^{-N},\quad (\xi',\xi'')\in\widetilde{G}_0,\, \eta'\in\RR^k.
\end{equation}
When $k=n$, employing this bound together with \eqref{equ-for-ano-est-for-int-cla-int-the} and arguing as in the proof of the Claim in the proof of Proposition \ref{pro-for-top-imbedingthforc}, it is straightforward to show that
$$
\DD'(\widetilde{U}_0)\rightarrow[0,\infty),\quad u\mapsto \left(\int_{\widetilde{G}_0} |\mathcal{F}(\varphi\hat{f}_0^*(\phi u))(\xi)|^2\langle \xi\rangle^{2r_1}d\xi\right)^{1/2},
$$ is a continuous seminorm on $\DD'(\widetilde{U}_0)$. When $k<n$, we employ \eqref{equ-for-ano-est-for-int-cla-int-the} and write
\begin{align}
\Bigg(\int_{\widetilde{G}_0} &|\mathcal{F}(\varphi\hat{f}_0^*(\phi u))(\xi)|^2\langle \xi\rangle^{2r_1}d\xi\Bigg)^{1/2}\nonumber\\
&\leq \left(\int_{\widetilde{G}_0} \left(\int_{\widetilde{V}_0}|\mathcal{F}(\phi u)(\eta)| |\mathcal{F}\varphi(\xi'-\eta',\xi'')|\langle \xi\rangle^{r_1}d\eta\right)^2d\xi\right)^{1/2}\label{int-for-est-nep-vortsklerps}\\
&{}\quad+ \left(\int_{\widetilde{G}_0} \left(\int_{\RR^n\backslash\widetilde{V}_0}|\mathcal{F}(\phi u)(\eta)| |\mathcal{F}\varphi(\xi'-\eta',\xi'')|\langle \xi\rangle^{r_1}d\eta\right)^2d\xi\right)^{1/2}.\nonumber
\end{align}
Employing \eqref{est-for-fou-imafcnt}, one again shows that
$$
\DD'(\widetilde{U}_0)\rightarrow[0,\infty),\quad u\mapsto \left(\int_{\widetilde{G}_0} \left(\int_{\RR^n\backslash\widetilde{V}_0}|\mathcal{F}(\phi u)(\eta)| |\mathcal{F}\varphi(\xi'-\eta',\xi'')|\langle \xi\rangle^{r_1}d\eta\right)^2d\xi\right)^{1/2},
$$
is a continuous seminorm on $\DD'(\widetilde{U}_0)$. To estimate the term in \eqref{int-for-est-nep-vortsklerps}, we employ \eqref{est-for-fou-imafcnt} with $N=|r_1|+r_2+n+m+1$ and infer
\begin{align*}
&\left(\int_{\widetilde{G}_0} \left(\int_{\widetilde{V}_0}|\mathcal{F}(\phi u)(\eta)| |\mathcal{F}\varphi(\xi'-\eta',\xi'')|\langle \xi\rangle^{r_1}d\eta\right)^2d\xi\right)^{1/2}\\
&\leq C'_1\int_{\widetilde{V}_0}|\mathcal{F}(\phi u)(\eta)|\langle\eta'\rangle^{-r_2-n-1}d\eta\\
&\leq C'_2\mathfrak{p}_{r_2;\phi,\overline{\widetilde{V}_0}}(u)\left(\int_{\RR^n}\langle\eta\rangle^{-2r_2}\langle \eta'\rangle^{-2r_2-2n-2}d\eta\right)^{1/2}\leq C'_3\mathfrak{p}_{r_2;\phi,\overline{\widetilde{V}_0}}(u),
\end{align*}
where the last inequality follows from $r_2>(n-k)/2$; notice that $\mathfrak{p}_{r_2;\phi,\overline{\widetilde{V}_0}}$ is a continuous seminorm on $\DD'^{r_2}_{\widetilde{L}}(\widetilde{U}_0)$ by the way we defined $\widetilde{V}_0$ and $\phi$. We showed that the first term in \eqref{for-est-nor-tpfortnnes} is bounded by a continuous seminorm on $\DD'^{r_2}_{\widetilde{L}}(\widetilde{U}_0)$ of $u$. Next, we bound each of the summands in the second term in \eqref{for-est-nor-tpfortnnes}. Fix $j\in J_2$. We employ \eqref{equ-for-ano-est-for-int-cla-int-the} and estimate as follows:
\begin{align*}
&\left(\int_{G'_j\times\RR^{m-k}} |\mathcal{F}(\varphi\hat{f}_0^*(\phi u))(\xi)|^2\langle \xi\rangle^{2r_1}d\xi\right)^{1/2}\\
&\leq \left(\int_{G'_j\times\RR^{m-k}} \left(\int_{\widetilde{G}'_j\times\RR^{n-k}}|\mathcal{F}(\phi u)(\eta)||\mathcal{F}\varphi(\xi'-\eta',\xi'')|\langle \xi\rangle^{r_1}d\eta\right)^2d\xi\right)^{1/2}\\
&{}\quad+\left(\int_{G'_j\times\RR^{m-k}} \left(\int_{(\RR^k\backslash \widetilde{G}'_j)\times\RR^{n-k}}|\mathcal{F}(\phi u)(\eta)||\mathcal{F}\varphi(\xi'-\eta',\xi'')|\langle \xi\rangle^{r_1}d\eta\right)^2d\xi\right)^{1/2}.
\end{align*}
Denote the two terms by $I'_j$ and $I''_j$ respectively. We first estimate $I'_j$. For the moment, when $k<n$, denote $r'_1:=\max\{r_1,0\}$ and, if $k=n$, set $r'_1:=r_1$. We employ H\"older's inequality in the inner integral to obtain
\begin{multline*}
I'_j\leq \Bigg(\int_{G'_j\times\RR^{m-k}} \left(\int_{\widetilde{G}'_j\times\RR^{n-k}}\frac{|\mathcal{F}\varphi(\xi'-\eta',\xi'')|\langle \xi\rangle^{r_1}}{\langle \eta\rangle^{2r_2-r'_1}}d\eta\right)\\
\cdot\left(\int_{\widetilde{G}'_j\times\RR^{n-k}}|\mathcal{F}(\phi u)(\eta)|^2\langle \eta\rangle^{2r_2-r'_1}|\mathcal{F}\varphi(\xi'-\eta',\xi'')|\langle \xi\rangle^{r_1}d\eta\right)d\xi\Bigg)^{1/2}.
\end{multline*}
When $k<n$, we employ spherical coordinates and the fact $2r_2-r'_1-n+k>0$ (this follows from $r_2>(n-k)/2$ and $r_2-r_1\geq (n-k)/2$) to infer
\begin{align*}
\int_{\RR^{n-k}}\langle\eta\rangle^{-2r_2+r'_1}d\eta'''&\leq C''_1\int_0^{\infty}\frac{\rho^{n-k-1}d\rho}{(1+|\eta'|+\rho)^{2r_2-r'_1}} \leq C''_1\int_{1+|\eta'|}^{\infty}\frac{d\rho}{\rho^{2r_2-r'_1-n+k+1}}\\
&\leq C''_2\langle\eta'\rangle^{-2r_2+r'_1+n-k}
\end{align*}
and, as $r_2-r'_1\geq (n-k)/2$ (again, this follows from $r_2>(n-k)/2$ and $r_2-r_1\geq (n-k)/2$), we deduce
$$
\int_{\widetilde{G}'_j\times\RR^{n-k}}\frac{|\mathcal{F}\varphi(\xi'-\eta',\xi'')|\langle \xi\rangle^{r_1}}{\langle \eta\rangle^{2r_2-r'_1}}d\eta\leq C''_3\int_{\RR^k}\frac{|\mathcal{F}\varphi(\xi'-\eta',\xi'')|\langle \xi'-\eta'\rangle^{|r_1|}\langle\xi''\rangle^{|r_1|}}{\langle \eta'\rangle^{2r_2-2r'_1-n+k}}d\eta'\leq C''_4
$$
for all $\xi\in\RR^m$. Notice that this bound also holds when $k=n$, since we assume $r_2\geq r_1$ in this case. Consequently
\begin{align*}
I'_j&\leq \sqrt{C''_4}\left(\int_{\widetilde{G}'_j\times\RR^{n-k}}|\mathcal{F}(\phi u)(\eta)|^2\langle \eta\rangle^{2r_2-r'_1}\int_{\RR^m}|\mathcal{F}\varphi(\xi'-\eta',\xi'')|\langle \xi\rangle^{r_1}d\xi d\eta \right)^{1/2}\\
&\leq C''_5\left(\int_{\widetilde{G}'_j\times\RR^{n-k}}|\mathcal{F}(\phi u)(\eta)|^2\langle \eta\rangle^{2r_2-r'_1}\int_{\RR^m}|\mathcal{F}\varphi(\xi)|\langle \xi'\rangle^{|r_1|}\langle\eta'\rangle^{r'_1}\langle\xi''\rangle^{|r_1|}d\xi d\eta \right)^{1/2}\\
&\leq C''_6 \mathfrak{p}_{r_2;\phi,\overline{\widetilde{G}'_j}\times\RR^{n-k}}(u);
\end{align*}
notice that $\mathfrak{p}_{r_2;\phi,\overline{\widetilde{G}'_j}\times\RR^{n-k}}$ is a continuous seminorm on $\DD'^{r_2}_{\widetilde{L}}(\widetilde{U}_0)$ by the way we defined $\widetilde{G}'_j\times\RR^{n-k}$.\footnote{The bound for $I'_j$ can be also derived by employing the Schur test with weights; the above arguments are essentially the proof of the test. We did not apply it merely to avoid additional unnecessary notational complexity which would come from writing the kernel and the weights.} We now turn our attention to $I''_j$. Let $\eta'\in(\RR^k\backslash\widetilde{G}'_j)\backslash\{0_k\}$ and $\xi'\in G'_j$ be arbitrary. Then $|\eta'/|\eta'|-\xi^{(j)\,'}|\geq 3\varepsilon_j$ and there is $\lambda>0$ such that $|\xi'/\lambda-\xi^{(j)\,'}|<\varepsilon_j$. Since $|\xi^{(j)\,'}|=1$ (as $\xi^{(j)\,''}=0$ and $(\xi^{(j)\,'},\xi^{(j)\,''})\in\mathbb{S}^{m-1}$), we have
$$
\left|1-\frac{|\xi'|}{\lambda}\right|\leq \left|\xi^{(j)'\,}-\frac{\xi'}{\lambda}\right|<\varepsilon_j,\quad \mbox{hence}\quad \left|\frac{\xi'}{|\xi'|}-\xi^{(j)\,'}\right|\leq \left|\frac{\xi'}{|\xi'|}-\frac{\xi'}{\lambda}\right|+\left|\frac{\xi'}{\lambda}-\xi^{(j)\,'}\right|<2\varepsilon_j.
$$
Consequently, $|\eta'/|\eta'|-\xi'/|\xi'||>\varepsilon_j$ which gives
$$
\left|\frac{\eta'}{|\eta'|}-\frac{\xi'}{|\eta'|}\right|\geq \left|\frac{\eta'}{|\eta'|}-\frac{\xi'}{|\xi'|}\right| -\left|\frac{\xi'}{|\xi'|}-\frac{\xi'}{|\eta'|}\right|> \varepsilon_j-\frac{||\eta'|-|\xi'||}{|\eta'|}\geq \varepsilon_j-\left|\frac{\eta'}{|\eta'|}-\frac{\xi'}{|\eta'|}\right|.
$$
Thus $\langle\eta'-\xi'\rangle> (\varepsilon_j/2)\langle\eta'\rangle$ and similarly $\langle\eta'-\xi'\rangle> (\varepsilon_j/2)\langle\xi'\rangle$; notice that these inequalities are valid even when $\eta'=0$. These inequalities show that for every $N>0$ there is $C_N>0$ such that
\begin{equation}\label{est-for-par-int-nespfhk}
|\mathcal{F}\varphi(\xi'-\eta',\xi'')|\leq C_N\langle\xi'\rangle^{-N}\langle\xi''\rangle^{-N}\langle\eta'\rangle^{-N},\quad \xi'\in G'_j,\,\xi''\in\RR^{m-k},\, \eta'\in\RR^k\backslash\widetilde{G}'_j.
\end{equation}
When $k=n$, these bounds yield that the integral $I''_j$ is a continuous seminorm on $\DD'(\widetilde{U}_0)$ of $u$. Assume that $k<n$. Write
\begin{multline*}
I''_j\leq \left(\int_{G'_j\times\RR^{m-k}} \left(\int_{((\RR^k\backslash \widetilde{G}'_j)\times\RR^{n-k})\backslash \widetilde{V}_0}|\mathcal{F}(\phi u)(\eta)||\mathcal{F}\varphi(\xi'-\eta',\xi'')|\langle \xi\rangle^{r_1}d\eta\right)^2d\xi\right)^{1/2}\\
+\left(\int_{G'_j\times\RR^{m-k}} \left(\int_{((\RR^k\backslash \widetilde{G}'_j)\times\RR^{n-k})\cap \widetilde{V}_0}|\mathcal{F}(\phi u)(\eta)||\mathcal{F}\varphi(\xi'-\eta',\xi'')|\langle \xi\rangle^{r_1}d\eta\right)^2d\xi\right)^{1/2}.
\end{multline*}
In view of \eqref{est-for-par-int-nespfhk}, the first integral is a continuous seminorm on $\DD'(\widetilde{U}_0)$ of $u$. It remains to bound the second integral; denote it for simplicity by $I'''_j$. For ease in writing, we denote the closed cone $((\RR^k\backslash \widetilde{G}'_j)\times\RR^{n-k})\cap \overline{\widetilde{V}_0}$ by $\widetilde{V}_1$. Notice that, by construction, $\mathfrak{p}_{r_2;\phi,\widetilde{V}_1}$ is a continuous seminorm on $\DD'^{r_2}_{\widetilde{L}}(\widetilde{U}_0)$. We infer
\begin{equation*}
I'''_j\leq \mathfrak{p}_{r_2;\phi,\widetilde{V}_1}(u) \left(\int_{G'_j\times\RR^{m-k}} \int_{\widetilde{V}_1}|\mathcal{F}\varphi(\xi'-\eta',\xi'')|^2\langle \xi\rangle^{2r_1}\langle\eta\rangle^{-2r_2}d\eta d\xi\right)^{1/2}= C''\mathfrak{p}_{r_2;\phi,\widetilde{V}_1}(u),
\end{equation*}
where we employed \eqref{est-for-par-int-nespfhk} with $N=|r_1|+n+m+1$ and the assumption $r_2>(n-k)/2$. This completes the proof that $\mathfrak{p}_{r_1;\varphi,G_0}(\hat{f}_0^*u)$ is bounded by a continuous seminorm on $\DD'^{r_2}_{\widetilde{L}}(\widetilde{U}_0)$ of $u$. To finish the proof of the claim, it remains to show such bounds for $\mathfrak{p}(\hat{f}_0^*u)$ where $\mathfrak{p}$ is an arbitrary continuous seminorm on $\DD'(\widetilde{O}_0)$. Without loss of generality, we can assume that $\mathfrak{p}=\sup_{\chi\in B}|\langle\cdot,\chi\rangle|$ for some bounded subset $B$ of $\DD(\widetilde{O}_0)$. There exists a compact subset $K$ of $\widetilde{O}_0$ such that $B$ is a bounded subset of $\DD_K$. When $k<n$, in the same way as above, one can find an open set $\widetilde{U}_K\subseteq\widetilde{U}_0$ satisfying $\hat{f}_0^*(K)\subseteq \widetilde{U}_K$ and $C_K>1$ such that $\widetilde{V}_K:=\{(\eta',\eta''')\in\RR^n\,|\, |\eta'''|>C_K|\eta'|\}$ satisfies $(\widetilde{U}_K\times \overline{\widetilde{V}_K})\cap \widetilde{L}=\emptyset$. Pick $\phi_0\in\DD(\widetilde{U}_0)$ such that $\phi_0=1$ on $\hat{f}_0^*(K)$; when $k<n$ we choose $\phi_0$ so that $\supp\phi_0\subseteq \widetilde{U}_K$. For $\chi \in B$, we denote $\widetilde{\chi}(x'):=\int_{\RR^{m-k}}\chi(x',x'')dx''$, $x'\in\RR^k$; when $k=m$ we set $\widetilde{\chi}:=\chi$. For every $N>0$ there is $C_N>0$ such that
\begin{equation}\label{bou-for-est-for-newpeesitr}
|\mathcal{F}^{-1}\widetilde{\chi}(\eta')|\leq C_N\langle\eta'\rangle^{-N},\quad \eta'\in\RR^k,\, \chi\in B.
\end{equation}
We infer
\begin{align*}
\sup_{\chi\in B}|\langle \hat{f}_0^*u,\chi\rangle|&=\sup_{\chi\in B}|\langle \hat{f}_0^*(\phi_0 u),\chi\rangle|=\sup_{\chi\in B}\frac{1}{(2\pi)^n}\left|\int_{\RR^n}\mathcal{F}(\phi_0 u)(\eta)\int_{\widetilde{O}_0}e^{ix'\eta'}\chi(x)dxd\eta\right|\\
&\leq \sup_{\chi\in B}\int_{\RR^n}|\mathcal{F}(\phi_0 u)(\eta)||\mathcal{F}^{-1}\widetilde{\chi}(\eta')|d\eta.
\end{align*}
When $k=n$, the bound \eqref{bou-for-est-for-newpeesitr} shows that the very last quantity is a continuous seminorm on $\DD'(\widetilde{U}_0)$ of $u$. Assume that $k<n$ and write
\begin{multline*}
\sup_{\chi\in B}|\langle \hat{f}_0^*u,\chi\rangle| \leq \sup_{\chi\in B}\int_{\RR^n\backslash \widetilde{V}_K}|\mathcal{F}(\phi_0 u)(\eta)||\mathcal{F}^{-1}\widetilde{\chi}(\eta')|d\eta\\
+\sup_{\chi\in B}\int_{\widetilde{V}_K}|\mathcal{F}(\phi_0 u)(\eta)||\mathcal{F}^{-1}\widetilde{\chi}(\eta')|d\eta.
\end{multline*}
The bound \eqref{bou-for-est-for-newpeesitr} shows that the first quantity is a continuous seminorm on $\DD'(\widetilde{U}_0)$ of $u$. We estimate the second quantity as follows:
\begin{equation*}
\sup_{\chi\in B}\int_{\widetilde{V}_K}|\mathcal{F}(\phi_0 u)(\eta)||\mathcal{F}^{-1}\widetilde{\chi}(\eta')|d\eta\leq \mathfrak{p}_{r_2;\phi_0,\overline{\widetilde{V}_K}}(u) \sup_{\chi\in B}\left(\int_{\RR^n}|\mathcal{F}^{-1}\widetilde{\chi}(\eta')|^2\langle\eta\rangle^{-2r_2}d\eta\right)^{1/2}.
\end{equation*}
The integral is uniformly bounded for all $\chi\in B$ in view of \eqref{bou-for-est-for-newpeesitr} and the assumption $r_2>(n-k)/2$. This completes the proof of the claim.
\end{proof}

\begin{remark}\label{rem-for-zer-rankmappulbc}
If $f:O\rightarrow U$ has constant rank $k=0$ then $f^*$ uniquely extends to a continuous mapping $f^*:\DD'^r_L(U)\rightarrow \mathcal{C}^{\infty}(O)$ when $L\cap \mathcal{N}_f=\emptyset$ and $r>n/2$. To verify this, we first point out that the connected components of $O$ are open and at most countably many. The assumption on $f$ implies that it is constant on every connected component of $O$. Hence, there are at most countably many pairwise disjoint open sets $O_j\subseteq O$, $j\in\Lambda$, whose union is $O$ and distinct points $y_j\in U$, $j\in \Lambda$, such that $f(x)=y_j$, $x\in O_j$, $j\in\Lambda$ (some of the $O_j$'s may be unions of some of the components of $O$!). Notice that $\mathcal{N}_f=(\bigcup_{j\in \Lambda}\{y_j\})\times\RR^n$. Let $K\subset\subset O$. Denote $\Lambda_K:=\{j\in\Lambda\,|\, O_j\cap K\neq \emptyset\}$; clearly, $\Lambda_K$ is finite. Choose $\phi_j\in\DD(U)$, $j\in\Lambda_K$, such that $\phi_j=1$ on a neighbourhood of $y_j$ and $(\supp\phi_j\times\RR^n)\cap L=\emptyset$. For $u\in\mathcal{C}^{\infty}(U)$ we have $f^*u=\sum_{j\in\Lambda}u(y_j)\mathbf{1}_{O_j}$ and consequently, for $l\in\NN$, we infer
\begin{align*}
\sup_{|\alpha|\leq l}\|\partial^{\alpha}(f^*u)\|_{L^{\infty}(K)}&\leq\sum_{j\in\Lambda_K}|\phi_j(y_j)u(y_j)|\leq \sum_{j\in\Lambda_K}\frac{1}{(2\pi)^n}\int_{\RR^n} |\mathcal{F}(\phi_j u)(\eta)|d \eta\\
&\leq \|\langle\cdot\rangle^{-r}\|_{L^2(\RR^n)}\sum_{j\in\Lambda_K}\mathfrak{p}_{r;\phi_j,\RR^n}(u),
\end{align*}
which shows the claim. When $u\in \DD'^r_L(U)$, the condition $L\cap \mathcal{N}_f=\emptyset$ implies that for every $j\in\Lambda$ there is an open neighbourhood $U_j\subseteq U$ of $y_j$ such that $u\in\mathcal{C}(U_j)$; indeed, for each $j\in\Lambda$ there is $\phi_j\in\DD(U)$ as above such that $\phi_j u\in H^r(\RR^n)\subseteq \mathcal{C}(\RR^n)$. Whence, in view of the above and Proposition \ref{seq-den-comsmf}, we deduce $f^*u=\sum_{j\in\Lambda}u(y_j)\mathbf{1}_{O_j}$, $u\in\DD'^r_L(U)$.
\end{remark}

Applying the theorem with $L:= WF^{r_2}(u)$ for $u\in\DD'(U)$ satisfying $WF^{r_2}(u)\cap\mathcal{N}_f=\emptyset$, we deduce the following result.

\begin{corollary}\label{cor-for-dif-inv-wavsetformacss}
Let $O$ and $U$ be open subsets of $\RR^m$ and $\RR^n$ respectively and let $f:O\rightarrow U$ be a smooth map. Given $r_1,r_2\in\RR$ which satisfy $r_2-r_1>n/2$ and $r_2>n/2$, it holds that
\begin{equation}\label{equ-inc-wav-stfor-dif-newth-plstk}
WF^{r_1}(f^*u)\subseteq f^*WF^{r_2}(u),\quad \mbox{for all}\,\, u\in\DD'(U)\,\, \mbox{satisfying}\,\, WF^{r_2}(u)\cap \mathcal{N}_f=\emptyset.
\end{equation}
If $f$ has constant rank $k\geq 1$, then \eqref{equ-inc-wav-stfor-dif-newth-plstk} holds true when $r_2-r_1\geq (n-k)/2$ and $r_2>(n-k)/2$. When $f$ is a submersion, \eqref{equ-inc-wav-stfor-dif-newth-plstk} holds true for all $r_2\geq r_1$. In particular, if $f$ is a diffeomorphism then $WF^r(f^*u)= f^*WF^r(u)$ for all $u\in\DD'(U)$ and $r\in\RR$.
\end{corollary}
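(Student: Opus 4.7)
The plan is straightforward: simply apply Theorem \ref{the-pul-bac-for-smcrmdiff} with the closed conic set $L$ chosen to be precisely $WF^{r_2}(u)$.

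First I would fix $u\in\DD'(U)$ satisfying $WF^{r_2}(u)\cap\mathcal{N}_f=\emptyset$ and set $L:=WF^{r_2}(u)$, which is a closed conic subset of $U\times(\RR^n\backslash\{0\})$. By the very definition of $\DD'^{r_2}_L(U)$ in \eqref{def-spa-with-wfinconcsl}, one has $u\in\DD'^{r_2}_L(U)$. The hypothesis $L\cap\mathcal{N}_f=\emptyset$ is exactly what Theorem \ref{the-pul-bac-for-smcrmdiff} requires, so the pullback is well-defined as a continuous map $f^*:\DD'^{r_2}_L(U)\rightarrow\DD'^{r_1}_{f^*L}(O)$ under each of the three regularity regimes in the statement (general smooth $f$ with $r_2-r_1>n/2$ and $r_2>n/2$; constant rank $k\geq1$ with $r_2-r_1\geq(n-k)/2$ and $r_2>(n-k)/2$; submersion with just $r_2\geq r_1$). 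Consequently $f^*u\in\DD'^{r_1}_{f^*L}(O)$, which by definition means
\begin{equation*}
WF^{r_1}(f^*u)\subseteq f^*L=f^*WF^{r_2}(u),
\end{equation*}
establishing \eqref{equ-inc-wav-stfor-dif-newth-plstk} in all three cases.

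For the last assertion, suppose $f$ is a diffeomorphism and $r\in\RR$. Since every diffeomorphism is a submersion, the previous step with $r_1=r_2=r$ yields $WF^r(f^*u)\subseteq f^*WF^r(u)$ for all $u\in\DD'(U)$ (note that for a diffeomorphism $\mathcal{N}_f$ is just the zero section, so the condition $WF^r(u)\cap\mathcal{N}_f=\emptyset$ is automatic). Applying the same inequality to the diffeomorphism $f^{-1}:U\rightarrow O$ and the distribution $f^*u\in\DD'(O)$ gives
\begin{equation*}
WF^r(u)=WF^r((f^{-1})^*f^*u)\subseteq (f^{-1})^*WF^r(f^*u),
\end{equation*}
and by \eqref{equ-for-con-subwithdifandordmass} applied to $f^{-1}$ this inclusion is equivalent to $f^*WF^r(u)\subseteq WF^r(f^*u)$. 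Combining the two inclusions gives equality.

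There is essentially no obstacle: all the analytic work has already been absorbed into Theorem \ref{the-pul-bac-for-smcrmdiff}, and the corollary is a one-line deduction from it together with the definition of $\DD'^r_L(U)$. The only small point to verify carefully is the equivalence $f^*u\in\DD'^{r_1}_{f^*L}(O)\Longleftrightarrow WF^{r_1}(f^*u)\subseteq f^*L$, which is immediate from \eqref{def-spa-with-wfinconcsl}, and the use of the identity $(f^{-1})^*f^*=\mathrm{id}$ on $\DD'(U)$ for the diffeomorphism case, which follows from the uniqueness of the continuous extension in Theorem \ref{the-pul-bac-for-smcrmdiff} together with functoriality of pullback on smooth functions and density of $\mathcal{C}^\infty(U)$ in $\DD'^r_L(U)$ (Proposition \ref{seq-den-comsmf}).
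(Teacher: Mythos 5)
Your proposal is correct and is exactly the paper's argument: the corollary is obtained by applying Theorem \ref{the-pul-bac-for-smcrmdiff} with $L:=WF^{r_2}(u)$ (and, for the diffeomorphism equality, the same inclusion applied to $f^{-1}$). The extra care you take about $(f^{-1})^*f^*=\operatorname{Id}$ via density and uniqueness of the extension is a fine, if unstated in the paper, detail.
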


When $B$ is a bounded subset of $\DD'(U)$ satisfying $WF^{r_2}_c(B)\cap\mathcal{N}_f=\emptyset$, we can apply the theorem with $L:= WF^{r_2}_c(B)$ together with Corollary \ref{car-of-comset-by-wavefrse}, to deduce the following result.

\begin{corollary}\label{inv-of-wav-fro-set-com-on-difss}
Let $O$ and $U$ be open subsets of $\RR^m$ and $\RR^n$ respectively, let $f:O\rightarrow U$ be a smooth map and let $B$ be a bounded subset of $\DD'(U)$. Given $r_1,r_2\in\RR$ which satisfy $r_2-r_1>n/2$ and $r_2>n/2$, the following statement holds true:
\begin{itemize}
\item[$(*)$] if $WF^{r_2}_c(B)\cap \mathcal{N}_f=\emptyset$ then $B$ is a relatively compact subset of $\DD'^{r_2}_{WF^{r_2}_c(B)}(U)$, $f^*B$ is a relatively compact subset of $\DD'^{r_1}_{f^*WF^{r_2}_c(B)}(O)$ and $WF^{r_1}_c(f^*B)\subseteq f^*WF^{r_2}_c(B)$.
\end{itemize}
If $f$ has constant rank $k\geq1$, then the statement $(*)$ holds true when $r_2-r_1\geq (n-k)/2$ and $r_2>(n-k)/2$. When $f$ is a submersion, $(*)$ holds true for all $r_2\geq r_1$. In particular, if $f$ is a diffeomorphism then $WF^r_c(f^*B)= f^*WF^r_c(B)$ for all $r\in\RR$ and all bounded subsets $B$ of $\DD'(U)$.
\end{corollary}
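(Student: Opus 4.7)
The approach is entirely a combination of two previously established results: the characterisation of relatively compact subsets via the Sobolev compactness wave front set (Corollary \ref{car-of-comset-by-wavefrse}) and the continuity of the pullback map (Theorem \ref{the-pul-bac-for-smcrmdiff}). The reason this corollary is almost automatic is that $WF^{r_2}_c(B)$ is precisely the smallest closed conic set that realises $B$ as a relatively compact subset of some $\DD'^{r_2}_L(U)$, so pushing through a continuous pullback lands in the smallest space one could hope for.

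First I would set $L:=WF^{r_2}_c(B)\subseteq U\times(\RR^n\backslash\{0\})$, which is a closed conic set by the remarks following Definition \ref{def-com-wav-forsetdefonbds}. Corollary \ref{car-of-comset-by-wavefrse} applied to the trivially true inclusion $WF^{r_2}_c(B)\subseteq L$ immediately yields that $B$ is relatively compact in $\DD'^{r_2}_L(U)$. Under the hypothesis $L\cap\mathcal{N}_f=\emptyset$ and the indicated constraints on $r_1,r_2$, Theorem \ref{the-pul-bac-for-smcrmdiff} provides a continuous pullback
\[
f^*:\DD'^{r_2}_L(U)\rightarrow \DD'^{r_1}_{f^*L}(O),
\]
and continuous maps send relatively compact sets to relatively compact sets; hence $f^*B$ is relatively compact in $\DD'^{r_1}_{f^*L}(O)=\DD'^{r_1}_{f^*WF^{r_2}_c(B)}(O)$. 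A second application of Corollary \ref{car-of-comset-by-wavefrse}, now in the reverse direction, translates this relative compactness into the wave front inclusion $WF^{r_1}_c(f^*B)\subseteq f^*WF^{r_2}_c(B)$.

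The constant rank and submersion variants follow by repeating the exact same three-step argument but invoking the corresponding strengthened versions of Theorem \ref{the-pul-bac-for-smcrmdiff}, which relax the conditions on $r_1,r_2$ to $r_2-r_1\geq(n-k)/2$, $r_2>(n-k)/2$ in the constant rank case and to $r_2\geq r_1$ when $f$ is a submersion; nothing else in the argument changes.

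For the diffeomorphism case, $\mathcal{N}_f$ reduces to the zero section and is thus disjoint from every $L\subseteq T^*U\backslash 0$, so the inclusion $WF^r_c(f^*B)\subseteq f^*WF^r_c(B)$ holds for every $r\in\RR$ and every bounded $B\subseteq \DD'(U)$. Applying this same inclusion to the diffeomorphism $f^{-1}:U\rightarrow O$ and the bounded set $f^*B\subseteq \DD'(O)$ (which is bounded since $f^*:\DD'(U)\rightarrow \DD'(O)$ is continuous) gives
\[
WF^r_c(B)=WF^r_c((f^{-1})^*f^*B)\subseteq (f^{-1})^*WF^r_c(f^*B),
\]
and applying $f^*$ to both sides and using the identity $f^*(f^{-1})^*=\operatorname{Id}$ on conic sets (consequence of \eqref{equ-for-con-subwithdifandordmass}) produces the reverse inclusion $f^*WF^r_c(B)\subseteq WF^r_c(f^*B)$, completing the equality. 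No step here presents any real obstacle; the work is entirely front-loaded into Theorem \ref{the-pul-bac-for-smcrmdiff} and Corollary \ref{car-of-comset-by-wavefrse}.
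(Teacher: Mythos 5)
Your proposal is correct and is exactly the paper's argument: the paper deduces this corollary by applying Theorem \ref{the-pul-bac-for-smcrmdiff} with $L:=WF^{r_2}_c(B)$ together with Corollary \ref{car-of-comset-by-wavefrse} (in both directions), and your handling of the constant-rank, submersion and diffeomorphism cases matches the intended deduction.
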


Finally, we point out that one can consider a variant of Theorem \ref{the-pul-bac-for-smcrmdiff} for the space $\EE'^r_W(U)$, but we will not need such general facts. However, we will need the special case when $f$ is a diffeomorphism.

\begin{corollary}\label{cor-for-dif-invofespacf}
Let $r\in\RR$ and let $f:O\rightarrow U$ be a diffeomorphism between the open sets $O$ and $U$ in $\RR^n$. For any closed conic subset $L$ of $U\times(\RR^n\backslash\{0\})$ and any compact subset $K$ of $U$ satisfying $\pr_1(L)\subseteq K$, the pullback $f^*:\DD'(U)\rightarrow \DD'(O)$ restricts to a topological isomorphism $f^*:\EE'^r_{L;K}(U)\rightarrow \EE'^r_{f^*L;f^{-1}(K)}(O)$. Consequently, it also restricts to a topological isomorphism $f^*:\EE'^r_W(U)\rightarrow \EE'^r_{f^*W}(O)$ for any open conic subset $W$ of $U\times(\RR^n\backslash\{0\})$.
\end{corollary}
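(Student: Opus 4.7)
The plan is to reduce the statement to three classical ingredients and then use the abstract framework of Fr\'echet spaces. Fix a closed conic subset $L$ of $U\times(\RR^n\backslash\{0\})$ and a compact set $K\subseteq U$ with $\pr_1(L)\subseteq K$, and set $\widetilde K:=f^{-1}(K)$ and $\widetilde L:=f^*L$. First I would check the set-theoretic inclusion $f^*(\EE'^r_{L;K}(U))\subseteq \EE'^r_{\widetilde L;\widetilde K}(O)$. For this, recall that $f^*:H^r_{\loc}(U)\to H^r_{\loc}(O)$ is a topological isomorphism (diffeomorphism invariance of local Sobolev spaces, which follows from the classical change of variables for $\Psi$DOs); combined with the support identity $\supp(f^*u)=f^{-1}(\supp u)$ this yields that $f^*$ restricts to a topological isomorphism $H^r_K(U)\to H^r_{\widetilde K}(O)$. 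The invariance of the $\mathcal{C}^{\infty}$ wave front set under diffeomorphisms, $WF(f^*u)=f^*WF(u)$ (see H\"ormander's classical result, recovered here as the $L=\emptyset$ instance of Corollary \ref{cor-for-dif-inv-wavsetformacss} with $r_1=r_2$ large), then shows that $WF(f^*u)\subseteq f^*L=\widetilde L$ whenever $WF(u)\subseteq L$.

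Next I would establish continuity of $f^*:\EE'^r_{L;K}(U)\to\EE'^r_{\widetilde L;\widetilde K}(O)$. Both spaces are reflexive Fr\'echet spaces by Proposition \ref{pro-for-esp-closincinimbinprdsp}, and they embed continuously into $\DD'(U)$ and $\DD'(O)$ respectively. Since the global pullback $f^*:\DD'(U)\to\DD'(O)$ is continuous (Theorem \ref{the-pul-bac-for-smcrmdiff}, CASE 2, with any $L$, applied with $L$ replaced by $U\times(\RR^n\backslash\{0\})$), if $u_j\to u$ in $\EE'^r_{L;K}(U)$ and $f^*u_j\to v$ in $\EE'^r_{\widetilde L;\widetilde K}(O)$, then in $\DD'(O)$ we necessarily have $v=f^*u$, so the graph is closed. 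The closed graph theorem for Fr\'echet spaces then yields continuity. Applying the same argument to the diffeomorphism $f^{-1}:U\to O$, one gets the inverse continuity, whence $f^*:\EE'^r_{L;K}(U)\to\EE'^r_{\widetilde L;\widetilde K}(O)$ is a topological isomorphism.

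Finally, for the inductive limit statement, I would observe that the map $(L,K)\mapsto (f^*L,f^{-1}(K))$ is an order-preserving bijection between the directed sets $\mathfrak{W}$ (for $U$ and $W$) and $\mathfrak{f^*W}$ (for $O$ and $f^*W$): it is injective and surjective because $f$ and $f^{-1}$ are diffeomorphisms and $f^*(f^{-1})^*=\operatorname{Id}$ at the level of conic sets, and it preserves inclusions. The linking maps in the inductive limit defining $\EE'^r_W(U)$ are the continuous inclusions \eqref{con-inc-for-setdualofwfssp}, and $f^*$ intertwines these linking maps with those defining $\EE'^r_{f^*W}(O)$. Thus the first part provides a compatible family of topological isomorphisms between the step spaces, and the universal property of the inductive limit (or just \cite[\S 19.1]{kothe1}) promotes it to a topological isomorphism $f^*:\EE'^r_W(U)\to\EE'^r_{f^*W}(O)$.

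The only mildly delicate point is the passage from continuity on a single step space to the inductive limit: one should make sure the bijection $(L,K)\mapsto(f^*L,f^{-1}(K))$ actually produces a cofinal system in $\mathfrak{f^*W}$ (it does, because every pair $(L',K')\in\mathfrak{f^*W}$ equals $(f^*((f^{-1})^*L'),f(f^{-1}(K')))$ with $((f^{-1})^*L',f(K'))\in\mathfrak{W}$). Once this is verified, no further calculation is needed, and no appeal to the stationary phase machinery from Theorem \ref{the-pul-bac-for-smcrmdiff} is required beyond CASE 2.
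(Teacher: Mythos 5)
Your proposal is correct and follows essentially the same route as the paper's own proof: set-level identification of $f^*(\EE'^r_{L;K}(U))$ with $\EE'^r_{f^*L;f^{-1}(K)}(O)$ via Sobolev diffeomorphism invariance and H\"ormander's classical wave front set invariance, then the closed graph theorem for Fr\'echet spaces (the paper invokes closed graph plus open mapping where you apply closed graph to both $f^*$ and $(f^{-1})^*$, which amounts to the same thing), with the inductive-limit step spelled out where the paper calls it immediate. One small parenthetical slip worth noting: the $\mathcal{C}^{\infty}$ wave front invariance is not the ``$L=\emptyset$ instance'' of Corollary \ref{cor-for-dif-inv-wavsetformacss} with $r_1=r_2$ large (that corollary concerns the Sobolev wave front sets $WF^r$), but your primary appeal to H\"ormander's classical result is exactly what is needed, so this does not affect the argument.
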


\begin{proof} Since $f^*(H^r_K(U))=H^r_{f^{-1}(K)}(O)$ as sets, \cite[Theorem 8.2.4, p. 263]{hor} implies that $f^*(\EE'^r_{L;K}(U))= \EE'^r_{f^*L;f^{-1}(K)}(O)$ as sets. As $f^*:\DD'(U)\rightarrow \DD'(O)$ is continuous, the closed graph and the open mapping theorems for Fr\'echet spaces imply that $f^*:\EE'^r_{L;K}(U)\rightarrow \EE'^r_{f^*L;f^{-1}(K)}(O)$ is a topological isomorphism. The last part is an immediate consequence of this.
\end{proof}

\section{The spaces \texorpdfstring{$\DD'^r_L$}{D'rL} and \texorpdfstring{$\EE'^r_W$}{E'rW} on manifolds and vector bundles}\label{sec-formain-result-onman}

The diffeomorphism invariance from Theorem \ref{the-pul-bac-for-smcrmdiff} and Corollary \ref{cor-for-dif-invofespacf} allows us to define $\DD'^r_L$ and $\EE'^r_W$ on smooth manifolds. Our goal in this section is to show a duality result for these spaces and to characterise the relatively compact subsets of $\DD'^r_L$ as in the Euclidean case. At the very end, we will show more general version of Theorem \ref{the-pul-bac-for-smcrmdiff} for pullback by smooth maps of distributional sections of vector bundles.\\
\indent From now and throughout the rest of the article, we will always employ the Einstein summation convention. Furthermore, $M$ will always stand for a smooth $m$-dimensional manifold and we will consistently apply the notations from Subsection \ref{subsec-dist-on-manifolds-vecbund}.

\subsection{The Sobolev compactness wave front set and the topology of \texorpdfstring{$\DD'^r_L$}{D'rL} on manifolds and vector bundles}

We start by recalling the Sobolev wave front set of order $r\in \RR$ of $u\in\DD'(M)$. The definition we are going to give is in the same spirit as in the Euclidean case; as we pointed out in Section \ref{Sec-comp}, the original definition of Duistermaat and H\"ormander \cite[p. 201]{dui-hor} is via pseudo-differential operators but one can easily convince oneself that they are the same by going to a chart and noticing that there they coincide in view of \cite[Proposition 8.2.6, p. 189]{hor1}. Pick a chart $(O,x)$ about $p\in M$ and, for $u\in \DD'(M)$, denote
$$
\Sigma^r_p(u):=\{\xi_jdx^j|_p\in T^*_pM\,|\, (\xi_1,\ldots,\xi_m)\in\Sigma^r_{x(p)}(u_x)\},
$$
where $u_x$ is the distribution defined in \eqref{ind-tri-man-caseford}. Corollary \ref{cor-for-dif-inv-wavsetformacss} verifies that $\Sigma^r_p(u)$ does not depend on the chart $(O,x)$ that contains $p$. The Sobolev wave front set of order $r$ of $u\in \DD'(M)$ is defined by
\begin{equation}\label{wav-fro-set-dis-manif}
WF^r(u):=\{(p,\xi)\in T^*M\backslash 0\,|\, \xi\in \Sigma^r_p(u)\}.
\end{equation}
If $\pi_E:E\rightarrow M$ is a vector bundle of rank $k$, then for a chart $(O,x)$ about $p\in M$ over which $E$ has a local trivialisation $\Phi_x:\pi^{-1}_E(O)\rightarrow O\times \CC^k$ and $u\in \DD'(M;E)$ define
$$
\Sigma^r_p(u):=\{\xi_jdx^j|_p\in T^*_pM\,|\, (\xi_1,\ldots,\xi_m)\in\cup_{j=1}^k\Sigma^r_{x(p)}(u_{\Phi_x}^j)\},
$$
where $u_{\Phi_x}^1,\ldots, u_{\Phi_x}^k\in \DD'(x(O))$ are the distributions defined in \eqref{ind-tri-loc-nestk}. Employing Corollary \ref{cor-for-dif-inv-wavsetformacss}, one can show that $\Sigma^r_p(u)$ does not depend on the chart $(O,x)$ nor on the local trivialisation $\Phi_x$. The Sobolev wave front set $WF^r(u)$ of $u\in\DD'(M;E)$ is defined as in \eqref{wav-fro-set-dis-manif}. Notice that $\Sigma^r_p(u)$ is a closed cone in $T^*_pM\backslash\{0\}$ and $WF^r(u)$ is a closed conic subset of $T^*M\backslash 0$.\\
\indent Let $B$ be a bounded subset of $\DD'(M)$ and $(O,x)$ a chart on $M$. Denote $B_x:=\{u_x\in\DD'(x(O))\,|\, u\in B\}$ and notice that $B_x$ is bounded in $\DD'(x(O))$. For a chart $(O,x)$ about $p\in M$, set
$$
\Sigma^r_{c,p}(B):=\{\xi_jdx^j|_p\in T^*_pM\,|\, (\xi_1,\ldots,\xi_m)\in\Sigma^r_{c,x(p)}(B_x)\}.
$$
In view of Corollary \ref{inv-of-wav-fro-set-com-on-difss}, $\Sigma^r_{c,p}(B)$ does not depend on the chart $(O,x)$ that contains $p$ and, similarly as above, we define the \textit{Sobolev compactness wave front set of order $r$} of $B$ by
\begin{equation}\label{wav-fro-set-dis-manif1}
WF^r_c(B):=\{(p,\xi)\in T^*M\backslash 0\,|\, \xi\in \Sigma^r_{c,p}(B)\}.
\end{equation}
Of course, this coincides with Definition \ref{def-com-wav-forsetdefonbds} when $M$ is an open subset of $\RR^m$.\\
\indent Similarly, if $E$ is a $k$-vector bundle over $M$ and $B$ a bounded subset of $\DD'(M;E)$, for any chart $(O,x)$ over which $E$ has a local trivialisation $\Phi_x:\pi^{-1}_E(O)\rightarrow O\times \CC^k$, we denote $B_{\Phi_x}^j:=\{u_{\Phi_x}^j\in\DD'(x(O))\,|\, u\in B\}$, $j=1,\ldots,k$; of course, $B_{\Phi_x}^1,\ldots, B_{\Phi_x}^k$ are bounded in $\DD'(x(O))$. For any such chart $(O,x)$ about $p\in M$, we define
$$
\Sigma^r_{c,p}(B):=\{\xi_jdx^j|_p\in T^*_pM\,|\, (\xi_1,\ldots,\xi_m)\in\cup_{j=1}^k\Sigma^r_{c,x(p)}(B_{\Phi_x}^j)\}.
$$
As before, employing Corollary \ref{inv-of-wav-fro-set-com-on-difss}, one can show that $\Sigma^r_{c,p}(B)$ does not depend on the chart $(O,x)$ that contains $p$ nor on the local trivialisation $\Phi_x$. We define the \textit{Sobolev compactness wave front set of order $r$} of $B$ by \eqref{wav-fro-set-dis-manif1}. Of course, $\Sigma^r_{c,p}(B)$ is a closed cone in $T^*_pM\backslash\{0\}$ and $WF^r_c(B)$ is a closed conic subset of $T^*M\backslash 0$. We point out that if $B=\{u^{(1)},\ldots, u^{(l)}\}$ then $WF^r_c(B)=WF^r(u^{(1)})\cup\ldots\cup WF^r(u^{(l)})$. Furthermore, as in the Euclidean case, $WF^r_c(a_1B_1+a_2B_2)\subseteq WF^r_c(B_1)\cup WF^r_c(B_2)$ for any $a_1,a_2\in\mathcal{C}^{\infty}(M)$ and any bounded subsets $B_1$ and $B_2$ of $\DD'(M;E)$ (or, of $\DD'(M)$); if in addition $B_1\subseteq B_2$ then $WF^r_c(B_1)\subseteq WF^r_c(B_2)$.\\
\indent Let $L$ be a closed conic subset of $T^*M\backslash 0$ and $E$ a $k$-vector bundle over $M$. For $r\in\RR$, we define
\begin{align*}
\DD'^r_L(M)&:=\{u\in\DD'(M)\,|\, WF^r(u)\subseteq L\}\quad \mbox{and}\\
\DD'^r_L(M;E)&:=\{u\in\DD'(M;E)\,|\, WF^r(u)\subseteq L\};
\end{align*}
of course, when $M$ is an open set in $\RR^m$, $\DD'^r_L(M)$ coincides as a set with $\DD'^r_L(M)$ as defined in \eqref{def-spa-with-wfinconcsl} in the Euclidean setting. Our goal is to equip $\DD'^r_L(M)$ and $\DD'^r_L(M;E)$ with locally convex topologies. Let $(O,x)$ be a chart on $M$. Let $\varphi\in\DD(O)$ and let $V$ be a closed cone in $\RR^m$ such that
\begin{equation}\label{equ-for-emp-intse}
\{(p,\xi_jdx^j|_p)\in T^*O\,|\, p\in \supp \varphi,\, (\xi_1,\ldots,\xi_m)\in V\}\cap L=\emptyset;
\end{equation}
notice that the first set is the inverse image of $\supp\varphi\times V$ under the chart induced local trivialisation of $T^*M$ over $O$ (i.e., $\pi^{-1}_{T^*M}(O)\rightarrow O\times \RR^m$, $(p,\xi_jdx^j|_p)\mapsto (p,\xi_1,\ldots,\xi_m)$). Then, when $u\in \DD'^r_L(M)$ we have $(\supp (\varphi\circ x^{-1}) \times V)\cap WF^r(u_x)=\emptyset$ and Corollary \ref{lemma-for-sem-wav-fr-set-spa} yields that
\begin{equation}\label{sem-for-topofddmlewithbninc}
\mathfrak{p}^x_{r;\varphi,V}(u):=\left(\int_V |\mathcal{F}((\varphi\circ x^{-1})u_x)(\xi)|^2\langle \xi\rangle^{2r} d\xi\right)^{1/2}<\infty,\quad u\in\DD'^r_L(M).
\end{equation}
We equip $\DD'^r_L(M)$ with the locally convex topology induced by all continuous seminorms on $\DD'(M)$ together with all seminorms $\mathfrak{p}^x_{r;\varphi,V}$ for all charts $(O,x)$ and $\varphi\in \DD(O)$ and $V\subseteq \RR^m$ as above. It is straightforward to check that if $u\in \DD'(M)$ satisfies $\mathfrak{p}^x_{r;\varphi,V}(u)<\infty$ for all $\mathfrak{p}^x_{r;\varphi,V}$ as above then $u\in \DD'^r_L(M)$.\\
\indent Analogously, given a chart $(O,x)$ over which $E$ trivialises via $\Phi_x:\pi^{-1}_E(O)\rightarrow O\times \CC^k$, a function $\varphi\in\DD(O)$ and a closed cone $V\subseteq \RR^m$ which satisfy \eqref{equ-for-emp-intse}, it holds that $(\supp (\varphi\circ x^{-1}) \times V)\cap WF^r(u_{\Phi_x}^j)=\emptyset$, $j=1,\ldots,k$, $u\in\DD'^r_L(M;E)$. Consequently, Corollary \ref{lemma-for-sem-wav-fr-set-spa} gives
\begin{equation}\label{sem-for-bun-valdiswavsincsstrs}
\mathfrak{p}^{\Phi_x}_{r;\varphi,V}(u):=\max_{1\leq j\leq k}\left(\int_V |\mathcal{F}((\varphi\circ x^{-1})u_{\Phi_x}^j)(\xi)|^2\langle \xi\rangle^{2r} d\xi\right)^{1/2}<\infty,\quad u\in\DD'^r_L(M;E).
\end{equation}
We equip $\DD'^r_L(M;E)$ with the locally convex topology induced by all continuous seminorms on $\DD'(M;E)$ together with all seminorms $\mathfrak{p}^{\Phi_x}_{r;\varphi,V}$ for all charts $(O,x)$ over which $E$ locally trivialises via $\Phi_x:\pi^{-1}_E(O)\rightarrow O\times \CC^k$ and all $\varphi\in \DD(O)$ and $V\subseteq \RR^m$ as above. Again, it is straightforward to check that if $u\in \DD'(M;E)$ satisfies $\mathfrak{p}^{\Phi_x}_{r;\varphi,V}(u)<\infty$ for all $\mathfrak{p}^{\Phi_x}_{r;\varphi,V}$ as above then $u\in\DD'^r_L(M;E)$.

\begin{remark}\label{rem-for-con-csifd}
Notice that $\DD'^r_{T^*M\backslash0}(M)=\DD'(M)$ and $\DD'^r_{T^*M\backslash0}(M;E)=\DD'(M;E)$ topologically. Also, it is straightforward to verify that $\DD'^r_{\emptyset}(M)=H^r_{\loc}(M)$ and $\DD'^r_{\emptyset}(M;E)=H^r_{\loc}(M;E)$ topologically.
\end{remark}

The following result will allow us to transfer the topological properties we showed in the Euclidean case to the case of manifolds and vector bundles. From now, we only state and prove the claims in the vector bundle case since the manifold case can be viewed as a special case of it (cf. Subsection \ref{subsec-dist-on-manifolds-vecbund}); we will keep both cases in the definitions for better clarity.

\begin{proposition}\label{lem-for-top-imebdofmapscs}
Let $\pi_E:E\rightarrow M$ be a vector bundle of rank $k$ and let $L$ be a closed conic subset of $T^*M\backslash0$. Let $\{(O_{\mu},x_{\mu})\}_{\mu\in\Lambda}$ be a family of coordinate charts on $M$ which cover $M$ such that $E$ locally trivialises over each $O_{\mu}$ via $\Phi_{x_{\mu}}:\pi^{-1}_E(O_{\mu})\rightarrow O_{\mu}\times \CC^k$, $\mu\in\Lambda$. Let $\kappa_{\mu}$, $\mu\in\Lambda$, be the total local trivialisation of $T^*M$ over $O_{\mu}$:
\begin{equation}\label{tri-cot-bun-coordindms}
\kappa_{\mu}: \pi^{-1}_{T^*M}(O_{\mu})\rightarrow x_{\mu}(O_{\mu})\times\RR^m,\quad \kappa_{\mu}(p,\xi_jdx^j|_p)=(x_{\mu}(p),\xi_1,\ldots,\xi_m).
\end{equation}
Then for each bounded subset $B$ of $\DD'(M;E)$, $WF^r_c(B)\subseteq L$ if and only if for all $\mu\in \Lambda$ and for all $\varphi\in\DD(O_{\mu})$ and closed cones $V\subseteq \RR^m$ which satisfy \eqref{equ-for-emp-intse}, it holds that
\begin{equation}\label{est-for-wfs-of-dis-bumva}
\sup_{u\in B}\max_{1\leq j\leq k}\int_{V,\,|\xi|>R} |\mathcal{F}((\varphi\circ x_{\mu}^{-1})u_{\Phi_{x_{\mu}}}^j)(\xi)|^2\langle \xi\rangle^{2r} d\xi\rightarrow 0,\quad \mbox{as}\quad R\rightarrow \infty.
\end{equation}
Furthermore, for each $\mu\in\Lambda$, $L_{\mu}:=\kappa_{\mu}(\pi^{-1}_{T^*M}(O_{\mu})\cap L)$ is a closed conic subset of $x_{\mu}(O_{\mu})\times (\RR^m\backslash\{0\})$ and the map
\begin{gather}
\DD'^r_L(M;E)\rightarrow \prod_{\mu\in\Lambda}\DD'^r_{L_{\mu}}(x_{\mu}(O_{\mu}))^k,\,\, u\mapsto\mathbf{f}_u,\quad \mbox{where}\label{map-imb-lcf-incprodick}\\
\mathbf{f}_u(\mu):=(u_{\Phi_{x_{\mu}}}^1,\ldots,u_{\Phi_{x_{\mu}}}^k),\, \mu\in\Lambda,\nonumber
\end{gather}
is a well-defined topological imbedding whose image is closed and complemented in $\prod_{\mu\in\Lambda}\DD'^r_{L_{\mu}}(x_{\mu}(O_{\mu}))^k$.
\end{proposition}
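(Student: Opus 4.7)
\bigskip

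The plan is to mirror Proposition \ref{pro-for-top-imbedingthforc} in the chart-by-chart setting, using the diffeomorphism invariance already built up in Corollaries \ref{cor-for-dif-inv-wavsetformacss}--\ref{inv-of-wav-fro-set-com-on-difss}. First, the characterisation via \eqref{est-for-wfs-of-dis-bumva} is an immediate chartwise application of Lemma \ref{lem-for-cha-wfofset-comwfl}: by definition $WF^r_c(B)\subseteq L$ means $\kappa_\mu(\pi^{-1}_{T^*M}(O_\mu)\cap WF^r_c(B))\subseteq L_\mu$ for every $\mu$, which by Lemma \ref{lem-for-cha-wfofset-comwfl} applied to $B_{\Phi_{x_\mu}}^1,\ldots,B_{\Phi_{x_\mu}}^k$ on $x_\mu(O_\mu)$ is exactly \eqref{est-for-wfs-of-dis-bumva}. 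The fact that $L_\mu$ is a closed conic subset of $x_\mu(O_\mu)\times(\RR^m\backslash\{0\})$ is immediate since $\kappa_\mu$ is a homeomorphism that scales fibrewise. Well-definedness of the map \eqref{map-imb-lcf-incprodick} is the chart version of the inclusion $WF^r(u^j_{\Phi_{x_\mu}})\subseteq L_\mu$.

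Continuity of \eqref{map-imb-lcf-incprodick} is essentially by construction: each generating seminorm of $\DD'^r_{L_\mu}(x_\mu(O_\mu))$ either is a continuous seminorm on $\DD'(x_\mu(O_\mu))$, whose composition with $u\mapsto u^j_{\Phi_{x_\mu}}$ is a continuous seminorm on $\DD'(M;E)$, or is of the form $\mathfrak{p}_{r;\phi,V}$, in which case the composition is exactly one of the seminorms $\mathfrak{p}^{\Phi_{x_\mu}}_{r;\phi,V}$ defining the topology of $\DD'^r_L(M;E)$.

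The main obstacle will be openness onto the image. Given a seminorm $\mathfrak{p}^{\Phi_x}_{r;\varphi,V}$ attached to an \emph{arbitrary} chart $(O,x)$ with trivialisation $\Phi_x$, I need to bound it by finitely many seminorms from the product topology built on the fixed cover. The plan is to fix a locally finite partition of unity $(\chi_\mu)$ subordinate to $(O_\mu)$ and decompose $\varphi=\sum_\mu\chi_\mu\varphi$; only finitely many terms are nonzero. For each relevant $\mu$, the transformation laws \eqref{tra-map-bun-exchkl}--\eqref{cha-fra-fun-duabuch} express $u^j_{\Phi_x}$ on $x(O\cap O_\mu)$ as a smooth $\operatorname{GL}(k,\CC)$-combination of the pullback $(x_\mu\circ x^{-1})^* u^l_{\Phi_{x_\mu}}$. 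Applying Theorem \ref{the-pul-bac-for-smcrmdiff} to the diffeomorphism $x_\mu\circ x^{-1}$ (which makes the pullback a continuous operator between the appropriate $\DD'^r_{(\cdot)}$ spaces), together with the hypocontinuity of multiplication by smooth functions (Remark \ref{rem-for-hyp-conmultmapfirsk}), yields a bound of $\mathfrak{p}_{r;\chi_\mu\varphi\circ x^{-1},V}\bigl(u^j_{\Phi_x}\bigr)$ by a finite sum of seminorms $\mathfrak{p}_{r;\widetilde\phi,\widetilde V}(u^l_{\Phi_{x_\mu}})$ (plus a continuous $\DD'(x_\mu(O_\mu))$-seminorm), which are seminorms of the product topology. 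Summing over the finitely many $\mu$ gives the required estimate. A parallel but simpler partition-of-unity argument handles the continuous $\DD'(M;E)$-seminorms.

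For closedness and complementedness of the image, I will construct a continuous linear retraction $R$ of $\prod_{\mu\in\Lambda}\DD'^r_{L_\mu}(x_\mu(O_\mu))^k$ onto the image. For $\mathbf{v}=(v^1_\mu,\ldots,v^k_\mu)_{\mu\in\Lambda}$, define $R(\mathbf{v})\in\DD'(M;E)$ by
\begin{equation*}
\langle R(\mathbf{v}),\varphi\rangle:=\sum_{\mu\in\Lambda}\langle v^j_\mu,(\chi_\mu\varphi)_j\circ x_\mu^{-1}\rangle,\quad \varphi\in\Gamma_c(E^\vee),
\end{equation*}
where $(\chi_\mu\varphi)_j$ are the components in the frame for $E^\vee$ induced by $\Phi_{x_\mu}$. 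Using the identity $\langle u,\psi\rangle=\langle u^j_{\Phi_{x_\mu}},\psi_j\circ x_\mu^{-1}\rangle$ for $\psi$ supported in $O_\mu$, one verifies $R\circ\mathcal{I}=\operatorname{Id}$. Continuity of $R$ into $\DD'(M;E)$ is straightforward; the fact that $R$ lands in $\DD'^r_L(M;E)$ and is continuous into it follows by the same chart-change/transition-law/diffeomorphism-invariance mechanism used for the openness step, applied now in reverse (i.e. to express $R(\mathbf{v})^l_{\Phi_{x_\nu}}$ in terms of the $v^j_\mu$ for the finitely many $\mu$ with $O_\mu\cap O_\nu\neq\emptyset$). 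Then $\mathcal{I}\circ R$ is a continuous projection of the product whose image is $\mathcal{I}(\DD'^r_L(M;E))$, yielding simultaneously that this image is closed and topologically complemented.
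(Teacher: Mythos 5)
Your proposal is correct and takes essentially the same route as the paper: the chartwise application of Lemma \ref{lem-for-cha-wfofset-comwfl} for the first part, the transition-law computation \eqref{tra-map-bun-exchkl} combined with the diffeomorphism case of Theorem \ref{the-pul-bac-for-smcrmdiff} for the seminorm estimates, and your retraction $R$ is precisely the paper's map \eqref{map-bet-spa-disvecvalofprowiskkll}, whose continuity together with $R\circ\mathcal{I}=\operatorname{Id}$ gives the topological imbedding, closedness and complementedness in one stroke. Two minor remarks: your separate direct proof of openness is redundant once the continuous left inverse $R$ is available, and in proving that $R$ maps continuously into $\DD'^r_L(M;E)$ you must estimate the seminorms $\mathfrak{p}^{\Phi_x}_{r;\varphi,V}(R(\mathbf{v}))$ attached to \emph{arbitrary} charts $(O,x)$ (exactly as you do in your openness step), not only to the cover charts $(O_{\nu},x_{\nu})$, since the topology of $\DD'^r_L(M;E)$ is defined via all charts and the sufficiency of a fixed atlas is only a consequence of the proposition (Remark \ref{char-for-topofwfsetsobdefofn}).
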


\begin{proof} The proof that $WF^r_c(B)\subseteq L$ is equivalent to \eqref{est-for-wfs-of-dis-bumva} is straightforward and we omit it (cf. Lemma \ref{lem-for-cha-wfofset-comwfl}). For the proof of the second part, we denote $u_{\mu}^j:=u_{\Phi_{x_{\mu}}}^j$ and $\Phi_{\mu}:=\Phi_{x_{\mu}}$, $\mu\in\Lambda$, $u\in\DD'(M;E)$, $j=1,\ldots,k$. The fact that $L_{\mu}$ is a closed conic subset of $x_{\mu}(O_{\mu})\times(\RR^m\backslash\{0\})$ is trivial. The map \eqref{map-imb-lcf-incprodick} is well-defined in view of the first part (applied to a singleton) and the proof of its continuity is straightforward (recall that $\DD'(M;E)\rightarrow \DD'(x_{\mu}(O_{\mu}))$, $u\mapsto u_{\mu}^j$, is continuous). Let $(\varphi_{\mu})_{\mu\in \Lambda}$ be a smooth partition of unity subordinated to $(O_{\mu})_{\mu\in\Lambda}$. A standard argument employing this partition of unity immediately yields the injectivity of \eqref{map-imb-lcf-incprodick}.\\
\indent To show that \eqref{map-imb-lcf-incprodick} is a topological imbedding whose image is complemented in $\prod_{\mu\in\Lambda}\DD'^r_{L_{\mu}}(x_{\mu}(O_{\mu}))^k$, we consider the map
\begin{gather}
\mathcal{R}:\prod_{\mu\in\Lambda}\DD'(x_{\mu}(O_{\mu}))^k\rightarrow \DD'(M;E),\,\, \mathcal{R}(\mathbf{f})= u_{\mathbf{f}},\quad \mbox{where}\label{map-bet-spa-disvecvalofprowiskkll}\\
\langle u_{\mathbf{f}},\psi\rangle:=\sum_{\mu\in\Lambda} \langle\mathbf{f}(\mu)^j,(\varphi_{\mu}\psi_{\mu,j})\circ x_{\mu}^{-1}\rangle,\,\, \psi\in\Gamma_c(E^{\vee}),\, \psi_{|O_{\mu}}=\psi_{\mu,j}\sigma_{\mu}^j,\nonumber
\end{gather}
and $(\sigma_{\mu}^1,\ldots,\sigma_{\mu}^k)$ is the local frame for $E^{\vee}$ over $O_{\mu}$ induced by $\Phi_{\mu}$. It is straightforward to check that the map is well-defined and continuous. We claim that $\mathcal{R}$ restricts to a well-defined and continuous map
\begin{equation}\label{res-map-t-c-for-newresinals}
\mathcal{R}:\prod_{\mu\in\Lambda}\DD'^r_{L_{\mu}}(x_{\mu}(O_{\mu}))^k\rightarrow \DD'^r_L(M;E).
\end{equation}
Let $(O,x)$ be a chart on $M$ over which $E$ locally trivialises via $\Phi_x:\pi^{-1}_E(O)\rightarrow O\times \CC^k$. Denote by $\kappa$ the total local trivialisation of $T^*M$ over $O$:
\begin{equation}\label{tot-loc-tri-cotbunovercoordpa}
\kappa:\pi^{-1}_{T^*M}(O)\rightarrow x(O)\times \RR^m,\quad \kappa(p,\xi_jdx^j|_p)=(x(p),\xi_1,\ldots,\xi_m).
\end{equation}
Let $\varphi\in \DD(O)\backslash\{0\}$ and the closed cone $V\subseteq \RR^m$ satisfy \eqref{equ-for-emp-intse}. Let $\Lambda_0\subseteq \Lambda$ be the finite set for which it holds that $\supp\varphi_{\mu}\cap \supp\varphi\neq \emptyset$, $\mu\in\Lambda_0$. Let $\mathbf{f}\in \prod_{\mu\in\Lambda}\DD'^r_{L_{\mu}}(x_{\mu}(O_{\mu}))^k$ and set $v_{\mu}^j:=\mathbf{f}(\mu)^j\in \DD'^r_{L_{\mu}}(x_{\mu}(O_{\mu}))$, $\mu\in\Lambda$, $j\in\{1,\ldots,k\}$. In view of \eqref{cha-fra-fun-duabuch}, we have
\begin{equation*}
\sigma^j=(|(x\circ x_{\mu}^{-1})'|\circ x_{\mu})\tau_{\mu,l}^j \sigma_{\mu}^l\quad \mbox{on}\quad O_{\mu}\cap O,
\end{equation*}
where $(\sigma^1,\ldots,\sigma^k)$ is the local frame for $E^{\vee}$ over $O$ induced by $\Phi_x$ and $\tau_{\mu}=(\tau_{\mu,l}^j)_{j,l}:O_{\mu}\cap O\rightarrow\operatorname{GL}(k,\CC)$ is the transition map given by $\Phi_x\circ\Phi_{\mu}^{-1}(p,z)=(p,\tau_{\mu}(p)z)$, $p\in O_{\mu}\cap O$, $z\in\CC^k$. Set $\phi_{\xi}:=e^{-i\,\cdot\, \xi}(\varphi\circ x^{-1})\in\DD(x(O))$, $\xi\in\RR^m$, and notice that
\begin{align*}
\mathcal{F}((\varphi\circ x^{-1}) (u_{\mathbf{f}})^j_{\Phi_x})(\xi)&=\langle (u_{\mathbf{f}})^j_{\Phi_x},\phi_{\xi}\rangle=\langle u_{\mathbf{f}},(\phi_{\xi}\circ x) \sigma^j\rangle\\
&=\sum_{\mu\in\Lambda_0} \langle v_{\mu}^l, ((\varphi_{\mu}\tau_{\mu,l}^j)\circ x_{\mu}^{-1}) (\phi_{\xi}\circ x\circ x_{\mu}^{-1})|(x\circ x_{\mu}^{-1})'|\rangle\\
&=\sum_{\mu\in\Lambda_0} \langle (x_{\mu}\circ x^{-1})^*v_{\mu}^l, ((\varphi_{\mu}\tau_{\mu,l}^j)\circ x^{-1}) \phi_{\xi}\rangle\\
&=\sum_{\mu\in\Lambda_0}\mathcal{F}\left(((\varphi\varphi_{\mu}\tau_{\mu,l}^j)\circ x^{-1})(x_{\mu}\circ x^{-1})^*v_{\mu}^l\right)(\xi).
\end{align*}
This implies that
\begin{equation}\label{est-sem-nor-onthebspaformaprtl}
\mathfrak{p}^{\Phi_x}_{r;\varphi,V}(u_{\mathbf{f}})\leq \max_{1\leq j\leq k}\sum_{\mu\in\Lambda_0}\sum_{l=1}^k \mathfrak{p}_{r;(\varphi\varphi_{\mu}\tau_{\mu,l}^j)\circ x^{-1},V}((x_{\mu}\circ x^{-1})^*v_{\mu}^l).
\end{equation}
For each $\mu\in\Lambda_0$, we set
$$
L_{\mu,O}:=\kappa_{\mu}(\pi^{-1}_{T^*M}(O_{\mu}\cap O)\cap L)\quad \mbox{and}\quad \widetilde{L}_{\mu,O}:=\kappa(\pi^{-1}_{T^*M}(O_{\mu}\cap O)\cap L).
$$
Of course, $L_{\mu,O}$ and $\widetilde{L}_{\mu,O}$ are closed conic subsets of $x_{\mu}(O_{\mu}\cap O)\times (\RR^m\backslash \{0\})$ and $x(O_{\mu}\cap O)\times (\RR^m\backslash\{0\})$ respectively. It is straightforward to check that $(x_{\mu}\circ x^{-1})^*L_{\mu,O}=\widetilde{L}_{\mu,O}$, whence Theorem \ref{the-pul-bac-for-smcrmdiff} yields that
$$
(x_{\mu}\circ x^{-1})^*:\DD'^r_{L_{\mu,O}}(x_{\mu}(O_{\mu}\cap O))\rightarrow \DD'^r_{\widetilde{L}_{\mu,O}}(x(O_{\mu}\cap O))
$$
is well-defined and continuous. Hence, for each $\mu\in\Lambda_0$ and $j,l\in\{1,\ldots,k\}$, there are $C>0$, a continuous seminorm $\mathfrak{p}$ on $\DD'(x_{\mu}(O_{\mu}\cap O))$, $\widetilde{\phi}_{1},\ldots,\widetilde{\phi}_{J}\in\DD(x_{\mu}(O_{\mu}\cap O))$ and closed cones $\widetilde{V}_{1},\ldots,\widetilde{V}_{J}\subseteq \RR^m$ satisfying $(\supp\widetilde{\phi}_{j'}\times \widetilde{V}_{j'})\cap L_{\mu,O}=\emptyset$, $j'=1,\ldots,J$, (of course, these depend on $\mu$, $j$ and $l$) such that
\begin{align*}
\mathfrak{p}_{r;(\varphi\varphi_{\mu}\tau_{\mu,l}^j)\circ x^{-1},V}((x_{\mu}\circ x^{-1})^*v_{\mu}^l)&\leq C\mathfrak{p}(v_{\mu}^l)+ C\sum_{j'=1}^{J}\mathfrak{p}_{r;\widetilde{\phi}_{j'}, \widetilde{V}_{j'}}(v_{\mu}^l)\\
&\leq C\widetilde{\mathfrak{p}}(v_{\mu}^l)+ C\sum_{j'=1}^{J}\mathfrak{p}_{r;\widetilde{\phi}_{j'}, \widetilde{V}_{j'}}(v_{\mu}^l),
\end{align*}
where $\widetilde{\mathfrak{p}}$ is a continuous seminorm on $\DD'(x_{\mu}(O_{\mu}))$ (since the restriction mapping $\DD'(x_{\mu}(O_{\mu}))\rightarrow \DD'(x_{\mu}(O_{\mu}\cap O))$ is continuous). In view of \eqref{est-sem-nor-onthebspaformaprtl}, this implies that \eqref{res-map-t-c-for-newresinals} is well-defined and continuous (the required bounds for $\mathfrak{p}(u_{\mathbf{f}})$, $\mathfrak{p}$ a continuous seminorm on $\DD'(M;E)$, follow from the continuity of \eqref{map-bet-spa-disvecvalofprowiskkll}). It is straightforward to check that $\mathcal{R}(\mathbf{f}_u)=u$, $u\in\DD'^r_L(M;E)$; i.e., $\mathcal{R}$ is a left inverse of \eqref{map-imb-lcf-incprodick}. This immediately implies that \eqref{map-imb-lcf-incprodick} is a topological imbedding and that its image is complemented in $\prod_{\mu\in\Lambda}\DD'^r_{L_{\mu}}(x_{\mu}(O_{\mu}))^k$. The image of \eqref{map-imb-lcf-incprodick} is closed since complemented subspaces always are.
\end{proof}

\begin{remark}
In the proof, we showed that for any partition of unity subordinated to $(O_{\mu})_{\mu\in\Lambda}$, the mapping \eqref{res-map-t-c-for-newresinals} is well-defined continuous left inverse of \eqref{map-imb-lcf-incprodick}. The analogous mapping for $\DD'^r_L(M)$ is
\begin{gather*}
\prod_{\mu\in\Lambda}\DD'^r_{L_{\mu}}(x_{\mu}(O_{\mu}))\rightarrow \DD'^r_L(M),\,\, \mathbf{f}\mapsto u_{\mathbf{f}},\quad \mbox{where}\\
\langle u_{\mathbf{f}},\psi\rangle:=\sum_{\mu\in\Lambda} \langle\mathbf{f}(\mu),(\varphi_{\mu}\psi_{\mu})\circ x_{\mu}^{-1}\rangle,\,\, \psi\in\Gamma_c(DM),\, \psi_{|O_{\mu}}=\psi_{\mu}\lambda^{x_{\mu}}.
\end{gather*}
\end{remark}

\begin{remark}\label{char-for-topofwfsetsobdefofn}
The proposition immediately implies that $u\in \DD'(M;E)$ (resp., $u\in\DD'(M)$) belongs to $\DD'^r_L(M;E)$ (resp., $\DD'^r_L(M)$) if and only if the seminorms \eqref{sem-for-bun-valdiswavsincsstrs} (resp., \eqref{sem-for-topofddmlewithbninc}) are finite when $(O,x)$ runs through the charts of a fixed atlas of $M$ such that $E$ locally trivialises over each of the charts. Similarly, the topology of $\DD'^r_L(M;E)$ (resp., $\DD'^r_L(M)$) is given by the continuous seminorms on $\DD'(M;E)$ (resp., $\DD'(M)$) together with the seminorms \eqref{sem-for-bun-valdiswavsincsstrs} (resp., \eqref{sem-for-topofddmlewithbninc}) when $(O,x)$ runs through the charts of a fixed atlas of $M$ such that $E$ locally trivialises over each of the charts. In particular, this shows that when $M$ is an open set in $\RR^m$, the topology on $\DD'^r_L(M)$ is the same as the one we defined in Section \ref{Sec-comp}.
\end{remark}

\begin{remark}\label{hyp-con-rem-formanbundlcaseofcon}
As a consequence of the proposition and Remark \ref{rem-for-hyp-conmultmapfirsk}, we infer that $\mathcal{C}^{\infty}(M)\times \DD'^r_L(M;E)\rightarrow\DD'^r_L(M;E)$, $(\chi, u)\mapsto \chi u$, is well-defined and hypocontinuous.
\end{remark}

Since we can take $\{(O_{\mu}, x_{\mu})\}_{\mu\in\Lambda}$ to be countable, the proposition together with Corollary \ref{rem-for-sem-refd}, \cite[Theorem 5 and Theorem 6, p. 299]{kothe1} and \cite[Theorem 1, p. 61, and Theorem 6, p. 62]{kothe2} immediately yield the following result.

\begin{corollary}
The space $\DD'^r_L(M;E)$ is complete, semi-reflexive and strictly webbed (in the sense of De Wilde).
\end{corollary}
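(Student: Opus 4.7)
The plan is to reduce the statement to the Euclidean case already handled by Corollary \ref{rem-for-sem-refd}, using the embedding supplied by Proposition \ref{lem-for-top-imebdofmapscs}. First I would pick a countable family of charts $\{(O_{\mu},x_{\mu})\}_{\mu\in\ZZ_+}$ covering $M$ over each of which $E$ locally trivialises; this is possible because manifolds are second-countable. Proposition \ref{lem-for-top-imebdofmapscs} then realises $\DD'^r_L(M;E)$ as a closed and complemented subspace of the countable product
\[
\mathbf{P}:=\prod_{\mu\in\ZZ_+}\DD'^r_{L_{\mu}}(x_{\mu}(O_{\mu}))^k,
\]
where $L_{\mu}=\kappa_{\mu}(\pi_{T^*M}^{-1}(O_{\mu})\cap L)$.

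Next I would transfer the three properties from the factors to $\mathbf{P}$. By Corollary \ref{rem-for-sem-refd}, each factor $\DD'^r_{L_{\mu}}(x_{\mu}(O_{\mu}))$ is complete, semi-reflexive and strictly webbed in the sense of De Wilde. Completeness is preserved under arbitrary products, semi-reflexivity under arbitrary products by \cite[Theorem 5 and Theorem 6, p. 299]{kothe1}, and strict webbing under countable products by \cite[Theorem 1, p. 61, and Theorem 6, p. 62]{kothe2}. Consequently $\mathbf{P}$ itself is complete, semi-reflexive and strictly webbed.

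Finally, I would descend these properties to the image of the embedding, which is a closed (in fact complemented) subspace of $\mathbf{P}$. Closed subspaces of complete l.c.s.\ are complete; closed subspaces of semi-reflexive l.c.s.\ are semi-reflexive by the same results in \cite[p.~299]{kothe1}; closed subspaces of strictly webbed l.c.s.\ are strictly webbed by the same results in \cite[pp.~61--62]{kothe2}. Since $\DD'^r_L(M;E)$ is topologically isomorphic to such a subspace, it inherits all three properties. The only delicate point is making sure the cited preservation theorems apply in the correct categorical form, but once a countable trivialising atlas is fixed (which second-countability of $M$ permits) the argument is immediate from Proposition \ref{lem-for-top-imebdofmapscs} and Corollary \ref{rem-for-sem-refd}; there is no real obstacle.
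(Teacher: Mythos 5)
Your proposal is correct and follows essentially the same route as the paper: a countable trivialising atlas, the closed complemented embedding into a countable product from Proposition \ref{lem-for-top-imebdofmapscs}, the Euclidean case from Corollary \ref{rem-for-sem-refd}, and the same K\"othe stability theorems for products and closed subspaces. Nothing is missing.
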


Employing the above proposition together with Corollary \ref{car-of-comset-by-wavefrse} and Tychonoff's theorem, we obtain the characterisation of the relatively compact sets in $\DD'^r_L(M;E)$ we announced in the introduction.

\begin{corollary}\label{cor-for-relcom-sub-wafe-fronchar}
Let $L$ be a closed conic subset of $T^*M\backslash 0$ and let $B$ be a bounded subset of $\DD'(M;E)$. Then $WF^r_c(B)\subseteq L$ if and only if $B$ is a relatively compact subset of $\DD'^r_L(M;E)$.
\end{corollary}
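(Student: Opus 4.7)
\smallskip

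\noindent\textbf{Proof plan.} The plan is to reduce the corollary to its Euclidean counterpart (Corollary \ref{car-of-comset-by-wavefrse}) via the topological imbedding constructed in Proposition \ref{lem-for-top-imebdofmapscs}. Fix a countable atlas $\{(O_\mu,x_\mu)\}_{\mu\in\Lambda}$ on $M$ such that $E$ locally trivialises over each $O_\mu$ via some $\Phi_{x_\mu}:\pi_E^{-1}(O_\mu)\to O_\mu\times\CC^k$, and let $\kappa_\mu$ be the associated total trivialisation of $T^*M$ over $O_\mu$ as in \eqref{tri-cot-bun-coordindms}. Set $L_\mu:=\kappa_\mu(\pi_{T^*M}^{-1}(O_\mu)\cap L)$, which is a closed conic subset of $x_\mu(O_\mu)\times(\RR^m\backslash\{0\})$. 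For each $\mu$ and $j\in\{1,\ldots,k\}$ let $B_{\Phi_{x_\mu}}^j:=\{u_{\Phi_{x_\mu}}^j\,|\,u\in B\}$; since the maps $\DD'(M;E)\to\DD'(x_\mu(O_\mu))$, $u\mapsto u_{\Phi_{x_\mu}}^j$, are continuous, each $B_{\Phi_{x_\mu}}^j$ is a bounded subset of $\DD'(x_\mu(O_\mu))$.

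The first step is to observe that the characterisation \eqref{est-for-wfs-of-dis-bumva} in Proposition \ref{lem-for-top-imebdofmapscs} is, in view of Lemma \ref{lem-for-cha-wfofset-comwfl} applied on each $x_\mu(O_\mu)$, exactly the statement $WF^r_c(B_{\Phi_{x_\mu}}^j)\subseteq L_\mu$ for every $\mu\in\Lambda$ and $j\in\{1,\ldots,k\}$. Thus
\[
WF^r_c(B)\subseteq L\quad\Longleftrightarrow\quad WF^r_c(B_{\Phi_{x_\mu}}^j)\subseteq L_\mu\ \text{for all $\mu\in\Lambda$, $j\in\{1,\ldots,k\}$}.
\]
By Corollary \ref{car-of-comset-by-wavefrse} (the Euclidean case), the right-hand side is equivalent to each $B_{\Phi_{x_\mu}}^j$ being relatively compact in $\DD'^r_{L_\mu}(x_\mu(O_\mu))$.

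The second step uses Proposition \ref{lem-for-top-imebdofmapscs} to transfer this to the global picture. Let
\[
\mathcal{I}:\DD'^r_L(M;E)\to P:=\prod_{\mu\in\Lambda}\DD'^r_{L_\mu}(x_\mu(O_\mu))^k,\quad \mathcal{I}(u)(\mu):=(u_{\Phi_{x_\mu}}^1,\ldots,u_{\Phi_{x_\mu}}^k),
\]
denote the topological imbedding with closed image given there. If $WF^r_c(B)\subseteq L$, then $\mathcal{I}(B)\subseteq\prod_{\mu\in\Lambda}\prod_{j=1}^k B_{\Phi_{x_\mu}}^j$, and by the first step together with Tychonoff's theorem the latter product is relatively compact in $P$. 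Since $\mathcal{I}$ is a topological isomorphism onto its image and the image is closed in $P$, the closure of $\mathcal{I}(B)$ in $P$ is contained in the image and therefore pulls back under $\mathcal{I}^{-1}$ to a compact subset of $\DD'^r_L(M;E)$ containing $B$; whence $B$ is relatively compact in $\DD'^r_L(M;E)$. Conversely, if $B$ is relatively compact in $\DD'^r_L(M;E)$, the continuity of $\mathcal{I}$ and of the coordinate projections yields that each $B_{\Phi_{x_\mu}}^j$ is relatively compact in $\DD'^r_{L_\mu}(x_\mu(O_\mu))$, and the first step concludes $WF^r_c(B)\subseteq L$.

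There is essentially no hard analytic obstacle here: all analytic work was already carried out in Corollary \ref{car-of-comset-by-wavefrse} and Proposition \ref{lem-for-top-imebdofmapscs}. The only point requiring care is the bookkeeping in the implication ``$WF^r_c(B)\subseteq L\Rightarrow B$ relatively compact'': namely, that relative compactness of $\mathcal{I}(B)$ inside the \emph{product} $P$ (obtained via Tychonoff from the local relative compactness) actually yields relative compactness of $B$ inside $\DD'^r_L(M;E)$, which is where the closedness of the image of $\mathcal{I}$ established in Proposition \ref{lem-for-top-imebdofmapscs} is essential.
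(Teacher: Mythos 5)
Your proposal is correct and follows essentially the same route as the paper: the paper derives this corollary precisely by combining Proposition \ref{lem-for-top-imebdofmapscs} (the topological imbedding with closed image, whose first part you correctly rephrase via Lemma \ref{lem-for-cha-wfofset-comwfl} as the local conditions $WF^r_c(B^j_{\Phi_{x_\mu}})\subseteq L_\mu$) with the Euclidean Corollary \ref{car-of-comset-by-wavefrse} and Tychonoff's theorem. Your bookkeeping of how closedness of the image lets relative compactness in the product pull back to $\DD'^r_L(M;E)$ is exactly the point the paper leaves implicit, and it is handled correctly.
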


\begin{corollary}
The bounded subsets of $\DD'^r_L(M;E)$ are metrisable when equipped with the induced topology. Consequently, if $\{u_j\}_{j\in\ZZ_+}$ is relatively compact in $\DD'^r_L(M;E)$, then there exists a subsequence $(u_{j_k})_{k\in\ZZ_+}$ which converges in $\DD'^r_L(M;E)$.
\end{corollary}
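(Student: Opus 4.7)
The plan is to combine Propositions \ref{lem-for-top-imebdofmapscs} and \ref{pro-for-top-imbedingthforc} to realise $\DD'^r_L(M;E)$ as a topological subspace of a countable product of locally convex spaces whose bounded subsets are metrisable, and then exploit the fact that a countable product of metrisable spaces is metrisable and metrisability passes to subspaces. Since $M$ is second-countable, I would begin by choosing a countable atlas $\{(O_\mu,x_\mu)\}_{\mu\in\ZZ_+}$ of $M$ such that $E$ trivialises over each $O_\mu$ via some $\Phi_{x_\mu}$. Proposition \ref{lem-for-top-imebdofmapscs} then supplies a topological imbedding of $\DD'^r_L(M;E)$ into $\prod_{\mu\in\ZZ_+}\DD'^r_{L_\mu}(x_\mu(O_\mu))^k$ with $L_\mu$ closed and conic in $x_\mu(O_\mu)\times(\RR^m\backslash\{0\})$. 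Applying Proposition \ref{pro-for-top-imbedingthforc} to each factor (trivially, when $L_\mu^c=\emptyset$ the factor is $\DD'(x_\mu(O_\mu))$), one further imbeds each $\DD'^r_{L_\mu}(x_\mu(O_\mu))$ topologically into $\DD'(x_\mu(O_\mu))\times L^2(\RR^m)^{\ZZ_+\times\ZZ_+}$. Composing, I obtain a topological imbedding of $\DD'^r_L(M;E)$ into a countable product $Y$ whose factors are either $\DD'(U)$-type spaces (for open $U\subseteq\RR^m$) or copies of $L^2(\RR^m)$.

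The essential analytic input is the metrisability of bounded subsets of $\DD'(U)$ in its strong topology. This I would establish as follows: $\DD(U)$ is a separable barrelled l.c.s., so bounded subsets of $\DD'(U)$ are equicontinuous; since $\DD(U)$ is separable, equicontinuous subsets of $\DD'(U)$ are $\sigma(\DD'(U),\DD(U))$-metrisable by a standard fact on duals of separable spaces. The Montel property of $\DD'(U)$ implies that on bounded subsets the weak topology $\sigma(\DD'(U),\DD(U))$ coincides with the original strong topology, yielding strong-metrisability of bounded subsets of $\DD'(U)$. Since $L^2(\RR^m)$ is itself a metric space, every bounded subset of a factor of $Y$ is metrisable in the induced topology.

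Let $B\subseteq\DD'^r_L(M;E)$ be bounded. By continuity of the projections, the image of $B$ in each factor of $Y$ is bounded, hence metrisable; therefore the image of $B$ in $Y$ is contained in a countable product of metrisable subsets, which is itself metrisable, and metrisability passes to subspaces. Since the composed map is a topological imbedding, $B$ with its topology induced from $\DD'^r_L(M;E)$ is metrisable. For the consequence, if $\{u_j\}_{j\in\ZZ_+}$ is relatively compact in $\DD'^r_L(M;E)$, then it is bounded, so its closure is a compact metrisable subset of $\DD'^r_L(M;E)$, hence sequentially compact; a convergent subsequence follows. The main obstacle in this plan is verifying the metrisability of bounded subsets of $\DD'(U)$ --- everything else is essentially formal, resting on the topological imbedding framework built in the previous propositions and the countability of a trivialising atlas of the second-countable manifold $M$.
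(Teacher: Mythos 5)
Your proposal is correct and follows essentially the same route as the paper: reduce to the Euclidean factors via the countable-atlas imbedding of Proposition \ref{lem-for-top-imebdofmapscs}, then use the imbedding of Proposition \ref{pro-for-top-imbedingthforc} into $\DD'(U)\times L^2(\RR^m)^{\ZZ_+\times\ZZ_+}$ together with the fact that bounded subsets of $\DD'(U)$ are weakly metrisable (equicontinuity plus separability of $\DD(U)$) and that the weak and strong topologies coincide on them. The only difference is that the paper compresses the metrisability of bounded sets in $\DD'(U)$ into a citation of Schaefer, while you spell out that argument explicitly.
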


\begin{proof} It suffices to show the claim for $\DD'^r_L(O)$, for $O$ an open set in $\RR^m$, because of Proposition \ref{lem-for-top-imebdofmapscs} (by taking the cover $\{(O_{\mu},x_{\mu})\}_{\mu\in\Lambda}$ to be countable). The claim for $\DD'^r_L(O)$ immediately follows from Proposition \ref{pro-for-top-imbedingthforc} in view of \cite[Theorem 1.7, p. 128]{Sch} since the weak and strong topologies coincide on the bounded subsets of $\DD'(O)$.
\end{proof}

\begin{proposition}\label{res-for-den-ope-map-thagivdes}
Let $\pi_E:E\rightarrow M$ be a vector bundle of rank $k$ and $\{(O_{\mu},x_{\mu})\}_{\mu\in\ZZ_+}$ a countable family of coordinate charts on $M$ which cover $M$. Assume that $E$ locally trivialises over each $O_{\mu}$ via $\Phi_{x_{\mu}}:\pi^{-1}_E(O_{\mu})\rightarrow O_{\mu}\times \CC^k$, $\mu\in\ZZ_+$, and let $(s_{\mu,1},\ldots,s_{\mu,k})$, $\mu\in\ZZ_+$, be the local frame of $E$ over $O_{\mu}$ induced by $\Phi_{x_{\mu}}$, $\mu\in\ZZ_+$. Let $(\varphi_{\mu})_{\mu\in \ZZ_+}$ be a smooth partition of unity subordinated to $(O_{\mu})_{\mu\in\ZZ_+}$. For each $\mu\in\ZZ_+$, let $(\widetilde{P}_{\mu,n})_{n\in\ZZ_+}$ be a sequence of operators in $\mathcal{L}(\DD'(x_{\mu}(O_{\mu})),\DD(x_{\mu}(O_{\mu})))$ which satisfies the properties as in the conclusion of Proposition \ref{seq-den-comsmf}. Then the operators
\begin{equation}\label{ope-tha-giv-dens-indfprevlls}
P_n:\DD'(M;E)\rightarrow \Gamma_c(E),\quad P_n(u):=\sum_{\mu=1}^n \varphi_{\mu} (\widetilde{P}_{\mu,n}u_{\Phi_{x_{\mu}}}^j)\circ x_{\mu} s_{\mu,j},\quad n\in\ZZ_+,
\end{equation}
are well-defined, continuous and $P_n\rightarrow \operatorname{Id}$ in $\mathcal{L}_b(\DD'(M;E))$. Furthermore, for any closed conic subset $L$ of $T^*M\backslash 0$, the set $\{P_n\}_{n\in\ZZ_+}$ is bounded in $\mathcal{L}_b(\DD'^r_L(M;E))$ and $P_n\rightarrow \operatorname{Id}$ in $\mathcal{L}_p(\DD'^r_L(M;E))$. In particular, $\Gamma_c(E)$ is sequentially dense in $\DD'^r_L(M;E)$.
\end{proposition}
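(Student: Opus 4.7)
The plan is to reduce everything to the Euclidean case (Proposition \ref{seq-den-comsmf}) via the topological embedding in Proposition \ref{lem-for-top-imebdofmapscs}, exploiting the diffeomorphism invariance from Theorem \ref{the-pul-bac-for-smcrmdiff} to handle the mismatch between the atlas used to define the $P_n$ and an arbitrary chart used to measure seminorms.

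First I would verify well-definedness, continuity, and the convergence $P_n\to\operatorname{Id}$ in $\mathcal{L}_b(\DD'(M;E))$. Each summand $\varphi_\mu(\widetilde P_{\mu,n}u^j_{\Phi_{x_\mu}})\circ x_\mu\,s_{\mu,j}$ is a composition of the continuous maps $u\mapsto u^j_{\Phi_{x_\mu}}$ (continuous $\DD'(M;E)\to\DD'(x_\mu(O_\mu))$ by definition), $\widetilde P_{\mu,n}\in\mathcal{L}(\DD'(x_\mu(O_\mu)),\DD(x_\mu(O_\mu)))$, pullback by the diffeomorphism $x_\mu^{-1}$, and multiplication by the compactly supported smooth function $\varphi_\mu$ and the frame $s_{\mu,j}$; consequently $P_n(u)\in\Gamma_c(E)$ and $P_n\in\mathcal{L}(\DD'(M;E),\Gamma_c(E))$. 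Writing $u=\sum_\mu\varphi_\mu u$ as a locally finite sum and $\varphi_\mu u=\varphi_\mu(u^j_{\Phi_{x_\mu}}\circ x_\mu)s_{\mu,j}$ yields
\begin{equation*}
u-P_nu=\sum_{\mu=1}^n\varphi_\mu\bigl((u^j_{\Phi_{x_\mu}}-\widetilde P_{\mu,n}u^j_{\Phi_{x_\mu}})\circ x_\mu\bigr)s_{\mu,j}+\sum_{\mu>n}\varphi_\mu u,
\end{equation*}
and testing against any $\psi\in\Gamma_c(E^\vee)$, only finitely many $\mu$ contribute; since $\widetilde P_{\mu,n}\to\operatorname{Id}$ in $\DD'(x_\mu(O_\mu))$ pointwise, $P_nu\to u$ in $\DD'(M;E)$. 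As $\DD'(M;E)$ is barrelled and Montel, the Banach--Steinhaus theorem upgrades this to convergence in $\mathcal{L}_b(\DD'(M;E))$.

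Next I would prove boundedness of $\{P_n\}$ in $\mathcal{L}_b(\DD'^r_L(M;E))$ and convergence $P_n\to\operatorname{Id}$ in $\mathcal{L}_p(\DD'^r_L(M;E))$. By Proposition \ref{lem-for-top-imebdofmapscs} (applied with the atlas $\{(O_\mu,x_\mu)\}$), the topology of $\DD'^r_L(M;E)$ is the initial topology of the embedding $u\mapsto(u^1_{\Phi_{x_\mu}},\ldots,u^k_{\Phi_{x_\mu}})_\mu$ into $\prod_\mu\DD'^r_{L_\mu}(x_\mu(O_\mu))^k$; hence it suffices to control each coordinate component $(P_nu)^l_{\Phi_{x_\nu}}$ in $\DD'^r_{L_\nu}(x_\nu(O_\nu))$. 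Using the transition formula \eqref{tra-map-bun-exchkl} and the fact that only finitely many $\mu$ satisfy $\supp\varphi_\mu\cap O_\nu\neq\emptyset$, each such component is a finite sum (bounded in size uniformly in $n$) of terms of the form $\chi_{\mu,\nu,j,l}\cdot(x_\mu\circ x_\nu^{-1})^*(\widetilde P_{\mu,n}u^j_{\Phi_{x_\mu}})$ where $\chi_{\mu,\nu,j,l}\in\DD(x_\nu(O_\mu\cap O_\nu))$. By CASE 2 of Theorem \ref{the-pul-bac-for-smcrmdiff} (diffeomorphism invariance), the pullback is continuous on the $\DD'^r$ spaces with image supported in $L_\nu$, and by Remark \ref{hyp-con-rem-formanbundlcaseofcon} (or the Euclidean analogue in Remark \ref{rem-for-hyp-conmultmapfirsk}) multiplication by $\chi_{\mu,\nu,j,l}$ is continuous; combining with the boundedness of $\{\widetilde P_{\mu,n}\}_n$ in $\mathcal{L}_b(\DD'^r_{L_\mu}(x_\mu(O_\mu)))$ from Proposition \ref{seq-den-comsmf}, we obtain the desired boundedness of $\{P_n\}$ in $\mathcal{L}_b(\DD'^r_L(M;E))$.

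For the convergence in $\mathcal{L}_p(\DD'^r_L(M;E))$, let $A$ be precompact in $\DD'^r_L(M;E)$; then by continuity of $u\mapsto u^j_{\Phi_{x_\mu}}$ the set $\{u^j_{\Phi_{x_\mu}}\,|\,u\in A\}$ is precompact in $\DD'^r_{L_\mu}(x_\mu(O_\mu))$, so Proposition \ref{seq-den-comsmf} gives $\sup_{u\in A}\mathfrak q_\mu(\widetilde P_{\mu,n}u^j_{\Phi_{x_\mu}}-u^j_{\Phi_{x_\mu}})\to0$ for any continuous seminorm $\mathfrak q_\mu$ on $\DD'^r_{L_\mu}(x_\mu(O_\mu))$. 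Applying the same decomposition as above to $u-P_nu$ (noting that, for any seminorm $\mathfrak p^{\Phi_{x_\nu}}_{r;\varphi,V}$, only finitely many $\mu$ yield nonzero summands once $\supp\varphi$ is fixed, which also absorbs the tail $\sum_{\mu>n}\varphi_\mu u$ for $n$ large), and using the continuity of the pullbacks and multiplications established above, yields $\sup_{u\in A}\mathfrak p(P_nu-u)\to0$ for every continuous seminorm $\mathfrak p$ on $\DD'^r_L(M;E)$. The sequential density of $\Gamma_c(E)$ is then immediate. The main obstacle is the interaction between the seminorms of $\DD'^r_L(M;E)$ (which are indexed by arbitrary charts and trivialisations) and the structure of $P_n$ (tied to one fixed atlas and trivialisations); Proposition \ref{lem-for-top-imebdofmapscs} together with Theorem \ref{the-pul-bac-for-smcrmdiff} resolve this by allowing us to work entirely within the fixed atlas.
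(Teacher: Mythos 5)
Your proposal is correct and follows essentially the same route as the paper: the same partition-of-unity decomposition of $u-P_nu$, the transition formula \eqref{tra-map-bun-exchkl}, the diffeomorphism case of Theorem \ref{the-pul-bac-for-smcrmdiff} combined with Proposition \ref{seq-den-comsmf}, and the Banach--Steinhaus/Montel upgrade, with your use of the embedding from Proposition \ref{lem-for-top-imebdofmapscs} (i.e.\ checking seminorms only in the fixed atlas) being merely a cosmetic reorganisation of the paper's estimate on arbitrary charts. One small imprecision: since the charts $O_\nu$ need not be relatively compact, ``only finitely many $\mu$ with $\supp\varphi_\mu\cap O_\nu\neq\emptyset$'' can fail; the finiteness must be taken, as you in fact do later in the $\mathcal{L}_p$ argument, relative to the compact support of the cutoff appearing in each individual seminorm.
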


\begin{proof} For simpler notation, denote $u_{\mu}^j:=u_{\Phi_{x_{\mu}}}^j$ and $\Phi_{\mu}:=\Phi_{x_{\mu}}$, $\mu\in\ZZ_+$, $u\in\DD'(M;E)$, $j=1,\ldots,k$. Let $(\sigma^1_{\mu},\ldots,\sigma^k_{\mu})$, $\mu\in\ZZ_+$, be the frame induced by $\Phi_{\mu}$ on $E^{\vee}$ over $O_{\mu}$; notice that $\sigma^j_{\mu}(s_{\mu,l})=\delta^j_l\lambda^{x_{\mu}}$ on $O_{\mu}$. Denote by $\kappa_{\mu}$ the coordinate induced total local trivialisation \eqref{tri-cot-bun-coordindms} of $T^*M$ over $O_{\mu}$ and set $L_{\mu}:=\kappa_{\mu}(\pi^{-1}_{T^*M}(O_{\mu})\cap L)$. Clearly \eqref{ope-tha-giv-dens-indfprevlls} is well-defined and its continuity follows immediately from the continuity of $\widetilde{P}_{\mu,j}:\DD'(x_{\mu}(O_{\mu}))\rightarrow \DD(x_{\mu}(O_{\mu}))$, $\mu,j\in\ZZ_+$. We show that for each fixed $u\in\DD'(M;E)$, $P_nu\rightarrow u$ in $\DD'(M;E)$. Let $B$ be a bounded subset of $\Gamma_c(E^{\vee})$. There is a compact subset $K$ of $M$ such that $B$ is a bounded subset of $\Gamma_K(E^{\vee})$. There is $n_0\in\ZZ_+$ such that $\supp\varphi_{\mu}\cap K=\emptyset$, $\mu>n_0$. For $\psi\in B$, write $\psi_{|O_{\mu}}=\psi_{\mu,j}\sigma^j_{\mu}$. When $n\geq n_0+1$, we have
\begin{align*}
\langle P_nu-u,\psi\rangle&=\sum_{\mu=1}^n\langle (\widetilde{P}_{\mu,n}u_{\mu}^j)\circ x_{\mu} s_{\mu,j},\varphi_{\mu}\psi_{\mu,j'}\sigma^{j'}_{\mu}\rangle-\sum_{\mu=1}^{n_0}\langle u,\varphi_{\mu}\psi\rangle\\
&=\sum_{\mu=1}^{n_0} \langle\widetilde{P}_{\mu,n}u_{\mu}^j-u_{\mu}^j,(\varphi_{\mu}\psi_{\mu,j})\circ x_{\mu}^{-1}\rangle.
\end{align*}
Since $\{(\varphi_{\mu}\psi_{\mu,j})\circ x_{\mu}^{-1}\,|\, \psi\in B\}$ is a bounded subset of $\DD(x_{\mu}(O_{\mu}))$, we deduce $\sup_{\psi\in B}|\langle P_nu-u,\psi\rangle|\rightarrow0$ as $n\rightarrow \infty$. Hence $P_n\rightarrow \operatorname{Id}$ in the topology of simple convergence on $\mathcal{L}(\DD'(M;E))$. The Banach-Steinhaus theorem \cite[Theorem 4.5, p. 85]{Sch} together with the fact that $\DD'(M;E)$ is Montel now yield that $P_n\rightarrow \operatorname{Id}$ in $\mathcal{L}_b(\DD'(M;E))$. The latter implies that $\{P_n\}_{n\in\ZZ_+}$ is bounded in $\mathcal{L}_b(\DD'(M;E))$. Hence, given a closed conic subset $L$ of $T^*M\backslash 0$, in order to prove that $\{P_n\}_{n\in\ZZ_+}$ is bounded in $\mathcal{L}_b(\DD'^r_L(M;E))$ it suffices to show that the seminorms \eqref{sem-for-bun-valdiswavsincsstrs} are uniformly bounded when $u$ varies in a bounded subset of $\DD'^r_L(M;E)$. Let $B$ be a bounded subset of $\DD'^r_L(M;E)$, let $(O,x)$ be a coordinate chart on $M$ over which $E$ locally trivialises via $\Phi_x:\pi_E^{-1}(O)\rightarrow O\times \CC^k$ and let $\varphi\in\DD(O)$ and the closed cone $V\subseteq \RR^m$ satisfy \eqref{equ-for-emp-intse}. We denote by $\tau_{\mu}=(\tau_{\mu,l}^j)_{j,l}:\pi_E^{-1}(O_{\mu}\cap O)\rightarrow \operatorname{GL}(k,\CC)$ the transition map given by $\Phi_x\circ\Phi_{\mu}^{-1}(p,z)=(p,\tau_{\mu}(p)z)$, $p\in O_{\mu}\cap O$, $z\in\CC^k$. Define $\phi_{\xi}:=e^{-i\, \cdot\,\xi}(\varphi\circ x^{-1})$, $\xi\in\RR^m$; clearly $\phi_{\xi}\in\DD(x(O))$, $\xi\in\RR^m$. There is $n_0\in\ZZ_+$ such that $\supp\varphi_{\mu}\cap \supp\varphi=\emptyset$, $\mu>n_0$. For $n\geq n_0+1$ and $u\in B$, we employ \eqref{cha-fra-fun-duabuch} to infer
\begin{align*}
\mathcal{F}((\varphi\circ x^{-1}) (P_nu)^j_{\Phi_x})(\xi)&=\langle (P_n u)^j_{\Phi_x},\phi_{\xi}\rangle=\sum_{\mu=1}^n\langle (\widetilde{P}_{\mu,n}u_{\mu}^t)\circ x_{\mu} s_{\mu,t},(\phi_{\xi}\circ x)\varphi_{\mu} \sigma^j\rangle\\
&=\sum_{\mu=1}^{n_0}\langle (\widetilde{P}_{\mu,n}u_{\mu}^t)\circ x_{\mu} s_{\mu,t}, (\phi_{\xi}\circ x)(|(x\circ x^{-1}_{\mu})'|\circ x_{\mu})\varphi_{\mu}\tau^j_{\mu,l}\sigma^l_{\mu}\rangle\\
&=\sum_{\mu=1}^{n_0}\langle \widetilde{P}_{\mu,n}u_{\mu}^l, (\phi_{\xi}\circ x\circ x^{-1}_{\mu})((\varphi_{\mu}\tau^j_{\mu,l})\circ x^{-1}_{\mu})|(x\circ x^{-1}_{\mu})'|\rangle\\
&=\sum_{\mu=1}^{n_0}\langle (x_{\mu}\circ x^{-1})^*\widetilde{P}_{\mu,n}u_{\mu}^l, \phi_{\xi}((\varphi_{\mu}\tau^j_{\mu,l})\circ x^{-1})\rangle\\
&=\sum_{\mu=1}^{n_0}\mathcal{F}\left(((\varphi\varphi_{\mu}\tau^j_{\mu,l})\circ x^{-1})(x_{\mu}\circ x^{-1})^*\widetilde{P}_{\mu,n}u_{\mu}^l\right)(\xi).
\end{align*}
We deduce
$$
\mathfrak{p}^{\Phi_x}_{r;\varphi,V}(P_nu)\leq \max_{1\leq j\leq k} \sum_{\mu=1}^{n_0}\sum_{l=1}^k\mathfrak{p}_{r;(\varphi\varphi_{\mu}\tau^j_{\mu,l})\circ x^{-1}, V}((x_{\mu}\circ x^{-1})^*\widetilde{P}_{\mu,n}u_{\mu}^l).
$$
With $\kappa$ as in \eqref{tot-loc-tri-cotbunovercoordpa}, we set $L_{\mu,O}:=\kappa_{\mu}(\pi^{-1}_{T^*M}(O_{\mu}\cap O)\cap L)$ and $\widetilde{L}_{\mu,O}:=\kappa(\pi^{-1}_{T^*M}(O_{\mu}\cap O)\cap L)$. Then $(x_{\mu}\circ x^{-1})^*L_{\mu,O}=\widetilde{L}_{\mu,O}$ and Theorem \ref{the-pul-bac-for-smcrmdiff} yields that $(x_{\mu}\circ x^{-1})^*:\DD'^r_{L_{\mu,O}}(x_{\mu}(O_{\mu}\cap O))\rightarrow \DD'^r_{\widetilde{L}_{\mu,O}}(x(O_{\mu}\cap O))$ is continuous. Since the map $\DD'^r_L(M;E)\rightarrow \DD'^r_{L_{\mu}}(x_{\mu}(O_{\mu}))$, $u\mapsto u^l_{\mu}$, is continuous, the set $\{\widetilde{P}_{\mu,n}u_{\mu}^l\,|\, u\in B,\,n\in\ZZ_+\}$ is bounded in $\DD'^r_{L_{\mu}}(x_{\mu}(O_{\mu}))$ and consequently in $\DD'^r_{L_{\mu,O}}(x_{\mu}(O_{\mu}\cap O))$ as well. The above now implies that $\sup_{n\geq n_0+1}\sup_{u\in B} \mathfrak{p}^{\Phi_x}_{r;\varphi,L}(P_nu)<\infty$ which completes the proof for the boundedness of $\{P_n\}_{n\in\ZZ_+}$ in $\mathcal{L}_b(\DD'^r_L(M;E))$. It remains to show that $P_n\rightarrow \operatorname{Id}$ in $\mathcal{L}_p(\DD'^r_L(M;E))$. Since we show that the convergence holds in $\mathcal{L}_b(\DD'(M;E))$, it suffices to show that for each precompact subset $B$ of $\DD'^r_L(M;E)$, $\sup_{u\in B}\mathfrak{p}^{\Phi_x}_{r;\varphi,V}(P_nu-u)\rightarrow 0$ where $\varphi$ and $V$ are as above. Similarly as above, for all $u\in B$ and $n\geq n_0+1$, we have
$$
\mathfrak{p}^{\Phi_x}_{r;\varphi,V}(P_nu-u)\leq \max_{1\leq j\leq k} \sum_{\mu=1}^{n_0}\sum_{l=1}^k\mathfrak{p}_{r;(\varphi\varphi_{\mu}\tau^j_{\mu,l})\circ x^{-1}, V}((x_{\mu}\circ x^{-1})^*(\widetilde{P}_{\mu,n}u_{\mu}^l-u_{\mu}^l)).
$$
Employing the same reasoning as before, one shows that $\sup_{u\in B}\mathfrak{p}^{\Phi_x}_{r;\varphi,V}(P_nu-u)\rightarrow 0$ which completes the proof.
\end{proof}

\begin{remark}\label{con-ofm-for-confasec}
If for each $\mu\in\ZZ_+$, $\{\widetilde{P}_{\mu,n}\}_{n\in\ZZ_+}$ are defined as in Proposition \ref{seq-den-comsmf}, then it is straightforward to verify that for each $u\in \Gamma^0(E)$, $P_nu\rightarrow u$ in $\Gamma^0(E)$.
\end{remark}

\begin{remark}
The density we just proved immediately shows that the following continuous inclusions are also dense:
\begin{gather}
H_{\loc}^r(M)\subseteq \DD'^r_L(M)\subseteq \DD'(M),\quad H_{\loc}^r(M;E)\subseteq \DD'^r_L(M;E)\subseteq \DD'(M;E);\label{inc-for-dif-nincfduals}\\
\DD'^{r_2}_L(M)\subseteq \DD'^{r_1}_L(M)\quad \mbox{and}\quad \DD'^{r_2}_L(M;E)\subseteq \DD'^{r_1}_L(M;E),\quad \mbox{when}\,\, r_2\geq r_1.\label{map-inc-for-dspasobw}
\end{gather}
\end{remark}

\subsection{The dual of \texorpdfstring{$\DD'^r_L(M;E)$}{D'rL(M;E)}}

Our next goal is to find the strong dual of $\DD'^r_L(M)$ and of $\DD'^r_L(M;E)$. As in the Euclidean case, for an open conic subset $W$ of $T^*M\backslash0$ and $r\in\RR$ we define
\begin{align*}
\EE'^r_W(M)&:=\{u\in H^r_{\comp}(M)\,|\, WF(u)\subseteq W\}\quad\mbox{and}\\
\EE'^r_W(M;E)&:=\{u\in H^r_{\comp}(M;E)\,|\, WF(u)\subseteq W\}.
\end{align*}
Additionally, for a closed conic subset $L$ of $T^*M\backslash0$ and a compact set $K\subseteq M$ satisfying $\pi_{T^*M}(L)\subseteq K$, we define
\begin{align*}
\EE'^r_{L;K}(M)&:=\{u\in H^r_K(M)\,|\, WF(u)\subseteq L\}\quad \mbox{and}\\
\EE'^r_{L;K}(M;E)&:=\{u\in H^r_K(M;E)\,|\, WF(u)\subseteq L\}.
\end{align*}
Of course, when $M$ is an open subset of $\RR^m$, $\EE'^r_{L;K}(M)$ and $\EE'^r_W(M)$ coincide as sets with their Euclidean counterparts we defined in Subsection \ref{sub-sec-for-dualityee}. As in the Euclidean case, we first define locally convex topologies on $\EE'^r_{L;K}(M)$ and $\EE'^r_{L;K}(M;E)$ and then define the topology on $\EE'^r_W(M)$ and $\EE'^r_W(M;E)$ as inductive limits of these spaces.\\
\indent Let $(O,x)$ be a chart on $M$. Let $\varphi\in\DD(O)$ and let $\emptyset\neq V\subseteq \RR^m$ be a closed cone such that \eqref{equ-for-emp-intse} holds true. When $u\in \EE'^r_{L;K}(M)$, we have $(\supp (\varphi\circ x^{-1}) \times V)\cap WF(u_x)=\emptyset$ and hence
\begin{equation}\label{sem-for-topofddmlewithbninc1111}
\mathfrak{q}^x_{\nu;\varphi,V}(u):=\sup_{\xi\in V}\langle \xi\rangle^{\nu}|\mathcal{F}((\varphi\circ x^{-1})u_x)(\xi)|<\infty,\quad u\in\EE'^r_{L;K}(M),\, \nu>0.
\end{equation}
We equip $\EE'^r_{L;K}(M)$ with the locally convex topology induced by (any) norm on $H^r_K(M)$ together with all seminorms $\mathfrak{q}^x_{\nu;\varphi,V}$, for all $\nu>0$, all charts $(O,x)$ and all $\varphi\in \DD(O)$ and $\emptyset\neq V\subseteq \RR^m$ as above. Analogously, given a chart $(O,x)$ over which $E$ trivialises via $\Phi_x:\pi^{-1}_E(O)\rightarrow O\times \CC^k$, a function $\varphi\in\DD(O)$ and a closed cone $\emptyset\neq V\subseteq \RR^m$ which satisfy \eqref{equ-for-emp-intse}, it holds that $(\supp (\varphi\circ x^{-1}) \times V)\cap WF(u_{\Phi_x}^j)=\emptyset$, $j=1,\ldots,k$, $u\in\EE'^r_{L;K}(M;E)$, and thus
\begin{equation}\label{sem-for-bun-valdiswavsincsstrs1122}
\mathfrak{q}^{\Phi_x}_{\nu;\varphi,V}(u):=\max_{1\leq j\leq k}\sup_{\xi\in V}\langle \xi\rangle^{\nu}|\mathcal{F}((\varphi\circ x^{-1})u_{\Phi_x}^j)(\xi)|<\infty,\quad u\in\EE'^r_{L;K}(M;E),\, \nu>0.
\end{equation}
We equip $\EE'^r_{L;K}(M;E)$ with the locally convex topology induced by (any) norm on $H^r_K(M;E)$ together with all seminorms $\mathfrak{q}^{\Phi_x}_{\nu;\varphi,V}$, for all $\nu>0$, all charts $(O,x)$ over which $E$ locally trivialises via $\Phi_x:\pi^{-1}_E(O)\rightarrow O\times \CC^k$ and all $\varphi\in \DD(O)$ and $\emptyset\neq V\subseteq \RR^m$ as above.

\begin{proposition}\label{lem-for-top-imebdofmapscs11}
Let $\pi_E:E\rightarrow M$ be a vector bundle of rank $k$, $K\subset\subset M$ and $L$ a closed conic subset of $T^*M\backslash0$ which satisfy $\pi_{T^*M}(L)\subseteq K$. Let $\{(O_{\mu},x_{\mu})\}_{\mu\in\Lambda}$ be a finite family of coordinate charts on $M$ which cover $K$ such that $E$ locally trivialises over $O_{\mu}$ via $\Phi_{x_{\mu}}:\pi^{-1}_E(O_{\mu})\rightarrow O_{\mu}\times \CC^k$, $\mu\in\Lambda$. Let $\kappa_{\mu}$, $\mu\in\Lambda$, be the total local trivialisation of $T^*M$ over $O_{\mu}$ as in \eqref{tri-cot-bun-coordindms}. Let $\varphi_{\mu}\in\mathcal{C}^{\infty}(M)$, $\mu\in\Lambda$, be nonnegative and such that $\supp\varphi_{\mu}\subseteq O_{\mu}$ and $\sum_{\mu\in\Lambda}\varphi_{\mu}=1$ on a neighbourhood of $K$. For each $\mu\in\Lambda$, $K_{\mu}:=x_{\mu}(K\cap \supp\varphi_{\mu})$ is a compact subset of $x_{\mu}(O_{\mu})$ and $L_{\mu}:=\kappa_{\mu}(\pi^{-1}_{T^*M}(\supp\varphi_{\mu})\cap L)$ is a closed conic subset of $x_{\mu}(O_{\mu})\times (\RR^m\backslash\{0\})$ which satisfies $\pr_1(L_{\mu})\subseteq K_{\mu}$. The distribution $u\in H^r_K(M;E)$ belongs to $\EE'^r_{L;K}(M;E)$ if and only if $\mathfrak{q}^{\Phi_{x_{\mu}}}_{\nu;\varphi,V}(u)<\infty$ for all $\nu>0$, $\mu\in \Lambda$ and for all $\varphi\in\DD(O_{\mu})$ and closed cones $V\subseteq \RR^m$ which satisfy \eqref{equ-for-emp-intse}. Furthermore, the map
\begin{gather}
\EE'^r_{L;K}(M;E)\rightarrow \prod_{\mu\in\Lambda}\EE'^r_{L_{\mu};K_{\mu}}(x_{\mu}(O_{\mu}))^k,\,\, u\mapsto\mathbf{f}_u,\quad \mbox{where}\label{map-imb-lcf-incprodick1111}\\
\mathbf{f}_u(\mu):=((\varphi_{\mu}\circ x_{\mu}^{-1})u_{\Phi_{x_{\mu}}}^1,\ldots,(\varphi_{\mu}\circ x_{\mu}^{-1})u_{\Phi_{x_{\mu}}}^k),\, \mu\in\Lambda,\nonumber
\end{gather}
is a well-defined topological imbedding whose image is closed and complemented in $\prod_{\mu\in\Lambda}\EE'^r_{L_{\mu};K_{\mu}}(x_{\mu}(O_{\mu}))^k$.
\end{proposition}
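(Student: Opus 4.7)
The plan is to follow the template of Proposition \ref{lem-for-top-imebdofmapscs}, but adjust the construction of the continuous left inverse so as to account for the fact that in this proposition the cutoffs $\varphi_{\mu}$ are already absorbed into the map \eqref{map-imb-lcf-incprodick1111}, whereas in \eqref{map-imb-lcf-incprodick} they were not. First, the assertions that $K_\mu$ is compact and $L_\mu$ is closed and conic with $\pr_1(L_\mu)\subseteq K_\mu$ follow directly from the fact that $x_\mu$ and $\kappa_\mu$ are homeomorphisms (the latter being linear and hence conic on each fibre), combined with $\pi_{T^*M}(L)\subseteq K$ and the identity $\pr_1\circ\kappa_\mu = x_\mu\circ \pi_{T^*M}$. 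The characterisation via the finite cover mirrors Remark \ref{char-for-topofwfsetsobdefofn}: an arbitrary seminorm $\mathfrak{q}^{\Phi_x}_{\nu;\varphi,V}$ is dominated by a finite sum of seminorms $\mathfrak{q}^{\Phi_{x_\mu}}_{\nu;\cdot,\cdot}$ via the partition of unity, transition maps (\`a la \eqref{cha-fra-fun-duabuch}), and the diffeomorphism invariance supplied by Corollary \ref{cor-for-dif-invofespacf}.

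For the map \eqref{map-imb-lcf-incprodick1111} itself, well-definedness is routine: $(\varphi_{\mu}\circ x_{\mu}^{-1})u_{\Phi_{x_{\mu}}}^j$ has support in $K_\mu$ since $\varphi_\mu\circ x_\mu^{-1}$ is compactly supported in $x_\mu(O_\mu)$ and $\supp u\subseteq K$; it lies in $H^r_{K_\mu}(x_\mu(O_\mu))$ because $u\in H^r_K(M;E)$ and multiplication by a compactly supported smooth function preserves $H^r$; and its $\mathcal{C}^{\infty}$ wave front set is contained in $L_\mu$, since multiplication by $\varphi_\mu\circ x_\mu^{-1}$ does not enlarge the wave front set and confines the base points to $x_\mu(\supp\varphi_\mu)$. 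Continuity of each component map $\EE'^r_{L;K}(M;E)\to\EE'^r_{L_\mu;K_\mu}(x_\mu(O_\mu))$ is immediate from the definitions of the $\mathfrak{q}^{\Phi_{x_\mu}}_{\nu;\cdot,\cdot}$ seminorms and the continuity of multiplication by compactly supported smooth functions on $H^r$.

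The crux is the construction of a continuous left inverse
\begin{equation*}
\mathcal{R}:\prod_{\mu\in\Lambda}\EE'^r_{L_{\mu};K_{\mu}}(x_{\mu}(O_{\mu}))^k\rightarrow \EE'^r_{L;K}(M;E),
\end{equation*}
defined for $\mathbf{f}=(\mathbf{f}(\mu)^j)_{\mu,j}$ by
\begin{equation*}
\langle\mathcal{R}(\mathbf{f}),\psi\rangle:=\sum_{\mu\in\Lambda}\langle\mathbf{f}(\mu)^j,\psi_{\mu,j}\circ x_\mu^{-1}\rangle,\quad \psi\in\Gamma(E^\vee),\; \psi_{|O_\mu}=\psi_{\mu,j}\sigma_\mu^j,
\end{equation*}
where $(\sigma_\mu^1,\ldots,\sigma_\mu^k)$ is the frame for $E^\vee$ over $O_\mu$ induced by $\Phi_{x_\mu}$. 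The key point for $\mathcal{R}$ to be a one-sided inverse is that no partition of unity is needed in this formula: the cutoff information is already encoded in the compact support of $\mathbf{f}(\mu)^j$ inside $K_\mu$. Indeed, for $u\in \EE'^r_{L;K}(M;E)$ we compute, using the defining identity of $u_{\Phi_{x_\mu}}^j$,
\begin{equation*}
\langle\mathcal{R}(\mathbf{f}_u),\psi\rangle=\sum_{\mu}\langle u_{\Phi_{x_\mu}}^j,(\varphi_\mu\psi_{\mu,j})\circ x_\mu^{-1}\rangle=\sum_{\mu}\langle u,\varphi_\mu\psi\rangle=\langle u,\psi\rangle,
\end{equation*}
since $\sum_\mu\varphi_\mu=1$ near $K\supseteq\supp u$.

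The main obstacle will be verifying that $\mathcal{R}$ takes values in $\EE'^r_{L;K}(M;E)$ and is continuous. For this one reinterprets each summand as $((x_\mu^{-1})^*\mathbf{f}(\mu)^j)s_{\mu,j}$, extended by zero to $M$, where $(s_{\mu,1},\ldots,s_{\mu,k})$ is the frame for $E$ over $O_\mu$ induced by $\Phi_{x_\mu}$. By Corollary \ref{cor-for-dif-invofespacf}, $(x_\mu^{-1})^*$ is a topological isomorphism $\EE'^r_{L_\mu;K_\mu}(x_\mu(O_\mu))\to\EE'^r_{\kappa_\mu^{-1}(L_\mu);x_\mu^{-1}(K_\mu)}(O_\mu)$; multiplication by the smooth section $s_{\mu,j}$ preserves $H^r$-regularity and does not enlarge the wave front set (cf.\ Remark \ref{rem-for-con-ofmulonespaceofonesk}); extension by zero from $O_\mu$ to $M$ is continuous on such compactly supported subspaces; and finally $\bigcup_\mu\kappa_\mu^{-1}(L_\mu)\subseteq L$ by the very definition of $L_\mu$, so the summation keeps the image inside $\EE'^r_{L;K}(M;E)$. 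Composing these continuous operations gives the continuity of $\mathcal{R}$. Once $\mathcal{R}$ is established, the standard abstract argument concludes: the imbedding \eqref{map-imb-lcf-incprodick1111} is a topological isomorphism onto its image, and the projection $(\text{imbedding})\circ\mathcal{R}$ on the product space is a continuous projector whose range is exactly the image of \eqref{map-imb-lcf-incprodick1111}, so this image is closed and complemented. Reflexivity and the Fr\'echet property are then inherited from the finite product of reflexive Fr\'echet spaces $\EE'^r_{L_\mu;K_\mu}(x_\mu(O_\mu))$ supplied by Proposition \ref{pro-for-esp-closincinimbinprdsp}.
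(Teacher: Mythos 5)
Your proposal is correct and follows essentially the same route as the paper: the paper's proof is precisely "repeat Proposition \ref{lem-for-top-imebdofmapscs} with Corollary \ref{cor-for-dif-invofespacf} in place of Theorem \ref{the-pul-bac-for-smcrmdiff}", and the left inverse you construct (pairing $\mathbf{f}(\mu)^j$ against $\psi_{\mu,j}\circ x_\mu^{-1}$ with no partition of unity, the cutoffs being already absorbed into the imbedding) is exactly the paper's map \eqref{map-lef-inv-fortheedualofdcomosl}. Your verification of the left-inverse identity via $\sum_\mu\varphi_\mu=1$ near $K$ and of the range/continuity of $\mathcal{R}$ via diffeomorphism invariance, multiplication by the frame, extension by zero and $\kappa_\mu^{-1}(L_\mu)\subseteq L$ matches the intended argument.
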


\begin{proof} The proof is similar to the proof of Proposition \ref{lem-for-top-imebdofmapscs}. The only notable differences are that instead of Theorem \ref{the-pul-bac-for-smcrmdiff}, one now employs Corollary \ref{cor-for-dif-invofespacf} and, instead of \eqref{res-map-t-c-for-newresinals}, one now shows that
\begin{gather}
\prod_{\mu\in\Lambda}\EE'^r_{L_{\mu};K_{\mu}}(x_{\mu}(O_{\mu}))^k\rightarrow \EE'^r_{L;K}(M;E),\,\, \mathbf{f}\mapsto u_{\mathbf{f}},\quad \mbox{where}\label{map-lef-inv-fortheedualofdcomosl}\\
\langle u_{\mathbf{f}},\psi\rangle:=\sum_{\mu\in\Lambda} \langle\mathbf{f}(\mu)^j,\psi_{\mu,j}\circ x_{\mu}^{-1}\rangle,\,\, \psi\in\Gamma_c(E^{\vee}),\, \psi_{|O_{\mu}}=\psi_{\mu,j}\sigma_{\mu}^j,\nonumber
\end{gather}
is a well-defined continuous left inverse of \eqref{map-imb-lcf-incprodick1111}.
\end{proof}

\begin{remark}\label{char-for-topofwfsetsobdefofn11}
The proposition implies that the topology of $\EE'^r_{L;K}(M;E)$ (resp., $\EE'^r_{L;K}(M)$) is given by (any) norm on $H^r_K(M;E)$ (resp., $H^r_K(M)$) together with the seminorms \eqref{sem-for-bun-valdiswavsincsstrs1122} (resp., \eqref{sem-for-topofddmlewithbninc1111}) when $(O_{\mu},x_{\mu})$, $\mu\in\Lambda$, are as in the proposition. In particular, if $M$ is an open subset of $\RR^m$, the topology on $\EE'^r_{L;K}(M)$ is the same as in the Euclidean case (employ this with the global trivial chart).
\end{remark}

In view of Proposition \ref{pro-for-esp-closincinimbinprdsp}, we immediately deduce the following result.

\begin{corollary}
The space $\EE'^r_{L;K}(M;E)$ is a reflexive Fr\'echet space.
\end{corollary}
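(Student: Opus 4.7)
The plan is to deduce this corollary directly from the topological imbedding given in Proposition \ref{lem-for-top-imebdofmapscs11} together with the Euclidean counterpart Proposition \ref{pro-for-esp-closincinimbinprdsp}. I would proceed as follows.

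First, I would choose a finite family $\{(O_\mu, x_\mu)\}_{\mu\in\Lambda}$ of coordinate charts on $M$ which cover $K$ and over which $E$ locally trivialises, together with a smooth partition of unity $(\varphi_\mu)_{\mu\in\Lambda}$ subordinate to $(O_\mu)_{\mu\in\Lambda}$ with $\sum_\mu \varphi_\mu = 1$ on a neighbourhood of $K$; the finiteness of $\Lambda$ is ensured by the compactness of $K$. By Proposition \ref{lem-for-top-imebdofmapscs11}, the associated map \eqref{map-imb-lcf-incprodick1111} is a topological imbedding of $\EE'^r_{L;K}(M;E)$ onto a closed (and in fact complemented) subspace of the finite product
\[
\mathcal{X} := \prod_{\mu\in\Lambda}\EE'^r_{L_\mu;K_\mu}(x_\mu(O_\mu))^k,
\]
with $K_\mu$ and $L_\mu$ as defined there.

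Next, I would invoke Proposition \ref{pro-for-esp-closincinimbinprdsp} to conclude that each factor $\EE'^r_{L_\mu;K_\mu}(x_\mu(O_\mu))$ is a reflexive Fr\'echet space, hence so is $\EE'^r_{L_\mu;K_\mu}(x_\mu(O_\mu))^k$ and, since $\Lambda$ is finite, so is the product $\mathcal{X}$ (finite products preserve both the Fr\'echet property and reflexivity).

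Finally, since $\EE'^r_{L;K}(M;E)$ is topologically imbedded as a closed subspace of the reflexive Fr\'echet space $\mathcal{X}$, I would conclude that $\EE'^r_{L;K}(M;E)$ is itself a Fr\'echet space (as a closed subspace of a Fr\'echet space) and is reflexive (a closed subspace of a reflexive Fr\'echet space is reflexive). There is no real obstacle here; the whole content has already been packaged into Propositions \ref{lem-for-top-imebdofmapscs11} and \ref{pro-for-esp-closincinimbinprdsp}, and the corollary is a one-line consequence of those two results together with standard permanence properties of reflexive Fr\'echet spaces.
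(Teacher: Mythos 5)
Your proposal is correct and is essentially the paper's own argument: the corollary is deduced directly from the closed (complemented) topological imbedding of Proposition \ref{lem-for-top-imebdofmapscs11} into a finite product of the Euclidean spaces, each of which is a reflexive Fr\'echet space by Proposition \ref{pro-for-esp-closincinimbinprdsp}, plus the standard permanence properties you cite.
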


We define locally convex topologies on $\EE'^r_W(M)$ and $\EE'^r_W(M;E)$ in the same way as in the Euclidean case. Namely, first we notice that $\EE'^r_W(M)=\bigcup_{(L,K)\in\mathfrak{W}} \EE'^r_{L;K}(M)$ and $\EE'^r_W(M;E)=\bigcup_{(L,K)\in\mathfrak{W}} \EE'^r_{L;K}(M;E)$, where $\mathfrak{W}$ is the set of all pairs $(L,K)$ with $K\subset\subset M$ and $L$ a closed conic subset of $T^*M\backslash 0$ satisfying $L\subseteq W$ and $\pi_{T^*M}(L)\subseteq K$ and then we define locally convex topologies on $\EE'^r_W(M)$ and $\EE'^r_W(M;E)$ by
$$
\EE'^r_W(M)=\lim_{\substack{\longrightarrow\\ (L,K)\in\mathfrak{W}}} \EE'^r_{L;K}(M)\quad \mbox{and}\quad \EE'^r_W(M;E)=\lim_{\substack{\longrightarrow\\ (L,K)\in\mathfrak{W}}} \EE'^r_{L;K}(M;E);
$$
as before, $\mathfrak{W}$ is a directed set by inclusion and the linking mappings in the inductive limits are the canonical inclusions. As in the Euclidean case, one can find a sequence $(L_j,K_j)_{j\in\ZZ_+}\subseteq \mathfrak{W}$ which satisfies \eqref{inc-set-com-exchscoses} with $M$ in place of $U$; whence
$$
\EE'^r_W(M)=\lim_{\substack{\longrightarrow\\ j\rightarrow \infty}} \EE'^r_{L_j;K_j}(M)\quad \mbox{and}\quad \EE'^r_W(M;E)=\lim_{\substack{\longrightarrow\\ j\rightarrow \infty}} \EE'^r_{L_j;K_j}(M;E)\quad\mbox{topologically}
$$
(arguing by contradiction, it is straightforward to check that if $L\subseteq W$ is a closed conic subset of $T^*M\backslash0$ such that $\pi_{T^*M}(L)$ is compact, then there is $j\in\ZZ_+$ such that $L\subseteq\operatorname{int} L_j$). Consequently, $\EE'^r_W(M)$ and $\EE'^r_W(M;E)$ are $(LF)$-spaces and thus both barrelled and bornological. Of course, if $M$ is an open subset of $\RR^m$, then the topology on $\EE'^r_W(M)$ is the same as in the Euclidean case.

\begin{remark}\label{rem-for-cof-casdlk}
Arguing as in Remark \ref{rem-for-ide-spawithorddspinrsetck}, one shows the following topological identities:
\begin{gather*}
\EE'^r_{\emptyset;K}(M;E)=\Gamma_K(E)\quad\mbox{and}\quad \EE'^r_{\pi^{-1}_{T^*M}(K)\backslash0;K}(M;E)=H^r_K(M;E),\quad\mbox{for any}\,\, K\subset\subset M;\\
\EE'^r_{\emptyset}(M;E)=\Gamma_c(E),\quad \EE'^r_{T^*M\backslash0}(M;E)=H^r_{\comp}(M;E).
\end{gather*}
\end{remark}

\begin{remark}
Analogously as in Remark \ref{rem-for-con-ofmulonespaceofonesk} one shows that $\mathcal{C}^{\infty}(M)\times\EE'^r_{L;K}(M;E)\rightarrow\EE'^r_{L;K}(M;E)$, $(\chi,u)\mapsto \chi u$, is continuous and $\mathcal{C}^{\infty}(M)\times\EE'^r_W(M;E)\rightarrow\EE'^r_W(M;E)$, $(\chi, u)\mapsto \chi u$, is hypocontinuous.
\end{remark}

As in the Euclidean setting, the identity operator on $\EE'^r_W(M;E)$ can be approximated by regularising operators in the topology of precompact convergence.

\begin{proposition}\label{den-ofd-ine-forcponlfns}
Let $\pi_E:E\rightarrow M$ be a vector bundle of rank $k$.
\begin{itemize}
\item[$(i)$] Let $K$ and $\widetilde{K}$ be compact sets in $M$ satisfying $K\subseteq \operatorname{int} \widetilde{K}$ and let $L$ be a closed conic subset of $T^*M\backslash0$ such that $\pi_{T^*M}(L)\subseteq K$. There are continuous operators $P_n:\EE'^r_{L;K}(M;E)\rightarrow \Gamma_c(E_{\operatorname{int} \widetilde{K}})$, $n\in\ZZ_+$, such that $P_n\rightarrow \operatorname{Id}$ in $\mathcal{L}_p(\EE'^r_{L;K}(M;E),\EE'^r_{L;\widetilde{K}}(M;E))$.
\item[$(ii)$] Let $W$ be an open conic subset of $T^*M\backslash0$. There are continuous operators $P_n:\EE'^r_W(M;E)\rightarrow \Gamma_c(E)$, $n\in\ZZ_+$, such that $P_n\rightarrow \operatorname{Id}$ in $\mathcal{L}_p(\EE'^r_W(M;E))$. In particular, $\Gamma_c(E)$ is sequentially dense in $\EE'^r_W(M;E)$.
\end{itemize}
\end{proposition}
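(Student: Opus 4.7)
The plan is to reduce to the Euclidean density result in Proposition \ref{lem-for-den-ofdine'lddd} via the topological imbedding from Proposition \ref{lem-for-top-imebdofmapscs11}, and then piece things together using partitions of unity.

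For part $(i)$, I would fix a finite atlas $\{(O_\mu,x_\mu)\}_{\mu\in\Lambda}$ covering $K$ over which $E$ locally trivialises via $\Phi_{x_\mu}:\pi_E^{-1}(O_\mu)\to O_\mu\times\CC^k$, together with nonnegative $\varphi_\mu\in\DD(O_\mu)$ satisfying $\bigcup_\mu\supp\varphi_\mu\subseteq\operatorname{int}\widetilde{K}$ and $\sum_{\mu}\varphi_\mu=1$ on a neighbourhood of $K$. With $K_\mu$ and $L_\mu$ as in Proposition \ref{lem-for-top-imebdofmapscs11}, pick compacta $\widetilde{K}_\mu\subset\subset x_\mu(O_\mu\cap\operatorname{int}\widetilde{K})$ with $K_\mu\subseteq\operatorname{int}\widetilde{K}_\mu$. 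Proposition \ref{lem-for-den-ofdine'lddd} then furnishes, for each $\mu\in\Lambda$, convolution-based operators $\widetilde{P}_{\mu,n}$ which for large $n$ map $\EE'^r_{L_\mu;K_\mu}(x_\mu(O_\mu))$ into $\DD_{\widetilde{K}_\mu}$ and converge to the identity in $\mathcal{L}_p(\EE'^r_{L_\mu;K_\mu}(x_\mu(O_\mu)),\EE'^r_{L_\mu;\widetilde{K}_\mu}(x_\mu(O_\mu)))$. I would then set
\begin{equation*}
P_nu:=\sum_{\mu\in\Lambda}\bigl(\widetilde{P}_{\mu,n}((\varphi_\mu\circ x_\mu^{-1})u^j_{\Phi_{x_\mu}})\bigr)\circ x_\mu\cdot s_{\mu,j},\quad u\in\EE'^r_{L;K}(M;E),
\end{equation*}
where $s_{\mu,j}$ is the local frame of $E$ over $O_\mu$ induced by $\Phi_{x_\mu}$. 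This takes values in $\Gamma_c(E_{\operatorname{int}\widetilde{K}})$, and its continuity as well as the convergence $P_n\to\operatorname{Id}$ in $\mathcal{L}_p(\EE'^r_{L;K}(M;E),\EE'^r_{L;\widetilde{K}}(M;E))$ follow by factoring $P_n$ through the topological imbedding \eqref{map-imb-lcf-incprodick1111}, the factorwise operators $\widetilde{P}_{\mu,n}$, and a continuous left inverse $\EE'^r_{L;\widetilde{K}}$-valued analogue of the map \eqref{map-lef-inv-fortheedualofdcomosl}.

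For part $(ii)$, I would fix a countable locally finite atlas $\{(O_\mu,x_\mu)\}_{\mu\in\ZZ_+}$ with trivialisations $\Phi_{x_\mu}$ and a subordinate partition of unity $(\varphi_\mu)$, and define $P_n$ exactly as in \eqref{ope-tha-giv-dens-indfprevlls} using Euclidean mollifier operators $\widetilde{P}_{\mu,n}$ from Proposition \ref{seq-den-comsmf}. These $P_n$ are continuous from $\EE'^r_W(M;E)$ into $\Gamma_c(E)$: they are already known to be continuous on $\DD'(M;E)$ by Proposition \ref{res-for-den-ope-map-thagivdes}, and their restrictions to each step $\EE'^r_{L_j;K_j}(M;E)$ of the inductive limit are continuous by part $(i)$. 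Pointwise convergence $P_nu\to u$ in $\EE'^r_W(M;E)$ for each fixed $u\in\EE'^r_W(M;E)$ is also immediate from $(i)$ applied to $\{u\}$. Since $\EE'^r_W(M;E)$ is barrelled as an $(LF)$-space, the Banach--Steinhaus theorem \cite[Theorem 4.5, p. 85]{Sch} upgrades this to convergence in $\mathcal{L}_p(\EE'^r_W(M;E))$, and the sequential density of $\Gamma_c(E)$ follows at once.

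The main obstacle is the bookkeeping in part $(i)$: one must verify that pasting the factorwise Euclidean smoothing operators together via the partition of unity indeed lands continuously in $\EE'^r_{L;\widetilde{K}}(M;E)$ with respect to the correct seminorms \eqref{sem-for-bun-valdiswavsincsstrs1122} and the $H^r_{\widetilde{K}}$-norm. Observe however that $P_nu\in\Gamma_c(E_{\operatorname{int}\widetilde{K}})$, so all wavefront-set seminorms of $P_nu$ vanish trivially; the nontrivial content is therefore the $H^r$-convergence $P_nu\to u$ and the convergence of the wavefront-set seminorms $\mathfrak{q}^{\Phi_x}_{\nu;\varphi,V}(P_nu-u)\to0$, both of which are furnished by Proposition \ref{lem-for-den-ofdine'lddd} on the Euclidean factors after applying the continuous left inverse from Proposition \ref{lem-for-top-imebdofmapscs11}.
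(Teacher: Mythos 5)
Your proposal is correct and follows essentially the same route as the paper's proof: local mollification in charts glued by a partition of unity, with the Euclidean results (Propositions \ref{seq-den-comsmf} and \ref{lem-for-den-ofdine'lddd}) supplying pointwise convergence and the Banach--Steinhaus theorem on the barrelled $(LF)$-space upgrading it to convergence on precompact sets. Two small repairs to your citations: Proposition \ref{lem-for-den-ofdine'lddd} literally gives only the pointwise convergence $\chi_j*u\to u$ on the fixed-compact spaces, so the factorwise $\mathcal{L}_p$-convergence you invoke needs a Banach--Steinhaus step (harmless, since $\EE'^r_{L_\mu;K_\mu}(x_\mu(O_\mu))$ is Fr\'echet); and in $(ii)$ the operators \eqref{ope-tha-giv-dens-indfprevlls} place the cutoff $\varphi_\mu$ outside the mollifier, so pointwise convergence is not literally ``immediate from $(i)$'' but requires the standard observation that, for fixed $u$ of compact support and $n$ large, $\varphi_\mu\,(\widetilde{P}_{\mu,n}u^j_{\Phi_{x_\mu}})\circ x_\mu=\varphi_\mu\,(\chi_n*(\widetilde{\varphi}_\mu u^j_{\Phi_{x_\mu}}))\circ x_\mu$ for a fixed cutoff $\widetilde{\varphi}_\mu\in\DD(x_\mu(O_\mu))$ equal to $1$ near $x_\mu(\supp\varphi_\mu)$, after which Proposition \ref{lem-for-den-ofdine'lddd} and the continuity of multiplication by smooth functions finish the argument.
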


\begin{proof} To prove $(i)$, pick finite number of charts $(O_{\mu},x_{\mu})$, $\mu=1,\ldots,l$, which cover $K$ such that $O_{\mu}\subseteq \operatorname{int} \widetilde{K}$ and $E$ locally trivialises over $O_{\mu}$ via $\Phi_{x_{\mu}}:\pi_E^{-1}(O_{\mu})\rightarrow O_{\mu}\times\CC^k$, $\mu=1,\ldots,l$. For each $\mu$, let $(s_{\mu,1},\ldots,s_{\mu,k})$ be the frame of $E$ over $O_{\mu}$ induced by $\Phi_{x_{\mu}}$. Choose nonnegative $\varphi_{\mu}\in \DD(O_{\mu})$, $\mu=1,\ldots,l$, such that $\sum_{\mu=1}^l\varphi_{\mu}=1$ on a neighbourhood of $K$. Choose $\{\chi_n\}_{n\in\ZZ_+}\subseteq\DD(\RR^m)$ as in Proposition \ref{seq-den-comsmf} and pick $n_0\in\ZZ_+$ such that $\supp\chi_n +x_{\mu}(\supp\varphi_{\mu})\subseteq x_{\mu}(O_{\mu})$, for all $n\geq n_0$, $\mu=1,\ldots,l$. For $n\geq n_0$, we define
$$
P_n:\EE'^r_{L;K}(M;E)\rightarrow \Gamma_c(E_{\operatorname{int}\widetilde{K}}),\quad P_n(u):=\sum_{\mu=1}^l (\chi_n*((\varphi_{\mu}\circ x_{\mu}^{-1})u^j_{\Phi_{x_{\mu}}}))\circ x_{\mu} s_{\mu,j}.
$$
It is straightforward to check that $P_n$ is well-defined and continuous. In view of the Banach-Steinhaus theorem, to verify that $P_n\rightarrow \operatorname{Id}$ in $\mathcal{L}_p(\EE'^r_{L;K}(M;E),\EE'^r_{L;\widetilde{K}}(M;E))$, it suffices to show that for each $u\in\EE'^r_{L;K}(M;E)$, $P_n(u)\rightarrow u$ in $\EE'^r_{L;\widetilde{K}}(M;E)$. This can be done similarly as in the proof of Proposition \ref{res-for-den-ope-map-thagivdes} (cf. Proposition \ref{lem-for-den-ofdine'lddd}).\\
\indent We now address $(ii)$. Let $(O_{\mu},x_{\mu})$, $\mu\in\ZZ_+$, be relatively compact charts on $M$ such that $E$ locally trivialises over $O_{\mu}$ via $\Phi_{x_{\mu}}:\pi_E^{-1}(O_{\mu})\rightarrow O_{\mu}\times \CC^k$, $x_{\mu}(O_{\mu})=B(0,2)$ and $O'_{\mu}:=x_{\mu}^{-1}(B(0,1))$, $\mu\in\ZZ_+$, cover $M$. Take a partition of unity $(\varphi_{\mu})_{\mu\in\ZZ_+}$ subordinated to $(O'_{\mu})_{\mu\in\ZZ_+}$. Pick $(L_j,K_j)\in \mathfrak{W}$, $j\in\ZZ_+$, which satisfy \eqref{inc-set-com-exchscoses} with $M$ in place of $U$. For each $j\in\ZZ_+$, pick $\psi_j\in\DD(\operatorname{int}K_{j+1})$ such that $0\leq \psi_j\leq 1$ and $\psi_j=1$ on a neighbourhood of $K_j$. Let $\{\chi_n\}_{n\in\ZZ_+}\subseteq\DD(\RR^m)$ be as in Proposition \ref{seq-den-comsmf} and notice that $\supp\chi_n +x_{\mu}(\supp\varphi_{\mu})\subseteq x_{\mu}(O_{\mu})$, for all $n,\mu\in\ZZ_+$. We define
$$
P_n:\DD'(M;E)\rightarrow\Gamma_c(E),\, P_n(u):=\sum_{\mu\in\ZZ_+} \left(\chi_n*\left(((\psi_n\varphi_{\mu})\circ x_{\mu}^{-1})u^l_{\Phi_{x_{\mu}}}\right)\right)\circ x_{\mu} s_{\mu,l},\,\, n\in\ZZ_+.
$$
It is straightforward to check that $P_n$, $n\in\ZZ_+$, are well-defined and continuous. Let $u\in\EE'^r_W(M;E)$ be arbitrary but fixed. There is $j_0\in\ZZ_+$ such that $u\in\EE'^r_{L_{j_0};K_{j_0}}(M;E)$. Set $\Lambda_{j_0}:=\{\mu\in\ZZ_+\,|\, \supp\varphi_{\mu}\cap K_{j_0}\neq \emptyset\}$; of course, $\Lambda_{j_0}$ is finite. For $n\geq j_0$, it holds that
$$
P_n(u)=\sum_{\mu\in\Lambda_{j_0}} (\chi_n*((\varphi_{\mu}\circ x_{\mu}^{-1})u^l_{\Phi_{x_{\mu}}}))\circ x_{\mu} s_{\mu,l},
$$
and, in view of the proof of $(i)$, the right-hand side tends to $u$ in $\EE'^r_{L_{j_0};K_{j'_0}}(M;E)$ as $n\rightarrow\infty$ where $j'_0>j_0$ is large enough so that $\bigcup_{\mu\in\Lambda_{j_0}}O_{\mu}\subseteq \operatorname{int} K_{j'_0}$ (such $j'_0$ exists since the $O_{\mu}$'s are relatively compact). We deduce that $P_n\rightarrow \operatorname{Id}$ in the topology of simple convergence on $\mathcal{L}(\EE'^r_W(M;E))$ and, as $\EE'^r_W(M;E)$ is barrelled, the Banach-Steinhaus theorem \cite[Theorem 4.5, p. 85]{Sch} verifies that the convergence also holds in $\mathcal{L}_p(\EE'^r_W(M;E))$.
\end{proof}

\begin{remark}\label{den-con-inc-forespaceindh}
Proposition \ref{den-ofd-ine-forcponlfns} immediately shows that the following continuous inclusions are also dense:
\begin{gather}
\DD(M)\subseteq \EE'^r_W(M)\subseteq H^r_{\comp}(M),\quad \Gamma_c(E)\subseteq \EE'^r_W(M;E)\subseteq H^r_{\comp}(M;E);\label{con-inc-for-espainsobcomspdes}\\
\EE'^{r_2}_W(M)\subseteq \EE'^{r_1}_W(M)\quad \mbox{and}\quad \EE'^{r_2}_W(M;E)\subseteq \EE'^{r_1}_W(M;E),\quad \mbox{when}\,\, r_2\geq r_1.
\end{gather}
\end{remark}

\begin{theorem}\label{the-for-dua-ofdwitheonmanwithvectbundd}
Let $\pi_E:E\rightarrow M$ be a vector bundle of rank $k$ and let $L$ be a closed conic subset of $T^*M\backslash0$. Then
$$
(\DD'^r_L(M))'_b=\EE'^{-r}_{\check{L}^c}(M;DM)\quad \mbox{and}\quad (\DD'^r_L(M;E))'_b=\EE'^{-r}_{\check{L}^c}(M;E^{\vee})\quad \mbox{topologically}.
$$
\end{theorem}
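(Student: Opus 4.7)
The plan is to reduce to the Euclidean duality of Theorem~\ref{the-for-dua-fordwithespacewithwfs} by localising through a partition of unity, and then to reassemble the pieces. The manifold statement follows from the bundle statement by specialising to $E=\CC_M$, in which case $E^\vee=DM$; I therefore focus on the bundle case. Fix a countable locally finite atlas $\{(O_\mu,x_\mu)\}_\mu$ of $M$ over which $E$ trivialises via $\Phi_\mu$, with induced frame $(\sigma^1_\mu,\dots,\sigma^k_\mu)$ for $E^\vee$, and let $(\varphi_\mu)_\mu$ be a smooth partition of unity subordinated to $(O_\mu)_\mu$. By Proposition~\ref{lem-for-top-imebdofmapscs} and Proposition~\ref{lem-for-top-imebdofmapscs11} the components $(\varphi_\mu u)^j_{\Phi_\mu}$ and $(\varphi_\mu v)^j_{\Phi_\mu}$ lie in $\DD'^r_{L_\mu}(x_\mu(O_\mu))$ and $\EE'^{-r}_{\check{L}^c_\mu}(x_\mu(O_\mu))$ respectively, where $L_\mu$ is the coordinate image of $L$. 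I then define the pairing by summing the Euclidean pairings of Theorem~\ref{the-for-dua-fordwithespacewithwfs}:
\[
\langle v,u\rangle:=\sum_\mu\sum_{j=1}^k\bigl\langle(\varphi_\mu v)^j_{\Phi_\mu},\,(\varphi_\mu u)^j_{\Phi_\mu}\bigr\rangle_{x_\mu(O_\mu)}.
\]
The sum is finite because $\supp v$ is compact, and on $\Gamma_c(E^\vee)\times\Gamma_c(E)$ it collapses, via the transition identity \eqref{cha-fra-fun-duabuch}, to the canonical pairing $\int_M[u,v]$; this fact together with the density results of Propositions~\ref{res-for-den-ope-map-thagivdes} and~\ref{den-ofd-ine-forcponlfns} ensures independence of atlas, trivialisations and partition of unity.

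Next I would check continuity and bijectivity. Separate continuity of the Euclidean pairing (Theorem~\ref{the-for-dua-fordwithespacewithwfs}) together with local finiteness of the sum on each compact piece yields a continuous linear map $J:\EE'^{-r}_{\check{L}^c}(M;E^\vee)\to(\DD'^r_L(M;E))'_b$. Injectivity follows by testing on $\Gamma_c(E)$, sequentially dense in $\DD'^r_L(M;E)$ by Proposition~\ref{res-for-den-ope-map-thagivdes}, so the agreement with the canonical pairing forces $v=0$. For surjectivity, given $\Lambda\in(\DD'^r_L(M;E))'$, I compose with the continuous maps $\DD'^r_{L_\mu}(x_\mu(O_\mu))^k\to\DD'^r_L(M;E)$ furnished by the left-inverse $\mathcal{R}$ built in the proof of Proposition~\ref{lem-for-top-imebdofmapscs} to obtain $\Lambda_\mu$ in each local dual; Theorem~\ref{the-for-dua-fordwithespacewithwfs} identifies each $\Lambda_\mu$ with an element of $\EE'^{-r}_{\check{L}^c_\mu;K_\mu}(x_\mu(O_\mu))^k$, and these patch via \eqref{cha-fra-fun-duabuch} into a single $v\in\EE'^{-r}_{\check{L}^c}(M;E^\vee)$ with $J(v)=\Lambda$.

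Finally, to upgrade $J$ to a topological isomorphism I would apply De Wilde's open mapping theorem: the source is an $(LF)$-space hence ultrabornological, while the target is the strong dual of the strictly webbed semi-reflexive space $\DD'^r_L(M;E)$ (the corollary following Proposition~\ref{lem-for-top-imebdofmapscs}), hence itself strictly webbed. The main obstacle I anticipate is the gluing in the surjectivity step: one must verify that the locally defined pieces assemble into a globally defined distributional section of $E^\vee$ with wave-front set contained in $\check{L}^c$. The transformation law \eqref{cha-fra-fun-duabuch} mixes the transition matrices with a density factor, so the patching must carefully track both when translating between Euclidean and intrinsic representations. The cleanest approach is to characterise the patched $v$ through its action on $\Gamma_c(E)$ --- where the pairing reduces to the intrinsic one --- and then use Corollary~\ref{cor-for-dif-invofespacf} chart by chart to transfer the wave-front-set containment back to the manifold.
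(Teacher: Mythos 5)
Your localisation strategy (cut with a partition of unity, use the Euclidean duality of Theorem \ref{the-for-dua-fordwithespacewithwfs} chart by chart, patch back via \eqref{cha-fra-fun-duabuch} and the left inverse $\mathcal{R}$ from Proposition \ref{lem-for-top-imebdofmapscs}) is essentially the right reduction, and your surjectivity argument matches the paper's in spirit. But the final step, where you upgrade the continuous bijection $J:\EE'^{-r}_{\check{L}^c}(M;E^{\vee})\rightarrow(\DD'^r_L(M;E))'_b$ to a topological isomorphism, has a genuine gap. De Wilde's open mapping theorem requires a continuous surjection \emph{from a strictly webbed space onto an ultrabornological space}; equivalently, applying his closed graph theorem to $J^{-1}$ requires the domain $(\DD'^r_L(M;E))'_b$ to be ultrabornological. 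You have the roles reversed: you justify that the source is ultrabornological (true, it is an $(LF)$-space) and claim the target is strictly webbed because it is the strong dual of a strictly webbed semi-reflexive space --- an implication that is not valid in general and, even if it were, would not feed De Wilde's theorem in the direction you need. Semi-reflexivity of $\DD'^r_L(M;E)$ only gives that its strong dual is barrelled, which is not enough. The missing structural input is precisely what the paper establishes first: using the complemented topological imbedding of Proposition \ref{lem-for-top-imebdofmapscs} into $\prod_{\mu}\DD'^r_{L_\mu}(x_\mu(O_\mu))^k$ and the Euclidean Theorem \ref{the-for-dua-fordwithespacewithwfs}, the strong dual $(\DD'^r_L(M;E))'_b$ is a complemented subspace of a countable locally convex direct sum of the $(LF)$-spaces $\EE'^{-r}_{\check{L}_\mu^c}(x_\mu(O_\mu))^k$, hence itself an $(LF)$-space; then the open mapping theorem for $(LF)$-spaces closes the argument. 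Without this (or some equivalent proof that the dual is ultrabornological), your isomorphism step does not go through.

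Two smaller points. First, your pairing $\sum_\mu\sum_j\langle(\varphi_\mu v)^j_{\Phi_\mu},(\varphi_\mu u)^j_{\Phi_\mu}\rangle$ cuts off \emph{both} arguments, so on smooth sections it reduces to $\int_M(\sum_\mu\varphi_\mu^2)[u,v]$, not to $\int_M[u,v]$; either cut off only $v$ (the sum is still finite since $\supp v$ is compact) or use a partition with $\sum_\mu\varphi_\mu^2=1$ as in Lemma \ref{lem-for-inc-ofeinddualsis}. Second, ``separate continuity of the Euclidean pairing plus local finiteness'' does not by itself give continuity of $J$ into the \emph{strong} dual; one needs that bounded subsets of $\EE'^{-r}_{\widetilde L;\widetilde K}(M;E^{\vee})$ are carried to equicontinuous (or at least bounded) subsets of the dual and then the Fréchet (bornological) property of $\EE'^{-r}_{\widetilde L;\widetilde K}$, which is how the paper argues via Lemma \ref{lem-for-inc-ofeinddualsis} $(ii)$ and Proposition \ref{lem-for-top-imebdofmapscs11}.
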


\begin{proof} We only consider the bundle-valued case. In view of \eqref{inc-for-dif-nincfduals}, $(\DD'^r_L(M;E))'\subseteq H^{-r}_{\comp}(M;E^{\vee})$. First we show that $(\DD'^r_L(M;E))'_b$ is an $(LF)$-space. Proposition \ref{lem-for-top-imebdofmapscs} together with Theorem \ref{the-for-dua-fordwithespacewithwfs} imply that $(\DD'^r_L(M;E))'_b$ is topologically isomorphic to a complemented subspace of a countable locally convex direct sum of $(LF)$-spaces (the strong dual of a topological product is the locally convex direct sum of the strong duals; see \cite[p. 287]{kothe1}). Since a countable direct sum of $(LF)$-spaces is an $(LF)$-space and a quotient of an $(LF)$-space by a closed subspace is again an $(LF)$-space, we deduce that $(\DD'^r_L(M;E))'_b$ is an $(LF)$-space. Thus, in view of the open mapping theorem for $(LF)$-spaces \cite[Theorem 4, p. 43]{kothe2}, to prove the desired result it suffices to show that $\EE'^{-r}_{\check{L}^c}(M;E^{\vee})=(\DD'^r_L(M;E))'$ as sets and the inclusion $\EE'^{-r}_{\check{L}^c}(M;E^{\vee})\subseteq (\DD'^r_L(M;E))'_b$ is continuous.\\
\indent First we show that $\EE'^{-r}_{\check{L}^c}(M;E^{\vee})\subseteq (\DD'^r_L(M;E))'_b$ continuously. Let $\widetilde{K}\subset\subset M$ and let $\widetilde{L}$ be a closed conic subset of $T^*M\backslash0$ satisfying $\pi_{T^*M}(\widetilde{L})\subseteq \widetilde{K}$ and $\widetilde{L}\subseteq \check{L}^c$. Pick a finite number of relatively compact coordinate charts $(O_{\mu},x_{\mu})$, $\mu=1,\ldots,l$, which cover $\widetilde{K}$ and such that $E$ trivialises over $O_{\mu}$ via $\Phi_{x_{\mu}}:\pi_E^{-1}(O_{\mu})\rightarrow O_{\mu}\times \CC^k$, $\mu=1,\ldots,l$. Let $(s_{\mu,1},\ldots,s_{\mu,k})$ be the frame for $E$ over $O_{\mu}$ induced by $\Phi_{x_{\mu}}$ and let $(\sigma^1_{\mu},\ldots,\sigma^k_{\mu})$ be the induced frame for $E^{\vee}$ over $O_{\mu}$. For $v\in\DD'(M;E^{\vee})$, let $v_{\mu,j}\in\DD'(x_{\mu}(O_{\mu}))$ be the distribution $\langle v_{\mu,j},\phi\rangle:=\langle v,\phi\circ x_{\mu}s_{\mu,j}\rangle$, $\phi\in\DD(x_{\mu}(O_{\mu}))$. Pick nonnegative $\varphi_{\mu}\in\DD(O_{\mu})$, $\mu=1,\ldots,l$, such that $\sum_{\mu=1}^l\varphi_{\mu}=1$ on a neighbourhood of $\widetilde{K}$. Let $B$ be a bounded subset of $\EE'^{-r}_{\widetilde{L};\widetilde{K}}(M;E^{\vee})$. Then,
\begin{equation}\label{equ-for-dua-ofuineds}
\langle v,\psi\rangle=\sum_{\mu=1}^l\langle v,\varphi_{\mu}\psi^j_{\mu}s_{\mu,j}\rangle=\sum_{\mu=1}^l\langle (\varphi_{\mu}\circ x_{\mu}^{-1})v_{\mu,j},\psi^j_{\mu}\circ x_{\mu}^{-1}\rangle,\quad \psi\in\Gamma_c(E),\, v\in B.
\end{equation}
In view of Proposition \ref{lem-for-top-imebdofmapscs11}, $B_{\mu,j}:=\{(\varphi_{\mu}\circ x_{\mu}^{-1})v_{\mu,j}\,|\, v\in B\}$ is a bounded subset of $\EE'^{-r}_{\widetilde{L}_{\mu}; \widetilde{K}_{\mu}}(x_{\mu}(O_{\mu}))$, for every $\mu$ and $j$ with $\widetilde{K}_{\mu}$ and $\widetilde{L}_{\mu}$ as in Proposition \ref{lem-for-top-imebdofmapscs11}. Theorem \ref{the-for-dua-fordwithespacewithwfs} and Lemma \ref{lem-for-inc-ofeinddualsis} $(ii)$ show that $B_{\mu,j}$ is an equicontinuous subset of $(\DD'^r_{L_{\mu}}(x_{\mu}(O_{\mu})))'$ with $L_{\mu}$ as in Proposition \ref{lem-for-top-imebdofmapscs}. Since $\Gamma_c(E)$ is dense in $\DD'^r_L(M;E)$ and $\DD'^r_L(M;E)\rightarrow \DD'^r_{L_{\mu}}(x_{\mu}(O_{\mu}))$, $u\mapsto u_{\Phi_{x_{\mu}}}^j$, is continuous (cf. Proposition \ref{lem-for-top-imebdofmapscs}), \eqref{equ-for-dua-ofuineds} verifies that $B$ is an equicontinuous subset of $(\DD'^r_L(M;E))'$. Hence $\EE'^{-r}_{\widetilde{L};\widetilde{K}}(M;E^{\vee})\subseteq  (\DD'^r_L(M;E))'_b$ and the inclusion maps bounded sets into bounded sets; whence, it is continuous since $\EE'^{-r}_{\widetilde{L};\widetilde{K}}(M;E^{\vee})$ is Fr\'echet. We deduce that $\EE'^{-r}_{\check{L}^c}(M;E^{\vee})\subseteq (\DD'^r_L(M;E))'_b$ continuously.\\
\indent It remains to show that $(\DD'^r_L(M;E))'\subseteq \EE'^{-r}_{\check{L}^c}(M;E^{\vee})$. For $v\in (\DD'^r_L(M;E))'\backslash\{0\}$, \eqref{inc-for-dif-nincfduals} implies $v\in H^{-r}_{\comp}(M;E^{\vee})$. For the compact set $\supp v$, we choose $(O_{\mu},x_{\mu})$ and $\varphi_{\mu}$, $\mu=1,\ldots,l$, as above. We make the following\\
\\
\noindent \textbf{Claim.} Let $(O,x)$ be a chart on $M$ over which $E$ locally trivialises via $\Phi:\pi_E^{-1}(O)\rightarrow O\times \CC^k$ and let $(s_1,\ldots,s_k)$ be the induced frame for $E$ over $O$. Set $L_{O,x}:=\kappa(\pi_{T^*M}^{-1}(O)\cap L)$ where $\kappa$ is the total local trivialisation of $T^*M$ over $O$ (cf. \eqref{tot-loc-tri-cotbunovercoordpa}). For $v\in\DD'(M;E^{\vee})$ and $j\in\{1,\ldots,k\}$, let $v_{\Phi,j}$ be the distribution on $x(O)$ defined by $\langle v_{\Phi,j},\phi\rangle:=\langle v,\phi\circ x \,s_j\rangle$, $\phi\in\DD(x(O))$. Then for each $j\in\{1,\ldots,k\}$ and $\varphi\in\DD(O)$, the map $(\DD'^r_L(M;E))'_b\rightarrow (\DD'^r_{L_{O,x}}(x(O)))'_b$, $v\mapsto (\varphi\circ x^{-1})v_{\Phi,j}$, is well-defined and continuous.\\
\\
\indent We first show how the Claim implies the desired result. The Claim together with Theorem \ref{the-for-dua-fordwithespacewithwfs} immediately imply that $(\varphi_{\mu}\circ x_{\mu}^{-1})v_{\mu,j}\in \EE'^{-r}_{\check{L}_{\mu}^c}(x_{\mu}(O_{\mu}))$, $\mu=1,\ldots,l$, $j=1,\ldots,k$, with $L_{\mu}$ as in Proposition \ref{lem-for-top-imebdofmapscs}. Set
$$
\widetilde{K}:=\bigcup_{\mu,j} x_{\mu}^{-1}(\supp ((\varphi_{\mu}\circ x_{\mu}^{-1})v_{\mu,j}))\quad \mbox{and}\quad \widetilde{L}:=\bigcup_{\mu,j} \kappa_{\mu}^{-1}(WF((\varphi_{\mu}\circ x_{\mu}^{-1})v_{\mu,j})).
$$
Then $\widetilde{K}=\supp v$ and $\widetilde{L}$ is a closed conic subset of $T^*M\backslash 0$ satisfying $\widetilde{L}\subseteq \check{L}^c$ and $\pi_{T^*M}(\widetilde{L})\subseteq \widetilde{K}$. Furthermore, $(\varphi_{\mu}\circ x_{\mu}^{-1})v_{\mu,j}\in \EE'^{-r}_{\widetilde{L}_{\mu};\widetilde{K}_{\mu}}(x_{\mu}(O_{\mu}))$ with $\widetilde{K}_{\mu}$ and $\widetilde{L}_{\mu}$ as in Proposition \ref{lem-for-top-imebdofmapscs11}. Notice that \eqref{equ-for-dua-ofuineds} is valid for $v$ and the right-hand side is exactly the map \eqref{map-lef-inv-fortheedualofdcomosl} (with $E^{\vee}$ in place of $E$); whence, the proof of Proposition \ref{lem-for-top-imebdofmapscs11} implies $v\in \EE'^{-r}_{\widetilde{L};\widetilde{K}}(M;E^{\vee})$ and the proof is complete.\\
\indent It remains to show the Claim. In the same way as in the proof of Proposition \ref{lem-for-top-imebdofmapscs}, one shows that for each $j_0\in\{1,\ldots,k\}$, the continuous map
$$
\DD'(x(O))\rightarrow \DD'(M;E),\,\, u\mapsto \widetilde{u}\,\, \mbox{with}\,\, \langle \widetilde{u},\psi\rangle:=\langle u, (\varphi\psi_{j_0})\circ x^{-1}\rangle,\, \psi\in\Gamma_c(E^{\vee}),\, \psi_{|O}=\psi_j\sigma^j,
$$
restricts to a well-defined and continuous map $\DD'^r_{L_{O,x}}(x(O))\rightarrow \DD'^r_L(M;E)$ ($(\sigma^1,\ldots,\sigma^k)$ is the frame for $E^{\vee}$ over $O$ induced by $\Phi$). It is straightforward to verify that its transpose is the map in the Claim which implies the validity of the Claim.
\end{proof}

Later, we are going to need the following technical result. Its proof follows immediately by applying Theorem \ref{the-for-dua-ofdwitheonmanwithvectbundd}, Lemma \ref{lem-for-inc-ofeinddualsis} $(ii)$, Proposition \ref{lem-for-top-imebdofmapscs11} and Proposition \ref{lem-for-top-imebdofmapscs} and we omit it.

\begin{lemma}\label{lem-for-equ-subofeddofboonsukls}
Suppose that $\widetilde{K}\subset\subset M$ and let the closed conic subsets $L$ and $\widetilde{L}$ of $T^*M\backslash0$ satisfy $\pi_{T^*M}(\widetilde{L})\subseteq \widetilde{K}$ and $\widetilde{L}\subseteq L^c$. Then every bounded subset of $\EE'^r_{\widetilde{L};\widetilde{K}}(M;E)$ is equicontinuous with respect to the duality $\langle\EE'^r_{L^c}(M;E),\DD'^{-r}_{\check{L}}(M;E^{\vee})\rangle$.
\end{lemma}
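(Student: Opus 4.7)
The plan is to reduce the claim to the Euclidean setting chart-by-chart, where Lemma \ref{lem-for-inc-ofeinddualsis}(ii) already provides the equicontinuity.

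First I would reinterpret the pairing. Applying Theorem \ref{the-for-dua-ofdwitheonmanwithvectbundd} with $(E,L,r)$ replaced by $(E^{\vee},\check{L},-r)$ yields the topological identity $(\DD'^{-r}_{\check{L}}(M;E^{\vee}))'_b=\EE'^r_{L^c}(M;E)$. Thus the stated duality is the canonical one, and the continuous inclusion $\EE'^r_{\widetilde{L};\widetilde{K}}(M;E)\subseteq\EE'^r_{L^c}(M;E)$ (valid since $\widetilde{L}\subseteq L^c$) realises any bounded $B\subseteq\EE'^r_{\widetilde{L};\widetilde{K}}(M;E)$ as a set of continuous linear functionals on $\DD'^{-r}_{\check{L}}(M;E^{\vee})$.

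Next, I would cover $\widetilde{K}$ by finitely many charts $(O_{\mu},x_{\mu})$, $\mu=1,\ldots,l$, over which $E$ trivialises via $\Phi_{x_{\mu}}:\pi_E^{-1}(O_{\mu})\to O_{\mu}\times\CC^k$, with coordinate cotangent trivialisations $\kappa_{\mu}$, and pick nonnegative $\varphi_{\mu}\in\DD(O_{\mu})$ summing to one on a neighbourhood of $\widetilde{K}$. Denote by $(s_{\mu,j})$ and $(\sigma^j_{\mu})$ the induced frames for $E$ and $E^{\vee}$. For $v\in B$ and $u\in\Gamma_c(E^{\vee})$ with $u_{|O_{\mu}}=u_{\mu,j}\sigma_{\mu}^j$, the same partition-of-unity computation as in \eqref{equ-for-dua-ofuineds} gives
\begin{equation*}
\langle v,u\rangle=\sum_{\mu=1}^l\langle(\varphi_{\mu}\circ x_{\mu}^{-1})v_{\Phi_{x_{\mu}},j},\,u_{\mu,j}\circ x_{\mu}^{-1}\rangle.
\end{equation*}

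Proposition \ref{lem-for-top-imebdofmapscs11} then shows that each $B_{\mu,j}:=\{(\varphi_{\mu}\circ x_{\mu}^{-1})v_{\Phi_{x_{\mu}},j}\,|\,v\in B\}$ is bounded in the Euclidean space $\EE'^r_{\widetilde{L}_{\mu};\widetilde{K}_{\mu}}(x_{\mu}(O_{\mu}))$, where $\widetilde{L}_{\mu}=\kappa_{\mu}(\pi_{T^*M}^{-1}(\supp\varphi_{\mu})\cap\widetilde{L})$ and $\widetilde{K}_{\mu}$ is as in that proposition. Writing $L_{\mu}:=\kappa_{\mu}(\pi_{T^*M}^{-1}(O_{\mu})\cap L)$, the hypothesis $\widetilde{L}\subseteq L^c$ translates into $\widetilde{L}_{\mu}\subseteq L_{\mu}^c=(\check{\check{L}_{\mu}})^c$, so Lemma \ref{lem-for-inc-ofeinddualsis}(ii), applied with the replacements $L\to\check{L}_{\mu}$ and $r\to -r$, gives that each $B_{\mu,j}$ is an equicontinuous subset of $(\DD'^{-r}_{\check{L}_{\mu}}(x_{\mu}(O_{\mu})))'$.

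Finally, Proposition \ref{lem-for-top-imebdofmapscs} provides, for every $\mu$ and $j$, a continuous coordinate projection $\DD'^{-r}_{\check{L}}(M;E^{\vee})\to\DD'^{-r}_{\check{L}_{\mu}}(x_{\mu}(O_{\mu}))$, $w\mapsto w^j_{\Phi_{x_{\mu}}}$. Pulling the local equicontinuity of each $B_{\mu,j}$ back through these continuous maps and summing the resulting seminorms over the finite index set produces equicontinuity of $B$ on $\Gamma_c(E^{\vee})$ equipped with the topology induced from $\DD'^{-r}_{\check{L}}(M;E^{\vee})$; since $\Gamma_c(E^{\vee})$ is dense there by Proposition \ref{res-for-den-ope-map-thagivdes}, the equicontinuity extends to the whole space. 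The main point where care is needed is the bookkeeping of the involutions $\check{(\cdot)}$ and the sign reversal of the Sobolev exponent when interchanging the roles of $\EE'$ and $\DD'$; no genuine analytic obstacle arises, as all the analytical content is already contained in Lemma \ref{lem-for-inc-ofeinddualsis}(ii).
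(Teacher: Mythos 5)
Your argument is correct and follows exactly the route the paper indicates (its proof is omitted but cites precisely Theorem \ref{the-for-dua-ofdwitheonmanwithvectbundd}, Lemma \ref{lem-for-inc-ofeinddualsis}~$(ii)$, Proposition \ref{lem-for-top-imebdofmapscs11} and Proposition \ref{lem-for-top-imebdofmapscs}): reinterpret the duality via Theorem \ref{the-for-dua-ofdwitheonmanwithvectbundd}, localise with a finite chart cover of $\widetilde{K}$ and a partition of unity, apply the Euclidean equicontinuity of Lemma \ref{lem-for-inc-ofeinddualsis}~$(ii)$ to the bounded coordinate families supplied by Proposition \ref{lem-for-top-imebdofmapscs11}, and pull back through the continuous component maps of Proposition \ref{lem-for-top-imebdofmapscs}, finishing by density. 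Your bookkeeping of $\check{(\cdot)}$, the sign of $r$, and the verification $\widetilde{L}_{\mu}\subseteq L_{\mu}^c$ are all in order, so no further changes are needed.
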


\subsection{The topology of \texorpdfstring{$\DD'^r_L(M;E)$}{D'rL(M;E)} is compatible with \texorpdfstring{$\DD'_L(M;E)$}{D'L(M;E)}}

We now show that the H\"ormander space $\DD'_L(M;E):=\{u\in\DD'(M;E)\,|\, WF(u)\subseteq L\}$ is the projective limit of the spaces $\DD'^r_L(M;E)$, $r\in\RR$. The topology of $\DD'_L(M;E)$ is given by the continuous seminorms on $\DD'(M;E)$ together with all seminorms $\mathfrak{q}^{\Phi_x}_{\nu;\varphi,V}$ (see \eqref{sem-for-bun-valdiswavsincsstrs1122}) for all $\nu>0$, all charts $(O,x)$ over which $E$ locally trivialises via $\Phi_x:\pi^{-1}_E(O)\rightarrow O\times \CC^k$ and all $\varphi\in \DD(O)$ and closed cones $V\subseteq \RR^m$ satisfying \eqref{equ-for-emp-intse}; we refer to \cite{BD,D1} for its topological properties.

\begin{proposition}\label{hor-spa-for-fixdsmwavfrsw}
Let $L$ be a closed conic subset of $T^*M\backslash 0$. Then $\DD'_L(M;E)=\bigcap_{r\in\RR}\DD'^r_L(M;E)$ and
$$
\DD'_L(M;E)=\lim_{\substack{\longleftarrow\\ r\rightarrow \infty}}\DD'^r_L(M;E)\quad \mbox{topologically},
$$
where the linking mappings in the projective limit are the canonical inclusions \eqref{map-inc-for-dspasobw}.
\end{proposition}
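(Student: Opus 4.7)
The plan is to establish the algebraic and topological identities simultaneously by showing that the two seminorm systems generate the same topology modulo continuous seminorms on $\DD'(M;E)$. The topology on $\DD'_L(M;E)$ is generated by the seminorms $\mathfrak{q}^{\Phi_x}_{\nu;\varphi,V}$ of \eqref{sem-for-bun-valdiswavsincsstrs1122} (for all $\nu>0$), while the projective-limit topology on $\bigcap_{r\in\RR}\DD'^r_L(M;E)$ is generated by the seminorms $\mathfrak{p}^{\Phi_x}_{r;\varphi,V}$ of \eqref{sem-for-bun-valdiswavsincsstrs} (for all $r\in\RR$); both families include the continuous seminorms on $\DD'(M;E)$ and are indexed by the same admissible triples $(\Phi_x,\varphi,V)$ satisfying \eqref{equ-for-emp-intse}. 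Since every seminorm is chart-local, the comparison reduces to Euclidean frequency estimates.

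For the easier direction, the plan is to show that for any $\nu>r+m/2$,
\[
\mathfrak{p}^{\Phi_x}_{r;\varphi,V}(u)\leq \mathfrak{p}_0(u)+C_{r,\nu}\mathfrak{q}^{\Phi_x}_{\nu;\varphi,V}(u)
\]
with $\mathfrak{p}_0$ a continuous seminorm on $\DD'(M;E)$. I split the defining integral at $|\xi|=1$: the low-frequency piece is controlled by continuity of $u\mapsto \mathcal{F}((\varphi\circ x^{-1})u^j_{\Phi_x})$ from $\DD'(M;E)$ into $\mathcal{C}^{\infty}(\RR^m)$, while the high-frequency piece is bounded by $\mathfrak{q}^{\Phi_x}_{\nu;\varphi,V}(u)^2\int_{V\setminus B(0,1)}\langle\xi\rangle^{2r-2\nu}d\xi<\infty$. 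This single inequality simultaneously produces the continuous inclusion $\DD'_L(M;E)\hookrightarrow \DD'^r_L(M;E)$ for every $r$ and hence a continuous injection $\DD'_L(M;E)\to\bigcap_{r}\DD'^r_L(M;E)$.

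For the harder direction, I plan to bound $\mathfrak{q}^{\Phi_x}_{\nu;\varphi,V}(u)$ by a continuous seminorm on $\DD'^{\nu}_L(M;E)$. Since $L$ is closed, I first enlarge $V$ to an open cone $V'\supseteq V\setminus\{0\}$ with $(\Phi_x,\varphi,V')$ still admissible. Writing $\varphi_\alpha(p):=(-ix(p))^{\alpha}\varphi(p)\in\DD(O)$, I exploit the identity
\[
\partial^{\alpha}_\xi \mathcal{F}((\varphi\circ x^{-1})u^j_{\Phi_x})(\xi)=\mathcal{F}((\varphi_\alpha\circ x^{-1})u^j_{\Phi_x})(\xi).
\]
Choosing $R>0$ large enough that $B(\xi,1)\subseteq V'$ and $\langle\eta\rangle\geq\tfrac{1}{2}\langle\xi\rangle$ whenever $\xi\in V$ with $|\xi|\geq R$ and $\eta\in B(\xi,1)$ (which is possible since $V\cap\mathbb{S}^{m-1}$ has positive distance to $\mathbb{S}^{m-1}\setminus V'$), Sobolev embedding on the unit ball with exponent $N>m/2$ yields
\[
|\mathcal{F}((\varphi\circ x^{-1})u^j_{\Phi_x})(\xi)|^2 \leq C\langle\xi\rangle^{-2\nu}\sum_{|\alpha|\leq N}\mathfrak{p}^{\Phi_x}_{\nu;\varphi_\alpha,V'}(u)^2,\quad \xi\in V,\, |\xi|\geq R,
\]
while for $|\xi|<R$ the quantity $\langle\xi\rangle^{\nu}|\mathcal{F}((\varphi\circ x^{-1})u^j_{\Phi_x})(\xi)|$ is bounded by a continuous seminorm on $\DD'(M;E)$. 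Since $\supp\varphi_\alpha\subseteq\supp\varphi$, each $\mathfrak{p}^{\Phi_x}_{\nu;\varphi_\alpha,V'}$ is continuous on $\DD'^{\nu}_L(M;E)$. Taking the maximum over $j$ produces
\[
\mathfrak{q}^{\Phi_x}_{\nu;\varphi,V}(u)\leq \widetilde{\mathfrak{p}}_0(u)+C'\sum_{|\alpha|\leq N}\mathfrak{p}^{\Phi_x}_{\nu;\varphi_\alpha,V'}(u),
\]
which delivers both the reverse algebraic inclusion $\bigcap_{r}\DD'^r_L(M;E)\subseteq \DD'_L(M;E)$ and the continuity of the inverse map.

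The main technical obstacle is this Sobolev-embedding step, in which $L^2$-weighted control provided by the $\mathfrak{p}$ seminorms must be upgraded to pointwise decay governed by the $\mathfrak{q}$ seminorms. The three enabling observations are that $\xi$-derivatives of the localised Fourier transform $\mathcal{F}((\varphi\circ x^{-1})u^j_{\Phi_x})$ are themselves localised Fourier transforms against polynomial multiples of $\varphi$ (so the $L$-avoidance condition is preserved), that the cone $V$ can be enlarged to an open $L$-avoiding cone $V'$, and that $\langle\eta\rangle\asymp\langle\xi\rangle$ on a unit ball centred at a high-frequency $\xi$. Once these are in place, a single seminorm comparison yields the algebraic and topological identities at once.
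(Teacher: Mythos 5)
Your proof is correct and follows essentially the same route as the paper: both directions reduce to chart-local Fourier estimates, and the key step bounds $\mathfrak{q}^{\Phi_x}_{\nu;\varphi,V}$ by $\mathfrak{p}^{\Phi_x}_{\nu;\cdot,\cdot}$-seminorms of order $\nu$ via Sobolev embedding applied to the localised Fourier transform, whose $\xi$-derivatives are again localised Fourier transforms against polynomial multiples of the cutoff. The only (harmless) difference is implementational: the paper applies the embedding on cone domains $V_{j,h}$ chosen to satisfy the cone condition, after a spatial partition of unity, whereas you apply it on unit balls $B(\xi,1)$ after enlarging $V$ to a slightly larger closed $L$-avoiding cone (the standard compactness argument) and freezing the weight via $\langle\eta\rangle\asymp\langle\xi\rangle$ on each ball.
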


\begin{proof} Denote the projective limit by $\mathfrak{P}$. Clearly, $\DD'_L(M;E)\subseteq\mathfrak{P}$ continuously. To show the opposite inclusion, it suffices to show that each seminorm $\mathfrak{q}^{\Phi_x}_{\nu;\varphi,V}(u)$, $u\in\mathfrak{P}$, is bounded by a continuous seminorms on some $\DD'^r_L(M;E)$. Employing a standard compactness argument, we find relatively compact open sets $O_1,\ldots,O_q$ which cover $\supp\varphi$ and satisfy $\overline{O_j}\subseteq O$, $j=1,\ldots,q$, and for each $O_j$ we find open cones $V_{j,h}:=\RR_+B(\eta^{(j,h)},\varepsilon_{j,h})$, $h=1,\ldots,\mu_j$, for some $\eta^{(j,h)}\in\mathbb{S}^{m-1}$ and $\varepsilon_{j,h}>0$, such that $V\backslash\{0\}\subseteq \bigcup_{h=1}^{\mu_j}V_{j,h}$ and
$$
\{(p,\xi_ldx^l|_p)\in T^*O_j\,|\, (\xi_1,\ldots,\xi_m)\in \overline{V_{j,h}}\}\cap L=\emptyset,\quad h=1,\ldots,\mu_j,\, j=1,\ldots,q.
$$
Pick nonnegative $\psi_j\in\DD(O_j)$, $j=1,\ldots,q$, such that $\sum_{j=1}^q\psi_j=1$ on a neighbourhood of $\supp\varphi$. Notice that $\mathfrak{q}^{\Phi_x}_{\nu;\varphi,V}(u)\leq \sum_{j=1}^q \max_{1\leq h\leq\mu_j}\mathfrak{q}^{\Phi_x}_{\nu;\varphi\psi_j,V_{j,h}}(u)$. The Sobolev imbedding theorem \cite[Theorem 4.12, p. 85]{adams} implies that there is $C>0$ such that
\begin{align*}
\mathfrak{q}^{\Phi_x}_{\nu;\varphi\psi_j,V_{j,h}}(u)&\leq C\max_{1\leq l\leq k}\max_{|\alpha|\leq 1+m/2}\left\|\partial^{\alpha}\left(\langle\cdot\rangle^{\nu}\mathcal{F}\left(((\varphi\psi_j)\circ x^{-1})u^l_{\Phi_x}\right)\right)\right\|_{L^2(V_{j,h})}\\
&\leq C'\max_{1\leq l\leq k}\max_{|\alpha|\leq 1+m/2}\|\langle\cdot\rangle^{\nu}\mathcal{F}(\phi_{j,\alpha} u^l_{\Phi_x}))\|_{L^2(V_{j,h})},
\end{align*}
where we denoted $\phi_{j,\alpha}(t)=t^{\alpha}(\varphi\psi_j)(x^{-1}(t))$, $t\in x(O_j)$, $\alpha\in\NN^m$.\footnote{The choice of $V_{j,h}$ was so that they satisfy the domain conditions for the Sobolev imbedding theorem.} Hence, $\mathfrak{q}^{\Phi_x}_{\nu;\varphi\psi_j,V_{j,h}}(u)\leq C'\max_{|\alpha|\leq 1+m/2}\mathfrak{p}^{\Phi_x}_{\nu;\phi_{j,\alpha}\circ x, V_{j,h}}(u)$ and the proof is complete.
\end{proof}

\begin{remark}\label{rem-for-wfi-ofsobofsmhks}
The proposition implies the well-known identity $WF(u)=\overline{\bigcup_{r\in\RR} WF^r(u)}\backslash 0$, $u\in\DD'(M;E)$. Indeed, the inclusion ``$\supseteq$'' is trivial and the opposite inclusion follows by applying Proposition \ref{hor-spa-for-fixdsmwavfrsw} with $L:=\overline{\bigcup_{r\in\RR} WF^r(u)}\backslash 0$.
\end{remark}

\subsection{Pullback by smooth maps on vector bundles}

We are now ready to show our main result on the pullback.

\begin{theorem}\label{mai-the-pul-forvecbundm}
Let $f:M\rightarrow N$ be a smooth map between the manifolds $M$ and $N$ with dimensions $m$ and $n$ respectively and let $L$ be a closed conic subset of $T^*N\backslash0$ which satisfies $L\cap \mathcal{N}_f=\emptyset$.
\begin{itemize}
\item[$(i)$] The pullback $f^*:\mathcal{C}^{\infty}(N)\rightarrow \mathcal{C}^{\infty}(M)$, $f^*(u)=u\circ f$, uniquely extends to a well-defined and continuous mapping $f^*:\DD'^{r_2}_L(N)\rightarrow \DD'^{r_1}_{f^*L}(M)$ when $r_2-r_1>n/2$ and $r_2>n/2$. If $f$ has constant rank $l\geq 1$, then this is valid when $r_2-r_1\geq (n-l)/2$ and $r_2>(n-l)/2$. When $f$ is a submersion, $f^*:\DD'^{r_2}_L(N)\rightarrow \DD'^{r_1}_{f^*L}(M)$ is well-defined and continuous even when $r_2\geq r_1$. Consequently, if $f$ is a diffeomorphism, then $f^*:\DD'^r_L(N)\rightarrow \DD'^r_{f^*L}(M)$ is a topological isomorphism for each $r\in\RR$.
\item[$(ii)$] Let $(E,\pi_E,N)$ be a vector bundle of rank $k$. The pullback $f^*:\Gamma(E)\rightarrow \Gamma(f^*E)$, $f^*(u)(p)=(p,u\circ f(p))$, $p\in M$, uniquely extends to a well-defined and continuous mapping $f^*:\DD'^{r_2}_L(N;E)\rightarrow \DD'^{r_1}_{f^*L}(M;f^*E)$ when $r_2-r_1>n/2$ and $r_2>n/2$. If $f$ has constant rank $l\geq 1$, then this is valid when $r_2-r_1\geq (n-l)/2$ and $r_2>(n-l)/2$. When $f$ is a submersion, $f^*:\DD'^{r_2}_L(N;E)\rightarrow \DD'^{r_1}_{f^*L}(M;f^*E)$ is well-defined and continuous even when $r_2\geq r_1$. Consequently, if $f$ is a diffeomorphism, then $f^*:\DD'^r_L(N;E)\rightarrow \DD'^r_{f^*L}(M;f^*E)$ is a topological isomorphism for each $r\in\RR$.
\end{itemize}
\end{theorem}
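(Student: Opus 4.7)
The plan is to reduce both (i) and (ii) to the Euclidean version, Theorem~\ref{the-pul-bac-for-smcrmdiff}. Part (i) is the special case of (ii) with $E=\CC_N$ the trivial line bundle, because $f^*\CC_N=\CC_M$ canonically and the identifications $\DD'(N;\CC_N)=\DD'(N)$ and $\DD'(M;\CC_M)=\DD'(M)$ of Subsection~\ref{subsec-dist-on-manifolds-vecbund} translate (i) verbatim into (ii). The rest of the argument concentrates entirely on (ii).

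Uniqueness of the extension is immediate from Proposition~\ref{res-for-den-ope-map-thagivdes}: since $\Gamma_c(E)\subseteq \Gamma(E)$ is sequentially dense in $\DD'^{r_2}_L(N;E)$, any two sequentially continuous extensions of $f^*_{|\Gamma(E)}$ must coincide. For existence and continuity, the main computation is the coordinate representation of $f^*$. Fix a chart $(O,y)$ on $N$ over which $E$ trivialises via $\Phi:\pi_E^{-1}(O)\to O\times\CC^k$, and a chart $(O',x)$ on $M$ with $f(O')\subseteq O$; then $f^*E$ trivialises over $O'$ via $f^*\Phi$. A direct unfolding of the definitions~\eqref{ind-tri-loc-nestk} shows that for every $u\in\Gamma(E)$
\begin{equation}\label{loc-rep-pullback}
(f^*u)^j_{f^*\Phi_x}=(y\circ f\circ x^{-1})^*\bigl(u^j_{\Phi_y}\bigr)\quad\text{in}\ \DD'(x(O')),\ j=1,\dots,k.
\end{equation}
With $\widetilde{f}:=y\circ f\circ x^{-1}:x(O')\to y(O)$ and $\widetilde{L}:=\kappa_y(\pi_{T^*N}^{-1}(O)\cap L)$, the hypothesis $L\cap\mathcal{N}_f=\emptyset$ yields $\widetilde{L}\cap\mathcal{N}_{\widetilde{f}}=\emptyset$, $\widetilde{f}^*\widetilde{L}$ represents $f^*L$ in coordinates, and the constant-rank, submersion or diffeomorphism properties of $f$ are inherited by $\widetilde{f}$ since they are local and invariant under composition with diffeomorphisms.

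Existence and continuity then follow by a partition-of-unity gluing. Given a target seminorm $\mathfrak{p}^{f^*\Phi_x}_{r_1;\varphi,V}$ on $\DD'^{r_1}_{f^*L}(M;f^*E)$ (Remark~\ref{char-for-topofwfsetsobdefofn}), a compactness argument on $\supp\varphi$ produces a finite cover $\{O'_\nu\}$ of $\supp\varphi$ by charts on $M$ such that each $f(O'_\nu)$ lies in a chart $(O_\nu,y_\nu)$ on $N$ over which $E$ trivialises via some $\Phi_\nu$, together with a subordinate partition of unity $\{\psi_\nu\}$. Using~\eqref{loc-rep-pullback} on each patch and applying Theorem~\ref{the-pul-bac-for-smcrmdiff} to each $\widetilde{f}_\nu:=y_\nu\circ f\circ x^{-1}$ with indices $(r_1,r_2)$, one bounds $\mathfrak{p}^{f^*\Phi_x}_{r_1;\psi_\nu\varphi,V}(f^*u)$ by finitely many seminorms of $u$ in $\DD'^{r_2}_L(N;E)$, plus a continuous seminorm on $\DD'(N;E)$. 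Summing over $\nu$ yields continuity of $f^*$ on $\Gamma_c(E)$, and extension by density produces the continuous map $f^*:\DD'^{r_2}_L(N;E)\to\DD'^{r_1}_{f^*L}(M;f^*E)$. For a general $u\in\Gamma(E)$, the equality $f^*u=u\circ f$ follows by approximating $u$ with $\chi_n u\in\Gamma_c(E)$ for an exhausting sequence of smooth cutoffs $\chi_n$: on any compact set $\chi_n u=u$ for $n$ large, so $\chi_n u\to u$ in $H^{r_2}_{\loc}(N;E)\hookrightarrow\DD'^{r_2}_L(N;E)$ and $(\chi_n u)\circ f\to u\circ f$ in $\Gamma(f^*E)\hookrightarrow\DD'^{r_1}_{f^*L}(M;f^*E)$.

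The principal technical obstacle is the bookkeeping in this gluing: the cones $V\subseteq\RR^m$ appearing in the seminorms on the $M$-side must be transported, under the coordinate changes $y_\nu\circ f\circ x^{-1}$, to cone configurations on the $N$-side compatible with $L$, which requires careful shrinking of the cover and refinement of the partition. This is the same type of compactness argument that underlies Proposition~\ref{lem-for-top-imebdofmapscs}, but augmented here by the need to track the regularity thresholds $r_2-r_1>n/2$, $r_2>n/2$ (and their constant-rank improvements) from Theorem~\ref{the-pul-bac-for-smcrmdiff}. Once this orchestration is properly set up, the estimates themselves are a finite number of applications of the Euclidean theorem in each patch.
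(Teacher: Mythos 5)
Your proposal is correct and follows essentially the same route as the paper: uniqueness via the density of $\Gamma_c(E)$ (Proposition \ref{res-for-den-ope-map-thagivdes}), and existence/continuity by transporting everything to charts through the local representation $(f^*u)^j_{(f^*\Phi)_x}=\hat f^*_{x,y}(u^j_{\Phi_y})$ and applying the Euclidean Theorem \ref{the-pul-bac-for-smcrmdiff} patchwise with a partition of unity. The only differences are organizational — you treat $(i)$ as the special case $E=\CC_N$ and do the gluing by direct seminorm estimates, whereas the paper factors the same argument through the imbedding/retraction maps of Proposition \ref{lem-for-top-imebdofmapscs} as $\mathcal{R}_M\circ\mathbf{F}\circ\mathcal{I}_N$ — so no substantive comment is needed.
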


\begin{proof} We only show $(ii)$ as the proof of $(i)$ is similar. The uniqueness of the extension follows from Proposition \ref{res-for-den-ope-map-thagivdes}. To show the existence, pick a cover of charts $\{(O_{\mu},x_{\mu})\}_{\mu\in\Lambda}$ of $M$ and corresponding charts $(U_{\mu},y_{\mu})$, $\mu\in\Lambda$, in $N$ such that $f(O_{\mu})\subseteq U_{\mu}$, $\mu\in\Lambda$, and $E$ locally trivialises over $U_{\mu}$ via $\Phi_{\mu}:\pi_E^{-1}(U_{\mu})\rightarrow U_{\mu}\times \CC^k$, $\mu\in \Lambda$ (of course $U_{\mu}$, $\mu\in\Lambda$, only cover $f(M)$). Denote, $\hat{f}_{\mu}:=y_{\mu}\circ f_{|O_{\mu}}\circ x_{\mu}^{-1}: x_{\mu}(O_{\mu})\rightarrow y_{\mu}(U_{\mu})$, $\mu\in\Lambda$; if $f$ has constant rank $l$ then the same holds for $\hat{f}_{\mu}$, $\mu\in\Lambda$, as well. For each $\mu\in\Lambda$, denote by $\kappa_{\mu}$ and $\iota_{\mu}$ the total local trivialisations of $T^*M$ and $T^*N$ over $O_{\mu}$ and $U_{\mu}$ induced by $x_{\mu}$ and $y_{\mu}$ respectively (cf. \eqref{tri-cot-bun-coordindms}). Set $(f^*L)_{\mu}:=\kappa_{\mu}(\pi_{T^*M}^{-1}(O_{\mu})\cap f^*L)$ and $L_{\mu}:=\iota_{\mu}(\pi_{T^*N}^{-1}(U_{\mu})\cap L)$ and notice that (cf. \eqref{equ-for-con-subwithdifandordmass})
\begin{equation}\label{equ-for-con-pulckfrfuckl}
\hat{f}_{\mu}^*L_{\mu}=(f^*L)_{\mu}\quad\mbox{and}\quad L_{\mu}\cap \mathcal{N}_{\hat{f}_{\mu}}=\emptyset, \quad \mu\in\Lambda.
\end{equation}
Let $(\widetilde{U}_{\nu},\widetilde{y}_{\nu})$, $\nu\in\Theta$, be charts on $N$ over which $E$ locally trivialises and which cover $N\backslash \bigcup_{\mu\in\Lambda} U_{\mu}$ ($\Theta=\emptyset$ if $N= \bigcup_{\mu\in\Lambda} U_{\mu}$). Let\footnote{Here and throughout the rest of the article we employ the principle of nullary Cartesian product $A\times \prod_{\alpha\in\emptyset}B_{\alpha}=A$.} $\mathcal{I}_N:\DD'^{r_2}_L(N;E)\rightarrow \prod_{\mu\in\Lambda}\DD'^{r_2}_{L_{\mu}}(y_{\mu}(U_{\mu}))^k\times \prod_{\nu\in\Theta}\DD'^{r_2}_{L_{\nu}}(\widetilde{y}_{\nu}(\widetilde{U}_{\nu}))^k$ be the map \eqref{map-imb-lcf-incprodick} for $N$ and the cover $\{(U_{\mu},y_{\mu})\}_{\mu\in\Lambda}\cup\{(\widetilde{U}_{\nu},\widetilde{y}_{\nu})\}_{\nu\in\Theta}$. Consider the map
\begin{gather*}
\mathbf{F}:\prod_{\mu\in\Lambda}\DD'^{r_2}_{L_{\mu}}(y_{\mu}(U_{\mu}))^k\times \prod_{\nu\in\Theta}\DD'^{r_2}_{L_{\nu}}(\widetilde{y}_{\nu}(\widetilde{U}_{\nu}))^k\rightarrow \prod_{\mu\in\Lambda}\DD'^{r_1}_{(f^*L)_{\mu}}(x_{\mu}(O_{\mu}))^k,\\
\mathbf{F}(\mathbf{f})(\mu)^j=\hat{f}^*_{\mu}(\mathbf{f}(\mu)^j),\quad \mu\in\Lambda,\, j\in\{1,\ldots,k\};
\end{gather*}
in view of Theorem \ref{the-pul-bac-for-smcrmdiff} and \eqref{equ-for-con-pulckfrfuckl}, the map is well-defined and continuous. Pick a partition of unity $(\varphi_{\mu})_{\mu\in\Lambda}$ subordinated to $(O_{\mu})_{\mu\in\Lambda}$ and denote by $\mathcal{R}_M$ the map \eqref{res-map-t-c-for-newresinals} with $f^*L$, $f^*E$ and $r_1$ in place of $L$, $E$ and $r$ respectively. We infer that $\mathcal{R}_M\circ\mathbf{F}\circ\mathcal{I}_N:\DD'^{r_2}_L(N;E)\rightarrow\DD'^{r_1}_{f^*L}(M;f^*E)$ is well-defined and continuous. It is straightforward to verify that $\mathcal{R}_M\circ\mathbf{F}\circ\mathcal{I}_N(u)(p)=(p,u\circ f(p))$, $p\in M$, for $u\in\Gamma(E)$; whence, the desired extension is $f^*=\mathcal{R}_M\circ\mathbf{F}\circ\mathcal{I}_N$.
\end{proof}

\begin{remark}
Assume the same as in Theorem \ref{mai-the-pul-forvecbundm} $(ii)$. Let $(O,x)$ and $(U,y)$ be charts on $M$ and $N$ such that $E$ trivialises over $U$ via $\Phi:\pi_E^{-1}(U)\rightarrow U\times \CC^k$ and $f(O)\subseteq U$ and let $\kappa$ and $\iota$ be the total local trivialisations of $T^*M$ and $T^*N$ over $O$ and $U$ induced by $x$ and $y$ respectively. Denoting $L_{U,y}:=\iota(\pi_{T^*N}^{-1}(U)\cap L)$, $(f^*L)_{O,x}:=\kappa(\pi_{T^*M}^{-1}(O)\cap f^*L)$ and $\hat{f}_{x,y}:=y\circ f_{|O} \circ x^{-1}:x(O)\rightarrow y(U)$, it holds that $\hat{f}_{x,y}^*L_{U,y}=(f^*L)_{O,x}$ and $L_{U,y}\cap \mathcal{N}_{\hat{f}_{x,y}}=\emptyset$. Consequently, Theorem \ref{the-pul-bac-for-smcrmdiff} implies that $\hat{f}_{x,y}:\DD'^{r_2}_{L_{U,y}}(y(U))\rightarrow \DD'^{r_1}_{(f^*L)_{O,x}}(x(O))$ is well defined and continuous. Furthermore
\begin{equation}\label{ide-for-cha-chasnf}
(f^*u)_{(f^*\Phi)_x}^j=\hat{f}^*_{x,y}(u_{\Phi_y}^j),\quad u\in \DD'^{r_2}_L(N;E),\,j\in\{1,\ldots,k\}.
\end{equation}
It is straightforward to verify \eqref{ide-for-cha-chasnf} for $u\in\Gamma(E)$ and the general case follows by density.\\
\indent Under the assumptions of Theorem \ref{mai-the-pul-forvecbundm} $(i)$, the analogous statement is that $(f^*u)_x=\hat{f}_{x,y}^*(u_y)$, $u\in \DD'^{r_2}_L(N)$.
\end{remark}

\begin{remark}
If $f:M\rightarrow N$ has constant rank $0$, then we can argue as in Remark \ref{rem-for-zer-rankmappulbc} to show that there are at most countably many pairwise disjoint open sets $O_j\subseteq M$, $j\in\Lambda$, whose union is $M$ and distinct points $q_j\in N$, $j\in \Lambda$, such that $f(p)=q_j$, $p\in O_j$, $j\in\Lambda$. Notice that $\mathcal{N}_f=\bigcup_{j\in \Lambda}(\{q_j\}\times T^*_{q_j}N)$. If $L\cap \mathcal{N}_f=\emptyset$ and $r>n/2$, we can reason similarly as in Remark \ref{rem-for-zer-rankmappulbc} to show that $f^*$ uniquely extends to a continuous mapping $f^*:\DD'^r_L(N;E)\rightarrow \Gamma(f^*E)$. Furthermore, each $u\in \DD'^r_L(N;E)$ is continuous on a neighbourhood around $q_j$ for every $j\in \Lambda$ and $f^*u(p)= (p, u(q_j))$, $p\in O_j$, $j\in\Lambda$, $u\in \DD'^r_L(N;E)$ (cf. Proposition \ref{res-for-den-ope-map-thagivdes} and Remark \ref{con-ofm-for-confasec}).
\end{remark}

As a direct consequence of Proposition \ref{res-for-den-ope-map-thagivdes} and Remark \ref{con-ofm-for-confasec}, we have the following useful result.

\begin{lemma}\label{lem-for-con-funcpulbmsmn}
Let $f:M\rightarrow N$ be a smooth map between the manifolds $M$ and $N$ with dimensions $m$ and $n$ respectively, let $L$
be a closed conic subset of $T^*N\backslash0$ and let $(E,\pi_E,N)$ be a vector bundle. If $f^*:\Gamma(E)\rightarrow \Gamma(f^*E)$ extends to a well-defined and continuous map $f^*:\DD'^r_L(N;E)\rightarrow \DD'(M;f^*E)$ for some $r\in\RR$, then for any $u\in\Gamma^0(E)\cap \DD'^r_L(N;E)$ it holds that $f^*u\in \Gamma^0(f^*E)$ and $f^*u(p)=(p,u\circ f(p))$, $p\in M$.
\end{lemma}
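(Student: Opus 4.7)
The plan is to approximate $u$ by smooth sections on which the classical pullback coincides with the extended one, and then pass to the limit in two compatible topologies. Specifically, let $P_n:\DD'(N;E)\rightarrow\Gamma_c(E)$, $n\in\ZZ_+$, be the operators constructed in Proposition \ref{res-for-den-ope-map-thagivdes} where the underlying local operators $\widetilde{P}_{\mu,n}$ on Euclidean coordinate patches are chosen as in Proposition \ref{seq-den-comsmf}. By Proposition \ref{res-for-den-ope-map-thagivdes}, the singleton $\{u\}$ is (trivially) precompact in $\DD'^r_L(N;E)$, hence $P_nu\rightarrow u$ in $\DD'^r_L(N;E)$. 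Simultaneously, since $u\in\Gamma^0(E)$, Remark \ref{con-ofm-for-confasec} gives $P_nu\rightarrow u$ in $\Gamma^0(E)$.

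First I would exploit continuity of $f^*$: from $P_nu\rightarrow u$ in $\DD'^r_L(N;E)$, the hypothesis yields $f^*(P_nu)\rightarrow f^*u$ in $\DD'(M;f^*E)$. On the other hand, each $P_nu$ is a smooth (in fact compactly supported smooth) section of $E$, so by definition of the classical pullback we have $f^*(P_nu)(p)=(p,P_nu(f(p)))$ for every $p\in M$. The convergence $P_nu\rightarrow u$ in $\Gamma^0(E)$ is uniform on compact subsets of $N$; since $f$ maps each compact subset of $M$ to a compact subset of $N$, this gives $p\mapsto(p,P_nu(f(p)))\rightarrow p\mapsto(p,u(f(p)))$ uniformly on compact subsets of $M$, i.e.\ in $\Gamma^0(f^*E)$. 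Denote this continuous section by $v$.

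The key observation is that the canonical inclusion $\Gamma^0(f^*E)\hookrightarrow \DD'(M;f^*E)$ is continuous, so $f^*(P_nu)\rightarrow v$ also holds in $\DD'(M;f^*E)$. By uniqueness of limits in the Hausdorff space $\DD'(M;f^*E)$, we conclude $f^*u=v$, i.e.\ $f^*u\in\Gamma^0(f^*E)$ and $f^*u(p)=(p,u(f(p)))$ for every $p\in M$. The only subtlety is checking that a sequence of smooth sections converging in $\Gamma^0$ also converges as distributional sections; this follows directly from the fact that pairing with a fixed test density in $\Gamma_c((f^*E)^{\vee})$ involves integration of a continuous section against a compactly supported continuous density, which passes to the limit by dominated convergence (uniform convergence on the support). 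No real obstacle arises, so the proof is essentially a two-topology density argument.
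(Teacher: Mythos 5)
Your proof is correct and follows exactly the route the paper intends: the lemma is stated as a direct consequence of Proposition \ref{res-for-den-ope-map-thagivdes} and Remark \ref{con-ofm-for-confasec}, i.e.\ approximating $u$ by $P_nu\in\Gamma_c(E)$ converging simultaneously in $\DD'^r_L(N;E)$ and in $\Gamma^0(E)$, applying continuity of $f^*$ and the classical pullback formula on smooth sections, and identifying the two limits in $\DD'(M;f^*E)$. Your spelled-out version, including the continuity of the inclusion $\Gamma^0(f^*E)\hookrightarrow\DD'(M;f^*E)$, is precisely the argument the authors leave implicit.
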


\section{Action of \texorpdfstring{$\Psi$DOs}{PsiDOs} on \texorpdfstring{$\DD'^r_L(M;E)$}{D'rL(M;E)} and its dual}\label{sec-psido-on-despacesanddual}

In this section, we study the continuity properties of pseudo-differential operators when acting on the spaces $\DD'^r_L(M;E)$, $r\in\RR$.

\begin{proposition}\label{pro-for-psudomapforcontonvectbac}
Let $r,r',r_0\in\RR$, $r'\geq r_0$. Let $L$ be a closed conic subset of $T^*M\backslash0$ and $W$ an open conic subset of $T^*M\backslash0$. Let $E$ and $F$ be two vector bundles over $M$. Let $A\in\Psi^{r'}(M;E,F)$ be properly supported, of order $r_0$ in $L^c$ and of order $-\infty$ in $W$. Then $A:\DD'^r_L(M;E)\rightarrow \DD'^{r-r_0}_{L\cap W^c}(M;F)$ is well-defined and continuous. Furthermore, $A$ maps bounded subsets of $\DD'^r_L(M;E)$ into relatively compact subsets of $\DD'^{\widetilde{r}}_{L\cap W^c}(M;F)$ for every $\widetilde{r}<r-r_0$.
\end{proposition}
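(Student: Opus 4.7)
I begin by verifying $A\bigl(\DD'^r_L(M;E)\bigr)\subseteq \DD'^{r-r_0}_{L\cap W^c}(M;F)$, equivalently $WF^{r-r_0}(Au)\subseteq L\cap W^c$ for $u\in\DD'^r_L(M;E)$. At every $(p_0,\xi_0)\in W$, $A$ has symbol in $S^{-\infty}_{\loc}$ in a conic neighbourhood and is microlocally smoothing there, hence $(p_0,\xi_0)\notin WF(Au)\supseteq WF^{r-r_0}(Au)$. At every $(p_0,\xi_0)\in L^c$, $u$ is microlocally $H^r$ (since $WF^r(u)\subseteq L$) and $A$ is microlocally a $\Psi$DO of order $r_0$; by the standard pseudo-local property for the Sobolev wave front set, $Au$ is microlocally $H^{r-r_0}$ at $(p_0,\xi_0)$.

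For continuity, Proposition \ref{lem-for-top-imebdofmapscs} together with the diffeomorphism invariance from Theorem \ref{mai-the-pul-forvecbundm} reduces the estimation of the generating seminorms of $\DD'^{r-r_0}_{L\cap W^c}(M;F)$ to the local scalar case $M=U\subseteq\RR^n$. Fix $\varphi\in\DD(U)$ and a closed cone $V\subseteq\RR^n$ with $(\supp\varphi\times V)\cap(L\cap W^c)=\emptyset$, equivalently $\supp\varphi\times(V\setminus\{0\})\subseteq L^c\cup W$. Choose a closed cone $V_1$ with $V\setminus\{0\}\subseteq\operatorname{int}V_1$ still satisfying $\supp\varphi\times V_1\subseteq L^c\cup W$ off the zero section, a symbol $\chi\in S^0(\RR^n)$ positively homogeneous of degree $0$ for $|\xi|>1$, equal to $1$ on $V$ and supported in $V_1$, and set $B:=\Op(\chi(\xi)\langle\xi\rangle^{r-r_0})\in\Psi^{r-r_0}$. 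Then
\[
\mathfrak{p}_{r-r_0;\varphi,V}(Au)\le(2\pi)^{n/2}\|B\varphi Au\|_{L^2(\RR^n)},
\]
and since $A$ is properly supported, $\varphi A$ has compactly supported kernel and $P:=B\varphi A$ is a well-defined $\Psi$DO on $U$.

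The heart of the argument is a microlocal decomposition $P=P_1+P_2+R$. Cover the compact set $\supp\varphi\times(V_1\cap\mathbb{S}^{n-1})$ by finitely many open conic subsets of $T^*U\setminus 0$ each lying in $L^c$ (on which $\sigma(A)\in S^{r_0}_{\loc}$) or in $W$ (on which $\sigma(A)\in S^{-\infty}_{\loc}$), take a subordinate microlocal partition of unity by zeroth-order symbols, and apply the composition formula $\sigma(P)\sim\sum_\alpha(-i)^{|\alpha|}\alpha!^{-1}\partial_\xi^\alpha\sigma(B)\,\partial_x^\alpha\sigma(\varphi A)$; the key arithmetic $(r-r_0)+r_0=r$ on the $L^c$-pieces, together with $S^{-\infty}$ absorbing the $W$-pieces and $\chi\equiv 0$ killing the residual region outside $V_1$, yields $P_1\in\Psi^r(U)$ with essential support in $\supp\varphi\times(V_1\cap L^c)$ modulo $\Psi^{-\infty}$, and $P_2,R\in\Psi^{-\infty}(U)$, all properly supported. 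Then $\|P_2u\|_{L^2}+\|Ru\|_{L^2}$ is a continuous seminorm on $\DD'(U)$. For $\|P_1u\|_{L^2}$ I pick $\psi\in\DD(U)$ equal to $1$ on the $U$-projection of the support of $P_1$'s kernel, so $P_1u=P_1(\psi u)$ with $\psi u\in\EE'(U)$, and $\chi_1\in S^0(\RR^n)$ equal to $1$ on the essential $\xi$-support of $P_1$ and supported in a closed cone $V'$ with $(\supp\psi\times V')\cap L=\emptyset$. The containment of essential supports gives $P_1=P_1\Op(\chi_1)+R_1$ with $R_1\in\Psi^{-\infty}(U)$, hence $P_1(\psi u)=P_1\Op(\chi_1)(\psi u)+R_1(\psi u)$; the smoothing term gives a continuous seminorm on $\DD'(U)$, and
\[
\|P_1\Op(\chi_1)(\psi u)\|_{L^2(\RR^n)}\le C\|\Op(\chi_1)(\psi u)\|_{H^r(\RR^n)}\le C\,\mathfrak{p}_{r;\psi,V'}(u),
\]
combining the $L^2$-boundedness of properly supported $\Psi^r$ operators acting $H^r_{\loc}\to L^2_{\loc}$ (applied on the compact output region of $P_1$) with the identity $\|\Op(\chi_1)(\psi u)\|_{H^r(\RR^n)}^2=\int_{\RR^n}\langle\xi\rangle^{2r}\chi_1(\xi)^2|\mathcal{F}(\psi u)(\xi)|^2 d\xi\le\mathfrak{p}_{r;\psi,V'}(u)^2$.

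For the final claim, continuity gives that $AB$ is bounded in $\DD'^{r-r_0}_{L\cap W^c}(M;F)$, so by Corollary \ref{cor-for-relcom-sub-wafe-fronchar} it remains to verify $WF^{\tilde r}_c(AB)\subseteq L\cap W^c$ for every $\tilde r<r-r_0$. For any $(p_0,\xi_0)\notin L\cap W^c$ and local $\varphi,V_1$ as above, the uniform estimate $C_0:=\sup_{u\in B}\mathfrak{p}_{r-r_0;\varphi,V_1}(Au)<\infty$ provided by the continuity just proved, combined with
\[
\sup_{u\in B}\int_{V_1,\,|\xi|>R}\langle\xi\rangle^{2\tilde r}|\mathcal{F}(\varphi Au)(\xi)|^2 d\xi\le \langle R\rangle^{-2(r-r_0-\tilde r)}C_0^2\to 0\quad\text{as}\quad R\to\infty,
\]
gives $(p_0,\xi_0)\notin WF^{\tilde r}_c(AB)$ by the local-chart criterion analogous to Lemma \ref{lem-for-cha-wfofset-comwfl}. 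The hardest step is the symbolic-calculus decomposition $P=P_1+P_2+R$ and the subsequent reduction of $\|P_1(\psi u)\|_{L^2}$ to the microlocal seminorm $\mathfrak{p}_{r;\psi,V'}(u)$: this is where the microlocal order-$r_0$ hypothesis on $A$ over $L^c$ is genuinely combined with the microlocal $H^r$-regularity of $u$ outside $L$ to manufacture microlocal $H^{r-r_0}$-regularity of $Au$ outside $L\cap W^c$, with careful control of the naive order $r'+r-r_0$ of the composition that appears if one forgets the conic localisations.
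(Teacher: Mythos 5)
Your overall architecture runs parallel to the paper's proof: a conic decomposition of $\supp\varphi\times V$ into pieces lying in $L^c$ or in $W$, symbolic calculus showing the composition has order $r$ on the $L^c$ pieces and $-\infty$ on the $W$ pieces, $L^2$-boundedness of the resulting order-zero operators, and (correctly) the elementary decay estimate plus Corollary \ref{cor-for-relcom-sub-wafe-fronchar} for the compactness statement. However, there is a genuine gap in the key localisation step. You dispose of the non-compact support of $u$ using only the \emph{proper support} of $A$: you take $\psi\in\DD(U)$ equal to $1$ on ``the $U$-projection of the support of the kernel of $P_1$'' and then bound $\|P_1\Op(\chi_1)(\psi u)\|_{L^2}$ by $\mathfrak{p}_{r;\psi,V'}(u)$, with $\chi_1$ supported in a closed cone $V'$ such that $(\supp\psi\times V')\cap L=\emptyset$. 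Two problems arise. First, $P_1=\Op(a_1)$ with $a_1$ compactly supported in $x$ does \emph{not} have compactly supported kernel in the second variable, so ``$P_1u=P_1(\psi u)$'' cannot be justified by the kernel support of $P_1$; the support argument only applies to $P=B\varphi A$ itself, via the proper support of $A$, which forces $\supp\psi$ to contain the second projection $K'$ of the kernel of $\varphi A$. Second, and this is the real gap: $K'$ is in general a much larger compact set than $\supp\varphi$, and the construction only guarantees that the $\xi$-directions of $\operatorname{ess\,supp}P_1$ avoid $L$ over $\supp\varphi$ (or a small neighbourhood of it); nothing prevents $L$ from meeting $\supp\psi\times V'$ over points of $K'\setminus\supp\varphi$. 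In that case $\mathfrak{p}_{r;\psi,V'}$ is not one of the defining seminorms of $\DD'^r_L(U)$ and is in general not even finite on $\DD'^r_L(U)$, so your final inequality bounds nothing.

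The fix is the device the paper actually uses: \emph{pseudo-locality}, not only proper support. Split $u=\chi u+(1-\chi)u$ with $\chi\in\DD(U)$ equal to $1$ near $\supp\varphi$ and supported in a slightly larger compact set over which the conic alternative ($L^c$ or $W$) still holds (such an enlargement exists by compactness; this is the role of the nested cones $V_{q,h}\subseteq V'_{q,h}\subseteq V''_{q,h}$ and the cut-offs $\psi_q,\chi_q$ in the paper's proof). The term $\varphi A((1-\chi)u)$ is then handled by the smoothness of the kernel of $A$ off the diagonal: $\varphi A(1-\chi)$ has a smooth compactly supported kernel, so its contribution is a continuous seminorm on $\DD'(M;E)$ (the term \eqref{int-forboun3} in the paper). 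Your symbolic argument applies to $\chi u$, and the resulting seminorm $\mathfrak{p}_{r;\chi,V'}(u)$ is admissible because $(\supp\chi\times V')\cap L=\emptyset$ by construction. The same correction is needed in your reduction to a single chart (the part of $u$ living outside the chart must be removed by pseudo-locality before $A$ can be treated as a local operator); with these repairs your proof becomes essentially the paper's.
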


\begin{proof} The case when $L=W^c=T^*M\backslash0$ is trivial. Assume this is not the case, i.e. $L\cap W^c$ is not the whole $T^*M\backslash0$. Let $E$ and $F$ have ranks $k'$ and $k$ respectively. We first prove the continuity of $A$. Since $\Gamma_c(E)$ is dense in $\DD'^r_L(M;E)$, it suffices to show that $A:\Gamma_c(E)\rightarrow \Gamma(F)$ is continuous when $\Gamma_c(E)$ and $\Gamma(F)$ are equipped with the topologies induced by $\DD'^r_L(M;E)$ and $\DD'^{r-r_0}_{L\cap W^c}(M;F)$ respectively. As $A:\DD'(M;E)\rightarrow \DD'(M;F)$ is continuous, to prove the latter, it suffices to show that every seminorm \eqref{sem-for-bun-valdiswavsincsstrs} of $Au\in \DD'^{r-r_0}_{L\cap W^c}(N;F)$, $u\in\Gamma_c(E)$, is bounded by a sum of continuous seminorms on $\DD'^r_L(M;E)$ of $u$. Let $(O,x)$ be a chart over which $F$ trivialises via $\Phi_x:\pi_F^{-1}(O)\rightarrow O\times\CC^k$ and let $\varphi\in\DD(O)\backslash\{0\}$ and the closed cone $V\subseteq\RR^m$, $V\backslash\{0\}\neq \emptyset$, be such that $\widetilde{L}:=\{(p,\xi_l dx^l|_p)\in T^*O\,|\, p\in\supp\varphi,\, (\xi_1,\ldots,\xi_m)\in V\}$ has empty intersection with $L\cap W^c$; in view of Remark \ref{char-for-topofwfsetsobdefofn}, we can assume that $E$ also trivialises over $O$ via some $\widetilde{\Phi}_x:\pi_E^{-1}(O)\rightarrow O\times\CC^{k'}$. We want to estimate $\mathfrak{p}^{\Phi_x}_{r-r_0;\varphi,V}(Au)$, for $u\in\Gamma_c(E)$. Notice that $\widetilde{L}\cap L\subseteq W$. Hence, we can employ a standard compactness argument to find relatively compact open sets $O_1,\ldots,O_n$ which cover $\supp\varphi$ and satisfy $\overline{O_q}\subseteq O$, $q=1,\ldots,n$, and for each $O_q$ we find open cones $V_{q,h},V'_{q,h},V''_{q,h}\subseteq \RR^m$, $h=1,\ldots\mu_q$, such that $\overline{V_{q,h}}\subseteq V'_{q,h}\cup\{0\}$, $\overline{V'_{q,h}}\subseteq V''_{q,h}\cup\{0\}$, $\RR^m\backslash\overline{V''_{q,h}}\neq\emptyset$, $V\backslash\{0\}\subseteq \bigcup_{h=1}^{\mu_q}V_{q,h}$ and at least one of the following holds
\begin{itemize}
\item[$(*)$] $\{(p,\xi_l dx^l|_p)\in T^*O\,|\, p\in \overline{O_q},\,(\xi_1,\ldots,\xi_m)\in \overline{V''_{q,h}}\backslash\{0\}\}\subseteq W$;
\item[$(**)$] $\{(p,\xi_l dx^l|_p)\in T^*O\,|\, p\in\overline{O_q},\,(\xi_1,\ldots,\xi_m)\in \overline{V''_{q,h}}\backslash\{0\}\}\subseteq L^c$.
\end{itemize}
Let $J_{q;W}$ be the set of all $h\in\{1,\ldots,\mu_q\}$ for which $(*)$ holds true, while $J_{q;L^c}$ be the set of all $h\in\{1,\ldots,\mu_q\}$ for which $(**)$ holds true; $J_{q;W}$ and $J_{q;L^c}$ may not be disjoint, also, for each $q\in\{1,\ldots,n\}$, one of these may be empty but can not be both. There are $\{\widetilde{a}^l_j\}_{l,j}\subseteq S^{r'}_{\operatorname{loc}}(x(O)\times \RR^m)$ and smoothing operators $\widetilde{T}^l_j:\EE'(x(O))\rightarrow \mathcal{C}^{\infty}(x(O))$, $j=1,\ldots,k'$, $l=1,\ldots,k$, such that
$$
(A\chi)_{|O}=\Op(\widetilde{a}^l_j)(\chi^j\circ x^{-1})\circ x\, s_l+\widetilde{T}^l_j(\chi^j\circ x^{-1})\circ x\, s_l,\quad \chi=\chi^je_j\in\Gamma_c(E_O),
$$
where $(e_1,\ldots,e_{k'})$ and $(s_1,\ldots,s_k)$ are the local frames over $O$ for $E$ and $F$ induced by $\widetilde{\Phi}_x$ and $\Phi_x$ respectively. Arguing by compactness, we see that $\{\widetilde{a}^l_j\}_{l,j}\subseteq S^{r_0}_{\loc}(x(O_q)\times V''_{q,h})$, $q=1,\ldots,n$, $h\in J_{q;L^c}$, and $\{\widetilde{a}^l_j\}_{l,j}\subseteq S^{-\infty}_{\loc}(x(O_q)\times V''_{q,h})$, $q=1,\ldots,n$, $h\in J_{q;W}$. Pick nonnegative $\psi_q,\chi_q\in\DD(O_q)$, $q=1,\ldots,n$, such that $\sum_{q=1}^n\psi_q=1$ on a neighbourhood of $\supp\varphi$ and $\chi_q=1$ on a neighbourhood of $\psi_q$, $q=1,\ldots,n$. Let $u\in\Gamma_c(E)$; $u_{|O}=u^je_j$. Denote $u^j_x:= u^j\circ x^{-1}\in \mathcal{C}^{\infty}(x(O))$, $\varphi_x:=\varphi\circ x^{-1}\in \DD(x(O))$, $\psi_{q,x}:=\psi_q\circ x^{-1}\in \DD(x(O_q))$, $\chi_{q,x}:=\chi_q\circ x^{-1}\in\DD(x(O_q))$. Notice that
$$
\mathfrak{p}^{\Phi_x}_{r-r_0;\varphi,V}(Au)\leq\max_{1\leq l\leq k}\tilde{I}^l(u)+\sum_{q=1}^n\max_{1\leq l\leq k}\tilde{I}^l_q(u)+\sum_{q=1}^n\sum_{h=1}^{\mu_q}(\max_{1\leq l\leq k}\tilde{I}^l_{q,h}(u)+\max_{1\leq l\leq k}I^l_{q,h}(u)),
$$
with
\begin{align}
\tilde{I}^l(u)&:=\left(\int_{\xi\in V,\, |\xi|<1} |\mathcal{F}(\varphi_x (Au)^l_{\Phi_x})(\xi)|^2\langle \xi\rangle^{2(r-r_0)}d\xi\right)^{1/2},\label{int-forboun2}\\
\tilde{I}^l_q(u)&:=\left(\int_{\xi\in V,\, |\xi|\geq 1} \left|\mathcal{F}\left(\varphi_x\psi_{q,x} \left(A((1-\chi_q)u)\right)^l_{\Phi_x}\right)(\xi)\right|^2\langle \xi\rangle^{2(r-r_0)}d\xi\right)^{1/2},\label{int-forboun3}\\
\tilde{I}^l_{q,h}(u)&:=\left(\int_{\xi\in V_{q,h},\, |\xi|\geq 1} |\mathcal{F}(\varphi_x\psi_{q,x} \widetilde{T}^l_j(\chi_{q,x}u^j_x))(\xi)|^2\langle \xi\rangle^{2(r-r_0)}d\xi\right)^{1/2},\label{int-forboun4}\\
I^l_{q,h}(u)&:=\left(\int_{\xi\in V_{q,h},\, |\xi|\geq 1} |\mathcal{F}(\varphi_x\psi_{q,x} \Op(\widetilde{a}^l_j)(\chi_{q,x}u^j_x))(\xi)|^2\langle \xi\rangle^{2(r-r_0)}d\xi\right)^{1/2}.\label{int-forboun1}
\end{align}
Since $A:\DD'(M;E)\rightarrow \DD'(N;F)$ is continuous, it is straightforward to show that $\DD'(M;E)\rightarrow [0,\infty)$, $f\mapsto \widetilde{I}^l(f)$, is a seminorm on $\DD'(M;E)$ which is bounded on bounded subsets (cf. the proof of the Claim in Proposition \ref{pro-for-top-imbedingthforc}) and hence it is continuous since $\DD'(M;E)$ is bornological. Since the kernel of $A$ is smooth outside of the diagonal, the operator $v\mapsto \psi_q A((1-\chi_q)v)$ has a smooth kernel and thus \eqref{int-forboun3} is a continuous seminorm on $\DD'(M;E)$ of $u$. The operators $\widetilde{T}^l_j$ are smoothing and consequently \eqref{int-forboun4} is also a continuous seminorm on $\DD'(M;E)$ of $u$. To estimate $I^l_{q,h}(u)$, we proceed as follows. Pick $b_{q,h},b'_{q,h},b''_{q,h}\in \mathcal{C}^{\infty}(\RR^m)$ such that $\mathbf{1}_{\RR^m}\otimes b_{q,h}\in S^{r-r_0}(\RR^{2m})$, $\mathbf{1}_{\RR^m}\otimes b'_{q,h}\in S^{-r}(\RR^{2m})$ and $\mathbf{1}_{\RR^m}\otimes b''_{q,h}\in S^r(\RR^{2m})$ and they satisfy the following:
\begin{itemize}
\item[$(i)$] $0\leq b_{q,h}\leq |\cdot|^{r-r_0}$, $0\leq b'_{q,h}\leq |\cdot|^{-r}$ and $0\leq b''_{q,h}\leq |\cdot|^r$ on $\RR^m\backslash\{0\}$;
\item[$(ii)$] $\supp b_{q,h}\subseteq  V'_{q,h}\backslash \overline{B(0,1/2)}$ and $b_{q,h}(\xi)=|\xi|^{r-r_0}$ when $\xi\in \overline{V_{q,h}}\backslash B(0,1)$;
\item[$(iii)$] $\supp b'_{q,h}\subseteq V''_{q,h}\backslash \overline{B(0,1/2)}$, $\supp b''_{q,h}\subseteq V''_{q,h}\backslash \overline{B(0,1/2)}$ and both $b'_{q,h}(\xi)=|\xi|^{-r}$ and $b''_{q,h}(\xi)=|\xi|^r$ when $\xi\in \overline{V'_{q,h}}\backslash B(0,1)$
\end{itemize}
(e.g., take $\chi,\widetilde{\chi}\in\DD(\RR^m)$ such that $0\leq \chi,\widetilde{\chi}\leq 1$, $\chi=1$ on $\overline{V_{q,h}}\cap \mathbb{S}^{m-1}$ and $\supp\chi\subseteq V'_{q,h}$, $\widetilde{\chi}=1$ on $\overline{B(0,1/2)}$ and $\supp\widetilde{\chi}\subseteq B(0,1)$, and define $b_{q,h}(\xi):=(1-\widetilde{\chi}(\xi))\chi(\xi/|\xi|)|\xi|^{r-r_0}$; $b'_{q,h}$ and $b''_{q,h}$ can be constructed analogously). Notice that
\begin{align}
I^l_{q,h}(u)&\leq (2\pi)^{m/2}2^{|r-r_0|} \|b_{q,h}(D)\Op(\varphi_x\psi_{q,x}\widetilde{a}^l_j (1-b'_{q,h}b''_{q,h}))(\chi_{q,x} u^j_x)\|_{L^2(\RR^m)}\label{the-sectermincontmpasik}\\
&{}\quad+(2\pi)^{m/2}2^{|r-r_0|} \|b_{q,h}(D)\Op(\varphi_x\psi_{q,x} \widetilde{a}^l_jb'_{q,h})b''_{q,h}(D)(\chi_{q,x} u^j_x)\|_{L^2(\RR^m)}.\label{verylastpart-spli-fornesss}
\end{align}
By construction, $b_{q,h}(D)\Op(\varphi_x\psi_{q,x}\widetilde{a}^l_j (1-b'_{q,h}b''_{q,h}))$ is an operator with symbol in $S^{-\infty}(\RR^{2m})$ and hence it is a continuous mapping from $\EE'(\RR^m)$ into $L^2(\RR^m)$. Whence, the term in \eqref{the-sectermincontmpasik} is a continuous seminorm on $\DD'(M;E)$ of $u$. When $h\in J_{q;W}$, $b_{q,h}(D)\Op(\varphi_x\psi_{q,x} \widetilde{a}^l_jb'_{q,h})$ is an operator with symbol in $S^{-\infty}(\RR^{2m})$. Consequently, $b_{q,h}(D)\Op(\varphi_x\psi_{q,x} \widetilde{a}^l_jb'_{q,h})b''_{q,h}(D)$ is a continuous mapping from $\EE'(\RR^m)$ into $L^2(\RR^m)$ which implies that \eqref{verylastpart-spli-fornesss} is a continuous seminorm on $\DD'(M;E)$ of $u$. When $h\in J_{q;L^c}$, the operator $b_{q,h}(D)\Op(\varphi_x\psi_{q,x} \widetilde{a}^l_jb'_{q,h})$ has symbol in $S^0(\RR^{2m})$, hence it is continuous on $L^2(\RR^m)$. This implies
\begin{equation*}
\|b_{q,h}(D)\Op(\varphi_x\psi_{q,x} \widetilde{a}^l_jb'_{q,h})b''_{q,h}(D)(\chi_{q,x} u^j_x)\|_{L^2(\RR^m)} \leq C\mathfrak{p}^{\widetilde{\Phi}_x}_{r;\chi_q,\overline{V''_{q,h}}}(u).
\end{equation*}
Since $h\in J_{q;L^c}$, the right-hand side is a continuous seminorm on $\DD'^r_L(M;E)$ and the proof is complete.\\
\indent To verify the second part, in view of the above, we only need to show that the inclusion $\DD'^{r-r_0}_{L\cap W^c}(M;F)\rightarrow \DD'^{\widetilde{r}}_{L\cap W^c}(M;F)$ maps bounded into relatively compact sets. For a bounded subset $B$ of $\DD'^{r-r_0}_{L\cap W^c}(M;F)$ it is straightforward to show that $WF^{\widetilde{r}}_c(B)\subseteq L\cap W^c$. Corollary \ref{cor-for-relcom-sub-wafe-fronchar} implies that $B$ is relatively compact in $\DD'^{\widetilde{r}}_{L\cap W^c}(M;F)$ and the proof of the proposition is complete.
\end{proof}

\begin{remark}\label{rem-for-bou-compsetforrellem}
If we only know that the $\Psi$DO $A$ is of order $r_0$ in $L^c$, then we can apply the proposition with $W=\emptyset$ to deduce that $A:\DD'^r_L(M;E)\rightarrow \DD'^{r-r_0}_L(M;F)$ is well-defined and continuous and $A:\DD'^r_L(M;E)\rightarrow \DD'^{\widetilde{r}}_L(M;F)$ maps bounded into relatively compact sets when $\widetilde{r}<r-r_0$.
\end{remark}

\begin{remark}
The second part is a generalisation of the Rellich's lemma to the spaces $\DD'^r_L(M;E)$, $r\in\RR$. Indeed, taking $L=W=\emptyset$ and $E=F$, one infers that the inclusion mapping $H^{r_2}_{\operatorname{loc}}(M;E)\rightarrow H^{r_1}_{\operatorname{loc}}(M;E)$, $r_2>r_1$, maps bounded into relatively compact sets. Since the Banach space $H^{r_2}_K(M;E)$, with $K\subset\subset M$, is a closed subspace of $H^{r_2}_{\operatorname{loc}}(M;E)$, its unit ball is bounded in $H^{r_2}_{\operatorname{loc}}(M;E)$, and the proposition implies that it is relatively compact in $H^{r_1}_{\operatorname{loc}}(M;E)$; this is exactly the Rellich’s lemma.
\end{remark}

\begin{corollary}\label{cor-for-hwf-spadtoplincs}
Let $A\in \Psi^r(M;E,F)$ be properly supported and of order $-\infty$ in the open conic subset $W$ of $T^*M\backslash0$. Then $A:\DD'(M;E)\rightarrow \DD'_{W^c}(M;F)$ is well-defined and continuous. Furthermore, if $L$ is a closed conic subset of $T^*M\backslash 0$, then $A:\DD'_L(M;E)\rightarrow \DD'_{L\cap W^c}(M;F)$ is well-defined and continuous.
\end{corollary}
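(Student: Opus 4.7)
The plan is to reduce Corollary \ref{cor-for-hwf-spadtoplincs} to Proposition \ref{pro-for-psudomapforcontonvectbac} by recognising $\DD'_L(M;E)$ as a projective limit of the Sobolev wave front spaces (Proposition \ref{hor-spa-for-fixdsmwavfrsw}) and applying the universal property.

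For the second statement, observe that $A\in\Psi^r(M;E,F)$ is trivially of order $r$ in $L^c$ (it is of order $r$ globally). Applying Proposition \ref{pro-for-psudomapforcontonvectbac} with $r'=r_0=r$ and the given $L$ and $W$, we obtain that for every $\widetilde r\in\RR$ the map
\[
A:\DD'^{\widetilde r}_L(M;E)\rightarrow \DD'^{\widetilde r-r}_{L\cap W^c}(M;F)
\]
is well-defined and continuous. By Proposition \ref{hor-spa-for-fixdsmwavfrsw}, we have
\[
\DD'_L(M;E)=\lim_{\substack{\longleftarrow\\ \widetilde r\rightarrow\infty}}\DD'^{\widetilde r}_L(M;E) \quad \mbox{and} \quad \DD'_{L\cap W^c}(M;F)=\lim_{\substack{\longleftarrow\\ s\rightarrow\infty}}\DD'^s_{L\cap W^c}(M;F)
\]
topologically. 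Composing with the continuous projection $\DD'_L(M;E)\rightarrow \DD'^{\widetilde r}_L(M;E)$, we get continuous maps $\DD'_L(M;E)\rightarrow \DD'^{\widetilde r-r}_{L\cap W^c}(M;F)$ for every $\widetilde r$, which form a compatible system with respect to the inclusions \eqref{map-inc-for-dspasobw}. The universal property of the projective limit yields a continuous map $\DD'_L(M;E)\rightarrow \DD'_{L\cap W^c}(M;F)$, and this map agrees with $A$ on $\Gamma_c(E)$, hence coincides with $A$ by the density provided by Proposition \ref{res-for-den-ope-map-thagivdes}.

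The first statement is the special case $L=T^*M\backslash 0$ of the second: here $L^c=\emptyset$, the hypothesis on the order in $L^c$ is vacuous, and in view of Remark \ref{rem-for-con-csifd} one has $\DD'^{\widetilde r}_{T^*M\backslash 0}(M;E)=\DD'(M;E)$ topologically for every $\widetilde r$, so the projective limit on the source collapses to $\DD'(M;E)$ itself, while $L\cap W^c=W^c$ on the target.

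There is essentially no obstacle: the work has already been done in Proposition \ref{pro-for-psudomapforcontonvectbac} and in the projective limit identification of Proposition \ref{hor-spa-for-fixdsmwavfrsw}. The only point to verify is that the continuous extension produced by the universal property of the projective limit coincides with the operator $A$ originally defined on $\DD'(M;E)$; this follows because $\Gamma_c(E)$ is sequentially dense in each $\DD'^{\widetilde r}_L(M;E)$ and in $\DD'_L(M;E)$, and $A$ is already known to send $\DD'_L(M;E)$ into $\DD'_{L\cap W^c}(M;F)$ as sets by the classical results of Duistermaat and H\"ormander \cite{dui-hor}.
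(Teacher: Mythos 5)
Your proposal is correct and follows essentially the same route as the paper: apply Proposition \ref{pro-for-psudomapforcontonvectbac} for every order $\widetilde r$ and then invoke the projective limit identification of Proposition \ref{hor-spa-for-fixdsmwavfrsw} (the paper treats the case $L=T^*M\backslash0$ first and calls the general case analogous, whereas you do the reverse). The closing appeal to density and to Duistermaat--H\"ormander is superfluous, since each map in your compatible family is literally $A$ itself, so the map obtained from the universal property is automatically $A$; but this is harmless.
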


\begin{proof} Since $\DD'(M;E)=\DD'^{\widetilde{r}}_{T^*M\backslash0}(M;E)$, $\widetilde{r}\in\RR$, to show the first part we employ Proposition \ref{pro-for-psudomapforcontonvectbac} with $L=T^*M\backslash0$ and we deduce that $A:\DD'(M;E)\rightarrow \DD'^{\widetilde{r}-r}_{W^c}(M;F)$ is well-defined and continuous for all $\widetilde{r}\in\RR$. Now, the claim follows from Proposition \ref{hor-spa-for-fixdsmwavfrsw}. The proof of the second part is analogous and we omit it.
\end{proof}

\begin{corollary}\label{cor-for-ope-defondinecomsetfort}
Let $A\in\Psi^{r_0}(M;E,F)$ be of order $-\infty$ in the open conic subset $W$ of $T^*M\backslash0$. Assume that the kernel of $A$ has compact support in $M\times M$ and let $K\subseteq M$ be its projection on the first component. Then for every closed conic subset $L$ of $T^*M\backslash0$ satisfying $L\subseteq W$ and every $r\in\RR$, $A:\DD'^r_L(M;E)\rightarrow \EE'^{r-r_0}_{W^c\cap \pi_{T^*M}^{-1}(K);K}(M;F)$ is well-defined and continuous. Furthermore, $A$ maps bounded subsets of $\DD'^r_L(M;E)$ into relatively compact subsets of $\EE'^{\widetilde{r}}_{W^c\cap \pi_{T^*M}^{-1}(K);K}(M;F)$ for all $\widetilde{r}<r-r_0$.
\end{corollary}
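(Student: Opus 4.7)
The plan is to combine Proposition \ref{pro-for-psudomapforcontonvectbac}, Corollary \ref{cor-for-hwf-spadtoplincs}, and the compactness of the kernel of $A$. First I would observe that compact support of the kernel in $M\times M$ makes $A$ properly supported and forces $\supp(Au)\subseteq K$ for every $u\in \DD'(M;E)$. The assumption $L\subseteq W$ gives $L\cap W^c=\emptyset$, so Proposition \ref{pro-for-psudomapforcontonvectbac} applied with global order $r_0$ (trivially of order $r_0$ in $L^c$) and order $-\infty$ in $W$ yields continuity $A:\DD'^r_L(M;E)\to \DD'^{r-r_0}_{\emptyset}(M;F)=H^{r-r_0}_{\loc}(M;F)$; combined with the support constraint, this refines to $A:\DD'^r_L(M;E)\to H^{r-r_0}_K(M;F)$ continuously. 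Corollary \ref{cor-for-hwf-spadtoplincs} additionally shows $WF(Au)\subseteq W^c$ for every $u\in\DD'(M;E)$, so $WF(Au)\subseteq W^c\cap \pi_{T^*M}^{-1}(K)$. Hence $A$ maps $\DD'^r_L(M;E)$ into $\EE'^{r-r_0}_{W^c\cap \pi_{T^*M}^{-1}(K);K}(M;F)$ set-theoretically.

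For continuity into this Fr\'echet target I would appeal to De Wilde's closed graph theorem: the source $\DD'^r_L(M;E)$ is strictly webbed (by the manifold analogue of Corollary \ref{rem-for-sem-refd}, obtained via the embedding of Proposition \ref{lem-for-top-imebdofmapscs}), and the target is Fr\'echet (hence webbed and ultrabornological); the graph is closed because both spaces continuously inject into $\DD'(M;F)$ and $A:\DD'(M;E)\to\DD'(M;F)$ is continuous. Alternatively, the continuity can be verified seminorm by seminorm using Proposition \ref{lem-for-top-imebdofmapscs11}: the $H^{r-r_0}_K$-norm contribution is covered by the first step, and each extra seminorm $\mathfrak{q}^{\Phi_x}_{\nu;\varphi,V}$ either vanishes identically on $Au$ (when $\supp\varphi\cap K=\emptyset$) or, by the condition $(\supp\varphi\times V)_{\mathrm{coords}}\cap \widetilde{L}_{\mathrm{coords}}=\emptyset$ with $\widetilde{L}:=W^c\cap \pi_{T^*M}^{-1}(K)$, probes a cone $V$ that lies inside $W$ over $\supp\varphi\cap K$; the microlocal smoothing property of $A$ in $W$ then produces the required continuous bounds on $\DD'^r_L(M;E)$.

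For the bounded-to-relatively-compact assertion, continuity already sends a bounded set $B\subseteq \DD'^r_L(M;E)$ into a bounded subset of $\EE'^{r-r_0}_{\widetilde{L};K}(M;F)$, so it suffices to show the canonical inclusion $\EE'^{r-r_0}_{\widetilde{L};K}(M;F)\hookrightarrow \EE'^{\widetilde{r}}_{\widetilde{L};K}(M;F)$ is a compact operator whenever $\widetilde{r}<r-r_0$. Using Proposition \ref{lem-for-top-imebdofmapscs11} the question reduces to the Euclidean case, where Proposition \ref{pro-for-esp-closincinimbinprdsp} provides a topological embedding of $\EE'^{r-r_0}_{L_\mu;K_\mu}(x_\mu(O_\mu))$ into $H^{r-r_0}_{K_\mu}(\RR^m)\times \SSS(\RR^m)^{\ZZ_+\times \ZZ_+}$. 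Under the inclusion into $\EE'^{\widetilde{r}}_{L_\mu;K_\mu}$, the Sobolev factor is compactly embedded into $H^{\widetilde{r}}_{K_\mu}$ by Rellich's lemma, while the countably many Schwartz-space factors, which already lie in the Fr\'echet--Montel space $\SSS(\RR^m)$, have bounded sets automatically relatively compact. Combining by Tychonoff gives the claim. The main obstacle is the continuity step: invoking De Wilde's closed graph theorem elegantly bypasses a technically delicate direct microlocal seminorm estimate, and is the cleanest route provided the webbed/ultrabornological hypotheses line up as the paper's earlier usage suggests.
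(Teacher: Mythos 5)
Your first step (compact kernel support forces $\supp(Au)\subseteq K$, and Proposition \ref{pro-for-psudomapforcontonvectbac} with $L\cap W^c=\emptyset$ gives continuity into $H^{r-r_0}_K(M;F)$) and your use of Corollary \ref{cor-for-hwf-spadtoplincs} to get $WF(Au)\subseteq W^c$ match the paper. The problem is your primary continuity argument: De Wilde's closed graph theorem requires the \emph{domain} to be ultrabornological and the \emph{codomain} to be (strictly) webbed, and you have the roles reversed. The paper only establishes that $\DD'^r_L(M;E)$ is complete, semi-reflexive and strictly webbed; it is not known (and in general is not) bornological or barrelled, so neither De Wilde's nor Pt\'ak's closed graph theorem applies to a map \emph{out of} $\DD'^r_L(M;E)$ into a Fr\'echet space. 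The webbedness of $\DD'^r_L$ helps when it is the target, not the source, so this route collapses. Your fallback seminorm argument is essentially the paper's proof, but the decisive step is left implicit: the dichotomy you state ("either $\supp\varphi\cap K=\emptyset$, or the cone lies in $W$ over $\supp\varphi\cap K$") does not by itself make $\mathfrak{q}^{\Phi_x}_{\nu;\varphi,V}$ a continuous seminorm on $\DD'_{W^c}(M;F)$, because $\supp\varphi$ may also meet points outside $K$ over which $V$ is not contained in $W$. One needs the localisation the paper carries out: cover $\supp\varphi$ by small sets $O_q$ and split $V$ into cones $V_{q,h}$ so that for each piece either the whole cone over $\overline{O_q}$ lies in $W$ (then Corollary \ref{cor-for-hwf-spadtoplincs} bounds that piece by continuous seminorms of $u$ in $\DD'(M;E)$), or $O_q\cap K=\emptyset$ (then the localised term vanishes since $\supp Au\subseteq K$). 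Without this partition-of-unity/compactness step the "required continuous bounds" are not justified.

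Your treatment of the bounded-to-relatively-compact statement is correct and genuinely different from the paper's. You push compactness downstream: reduce via Proposition \ref{lem-for-top-imebdofmapscs11} to the Euclidean spaces, use the embedding of Proposition \ref{pro-for-esp-closincinimbinprdsp} (whose wave-front components land in the Montel space $\SSS(\RR^m)$ and do not depend on the Sobolev order), apply Rellich to the $H$-factor and Tychonoff, concluding that the inclusion $\EE'^{r-r_0}_{\widetilde{L};K}\hookrightarrow\EE'^{\widetilde r}_{\widetilde{L};K}$ sends bounded sets to relatively compact sets; the closedness of the image guarantees the limit points stay in the space. The paper instead pushes compactness upstream: a bounded set in $\DD'^r_L(M;E)$ is already relatively compact in $\DD'^{\widetilde r+r_0}_L(M;E)$ by Remark \ref{rem-for-bou-compsetforrellem}, and then the continuity statement at the lower order finishes the proof. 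Both arguments are valid; yours isolates a reusable compact-inclusion fact about the $\EE'$-scale, while the paper's recycles its Rellich-type result for the $\DD'$-scale and is shorter given what is already proved.
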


\begin{proof} Denote $\widetilde{L}:= W^c\cap \pi_{T^*M}^{-1}(K)$. Proposition \ref{pro-for-psudomapforcontonvectbac} verifies that $A:\DD'^r_L(M;E)\rightarrow H^{r-r_0}_{\loc}(M;F)$ is well-defined and continuous. Since $\supp Au\subseteq K$ and the topology of $H^{r-r_0}_K(M;F)$ is the same as the one induced by $H^{r-r_0}_{\loc}(M;F)$, we deduce that $A:\DD'^r_L(M;E)\rightarrow H^{r-r_0}_K(M;F)$ is well-defined and continuous. This completes the proof of the continuity of $A$ when $W=\emptyset$ (cf. Remark \ref{rem-for-con-csifd} and Remark \ref{rem-for-cof-casdlk}). Assume that $W\neq\emptyset$. As $\Gamma_c(E)$ is dense in $\DD'^r_L(M;E)$, to show the continuity of $A$ it remains to estimate $\mathfrak{q}^{\Phi_x}_{\nu;\varphi,V}(u)$, $u\in\Gamma_c(E)$, where $\varphi\in\DD(O)\backslash\{0\}$ and the closed cone $V\subseteq \RR^m$, $V\backslash\{0\}\neq\emptyset$, satisfy \eqref{equ-for-emp-intse} with $\widetilde{L}$ in place of $L$ and $(O,x)$ is a chart on $M$ over which $E$ and $F$ locally trivialise (we denoted by $\Phi_x$ the local trivialisation of $F$). We employ a standard compactness argument to find relatively compact open sets $O_1,\ldots,O_n$ which cover $\supp\varphi$ and satisfy $\overline{O_q}\subseteq O$, $q=1,\ldots,n$, and for each $O_q$ we find open cones $V_{q,h}\subseteq \RR^m$, $h=1,\ldots\mu_q$, such that $V\backslash\{0\}\subseteq \bigcup_{h=1}^{\mu_q}V_{q,h}$ and at least one of the following holds
\begin{itemize}
\item[$(*)$] $\{(p,\xi_l dx^l|_p)\in T^*O_q\,|\, (\xi_1,\ldots,\xi_m)\in \overline{V_{q,h}}\backslash\{0\}\}\subseteq W$;
\item[$(**)$] $\{(p,\xi_l dx^l|_p)\in T^*O_q\,|\, (\xi_1,\ldots,\xi_m)\in \overline{V_{q,h}}\}\cap \pi_{T^*M}^{-1}(K)=\emptyset$.
\end{itemize}
Denote by $J_{q;W}$ the set of all $h\in\{1,\ldots,\mu_q\}$ for which $(*)$ holds. Pick nonnegative $\psi_q\in\DD(O_q)$, $q=1,\ldots,n$, such that $\sum_{q=1}^n\psi_q=1$ on a neighbourhood of $\supp\varphi$ and notice that
$$
\mathfrak{q}^{\Phi_x}_{\nu;\varphi,V}(Au)\leq\sum_{q=1}^n\sum_{h\in J_{q;W}}\mathfrak{q}^{\Phi_x}_{\nu;\varphi\psi_q,\overline{V_{q,h}}}(Au)
$$
since $\mathfrak{q}^{\Phi_x}_{\nu;\varphi\psi_q,\overline{V_{q,h}}}(Au)=0$ when $h\not\in J_{q,W}$. Corollary \ref{cor-for-hwf-spadtoplincs} yields that $A:\DD'(M;E)\rightarrow \DD'_{W^c}(M;F)$ is well-defined and continuous, and thus each term $\mathfrak{q}^{\Phi_x}_{\nu;\varphi\psi_q,\overline{V_{q,h}}}(Au)$, $h\in J_{q,W}$, is bounded by a continuous seminorm on $\DD'^r_L(M;E)$ of $u$ which completes the proof of the first part of the corollary.\\
\indent To show the second part, let $B$ be a bounded subset of $\DD'^r_L(M;E)$. Proposition \ref{pro-for-psudomapforcontonvectbac} shows that it is relatively compact in $\DD'^{\widetilde{r}+r_0}_L(M;E)$ when $\widetilde{r}<r-r_0$ (cf. Remark \ref{rem-for-bou-compsetforrellem}). Hence $A(B)$ is relatively compact in $\EE'^{\widetilde{r}}_{\widetilde{L};K}(M;F)$ in view of the first part of the corollary and the proof is complete.
\end{proof}

Proposition \ref{pro-for-psudomapforcontonvectbac} allows us to show the following improvement of \cite[Theorem 18.1.31, p. 90]{hor2}.

\begin{proposition}\label{proposition-for-gentopology-ellipoperr}
Let $r,r_0\in\RR$ and $L$ a closed conic subset of $T^*M\backslash0$. Let $E$ and $F$ be vector bundles of rank $k$ over $M$ and let $A\in\Psi^{r_0}(M;E,F)$ be properly supported and satisfying $\Char A\subseteq L$. For each $u\in\DD'(M;E)$, $u\in\DD'^r_L(M;E)$ is equivalent to $Au\in\DD'^{r-r_0}_L(M;F)$. Furthermore, the topology of $\DD'^r_L(M;E)$ is generated by the continuous seminorms on $\DD'(M;E)$ together with all seminorms $u\mapsto \mathfrak{p}^{\Phi_x}_{r-r_0;\varphi, V}(Au)$, with $\varphi\in\DD(O)$ and $(O,x)$ a chart on $M$ over which $F$ trivialises via $\Phi_x:\pi_F^{-1}(O)\rightarrow O\times \CC^k$ and $V$ a closed cone in $\RR^m$ which satisfy \eqref{equ-for-emp-intse}.
\end{proposition}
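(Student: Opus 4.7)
The plan is to rely on the microlocal parametrix furnished by Lemma \ref{lemma-for-parmetrixalsmforsonlyindirc} combined with the continuity properties from Proposition \ref{pro-for-psudomapforcontonvectbac}. I would first establish the set-theoretic identity. The implication $u\in\DD'^r_L(M;E)\Rightarrow Au\in\DD'^{r-r_0}_L(M;F)$ is immediate from Proposition \ref{pro-for-psudomapforcontonvectbac} (applied with $W=\emptyset$). For the converse, fix $u\in\DD'(M;E)$ with $Au\in\DD'^{r-r_0}_L(M;F)$ and let $(p,\xi)\in L^c$. Since $\Char A\subseteq L$, Lemma \ref{lemma-for-parmetrixalsmforsonlyindirc} provides a properly supported $A'\in\Psi^{-r_0}(M;F,E)$ such that both $A'A-\operatorname{Id}$ and $AA'-\operatorname{Id}$ are of order $-\infty$ in a conic neighbourhood $W$ of $(p,\xi)$; shrinking $W$ we may assume $W\subseteq L^c$. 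From $u=A'Au-(A'A-\operatorname{Id})u$, Proposition \ref{pro-for-psudomapforcontonvectbac} applied to $A'$ gives $A'Au\in\DD'^r_L(M;E)$, while the same proposition applied to $A'A-\operatorname{Id}\in\Psi^0(M;E,E)$ (with $L=T^*M\backslash0$ there) gives $(A'A-\operatorname{Id})u\in\DD'^r_{W^c}(M;E)$. Hence $WF^r(u)\subseteq L\cup W^c$ and in particular $(p,\xi)\notin WF^r(u)$.

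Denote by $\tau$ the locally convex topology on $\DD'^r_L(M;E)$ generated by the continuous seminorms on $\DD'(M;E)$ together with the seminorms $u\mapsto\mathfrak{p}^{\Phi_x}_{r-r_0;\varphi,V}(Au)$. Since $A:\DD'^r_L(M;E)\to\DD'^{r-r_0}_L(M;F)$ is continuous by Proposition \ref{pro-for-psudomapforcontonvectbac}, all $\tau$-defining seminorms are continuous on $\DD'^r_L(M;E)$, so the original topology is finer than $\tau$. For the reverse comparison, I would estimate any defining seminorm $\mathfrak{p}^{\Phi'_y}_{r;\psi,V'}$ of the original topology by finitely many $\tau$-seminorms. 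Since the compact set $\supp\psi\times(V'\cap\mathbb{S}^{m-1})$, viewed in $T^*M\backslash0$ via the chart $(O',y)$, is disjoint from the closed conic set $L$, a compactness argument modelled on the one in the proof of Proposition \ref{pro-for-top-imbedingthforc} produces a finite cover by open conic sets $W_1,\ldots,W_N\subseteq L^c$, each contained in a coordinate chart on which both $E$ and $F$ trivialize and each admitting, via Lemma \ref{lemma-for-parmetrixalsmforsonlyindirc}, a properly supported parametrix $A'_i\in\Psi^{-r_0}(M;F,E)$ with $A'_iA-\operatorname{Id}$ of order $-\infty$ on $W_i$. A subordinate microlocal partition of unity (cutoffs $\psi_i\in\DD(O')$ and closed cones $V'_i\subseteq\RR^m$ with the microlocal support of $\psi_i\times V'_i$ contained in $W_i$) then yields, for a constant $C>0$ and a continuous seminorm $\mathfrak{r}$ on $\DD'(M;E)$,
$$
\mathfrak{p}^{\Phi'_y}_{r;\psi,V'}(u)\leq C\sum_{i=1}^N\mathfrak{p}^{\Phi'_y}_{r;\psi_i,V'_i}(u)+\mathfrak{r}(u),\quad u\in\DD'^r_L(M;E).
$$

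For each $i$, the decomposition $u=A'_iAu+(\operatorname{Id}-A'_iA)u$ gives
$$
\mathfrak{p}^{\Phi'_y}_{r;\psi_i,V'_i}(u)\leq \mathfrak{p}^{\Phi'_y}_{r;\psi_i,V'_i}(A'_iAu)+\mathfrak{p}^{\Phi'_y}_{r;\psi_i,V'_i}((\operatorname{Id}-A'_iA)u).
$$
By the continuity of $A'_i:\DD'^{r-r_0}_L(M;F)\to\DD'^r_L(M;E)$ from Proposition \ref{pro-for-psudomapforcontonvectbac}, the first summand is bounded by a finite sum of seminorms of the form $\mathfrak{p}^{\Phi_x}_{r-r_0;\varphi_j,V_j}(Au)$ and of continuous seminorms on $\DD'(M;F)$ of $Au$; the latter pull back to continuous seminorms on $\DD'(M;E)$ of $u$ since $A:\DD'(M;E)\to\DD'(M;F)$ is continuous. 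The second summand is itself a continuous seminorm on $\DD'(M;E)$ of $u$: indeed, $\operatorname{Id}-A'_iA\in\Psi^0(M;E,E)$ is of order $-\infty$ on $W_i$, so Proposition \ref{pro-for-psudomapforcontonvectbac} gives the continuity $\operatorname{Id}-A'_iA:\DD'(M;E)\to\DD'^r_{W_i^c}(M;E)$, and by the very choice of $\psi_i,V'_i$ the seminorm $\mathfrak{p}^{\Phi'_y}_{r;\psi_i,V'_i}$ is a continuous seminorm on $\DD'^r_{W_i^c}(M;E)$.

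The main technical obstacle is the explicit construction of the cover $\{W_i\}$ together with the subordinate microlocal cutoffs $\psi_i$ and cones $V'_i$ which correctly decomposes the seminorm $\mathfrak{p}^{\Phi'_y}_{r;\psi,V'}(u)$; this is essentially the delicate compactness-and-covering manipulation already carried out in the proof of Proposition \ref{pro-for-top-imbedingthforc}, which I would adapt line by line to the present setting.
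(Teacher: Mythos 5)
Your proposal is correct and follows essentially the same route as the paper: Proposition \ref{pro-for-psudomapforcontonvectbac} for the forward direction and continuity of the new seminorms, then Lemma \ref{lemma-for-parmetrixalsmforsonlyindirc} with a compactness covering, a subordinate partition of unity, and the decomposition $u=A'Au-(A'A-\operatorname{Id})u$, handling the remainder via Proposition \ref{pro-for-psudomapforcontonvectbac} applied with $L=T^*M\backslash0$ and $W$ the region where $A'A-\operatorname{Id}$ is of order $-\infty$. The only cosmetic difference is that you treat the set-theoretic converse by a separate pointwise wave-front argument, whereas the paper obtains it as a byproduct of the same seminorm estimate; the covering step you defer is the straightforward compactness argument the paper carries out and poses no difficulty.
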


\begin{remark}
When $A$ is as in the proposition, the fact $u\in\DD'^r_L(M;E)\Longleftrightarrow Au\in\DD'^{r-r_0}_L(M;F)$ follows from \cite[Theorem 18.1.31, p. 90]{hor2}; the novelty in the proposition is that one can generate the topology of $\DD'^r_L(M;E)$ as described. This is, in fact, an a priori estimate for $A$: for every continuous seminorm $\widetilde{\mathfrak{p}}$ on $\DD'^r_L(M;E)$ there are $\varphi_{\mu}\in\DD(O_{\mu})$ and $V_{\mu}\subseteq \RR^m$, $\mu=1,\ldots,l$, as in the proposition, a constant $C>0$ and a bounded subset $B$ of $\Gamma_c(E^{\vee})$ such that
$$
\widetilde{\mathfrak{p}}(u)\leq C\sup_{\psi\in B}|\langle u,\psi\rangle|+C\sum_{\mu=1}^l\mathfrak{p}^{\Phi_{x_{\mu}}}_{r-r_0;\varphi_{\mu}, V_{\mu}}(Au),\quad u\in \DD'^r_L(M;E).
$$
When $L=\emptyset$ and $A$ is elliptic, this boils down to the widely known fact about a priori estimate for $A:H^r_{\loc}(M;E)\rightarrow H^{r-r_0}_{\loc}(M;F)$.
\end{remark}

\begin{proof}[Proof of Proposition \ref{proposition-for-gentopology-ellipoperr}] The claim is trivial when $L=T^*M\backslash0$. Assume that $L^c\neq\emptyset$. In view of Proposition \ref{pro-for-psudomapforcontonvectbac}, if $u\in\DD'^r_L(M;E)$ then $Au\in\DD'^{r-r_0}_L(M;F)$ and the seminorms $u\mapsto \mathfrak{p}^{\Phi_x}_{r-r_0;\varphi, V}(Au)$ are well defined and continuous on $\DD'^r_L(M;E)$. Let $u\in\DD'(M;E)$ be such that $Au\in\DD'^{r-r_0}_L(M;F)$. Let $(O,x)$ be a chart on $M$ over which $E$ trivialises via $\widetilde{\Phi}_x:\pi_E^{-1}(O)\rightarrow O\times \CC^k$ and let $\varphi\in\DD(O)\backslash\{0\}$ and the closed cone $V\subseteq \RR^m$, $V\backslash\{0\}\neq\emptyset$, satisfy \eqref{equ-for-emp-intse}. Our goal is to bound $\mathfrak{p}^{\widetilde{\Phi}_x}_{r;\varphi,V}(u)$ by seminorms of $u$ in $\DD'(M;E)$ and seminorms of $Au$ as in the proposition; in view of Remark \ref{char-for-topofwfsetsobdefofn}, we can assume that $F$ also trivialises over $(O,x)$ via $\Phi_x:\pi_F^{-1}(O)\rightarrow O\times \CC^k$. Lemma \ref{lemma-for-parmetrixalsmforsonlyindirc} together with a standard compactness argument imply that there are open sets $O'_1,\ldots, O'_n$ with compact closure in $O$ which cover $\supp\varphi$ and, for each $j\in\{1,\ldots, n\}$, open cones $V_{j,h},V'_{j,h}\subseteq \RR^m\backslash\{0\}$, $h=1,\ldots,\mu_j$, and properly supported $\Psi$DOs $A'_{j,h}\in\Psi^{-r_0}(M;F,E)$, $h=1,\ldots,\mu_j$, such that $\overline{V_{j,h}}\subseteq V'_{j,h}\cup\{0\}$, $V\backslash\{0\}\subseteq \bigcup_{h=1}^{\mu_j} V_{j,h}$, $W'_{j,h}:=\{(p,\xi_ldx^l|_p)\in T^*O'_j\,|\, (\xi_1,\ldots, \xi_m)\in V'_{j,h}\}$ does not intersect $L$ and $R'_{j,h}:=A'_{j,h}A-\operatorname{Id}\in\Psi^0(M;E,E)$ is of order $-\infty$ in $W'_{j,h}$, $h=1,\ldots, \mu_j$. Pick $\varphi_j\in\DD(O'_j)$, $j=1,\ldots,n$, such that $0\leq \varphi_j\leq 1$ and $\sum_{j=1}^n\varphi_j=1$ on a neighbourhood of $\supp\varphi$. Notice that
\begin{equation}\label{bound-onseminorm-ofuinlemforcopsopsemnor}
\mathfrak{p}^{\widetilde{\Phi}_x}_{r;\varphi,V}(u)\leq \sum_{j=1}^n \sum_{h=1}^{\mu_j}\mathfrak{p}^{\widetilde{\Phi}_x}_{r;\varphi\varphi_j, \overline{V_{j,h}}}(R'_{j,h}u)+\sum_{j=1}^n \sum_{h=1}^{\mu_j}\mathfrak{p}^{\widetilde{\Phi}_x}_{r;\varphi\varphi_j, \overline{V_{j,h}}}(A'_{j,h}Au).
\end{equation}
Proposition \ref{pro-for-psudomapforcontonvectbac} yields that $R'_{j,h}:\DD'(M;E)\rightarrow \DD'^r_{W'^{c}_{j,h}}(M;E)$ is well-defined and continuous (apply it with $L=T^*M\backslash0$ and $W=W'_{j,h}$; cf. Remark \ref{rem-for-con-csifd}) and thus $\DD'(M;E)\rightarrow [0,\infty)$, $u\mapsto \mathfrak{p}^{\widetilde{\Phi}_x}_{r;\varphi\varphi_j, \overline{V_{j,h}}}(R'_{j,h}u)$, is a continuous seminorm on $\DD'(M;E)$. Proposition \ref{pro-for-psudomapforcontonvectbac} also shows that $\DD'^{r-r_0}_L(M;F)\rightarrow [0,\infty)$, $v\mapsto \mathfrak{p}^{\widetilde{\Phi}_x}_{r;\varphi\varphi_j, \overline{V_{j,h}}}(A'_{j,h}v)$, is a continuous seminorm on $\DD'^{r-r_0}_L(M;F)$. Hence $\mathfrak{p}^{\widetilde{\Phi}_x}_{r;\varphi\varphi_j,\overline{V_{j,h}}}(A'_{j,h}Au)$ is bounded from above by a finite sum of seminorms of $Au$ as in the proposition together with a continuous seminorm on $\DD'(M;E)$ of $u$. This shows that $u\in\DD'^r_L(M;E)$ and that the seminorms in the proposition generate the topology of $\DD'^r_L(M;E)$.
\end{proof}

As a consequence, we show that the Sobolev compactness wave front set satisfies analogous bounds as the Sobolev wave front set \cite[Theorem 18.1.31, p. 90]{hor2}.

\begin{corollary}\label{cor-for-ide-ofwafornes}
Let $r,r_0\in\RR$. Let $E$ and $F$ be vector bundles over $M$ of rank $k$, let $B$ be a bounded subset of $\DD'(M;E)$ and let $A\in\Psi^{r_0}(M;E,F)$ be properly supported. Then
\begin{equation}\label{wafe-frontincl-forcomp}
WF^{r-r_0}_c(A(B))\subseteq WF^r_c(B)\subseteq WF^{r-r_0}_c(A(B))\cup\Char A.
\end{equation}
\end{corollary}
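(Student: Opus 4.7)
The proof splits into the two inclusions; the main tools will be Corollary \ref{cor-for-relcom-sub-wafe-fronchar} (which characterises relative compactness in $\DD'^r_L(M;E)$ via the Sobolev compactness wave front set) together with Proposition \ref{pro-for-psudomapforcontonvectbac} (continuity and Rellich-type mapping properties of properly supported $\Psi$DOs between such spaces).

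For the first inclusion, I will set $L := WF^r_c(B)$, so that Corollary \ref{cor-for-relcom-sub-wafe-fronchar} makes $B$ relatively compact in $\DD'^r_L(M;E)$. Since $A\in\Psi^{r_0}(M;E,F)$ is of order $r_0$ at every point, Proposition \ref{pro-for-psudomapforcontonvectbac} with $W=\emptyset$ gives that $A:\DD'^r_L(M;E)\rightarrow\DD'^{r-r_0}_L(M;F)$ is continuous. Hence $A(B)$ is relatively compact in $\DD'^{r-r_0}_L(M;F)$, and a second application of Corollary \ref{cor-for-relcom-sub-wafe-fronchar} yields $WF^{r-r_0}_c(A(B))\subseteq L = WF^r_c(B)$.

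For the second inclusion I will argue by contrapositive. Fix $(p,\xi)\notin WF^{r-r_0}_c(A(B))\cup\Char A$, set $\widetilde{L}:=WF^{r-r_0}_c(A(B))$, and invoke Lemma \ref{lemma-for-parmetrixalsmforsonlyindirc} at $(p,\xi)$ to obtain a properly supported parametrix $A'\in\Psi^{-r_0}(M;F,E)$ for which $R:=A'A-\operatorname{Id}\in\Psi^0(M;E,E)$ is of order $-\infty$ in some open conic neighbourhood $W$ of $(p,\xi)$; after shrinking $W$, I may also assume $W\cap\widetilde{L}=\emptyset$, so in particular $(p,\xi)\notin \widetilde{L}\cup W^c$. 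The identity $u=A'Au-Ru$ for $u\in B$ displays $B$ as a subset of $A'A(B)-R(B)$. Corollary \ref{cor-for-relcom-sub-wafe-fronchar} together with the continuity of $A':\DD'^{r-r_0}_{\widetilde{L}}(M;F)\rightarrow\DD'^r_{\widetilde{L}}(M;E)$ (Proposition \ref{pro-for-psudomapforcontonvectbac} with $W=\emptyset$) shows that $A'A(B)$ is relatively compact in $\DD'^r_{\widetilde{L}}(M;E)$; viewing $B$ as a bounded subset of $\DD'^{r+1}_{T^*M\backslash 0}(M;E)$ and applying the ``Furthermore'' clause of Proposition \ref{pro-for-psudomapforcontonvectbac} to $R$ (with $L=T^*M\backslash 0$ and the neighbourhood $W$) yields that $R(B)$ is relatively compact in $\DD'^r_{W^c}(M;E)$. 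Combined with the continuity of the natural inclusions $\DD'^r_{\widetilde{L}}(M;E),\DD'^r_{W^c}(M;E)\hookrightarrow \DD'^r_{\widetilde{L}\cup W^c}(M;E)$ (which is immediate from the definition of the topologies, since enlarging the conic set $L$ shrinks the family of defining seminorms $\mathfrak{p}^{\Phi_x}_{r;\varphi,V}$), the set $A'A(B)-R(B)$ is relatively compact in $\DD'^r_{\widetilde{L}\cup W^c}(M;E)$, hence so is $B$; Corollary \ref{cor-for-relcom-sub-wafe-fronchar} then gives $WF^r_c(B)\subseteq \widetilde{L}\cup W^c$, and by the construction of $W$ the point $(p,\xi)$ does not lie in this union.

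The main technical point to watch is the Sobolev bookkeeping: I need relative compactness of $R(B)$ at index exactly $r$ (rather than at some lower $\widetilde{r}<r$), so that the decomposition $B\subseteq A'A(B)-R(B)$ controls $B$ in $\DD'^r_{\widetilde{L}\cup W^c}$. This is precisely what the ``Furthermore'' clause of Proposition \ref{pro-for-psudomapforcontonvectbac} supplies once $B$ is regarded as bounded at index $r+1$ rather than at $r$; after this is arranged, the remainder of the argument is a routine juggling of the continuous inclusions $\DD'^{r'}_{L_1}(M;E)\subseteq \DD'^{r'}_{L_2}(M;E)$ for $L_1\subseteq L_2$ and of Corollary \ref{cor-for-relcom-sub-wafe-fronchar}.
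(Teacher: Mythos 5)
Your proof is correct. For the first inclusion you do exactly what the paper does: take $L=WF^r_c(B)$ and combine Corollary \ref{cor-for-relcom-sub-wafe-fronchar} with Proposition \ref{pro-for-psudomapforcontonvectbac}. For the second inclusion your route is genuinely different. The paper sets $L:=WF^{r-r_0}_c(A(B))\cup\Char A$, invokes Proposition \ref{proposition-for-gentopology-ellipoperr} both to conclude $B\subseteq\DD'^r_L(M;E)$ and to replace the defining seminorms of $\DD'^r_L(M;E)$ by seminorms of $Au$, and then proves total boundedness of $B$ directly by a two-stage $\varepsilon$-net refinement, using the relative compactness of $A(B)$ in $\DD'^{r-r_0}_L(M;F)$ for the $A$-seminorms and the Montel property of $\DD'(M;E)$ for the residual distributional seminorm. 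You instead argue microlocally at each $(p,\xi)\notin WF^{r-r_0}_c(A(B))\cup\Char A$: Lemma \ref{lemma-for-parmetrixalsmforsonlyindirc} supplies the one-sided microlocal parametrix, the identity $u=A'Au-Ru$ splits $B$ into a piece controlled through $A(B)$ and a remainder $R(B)$, and the loss-free control of the remainder at index exactly $r$ is obtained by regarding $B$ as bounded at index $r+1$ (legitimate, since $\DD'^{r+1}_{T^*M\backslash0}(M;E)=\DD'(M;E)$ topologically) and using the Rellich clause of Proposition \ref{pro-for-psudomapforcontonvectbac} with $\widetilde{r}=r<r+1$ — a correctly handled bookkeeping point, and indeed the crux of your argument. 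Both proofs ultimately rest on Lemma \ref{lemma-for-parmetrixalsmforsonlyindirc} (the paper through Proposition \ref{proposition-for-gentopology-ellipoperr}), but yours bypasses Proposition \ref{proposition-for-gentopology-ellipoperr} and the explicit total-boundedness computation, at the price of only establishing relative compactness of $B$ in the point-dependent spaces $\DD'^r_{\widetilde{L}\cup W^c}(M;E)$ rather than in $\DD'^r_L(M;E)$ for the global $L$; since Corollary \ref{cor-for-relcom-sub-wafe-fronchar} converts either statement into the desired wave-front inclusion, the conclusions coincide, while the paper's version yields the slightly stronger global compactness statement as a by-product. One cosmetic remark: shrinking $W$ so that $W\cap\widetilde{L}=\emptyset$ is unnecessary, because $(p,\xi)\notin\widetilde{L}$ by choice and $(p,\xi)\in W$ already give $(p,\xi)\notin\widetilde{L}\cup W^c$.
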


\begin{proof} The first inclusion in \eqref{wafe-frontincl-forcomp} follows from Corollary \ref{cor-for-relcom-sub-wafe-fronchar} and Proposition \ref{pro-for-psudomapforcontonvectbac} by taking $L=WF^r_c(B)$. To show the second inclusion, set $L:=WF^{r-r_0}_c(A(B))\cup\Char A$. Corollary \ref{cor-for-relcom-sub-wafe-fronchar} verifies that $A(B)$ is a relatively compact subset of $\DD'^{r-r_0}_L(M;F)$ and Proposition \ref{proposition-for-gentopology-ellipoperr} yields that $B\subseteq \DD'^r_L(M;E)$. In view of Corollary \ref{cor-for-relcom-sub-wafe-fronchar}, it suffices to show that $B$ is relatively compact, i.e. totally bounded, in $\DD'^r_L(M;E)$. Let $U$ be a neighbourhood of zero in $\DD'^r_L(M;E)$; in view of Proposition \ref{proposition-for-gentopology-ellipoperr}, without loss in generality, we can assume that
$$
U=\{u\in\DD'^r_L(M;E)\,|\, \mathfrak{p}^{\Phi_1}_{r-r_0;\varphi_1,V_1}(Au)<\varepsilon,\ldots,\mathfrak{p}^{\Phi_n}_{r-r_0;\varphi_n,V_n}(Au)<\varepsilon, \mathfrak{p}(u)<\varepsilon\}
$$
for some $\varepsilon>0$, where $\mathfrak{p}^{\Phi_j}_{r-r_0;\varphi_j,V_j}$, $j=1,\ldots,n$, are seminorms on $\DD'^{r-r_0}_L(M;F)$ of the form \eqref{sem-for-bun-valdiswavsincsstrs} and $\mathfrak{p}$ is a continuous seminorm on $\DD'(M;E)$. Since $A(B)$ is a relatively compact subset of $\DD'^{r-r_0}_L(M;F)$, there are $u_1,\ldots,u_q\in B$ such that for every $u\in B$ there is $u_j$ so that $\max_{1\leq l\leq n}\mathfrak{p}^{\Phi_l}_{r-r_0;\varphi_l,V_l}(Au_j-Au)<\varepsilon/2$. Since $(u_j+U_0)\cap B$, $j=1,\ldots,q$, with $U_0:=\{u\in\DD'^r_L(M;E)\,|\, \max_{1\leq l\leq n}\mathfrak{p}^{\Phi_l}_{r-r_0;\varphi_l,V_l}(Au)<\varepsilon/2\}$, is a relatively compact subset of $\DD'(M;E)$ (as $\DD'(M;E)$ is Montel), there is $B_j:=\{u_j+u_{j,1},\ldots,u_j+u_{j,t_j}\}\subseteq (u_j+U_0)\cap B$ so that for each $u\in (u_j+U_0)\cap B$ there is $u_j+u_{j,l}\in B_j$ such that $\mathfrak{p}(u-u_j-u_{j,l})<\varepsilon$. Set $B_0:=\bigcup_{j=1}^q B_j$. It is straightforward to show that $B\subseteq B_0+U$ which completes the proof of the corollary.
\end{proof}

We end the section with the following consequence of the H\"ormander's construction of a distribution with prescribed wave front set \cite[Theorem 8.1.4, p. 255]{hor}.

\begin{lemma}
Let $O$ be an open set in $\RR^m$. For every $r\in\RR$ and every closed conic subset $L$ of $O\times(\RR^m\backslash\{0\})$, there exists $u\in\DD'(O)$ such that $WF^r(u)=WF(u)=L$.
\end{lemma}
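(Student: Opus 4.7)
If $L=\emptyset$ take $u=0$. For $L\neq\emptyset$, the plan is to sharpen the construction in \cite[Theorem 8.1.4]{hor} by choosing amplitudes tuned to the exponent $r$, so that the resulting distribution fails to be microlocally $H^r$ at every point of $L$. Concretely, let $L_1:=L\cap(O\times\mathbb{S}^{m-1})$ and pick a countable dense subset $\{(x_j,\omega_j)\}_{j\in\ZZ_+}$ of $L_1$. Choose a lacunary sequence with $\lambda_1\geq 2$ and $\lambda_{j+1}\geq 2\lambda_j$, set $\xi_j:=\lambda_j\omega_j$ and $c_j:=\lambda_j^{-r}$. Fix $\chi\in\DD(\RR^m)$ with $\chi(0)=1$, $\supp\chi\subseteq B(0,1)$ and $\int_{\RR^m}\chi\neq 0$, and put $\phi_j(x):=\chi((x-x_j)/\delta_j)$ with $\delta_j:=\tfrac{1}{2}\min(1,\operatorname{dist}(x_j,\partial O))>0$. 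Define
\[
u:=\sum_{j=1}^\infty c_j\,\phi_j(\cdot)\,e^{i\langle\cdot,\xi_j\rangle}.
\]

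The convergence in $\DD'(O)$ and the inclusion $WF(u)\subseteq L$ proceed as in \cite[Theorem 8.1.4]{hor}. For $\psi\in\DD(O)$, the indices $j$ with $\supp\phi_j\cap\supp\psi\neq\emptyset$ have $x_j$ in a fixed compact subset of $O$, so $\delta_j\geq\delta_0>0$ and the $\mathcal{C}^k$-norms of $\phi_j$ are uniformly bounded on such $j$. Rapid decrease then gives $|c_j\mathcal{F}(\psi\phi_j)(-\xi_j)|=O(\lambda_j^{-N})$ for every $N$, yielding absolute convergence. If $(x_0,\xi_0)\notin L$ and $\psi$, $V$ are chosen with $(\supp\psi\times V)\cap L=\emptyset$, then $\omega_j\notin V$ whenever $\psi\phi_j\not\equiv 0$, so $\mathcal{F}(\psi\phi_j)(\eta-\xi_j)$ decays rapidly on $V$, producing rapid decay of $\mathcal{F}(\psi u)$ on $V$. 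Hence $WF^r(u)\subseteq WF(u)\subseteq L$.

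The heart of the argument is the opposite inclusion $L\subseteq WF^r(u)$. Fix $(x_0,\omega_0)\in L_1$, $\psi\in\DD(O)$ with $\psi(x_0)\neq 0$, and an open cone $V\ni\omega_0$; pass to an open sub-cone $V'$ with $\omega_0\in V'$ and $\overline{V'}\cap\mathbb{S}^{m-1}\subseteq V$. By density the set $J:=\{j\,|\,|\psi(x_j)|\geq|\psi(x_0)|/2,\,\omega_j\in V'\}$ is infinite, and on $J$ the values $\delta_j$ lie in $[\delta_0,\tfrac{1}{2}]$ for some $\delta_0>0$. Lacunarity gives $|\eta-\xi_j|\geq\tfrac{1}{4}\max(\lambda_j,\lambda_{j_0})$ for $j\neq j_0$ and $\eta\in B(\xi_{j_0},1)$, so Schwartz rapid decrease controls the cross-terms:
\[
\sum_{j\neq j_0}c_j|\mathcal{F}(\psi\phi_j)(\eta-\xi_j)|=O(\lambda_{j_0}^{-N})\quad\text{for every }N>0.
\]
Meanwhile, continuity together with the compactness of the parameter range of $\psi\phi_{j_0}$ (coupled with $\mathcal{F}(\psi\phi_{j_0})(0)=\int\psi\phi_{j_0}\neq 0$ uniformly for large $j_0\in J$) yield constants $\rho,c'>0$ and a set $E_{j_0}\subseteq \xi_{j_0}+\overline{B(0,\rho)}$ of measure bounded below, lying inside $V$ for large $j_0\in J$, on which $|c_{j_0}\mathcal{F}(\psi\phi_{j_0})(\eta-\xi_{j_0})|\geq c'c_{j_0}$. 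Combining with $\langle\eta\rangle^{2r}\geq(\lambda_{j_0}/2)^{2r}$ and the identity $c_{j_0}^2\lambda_{j_0}^{2r}=1$,
\[
\int_{E_{j_0}}|\mathcal{F}(\psi u)(\eta)|^2\langle\eta\rangle^{2r}d\eta\geq c''>0\quad\text{uniformly in large }j_0\in J.
\]
Since the balls $B(\xi_{j_0},1)$ are pairwise disjoint by lacunarity, summing over $J$ gives $\mathfrak{p}_{r;\psi,V}(u)=\infty$, hence $(x_0,\omega_0)\in WF^r(u)$. Conicity yields $L\subseteq WF^r(u)$, and combined with the inclusion established above one concludes $WF^r(u)=WF(u)=L$.

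The main obstacle is the cross-term analysis in the Fourier domain: the $L^2(V)$-mass of each frequency packet centred at $\xi_{j_0}$ must not be obscured by the tails of the other packets. Lacunary spacing $\lambda_{j+1}\geq 2\lambda_j$ combined with Schwartz rapid decrease delivers this, and the normalisation $c_j=\lambda_j^{-r}$ makes each packet contribute a fixed quantum to the $H^r$-seminorm, forcing the series to diverge exactly at $L$.
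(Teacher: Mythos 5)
Your plan has a genuine gap at the step you dismiss as routine: the inclusion $WF(u)\subseteq L$ does \emph{not} ``proceed as in'' H\"ormander's Theorem 8.1.4 for your $u$, because his argument depends crucially on the bump widths shrinking (there the $k$-th packet has width $\sim 1/k$), so that, apart from finitely many terms (each of which is smooth and harmless), every packet whose support meets $\supp\psi$ is centred at a point arbitrarily close to $\supp\psi$; only then does ``$(\supp\psi\times V)\cap L=\emptyset$'' force $\omega_j$ away from $V$. With your non-shrinking widths $\delta_j\geq\delta_0>0$ this fails, and in fact the construction itself is wrong, not just the write-up. Concretely, take $O=\RR^m$, $L=\{x_1\}\times\RR_+\omega_0$; then every $x_j=x_1$, every $\delta_j=1/2$, and $u=\phi\cdot\sum_j\lambda_j^{-r}e^{i\lambda_j\langle\cdot,\omega_0\rangle}$ with the single fixed bump $\phi(x)=\chi(2(x-x_1))$. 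Pick $x_0\neq x_1$ with $|x_0-x_1|$ small, so $(x_0,\omega_0)\notin L$ but $\phi(x_0)\neq0$. For any $\psi$ with $\psi(x_0)\neq0$ one has $\psi\phi\not\equiv0$, and your own lacunarity/cross-term estimate shows that on any cone $V\ni\omega_0$ the function $\mathcal{F}(\psi u)$ has peaks of height $\gtrsim\lambda_j^{-r}$ on sets of fixed positive measure near $\xi_j=\lambda_j\omega_0\in V$; this is only polynomial decay, so $(x_0,\omega_0)\in WF(u)$ (and in $WF^{r'}(u)$ for large $r'$), i.e.\ $WF(u)\supsetneq L$. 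Repairing this forces $\delta_j\to0$, and then the rest of your argument must be redone: the amplitudes $c_j$ have to be retuned against the $\delta_j$-dependence of $\mathcal{F}(\psi\phi_j)$ (its value at $0$ is $\approx\psi(x_j)\delta_j^m\int\chi$, its ``plateau'' has measure $\sim\delta_j^{-m}$ and need not stay inside $V$), and this bookkeeping is exactly the heart of the matter, which the proposal does not address. A secondary flaw, even granting fixed-size bumps: the assertion that $\int\psi\phi_{j_0}\neq0$ uniformly for an \emph{arbitrary} admissible $\psi$ is false in general (it can vanish identically along $J$); the compactness argument you hint at can replace it, but it must be run without assuming nonvanishing of $\mathcal{F}(\psi\phi_{j_0})$ at the origin.

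For comparison, the paper's proof does not redo the construction at all: it inspects H\"ormander's own example and shows it already satisfies $WF^{2m^2}(u)=L$ (via the lower bounds $|\mathcal{F}(\chi u)(k_j^3\theta_j)|\geq k_j^{-m-2}/2$ together with a Lipschitz bound on $\mathcal{F}(\chi u)$, which spreads the lower bound over balls of radius $k_j^{-m-3}$), and then obtains general $r$ by applying a properly supported elliptic $A\in\Psi^{2m^2-r}(\RR^m)$ and the wave front set invariance $WF^r(Au)=WF^{2m^2}(u)$, $WF(Au)=WF(u)$. If you want to salvage your approach, either adopt that order-shifting device, or keep the tuned construction but with shrinking supports and amplitudes compensating both $r$ and $\delta_j$.
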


\begin{proof} It suffices to show the claim when $O=\RR^m$ and $L\subseteq \RR^m\times(\RR^m\backslash\{0\})$ for otherwise we can apply this case to the closure of $L$ in $\RR^m\times(\RR^m\backslash\{0\})$. For such $L$, let $u$ be the distribution constructed in the proof of \cite[Theorem 8.1.4, p. 255]{hor} which satisfies $WF(u)=L$. We claim that $WF^{2m^2}(u)=L$. Clearly, is suffices to show that $L\subseteq WF^{2m^2}(u)$ (cf. Remark \ref{rem-for-wfi-ofsobofsmhks}). By carefully examining the proof of \cite[Theorem 8.1.4, p. 255]{hor}, one sees that for every $\psi\in\DD(\RR^m)$ the smooth function $\mathcal{F}(\psi u)$ satisfies $\mathcal{F}(\psi u)\in L^{\infty}(\RR^m)$ and for every $(x_0,\xi_0)\in L$ and $\chi\in\DD(\RR^m)$ which equals $1$ on a neighbourhood of $x_0$, there is a sequence $(x_j,\theta_j)\in L$, $j\in\ZZ_+$, with $\theta_j\in\mathbb{S}^{m-1}$, which converges to $(x_0,\xi_0/|\xi_0|)$ and
$$
|\mathcal{F}(\chi u)(k_j^3\theta_j)|\geq k_j^{-m-2}/2,\,\, j\in\ZZ_+,\quad \mbox{where}\quad k_j\in\ZZ_+\,\,\mbox{and}\,\, k_{j+1}>k_j.
$$
Notice that the first property implies $\partial_l\mathcal{F}(\psi u)\in L^{\infty}(\RR^m)$, $l=1,\ldots,m$, for all $\psi\in\DD(\RR^m)$. Let $(x_0,\xi_0)\in L$ and assume that $(x_0,\xi_0)\not\in WF^{2m^2}(u)$. There is $\chi\in\DD(\RR^m)$ which equals $1$ on a neighbourhood of $x_0$ and an open cone $V\subseteq \RR^m$ containing $\xi_0$ such that $\|\langle \cdot\rangle^{2m^2}\mathcal{F}(\chi u)\|_{L^2(V)}<\infty$. Let $(x_j,\theta_j)\in L$, $j\in\ZZ_+$, be as above. There is $j_1\in\ZZ_+$ such that $B(k_j^3\theta_j,k_j^{-m-3})\subseteq V$ when $j\geq j_1$. We Taylor expand $\mathcal{F}(\chi u)$ at $k_j^3\theta_j$ up to order $0$ and employ the above properties of $\mathcal{F}(\chi u)$ to deduce that
$$
|\mathcal{F}(\chi u)(\xi)-\mathcal{F}(\chi u)(k_j^3\theta_j)|\leq m|\xi-k_j^3\theta_j|\max_{1\leq l\leq m}\|\partial_l\mathcal{F}(\chi u)\|_{L^{\infty}(\RR^m)},\quad \xi\in\RR^m,\, j\in\ZZ_+.
$$
Hence, there is $j_2> j_1$ such that $|\mathcal{F}(\chi u)(\xi)|\geq k_j^{-m-2}/4$ when $\xi\in B(k_j^3\theta_j,k_j^{-m-3})$, for all $j\geq j_2$. For any $j\geq j_2$, we infer
$$
\infty>\|\langle \cdot\rangle^{2m^2}\mathcal{F}(\chi u)\|_{L^2(V)}^2\geq \frac{(k_j^3-1)^{4m^2}}{16k_j^{2m+4}}\int_{B(k_j^3\theta_j,k_j^{-m-3})}d\xi= c_0\frac{(k_j^3-1)^{4m^2}}{16k_j^{m^2+5m+4}},
$$
where $c_0$ is the volume of the unit ball in $\RR^m$. This is a contradiction since the right hand side tends to $\infty$ as $j\rightarrow \infty$ and the proof of $WF^{2m^2}(u)=L$ is complete. For general $r\in\RR$, pick properly supported and elliptic $A\in\Psi^{2m^2-r}(\RR^m)$ and apply \eqref{wafe-frontincl-forcomp} and \cite[Theorem 18.1.28, p. 89]{hor2} to deduce $WF^r(Au)=WF^{2m^2}(u)=L=WF(u)=WF(Au)$.
\end{proof}

\section{Applications}\label{application}

We now showcase the theory we developed so far on two important and related concepts in the theory of PDEs and microlocal analysis: the existence of microlocal defect measures and the compensated compactness theorem. Our improvement primarily lies in that we can consider distributions whose Sobolev regularity is known only in parts of the cotangent bundle; of course, the results will provide information only in those parts. Both results are intrinsically connected with the $L^2$-inner product and we will have to work with sesquilinear forms. Since we aim for geometric extensions, we need to recall facts about anti-dual bundles. Most of these are widely known or easy to check, but, to the best of our knowledge, there are no standard notations for hardly any of them; we use the opportunity to fix the notation for the main results.\\
\indent For a l.c.s. $X$, we denote by $X^*$ its anti-dual (i.e. conjugate dual): it is the space of all continuous anti-linear (i.e. conjugate linear) functionals on $X$. As before, $X^*_b$ will stand for $X^*$ equipped with the strong dual topology.\\
\indent Given two complex vector bundles $E$ and $F$ over $M$, we denote by $\overline{L}(E,F)$ the complex vector bundle whose fibre at $p$ is the space of anti-linear (conjugate linear) maps $\overline{\mathcal{L}}(E_p,F_p)$. We denote by $E^*$ the anti-dual bundle of $E$, i.e. $E^*:=\overline{L}(E,\CC_M)$, while $E^{\#}$ stands for the functional anti-dual bundle of $E$, i.e. $E^{\#}:=\overline{L}(E,DM)$. The bundle $E$ is canonically isomorphic to $E^{\#\,\#}$ via the isomorphism $e\mapsto (e^*\mapsto \overline{e^*(e)})$; we will always identify these two bundles via this isomorphism (similarly as we identify $E$ with $E^{\vee\,\vee}$). Given $T\in L(E,F)_p$, we denote by $T^{\#}$ the element of $L(F^{\#},E^{\#})_p$ defined by $T^{\#}(f^*)(e)=f^*(Te)$, $e\in E_p$, $f^*\in F^{\#}_p$. Employing the above identification, we have $T=T^{\#\,\#}$. For $T\in L(E,F)_p$, we define $T^*\in L(F^*,E^*)_p$ analogously, and, employing the identifications $E=E^{*\,*}$ and $F=F^{*\,*}$ defined in the same way as above, we have $T^{*\,*}=T$.\\
\indent There is a canonical anti-linear isomorphism $\iota_E:E\rightarrow E^{\#\,\vee}=L(E^{\#},DM)$ given by $\iota_E(e)(e^*):=e^*(e)$, $e\in E_p$, $e^*\in E^{\#}_p$. (Although we will never use it, we point out that the conjugate bundle $\overline{E}$ is canonically isomorphic to $E^{\#\,\vee}$ via $e\mapsto(e^*\mapsto e^*(e))$.) The map $\iota_E$ induces an anti-linear topological isomorphism $\Gamma_c(E)\rightarrow \Gamma_c(E^{\#\,\vee})$ by applying it pointwise, which we again denote by $\iota_E$. Its transpose\footnote{If $T:X\rightarrow Y$ is a continuous anti-linear map, then one can define two transposes: $T_1:Y'\rightarrow X^*$, $T_1(y')(x):=y'(Tx)$, and $T_2:Y^*\rightarrow X'$, $T_2(y^*)(x):=y^*(Tx)$; both of them are linear and continuous when the spaces are equipped with their respective strong topologies. Here, $\iota_{0,E}$ is the first of these transposes.} $\iota_{0,E}:\DD'(M;E^{\#})\rightarrow (\Gamma_c(E))^*_b$ is a linear topological isomorphism. We denote by $(\cdot,\cdot)$ the sesquilinear form that comes from the anti duality of $\Gamma_c(E)$ and $(\Gamma_c(E))^*_b$; i.e. $(\iota_{0,E}(u),\varphi)=\langle u,\iota_E(\varphi)\rangle$, $u\in\DD'(M;E^{\#})$, $\varphi\in\Gamma_c(E)$. It is straightforward to verify that
$$
(\iota_{0,E}(f),\varphi)=\int_M \sqsubset f,\varphi\sqsupset,\quad f\in L^1_{\loc}(M;E^{\#}),\, \varphi\in\Gamma_c(E),\,\, \mbox{where}\,\, \sqsubset f,\varphi\sqsupset_p:=f_p(\varphi_p).
$$
Furthermore, $\iota_E$ uniquely extends to an anti-linear topological isomorphism $\iota_E:\DD'(M;E)\rightarrow \DD'(M;E^{\#\,\vee})$: pick a countable family of charts $\{(O_{\mu},x_{\mu})\}_{\mu\in\ZZ_+}$ as in Proposition \ref{lem-for-top-imebdofmapscs} and a partition of unity $(\varphi_{\mu})_{\mu\in\ZZ_+}$ subordinated to it and define (after identifying $E^{\#}$ with $(E^{\#})^{\vee\,\vee}$ as standard)
$$
\langle \iota_E(u),\psi\rangle:=\sum_{\mu\in\ZZ_+} \langle \overline{u^j_{\Phi_{x_{\mu}}}},(\varphi_{\mu}\psi_{\mu,j})\circ x^{-1}_{\mu}\rangle,\quad u\in\DD'(M;E),\,\,\psi\in\Gamma_c(E^{\#}),\, \psi_{|O_{\mu}}=\psi_{\mu,j}\widetilde{\sigma}^j_{\mu},
$$
where $(\widetilde{\sigma}^1_{\mu},\ldots,\widetilde{\sigma}^k_{\mu})$ is the frame for $E^{\#}$ induced by the trivialisation $\Phi_{x_{\mu}}$ of $E$ over $O_{\mu}$, namely $(\widetilde{\sigma}^j_{\mu})_p:=\Psi_{x_{\mu},p}^{-1}\circ \overline{\epsilon}^j\circ \Phi_{x_{\mu},p}$ with $\Phi_{x_{\mu},p}$ and $\Psi_{x_{\mu},p}$ the linear isomorphisms induced by the local trivialisations of $E$ and $DM$ over $O_{\mu}$ and $\overline{\epsilon}^j$ the anti-linear map $\CC^k\rightarrow\CC$, $z\mapsto \overline{z^j}$.
A more elegant way to define the extension of $\iota_E$ is as follows. Notice that $\iota_{E^{\#}}: E^{\#}\rightarrow E^{\vee}$ (we identify $E^{\#\, \#\,\vee}$ with $E^{\vee}$) is given by $\iota_{E^{\#}}(e^*)(e)=\overline{e^*(e)}$, $e\in E_p$, $e^*\in E^{\#}_p$ (the conjugation comes from the identification of $E$ with $E^{\#\, \#}$) and it induces an anti-linear topological isomorphism $\iota_{E^{\#}}:\Gamma_c(E^{\#})\rightarrow \Gamma_c(E^{\vee})$. The desired extension of $\iota_E$ is $\langle \iota_E(u),\psi\rangle:=\overline{\langle u,\iota_{E^{\#}}(\psi)\rangle}$, $\psi\in\Gamma_c(E^{\#})$, $u\in\DD'(M;E)$ (after identifying $E^{\#}$ with $(E^{\#})^{\vee\,\vee}$). In any case, $\iota_E$ restricts to an anti-linear topological isomorphism $\iota_E:\DD'^r_L(M;E)\rightarrow \DD'^r_{\check{L}}(M;E^{\#\,\vee})$ for any closed conic subset $L$ of $T^*M\backslash0$. Hence, in view of Theorem \ref{the-for-dua-ofdwitheonmanwithvectbundd}, $\iota_{0,E}$ restricts to a linear topological isomorphism $\iota_{0,E}:\EE'^{-r}_{L^c}(M;E^{\#})\rightarrow (\DD'^r_L(M;E))^*_b$. When $E$ is a complexification of a real vector bundle, $E$ has a natural operation of conjugation which induces conjugation on $E^{\vee}$ and $E^{\#}$ by $\overline{e'}(e)=\overline{e'(\overline{e})}$ and $\overline{e^*}(e)=\overline{e^*(\overline{e})}$ respectively which, in turn, allows us to identify $E^{\vee}$ with $E^{\#}$ via $e'\mapsto (e\mapsto \overline{\overline{e'}(e)}=e'(\overline{e}))$ (this is not the same as $\iota_{E^{\#}}^{-1}$ since $\iota_{E^{\#}}^{-1}$ is anti-linear!). This yields that $\iota_E$ is just conjugation in this case (since $E=E^{\vee\,\vee}$); furthermore for $\iota_{0,E}$ we have $(\iota_{0,E}(u),\varphi)=\langle u,\overline{\varphi}\rangle$ which implies that $(\cdot,\cdot)$ is induced by the standard $L^2$-sesquilinear form. However, when $E$ is not a complexification of a real vector bundle, $E$ does not possess conjugation compatible with its almost complex structure (see \cite[Proposition 2, p. 39]{luke-mis}) and one can not make these identifications.\\
\indent If $A\in\Psi^r(M;E,F)$ is properly supported, then its adjoint map with respect to the sesquilinear forms from the respective anti-dualities is defined by $A_0:(\Gamma_c(F))^*_b\rightarrow (\Gamma_c(E))^*_b$, $(A_0u,\varphi):=(u,A\varphi)$, $u\in(\Gamma_c(F))^*_b$, $\varphi\in\Gamma_c(E)$. It is customary to intertwine $A_0$ with the $\iota_0$-maps so it becomes a map on the distribution spaces, i.e. $A^*:\DD'(M;F^{\#})\rightarrow \DD'(M;E^{\#})$, $A^*:=\iota_{0,E}^{-1}A_0\iota_{0,F}$, and call $A^*$ the adjoint of $A$ instead (the notation $A^*$ is also suggestive for this, although not precise since $A_0$ is the true adjoint); we will always employ this definition and notation for $A^*$ throughout the rest of the article. Then $A^*\in\Psi^r(M;F^{\#},E^{\#})$, $A^*$ is properly supported and $\boldsymbol{\sigma}^r(A^*)=\boldsymbol{\sigma}^r(A)^{\#}$, where for $v\in\Gamma(\pi_{T^*M}^*L(E,F))$, we denote $v^{\#}\in\Gamma(\pi_{T^*M}^*L(F^{\#},E^{\#}))$, $v^{\#}(p,\xi)(f^*)e:=f^*(v(p,\xi)e)$, $e\in E_p$, $f^*\in F^{\#}_p$; i.e. $v^{\#}(p,\xi)=v(p,\xi)^{\#}$ (of course, after identifying $\pi^*_{T^*M}L(F^{\#},E^{\#})_{(p,\xi)}=\{(p,\xi)\}\times \mathcal{L}(F^{\#}_p,E^{\#}_p)$ with $\mathcal{L}(F^{\#}_p,E^{\#}_p)$). If in addition $A\in\Psi^r_{\phg}(M;E,F)$ then $A^*\in\Psi^r_{\phg}(M;F^{\#},E^{\#})$.\\
\indent For $r\in\RR$ and $W$ an open conic subset of $T^*M\backslash0$, we denote by $\Gamma_{\hom,r}(\pi_{T^*M}^*L(E,F)_W)$ the space of smooth sections $a:W\rightarrow \pi_{T^*M}^*L(E,F)_W$ which are positively homogeneous of degree $r$ on $W$, i.e. $\mathfrak{pr}(a(p,t\xi))=t^r\mathfrak{pr}(a(p,\xi))$, $(p,\xi)\in W$, $t>0$, where $\mathfrak{pr}$ is the smooth bundle homomorphism $\mathfrak{pr}:\pi^*_{T^*M}L(E,F)\rightarrow L(E,F)$, $\mathfrak{pr}((p,\xi),T)=T$ for $((p,\xi),T)\in \pi^*_{T^*M}L(E,F)_{(p,\xi)}=\{(p,\xi)\}\times\mathcal{L}(E_p,F_p)$. The principal symbol map $\boldsymbol{\sigma}^r$ induces a surjective linear map $\widetilde{\sigma}^r:\Psi^r_{\phg}(M;E,F)\rightarrow \Gamma_{\hom,r}(\pi^*_{T^*M}L(E,F)_{T^*M\backslash0})$;
for $A\in\Psi^r_{\phg}(M;E,F)$, $\widetilde{\sigma}^r(A)$ is such that it becomes an element of $\boldsymbol{\sigma}^r(A)$ once we modify it on (any) open neighbourhood $\mathcal{U}$ of the zero section in $T^*M$ such that $\mathcal{U}\cap \pi^{-1}_{T^*M}(K)$ is relatively compact for every $K\subset\subset M$. The kernel of $\widetilde{\sigma}^r$ is $\Psi^{r-1}_{\phg}(M;E,F)$. We denote by $\Psi_{\phg,c}^r(M;E,F)$ the space of all $\Psi$DOs in $\Psi^r_{\phg}(M;E,F)$ which have compactly supported kernel. When $r=0$, $\widetilde{\sigma}^0$ induces a surjective linear map $\sigma^0:\Psi^0_{\phg}(M;E,F)\rightarrow \Gamma(\pi_{S^*M}^*L(E,F))$, where $\pi_{S^*M}:S^*M\rightarrow M$ is the cosphere bundle and the latter space is the space of smooth sections $S^*M\rightarrow \pi_{S^*M}^*L(E,F)$ of the pullback vector bundle $\pi_{S^*M}^*L(E,F)$. The kernel of $\sigma^0$ is $\Psi^{-1}_{\phg}(M;E,F)$. For properly supported $A\in\Psi^0_{\phg}(M;E,F)$, it holds that $\sigma^0(A^*)=\sigma^0(A)^{\#}$ with the operation ${}^{\#}$ defined as above but now on $\pi^*_{S^*M}L(E,F)$. Notice that $\sigma^0$ restricts to a surjective linear map $\sigma^0:\Psi^0_{\phg,c}(M;E,F)\rightarrow \Gamma_c(\pi_{S^*M}^*L(E,F))$ whose kernel is $\Psi^{-1}_{\phg,c}(M;E,F)$. If $A\in\Psi^0_{\phg,c}(M;E,F)$ and $B\in \Psi^0_{\phg}(M;F,H)$ is properly supported, then $BA$ also has kernel with compact support and $\sigma^0(BA)=\sigma^0(B)\sigma^0(A)$; an analogous statement holds true when composing operators in the other direction. When $W$ is an open subset of $S^*M$, we denote by $\Gamma_c(\pi_{S^*M}^*L(E,F)_W)$ and $\Gamma_c^0(\pi_{S^*M}^*L(E,F)_W)$ the spaces of all smooth and continuous compactly supported sections $W \rightarrow \pi_{S^*M}^*L(E,F)_W$ respectively. These spaces are endowed with their respective strict $(LF)$- and strict $(LB)$-space topologies. If $A\in\Psi^0_{\phg}(M;E,F)$ is of order $r'<0$ at $(p,\xi)\in T^*M\backslash0$ (hence of order $\lfloor r' \rfloor$ at $(p,\xi)$), then $\sigma^0(A)=0$ in a neighbourhood of $(p,[\xi])\in S^*M$ where $(p,[\xi])$ is the image of $(p,\xi)$ under the natural map $T^*M\backslash0\rightarrow S^*M$. For an open conic subset $W$ of $T^*M\backslash0$ and $r\in\RR$, we denote
$$
\Psi^r_{\phg,c,W}(M;E,F):=\{A\in\Psi^r_{\phg,c}(M;E,F)\,|\, A\,\, \mbox{is of order}\,\, -\infty\,\, \mbox{at every point of}\,\, W^c\}.
$$
If $A\in\Psi^r_{\phg,c,W}(M;E,F)$, then $A^*\in\Psi^r_{\phg,c,W}(M;F^{\#},E^{\#})$. Notice that $\sigma^0$ restricts to a well-defined linear map
\begin{equation}\label{equ-for-pri-maponpolsymcospbdd}
\sigma^0:\Psi^0_{\phg,c,W}(M;E,F)\rightarrow \Gamma_c(\pi_{S^*M}^*L(E,F)_{[W]});
\end{equation}
here and throughout the rest of the article we denote by $[L]$ the image of the conic subset $L\subseteq T^*M\backslash0$ under the natural map $T^*M\backslash0\rightarrow S^*M$. We point out that \eqref{equ-for-pri-maponpolsymcospbdd} is surjective. More generally, given $a\in \Gamma(\pi^*_{S^*M}L(E,F))$ with $\supp a\subseteq [W]$, there is a properly supported $A\in\Psi^0_{\phg}(M;E,F)$ such that $A$ is of order $-\infty$ on $W^c$ and $\sigma^0(A)=a$. The kernel of \eqref{equ-for-pri-maponpolsymcospbdd} is $\Psi^{-1}_{\phg,c,W}(M;E,F)$. If $A\in\Psi^0_{\phg,c,W}(M;E,F)$ and $B\in\Psi^0_{\phg}(M;F,H)$ is properly supported then $BA\in\Psi^0_{\phg,c,W}(M;E,H)$; an analogous statement holds true when composing operators in the other direction. Notice that all of the above holds even when $W=\emptyset$ if we interpret $\Gamma_c(\pi_{S^*M}^*L(E,F)_{[W]})$ as the space of smooth sections with compact support in $[W]=\emptyset$ which consists only of the zero section.

\subsection{Generalisation of the microlocal defect measures of \texorpdfstring{G\'erard}{Gerard} and Tartar}

We devote this subsection to generalising the concept of microlocal defect measures introduced by G\'erard \cite[Theorem 1]{Ger} and Tartar \cite[Theorem 1.1]{Tar} to sequences in the space $\DD'^0_L(M;E)$ where $L$ is a closed conic subset of $T^*M\backslash0$; i.e. sequences whose elements are only known to be $L^2_{\loc}$ outside of $L$. The two main ingredients that will allow us to show this fact is the duality from Theorem \ref{the-for-dua-ofdwitheonmanwithvectbundd}, which we will constantly tacitly apply it from now on, and the generalisations of the Rellich's lemma we showed in Section \ref{sec-psido-on-despacesanddual}.\\
\indent Before we start, we point out the following consequence of the above considerations. Given $A\in\Psi^0_{\phg,c,L^c}(M;E,E^{\#})$, Corollary \ref{cor-for-ope-defondinecomsetfort} implies that $A:\DD'^0_L(M;E)\rightarrow \EE'^0_{L^c}(M;E^{\#})$ is well-defined and continuous and thus $(\iota_{0,E}(Au),v)$, $u,v\in\DD'^0_L(M;E)$, is well-defined. Furthermore $(\iota_{0,E}(A^*u),v)=\overline{(\iota_{0,E}(A v),u)}$, $u,v\in\DD'^0_L(M;E)$.\\
\indent We need the following variant of \cite[Lemma 1.2]{Ger}; throughout the rest of the article, $\M_k(\CC)$ stands for the space of $k\times k$ complex matrices.

\begin{lemma}\label{lem-for-ope-whinonsymbbozerliminfst}
Let $O$ be an open subset of $\RR^m$ and $L$ a closed conic subsets of $O\times (\RR^m\backslash\{0\})$. Let $u,u_n\in\DD'^0_L(O;\CC^k)$, $n\in\ZZ_+$, be such that $\{u_n\}_{n\in\ZZ_+}$ is bounded in $\DD'^0_L(O;\CC^k)$ and $u_n\rightarrow u$ in $\DD'(O;\CC^k)$. If $A\in \Psi^0_{\phg,c,L^c}(O;\CC^k,\CC^k)$ is such that $\sigma^0(A)$ is positive semi-definite at every point, then
\begin{equation}
\lim_{n\rightarrow\infty}\operatorname{Im}(A(u_n-u),u_n-u)=0\quad \mbox{and}\quad \liminf_{n\rightarrow \infty}\operatorname{Re}(A(u_n-u),u_n-u)\geq0.
\end{equation}
\end{lemma}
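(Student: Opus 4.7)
The plan is to set $v_n := u_n - u$ (so $\{v_n\}$ is bounded in $\DD'^0_L(O;\CC^k)$ and $v_n \to 0$ in $\DD'(O;\CC^k)$) and to handle the two statements separately, reducing each to Corollary \ref{cor-for-ope-defondinecomsetfort} together with (for the real part) the classical matrix-valued sharp G{\aa}rding inequality. Fix $K \subset\subset O$ containing the first projection of the support of the kernel of $A$.

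For the imaginary part, I would start from $2i\operatorname{Im}(Av_n, v_n) = ((A - A^*)v_n, v_n)$. Since $\sigma^0(A)$ is Hermitian, the identity $\sigma^0(A^*) = \sigma^0(A)^{\#}$---which for the trivial bundle $\CC^k$ reduces to conjugate transposition once the $\iota_E$-identifications are used---yields $\sigma^0(A - A^*) = 0$ and hence $A - A^* \in \Psi^{-1}_{\phg,c,L^c}(O;\CC^k,\CC^k)$. Corollary \ref{cor-for-ope-defondinecomsetfort} (with $r = 0$ and $r_0 = -1$) then shows that $A - A^*$ maps bounded subsets of $\DD'^0_L$ into relatively compact subsets of $\EE'^0_{L^c;K}$, and the $\DD'$-convergence $(A - A^*)v_n \to 0$ upgrades to convergence in the Fr\'echet topology of $\EE'^0_{L^c;K}$. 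Via the separate continuity of the sesquilinear pairing from Theorem \ref{the-for-dua-ofdwitheonmanwithvectbundd}, the family of continuous linear functionals $w \mapsto (w, v_n)$ on $\EE'^0_{L^c;K}$ is pointwise bounded and therefore equicontinuous by Banach--Steinhaus on a Fr\'echet space; consequently $((A - A^*)v_n, v_n) \to 0$, giving $\operatorname{Im}(Av_n, v_n) \to 0$.

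For the real part, the key step is to introduce a microlocal cutoff $\psi$ that converts $v_n$ into a genuinely compactly supported $L^2$-section on the microsupport of $A$, so that the classical sharp G{\aa}rding inequality can take over. I would pick a self-adjoint $\psi \in \Psi^0_{\phg,c,L^c}(O;\CC^k,\CC^k)$ with $\sigma^0(\psi) = \operatorname{Id}_{\CC^k}$ on a conic neighbourhood of $\supp \sigma^0(A) \subseteq [L^c]$ (surjectivity of the principal symbol map provides $\psi$; replacing it by $(\psi + \psi^*)/2$ makes it self-adjoint while preserving the principal symbol, since $\operatorname{Id}^{\#} = \operatorname{Id}$). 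The symbolic calculus then gives $\sigma^0(\psi A \psi) = \sigma^0(A)$ throughout, whence $A - \psi A \psi \in \Psi^{-1}_{\phg,c,L^c}$, and the splitting
\begin{equation*}
\operatorname{Re}(Av_n, v_n) = \operatorname{Re}(A \psi v_n, \psi v_n) + \operatorname{Re}((A - \psi A \psi)v_n, v_n)
\end{equation*}
reduces matters to the first summand, as the second tends to $0$ by the Banach--Steinhaus argument above. Corollary \ref{cor-for-ope-defondinecomsetfort} applied to $\psi$ shows that $\{\psi v_n\}$ is bounded in $\EE'^0_{L^c;K} \hookrightarrow L^2_K(O;\CC^k)$ and relatively compact in $\EE'^{-1/2}_{L^c;K} \hookrightarrow H^{-1/2}_K(O;\CC^k)$, so $\psi v_n \to 0$ in $H^{-1/2}_K$. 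Viewing $A$ as a global $\Psi$DO on $\RR^m$ through its compactly supported kernel, write $A = \Op(a_0) + R$, where $a_0 \in S^0_c(O \times \RR^m)$ is pointwise Hermitian positive semi-definite for $|\xi|$ large (obtained from a representing symbol of $A$ by truncating the low frequencies using the positivity of $\sigma^0(A)$) and $R \in \Psi^{-1}_c$. The Lax--Nirenberg matrix-valued sharp G{\aa}rding inequality yields $\operatorname{Re}(\Op(a_0) w, w)_{L^2(\RR^m)} \geq -C \|w\|_{H^{-1/2}(\RR^m)}^2$ for $w \in L^2(\RR^m;\CC^k)$, while $|(Rw, w)_{L^2}| \leq \|Rw\|_{H^{1/2}} \|w\|_{H^{-1/2}} \leq C' \|w\|_{H^{-1/2}}^2$ by the $H^{1/2}$--$H^{-1/2}$ duality. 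Substituting $w = \psi v_n$ and passing to $\liminf$ gives the desired bound. The main obstacle will be guaranteeing that the adjusted principal symbol $a_0$ really is pointwise Hermitian positive semi-definite (not merely so modulo lower-order terms), and carefully verifying the principal symbol identities $\sigma^0(\psi A \psi) = \sigma^0(A)$ and $\sigma^0(A^*) = \sigma^0(A)^{\#}$ under the $\iota_E$-identifications that mediate between the vector-bundle and the matrix calculations.
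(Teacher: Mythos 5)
Your argument for the imaginary part is essentially the paper's (and Gérard's): $\sigma^0(A-A^*)=0$, so $A-A^*\in\Psi^{-1}_{\phg,c,L^c}$, and Corollary \ref{cor-for-ope-defondinecomsetfort} upgrades the $\DD'$-convergence of $(A-A^*)v_n$ to convergence in a Fréchet step of $\EE'^0_{L^c}$, after which equicontinuity of the pairings against the bounded family $\{v_n\}$ (Banach--Steinhaus, or directly Lemma \ref{lem-for-equ-subofeddofboonsukls}) gives the limit; only note that the target should be read as $\EE'^0_{L';K}$ with $L'$ a closed conic subset of $L^c$, e.g. $W_0^c\cap\pi^{-1}_{T^*M}(K)$ where $W_0\supseteq L$ is the open conic set on which $A$ has order $-\infty$, exactly as Corollary \ref{cor-for-ope-defondinecomsetfort} provides, since ``$\EE'^0_{L^c;K}$'' with $L^c$ open is not literally one of the defined spaces.

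For the real part you take a genuinely different route. The paper follows Gérard's Lemma 1.2: for each $\delta>0$ it builds an approximate square root, choosing $B'$ with $\sigma^0(B')=(\delta I+a)^{1/2}-\delta^{1/2}I$ and a cutoff operator $A_0$ with scalar principal symbol equal to $1$ near $\supp a$, so that $B^*B=\delta A_0^*A_0+A+R$ with $R$ of order $-1$; the elementary positivity $(B^*Bw,w)=\|Bw\|^2\ge0$ (proved on $\DD$ and extended by density), the compactness Corollary for the $R$-term, boundedness of $(A_0^*A_0v_n,v_n)$, and $\delta\to0^+$ then give the inequality, with no Gårding-type input. You instead insert a self-adjoint microlocal cutoff $\psi$ with $\sigma^0(\psi)=a_0 I$, $a_0=1$ near $\supp\sigma^0(A)$ (so $A-\psi A\psi\in\Psi^{-1}_{\phg,c,L^c}$ and that term dies as before), note that $\psi v_n\to0$ in $H^{-1/2}$ with compact supports by the second half of Corollary \ref{cor-for-ope-defondinecomsetfort}, and then invoke the Lax--Nirenberg matrix-valued sharp G\aa rding inequality for a pointwise PSD representative $a_0\in S^0_c$ of the principal symbol, absorbing the order $-1$ remainder via the $H^{1/2}$--$H^{-1/2}$ duality. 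This is correct: your two worries are not real obstacles, since truncating $\chi(\xi)\,\sigma^0(A)(x,\xi/|\xi|)$ with $0\le\chi\le1$ keeps the symbol Hermitian PSD everywhere, and the identities $\sigma^0(A^*)=\sigma^0(A)^{\#}$ and $\sigma^0(\psi A\psi)=a_0^2\sigma^0(A)=\sigma^0(A)$ reduce, for the trivial bundle $\CC^k_O$, to conjugate transposition and the symbolic calculus already recorded in the paper; the identity $(\psi A\psi v_n,v_n)=(A\psi v_n,\psi v_n)$ is justified by density of $\DD(O;\CC^k)$ in $\DD'^0_L(O;\CC^k)$ exactly as the paper justifies $(B^*Bw,w)\ge0$. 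The trade-off: the paper's square-root trick is self-contained and uses only $\|Bw\|^2\ge0$ plus Corollary \ref{cor-for-ope-defondinecomsetfort}, at the cost of the $\delta$-regularisation and an extra operator $A_0$; your version imports a classical hard theorem (sharp G\aa rding for systems) but then needs no regularisation and makes the mechanism---passage to genuinely compactly supported $L^2$ sections on the microsupport of $A$---more transparent.
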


\begin{proof} The claim is trivial when $L=O\times(\RR^m\backslash\{0\})$. Assume that $L^c\neq\emptyset$. The proof of the first equality is the same as in \cite[Lemma 1.2]{Ger}, but, instead of the Rellich lemma, one employs Corollary \ref{cor-for-ope-defondinecomsetfort}. The proof of the second inequality is also similar to the proof of \cite[Lemma 1.2]{Ger}; we point out only the notable differences. Denote $a:=\sigma^0(A)\in\DD([L^c];\M_k(\CC))$, and for arbitrary but fixed $\delta>0$ set $b:=(\delta I+a)^{1/2}$ and $b':=b-\delta^{1/2}I$ with $I$ being the identity matrix; notice that $\supp b'=\supp a$. Pick $B'\in\Psi^0_{\phg,c,L^c}(O;\CC^k,\CC^k)$ such that $\sigma^0(B')=b'$. The difference is that we define $B$ as $B:=A_0(\delta^{1/2}\operatorname{Id}+B')$ where we choose $A_0\in\Psi^0_{\phg,c,L^c}(O;\CC^k,\CC^k)$ such that $\DD([L^c];\M_k(\CC))\ni \sigma^0(A_0)=a_0I$ with $a_0$ smooth, nonnegative and equal to $1$ on a neighbourhood of $\supp a$. Of course $B\in\Psi^0_{\phg,c,L^c}(O;\CC^k,\CC^k)$ and $\sigma^0(B)=a_0b=a_0(\delta I+a)^{1/2}$ and consequently $\sigma^0(B^*B)=\delta a_0^2I+a_0^2a=\delta a_0^2I+a=\sigma^0(\delta A_0^*A_0+A)$. Hence, there is $R\in\Psi^{-1}_{\phg,c,L^c}(O;\CC^k,\CC^k)$ such that $B^*B=\delta A_0^*A_0+A+R$. In view of Corollary \ref{cor-for-ope-defondinecomsetfort}, $A_0^*A_0,B^*B:\DD'^0_L(O;\CC^k)\rightarrow \EE'^0_{L^c}(O;\CC^k)$ are well-defined and continuous. Notice that $(B^*B\widetilde{u},\widetilde{u})=(B\widetilde{u},B\widetilde{u})\geq 0$, $\widetilde{u}\in\DD'^0_L(O;\CC^k)$. This trivially holds when $\widetilde{u}\in\DD(O;\CC^k)$ and the general case follows by density (as $\EE'^0_{L^c}(O;\CC^k)\subseteq \DD'^0_L(O;\CC^k)$); of course the same holds with $A_0$ in place of $B$. Employing this, in the same way as in \cite[Lemma 1.2]{Ger}, we deduce the second inequality in the lemma.
\end{proof}

We denote by $\mathcal{K}(M;\M_k(\CC))$ the space of continuous functions with compact support with values in $\M_k(\CC)$ and equipped with its standard $(LB)$-space topology; when $k=1$ (the complex-valued case) we simply write $\mathcal{K}(M)$. A Radon measure is a continuous functional on $\mathcal{K}(M;\M_k(\CC))$ (cf. \cite[Chapter 3]{bourbaki}). The Radon measure $\vartheta\in (\mathcal{K}(M;\M_k(\CC)))'$ is said to be positive if $\langle \vartheta,\psi\rangle\geq 0$ for every $\psi\in\mathcal{K}(M;\M_k(\CC))$ which is positive semi-definite at every point. Every $\vartheta\in (\mathcal{K}(M;\M_k(\CC)))'$ is a matrix of elements of $(\mathcal{K}(M))'$, i.e $\vartheta=(\vartheta^{j,l})_{j,l}$, $\vartheta^{j,l}\in(\mathcal{K}(M))'$, and, for $\psi=(\psi_{j,l})_{j,l}\in \mathcal{K}(M;\M_k(\CC))$, the dual pairing is $\langle \vartheta,\psi\rangle=\langle\vartheta^{j,l},\psi_{j,l}\rangle$ (of course, it looks like the trace of the product of $\vartheta$ with the transpose of $\psi$ since that is the dual pairing on the matrices). Each $\vartheta\in(\mathcal{K}(M;\M_k(\CC)))'$ can be viewed as an element of $\mathcal{L}(\mathcal{K}(M),\M_k(\CC))$ by defining $\vartheta(\phi)=(\langle\vartheta^{j,l},\phi\rangle)_{j,l}$, $\phi\in\mathcal{K}(M)$, and this gives topological isomorphism between the strong dual $(\mathcal{K}(M;\M_k(\CC)))'_b$ and $\mathcal{L}_b(\mathcal{K}(M);\M_k(\CC))$; some author choose to define the matrix valued Radon measures as $\mathcal{L}(\mathcal{K}(M),\M_k(\CC))$ (see \cite{Ger}). In this case, the measure is said to be positive if for every nonnegative $\phi\in\mathcal{K}(M)$, the matrix $(\langle\vartheta^{j,l},\phi\rangle)_{j,l}$ is positive semi-definite (see \cite{dur-rod,Ger,rosenberg}). Employing similar technique as in the proof of \cite[Proposition A.1]{Ger}, one can show that this definition of positiveness coincides with the one we give above. We specifically choose to define $(\mathcal{K}(M;\M_k(\CC)))'$ as the space of matrix-valued Radon measures because it is better suited for a generalisation to the vector bundle case; we intentionally put the indices in $(\vartheta^{j,l})_{j,l}$ up to hint at the geometrical picture given in the main theorem (if one thinks of the elements of $E$ as vectors, $\vartheta$ will be a $2$-vector filed, instead of an $(1,1)$-tensor field).\\
\indent We first show our result on the existence of microlocal defect measures in the Euclidean setting. The extensive analysis we have done in the previous sections will now pay off by allowing us to mimic the main ideas of \cite{Ger}.

\begin{proposition}\label{pro-for-mai-resonrspt}
Let $O$ be an open set in $\RR^m$ and $L$ a closed conic subset of $O\times (\RR^m\backslash\{0\})$. Let $u,u_n\in\DD'^0_L(O;\CC^k)$, $n\in\ZZ_+$, be such that $\{u_n\}_{n\in\ZZ_+}$ is bounded in $\DD'^0_L(O;\CC^k)$ and $u_n\rightarrow u$ in $\DD'(O;\CC^k)$. Then there is a subsequence $(u_{n_j})_{j\in\ZZ_+}$ and a positive Radon measure $\vartheta\in (\mathcal{K}([L^c];\M_k(\CC)))'$ such that
\begin{equation}\label{equ-for-pro-resgercomsct}
\lim_{j\rightarrow \infty}(A(u_{n_j}-u),u_{n_j}-u)=\langle \vartheta,\sigma^0(A)\rangle,\quad A\in\Psi^0_{\phg,c,L^c}(O;\CC^k,\CC^k),
\end{equation}
and $\vartheta$ satisfies $\langle\vartheta,\psi^*\rangle=\overline{\langle\vartheta,\psi\rangle}$, $\psi\in\mathcal{K}([L^c];\M_k(\CC))$. Furthermore, $\vartheta$ is unique in the following sense: if $\vartheta'\in (\mathcal{K}([L^c];\M_k(\CC)))'$ is such that \eqref{equ-for-pro-resgercomsct} is valid for $\vartheta'$, then $\vartheta'=\vartheta$.
\end{proposition}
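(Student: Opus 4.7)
The plan is to generalise G\'erard's classical construction, now using the continuity/compactness machinery from Sections \ref{Sec-comp}--\ref{sec-psido-on-despacesanddual}. After replacing $u_n$ by $v_n := u_n - u$ (so $\{v_n\}$ is bounded in $\DD'^0_L(O;\CC^k)$ and $v_n \to 0$ in $\DD'(O;\CC^k)$), I would consider the functionals
$$
\mathfrak{L}_n : \Psi^0_{\phg,c,L^c}(O;\CC^k,\CC^k) \to \CC,\qquad \mathfrak{L}_n(A) := (Av_n, v_n).
$$
These are well-defined because Corollary \ref{cor-for-ope-defondinecomsetfort} says $A:\DD'^0_L(O;\CC^k) \to \EE'^0_{L^c}(O;\CC^k)$ is continuous, while (in this Euclidean, trivial-bundle setting) Theorem \ref{the-for-dua-ofdwitheonmanwithvectbundd} identifies $\EE'^0_{L^c}(O;\CC^k)$ with the anti-dual of $\DD'^0_L(O;\CC^k)$ via the Lebesgue density.

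Next I would establish two key properties of $\{\mathfrak{L}_n\}$. \emph{(a)~Asymptotic dependence only on $\sigma^0(A)$.} If $A,A'\in\Psi^0_{\phg,c,L^c}$ share their principal symbols then $R:=A-A' \in \Psi^{-1}_{\phg,c,L^c}$, and by the second part of Corollary \ref{cor-for-ope-defondinecomsetfort}, $\{Rv_n\}$ is relatively compact in $\EE'^0_{L^c}(O;\CC^k)$. Since $v_n\to 0$ in $\DD'(O;\CC^k)$, the only accumulation point is $0$, whence $Rv_n\to 0$ strongly in the dual of $\DD'^0_L(O;\CC^k)$; pairing with the bounded family $v_n$ yields $\mathfrak{L}_n(A)-\mathfrak{L}_n(A')\to 0$. \emph{(b)~Uniform bound.} Decompose $A=H+iK$ with $H,K$ self-adjoint (so $\sigma^0(H),\sigma^0(K)$ are Hermitian). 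For a fixed compact $K_0\subseteq [L^c]$ choose a self-adjoint $A_{K_0}\in\Psi^0_{\phg,c,L^c}$ with $\sigma^0(A_{K_0})=I$ on a neighbourhood of $K_0$; when $\supp\sigma^0(A)\subseteq K_0$, the operators $\|\sigma^0(H)\|_\infty A_{K_0}\pm H$ have positive semi-definite principal symbol, so Lemma \ref{lem-for-ope-whinonsymbbozerliminfst} gives $\limsup_n |(Hv_n,v_n)|\leq \|\sigma^0(H)\|_\infty \cdot C(K_0)$ with $C(K_0):=\limsup_n (A_{K_0}v_n,v_n)<\infty$; the analogous estimate for $K$ yields
$$
\limsup_n |\mathfrak{L}_n(A)| \leq 2\,C(K_0)\,\|\sigma^0(A)\|_{L^\infty},\qquad \supp\sigma^0(A)\subseteq K_0.
$$

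With (a) and (b) in hand, a standard diagonal extraction finishes. Exhaust $[L^c]$ by compact sets $K_q$ and pick a countable dense family $\{a_q\}\subseteq\Gamma_c([L^c];\M_k(\CC))$; by surjectivity of $\sigma^0$ in \eqref{equ-for-pri-maponpolsymcospbdd} choose $A_q\in\Psi^0_{\phg,c,L^c}$ with $\sigma^0(A_q)=a_q$. Cantor's diagonal procedure gives a subsequence $(n_j)$ such that $\mathfrak{L}_{n_j}(A_q)$ converges for every $q$; by (a), the limit is independent of the lift $A_q$ of $a_q$, and by (b) the induced map $a_q\mapsto \lim_j\mathfrak{L}_{n_j}(A_q)$ is continuous in the $L^\infty$-norm on each compact, hence extends to a continuous linear functional $\vartheta$ on $\mathcal{K}([L^c];\M_k(\CC))$, i.e.\ a matrix-valued Radon measure. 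The Hermitian symmetry $\langle\vartheta,\psi^*\rangle=\overline{\langle\vartheta,\psi\rangle}$ is immediate from $\sigma^0(A^*)=\sigma^0(A)^*$ and $(A^*v_n,v_n)=\overline{(Av_n,v_n)}$. Positivity follows because if $\psi$ is pointwise PSD then one can realise $\psi=\sigma^0(A)$ with $A$ self-adjoint, and Lemma \ref{lem-for-ope-whinonsymbbozerliminfst} gives $\liminf_n\Re\mathfrak{L}_n(A)\geq 0$, while Hermitian symmetry forces the limit to be real. Uniqueness follows because \eqref{equ-for-pro-resgercomsct} pins down $\vartheta$ on the dense subspace $\Gamma_c([L^c];\M_k(\CC))\subseteq \mathcal{K}([L^c];\M_k(\CC))$.

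The main obstacle is the uniform estimate (b): one must obtain a bound in terms of $\|\sigma^0(A)\|_{L^\infty}$ rather than the full operator seminorms of $A$, and this depends crucially on the Rellich-type fact that the ``remainder'' operators in $\Psi^{-1}_{\phg,c,L^c}$ act compactly from $\DD'^0_L$ into $\EE'^0_{L^c}$, together with the $\EE'^0_{L^c}$-$\DD'^0_L$ anti-duality. Once those two pieces of microlocal machinery are in place, the remainder of the proof is a faithful transcription of the G\'erard--Tartar argument.
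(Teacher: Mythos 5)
Your overall strategy is the same as the paper's: pass to $v_n=u_n-u$, use Corollary \ref{cor-for-ope-defondinecomsetfort} both to make $(Av_n,v_n)$ well defined and to see that limits along subsequences depend only on $\sigma^0(A)$, extract a diagonal subsequence along a countable family of symbols, and get continuity, Hermitian symmetry and positivity of $\vartheta$ from Lemma \ref{lem-for-ope-whinonsymbbozerliminfst}. Your uniform bound (b), $\limsup_n|(Av_n,v_n)|\leq 2C(K_0)\|\sigma^0(A)\|_{L^\infty}$ for $\supp\sigma^0(A)\subseteq K_0$, is essentially the paper's continuity estimate for $\vartheta$ (obtained there from $r_aB_j\pm A$). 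Where the two arguments genuinely diverge is in extending \eqref{equ-for-pro-resgercomsct} from the countable family to an arbitrary $A\in\Psi^0_{\phg,c,L^c}(O;\CC^k,\CC^k)$: the paper approximates $\sigma^0(A)$ in the $\mathcal{C}^{\infty}$ topology, extends homogeneously to symbols $\widetilde{a}_l\rightarrow\widetilde{a}$ in $S^0$, and uses the continuity of $a\mapsto\Op(a)$ into $\mathcal{L}_b(L^2(\RR^m))$ through the auxiliary operators $B_j\Op(\widetilde{a})B_j$, whereas your bound (b) permits approximation merely in sup norm, which is closer to G\'erard's original argument and equally viable.

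However, you must actually carry out that last approximation step: constructing $\vartheta$ as the continuous extension of $a_q\mapsto\lim_j(A_qv_{n_j},v_{n_j})$ does not by itself yield convergence of $(Av_{n_j},v_{n_j})$ when $\sigma^0(A)$ lies outside the chosen countable family, and your symmetry and positivity arguments already invoke \eqref{equ-for-pro-resgercomsct} for such general $A$. The fix is within your own toolkit: for $a=\sigma^0(A)$ choose $a_q\rightarrow a$ uniformly with supports in a fixed compact, apply (b) to $A-A_q\in\Psi^0_{\phg,c,L^c}(O;\CC^k,\CC^k)$, and run the resulting three-term estimate. Two further small points: the comparison operator $A_{K_0}$ must have a globally positive semi-definite principal symbol — take $\sigma^0(A_{K_0})=b_0I$ with $b_0\in\DD([L^c])$ nonnegative and equal to $1$ near $K_0$, exactly the paper's $B_j$ — since self-adjointness plus $\sigma^0(A_{K_0})=I$ only near $K_0$ is not enough to apply Lemma \ref{lem-for-ope-whinonsymbbozerliminfst} to $\|\sigma^0(H)\|_{\infty}A_{K_0}\pm H$; and in the positivity step you can only realise $\psi=\sigma^0(A)$ for smooth $\psi$, so general positive semi-definite $\psi\in\mathcal{K}([L^c];\M_k(\CC))$ requires the density argument the paper also uses.
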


\begin{proof} The claim is trivial when $L=O\times(\RR^m\backslash\{0\})$. Assume that $L^c\neq\emptyset$. Since for every $a\in\DD([L^c];\M_k(\CC))$ there is $A\in\Psi^0_{\phg,c,L^c}(O;\CC^k,\CC^k)$ such that $\sigma^0(A)=a$, the uniqueness follows from density. We show the existence. For brevity in notation, set $v_n:=u_n-u$, $n\in\ZZ_+$. One can find closed conic subsets $\widetilde{L}_j\neq\emptyset$, $j\in\ZZ_+$, of $O\times (\RR^m\backslash\{0\})$ such that $\pr_1(\widetilde{L}_j)$ is compact and $\widetilde{L}_j\subseteq \operatorname{int}\widetilde{L}_{j+1}$, $j\in\ZZ_+$, and $\bigcup_{j\in\ZZ_+}\widetilde{L}_j=L^c$ (cf. \eqref{inc-set-com-exchscoses}). Hence $[\widetilde{L}_j]$, $j\in\ZZ_+$, are compact. Since the closure of $\DD_{[\widetilde{L}_{j+1}]}([L^c];\M_k(\CC))$ in $\mathcal{C}_{[\widetilde{L}_{j+1}]}([L^c];\M_k(\CC))$ contains $\mathcal{C}_{[\widetilde{L}_j]}([L^c];\M_k(\CC))$, there is a countable dense subset $D_j$ of $\DD_{[\widetilde{L}_{j+1}]}([L^c];\M_k(\CC))$ such that its closure in $\mathcal{C}_{[\widetilde{L}_{j+1}]}([L^c];\M_k(\CC))$ contains $\mathcal{C}_{[\widetilde{L}_j]}([L^c];\M_k(\CC))$. Set $D:=\bigcup_{j\in\ZZ_+} D_j$ and denote $\widetilde{D}:=\operatorname{span}(D\cup\{a\,|\, a^*\in D\})$. Let $a\in D$ and pick $A\in \Psi^0_{\phg,c,L^c}(O;\CC^k,\CC^k)$ such that $\sigma^0(A)=a$. Since $\{v_n\}_{n\in\ZZ_+}$ is bounded in $\DD'^0_L(O;\CC^k)$, $\sup_{n\in\ZZ_+}|(Av_n,v_n)|<\infty$ and hence there is a subsequence $(v_{n_l})_{l\in\ZZ_+}$ such that $(Av_{n_l},v_{n_l})$ converges to some $\vartheta(a)\in\CC$. In view of Corollary \ref{cor-for-ope-defondinecomsetfort}, the same holds for any other $\widetilde{A}\in\Psi^0_{\phg,c,L^c}(O;\CC^k,\CC^k)$ satisfying $\sigma^0(\widetilde{A})=a$. By employing diagonal extraction, we can assume the subsequence $(v_{n_l})_{l\in\ZZ_+}$ is the same for all $a\in D$. Employing linearity and involutions, we deduce the existence of a linear functional
$$
\vartheta:\widetilde{D}\rightarrow \CC,\quad \vartheta(a)=\lim_{l\rightarrow \infty} (Av_{n_l},v_{n_l}),\,\,\mbox{with}\,\, \sigma^0(A)=a,
$$
satisfying $\vartheta(a^*)=\overline{\vartheta(a)}$. We equip $\widetilde{D}$ with the topology induced by $\mathcal{K}([L^c];\M_k(\CC))$. We claim that $\vartheta$ is continuous. For each $j\in\ZZ_+$, set $\widetilde{D}_j:=\widetilde{D}\cap \mathcal{C}_{[\widetilde{L}_j]}([L^c];\M_k(\CC))$ and equip it with the topology induced by $\mathcal{C}_{[\widetilde{L}_j]}([L^c];\M_k(\CC))$. By construction, $\mathcal{C}_{[\widetilde{L}_j]}([L^c];\M_k(\CC))$ is contained in the closure of $\widetilde{D}_{j+1}$ in $\mathcal{C}_{[\widetilde{L}_{j+1}]}([L^c];\M_k(\CC))$ and \cite[Corollary 1, p. 164]{hus-lfspp} implies that $\displaystyle\lim_{\substack{\longrightarrow\\ j\rightarrow\infty}} \widetilde{D}_j=\widetilde{D}$ topologically; whence, it suffices to show that $\vartheta$ is continuous on $\widetilde{D}_j$ for each $j\in\ZZ_+$. For every $j\in\ZZ_+$, pick nonnegative $b_j\in\DD_{[\widetilde{L}_{j+1}]}([L^c])$ such that $b_j=1$ on a neighbourhood of $[\widetilde{L}_j]$ and choose $B_j\in\Psi^0_{\phg,c,L^c}(O;\CC^k,\CC^k)$ such that $\sigma^0(B_j)=b_jI$, where $I$ is the identity matrix. Let $a\in\widetilde{D}_j$ be self-adjoint at every point and denote $r_a:=\|a\|_{\mathcal{C}_{[\widetilde{L}_j]}([L^c];\M_k(\CC))}$. Pick $A\in\Psi^0_{\phg,c,L^c}(O;\CC^k,\CC^k)$ such that $\sigma^0(A)=a$. Lemma \ref{lem-for-ope-whinonsymbbozerliminfst} is applicable for $r_aB_j-A\in\Psi^0_{\phg,c,L^c}(O;\CC^k,\CC^k)$, so we infer $\vartheta(a)\leq C_jr_a$ with $C_j:=\sup_{l\in\ZZ_+}|(B_jv_{n_l},v_{n_l})|<\infty$ (of course, $\vartheta(a)\in\RR$). Applying this with $-a$ in place of $a$, we deduce $|\vartheta(a)|\leq C_jr_a$. When $a\in\widetilde{D}_j$ is arbitrary, we apply the above to $(a+a^*)/2\in\widetilde{D}_j$ and $(a-a^*)/(2i)\in\widetilde{D}_j$ to conclude $|\vartheta(a)|\leq 2C_j r_a$. Consequently, $\vartheta:\widetilde{D}\rightarrow \CC$ is continuous; whence $\vartheta\in(\mathcal{K}([L^c];\M_k(\CC)))'$.\\
\indent To show \eqref{equ-for-pro-resgercomsct}, let $A\in \Psi^0_{\phg,c,L^c}(O;\CC^k,\CC^k)$. There is $j\geq 2$ such that $a:=\sigma^0(A)\in\DD_{[\widetilde{L}_j]}([L^c];\M_k(\CC))$. By construction, there are $a_l\in D_{j-1}$, $l\in\ZZ_+$, such that $a_l\rightarrow a$ in $\DD_{[\widetilde{L}_j]}([L^c];\M_k(\CC))$. Pick $\chi\in\mathcal{C}^{\infty}(\RR^m)$ such that $0\leq \chi\leq 1$, $\chi(\xi)=0$ when $|\xi|\leq 1/4$ and $\chi(\xi)=1$ when $|\xi|\geq 1/2$. Define $\widetilde{a}(x,\xi)=\chi(\xi)a(x,\xi/|\xi|)$ and $\widetilde{a}_l(x,\xi)=\chi(\xi)a_l(x,\xi/|\xi|)$, $l\in\ZZ_+$, $x\in O$, $\xi\in\RR^m$. Of course, $\widetilde{a},\widetilde{a}_l\in S^0_c(O\times\RR^m;\CC^k,\CC^k)$, $l\in\ZZ_+$. Notice that both $\{B_jv_{n_h}\}_{h\in\ZZ_+}$ and $\{B^*_jv_{n_h}\}_{h\in\ZZ_+}$ are bounded subsets of $\EE'^0_{L^c}(O;\CC^k)$ and thus also of $L^2(\RR^m)$ (see \eqref{con-inc-for-espainsobcomspdes}). Consequently
\begin{multline*}
|(B_j\Op(\widetilde{a})B_jv_{n_h},v_{n_h})-(B_j\Op(\widetilde{a}_l)B_jv_{n_h},v_{n_h})|\\
=|(\Op(\widetilde{a}-\widetilde{a}_l)B_jv_{n_h},B^*_jv_{n_h})|\leq C\sup_{|\alpha|+|\beta|\leq q}\|\langle\cdot\rangle^{|\alpha|} \partial^{\alpha}_{\xi}\partial^{\beta}_x(\widetilde{a}-\widetilde{a}_l)\|_{L^{\infty}(\RR^{2m};\M_k(\CC))},\,\,\, h,l\in\ZZ_+,
\end{multline*}
and, as $\widetilde{a}_l\rightarrow \widetilde{a}$ in $S^0(\RR^{2m};\CC^k,\CC^k)$, we deduce
$$
\lim_{l\rightarrow\infty}\sup_{h\in\ZZ_+}|(B_j\Op(\widetilde{a})B_jv_{n_h},v_{n_h})-(B_j\Op(\widetilde{a}_l)B_jv_{n_h},v_{n_h})|=0.
$$
For $\varepsilon>0$ there is $l_{\varepsilon}\in\ZZ_+$ such that for all $h\in\ZZ_+$ and $l\geq l_{\varepsilon}$ it holds that
\begin{gather*}
\operatorname{Re}(B_j\Op(\widetilde{a}_l)B_jv_{n_h},v_{n_h})-\varepsilon\leq \operatorname{Re}(B_j\Op(\widetilde{a})B_jv_{n_h},v_{n_h})\leq \operatorname{Re}(B_j\Op(\widetilde{a}_l)B_jv_{n_h},v_{n_h})+\varepsilon,\\
\operatorname{Im}(B_j\Op(\widetilde{a}_l)B_jv_{n_h},v_{n_h})-\varepsilon\leq \operatorname{Im}(B_j\Op(\widetilde{a})B_jv_{n_h},v_{n_h})\leq \operatorname{Im}(B_j\Op(\widetilde{a}_l)B_jv_{n_h},v_{n_h})+\varepsilon.
\end{gather*}
Since $\sigma^0(B_j\Op(\widetilde{a}_l)B_j)=a_l$, by what we proved above, we infer $(B_j\Op(\widetilde{a}_l)B_jv_{n_h},v_{n_h})\rightarrow \langle\vartheta,a_l\rangle$, as $h\rightarrow \infty$, and consequently
\begin{align*}
\operatorname{Re}\langle\vartheta,a_l\rangle-\varepsilon&\leq \liminf_{h\rightarrow\infty}\operatorname{Re}(B_j\Op(\widetilde{a})B_jv_{n_h},v_{n_h})\\
&\leq \limsup_{h\rightarrow\infty}\operatorname{Re}(B_j\Op(\widetilde{a})B_jv_{n_h},v_{n_h})\leq \operatorname{Re}\langle\vartheta,a_l\rangle+\varepsilon,\quad l\geq l_{\varepsilon}\\
\operatorname{Im}\langle\vartheta,a_l\rangle-\varepsilon&\leq \liminf_{h\rightarrow\infty}\operatorname{Im}(B_j\Op(\widetilde{a})B_jv_{n_h},v_{n_h})\\
&\leq \limsup_{h\rightarrow\infty}\operatorname{Im}(B_j\Op(\widetilde{a})B_jv_{n_h},v_{n_h}) \leq \operatorname{Im}\langle\vartheta,a_l\rangle+\varepsilon,\quad l\geq l_{\varepsilon}.
\end{align*}
We take the limit as $l\rightarrow \infty$. Since $\langle \vartheta,a_l\rangle\rightarrow \langle \vartheta,a\rangle$ and as $\varepsilon>0$ was arbitrary, we deduce
$$
\lim_{h\rightarrow\infty}(B_j\Op(\widetilde{a})B_jv_{n_h},v_{n_h})=\langle\vartheta,a\rangle.
$$
As $\sigma^0(B_j\Op(\widetilde{a})B_j)=a=\sigma^0(A)$, we have $B_j\Op(\widetilde{a})B_j-A\in\Psi^{-1}_{\phg,c,L^c}(O;\CC^k,\CC^k)$ and Corollary \ref{cor-for-ope-defondinecomsetfort} yields that $(Av_{n_h},v_{n_h})\rightarrow (\vartheta,a)$. Now, Corollary \ref{cor-for-ope-defondinecomsetfort} also shows that $(A'v_{n_h},v_{n_h})\rightarrow (\vartheta,a)$ for any other $A'\in\Psi^0_{\phg,c,L^c}(O;\CC^k,\CC^k)$ satisfying $\sigma^0(A')=a$. This completes the proof of \eqref{equ-for-pro-resgercomsct}.\\
\indent Finally, $\langle\vartheta,\psi^*\rangle=\overline{\langle\vartheta,\psi\rangle}$, $\psi\in\mathcal{K}([L^c];\M_k(\CC))$, holds true since it is true for $\psi\in\widetilde{D}$. Similarly, $\langle \vartheta,\psi\rangle\geq0$ holds true for all $\psi\in\DD([L^c];\M_k(\CC))$ which are positive semi-definite at every point in view of \eqref{equ-for-pro-resgercomsct} and Lemma \ref{lem-for-ope-whinonsymbbozerliminfst} and the general case when $\psi\in\mathcal{K}([L^c];\M_k(\CC))$ and it is positive semi-definite at every point follows by density.
\end{proof}

We say that $T\in \pi^*_{S^*M}L(E,E^{\#})_{(p,[\xi])}$ is \textit{positive semi-definite} if $T=T^{\#}$ and $T(e)(e)$ is a nonnegative density on $T_pM$ for all $e\in E_p$ (of course, after identifying $\pi^*_{S^*M}L(E,E^{\#})_{(p,[\xi])}=\{(p,[\xi])\}\times \mathcal{L}(E_p,E^{\#}_p)$ with $\mathcal{L}(E_p,E^{\#}_p)$), i.e. $T(e)(e)(\mathrm{v}_1,\ldots,\mathrm{v}_m)\in[0,\infty)$ for all $\mathrm{v}_1,\ldots,\mathrm{v}_m\in T_pM$. The main result of the subsection is the following theorem.

\begin{theorem}\label{ger-the-com-onmanifol}
Let $L$ be a closed conic subset of $T^*M\backslash0$ and let $u,u_n\in\DD'^0_L(M;E)$, $n\in\ZZ_+$. If $\{u_n\}_{n\in\ZZ_+}$ is bounded in $\DD'^0_L(M;E)$ and $u_n\rightarrow u$ in $\DD'(M;E)$, then there is a subsequence $(u_{n_j})_{j\in\ZZ_+}$ and $\vartheta\in (\Gamma^0_c(\pi_{S^*M}^*L(E,E^{\#})_{[L^c]}))'$ such that
\begin{equation}\label{ide-for-the-ofgermars}
\lim_{j\rightarrow \infty}(\iota_{0,E}(A(u_{n_j}-u)),u_{n_j}-u)=\langle \vartheta,\sigma^0(A)\rangle,\quad A\in\Psi^0_{\phg,c,L^c}(M;E,E^{\#}),
\end{equation}
and $\vartheta$ satisfies $\langle\vartheta,\psi^{\#}\rangle=\overline{\langle\vartheta,\psi\rangle}$, $\psi\in\Gamma^0_c(\pi_{S^*M}^*L(E,E^{\#})_{[L^c]})$, and $\langle\vartheta,\psi\rangle\geq0$ if $\psi\in\Gamma^0_c(\pi_{S^*M}^*L(E,E^{\#})_{[L^c]})$ is positive semi-definite at every point. Furthermore, $\vartheta$ is unique in the following sense: if \eqref{ide-for-the-ofgermars} is valid for $\vartheta'\in (\Gamma^0_c(\pi_{S^*M}^*L(E,E^{\#})_{[L^c]}))'$ then $\vartheta'=\vartheta$.
\end{theorem}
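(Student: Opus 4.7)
My plan is to lift the Euclidean proof of Proposition \ref{pro-for-mai-resonrspt} to the vector bundle setting by using the continuity and compactness results from Section \ref{sec-psido-on-despacesanddual}. Set $v_n:=u_n-u$. For any $A\in\Psi^0_{\phg,c,L^c}(M;E,E^{\#})$, Corollary \ref{cor-for-ope-defondinecomsetfort} (applied to an open conic neighbourhood $W\supseteq L$ on which $A$ is of order $-\infty$) shows that $A:\DD'^0_L(M;E)\to \EE'^0_{L^c}(M;E^{\#})$ is well-defined and continuous; combined with Theorem \ref{the-for-dua-ofdwitheonmanwithvectbundd} and Lemma \ref{lem-for-equ-subofeddofboonsukls}, the sesquilinear pairing $(\iota_{0,E}(\cdot),\cdot)$ is hypocontinuous on $\EE'^0_{L^c}(M;E^{\#})\times \DD'^0_L(M;E)$, so $\{(\iota_{0,E}(Av_n),v_n)\}_n$ is bounded in $\CC$. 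Moreover, if $R\in\Psi^{-1}_{\phg,c,L^c}(M;E,E^{\#})$, Corollary \ref{cor-for-ope-defondinecomsetfort} gives that $\{Rv_n\}$ is relatively compact in $\EE'^0_{L^c}(M;E^{\#})$, and since $v_n\to 0$ in $\DD'(M;E)$ the only possible limit is $0$; hence $Rv_n\to 0$ in $\EE'^0_{L^c}(M;E^{\#})$ and the equicontinuity coming from hypocontinuity yields $(\iota_{0,E}(Rv_n),v_n)\to 0$.

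Next I would extract the common subsequence. Each Banach step $\Gamma^0_K(\pi_{S^*M}^*L(E,E^{\#})_{[L^c]})$ of the strict $(LB)$-space $\Gamma^0_c(\pi_{S^*M}^*L(E,E^{\#})_{[L^c]})$ is separable, so using the surjectivity of \eqref{equ-for-pri-maponpolsymcospbdd} I can pick a countable family $\{a_\ell\}_{\ell\in\ZZ_+}$ of smooth sections, closed under $a\mapsto a^{\#}$, whose intersection with each step is dense in it, and lift each $a_\ell$ to $A_\ell\in\Psi^0_{\phg,c,L^c}(M;E,E^{\#})$. A diagonal extraction produces $(v_{n_j})_j$ for which $\vartheta_0(a_\ell):=\lim_j(\iota_{0,E}(A_\ell v_{n_j}),v_{n_j})$ exists for every $\ell$; by the $\Psi^{-1}$ observation the number depends only on $a_\ell$, and the identity $(\iota_{0,E}(A^*u),v)=\overline{(\iota_{0,E}(Av),u)}$ from the preamble together with $\sigma^0(A^*)=\sigma^0(A)^{\#}$ yields $\vartheta_0(a_\ell^{\#})=\overline{\vartheta_0(a_\ell)}$.

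The crux is the manifold analogue of Lemma \ref{lem-for-ope-whinonsymbbozerliminfst}: for $A\in\Psi^0_{\phg,c,L^c}(M;E,E^{\#})$ with $\sigma^0(A)$ pointwise positive semi-definite, $\liminf_j\operatorname{Re}(\iota_{0,E}(Av_{n_j}),v_{n_j})\geq 0$ and $\lim_j\operatorname{Im}(\iota_{0,E}(Av_{n_j}),v_{n_j})=0$. I would prove this by a partition-of-unity reduction: cover the compact projection of the kernel of $A$ by finitely many charts $(O_\mu,x_\mu)$ over which $E$ trivialises, pick a nonnegative partition of unity $(\chi_\mu)$ with $\sum\chi_\mu^2=1$ on a neighbourhood of this projection, and express the global sesquilinear form as a sum of local ones involving the matrix representations of $\chi_\mu A\chi_\nu$ acting on the local $\CC^k$-valued coordinates $(\chi_\nu v_{n_j})_{\Phi_{x_\nu}}$; assembling these into a single block-matrix pseudo-differential operator on $\CC^{k\Lambda}$ over a common chart of an appropriate auxiliary neighbourhood (or, more directly, repeating the partition-of-unity argument summand by summand) reduces the estimate to Lemma \ref{lem-for-ope-whinonsymbbozerliminfst} applied to a matrix symbol that remains positive semi-definite. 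From this lemma, choosing for any compact $K\subset [L^c]$ a fixed $B_K\in\Psi^0_{\phg,c,L^c}(M;E,E^{\#})$ whose principal symbol equals a fibrewise positive ``cutoff'' on a neighbourhood of $K$ and applying the inequality to $\|a\|_{\Gamma^0_K} B_K\pm(a+a^{\#})/2$ and $\|a\|_{\Gamma^0_K}B_K\pm(a-a^{\#})/(2i)$, one obtains $|\vartheta_0(a)|\leq C_K\|a\|_{\Gamma^0_K}$ for $a$ in the span of the dense family with $\supp a\subseteq K$; $\vartheta_0$ therefore extends uniquely to $\vartheta\in(\Gamma^0_c(\pi_{S^*M}^*L(E,E^{\#})_{[L^c]}))'$ enjoying the claimed conjugation and positivity.

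Finally, for arbitrary $A\in\Psi^0_{\phg,c,L^c}(M;E,E^{\#})$ one approximates $\sigma^0(A)$ in a suitable $\Gamma^0_K$ by $a_\ell$ from the dense family; the same uniform estimate bounds $\limsup_j|(\iota_{0,E}((A-A_\ell)v_{n_j}),v_{n_j})|$ by $C_K\|\sigma^0(A)-a_\ell\|_{\Gamma^0_K}$, and passing to the limit yields \eqref{ide-for-the-ofgermars}; the uniqueness follows at once from the density of $\sigma^0(\Psi^0_{\phg,c,L^c}(M;E,E^{\#}))$ in $\Gamma^0_c(\pi_{S^*M}^*L(E,E^{\#})_{[L^c]})$. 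The main technical obstacle I expect is executing the bundle-level positivity step cleanly, since the composition $A^*A$ used in the Euclidean proof of Lemma \ref{lem-for-ope-whinonsymbbozerliminfst} is not intrinsically defined for $A:\Gamma_c(E)\to\Gamma(E^{\#})$; the partition-of-unity transfer to Euclidean matrix operators avoids this issue but requires careful bookkeeping to ensure that the cross-terms produced by $\sum_\mu\chi_\mu^2$ either cancel algebraically against remainders of order $-1$ (which vanish in the limit) or are absorbed into a single positive block operator whose Euclidean principal symbol remains pointwise positive semi-definite.
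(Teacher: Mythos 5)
Your proposal is correct, but it is organized differently from the paper's proof. The paper never formulates a manifold analogue of Lemma \ref{lem-for-ope-whinonsymbbozerliminfst}: it applies the full Euclidean statement, Proposition \ref{pro-for-mai-resonrspt}, chart by chart to the local coordinate sequences $(v^1_{n,\mu},\ldots,v^k_{n,\mu})$, extracts one diagonal subsequence, glues the resulting local matrix-valued positive Radon measures $\widetilde{\vartheta}_{\mu}$ into $\vartheta$ by a partition of unity, and only afterwards verifies \eqref{ide-for-the-ofgermars} for a general $A$ by splitting it as $\sum_{\mu}\chi_{\mu}A\chi'_{\mu}$ plus operators with smooth compactly supported kernels. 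You instead build $\vartheta$ globally: diagonal extraction over a countable family of global symbols, a global G\aa rding-type bound coming from a bundle version of Lemma \ref{lem-for-ope-whinonsymbbozerliminfst}, then continuity of the limit functional and a density argument for general $A$. Both routes rest on the same localization machinery (Propositions \ref{lem-for-top-imebdofmapscs} and \ref{res-for-den-ope-map-thagivdes}, Corollary \ref{cor-for-ope-defondinecomsetfort}); what the paper's route buys is that $\vartheta$ comes equipped, essentially for free, with coordinate representations as genuine positive matrix measures (Remark \ref{rem-for-mea-posdefonskt}), which are reused in the proof of Theorem \ref{com-cpc-the-formanbunflskslt}, while your route avoids the gluing bookkeeping and delivers directly the uniform estimate $\limsup_j|(\iota_{0,E}(Av_{n_j}),v_{n_j})|\leq C_K\|\sigma^0(A)\|$ that drives the approximation step.

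Two remarks on the step you flag as the main obstacle, which does close. The cleanest execution is the diagonal decomposition: choosing nonnegative $\chi_{\mu}\in\DD(O_{\mu})$ with $\sum_{\mu}\chi_{\mu}^2=1$ on a neighbourhood of the projections of the kernel support of $A$, one has $A-\sum_{\mu}\chi_{\mu}A\chi_{\mu}\in\Psi^{-1}_{\phg,c,L^c}(M;E,E^{\#})$ (the difference is $\sum_{\mu}\chi_{\mu}[A,\chi_{\mu}]$ modulo smoothing), so it contributes nothing in the limit; each $\chi_{\mu}A\chi_{\mu}$ has kernel supported in $O_{\mu}\times O_{\mu}$ and, in the frames induced by a single trivialisation, localizes to a Euclidean matrix operator whose principal symbol is the matrix of $\chi_{\mu}^2\,\sigma^0(A)$, which is positive semi-definite in the ordinary sense; Lemma \ref{lem-for-ope-whinonsymbbozerliminfst} then applies summand by summand, and superadditivity of $\liminf$ (together with the vanishing of the imaginary parts) gives the global inequality — no cross terms or block operators are needed. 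Finally, make explicit that defining the reference operators $B_K$ and the norm in your G\aa rding bound requires an auxiliary fibrewise positive section of $\pi^*_{S^*M}L(E,E^{\#})$ (e.g. from a Hermitian metric on $E$ and a positive density on $M$), and that in the countable dense family the approximants of a symbol supported in $K$ should be allowed supports in a slightly larger compact, exactly as in the nested construction $D_j\subseteq\DD_{[\widetilde{L}_{j+1}]}([L^c];\M_k(\CC))$ used in Proposition \ref{pro-for-mai-resonrspt}; with these adjustments your argument is complete.
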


\begin{proof} The proof of the uniqueness is the same is in Proposition \ref{pro-for-mai-resonrspt}. We show the existence. For brevity in notation, set $v_n:=u_n-u$, $n\in\ZZ_+$. Pick a countable locally finite family of relatively compact coordinate charts $(O_{\mu},x_{\mu})$, $\mu\in\ZZ_+$, that cover $M$ such that, for each $\mu\in\ZZ_+$, $E$ locally trivialises over $O_{\mu}$ via $\Phi_{\mu}:=\Phi_{x_{\mu}}:\pi_E^{-1}(O_{\mu})\rightarrow O_{\mu}\times \CC^k$. We denote by $(s_{\mu,1},\ldots,s_{\mu,k})$ and $(\sigma^1_{\mu},\ldots,\sigma^k_{\mu})$ the local frames induced by $\Phi_{\mu}$ for $E$ and $E^{\#}$ over $O_{\mu}$ respectively; of course $\sigma^j_{\mu}(zs_{\mu,l})=\overline{z}\delta^j_l\lambda^{x_{\mu}}$, $z\in\CC$. Denoting by $(s'^1_{\mu},\ldots,s'^k_{\mu})$ the frame for $E'$ dual to $(s_{\mu,1},\ldots,s_{\mu,k})$, we see that $(s'^j_{\mu}\otimes \sigma^l_{\mu})_{j,l}$ is a local frame for $L(E,E^{\#})$ over $O_{\mu}$; for $e\in E_p$, $(s'^j_{\mu}\otimes\sigma^l_{\mu})_p(e)=s'^j_{\mu,p}(e)\sigma^l_{\mu,p}\in E^{\#}_p$. As standard, $(\pi^*_{S^*M}(s'^j_{\mu}\otimes\sigma^l_{\mu}))_{j,l}$ is the pullback local frame for $\pi^*_{S^*M}L(E,E^{\#})$ over $\pi^{-1}_{S^*M}(O_{\mu})$. For each $\mu\in\ZZ_+$, set $L_{\mu}:=\kappa_{\mu}(\pi_{T^*M}^{-1}(O_{\mu})\cap L)$ with $\kappa_{\mu}$ as in \eqref{tri-cot-bun-coordindms}; $L_{\mu}$ is a closed conic subset of $x_{\mu}(O_{\mu})\times(\RR^m\backslash\{0\})$. Denote $v^j_{n,\mu}:=v^j_{n,\Phi_{x_\mu}}\in\DD'^0_{L_{\mu}}(x_{\mu}(O))$, $j=1,\ldots,k$, $\mu\in\ZZ_+$, $n\in\ZZ_+$; see Proposition \ref{lem-for-top-imebdofmapscs}. Proposition \ref{pro-for-mai-resonrspt} is applicable to any subsequence of $\widetilde{v}_{n,\mu}:=(v^1_{n,\mu},\ldots,v^k_{n,\mu})\in\DD'^0_{L_{\mu}}(x_{\mu}(O_{\mu});\CC^k)$, $n\in\ZZ_+$, for each $\mu\in\ZZ_+$. We apply the proposition together with a standard diagonal argument to find a subsequence $(v_{n_h})_{h\in\ZZ_+}$ and positive Radon measures $\widetilde{\vartheta}_{\mu}=(\widetilde{\vartheta}^{l,j}_{\mu})_{l,j}\in (\mathcal{K}([(L_{\mu})^c];\M_k(\CC)))'$, $\mu\in\ZZ_+$, such that
\begin{equation}\label{equ-for-lim-ofopexmeaofs}
\lim_{h\rightarrow \infty}(\widetilde{A} \widetilde{v}_{n_h,\mu},\widetilde{v}_{n_h,\mu})=\langle\widetilde{\vartheta}_{\mu},\sigma^0(\widetilde{A})\rangle,\quad \widetilde{A}\in\Psi^0_{\phg,c,(L_{\mu})^c}(x_{\mu}(O_{\mu});\CC^k,\CC^k).
\end{equation}
We are going to glue together the $\widetilde{\vartheta}_{\mu}$'s to a continuous functional on $\Gamma^0_c(\pi_{S^*M}^*L(E,E^{\#})_{[L^c]})$. Choose a smooth partition of unity $(\chi_{\mu})_{\mu\in\ZZ_+}$ subordinated to $(O_{\mu})_{\mu\in\ZZ_+}$ and, for each $\mu\in\ZZ_+$, pick nonnegative $\chi'_{\mu}\in\DD(O_{\mu})$ such that $\chi'_{\mu}=1$ on a neighbourhood of $\supp\chi_{\mu}$. For brevity in notation, we set $\widetilde{\chi}_{\mu}:=\chi_{\mu}\circ x_{\mu}^{-1}$ and $\widetilde{\chi}'_{\mu}:=\chi'_{\mu}\circ x_{\mu}^{-1}$. Let $\hat{\kappa}_{\mu}$ be the diffeomorphism
\begin{equation}\label{dif-for-cos-bunovermanc}
\hat{\kappa}_{\mu}:\pi_{S^*M}^{-1}(O_{\mu})\rightarrow x_{\mu}(O_{\mu})\times \mathbb{S}^{m-1},\, \hat{\kappa}_{\mu}(p,[\xi_j dx^j|_p]):=(x_{\mu}(p),\hat{\xi}_1,\ldots,\hat{\xi}_m),
\end{equation}
where $\hat{\xi}_jdx^j|_p\in[\xi_jdx^j|_p]$ is such that $(\hat{\xi}_1,\ldots,\hat{\xi}_m)\in\mathbb{S}^{m-1}$. We define
\begin{gather*}
\vartheta:\Gamma_c^0(\pi_{S^*M}^*L(E,E^{\#})_{[L^c]})\rightarrow \CC,\, \langle \vartheta,\psi\rangle:=\sum_{\mu\in\ZZ_+}\langle \widetilde{\vartheta}^{l,j}_{\mu},\widetilde{\chi}_{\mu}\,\psi_{\mu,l,j}\circ \hat{\kappa}^{-1}_{\mu}\rangle,\quad \mbox{where}\\
\psi\in\Gamma^0_c(\pi_{S^*M}^*L(E,E^{\#})_{[L^c]}),\, \psi_{|\pi_{S^*M}^{-1}(O_{\mu})}=\psi_{\mu,l,j}\pi^*_{S^*M}(s'^j_{\mu}\otimes \sigma^l_{\mu}).
\end{gather*}
It is straightforward to verify that $\vartheta$ is well-defined and continuous (notice that $\supp(\widetilde{\chi}_{\mu}\,\psi_{\mu,l,j}\circ\hat{\kappa}_{\mu}^{-1})\subset\subset [(L_{\mu})^c]$). The fact $\langle\vartheta,\psi^{\#}\rangle=\overline{\langle \vartheta,\psi\rangle}$ follows from the fact that $\psi^{\#}_{|\pi^{-1}_{S^*M}(O_{\mu})}=\overline{\psi_{\mu,j,l}}\pi^*_{S^*M}(s'^j_{\mu}\otimes \sigma^l_{\mu})$ and the corresponding fact about $\widetilde{\vartheta}_{\mu}$ from Proposition \ref{pro-for-mai-resonrspt}. Furthermore, if $\psi$ is positive semi-definite at every point, then $\langle\vartheta,\psi\rangle\geq 0$ since $\widetilde{\vartheta}_{\mu}$, $\mu\in \ZZ_+$, are positive Radon measures.\\
\indent It remains to show that $\vartheta$ satisfies \eqref{ide-for-the-ofgermars}. Let $A\in\Psi^0_{\phg,c,L^c}(M;E;E^{\#})$ and set $\hat{a}:=\sigma^0(A)\in\Gamma_c(\pi_{S^*M}^*L(E,E^{\#})_{[L^c]})$. Fix an open neighbourhood $\mathcal{U}$ of the zero section in $T^*M$ such that $\mathcal{U}\cap \pi^{-1}_{T^*M}(K)$ is relatively compact for any $K\subset\subset M$ and if $(p,\xi)\not\in\mathcal{U}$, then $(p,t\xi)\not\in\mathcal{U}$, for all $t>1$; e.g. $\mathcal{U}:=\bigcup_{\mu\in\ZZ_+}\{(p,\xi_l dx^l_{\mu}|_p)\in T^*O_{\mu}\,|\, \chi_{\mu}(p)>0,\, \sum_{j=1}^m \xi_j^2<1\}$. We modify the pullback of $\hat{a}$ under the natural map $T^*M\backslash0\rightarrow S^*M$ in $\mathcal{U}$ so as to become a smooth section $a:T^*M\rightarrow \pi^*_{T^*M}L(E,E^{\#})$. Then $a\in S^0_{\loc}(T^*M;E,E^{\#})$ and $a\in\boldsymbol{\sigma}^0(A)$. For each $\mu\in\ZZ_+$, we have $a_{|T^*O_{\mu}}=\kappa^*_{\mu}(a_{\mu,l,j})\pi^*_{T^*M}(s'^j_{\mu}\otimes \sigma^l_{\mu})$ with $a_{\mu,l,j}\in S^0_{\loc}(x_{\mu}(O_{\mu})\times \RR^m)$, $l,j=1,\ldots,k$, and $(\pi^*_{T^*M}(s'^j_{\mu}\otimes \sigma^l_{\mu}))_{j,l}$ is the pullback local frame for $\pi_{T^*M}^*L(E,E^{\#})$ over $T^*O_{\mu}$. Of course, $a_{\mu,l,j}$ is positively homogeneous of degree $0$ outside of $\kappa_{\mu}(\pi_{T^*M}^{-1}(O_{\mu})\cap \mathcal{U})$. For each $\mu\in\ZZ_+$, there are $\{\widetilde{a}_{\mu,l,j}\}_{l,j}\subseteq S^0_{\loc}(x_{\mu}(O_{\mu})\times \RR^m)$ such that $\widetilde{a}_{\mu,l,j}$, $l,j=1,\ldots,k$, are of order $-\infty$ in a conic neighbourhood of every point of $L_{\mu}$ and $\widetilde{T}_{\mu,l,j}\in\Psi^{-\infty}(x_{\mu}(O_{\mu}))$, $l,j=1,\ldots,k$, such that
$$
(A\varphi)_{|O_{\mu}}=\Op(\widetilde{a}_{\mu,l,j})(\varphi^j\circ x_{\mu}^{-1})\circ x_{\mu}\,\sigma^l_{\mu}+ \widetilde{T}_{\mu,l,j}(\varphi^j\circ x_{\mu}^{-1})\circ x_{\mu}\,\sigma^l_{\mu},\quad \varphi=\varphi^js_{\mu,j}\in\Gamma_c(E_{O_{\mu}}).
$$
Then $a_{\mu,l,j}-\widetilde{a}_{\mu,l,j}\in S^{-1}_{\loc}(x_{\mu}(O_{\mu})\times\RR^m)$, which, in view of the properties of $a_{\mu,l,j}$, implies that $a_{\mu,l,j}=0$ in $W_{\mu}\backslash\kappa_{\mu}(\pi_{T^*M}^{-1}(O_{\mu})\cap \mathcal{U})$ for some open conic neighbourhood $W_{\mu}$ of $L_{\mu}$ in $x_{\mu}(O_{\mu})\times(\RR^m\backslash\{0\})$. We infer
$$
(A\varphi)_{|O_{\mu}}=\Op(a_{\mu,l,j})(\varphi^j\circ x_{\mu}^{-1})\circ x_{\mu}\,\sigma^l_{\mu}+ \widetilde{Q}_{\mu,l,j}(\varphi^j\circ x_{\mu}^{-1})\circ x_{\mu}\,\sigma^l_{\mu},\quad \varphi=\varphi^js_{\mu,j}\in\Gamma_c(E_{O_{\mu}}),
$$
with $\widetilde{Q}_{\mu,l,j}\in\Psi^{-1}(x_{\mu}(O_{\mu}))$ and $\widetilde{Q}_{\mu,l,j}$ is of order $-\infty$ in $L_{\mu}$, $l,j=1,\ldots,k$. Denote
$$
\Lambda_0:=\{\mu\in\ZZ_+\,|\, O_{\mu}\cap \pi_{S^*M}(\supp\hat{a})\neq \emptyset\,\, \mbox{or}\,\, O_{\mu}\cap \pr_1(\supp(\mbox{kernel of}\, A))\neq\emptyset\};
$$
$\Lambda_0$ is finite. Notice that
$$
(\iota_{0,E}(A v_{n_h}),v_{n,h})=\sum_{\mu\in\Lambda_0} (\iota_{0,E}(\chi_{\mu}A(\chi'_{\mu} v_{n_h})),v_{n_h})+\sum_{\mu\in\Lambda_0}(\iota_{0,E}(\chi_{\mu}A((1-\chi'_{\mu})v_{n_h})),v_{n_h}).
$$
Since the kernel of $A$ is smooth outside of the diagonal, the operator $\psi\mapsto \chi_{\mu}A((1-\chi'_{\mu})\psi)$ has smooth compactly supported kernel and thus it is a continuous map $\DD'(M;E)\rightarrow \Gamma_c(E^{\#})$. Consequently, the second sum tends to $0$ as $h\rightarrow \infty$. For the first sum, we infer
\begin{multline*}
\sum_{\mu\in\Lambda_0} (\iota_{0,E}(\chi_{\mu}A(\chi'_{\mu} v_{n_h})),v_{n_h})\\
=\sum_{\mu\in\Lambda_0}(\Op(\widetilde{\chi}_{\mu}a_{\mu,l,j})(\widetilde{\chi}'_{\mu}v_{n_h,\mu}^j),v_{n_h,\mu}^l)+ \sum_{\mu\in\Lambda_0}(\widetilde{\chi}_{\mu}\widetilde{Q}_{\mu,l,j}(\widetilde{\chi}'_{\mu}v_{n_h,\mu}^j),v_{n_h,\mu}^l).
\end{multline*}
In view of Corollary \ref{cor-for-ope-defondinecomsetfort}, the second sum tends to zero as $h\rightarrow\infty$. There is $b_{\mu,l,j}\in S^0_c(x_{\mu}(O_{\mu})\times \RR^m)$ such that $\Op(\widetilde{\chi}_{\mu}a_{\mu,l,j})\widetilde{\chi}'_{\mu}=\Op(b_{\mu,l,j})$. Notice that $\Op(b_{\mu,l,j})\in\Psi^0_{\phg,c,(L_{\mu})^c}(x_{\mu}(O_{\mu}))$. Since $\widetilde{\chi}_{\mu}a_{\mu,l,j}=\widetilde{\chi}_{\mu}a_{\mu,l,j}\widetilde{\chi}'_{\mu}\in \boldsymbol{\sigma}^0(\Op(b_{\mu,l,j}))$, from the way we defined $a$, we infer $\sigma^0(\Op(b_{\mu,l,j}))=\widetilde{\chi}_{\mu} \hat{a}_{\mu,l,j}\circ\hat{\kappa}_{\mu}^{-1}$, where $\hat{a}_{\mu,l,j}\in\mathcal{C}^{\infty}(\pi_{S^*M}^{-1}(O_{\mu}))$ are defined by $\hat{a}_{|\pi_{S^*M}^{-1}(O_{\mu})}=\hat{a}_{\mu,l,j}\pi^*_{S^*M}(s'^j_{\mu}\otimes\sigma^l_{\mu})$. In view of \eqref{equ-for-lim-ofopexmeaofs}, we deduce
\begin{equation}
\lim_{h\rightarrow\infty}\sum_{\mu\in\Lambda_0} (\Op(\widetilde{\chi}_{\mu}a_{\mu,l,j})(\widetilde{\chi}'_{\mu}v_{n_h,\mu}^j),v_{n_h,\mu}^l)= \sum_{\mu\in\ZZ_+}\langle\widetilde{\vartheta}^{l,j}_{\mu},\widetilde{\chi}_{\mu} \hat{a}_{\mu,l,j}\circ\hat{\kappa}_{\mu}^{-1}\rangle=\langle\vartheta,\hat{a}\rangle,
\end{equation}
which verifies \eqref{ide-for-the-ofgermars}. This completes the proof of the theorem.
\end{proof}

\begin{remark}\label{rem-for-mea-posdefonskt}
Strictly speaking, $\vartheta$ is not a measure because it acts on sections. However, once we choose coordinates and a trivialisation of the bundle it becomes a measure. To wit, the fact $\langle \vartheta,\psi\rangle\geq 0$ for all $\psi$ positive semi-definite at every point implies that any coordinate representation of $\vartheta$ is a matrix-valued positive Radon measure: Let $(x,O)$ be any coordinate chart over which $E$ trivialises via $\Phi:\pi_E^{-1}(O)\rightarrow O\times \CC^k$, let $(\pi^*_{S^*M}(s'^j\otimes\sigma^l))_{j,l}$ be the local frame for $\pi_{S^*M}^*L(E,E^{\#})$ over $O$ induced by $\Phi$ (defined as in the proof of the theorem) and denote $L_O:=\kappa(\pi_{T^*M}^{-1}(O)\cap L)$ with $\kappa$ the total local trivialisation of $T^*M$ over $O$. Then, $\widetilde{\vartheta}:=(\widetilde{\vartheta}^{l,j})_{l,j}$ is a matrix-valued positive Radon measure on $[(L_O)^c]$, where $\widetilde{\vartheta}^{l,j}\in (\mathcal{K}([(L_O)^c]))'$ is defined by $\langle \widetilde{\vartheta}^{l,j},\phi\rangle:=\langle\vartheta,\phi\circ \hat{\kappa}\, \pi^*_{S^*M}(s'^j\otimes\sigma^l)\rangle$ with $\hat{\kappa}$ as in \eqref{dif-for-cos-bunovermanc}.
\end{remark}

The support of $\vartheta$ contains all information about the directions in $[L^c]$ in which the convergence $u_{n_j}\rightarrow u$ is not in $L^2$-sense.

\begin{corollary}\label{cor-for-wav-ofdissupdefmes}
Let $L$, $u$ and $(u_n)_{n\in\ZZ_+}$ satisfy the assumptions in Theorem \ref{ger-the-com-onmanifol} and let $\vartheta$ and the subsequence $(u_{n_j})_{j\in\ZZ_+}$ be as in the claims of the theorem. Let $L^{(\vartheta)}$ be the inverse image of $\supp\vartheta$ under the natural map $T^*M\backslash0\rightarrow S^*M$. Then
\begin{equation}\label{equ-for-inc-ofwavwefreontsesk}
L^{(\vartheta)}\subseteq WF^0_c(\{u_{n_j}\}_{j\in\ZZ_+})\subseteq L\cup L^{(\vartheta)},
\end{equation}
$L\cup L^{(\vartheta)}$ is a closed conic subset of $T^*M\backslash0$ and $u_{n_j}\rightarrow u$ in $\DD'^0_{L\cup L^{(\vartheta)}}(M;E)$.
\end{corollary}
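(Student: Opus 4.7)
First I would check that $L\cup L^{(\vartheta)}$ is closed conic in $T^*M\backslash 0$: $L^{(\vartheta)}$ is the preimage of the closed set $\supp\vartheta\subseteq [L^c]$ under the continuous conic projection $T^*M\backslash 0\to S^*M$, hence closed conic in $L^c$, and any limit point of $L\cup L^{(\vartheta)}$ lying outside $L$ must already fall in $L^c$ from some stage on and thus in $L^{(\vartheta)}$. The convergence $u_{n_j}\to u$ in $\DD'^0_{L\cup L^{(\vartheta)}}(M;E)$ then reduces to the second inclusion in \eqref{equ-for-inc-ofwavwefreontsesk}: by Corollary \ref{cor-for-relcom-sub-wafe-fronchar}, $\{u_{n_j}\}_j$ would be relatively compact in $\DD'^0_{L\cup L^{(\vartheta)}}(M;E)$; combined with $u\in\DD'^0_L(M;E)\subseteq \DD'^0_{L\cup L^{(\vartheta)}}(M;E)$, the convergence $u_{n_j}\to u$ in $\DD'(M;E)$, and the metrisability of bounded subsets of $\DD'^0_{L\cup L^{(\vartheta)}}(M;E)$, a Hausdorff / unique cluster point argument delivers the desired convergence in $\DD'^0_{L\cup L^{(\vartheta)}}(M;E)$.

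For the second inclusion, fix $(p_0,\xi_0)\not\in L\cup L^{(\vartheta)}$ and reduce to showing $(p_0,\xi_0)\not\in WF^0_c(\{v_{n_j}\}_j)$ with $v_{n_j}:=u_{n_j}-u$ (using the subadditivity of $WF^0_c(\cdot)$ and $WF^0(u)\subseteq L$). Pick a chart $(O,x)$ about $p_0$ over which $E$ trivialises via $\Phi_x$, a cutoff $\varphi\in\DD(O)$ with $\varphi(p_0)=1$, and an open cone $V\subseteq\RR^m$ around the coordinate representation $\eta_0$ of $\xi_0$ so narrow that $\supp\varphi\times\overline V$ misses the local version $L_O$ of $L$ and that its image on the sphere bundle misses the corresponding coordinate representation $\supp\widetilde\vartheta_\mu$ of $\supp\vartheta$ (as in the proof of Theorem \ref{ger-the-com-onmanifol}). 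Choose a real $b\in\mathcal{C}^{\infty}(\RR^m)$ positively homogeneous of degree $0$ for $|\eta|\geq 2$, supported in $V\backslash B(0,1)$ and equal to $1$ on a smaller subcone $V_1\subset V$ for $|\eta|\geq 2$. Lifting the coordinate symbol $(\varphi\circ x^{-1})^2 b^2\,\Theta$ — where $\Theta$ denotes the Hermitian positive-definite section of $\pi^*_{S^*O} L(E_O,E^{\#}_O)$ corresponding to the identity matrix in the frames induced by $\Phi_x$ — yields an $A\in\Psi^0_{\phg,c,L^c}(M;E,E^{\#})$ supported in the chart with $\sigma^0(A)=(\varphi\circ x^{-1})^2 b^2\,\Theta$. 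A direct computation shows $(Av,v)=\sum_{l=1}^k\|b(D)((\varphi\circ x^{-1})v^l_{\Phi_x})\|^2_{L^2(\RR^m)}$, and by Theorem \ref{ger-the-com-onmanifol} this expression converges to $\langle\vartheta,\sigma^0(A)\rangle=0$, since $\supp\sigma^0(A)$ avoids $\supp\vartheta$. Parseval then gives, for each $l$, $\int_{V_1,\,|\eta|\geq 2}|\mathcal{F}((\varphi\circ x^{-1})(v_{n_j})^l_{\Phi_x})(\eta)|^2 d\eta\to 0$ as $j\to\infty$.

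The main obstacle is upgrading this pointwise-in-$j$ convergence to the \emph{uniform-in-$j$} tail decay demanded by Lemma \ref{lem-for-cha-wfofset-comwfl}. The key observation is that $\{(v_{n_j})^l_{\Phi_x}\}_j$ is bounded in $\DD'^0_{L_O}(x(O))$ and $V_1$ lies in the favourable region, so the seminorm $\mathfrak{p}_{0;\varphi\circ x^{-1},V_1}((v_{n_j})^l_{\Phi_x})$ is uniformly bounded in $j$; consequently, for each individual $j$ the tail $\int_{V_1,\,|\eta|\geq R}|\mathcal{F}((\varphi\circ x^{-1})(v_{n_j})^l_{\Phi_x})|^2 d\eta$ tends to $0$ as $R\to\infty$ by dominated convergence. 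Combined with the $j\to\infty$ decay of the full integral $\int_{V_1,\,|\eta|\geq 2}|\cdots|^2 d\eta$, a ``finitely many exceptions'' argument — given $\varepsilon>0$, take $j_0$ with $\int_{V_1,\,|\eta|\geq 2}|\cdots|^2<\varepsilon$ for $j\geq j_0$, then pick $R$ large enough to handle each of the finitely many remaining indices individually — delivers $\sup_j \int_{V_1,\,|\eta|\geq R}|\mathcal{F}(\cdots)|^2 d\eta\to 0$ as $R\to\infty$, which is the Kolmogorov--Riesz criterion from Lemma \ref{lem-for-cha-wfofset-comwfl} giving $(p_0,\xi_0)\not\in WF^0_c(\{v_{n_j}\}_j)$.

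The first inclusion I would prove by contradiction. Assume $(p_0,\xi_0)\in L^{(\vartheta)}$ but $(p_0,\xi_0)\not\in L^*:=WF^0_c(\{u_{n_j}\}_j)$. Corollary \ref{cor-for-relcom-sub-wafe-fronchar} gives $\{u_{n_j}\}_j$ relatively compact in $\DD'^0_{L^*}(M;E)\subseteq \DD'^0_{L\cup L^*}(M;E)$, and the unique cluster point argument of paragraph 1 gives $u_{n_j}-u\to 0$ in $\DD'^0_{L\cup L^*}(M;E)$. Pick an open conic neighbourhood $V_0\subseteq T^*M\backslash 0$ of $(p_0,\xi_0)$ disjoint from $L\cup L^*$, a section $a\in\Gamma_c(\pi^*_{S^*M}L(E,E^{\#})_{[V_0]})$ with $a(p_0,[\xi_0])\neq 0$, and lift it to $A\in\Psi^0_{\phg,c,V_0}(M;E,E^{\#})$ of order $-\infty$ outside $V_0$ — in particular on an open conic neighbourhood of $L\cup L^*$. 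Corollary \ref{cor-for-ope-defondinecomsetfort} gives continuity $A:\DD'^0_{L\cup L^*}(M;E)\to\EE'^0_{(L\cup L^*)^c\cap\pi^{-1}(K);K}(M;E^{\#})$, so $A(u_{n_j}-u)\to 0$ strongly in $\EE'^0_{(L\cup L^*)^c}(M;E^{\#})$; the latter is the strong anti-dual of $\DD'^0_{L\cup L^*}(M;E)$ via $\iota_{0,E}$ (by the discussion preceding Theorem \ref{ger-the-com-onmanifol} applied to $L\cup L^*$). Since $\{u_{n_j}-u\}_j$ is bounded in $\DD'^0_{L\cup L^*}(M;E)$, the continuity of the seminorm $p_B(w):=\sup_{v\in B}|(w,v)|$ on the strong anti-dual for $B:=\{u_{n_j}-u\}_j$ forces $(A(u_{n_j}-u),u_{n_j}-u)\to 0$. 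Theorem \ref{ger-the-com-onmanifol} identifies this limit as $\langle\vartheta,a\rangle$, so $\langle\vartheta,a\rangle=0$; varying $a$ over $\Gamma_c(\pi^*_{S^*M}L(E,E^{\#})_{[V_0]})$ makes $\vartheta$ vanish on $[V_0]$, contradicting $(p_0,[\xi_0])\in\supp\vartheta$.
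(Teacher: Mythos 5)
Your proposal is correct and, in its overall architecture, matches the paper's proof: both inclusions are obtained by testing against a microlocal cutoff operator whose principal symbol avoids $\supp\vartheta$ (resp.\ is supported away from $L\cup WF^0_c$), using Theorem \ref{ger-the-com-onmanifol} to identify the limit of the quadratic expression, and the duality $\iota_{0,E}:\EE'^0_{(\cdot)^c}(M;E^{\#})\rightarrow(\DD'^0_{(\cdot)}(M;E))^*_b$ together with Corollary \ref{cor-for-ope-defondinecomsetfort} for the reverse inclusion; your first paragraph (cluster-point argument via Corollary \ref{cor-for-relcom-sub-wafe-fronchar} and metrisability of bounded sets) is exactly the paper's opening step. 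Where you genuinely diverge is the \emph{end} of the second inclusion: the paper builds two operators $A$ and $A'$ with $(p_0,\xi_0)\notin\Char A'$, deduces $A'u_{n_j}\rightarrow A'u$ in $L^2_{\comp}$, and then invokes Corollary \ref{cor-for-ide-ofwafornes} to get $WF^0_c(\{u_{n_j}\})\subseteq\Char A'$, whereas you reduce to $v_{n_j}=u_{n_j}-u$ by subadditivity and verify the Kolmogorov--Riesz tail criterion of Lemma \ref{lem-for-cha-wfofset-comwfl} by hand, upgrading the $j\rightarrow\infty$ decay to uniform-in-$j$ tail decay via the ``finitely many exceptions'' argument (which is legitimate, since each fixed $v_{n_j}$ has finite $\mathfrak{p}_{0;\varphi\circ x^{-1},V_1}$-seminorm and the tails are monotone in $R$). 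Your route avoids Corollary \ref{cor-for-ide-ofwafornes} at the cost of this elementary uniformization; the paper's route is softer but leans on the $\Psi$DO--wave-front machinery. Two small points you should make explicit: the identity $(\iota_{0,E}(Av),v)=\sum_{l}\|b(D)((\varphi\circ x^{-1})v^l_{\Phi_x})\|^2_{L^2}$ is \emph{not} valid for an arbitrary polyhomogeneous lift of the symbol $(\varphi\circ x^{-1})^2b^2\,\Theta$ (lifts differ by order $-1$ terms), so you must take the explicit coordinate operator $M_{\varphi}\,b^2(D)\,M_{\varphi}$ acting componentwise in the frames (then Theorem \ref{ger-the-com-onmanifol} still applies, and lower-order discrepancies are harmless anyway by Corollary \ref{cor-for-ope-defondinecomsetfort}); and the identity is first checked on $\Gamma_c(E)$ and extended to $\DD'^0_L(M;E)$ by the density of Proposition \ref{res-for-den-ope-map-thagivdes} and the continuity of Corollary \ref{cor-for-ope-defondinecomsetfort}, exactly as the paper does for its pairing $(\iota_{0,E}(Av),A'v)$.
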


\begin{remark}
Since (by definition!) $L^{(\vartheta)}\subseteq L^c$, \eqref{equ-for-inc-ofwavwefreontsesk} is equivalent to
\begin{equation}
L^{(\vartheta)}=WF^0_c(\{u_{n_j}\}_{j\in\ZZ_+})\backslash L.
\end{equation}
Although $\vartheta$ carries similar information as the Sobolev compactness wave front set $WF^0_c$, it can be a convenient tool in practice as we show in the next subsection.
\end{remark}

\begin{proof}[Proof of Corollary \ref{cor-for-wav-ofdissupdefmes}] Corollary \ref{cor-for-relcom-sub-wafe-fronchar} shows that $\{u_{n_j}\}_{j\in\ZZ_+}$ is a relatively compact subset of $\DD'^0_{WF^0_c(\{u_{n_j}\}_{j\in\ZZ_+})}(M;E)$ and, since $u_{n_j}\rightarrow u$ in $\DD'(M;E)$,
\begin{equation}\label{ide-for-con-ofseqinspacfsgs}
u\in\DD'^0_{WF^0_c(\{u_{n_j}\}_{j\in\ZZ_+})}(M;E)\quad \mbox{and}\quad u_{n_j}\rightarrow u\quad \mbox{in}\quad \DD'^0_{WF^0_c(\{u_{n_j}\}_{j\in\ZZ_+})}(M;E).
\end{equation}
Set $L':= L\cup L^{(\vartheta)}$. The proof that $L'$ is closed in $T^*M\backslash0$ is straightforward\footnote{This fact is not redundant since $L^{(\vartheta)}$ is closed in $L^c$ but not necessarily in $T^*M\backslash0$.}. Once we show \eqref{equ-for-inc-ofwavwefreontsesk}, \eqref{ide-for-con-ofseqinspacfsgs} would imply $u_{n_j}\rightarrow u$ in $\DD'^0_{L'}(M;E)$.\\
\indent First we show the second inclusion in \eqref{equ-for-inc-ofwavwefreontsesk}. We may assume $L'^c\neq\emptyset$. Let $(p_0,\xi_0)\not\in L'$, $\xi_0\neq0$. There is a chart $(O,x)$ about $p_0$ over which $E$ locally trivialises via $\Phi_x:\pi_E^{-1}(O)\rightarrow O\times \CC^k$ and open cones $V,V'\subseteq \RR^m$ such that $\overline{V}\subseteq V'\cup\{0\}$, $\RR^m\backslash\overline{V'}\neq \emptyset$,
\begin{gather}
(p_0,\xi_0)\in \{(p,\xi_l dx^l|_p)\in T^*O\,|\, (\xi_1,\ldots,\xi_m)\in V\}\quad\mbox{and}\nonumber\\
\{(p,\xi_l dx^l|_p)\in T^*O\,|\, (\xi_1,\ldots,\xi_m)\in \overline{V'}\}\subseteq L'^c.\label{set-for-ano-incincomfortheincs}
\end{gather}
Pick $\phi,\widetilde{\phi}\in\DD(\RR^m)$ such that $0\leq \phi,\widetilde{\phi}\leq1$, $\phi=1$ on $\overline{V}\cap \mathbb{S}^{m-1}$, $\supp\phi\subseteq V'$, $\widetilde{\phi}=1$ on $\overline{B(0,1/4)}$ and $\supp\widetilde{\phi}\subseteq B(0,1/2)$. Define $b(\xi):=(1-\widetilde{\phi}(\xi))\phi(\xi/|\xi|)$; $b$ is smooth with support in $V'$, positively homogeneous of order $0$ when $|\xi|\geq 1/2$ and equal to $1$ on $\overline{V}\backslash B(0,1/2)$. Pick nonnegative $\chi,\chi'\in\DD(O)$ such that $\chi=1$ on a neighbourhood of $p_0$ and $\chi'=1$ on a neighbourhood of $\supp\chi$ and define
$$
A:\Gamma_c(E)\rightarrow \Gamma_c(E^{\#}),\, A\varphi:= \sum_{l=1}^k\chi\Op(b)((\chi' \varphi^l)\circ x^{-1})\circ x\, \sigma^l,\quad \varphi\in\Gamma_c(E),\, \varphi_{|O}=\varphi^l s_l,
$$
where $(s_1,\ldots,s_k)$ and $(\sigma^1,\ldots,\sigma^k)$ are the local frames for $E$ and $E^{\#}$ over $O$ induced by $\Phi_x$. Clearly, $A\in\Psi^0_{\phg,c,L'^c}(M;E,E^{\#})$ and $(p_0,\xi_0)\not\in\Char A$. We also define
$$
A':\Gamma_c(E)\rightarrow \Gamma_c(E),\, A'\varphi:= \sum_{l=1}^k\chi\Op(b)((\chi' \varphi^l)\circ x^{-1})\circ x\, s_l,\quad \varphi\in\Gamma_c(E),\, \varphi_{|O}=\varphi^l s_l.
$$
Of course, $A'\in\Psi^0_{\phg,c,L'^c}(M;E,E)$ and $(p_0,\xi_0)\not\in\Char A'$. We claim that
\begin{equation}\label{equ-for-nor-inlfsdohsk}
(\iota_{0,E}(Av),A'v)=\sum_{l=1}^k \|\chi\circ x^{-1}\Op(b)((\chi'\circ x^{-1}) v^l_{\Phi_x})\|_{L^2(\RR^m)}^2,\quad v\in \DD'^0_L(M;E),
\end{equation}
with $v^l_{\Phi_x}\in \DD'^0_{L_O}(x(O))$ as in \eqref{ind-tri-loc-nestk} and $L_O$ as in Remark \ref{rem-for-mea-posdefonskt} (cf. Proposition \ref{lem-for-top-imebdofmapscs}). Both the left and the right hand side are well-defined in view of Corollary \ref{cor-for-ope-defondinecomsetfort} and the continuous inclusions $\EE'^0_{L'^c}(M;E)\subseteq L^2_{\comp}(M;E)\subseteq \DD'^0_{L'}(M;E)$; notice that the right-hand side is the square of a norm on $L^2_{\supp\chi}(M;E)$ of $A'v\in L^2_{\supp\chi}(M;E)$. By plugging in the definitions of $A$ and $A'$, it is easy to see that \eqref{equ-for-nor-inlfsdohsk} hods true for $v\in\Gamma_c(E)$. The general case follows from the fact that $\Gamma_c(E)$ is sequentially dense in $\DD'^0_L(M;E)$ (Proposition \ref{res-for-den-ope-map-thagivdes}) and Corollary \ref{cor-for-ope-defondinecomsetfort}. Denoting $v_{n_j}:=u_{n_j}-u$, $j\in\ZZ_+$, Theorem \ref{ger-the-com-onmanifol} gives
$$
(\iota_{0,E}(Av_{n_j}),A'v_{n_j})=(\iota_{0,E}(A'^*Av_{n_j}),v_{n_j})\rightarrow\langle \vartheta,\sigma^0(A'^*A)\rangle=\langle \vartheta,\sigma^0(A')^{\#}\sigma^0(A)\rangle=0
$$
since $\supp\sigma^0(A)\cap \supp\vartheta=\emptyset=\supp\sigma^0(A')\cap\supp\vartheta$. Employing \eqref{equ-for-nor-inlfsdohsk}, we deduce that $A'v_{n_j}\rightarrow 0$ in $L^2_{\comp}(M;E)$ and thus $WF^0_c(A'(\{u_{n_j}\}_{j\in\ZZ_+}))=\emptyset$ in view of Corollary \ref{cor-for-relcom-sub-wafe-fronchar}. Corollary \ref{cor-for-ide-ofwafornes} now yields $WF^0_c(\{u_{n_j}\}_{j\in\ZZ_+})\subseteq \Char A'$ and thus $(p_0,\xi_0)\not\in WF^0_c(\{u_{n_j}\}_{j\in\ZZ_+})$.\\
\indent To show the first inclusion in \eqref{equ-for-inc-ofwavwefreontsesk}, let $(p_0,\xi_0)\not\in WF^0_c(\{u_{n_j}\}_{j\in\ZZ_+})$, $\xi_0\neq 0$. If $(p_0,\xi_0)\in L$, then $(p_0,\xi_0)\not\in L^{(\vartheta)}$. Assume that $(p_0,\xi_0)\not\in L'':= L\cup WF^0_c(\{u_{n_j}\}_{j\in\ZZ_+})$. We claim that $\langle \vartheta,a\rangle=0$, $a\in\Gamma_c(\pi^*_{S^*M}L(E,E^{\#})_{[L''^c]})$; this would imply that $(p_0,\xi_0)\not\in L^{(\vartheta)}$ (by density). For $a\in\Gamma_c(\pi_{S^*M}^*L(E,E^{\#})_{[L''^c]})$, pick $A\in\Psi^0_{\phg,c,L''^c}(M;E,E^{\#})$ such that $\sigma^0(A)=a$. Corollary \ref{cor-for-ope-defondinecomsetfort} together with \eqref{ide-for-con-ofseqinspacfsgs} yield that $Au_{n_j}\rightarrow Au$ in $\EE'^0_{L''^c}(M;E^{\#})$ and Theorem \ref{ger-the-com-onmanifol} gives $\langle\vartheta,a\rangle=0$.
\end{proof}

\begin{remark}
When $L=\emptyset$, \eqref{equ-for-inc-ofwavwefreontsesk} boils down to $WF^0_c(\{u_{n_j}\}_{j\in\ZZ_+})= L^{(\vartheta)}$; this result was shown in \cite{G1}. Even in the case $L=\emptyset$, the corollary improves this result by identifying that $u_{n_j}\rightarrow u$ in $\DD'^0_{L^{(\vartheta)}}(M;E)$. In this case, the result is optimal in the following sense: if $u_{n_j}\rightarrow u$ in $\DD'^0_{L'}(M;E)$ for some closed conic subset $L'$ of $T^*M\backslash0$, then Corollary \ref{cor-for-relcom-sub-wafe-fronchar} together with this result yield that $L^{(\vartheta)}\subseteq L'$.
\end{remark}

\subsection{The compensated compactness theorem}

We are now going to employ the microlocal defect measures from Theorem \ref{ger-the-com-onmanifol} to generalise the G\'erard-Murat-Tartar theorem on compensated compactness \cite{mur1,mur2,Tar-1,Ger}. We start with a technical lemma.

\begin{lemma}\label{lem-for-seq-ofnschfortheks}
Let $L$, $u$ and $(u_n)_{n\in\ZZ_+}$ satisfy the assumptions in Theorem \ref{ger-the-com-onmanifol} and let $\vartheta$ and $(u_{n_j})_{j\in\ZZ_+}$ be as in the claims of the theorem. For every $a\in \Gamma_c(\pi^*_{S^*M}L(E,E^{\#})_{[L^c]})$, there exists a subsequence of $(u_{n_j})_{j\in\ZZ_+}$, denoted again by $(u_{n_j})_{j\in\ZZ_+}$, such that for every $A\in\Psi^0_{\phg,c,L^c}(M;E,E^{\#})$ satisfying $\sigma^0(A)=a$ it holds that
\begin{equation}\label{ide-exi-lim-forpspssk}
\lim_{j\rightarrow\infty} (\iota_{0,E}(Au),u_{n_j})=(\iota_{0,E}(Au),u)\quad\mbox{and}\quad \lim_{j\rightarrow\infty}(\iota_{0,E}(Au_{n_j}),u_{n_j})\in \CC.
\end{equation}
\end{lemma}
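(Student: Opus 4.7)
The key observation is that no further subsequence extraction is needed beyond the one provided by Theorem \ref{ger-the-com-onmanifol}; the stated identities will follow mechanically from a single continuity fact combined with the expansion of the sesquilinear form around $u$.

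The crucial preliminary step is to establish that, for every $v\in\EE'^0_{L^c}(M;E^{\#})$, one has $(\iota_{0,E}(v),u_{n_j})\to(\iota_{0,E}(v),u)$ as $j\to\infty$. For test sections $\varphi\in\Gamma_c(E^{\#})$ this is immediate, since $(\iota_{0,E}(\varphi),u_{n_j})=\langle u_{n_j},\iota_E\varphi\rangle\to\langle u,\iota_E\varphi\rangle$ follows from $u_{n_j}\to u$ in $\DD'(M;E)$. The general case then follows by an $\varepsilon/3$-argument using the sequential density of $\Gamma_c(E^{\#})$ in $\EE'^0_{L^c}(M;E^{\#})$ (Proposition \ref{den-ofd-ine-forcponlfns}) together with the equicontinuity of $\{u_{n_j}\}_j$ as a family of anti-linear functionals on $\EE'^0_{L^c}(M;E^{\#})$. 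The equicontinuity is supplied by the Banach--Steinhaus theorem: by Theorem \ref{the-for-dua-ofdwitheonmanwithvectbundd} the space $\EE'^0_{L^c}(M;E^{\#})$ is the strong anti-dual of $\DD'^0_L(M;E)$, and, being a barrelled $(LF)$-space, it makes every weakly bounded subset of its anti-dual equicontinuous; boundedness of $\{u_n\}_n$ in $\DD'^0_L(M;E)$ amply provides this.

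With the continuity fact in hand, the first identity in \eqref{ide-exi-lim-forpspssk} is immediate upon noting that $Au\in\EE'^0_{L^c}(M;E^{\#})$, a consequence of Corollary \ref{cor-for-ope-defondinecomsetfort}. For the second identity, set $v_{n_j}:=u_{n_j}-u$ and expand
\begin{align*}
(\iota_{0,E}(Au_{n_j}),u_{n_j})
&=(\iota_{0,E}(Au),u)+(\iota_{0,E}(Au),v_{n_j})\\
&\qquad+(\iota_{0,E}(Av_{n_j}),u)+(\iota_{0,E}(Av_{n_j}),v_{n_j}).
\end{align*}
The second summand tends to zero by the continuity fact applied to $v:=Au$. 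The third summand equals $\overline{(\iota_{0,E}(A^*u),v_{n_j})}$ by the adjoint identity recalled before the statement of the lemma; since $A^*\in\Psi^0_{\phg,c,L^c}(M;E,E^{\#})$ (with $\sigma^0(A^*)=\sigma^0(A)^{\#}$), Corollary \ref{cor-for-ope-defondinecomsetfort} gives $A^*u\in\EE'^0_{L^c}(M;E^{\#})$, and a second application of the continuity fact with $v:=A^*u$ shows this summand also tends to zero. The fourth summand converges to $\langle\vartheta,a\rangle$ by Theorem \ref{ger-the-com-onmanifol}. Hence the limit exists in $\CC$ and equals $(\iota_{0,E}(Au),u)+\langle\vartheta,a\rangle$, an expression depending only on the principal symbol $a$, so the subsequence extracted in Theorem \ref{ger-the-com-onmanifol} works simultaneously for all $A\in\Psi^0_{\phg,c,L^c}(M;E,E^{\#})$ with $\sigma^0(A)=a$.

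The main obstacle is establishing the equicontinuity of $\{u_{n_j}\}_j$ cleanly; everything else is a formal expansion. Once one unpacks the duality identifications from Theorem \ref{the-for-dua-ofdwitheonmanwithvectbundd} and applies Banach--Steinhaus in the barrelled anti-dual $(LF)$-setting, the rest of the argument reduces to substituting the limits provided by Theorem \ref{ger-the-com-onmanifol} together with the adjoint identity.
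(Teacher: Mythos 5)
Your proposal is correct, but it follows a genuinely different route from the paper's. The paper first extracts a further subsequence: it uses Corollary \ref{cor-for-ope-defondinecomsetfort} only to get $\sup_j|(\iota_{0,E}(Au_{n_j}),u_{n_j})|<\infty$ and then invokes Bolzano--Weierstrass so that the second limit exists (with its value unidentified), proves the first identity by approximating $u$ by smooth sections $\psi_l$ in $\DD'^0_L(M;E)$ and sandwiching real and imaginary parts, and finally transfers both statements to any other $A'$ with $\sigma^0(A')=a$ via Corollary \ref{cor-for-ope-defondinecomsetfort} applied to $A'-A\in\Psi^{-1}_{\phg,c,L^c}(M;E,E^{\#})$. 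You instead prove a weak-$*$ convergence fact, $(\iota_{0,E}(v),u_{n})\rightarrow(\iota_{0,E}(v),u)$ for every $v\in\EE'^0_{L^c}(M;E^{\#})$, by approximating $v$ by test sections (Proposition \ref{den-ofd-ine-forcponlfns}) and using Banach--Steinhaus equicontinuity in the barrelled $(LF)$-space $\EE'^0_{L^c}(M;E^{\#})$; then the sesquilinear expansion of $(\iota_{0,E}(Au_{n_j}),u_{n_j})$ around $u$, together with the adjoint identity and $A^*\in\Psi^0_{\phg,c,L^c}(M;E,E^{\#})$, reduces the second limit directly to \eqref{ide-for-the-ofgermars}, so no further extraction is needed and your version is in fact slightly stronger (the subsequence of Theorem \ref{ger-the-com-onmanifol} works for all $a$ simultaneously, and the limit is identified as $(\iota_{0,E}(Au),u)+\langle\vartheta,a\rangle$). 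What your route buys is this quantitative identification and the elimination of the diagonal/extraction step; what it costs is that you must justify that each $u_n$ defines a continuous functional on $\EE'^0_{L^c}(M;E^{\#})$ and that boundedness in $\DD'^0_L(M;E)$ gives pointwise boundedness of these functionals -- both follow from the duality and semi-reflexivity results already in the paper (Theorem \ref{the-for-dua-ofdwitheonmanwithvectbundd}, Corollary \ref{rem-for-sem-refd} and its bundle analogue, or the separate continuity in Theorem \ref{the-for-dua-fordwithespacewithwfs}$(i)$), so this is an elision, not a gap. Two small inaccuracies worth fixing: for smooth $\varphi\in\Gamma_c(E^{\#})$ the pairing is $(\iota_{0,E}(\varphi),w)=\overline{\langle w,\iota_{E^{\#}}(\varphi)\rangle}$ rather than $\langle w,\iota_E\varphi\rangle$ (this does not affect the argument, since distributional convergence of $u_{n_j}$ still gives the convergence you need), and the closing remark that the limit value ``depends only on the principal symbol $a$'' is not literally true, because $(\iota_{0,E}(Au),u)$ depends on $A$; it is also unnecessary, since you have established existence of the limit for every fixed $A$ with $\sigma^0(A)=a$, which is all the lemma asks.
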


\begin{proof} Let $a\in \Gamma_c(\pi^*_{S^*M}L(E,E^{\#})_{[L^c]})$ and pick $A\in\Psi^0_{\phg,c,L^c}(M;E,E^{\#})$ so that $\sigma^0(A)=a$. In view of Corollary \ref{cor-for-ope-defondinecomsetfort}, $\sup_{n\in\ZZ_+}|(\iota_{0,E}(Au),u_n)|<\infty$ and $\sup_{n\in\ZZ_+}|(\iota_{0,E}(Au_n),u_n)|<\infty$ so there is a subsequence of $(u_{n_j})_{j\in\ZZ_+}$ from Theorem \ref{ger-the-com-onmanifol}, which we again denote by $(u_{n_j})_{j\in\ZZ_+}$, such that both $\lim_{j\rightarrow\infty} (\iota_{0,E}(Au),u_{n_j})$ and $\lim_{j\rightarrow\infty}(\iota_{0,E}(Au_{n_j}),u_{n_j})$ exist in $\CC$. If $A'\in\Psi^0_{\phg,c,L^c}(M;E,E^{\#})$ is any other $\Psi$DO satisfying $\sigma^0(A')=a$, then both of the limits exist for $A'$ with the same subsequence in view of Corollary \ref{cor-for-ope-defondinecomsetfort} employed with $A'-A\in\Psi^{-1}_{\phg,c,L^c}(M;E,E^{\#})$ (for $(\iota_{0,E}(A'u_{n_j}),u_{n_j})$, it is straightforward to show that $(\iota_{0,E}((A'-A)u_{n_j}),u_{n_j})\rightarrow (\iota_{0,E}((A'-A)u),u)$). It remains to show the first identity in \eqref{ide-exi-lim-forpspssk}. Take a sequence $\{\psi_l\}_{l\in\ZZ_+}\subseteq \Gamma_c(E)$ which converges to $u$ in $\DD'^0_L(M;E)$ (cf. Proposition \ref{res-for-den-ope-map-thagivdes}). Notice that
\begin{align*}
\operatorname{Re}(\iota_{0,E}(Au),u_{n_j})&=\operatorname{Re}(\iota_{0,E}(A(u-\psi_l)),u_{n_j})+ \operatorname{Re}(\iota_{0,E}(A\psi_l),u_{n_j})\\
&\leq \sup_{n\in\ZZ_+}|(\iota_{0,E}(A(u-\psi_l)),u_n)|+\operatorname{Re}(\iota_{0,E}(A\psi_l),u_{n_j}).
\end{align*}
Hence $\lim_{j\rightarrow\infty}\operatorname{Re}(\iota_{0,E}(Au),u_{n_j})\leq \sup_{n\in\ZZ_+}|(\iota_{0,E}(A(u-\psi_l)),u_n)|+\operatorname{Re}(\iota_{0,E}(A\psi_l),u)$ and consequently (cf. Corollary \ref{cor-for-ope-defondinecomsetfort}) $\lim_{j\rightarrow\infty}\operatorname{Re}(\iota_{0,E}(Au),u_{n_j})\leq \operatorname{Re}(\iota_{0,E}(Au),u)$. Doing the same for $-A$ and $\operatorname{Im}(\iota_{0,E}(\pm Au),u_{n_j})$, we deduce the first identity in \eqref{ide-exi-lim-forpspssk}.
\end{proof}

For a closed conic subset $L$ of $T^*M\backslash0$ and $r\in\RR$, we define the l.c.s. $H^r_{L;\loc}(M;E):=H^r_{\loc}(M;E)\cap \DD'_L(M;E)$ equipped with the topology induced by all continuous seminorms on $H^r_{\loc}(M;E)$ and $\DD'_L(M;E)$. Clearly, $H^r_{L;\loc}(M;E)$ is complete. It is continuously and densely included in both $H^r_{\loc}(M;E)$ and $\DD'_L(M;E)$ (since $\Gamma_c(E)$ is dense in both spaces) and $H^r_{T^*M\backslash0;\loc}(M;E)=H^r_{\loc}(M;E)$ and $H^r_{\emptyset;\loc}(M;E)=\Gamma(E)$. For $\chi\in \Gamma_c(DM)$ fixed, set $K:=\supp\chi$ and notice that the following map is well-defined and continuous:
\begin{equation}\label{equ-for-con-opeofmaponsespansk}
H^r_{L;\loc}(M;E^*)\rightarrow \EE'^r_{L\cap\pi^{-1}_{T^*M}(K);K}(M;E^{\#}),\quad u\mapsto \chi u.
\end{equation}

\begin{proposition}\label{pro-for-bul-mapextprodofdisonspk}
Let $r\in\RR$ and let $L$ and $\widetilde{L}$ be two closed conic subsets of $T^*M\backslash0$ which satisfy $\widetilde{L}\subseteq L^c$. Then
$$
\mathcal{P}:H^{-r}_{\widetilde{L};\loc}(M;E^*)\times \DD'^r_L(M;E)\rightarrow \DD'(M),\quad \langle\mathcal{P}(u,v),\chi\rangle:=\langle \chi u,\iota_E(v)\rangle,\, \chi\in\Gamma_c(DM),
$$
where the last duality is $\langle\EE'^{-r}_{L^c}(M;E^{\#}), \DD'^r_{\check{L}}(M;E^{\#\,\vee})\rangle$, is well-defined hypocontinuous sesquilinear map that restricts to the sesquilinear map
\begin{equation}\label{map-for-csm-foreqlsphkrcvk1}
\Gamma(E^*)\times\Gamma(E)\rightarrow \mathcal{C}^{\infty}(M),\quad (\varphi,\psi)\mapsto (p\mapsto \varphi_p(\psi_p)).
\end{equation}
\end{proposition}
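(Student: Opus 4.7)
The plan is to construct $\mathcal{P}$ directly by transporting $v\in\DD'^r_L(M;E)$ to $\iota_E(v)\in\DD'^r_{\check{L}}(M;E^{\#\,\vee})$ and then pairing $\chi u$ against $\iota_E(v)$ via the duality of Theorem~\ref{the-for-dua-ofdwitheonmanwithvectbundd}. All the analytic content is to verify that everything makes sense for variable $\chi,u,v$ and that bounded families behave uniformly; the algebraic identification on smooth sections is the final check.

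\textbf{Step 1 (well-definedness for fixed $u,v$).} Fix $u\in H^{-r}_{\widetilde{L};\loc}(M;E^*)$, $v\in\DD'^r_L(M;E)$ and $\chi\in\Gamma_c(DM)$ with $K:=\supp\chi$. First I would invoke \eqref{equ-for-con-opeofmaponsespansk} to place $\chi u\in\EE'^{-r}_{\widetilde{L}\cap\pi_{T^*M}^{-1}(K);K}(M;E^{\#})$; because $\widetilde{L}\subseteq L^c$ this gives $\chi u\in\EE'^{-r}_{L^c}(M;E^{\#})$. Since $\iota_E$ restricts to an anti-linear topological isomorphism $\DD'^r_L(M;E)\to\DD'^r_{\check{L}}(M;E^{\#\,\vee})$ (as recalled at the start of the subsection), $\iota_E(v)$ lies in the latter space. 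Applying Theorem~\ref{the-for-dua-ofdwitheonmanwithvectbundd} to the bundle $E^{\#}$ identifies $\EE'^{-r}_{L^c}(M;E^{\#})$ as the strong dual of $\DD'^r_{\check{L}}(M;E^{\#\,\vee})$, so $\langle\chi u,\iota_E(v)\rangle$ is an honest complex number. Sesquilinearity in $(u,v)$ follows at once from linearity of $u\mapsto\chi u$ and anti-linearity of $\iota_E$.

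\textbf{Step 2 ($\mathcal{P}(u,v)\in\DD'(M)$).} For fixed $u,v$ I would show the linear form $\chi\mapsto\langle\chi u,\iota_E(v)\rangle$ is continuous on each $\Gamma_K(DM)$. For bounded $B_0\subseteq\Gamma_K(DM)$, continuity of \eqref{equ-for-con-opeofmaponsespansk} yields that $\{\chi u\mid\chi\in B_0\}$ is bounded in $\EE'^{-r}_{\widetilde{L}\cap\pi_{T^*M}^{-1}(K);K}(M;E^{\#})$; Lemma~\ref{lem-for-equ-subofeddofboonsukls} (used with bundle $E^{\#}$, regularity $-r$, the pair $(\widetilde{L}\cap\pi_{T^*M}^{-1}(K),K)$ and with $\check{L}$ in place of its $L$, since $\widetilde{L}\cap\pi_{T^*M}^{-1}(K)\subseteq L^c=(\check{\check{L}})^c$) then shows this bounded set is equicontinuous with respect to $\langle\EE'^{-r}_{L^c}(M;E^{\#}),\DD'^r_{\check{L}}(M;E^{\#\,\vee})\rangle$. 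Testing against the fixed element $\iota_E(v)$ gives boundedness of $\chi\mapsto\langle\chi u,\iota_E(v)\rangle$ on $B_0$; since this holds for every bounded $B_0$ in each $\Gamma_K(DM)$ and $\Gamma_c(DM)$ is bornological (being an $(LF)$-space), continuity follows and $\mathcal{P}(u,v)\in\DD'(M)$.

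\textbf{Step 3 (hypocontinuity).} Fix a bounded $B_1\subseteq H^{-r}_{\widetilde{L};\loc}(M;E^*)$ and a bounded $B_0\subseteq\Gamma_c(DM)$; pick $K$ with $B_0\subseteq\Gamma_K(DM)$. By \eqref{equ-for-con-opeofmaponsespansk} the set $\{\chi u\mid u\in B_1,\,\chi\in B_0\}$ is bounded in $\EE'^{-r}_{\widetilde{L}\cap\pi_{T^*M}^{-1}(K);K}(M;E^{\#})$, so Lemma~\ref{lem-for-equ-subofeddofboonsukls} again gives equicontinuity of this family as linear functionals on $\DD'^r_{\check{L}}(M;E^{\#\,\vee})$. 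Pulling back through the continuous anti-linear $\iota_E:\DD'^r_L(M;E)\to\DD'^r_{\check{L}}(M;E^{\#\,\vee})$ shows the family $\{v\mapsto\langle\mathcal{P}(u,v),\chi\rangle\mid u\in B_1,\,\chi\in B_0\}$ is equicontinuous on $\DD'^r_L(M;E)$, which is the hypocontinuity statement in the variable $v$. The hypocontinuity in $u$ is analogous: for bounded $B_2\subseteq\DD'^r_L(M;E)$, the set $\iota_E(B_2)$ is bounded in $\DD'^r_{\check{L}}(M;E^{\#\,\vee})$, hence equicontinuous on $\EE'^{-r}_{L^c}(M;E^{\#})$ (as this space is barrelled, being $(LF)$), and composing with the continuous maps $\chi\mapsto\chi u$ of \eqref{equ-for-con-opeofmaponsespansk} yields uniform boundedness.

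\textbf{Step 4 (restriction to smooth sections).} When $u=\varphi\in\Gamma(E^*)$ and $v=\psi\in\Gamma(E)$, $\chi\varphi\in\Gamma_c(E^{\#})$ and $\iota_E(\psi)\in\Gamma(E^{\#\,\vee})$, so the duality collapses to the integral $\int_M\sqsubset\chi\varphi,\iota_E(\psi)\sqsupset$. Unwinding the definitions $\iota_E(\psi)_p(e^*)=e^*(\psi_p)$ and $(\chi\varphi)_p(e)=\varphi_p(e)\chi_p$ gives the integrand $\varphi_p(\psi_p)\chi_p$, identifying $\mathcal{P}(\varphi,\psi)$ with the smooth function $p\mapsto\varphi_p(\psi_p)$.

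The main obstacle is Step~3; it is where the non-trivial equicontinuity of Lemma~\ref{lem-for-equ-subofeddofboonsukls} is genuinely needed, and one has to be meticulous about the interplay of the anti-linear $\iota_E$ with the switch $L\leftrightarrow\check{L}$. The remaining steps are essentially notational bookkeeping.
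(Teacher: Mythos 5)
Your overall architecture (pair $\chi u\in\EE'^{-r}_{L^c}(M;E^{\#})$ against $\iota_E(v)\in\DD'^r_{\check L}(M;E^{\#\,\vee})$ via Theorem \ref{the-for-dua-ofdwitheonmanwithvectbundd}, then get uniformity from Lemma \ref{lem-for-equ-subofeddofboonsukls} and barrelledness) is the same as the paper's, but there is a genuine gap at the point where the $\chi$-dependence is handled. In Steps 2 and 3 you claim that \eqref{equ-for-con-opeofmaponsespansk} yields boundedness of $\{\chi u\,|\,\chi\in B_0\}$, respectively of $\{\chi u\,|\,\chi\in B_0,\,u\in B_1\}$, in $\EE'^{-r}_{\widetilde L\cap\pi^{-1}_{T^*M}(K);K}(M;E^{\#})$, and later that a single continuous seminorm bound $\sup_{\chi\in B_0}\mathfrak q(\chi u)\lesssim$ (seminorm of $u$) follows from "the continuous maps $\chi\mapsto\chi u$ of \eqref{equ-for-con-opeofmaponsespansk}". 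But \eqref{equ-for-con-opeofmaponsespansk} is continuity of $u\mapsto\chi u$ for each \emph{fixed} $\chi$; it contains no uniformity over $\chi$ in a bounded set, and your equicontinuity arguments in both halves of Step 3 (and already the boundedness claim in Step 2) do not follow from what you cite. What you actually need is hypocontinuity of the bilinear multiplication $\Gamma_K(DM)\times H^{-r}_{\widetilde L;\loc}(M;E^*)\rightarrow\EE'^{-r}_{\widetilde L\cap\pi^{-1}_{T^*M}(K);K}(M;E^{\#})$ with respect to bounded subsets of $\Gamma_K(DM)$. This is true and can be supplied in the spirit of Remark \ref{rem-for-con-ofmulonespaceofonesk}: for fixed $u$ the map $\chi\mapsto\chi u$ is continuous by the closed graph theorem between the Fr\'echet spaces $\Gamma_K(DM)$ and $\EE'^{-r}_{\widetilde L\cap\pi^{-1}_{T^*M}(K);K}(M;E^{\#})$, so the bilinear map is separately continuous, and barrelledness of $\Gamma_K(DM)$ then gives the required hypocontinuity — but this argument is missing from your proposal, and without it the central uniformity claims are unjustified. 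A second, minor slip: Lemma \ref{lem-for-equ-subofeddofboonsukls} must be applied with its $L$ equal to the present $L$ (so that its pairing is exactly $\langle\EE'^{-r}_{L^c}(M;E^{\#}),\DD'^r_{\check L}(M;E^{\#\,\vee})\rangle$ and the hypothesis is $\widetilde L\cap\pi^{-1}_{T^*M}(K)\subseteq L^c$, which holds); your substitution "with $\check L$ in place of its $L$" would instead require $\widetilde L\subseteq\check L^c$, which is not assumed.

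For comparison, the paper sidesteps the missing hypocontinuity altogether by a localization trick: choosing a fixed finite partition $\varphi_\mu$ and writing $\chi_{|O_\mu}=\chi_\mu\lambda^{x_\mu}$, it estimates $|\langle\chi u,\iota_E(v)\rangle|\le\sum_\mu|\langle\varphi_\mu\lambda^{x_\mu}u,\varphi'_\mu\chi_\mu\iota_E(v)\rangle|$, so that $u$ is only ever multiplied by the finitely many \emph{fixed} densities $\varphi_\mu\lambda^{x_\mu}$ (where \eqref{equ-for-con-opeofmaponsespansk} suffices), while the variable factor $\varphi'_\mu\chi_\mu\in\mathcal C^\infty(M)$ is moved onto $\iota_E(v)$, where the already-established hypocontinuity of $\mathcal C^\infty(M)\times\DD'^r_{\check L}(M;E^{\#\,\vee})\rightarrow\DD'^r_{\check L}(M;E^{\#\,\vee})$ (Remark \ref{hyp-con-rem-formanbundlcaseofcon}) provides exactly the uniformity you are missing. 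Either adopt that decomposition or insert the closed-graph-plus-barrelledness argument sketched above; as written, the proposal is incomplete.
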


\begin{proof} In view of \eqref{equ-for-con-opeofmaponsespansk}, $\chi u\in\EE'^{-r}_{L^c}(M;E^{\#})$ and hence $\langle \chi u,\iota_E(v)\rangle$ is well-defined since $\iota_E(v)\in\DD'^r_{\check{L}}(M;E^{\#\, \vee})$. To show the continuity of $\mathcal{P}(u,v):\Gamma_c(DM)\rightarrow \CC$ it suffices to show that it maps bounded sets into bounded sets since $\Gamma_c(DM)$ is bornological. Fix a bounded subset $B$ of $\Gamma_c(DM)$. There is $K\subset\subset M$ such that $B$ is a bounded subset of $\Gamma_K(DM)$. Pick relatively compact charts $(O_{\mu},x_{\mu})$, $\mu=1,\ldots,l$, which cover $K$ and nonnegative $\varphi_{\mu}\in\DD(O_{\mu})$ such that $\sum_{\mu=1}^l\varphi_{\mu}=1$ on a neighbourhood of $K$ and choose $\varphi'_{\mu}\in\DD(O_{\mu})$ so that $\varphi'_{\mu}=1$ on a neighbourhood of $\varphi_{\mu}$. For $\chi\in B$, write $\chi_{|O_{\mu}}=\chi_{\mu}\lambda^{x_{\mu}}$, $\chi_{\mu}\in\mathcal{C}^{\infty}(O_{\mu})$, and notice that
\begin{equation}\label{ine-for-psi-forsepinchaonscosk1}
\sup_{\chi\in B}|\langle \chi u,\iota_E(v)\rangle|\leq \sum_{\mu=1}^l\sup_{\chi\in B}|\langle \varphi_{\mu} \lambda^{x_{\mu}}u,\varphi'_{\mu}\chi_{\mu}\iota_E(v)\rangle|.
\end{equation}
Since $\{(\varphi'_{\mu}\chi_{\mu})\iota_E(v)\,|\, \chi\in B\}$ is a bounded subset of $\DD'^r_{\check{L}}(M;E^{\#\,\vee})$ (cf. Remark \ref{hyp-con-rem-formanbundlcaseofcon}) and $(\varphi_{\mu}\lambda^{x_{\mu}}) u\in \EE'^{-r}_{L^c}(M;E^{\#})$, we infer that the right hand side of \eqref{ine-for-psi-forsepinchaonscosk1} is finite. This shows that $\mathcal{P}(u,v)\in\DD'(M)$ and hence $\mathcal{P}$ is well-defined. To show that $\mathcal{P}$ is hypocontinuous with respect to the first variable, fix a bounded subset $B_2$ of $\DD'^r_L(M;E)$. Let $B$ be a bounded subset of $\Gamma_c(DM)$. We want to estimate $\sup_{v\in B_2}\sup_{\chi\in B}|\langle \chi u,\iota_E(v)\rangle|$ by a continuous seminorm of $u$ in $H^{-r}_{\widetilde{L};\loc}(M;E^*)$. With the above notation, \eqref{ine-for-psi-forsepinchaonscosk1} is valid for all $u\in H^{-r}_{\widetilde{L};\loc}(M;E^*)$ and $v\in B_2$. Since $\{(\varphi'_{\mu}\chi_{\mu})\iota_E(v)\,|\, \chi\in B,\, v\in B_2\}$ is bounded in $\DD'^r_{\check{L}}(M;E^{\#\,\vee})$ (cf. Remark \ref{hyp-con-rem-formanbundlcaseofcon}), $\sup_{v\in B_2} \sup_{\chi\in B}|\langle \varphi_{\mu} \lambda^{x_{\mu}}u,\varphi'_{\mu}\chi_{\mu}\iota_E(v)\rangle|$ is a continuous seminorm of $\varphi_{\mu} \lambda^{x_{\mu}}u$ in $\EE'^{-r}_{L^c}(M;E^{\#})$ and \eqref{equ-for-con-opeofmaponsespansk} implies that it is bounded by a continuous seminorm of $u$ in $H^{-r}_{\widetilde{L};\loc}(M;E^*)$. To show the hypocontinuity of $\mathcal{P}$ with respect to the second variable, fix a bounded subset $B_1$ of $H^{-r}_{\widetilde{L};\loc}(M;E^*)$. For a bounded subset $B$ of $\Gamma_c(DM)$, we again write \eqref{ine-for-psi-forsepinchaonscosk1}. We apply \eqref{equ-for-con-opeofmaponsespansk} and Lemma \ref{lem-for-equ-subofeddofboonsukls} to deduce that $\sup_{u\in B_1}\sup_{\chi\in B}|\langle \varphi_{\mu} \lambda^{x_{\mu}}u,\varphi'_{\mu}\chi_{\mu}\iota_E(v)\rangle|\leq C_{\mu}\sup_{\chi\in B}\mathfrak{p}_{\mu}(\varphi'_{\mu}\chi_{\mu}\iota_E(v))$ with $\mathfrak{p}_{\mu}$ a continuous seminorm on $\DD'^r_{\check{L}}(M;E^{\#\,\vee})$. Now, Remark \ref{hyp-con-rem-formanbundlcaseofcon} yields the desired hypocontinuity. By direct inspection, one verifies that $\mathcal{P}$ restricts to \eqref{map-for-csm-foreqlsphkrcvk1} and the proof is complete.
\end{proof}

\begin{remark}\label{rem-for-exi-ofprodofdisindwiespa}
If $Q\in\Psi^0(M;E,E^*)$ is properly supported and of order $-\infty$ in the closed conic subset $L$ of $T^*M\backslash0$, Proposition \ref{pro-for-psudomapforcontonvectbac} and Corollary \ref{cor-for-hwf-spadtoplincs} imply that $Q:\DD'^{-r}_L(M;E)\rightarrow H^{-r}_{\widetilde{L};\loc}(M;E^*)$ is well-defined and continuous for some closed conic subset $\widetilde{L}$ of $T^*M\backslash0$ satisfying $\widetilde{L}\subseteq L^c$. Hence, Proposition \ref{pro-for-bul-mapextprodofdisonspk} shows that
$$
\DD'^{-r}_L(M;E)\times\DD'^r_L(M;E)\rightarrow \DD'(M),\quad (u,v)\mapsto \mathcal{P}(Qu,v),
$$
is well-defined hypocontinuous sesquilinear map that restricts to $\Gamma(E)\times\Gamma(E)\rightarrow\mathcal{C}^{\infty}(M)$, $(\varphi,\psi)\mapsto (p\mapsto (Q\varphi)_p(\psi_p))$.
\end{remark}

\begin{remark}
Proposition \ref{pro-for-bul-mapextprodofdisonspk} can be viewed as an extension of the multiplication to distributions. To be precise, the proposition gives a sesquilinear product. The bilinear version is the following: for any $r\in\RR$ and $L$ and $\widetilde{L}$ two closed conic subsets of $T^*M\backslash0$ which satisfy $\widetilde{L}\subseteq \check{L}^c$, the bilinear mapping
$$
\mathcal{P}_0:H^{-r}_{\widetilde{L};\loc}(M;E')\times \DD'^r_L(M;E)\rightarrow \DD'(M),\quad \langle\mathcal{P}_0(u,v),\chi\rangle:=\langle \chi u,v\rangle,\, \chi\in\Gamma_c(DM),
$$
where the last duality is $\langle\EE'^{-r}_{\check{L}^c}(M;E^{\vee}), \DD'^r_L(M;E)\rangle$, is well-defined and hypocontinuous and it restricts to $\Gamma(E')\times\Gamma(E)\rightarrow \mathcal{C}^{\infty}(M)$, $(\varphi,\psi)\mapsto (p\mapsto \varphi_p(\psi_p))$. This can be shown in an analogous way as in the proof of Proposition \ref{pro-for-bul-mapextprodofdisonspk}. Notice that this result is neither weaker nor stronger than the H\"ormander theorem for product of distributions \cite[Theorem 8.2.10, p. 267]{hor}, \cite[Theorem 6.1]{D1}, since one of the spaces is larger and the other smaller when compared with the H\"ormander theorem. It is passible that a similar result can be obtained from our theorem on the pull-back (Theorem \ref{mai-the-pul-forvecbundm}) by applying similar technique as in \cite[Theorem 8.2.10, p. 267]{hor} and with that to obtain information on the Sobolev wave front set of the product (cf. \cite[Theorem 8.3.3, p. 190]{hor1}). We leave this to the reader, as we are not going to need such result.
\end{remark}

We are ready to show our generalisation of the compensated compactness theorem. Our result on the microlocal defect measures together with the theory we developed before, allows us to adjust the main idea from the proof of \cite[Theorem 2]{Ger} to our needs.

\begin{theorem}\label{com-cpc-the-formanbunflskslt}
Let $E$ and $F$ be two vector bundles over $M$ of rank $k$ and $k'$, $L$ a closed conic subset of $T^*M\backslash0$ satisfying $L\neq T^*M\backslash0$ and $a\in\Gamma_{\hom,r}(\pi^*_{T^*M}L(E,F)_{L^c})$ for some $r\in\RR$. Let $\{u_n\}_{n\in\ZZ_+}$ be a bounded subset of $\DD'^0_L(M;E)$ such that $(u_n)_{n\in\ZZ_+}$ converges in $\DD'(M;E)$ to some $u\in\DD'^0_L(M;E)$. Assume that for every $(p,\xi)\in L^c$ there is an open conic set $W\subseteq L^c$ containing it and a properly supported $A\in\Psi^r_{\phg}(M;E,F)$ such that $\{Au_n\}_{n\in\ZZ_+}$ is relatively compact in $\DD'^{-r}_L(M;F)$ and $\widetilde{\sigma}^r(A)_{|W}=a_{|W}$. Let $b\in\Gamma(\pi^*_{S^*M}L(E,E^*))$ satisfies $\supp b\subseteq [L^c]$ and let $B\in\Psi^0_{\phg}(M;E,E^*)$ be properly supported, of order $-\infty$ in $L$ and $\sigma^0(B)=b$.
\begin{itemize}
\item[$(i)$] Assume that $b=b^*$ and the following implication holds true:
$$
\forall (p,\xi)\in L^c,\, \forall e\in E_p,\quad a(p,\xi)(e)=0\,\,\Longrightarrow\,\, b(p,[\xi])(e)(e)\geq0.
$$
Then, for $\chi\in\Gamma_c(DM)$ which is nonnegative at every point, it holds that
$$
\liminf_{n\rightarrow \infty}\operatorname{Re}\langle \mathcal{P}(Bu_n,u_n),\chi\rangle\geq \operatorname{Re}\langle \mathcal{P}(Bu,u),\chi\rangle,\,\, \lim_{n\rightarrow \infty}\operatorname{Im}\langle \mathcal{P}(Bu_n,u_n),\chi\rangle= \operatorname{Im}\langle \mathcal{P}(Bu,u),\chi\rangle.
$$
\item[$(ii)$] Assume the following implication holds true:
$$
\forall (p,\xi)\in L^c,\, \forall e\in E_p,\quad a(p,\xi)(e)=0\,\,\Longrightarrow\,\, b(p,[\xi])(e)(e)=0.
$$
Then $\mathcal{P}(Bu_n,u_n)\rightarrow \mathcal{P}(Bu,u)$ in $\DD'(M)$.
\end{itemize}
\end{theorem}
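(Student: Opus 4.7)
The plan is to reduce the theorem, via the microlocal defect measure of Theorem~\ref{ger-the-com-onmanifol}, to a purely symbolic statement about how this measure pairs with $b$. Fix $\chi\in\Gamma_c(DM)$; fibrewise multiplication by $\chi$ is a smooth bundle homomorphism $E^*\to E^\#$, so $B_\chi:=\chi B$ lies in $\Psi^0_{\phg,c,L^c}(M;E,E^\#)$ with principal symbol $\chi b\in\Gamma_c(\pi^*_{S^*M}L(E,E^\#)_{[L^c]})$, and unwinding the definitions gives $\langle\mathcal{P}(Bu_n,u_n),\chi\rangle=(\iota_{0,E}(B_\chi u_n),u_n)$. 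Writing $v_n:=u_n-u$ and expanding,
\[
(\iota_{0,E}(B_\chi u_n),u_n)-(\iota_{0,E}(B_\chi u),u)=(\iota_{0,E}(B_\chi v_n),v_n)+(\iota_{0,E}(B_\chi v_n),u)+(\iota_{0,E}(B_\chi u),v_n).
\]
By ``every subsequence has a further subsequence'' it suffices to prove the desired conclusion along a well-chosen subsequence. Picking a countable dense family $(\chi_k)\subseteq\Gamma_c(DM)$ and diagonally extracting via Theorem~\ref{ger-the-com-onmanifol} and Lemma~\ref{lem-for-seq-ofnschfortheks} applied to each $B_{\chi_k}$, one obtains a subsequence $(u_{n_j})$ and a measure $\vartheta$ such that $(\iota_{0,E}(B_\chi v_{n_j}),v_{n_j})\to\langle\vartheta,\chi b\rangle$ for every $\chi$ in the dense family. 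The cross term $(\iota_{0,E}(B_\chi u),v_{n_j})\to 0$ follows directly from Lemma~\ref{lem-for-seq-ofnschfortheks}, while $(\iota_{0,E}(B_\chi v_{n_j}),u)\to 0$ follows from the weak convergence of $B_\chi u_{n_j}\to B_\chi u$ in the reflexive Fr\'echet space $\EE'^0_{L^c;K}(M;E^\#)$ (combining the boundedness coming from Proposition~\ref{pro-for-psudomapforcontonvectbac} and Corollary~\ref{cor-for-hwf-spadtoplincs} with convergence in $\DD'$) paired with the fixed element $\iota_E(u)$; a standard boundedness/density argument extends the conclusion from $\chi_k$ to all $\chi\in\Gamma_c(DM)$. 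The theorem is thereby reduced to showing, for every admissible $\chi$, the inequality $\operatorname{Re}\langle\vartheta,\chi b\rangle\ge 0$ and identity $\operatorname{Im}\langle\vartheta,\chi b\rangle=0$ in case (i), and the identity $\langle\vartheta,\chi b\rangle=0$ in case (ii).

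The key localisation property of $\vartheta$ comes from the hypothesis on $A$. Cover the compact set $[\supp b]\subseteq[L^c]$ by finitely many open conic subsets $W_1,\dots,W_N$ of $L^c$, on each of which the hypothesis furnishes a properly supported $A_i\in\Psi^r_{\phg}(M;E,F)$ with $\widetilde\sigma^r(A_i)_{|W_i}=a_{|W_i}$ and $\{A_iu_n\}$ relatively compact in $\DD'^{-r}_L(M;F)$. For every properly supported $Q\in\Psi^{-r}_{\phg}(M;F,E^\#)$ that is of order $-\infty$ outside $W_i\cap L^c$ and has compactly supported kernel, Proposition~\ref{pro-for-psudomapforcontonvectbac} applied to $Q$ together with the above relative compactness shows that $\{QA_iu_n\}$ is relatively compact in $L^2_\loc(M;E^\#)$, whence (after a further subsequence extraction absorbed into the diagonal above) $(\iota_{0,E}(QA_iv_{n_j}),v_{n_j})\to 0$; Theorem~\ref{ger-the-com-onmanifol} then forces
\[
\langle\vartheta,c\cdot a\rangle=0\quad\text{for every } c\in\Gamma_c(\pi^*_{S^*M}L(F,E^\#)_{[W_i]}),
\]
since $\sigma^0(QA_i)_{|W_i}=\sigma^0(Q)\cdot a$. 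Taking adjoints and using $\langle\vartheta,\psi^\#\rangle=\overline{\langle\vartheta,\psi\rangle}$ from Theorem~\ref{ger-the-com-onmanifol} likewise gives $\langle\vartheta,a^\#\cdot c'\rangle=0$ for the analogous $c'$.

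The final step is a pointwise algebraic decomposition of $\chi b$ over each $[W_i]$ matching this annihilation pattern. In case (ii), polarising the hypothesis yields $b(p,[\xi])(e)(e')=0$ whenever $e\in\ker a(p,\xi)$ or $e'\in\ker a(p,\xi)$, which allows one to write $\chi b=\alpha\,a+a^\#\beta$ on $[W_i]$ with $\alpha,\beta$ smooth compactly supported, so that combining with the previous paragraph and a partition of unity across the $W_i$ yields $\langle\vartheta,\chi b\rangle=0$. In case (i), the self-adjointness $b=b^*$ together with positive semi-definiteness on $\ker a$ permits a decomposition $\chi b=c^\#c+\alpha\,a+a^\#\alpha^\#$; the positivity property of $\vartheta$ in Theorem~\ref{ger-the-com-onmanifol} together with $\chi\ge 0$ then gives $\operatorname{Re}\langle\vartheta,\chi b\rangle=\langle\vartheta,c^\#c\rangle\ge 0$, while self-adjointness of $\chi b$ forces $\operatorname{Im}\langle\vartheta,\chi b\rangle=0$. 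The main technical obstacle will be producing these decompositions as smooth sections globally on $[\supp b]$, since $\dim\ker a(p,\xi)$ may jump; I expect this to be handled by reducing via a partition of unity to local pieces on which $a$ has locally constant rank (then invoking the constant rank theorem), or, following G\'erard's approach, by an explicit Moore--Penrose pseudo-inverse construction where $aa^\#$ is invertible together with a careful limiting argument near the jumps.
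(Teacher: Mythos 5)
Your overall strategy --- reducing to the microlocal defect measure $\vartheta$ of Theorem \ref{ger-the-com-onmanifol}, disposing of the cross terms with Lemma \ref{lem-for-seq-ofnschfortheks}, and using the relative compactness of $\{A_iu_n\}$ to show that $\vartheta$ annihilates symbols divisible by $a$ --- coincides with the paper's proof up to and including the localisation step. The genuine gap is the final symbolic step. First, the polarisation claim in case (ii) is incorrect: from $a(p,\xi)e=0\Rightarrow b(p,[\xi])(e)(e)=0$, complex polarisation only gives $b(p,[\xi])(e)(e')=0$ when \emph{both} $e,e'\in\ker a(p,\xi)$; it is false in general that $b$ vanishes as soon as one argument lies in the kernel (take $a=(1,0):\CC^2\to\CC$ and $b(e)(e')=e_2\overline{e'_1}$, which vanishes on the diagonal of $\ker a$ but not on $\ker a\times\CC^2$). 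More seriously, the smooth compactly supported factorisations $\chi b=\alpha\,a+a^{\#}\beta$ (case (ii)) and $\chi b=c^{\#}c+\alpha\,a+a^{\#}\alpha^{\#}$ (case (i)) need not exist: although the pointwise decomposition is available at each fixed $(p,\xi)$ (choosing a complement of $\ker a(p,\xi)$ and a left inverse of $a$ on it), $\ker a(p,\xi)$ can jump dimension, the constant-rank strata need not permit a partition-of-unity reduction compatible with the factorisation, and the Moore--Penrose pseudo-inverse is discontinuous exactly at the rank jumps; even nonnegative smooth matrix functions need not admit $\mathcal{C}^1$ square roots. So neither of the fixes you propose closes the gap.

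The paper avoids any factorisation, following G\'erard: (ii) is reduced to (i) by splitting $b$ into its Hermitian and anti-Hermitian parts $b_1=(b+b^*)/2$, $b_2=(b-b^*)/(2i)$ and applying (i) to $\pm b_1,\pm b_2$ (the imaginary-part identity in (i) is immediate from $\chi B-(\chi B)^*\in\Psi^{-1}_{\phg,c,L^c}(M;E,E^{\#})$ and Corollary \ref{cor-for-ope-defondinecomsetfort}). For the real-part inequality one works in a chart and uses a compactness argument (G\'erard's Lemma 2.3): the weak Legendre--Hadamard hypothesis implies that for every $\varepsilon>0$ there is $C_{\varepsilon}>0$ such that $\widetilde b+C_{\varepsilon}\widetilde a^*\widetilde a+\varepsilon I$ is positive semidefinite on the relevant compact subset of $[L^c]$. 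Pairing this with the positive measure $\widetilde\vartheta$, invoking the annihilation statement only in the single instance $\langle\widetilde\vartheta,\widetilde\chi\widetilde\psi\,\widetilde a^*\widetilde a\rangle=0$ (which your localisation argument does deliver), and letting $\varepsilon\to0^+$ gives $\langle\vartheta,\chi b\rangle\geq0$ with no smooth factorisation of $b$ through $a$ required. If you replace your final decomposition paragraph by this $\varepsilon$-regularised quadratic inequality and derive (ii) from (i) as above, the rest of your argument goes through.
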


\begin{proof} We start by pointing out that for all properly supported $Q\in\Psi^0_{\phg}(M;E,E^*)$ which are of order $-\infty$ in $L$ and all $\chi\in\Gamma_c(DM)$, we have $\chi Q\in\Psi^0_{\phg,c,L^c}(M;E,E^{\#})$.\\
\indent We claim that $(ii)$ follows from $(i)$. To see this, set $b_1:=(b+b^*)/2$ and $b_2:=(b-b^*)/(2i)$ and pick properly supported $B_1,B_2\in\Psi^0_{\phg}(M;E,E^*)$ which are of order $-\infty$ in $L$ and $\sigma^0(B_j)=b_j$, $j=1,2$. In view of the assumption in $(ii)$, we can apply $(i)$ with $\pm b_1$ and $\pm b_2$. Applying it with $b_1$ and $-b_1$ we infer that for every $\chi\in\Gamma_c(DM)$ which is nonnegative at every point it holds that
\begin{equation}\label{lim-for-ide-ofmeaweithcsvk}
\lim_{n\rightarrow \infty}\langle \mathcal{P}(B_1u_n,u_n),\chi\rangle=\langle \mathcal{P}(B_1u,u),\chi\rangle.
\end{equation}
When $\chi\in\Gamma_c(DM)$ is real-valued, pick $\chi'\in\Gamma_c(DM)$ which is nonnegative at every point and such that $\chi'-\chi$ is nonnegative at every point and apply \eqref{lim-for-ide-ofmeaweithcsvk} with $\chi'$ and $\chi'-\chi$ to deduce that \eqref{lim-for-ide-ofmeaweithcsvk} is valid also for $\chi$. Employing this to the real and imaginary part of general $\chi\in\Gamma_c(DM)$, we deduce $\mathcal{P}(B_1u_n,u_n)\rightarrow \mathcal{P}(B_1u,u)$ in $\DD'(M)$ since every weakly convergent sequence is strongly convergent in $\DD'(M)$. Arguing analogously for $b_2$, we deduce the claim in $(ii)$ for $B_1+iB_2$. Since for each $\chi\in\Gamma_c(DM)$, $\chi(B_1+iB_2-B)\in\Psi^{-1}_{\phg,c,L^c}(M;E,E^{\#})$, we can invoke Corollary \ref{cor-for-ope-defondinecomsetfort} to deduce the claim for $B$.\\
\indent We now show $(i)$. Since
$$
\operatorname{Im}\langle \mathcal{P}(Bu_n,u_n),\chi\rangle=\operatorname{Im}(\iota_{0,E}(\chi Bu_n),u_n)=(2i)^{-1}\left(\iota_{0,E}((\chi B-(\chi B)^*)u_n),u_n\right)
$$
and $\chi B-(\chi B)^*\in\Psi^{-1}_{\phg,c,L^c}(M;E,E^{\#})$, Corollary \ref{cor-for-ope-defondinecomsetfort} verifies the second identity in $(i)$. To show the first inequality, pick a subsequence $(u_{n_j})_{j\in\ZZ_+}$ such that
$$
\liminf_{n\rightarrow \infty}\operatorname{Re}\langle \mathcal{P}(Bu_n,u_n),\chi\rangle=\lim_{j\rightarrow \infty}\operatorname{Re}\langle \mathcal{P}(Bu_{n_j},u_{n_j}),\chi\rangle.
$$
We apply Theorem \ref{ger-the-com-onmanifol} to $(u_{n_j})_{j\in\ZZ_+}$ to find $\vartheta\in (\Gamma^0_c(\pi_{S^*M}^*L(E,E^{\#})_{[L^c]}))'$ and Lemma \ref{lem-for-seq-ofnschfortheks} to $(u_{n_j})_{j\in\ZZ_+}$ and $\chi b$ to extract a subsequence of it, still denoted by $(u_{n_j})_{j\in\ZZ_+}$, which satisfies all of the properties stated in the lemma\footnote{We point out that $\vartheta$ depends on the subsequence chosen for the $\liminf$ and hence it depends on $B$ and $\chi$, however, this will not matter since we will only work with the last extracted subsequence.}. Since $\sigma^0((\chi B)^*)=(\chi b)^{\#}=\chi b=\sigma^0(\chi B)$ and \eqref{ide-for-the-ofgermars} is satisfied with $\chi B$, we infer
\begin{align*}
\langle\vartheta,\chi b\rangle=\operatorname{Re}\langle\vartheta,\chi b\rangle&=\lim_{j\rightarrow\infty}\Big(\operatorname{Re}(\iota_{0,E}(\chi Bu_{n_j}),u_{n_j})-\operatorname{Re}(\iota_{0,E}(\chi Bu),u_{n_j})\\
&{}\quad-\operatorname{Re}\overline{\left(\iota_{0,E}((\chi B)^*u),u_{n_j}\right)}+\operatorname{Re}(\iota_{0,E}(\chi Bu),u)\Big)\\
&=\lim_{j\rightarrow\infty}\operatorname{Re}(\iota_{0,E}(\chi Bu_{n_j}),u_{n_j})-\operatorname{Re}(\iota_{0,E}(\chi Bu),u).
\end{align*}
Thus, it suffices to show that $\langle\vartheta,\chi b\rangle\geq 0$; we show that this holds for all $\chi\in\Gamma_c(DM)$ which are nonnegative at every point. By employing a partition of unity, we see that it is enough to prove this only for such $\chi$ which also satisfy $\supp\chi \subseteq O$ where $(O,x)$ is a relatively compact chart over which $E$ and $F$ locally trivialise via $\Phi$ and $\Phi'$. Let $L_O$ be as in Remark \ref{rem-for-mea-posdefonskt}. If $(L_O)^c=\emptyset$, then the claim is trivial since $\chi b=0$. Assume that $(L_O)^c\neq \emptyset$. Write $\chi:=\widetilde{\chi}\circ x\,\lambda^x$, $a_{|L^c\cap T^*O}=\kappa^*(\widetilde{a}^l_j)\pi^*_{T^*M}(e'^j\otimes s_l)$ and $b_{|\pi^{-1}_{S^*M}(O)}=\hat{\kappa}^*(\widetilde{b}_{l,j})\pi^*_{S^*M}(e'^j\otimes e^{*\,l})$ with $\widetilde{\chi}\in\DD(x(O))$, $\widetilde{a}^l_j\in\mathcal{C}^{\infty}((L_O)^c)$ and $\widetilde{b}_{l,j}\in\mathcal{C}^{\infty}(x(O)\times\mathbb{S}^{m-1})$ and $(e_1,\ldots,e_k)$ and $(s_1,\ldots,s_{k'})$ the local frames for $E$ and $F$ over $O$ induced by $\Phi$ and $\Phi'$ and $(e'^1,\ldots,e'^k)$ and $(e^{*\,1},\ldots,e^{*\,k})$ the corresponding dual and anti-dual frames for $E'$ and $E^*$ over $O$; of course, $\kappa$ and $\hat{\kappa}$ are defined as in \eqref{loc-tri-ofb-undimatotcotbusks} and \eqref{dif-for-cos-bunovermanc}. Set $\widetilde{a}:=(\widetilde{a}^l_j)_{l,j}$ and $\widetilde{b}:=(\widetilde{b}_{l,j})_{l,j}$ and let $\widetilde{\vartheta}:=(\widetilde{\vartheta}^{l,j})_{l,j}$ be as in Remark \ref{rem-for-mea-posdefonskt}. Pick nonnegative $\widetilde{\psi}\in\DD([(L_O)^c])$ such that $\widetilde{\psi}=1$ on a neighbourhood of $(\supp\widetilde{\chi}\times \mathbb{S}^{m-1})\cap \supp\widetilde{b}$ and set $\psi:=\hat{\kappa}^*(\widetilde{\psi})\in\DD([L^c]\cap\pi^{-1}_{S^*M}(O))$. By employing the assumption in $(i)$, in the same way as in the proof of \cite[Lemma 2.3]{Ger}, one can show that for every $\varepsilon>0$ there is $C_{\varepsilon}>0$ such that
$$
\widetilde{\varphi}_{\varepsilon}: [(L_O)^c]\rightarrow\M_k(\CC),\quad \widetilde{\varphi}_{\varepsilon}(t,\omega):= \widetilde{b}(t,\omega)+C_{\varepsilon}\widetilde{a}(t,\omega)^*\widetilde{a}(t,\omega)+\varepsilon I,
$$
with $I$ the identity $k\times k$ matrix, is smooth and positive semi-definite at every point of $\supp\widetilde{\psi}$. The properties of $\widetilde{\vartheta}$ imply
$$
0\leq \langle \widetilde{\vartheta},\widetilde{\chi}\widetilde{\psi}\widetilde{\varphi}_{\varepsilon}\rangle=\langle \vartheta,\chi b\rangle+C_{\varepsilon}\langle\widetilde{\vartheta}, \widetilde{a}^*\widetilde{a}\widetilde{\psi}\widetilde{\chi}\rangle+\varepsilon\langle\widetilde{\vartheta}, \widetilde{\chi}\widetilde{\psi}I\rangle.
$$
We claim that $\langle\widetilde{\vartheta}, \widetilde{a}^*\widetilde{a}\widetilde{\psi}\widetilde{\chi}\rangle=0$. Once we show this, the inequality in $(i)$ follows by letting $\varepsilon\rightarrow0^+$ in the above inequality. We employ the assumption in the theorem to find open conic sets $W_1,\ldots, W_d\subseteq L^c\cap T^*O$ satisfying $\supp\psi\subseteq \bigcup_{\mu=1}^d [W_{\mu}]$ and properly supported $A_1,\ldots,A_d\in\Psi^r_{\phg}(M;E,F)$ such that $a_{\mu}:=\widetilde{\sigma}^r(A_{\mu})$ coincides with $a$ on $W_{\mu}$, $\mu=1,\ldots,d$. Pick nonnegative $\psi_{\mu}\in\DD([W_{\mu}])$ such that $\sum_{\mu=1}^d\psi_{\mu}^2=1$ on $\supp\psi$. For each $\mu\in\{1,\ldots,d\}$, define $\phi_{\mu}(t,\xi):=|\xi|^{-r}\psi_{\mu}\circ\hat{\kappa}^{-1}(t,\xi/|\xi|)$, $(t,\xi)\in x(O)\times(\RR^m\backslash\{0\})$. Notice that $\sum_{j=1}^{k'}\kappa^*(\phi_{\mu})\pi^*_{T^*M}(s'^j\otimes s_j)\in\Gamma_{\hom,-r}(\pi^*_{T^*M}L(F,F)_{T^*M\backslash0})$ and pick properly supported $B_{\mu}\in\Psi^{-r}_{\phg}(M;F,F)$ such that $\widetilde{\sigma}^{-r}(B_{\mu})$ is this section; of course, $(s'^1,\ldots,s'^{k'})$ is the dual frame for $F'$ induced by $\Phi'$. Finally, choose $B_0\in\Psi^0_{\phg,c,L^c}(M;F,F^{\#})$ such that $\sigma^0(B_0)=\sum_{j=1}^{k'}\psi\pi^*_{S^*M}(s'^j\otimes \sigma^j)$ with $(\sigma^1,\ldots,\sigma^{k'})$ the frame for $F^{\#}$ induced by $\Phi'$. Then $B_{\mu}A_{\mu}\in\Psi^0_{\phg}(M;E,F)$ and $(B_{\mu}A_{\mu})^*B_0(B_{\mu}A_{\mu})\in\Psi^0_{\phg,c,L^c}(M;E,E^{\#})$. Notice that
$$
\sigma^0(B_{\mu}A_{\mu})=\psi_{\mu}a_{\mu,l}^j\circ\kappa^{-1}\circ \hat{\kappa}\,\pi^*_{S^*M}(e'^l\otimes s_j),\quad \mbox{where}\quad a_{\mu\,|T^*O\backslash0}=a_{\mu,l}^j\pi^*_{T^*M}(e'^l\otimes s_j).
$$
Hence, denoting by $(\epsilon^1,\ldots,\epsilon^k)$ the frame for $E^{\#}$ induced by $\Phi$, we infer
$$
\sigma^0((B_{\mu}A_{\mu})^*B_0(B_{\mu}A_{\mu}))=\psi\psi_{\mu}^2\sum_{h=1}^{k'} (\overline{a^h_{\mu,j}}a^h_{\mu,l})\circ\kappa^{-1}\circ \hat{\kappa}\, \pi^*_{S^*M}(e'^l\otimes \epsilon^j).
$$
The assumptions in the theorem together with Corollary \ref{cor-for-ope-defondinecomsetfort} imply that $\{(B_{\mu}A_{\mu})^*B_0(B_{\mu}A_{\mu})u_n\}_{n\in\ZZ_+}$ is relatively compact in $\EE'^0_{L^c}(M;E^{\#})$ and consequently
\begin{align*}
0&=\lim_{j\rightarrow\infty}\sum_{\mu=1}^d\left(\iota_{0,E}\left((\widetilde{\chi}\circ x)(B_{\mu}A_{\mu})^*B_0(B_{\mu}A_{\mu})(u_{n_j}-u)\right),u_{n_j}-u\right)\\
&=\sum_{\mu=1}^d\sum_{h=1}^{k'}\left\langle\vartheta,(\widetilde{\chi}\circ x)\psi\psi_{\mu}^2(\overline{\widetilde{a}^h_j}\widetilde{a}^h_l)\circ \hat{\kappa}\, \pi^*_{S^*M}(e'^l\otimes \epsilon^j)\right\rangle= \sum_{h=1}^{k'}\left\langle\widetilde{\vartheta}^{j,l},\widetilde{\chi}\widetilde{\psi}\overline{\widetilde{a}^h_j}\widetilde{a}^h_l \right\rangle=\langle\widetilde{\vartheta},\widetilde{\chi}\widetilde{\psi}\widetilde{a}^*\widetilde{a}\rangle.
\end{align*}
This completes the proof of the theorem.
\end{proof}

We end the article with a typical application of the compensated compactness theorem to $\DD'$-sequential continuity of quadratic forms on the set of solutions of second order PDEs; see \cite{G1,Ger,jik-koz-ole,Tar1}.

\begin{example}\label{exa-for-equ-withsecordvecfonsk}
As before, $M$ is an $m$-dimensional manifold. Let $\mathcal{V}_1,\ldots,\mathcal{V}_n\in\Gamma(TM)$ (they are real-valued by definition!). Let $A_{j,l}\in\Psi^0_{\phg}(M)$, $j,l=1,\ldots,n$, and $A_1\in\Psi^1(M)$ be properly supported. Let $v,v^{(k)},f^{(k)}\in \DD'(M)$, $k\in\ZZ_+$, satisfy
$$
\sum_{j,l=1}^n \mathcal{V}_jA_{j,l}\mathcal{V}_l v^{(k)}+A_1v^{(k)}=f^{(k)},\,\, k\in\ZZ_+,\quad\mbox{and}\quad v^{(k)}\rightarrow v\,\,\mbox{in}\,\, \DD'(M).
$$
Denote $L_0:=\{(p,\xi)\in T^*M\backslash0\,|\, \xi(\mathcal{V}_j(p))=0,\, j=1,\ldots,n\}$; clearly $L_0$ is a closed conic subset of $T^*M\backslash0$. Let $L_1$ and $L_2$ be closed conic subset of $T^*M\backslash0$ which satisfy $WF^{-1}_c(\{f^{(k)}\}_{k\in\ZZ_+})\subseteq L_1$, $v\in\DD'^1_{L_2}(M)$ and $\{v^{(k)}\}_{k\in\ZZ_+}$ is a bounded subset of $\DD'^1_{L_2}(M)$. Setting $L:=L_0\cup L_1\cup L_2$, we claim that for every properly supported $B_0\in\Psi^0_{\phg}(M)$ which is of order $-\infty$ in $L$, it holds that
\begin{align}
\sum_{j,l=1}^n\mathcal{P}(A_{j,l}B_0(\mathcal{V}_lv^{(k)}),\mathcal{V}_jv^{(k)})&\rightarrow \sum_{j,l=1}^n\mathcal{P}(A_{j,l}B_0(\mathcal{V}_lv),\mathcal{V}_jv),\,\, \mbox{as}\,\, k\rightarrow\infty\,\, \mbox{in}\,\, \DD'(M);\label{equ-for-con-ofgralikoffoks}\\
\sum_{j,l=1}^n\mathcal{P}(B_0A_{j,l}(\mathcal{V}_lv^{(k)}),\mathcal{V}_jv^{(k)})&\rightarrow \sum_{j,l=1}^n\mathcal{P}(B_0A_{j,l}(\mathcal{V}_lv),\mathcal{V}_jv),\,\, \mbox{as}\,\, k\rightarrow\infty\,\, \mbox{in}\,\, \DD'(M).\label{equ-for-con-ofgralikoffoks1111}
\end{align}
Before we show the claims, we point out several things. First, if $L=T^*M\backslash 0$, the claims are trivial since in this case $B_0$ is a properly supported $\Psi$DO in $\Psi^{-\infty}(M)$; of course this always holds since we can always take $L_1=L_2=T^*M\backslash0$, but the result is meaningless! When this is not the case, $L\neq T^*M\backslash 0$ imposes real restrictions on $\{f_k\}_{k\in\ZZ_+}$ and $\{v_k\}_{k\in\ZZ_+}$ and the claims state that the convergence will hold once we employ a $\Psi$DO that removes the bad part of $T^*M$: the annihilators of the spaces spanned by the vector fields at every point\footnote{If $M$ has a (pseudo-)Riemannian metric, these can be identified with the orthogonal complements of the spaces spanned by the vector fields.} (the set $L_0$), the place where $\{f_k\}_{k\in\ZZ_+}$ are too singular or do not behave as a relatively compact subset of the Sobolev- $-1$ distributions (the set $L_1$, cf. Corollary \ref{cor-for-relcom-sub-wafe-fronchar}) and the place where $v$ and $\{v_k\}_{k\in\ZZ_+}$ are too singular or do not behave like a bounded subset of the Sobolev- $1$ distributions (the set $L_2$). Finally, $A_1 v^{(k)}$ is irrelevant since it can be absorbed in the right hand side in view of Remark \ref{rem-for-bou-compsetforrellem}.\\
\indent Notice that \eqref{equ-for-con-ofgralikoffoks1111} follows from \eqref{equ-for-con-ofgralikoffoks} in view of Corollary \ref{cor-for-ope-defondinecomsetfort} and the definition of $\mathcal{P}$ since $\chi(B_0A_{j,l}-A_{j,l}B_0)\in\Psi^{-1}_{\phg,c,L^c}(M;\CC_M,DM)$, for all $\chi\in\Gamma_c(DM)$. To prove \eqref{equ-for-con-ofgralikoffoks}, as we pointed out, we may assume $L\neq T^*M\backslash0$. Set $a_{j,l}:=\sigma^0(A_{j,l})\in \mathcal{C}^{\infty}(S^*M)$. Let $B_0$ be as in the claim and denote $b_0:=\sigma^0(B_0)\in \mathcal{C}^{\infty}(S^*M)$. Notice that $\supp b_0\subseteq [L^c]$. Set $u^{(k)}:=(\mathcal{V}_1v^{(k)},\ldots,\mathcal{V}_nv^{(k)})\in\DD'^0_L(M;\CC_M^n)$, $k\in\ZZ_+$, and $u:=(\mathcal{V}_1v,\ldots,\mathcal{V}_nv)\in\DD'^0_L(M;\CC_M^n)$. Consider the properly supported $\Psi$DO $\widetilde{A}\in\Psi^1_{\phg}(M;\CC_M^n,\CC_M^{(1+n(n-1)/2)})$ defined as follows: the terms in the last row in the $(1+n(n-1)/2)\times n$ matrix of $\widetilde{A}$ are $\sum_{j=1}^n \mathcal{V}_jA_{j,1},\ldots,\sum_{j=1}^n\mathcal{V}_j A_{j,n}$ and all of the previous rows are $(0,\ldots,0,\mathcal{V}_j,0,\ldots,0,-\mathcal{V}_l,0,\ldots,0)$ where $\mathcal{V}_j$ is on $l$-th place and $-\mathcal{V}_l$ is on $j$-th place, $1\leq l<j\leq n$. We are going to apply Theorem \ref{com-cpc-the-formanbunflskslt} with this $\widetilde{A}$, $\widetilde{a}:=\widetilde{\sigma}^1(\widetilde{A})$ and $b:=(b_0a_{j,l})_{j,l}\in\mathcal{C}^{\infty}(S^*M;\M_n(\CC))$; clearly $\supp b\subseteq [L^c]$. We check that the conditions are satisfied. Notice that $\widetilde{A} u^{(k)}$ is given as follows: the last entry is $f^{(k)}-A_1v^{(k)}$, while the previous are $[\mathcal{V}_j,\mathcal{V}_l]v^{(k)}$, $1\leq l<j\leq n$. The assumptions together with Corollary \ref{cor-for-relcom-sub-wafe-fronchar} and Remark \ref{rem-for-bou-compsetforrellem} imply that $\{f^{(k)}-A_1v^{(k)}\}_{k\in\ZZ_+}$ is a relatively compact subset of $\DD'^{-1}_L(M)$. Since $[\mathcal{V}_j,\mathcal{V}_l]\in\Psi^1_{\phg}(M)$, Remark \ref{rem-for-bou-compsetforrellem} also implies that $\{[\mathcal{V}_j,\mathcal{V}_l]v^{(k)}\}_{k\in\ZZ_+}$ is relatively compact in $\DD'^{-1}_L(M)$. Consequently, $\{\widetilde{A}u^{(k)}\}_{k\in\ZZ_+}$ is relatively compact in $\DD'^{-1}_L(M;\CC_M^{1+n(n-1)/2})$. Let $(p,\xi)\in L^c$ and $z=(z^1,\ldots,z^n)\in\CC^n$ be such that $\widetilde{a}(p,\xi)z=0$. Pick a chart $(O,x)$ about $p$ and write $\mathcal{V}_j=\mathcal{V}^l_j\frac{\partial}{\partial x^l}$ and $\xi=\xi_ldx^l|_p$. Set $c_j(p,\xi):=\mathcal{V}^l_j(p)\xi_l$ and notice that $\widetilde{a}(p,\xi)z=0$ implies that $c_j(p,\xi)z^l=c_l(p,\xi)z^j$, $j\neq l$, and $\sum_{j,l=1}^n a_{j,l}(p,[\xi])c_j(p,\xi)z^l=0$. At least one $c_j(p,\xi)\neq 0$ since $(p,\xi)\not\in L_0$. Consequently, the first equations imply $z^j=\zeta c_j(p,\xi)$, $j=1,\ldots,n$, for some $\zeta\in\CC$. If $\zeta\neq0$, plugging this in the last equation, we infer $\sum_{j,l=1}^n a_{j,l}(p,[\xi])z^l\overline{z^j}=0$ (recall, $c_j(p,\xi)\in\RR$, $j=1,\ldots,n$), i.e. $b(p,[\xi])z\cdot \overline{z}=0$. When $\zeta=0$, we have $z=0$ and hence $b(p,[\xi])z\cdot \overline{z}=0$. Thus, we can apply Theorem \ref{com-cpc-the-formanbunflskslt} to deduce \eqref{equ-for-con-ofgralikoffoks}.
\end{example}

In a similar way, one can apply Theorem \ref{com-cpc-the-formanbunflskslt} to generalise other results where the standard compensated compactness theorem plays a key role. We leave such investigations for future research and conclude the article with the following consequence of Example \ref{exa-for-equ-withsecordvecfonsk}.

\begin{example}
Let $(M,g)$ be a pseudo-Riemannian manifold. Let $A\in\Psi^0_{\phg}(M;TM\otimes \CC,TM\otimes\CC)$ and $A_1\in\Psi^1(M)$ be properly supported and let $v,v^{(k)},f^{(k)}\in \DD'(M)$, $k\in\ZZ_+$, satisfy
\begin{equation}\label{equ-for-con-onquadratiformonsol}
\operatorname{div}_g(A(\operatorname{grad}_g v^{(k)}))+A_1v^{(k)}=f^{(k)},\,\, k\in\ZZ_+,\quad \mbox{and}\quad v^{(k)}\rightarrow v\,\, \mbox{in}\,\, \DD'(M).
\end{equation}
Let $L_1$ and $L_2$ be closed conic subsets of $T^*M\backslash0$ which satisfy $WF^{-1}_c(\{f^{(k)}\}_{k\in\ZZ_+})\subseteq L_1$, $v\in\DD'^1_{L_2}(M)$ and $\{v^{(k)}\}_{k\in\ZZ_+}$ is a bounded subset of $\DD'^1_{L_2}(M)$. Setting $L:=L_1\cup L_2$, we claim that for every properly supported $B\in\Psi^0_{\phg}(M;TM\otimes \CC,TM\otimes \CC)$ which is of order $-\infty$ in $L$ and such that $\sigma^0(B)=b\operatorname{I}_{TM\otimes \CC}$ with $b\in\mathcal{C}^{\infty}(S^*M)$, it holds that
\begin{equation}\label{equ-for-con-ofgralikoffoks11}
\mathcal{P}(BA(\operatorname{grad}_gv^{(k)}),dv^{(k)})\rightarrow \mathcal{P}(BA(\operatorname{grad}_gv),dv),\quad \mbox{as}\,\, k\rightarrow\infty\,\, \mbox{in}\,\, \DD'(M).
\end{equation}
Notice that all terms are well-defined elements of $\DD'(M)$ in view of Proposition \ref{pro-for-bul-mapextprodofdisonspk} (cf. Remark \ref{rem-for-exi-ofprodofdisindwiespa}) since the anti-dual bundle of $T^*M\otimes\CC$ is canonically identified with $TM\otimes\CC$. As before, we may assume that $L\neq T^*M\backslash0$ since the claim is trivial if $L=T^*M\backslash0$. Let $B$ and $b$ be as in the claim; clearly $\supp b\subseteq [L^c]$. Pick properly supported $B'\in\Psi^0_{\phg}(M)$ such that $\sigma^0(B')=b$ and is of order $-\infty$ in $L$. Let $\{(O_{\mu},x_{\mu})\}_{\mu\in\ZZ_+}$ be a locally finite cover of $M$ of relatively compact charts. Let $(\varphi_{\mu})_{\mu\in\ZZ_+}$ be a partition of unity subordinated to this cover and choose $\varphi'_{\mu},\varphi''_{\mu}\in\DD(O_{\mu})$ such that $\varphi'_{\mu}=1$ on a neighbourhood of $\supp\varphi_{\mu}$ and $\varphi''_{\mu}=1$ on a neighbourhood of $\supp\varphi'_{\mu}$. Notice that the operator $B'_{\mu}:\DD(O_{\mu})\rightarrow \DD(O_{\mu})$, $B'_{\mu}\psi:= \varphi_{\mu} B'(\varphi'_{\mu}\psi)$, belongs to $\Psi^0_{\phg,c}(O_{\mu})$ and is of order $-\infty$ in $T^*O_{\mu}\cap L$. Furthermore, the operator $A^j_{\mu,l}:\DD(O_{\mu})\rightarrow \DD(O_{\mu})$, $A^j_{\mu,l}(\psi):=dx^j_{\mu}(\varphi'_{\mu}A(\varphi''_{\mu}\psi\frac{\partial}{\partial x^l_{\mu}}))$, belongs to $\Psi^0_{\phg,c}(O_{\mu})$. Write $g_{|O_{\mu}}=g_{\mu,j,l}dx_{\mu}^j\otimes dx_{\mu}^l$, $|g_{\mu}|:=|\det (g_{\mu,j,l})_{j,l}|$ and denote by $(g^{j,l}_{\mu})_{j,l}$ the inverse of $(g_{\mu,j,l})_{j,l}$. Notice that
\begin{equation*}
\operatorname{div}_g\left(\varphi'_{\mu}A(\varphi''_{\mu}\operatorname{grad}_gv^{(k)})\right)= \operatorname{div}_g(\varphi'_{\mu}A(\operatorname{grad}_gv^{(k)})) - \operatorname{div}_g\left(\varphi'_{\mu}A((1-\varphi''_{\mu})\operatorname{grad}_gv^{(k)})\right).
\end{equation*}
The set $\{\operatorname{div}_g(\varphi'_{\mu}A((1-\varphi''_{\mu})\operatorname{grad}_gv^{(k)}))\}_{k\in\ZZ_+}$ is relatively compact in $\DD'^{-1}_L(M)$ since $\varphi'_{\mu}A(1-\varphi''_{\mu})$ has smooth compactly supported kernel. For $u\in \DD'(M;TM\otimes\CC)$, we employ the notation $u^j_{\mu}\in\DD'(O_{\mu})$, $\langle u^j_{\mu},\varphi\rangle:=\langle u,\varphi dx^j_{\mu}\rangle$, $\varphi\in\Gamma_c(DO_{\mu})$. Notice that
$$
\operatorname{div}_g(\varphi'_{\mu}A(\operatorname{grad}_gv^{(k)})) = \frac{\partial\varphi'_{\mu}}{\partial x_{\mu}^j} \left(A(\operatorname{grad}_g v^{(k)})\right)^j_{\mu} + \varphi'_{\mu}\operatorname{div}_g(A(\operatorname{grad}_gv^{(k)})),\,\, \mbox{on}\,\, O_{\mu}.
$$
Hence, in view of the assumptions and Remark \ref{rem-for-bou-compsetforrellem}, we conclude that $\{\operatorname{div}_g(\varphi'_{\mu}A(\varphi''_{\mu}\operatorname{grad}_gv^{(k)}))\}_{k\in\ZZ_+}$ is a relatively compact subset of $\DD'^{-1}_{T^*O_{\mu}\cap L}(O_{\mu})$. Since
$$
\operatorname{div}_g(\varphi'_{\mu}A(\varphi''_{\mu}\operatorname{grad}_gv^{(k)}))= |g_{\mu}|^{-1/2}\frac{\partial}{\partial x_{\mu}^j}\left(|g_{\mu}|^{1/2}A^j_{\mu,l}\left(g_{\mu}^{l,h}\frac{\partial}{\partial x_{\mu}^h}v^{(k)}\right)\right),\quad \mbox{on}\quad O_{\mu},
$$
we infer that $\frac{\partial}{\partial x_{\mu}^j}\left(A^j_{\mu,l}(g_{\mu}^{l,h}\frac{\partial}{\partial x_{\mu}^h}v^{(k)})\right)$ is a relatively compact subset of $\DD'^{-1}_{T^*O_{\mu}\cap L}(O_{\mu})$. Hence, we can apply the claim \eqref{equ-for-con-ofgralikoffoks1111} from Example \ref{exa-for-equ-withsecordvecfonsk} with $B'_{\mu}$ to deduce
$$
\mathcal{P}\left(B'_{\mu}A^j_{\mu,l}\left(g^{l,h}_{\mu}\frac{\partial}{\partial x_{\mu}^h}v^{(k)}\right),\frac{\partial}{\partial x_{\mu}^j}v^{(k)}\right)\rightarrow \mathcal{P}\left(B'_{\mu}A^j_{\mu,l}\left(g^{l,h}_{\mu}\frac{\partial}{\partial x_{\mu}^h}v\right),\frac{\partial}{\partial x_{\mu}^j}v\right),\,\, \mbox{in}\,\, \DD'(O_{\mu}).
$$
Consider the $\Psi$DO $\widetilde{B}_{\mu}:\Gamma_c(TO_{\mu}\otimes\CC)\rightarrow \Gamma_c(TO_{\mu}\otimes\CC)$, $\widetilde{B}_{\mu}(\chi^j\frac{\partial}{\partial x_{\mu}^j})=B'_{\mu}(\chi^j)\frac{\partial}{\partial x_{\mu}^j}$. Of course, it is in $\Psi^0_{\phg,c}(O_{\mu};TO_{\mu}\otimes\CC, TO_{\mu}\otimes \CC)$ and is of order $-\infty$ in $T^*O_{\mu}\cap L$. In view of the definition of $\mathcal{P}$, we infer
\begin{equation}\label{equ-for-con-ofpsonsmalsetbigsetkls}
\mathcal{P}\left(\widetilde{B}_{\mu}\left(\varphi'_{\mu}A(\varphi''_{\mu}\operatorname{grad}_gv^{(k)})\right), \varphi'_{\mu}dv^{(k)}\right)\rightarrow \mathcal{P}\left(\widetilde{B}_{\mu}\left(\varphi'_{\mu}A(\varphi''_{\mu}\operatorname{grad}_gv)\right), \varphi'_{\mu}dv\right),\,\,\mbox{in}\,\, \DD'(O_{\mu}).
\end{equation}
Since $\sigma^0(\varphi_{\mu} B)=\sigma^0(\widetilde{B}_{\mu})$ in $T^*O_{\mu}$, we have $\varphi_{\mu} B-\widetilde{B}_{\mu}\in\Psi^{-1}_{\phg,c}(O_{\mu};TO_{\mu}\otimes\CC,TO_{\mu}\otimes\CC)$ and is of order $-\infty$ in $T^*O_{\mu}\cap L$. Hence, Corollary \ref{cor-for-ope-defondinecomsetfort} implies that \eqref{equ-for-con-ofpsonsmalsetbigsetkls} holds true but with $\varphi_{\mu} B$ in place of $\widetilde{B}_{\mu}$. The operator $\varphi_{\mu}B(1-\varphi'_{\mu})$ has smooth compactly supported kernel, whence
$$
\mathcal{P}(\varphi_{\mu}BA(\varphi''_{\mu}\operatorname{grad}_gv^{(k)}),\varphi'_{\mu}dv^{(k)})\rightarrow \mathcal{P}(\varphi_{\mu}BA(\varphi''_{\mu}\operatorname{grad}_gv),\varphi'_{\mu}dv),\quad\mbox{in}\quad \DD'(M).
$$
Similarly, as $\varphi_{\mu}BA(1-\varphi''_{\mu})$ has a smooth compactly supported kernel, we infer
$$
\mathcal{P}(\varphi_{\mu}BA(\operatorname{grad}_gv^{(k)}),\varphi'_{\mu}dv^{(k)})\rightarrow \mathcal{P}(\varphi_{\mu}BA(\operatorname{grad}_gv),\varphi'_{\mu}dv),\quad\mbox{in}\quad \DD'(M).
$$
In view of the definition of $\mathcal{P}$, this immediately implies \eqref{equ-for-con-ofgralikoffoks11}.
\end{example}

In the special case when $A=\operatorname{Id}$, \eqref{equ-for-con-onquadratiformonsol} becomes
\begin{equation}\label{equ-for-con-onquadratiformonsol11}
\Delta_gv^{(k)}+A_1v^{(k)}=f^{(k)},\,\, k\in\ZZ_+,\quad \mbox{and}\quad v^{(k)}\rightarrow v\,\, \mbox{in}\,\, \DD'(M),
\end{equation}
where $\Delta_g=\operatorname{div}_g\operatorname{grad}_g$ is the geometric Laplacian. In this case the claim is
\begin{equation}\label{equ-for-con-ofgralikoffoks22}
\mathcal{P}(B(\operatorname{grad}_gv^{(k)}),dv^{(k)})\rightarrow \mathcal{P}(B(\operatorname{grad}_gv),dv),\quad \mbox{as}\,\, k\rightarrow\infty\,\, \mbox{in}\,\, \DD'(M).
\end{equation}
For real valued $\psi$, notice that $\mathcal{P}(\operatorname{grad}_g\psi,d\psi)=g(\operatorname{grad}_g\psi,\operatorname{grad}_g\psi)$. Hence, in the special case of a $1+3$ Lorentzian manifold, the quadratic form in \eqref{equ-for-con-ofgralikoffoks22} is a pseudo-differential modification of the Lagrangian of $v$ without an external potential.

\appendix
\section{The optimality of the conditions in Theorem \ref{the-pul-bac-for-smcrmdiff}}\label{app-for-cou-foroptthpulbacsmom}

\begin{proposition}
Let $n,m,k\in \ZZ_+$ be such that $k\leq m$ and $k<n$ and define the following linear map of rank $k$:
$$
f:\RR^m\rightarrow \RR^n,\quad f(x_1,\ldots,x_m)=(x_1,\ldots,x_k,0,\ldots,0).
$$
\begin{itemize}
\item[$(i)$] The map $f^*:\mathcal{C}^{\infty}(\RR^n)\rightarrow \mathcal{C}^{\infty}(\RR^m)$ does not extend to a continuous mapping $f^*:H^{r_2}_{\loc}(\RR^n)\rightarrow\DD'(\RR^m)$ for any $r_2<(n-k)/2$.
\item[$(ii)$] The map $f^*:\mathcal{C}^{\infty}(\RR^n)\rightarrow \mathcal{C}^{\infty}(\RR^m)$ does not extend to a continuous mapping $f^*:H^{r_2}_{\loc}(\RR^n)\rightarrow\DD'^{r_1}_{\widetilde{L}}(\RR^m)$ if $r_2-r_1<(n-k)/2$ for any closed conic subset $\widetilde{L}$ of $\RR^m\times (\RR^m\backslash\{0\})$ which satisfies
    \begin{equation}\label{con-for-non-exofesxonconsofr}
    \left(\{0_m\}\times ((\RR^k\times\{0_{m-k}\})\backslash\{0_m\})\right)\backslash \widetilde{L}\neq \emptyset.
    \end{equation}
    In particular, $f^*$ does not extend to a continuous mapping $f^*:\DD'^{r_2}_L(\RR^n)\rightarrow\DD'^{r_1}_{f^*L}(\RR^m)$ if $r_2-r_1<(n-k)/2$ for any closed conic subset $L$ of $\RR^n\times (\RR^n\backslash\{0\})$ which satisfies $L\cap \mathcal{N}_f=\emptyset$ and $\left(\{0_n\}\times (\{\eta'_0\}\times\RR^{n-k})\right)\cap L= \emptyset$ for some $\eta'_0\in\RR^k\backslash\{0\}$.
\end{itemize}
\end{proposition}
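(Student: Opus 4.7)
The strategy is to construct, for each pair $(r_1,r_2)$ violating the hypotheses of Theorem \ref{the-pul-bac-for-smcrmdiff}, an explicit sequence $(u_j)_{j\in\ZZ_+}\subseteq\mathcal{C}_c^\infty(\RR^n)$ which converges to zero in the source topology while $(f^*u_j)_{j\in\ZZ_+}$ fails to converge in the target. The model family will be
$$
u_j(y',y''') := \lambda_j\,\varphi(y')\,h(jy''')\,e^{ij\xi'_0\cdot y'},\qquad (y',y''')\in\RR^k\times\RR^{n-k},
$$
where $\varphi\in\mathcal{C}_c^\infty(\RR^k)$ and $h\in\mathcal{C}_c^\infty(\RR^{n-k})$ satisfy $\varphi(0), h(0)>0$, the direction $\xi'_0\in\RR^k$ is taken to be $0$ for part $(i)$ and a unit vector supplied by the hypotheses for part $(ii)$, and the scale $\lambda_j>0$ is tuned only at the end.

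First I would compute the two relevant quantities by Plancherel. Writing $\hat u_j(\xi) = \lambda_j j^{-(n-k)}\hat\varphi(\xi'-j\xi'_0)\hat h(\xi'''/j)$ and substituting $\beta'=\xi'-j\xi'_0$, $\gamma=\xi'''/j$, the weight $\langle(\beta'+j\xi'_0,j\gamma)\rangle$ is comparable to $j$ on the effective support of the integrand when $\xi'_0\neq 0$, giving $\|u_j\|_{H^{r_2}}\asymp \lambda_j\,j^{r_2-(n-k)/2}$; when $\xi'_0=0$ the same computation yields $\|u_j\|_{H^{r_2}}\leq C\lambda_j j^{\max(r_2,0)-(n-k)/2}$. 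The pullback equals $f^*u_j(x)=\lambda_j h(0)\varphi(x')e^{ij\xi'_0\cdot x'}$; multiplying by a bump $\psi\in\DD(\RR^m)$ with $\psi(0)\varphi(0)\neq 0$ gives
$$
\mathcal{F}(\psi\,f^*u_j)(\xi) = \lambda_j\, h(0)\,\Phi(\xi'-j\xi'_0,\,\xi''),
$$
with $\Phi$ a fixed nonzero Schwartz function. For a closed conic neighborhood $V\subseteq\RR^m$ of $(\xi'_0,0_{m-k})$, the set $V-(j\xi'_0,0)$ contains a ball $B(0,cj)$ and on this ball $\langle(\beta'+j\xi'_0,\gamma'')\rangle\asymp j$; picking $\Phi$ nonvanishing near the origin then produces the lower bound $\mathfrak{p}_{r_1;\psi,V}(f^*u_j)\geq c'\lambda_j\,j^{r_1}$ for all large $j$. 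For part $(i)$ we use instead $f^*u_j(x)=\lambda_j h(0)\varphi(x')$ and test against any $\psi\in\mathcal{C}_c^\infty(\RR^m)$ with $\int\psi(x)\varphi(x')\,dx>0$, obtaining $\langle f^*u_j,\psi\rangle = C''\lambda_j$.

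The proof is completed by calibrating $\lambda_j$. For $(i)$ take $\lambda_j := j^{(n-k)/2-\max(r_2,0)-\varepsilon}$, which tends to $\infty$ since $r_2<(n-k)/2$ and $n>k$; then $\|u_j\|_{H^{r_2}_{\loc}}\to 0$ while $\lambda_j\to\infty$, so any continuous extension $H^{r_2}_{\loc}(\RR^n)\to\DD'(\RR^m)$ extending $f^*$ would have to send $u_j$ to $0$, contradicting $\langle f^*u_j,\psi\rangle\to\infty$. For the first assertion of $(ii)$, use the closedness of $\widetilde L$ together with $(0_m,(\xi'_0,0_{m-k}))\notin\widetilde L$ to pick admissible $\psi$ and $V$, and set $\lambda_j := j^{-r_1+\delta}$ with $\delta>0$ small enough that $r_2-r_1+\delta<(n-k)/2$ (possible exactly because $r_2-r_1<(n-k)/2$); then $\|u_j\|_{H^{r_2}}\to 0$ and $\mathfrak{p}_{r_1;\psi,V}(f^*u_j)\gtrsim j^{\delta}\to\infty$. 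The second assertion of $(ii)$ reduces at once to the first: $(0_n,(\eta'_0,\eta'''))\notin L$ for every $\eta'''$ forces $(0_m,(\eta'_0,0_{m-k}))\notin f^*L$, so $f^*L$ satisfies \eqref{con-for-non-exofesxonconsofr}; and the continuous inclusion $H^{r_2}_{\loc}(\RR^n)\hookrightarrow\DD'^{r_2}_L(\RR^n)$ (which follows from $\mathfrak{p}_{r_2;\psi,W}(u)\leq C_\psi\|u\|_{H^{r_2}_{\loc}}$ for $u\in H^{r_2}_{\loc}$) shows that any continuous extension $\DD'^{r_2}_L\to\DD'^{r_1}_{f^*L}$ would restrict to a continuous extension from $H^{r_2}_{\loc}(\RR^n)$, excluded by the first half of $(ii)$.

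The main technical hurdle is the lower bound $\mathfrak{p}_{r_1;\psi,V}(f^*u_j)\gtrsim \lambda_j j^{r_1}$, which requires the Fourier concentration of $f^*u_j$ at $(j\xi'_0,0_{m-k})$ to sit inside the cone $V$ at scale $j$; the cone-and-ball inclusion $V-(j\xi'_0,0)\supseteq B(0,cj)$ is precisely the quantitative statement needed, and the constant $c$ depending on the aperture of $V$ is what forces the strict inequality $r_2-r_1<(n-k)/2$ in the final calibration. The exponent $(n-k)/2$ emerges from the dimensional mismatch between the $\RR^{n-k}$-dilation $h(jy''')$, which contributes weight $j^{r_2-(n-k)/2}$ to $\|u_j\|_{H^{r_2}}$ but is discarded by the restriction to $\{y'''=0\}$, and the $\RR^k$-oscillation $e^{ij\xi'_0\cdot y'}$ which survives the pullback and produces weight $j^{r_1}$ in the seminorm of $f^*u_j$.
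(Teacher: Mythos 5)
Your proposal is correct, but it takes a genuinely different route from the paper. The paper works with explicit Bessel-type kernels $u^{(q)}_s=\mathcal{F}^{-1}(\langle\cdot\rangle^{-s})$: for $(i)$ it mollifies $\varphi\otimes u^{(n-k)}_{n-k}\in H^{r_2}$ and shows the pullbacks of the mollifications diverge in $\DD'(\RR^m)$ because the kernel blows up at the origin; for $(ii)$ it first invokes $(i)$ to force $r_2\geq (n-k)/2$ and $r_1>0$, then shows that any continuous extension would force $c_0\,u^{(k)}_{r'_2+k-n/2}\otimes\mathbf{1}_{\RR^{m-k}}$ to lie in $\DD'^{r_1}_{\widetilde L}(\RR^m)$, which is refuted through rotation invariance and the explicit Fourier transform of the kernel (a membership failure rather than an unboundedness). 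You instead exhibit a null sequence of wave packets $u_j=\lambda_j\varphi(y')h(jy''')e^{ij\xi'_0\cdot y'}$, concentrating at scale $1/j$ in the $(n-k)$ transversal variables and oscillating at frequency $j$ along the image directions, and show that a single admissible seminorm $\mathfrak{p}_{r_1;\psi,V}$ of $f^*u_j$ blows up while $\|u_j\|_{H^{r_2}_{\loc}}\to 0$; the same family handles $(i)$ (with $\xi'_0=0$) and $(ii)$, with no case split and no special kernels, and the reduction of the second assertion of $(ii)$ via $H^{r_2}_{\loc}\hookrightarrow\DD'^{r_2}_L$ and $(0_m,(\eta'_0,0_{m-k}))\notin f^*L$ coincides with the paper's. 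Your approach buys elementarity and uniformity; the paper's buys an explicit identification of the distributions on which the would-be extension fails. Two routine points you gloss over and should write out: the ``comparable to $j$ on the effective support'' estimate needs the standard splitting $|\beta'|\le j/2$ versus $|\beta'|>j/2$ with Schwartz decay of $\hat\varphi$ (only the upper bound $\|u_j\|_{H^{r_2}}\lesssim\lambda_j j^{r_2-(n-k)/2}$, resp. $\lambda_j j^{\max(r_2,0)-(n-k)/2}$, is actually used), and the choice of $(\psi,V)$ with $(\supp\psi\times V)\cap\widetilde L=\emptyset$ and $V\supseteq$ a cone of fixed aperture about $(\xi'_0,0_{m-k})$ requires the usual compactness argument from the closedness and conicity of $\widetilde L$; neither is a gap.
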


\begin{proof} Throughout the proof, for $q\in\ZZ_+$, we denote by $\delta_q$ the $\delta$-distribution on $\RR^q$. We first make the following preliminary observation. For $s>0$, we claim that the tempered distribution $\langle D\rangle^{-s}\delta_q=\mathcal{F}^{-1}(\langle \cdot\rangle^{-s})\in\SSS'(\RR^q)$ is given by
\begin{equation}\label{def-off-inv-ftrdedltns}
\mathcal{F}^{-1}(\langle \cdot\rangle^{-s})(y)=\frac{1}{\Gamma(s/2)2^q\pi^{q/2}}\int_0^{\infty}t^{(s-q-2)/2}e^{-t} e^{-|y|^2/(4t)} dt,\quad y\in\RR^q\backslash\{0\}.
\end{equation}
Denote by $u^{(q)}_s(y)$ the right-hand side of \eqref{def-off-inv-ftrdedltns}. We first show that $u^{(q)}_s$ is smooth outside of the origin. For $|y|\geq \varepsilon>0$, we have
\begin{equation}\label{ine-for-the-funinsofderd}
\frac{|y|^2}{4t}+t\geq \frac{|y|^2}{8t}+\frac{t}{2}+\frac{|y|^2}{8t}+\frac{t}{2}\geq \frac{\varepsilon^2}{8t}+\frac{t}{2}+\frac{|y|}{2}.
\end{equation}
Since the derivatives with respect to $y$ of the integrand in \eqref{def-off-inv-ftrdedltns} are finite sums of terms of the form $Cy^lt^{\lambda}e^{-t} e^{-|y|^2/(4t)}$ for some $l\in\NN$, $C,\lambda\in\RR$, \eqref{ine-for-the-funinsofderd} implies that $u^{(q)}_s\in\mathcal{C}^{\infty}(\RR^q\backslash\{0\})$; \eqref{ine-for-the-funinsofderd} also yields that $0<u^{(q)}_s(y)\leq C_{\varepsilon}e^{-|y|/2}$, $|y|\geq \varepsilon$. Notice that $(t,y)\mapsto t^{(s-q-2)/2}e^{-t} e^{-|y|^2/(4t)}$ belongs to $L^1(\RR_+\times\RR^q)$ (integrate first with respect to $y$). This yields $u^{(q)}_s\in L^1(\RR^q)$ and, in view of the fact $\mathcal{F}(e^{-|\cdot|^2})=\pi^{q/2}e^{-|\cdot|^2/4}$, we infer
$$
\mathcal{F}(u^{(q)}_s)(\xi)=\frac{1}{\Gamma(s/2)}\int_0^{\infty}t^{s/2-1} e^{-(1+|\xi|^2)t} dt=\langle \xi\rangle^{-s},\quad \xi\in\RR^q,
$$
which shows \eqref{def-off-inv-ftrdedltns}. Throughout the proof we continue to employ the notation $u^{(q)}_s$ for $\mathcal{F}^{-1}(\langle\cdot\rangle^{-s})$. We claim that
\begin{equation}\label{far-for-bel-insoff}
\varphi u_s^{(q)}\in\SSS(\RR^q)\,\, \mbox{for any}\,\, \varphi\in\DD_{L^{\infty}}(\RR^q)\,\, \mbox{satisfying}\,\, \varphi=0\,\, \mbox{on a neighbourhood of}\,\, 0.
\end{equation}
To verify it, notice that $\partial^{\alpha}(\varphi u_s^{(q)})$ is a finite sum of terms of the form
$$
Cy^l\partial^{\beta}\varphi(y)\int_0^{\infty}t^{\lambda}e^{-t} e^{-|y|^2/(4t)} dt,\quad \mbox{for some}\,\, \beta\leq \alpha,\, l\in\NN,\, C,\lambda\in\RR.
$$
Hence, \eqref{ine-for-the-funinsofderd} implies $\varphi u^{(q)}_s\in\SSS(\RR^q)$. Finally, we point out that $u^{(q)}_s\in\mathcal{C}(\RR^q)$ when $s>q$.\\
\indent Throughout the rest of the proof, for $x\in\RR^m$, we denote $x=(x',x'')$, with $x'\in\RR^k$ and $x''\in\RR^{m-k}$. Similarly, for $y\in\RR^n$, we denote $y=(y',y''')$, with $y'\in\RR^k$ and $y'''\in\RR^{n-k}$. We first address $(i)$. Pick any $\varphi\in\DD(\RR^k)\backslash\{0\}$. By construction, $\varphi\otimes u^{(n-k)}_{n-k}\in H^{r_2}(\RR^n)$ since $r_2<(n-k)/2$. Choose nonnegative $\psi\in\DD(\RR^{n-k})$ such that $\psi(0)>0$ and $\int_{\RR^{n-k}}\psi(y''')d y'''=1$ and define $\psi_j(y''')=j^{n-k}\psi(jy''')$, $y'''\in\RR^{n-k}$, $j\in\ZZ_+$. Clearly $\varphi\otimes (u^{(n-k)}_{n-k}*\psi_j)\rightarrow \varphi\otimes u^{(n-k)}_{n-k}$ as $j\rightarrow \infty$ in $H^{r_2}(\RR^n)$ and hence in $H^{r_2}_{\loc}(\RR^n)$ as well. Notice that $f^*(\varphi\otimes (u^{(n-k)}_{n-k}*\psi_j))=(u^{(n-k)}_{n-k}*\psi_j)(0)\varphi\otimes \mathbf{1}_{\RR^{m-k}}$. Since
$$
u^{(n-k)}_{n-k}*\psi_j(0)=\frac{1}{\Gamma((n-k)/2)2^{n-k}\pi^{(n-k)/2}}\iint_{\RR_+\times\RR^{n-k}}t^{-1}e^{-t} e^{-|y'''|^2/(4tj^2)} \psi(y''')dtdy'''
$$
and the sequence of functions in the integral is pointwise increasing with respect to $j$, we can apply monotone convergence to deduce $u^{(n-k)}_{n-k}*\psi_j(0)\rightarrow \infty$ as $j\rightarrow \infty$. Hence $f^*(\varphi\otimes (u^{(n-k)}_{n-k}*\psi_j))$ does not converge in $\DD'(\RR^m)$ and the proof of $(i)$ is complete.
\indent We turn our attention to $(ii)$. Assume that $f^*$ extends to a continuous mapping $f^*:H^{r_2}_{\loc}(\RR^n)\rightarrow\DD'^{r_1}_{\widetilde{L}}(\RR^m)$ for some $r_1$, $r_2$ and $\widetilde{L}$ as in $(ii)$. Then $(i)$ implies $r_2\geq (n-k)/2$, which, in view of $r_2-r_1<(n-k)/2$, implies $r_1>0$. Pick $r'_2>r_2$ such that $r'_2-r_1<(n-k)/2$. Clearly, $u^{(n)}_{r'_2+n/2}\in H^{r_2}(\RR^n)$. Let $(\psi_j)_{j\in\ZZ_+}$ be a $\delta$-sequence as before but defined on $\RR^n$ instead. Of course, $\psi_j* u^{(n)}_{r'_2+n/2}\rightarrow u^{(n)}_{r'_2+n/2}$ in $H^{r_2}(\RR^n)$. We claim that $f^*(\psi_j*u^{(n)}_{r'_2+n/2})\rightarrow c_0u^{(k)}_{r'_2+k-n/2}\otimes \mathbf{1}_{\RR^{m-k}}$ in $\DD'(\RR^m)$ with
$$
c_0:=\Gamma(r'_2/2+k/2-n/4)\Gamma(r'_2/2+n/4)^{-1}2^{k-n}\pi^{(k-n)/2}>0
$$
and, by assumption, in $\DD'^{r_1}_{\widetilde{L}}(\RR^m)$ as well. Let $\varphi\in\DD(\RR^m)$ be arbitrary and notice that
\begin{multline*}
\langle f^*(\psi_j* u^{(n)}_{r'_2+n/2}),\varphi\rangle=\frac{1}{\Gamma(r'_2/2+n/4)2^n\pi^{n/2}}\\
\cdot\iiint_{\RR_+\times\RR^m\times\RR^n}t^{(r'_2-\frac{n}{2}-2)/2}e^{-t} e^{-|x'|^2/(4t)}e^{-|y'''|^2/(4tj^2)} \psi(y)\varphi(x'+y'/j,x'')dtdxdy.
\end{multline*}
It is straightforward to check that the integrand is dominated pointwise by a function in $L^1(\RR_+\times \RR^{m+n})$ for all $j$ and hence we can apply dominated convergence to deduce
$$
\lim_{j\rightarrow\infty}\langle f^*(\psi_j* u^{(n)}_{r'_2+n/2}),\varphi\rangle=\frac{1}{\Gamma(r'_2/2+n/4)2^n\pi^{n/2}} \iint_{\RR_+\times\RR^m}t^{(r'_2-\frac{n}{2}-2)/2}e^{-t} e^{-|x'|^2/(4t)}\varphi(x)dtdx.
$$
This shows that $f^*(\psi_j* u^{(n)}_{r'_2+n/2})\rightarrow c_0u^{(k)}_{r'_2+k-n/2}\otimes \mathbf{1}_{\RR^{m-k}}$ weakly in $\DD'(\RR^m)$ and hence also in the strong topology (since $\DD(\RR^m)$ is Montel)\footnote{When $r'_2>n/2$, $u^{(n)}_{r'_2+n/2}$ is continuous and the fact $f^* u^{(n)}_{r'_2+n/2}= c_0u^{(k)}_{r'_2+k-n/2}\otimes \mathbf{1}_{\RR^{m-k}}$ immediately follows from  Lemma \ref{lem-for-con-funcpulbmsmn}.}. We deduce $u^{(k)}_{r'_2+k-n/2}\otimes \mathbf{1}_{\RR^{m-k}}\in\DD'^{r_1}_{\widetilde{L}}(\RR^m)$. We show that this leads to a contradiction. We only consider the case $k<m$, since the case $k=m$ can be treated analogously. There is $\xi'_0\in\mathbb{S}^{k-1}$ such that $(0_m,\xi'_0,0_{m-k})\not\in \widetilde{L}$. Choose an open neighbourhood $O$ of the origin in $\RR^m$ and an open cone $V=\RR_+(B(\xi'_0,\varepsilon)\times B(0_{m-k},\varepsilon))$ with $0<\varepsilon<1/2$ such that $(O\times \overline{V})\cap \widetilde{L}=\emptyset$. We can choose $O= O_1\times O_2$ with $O_1$ and $O_2$ open balls both with radii $\varepsilon_0>0$ and centres at the origins in $\RR^k$ and $\RR^{m-k}$. Take $\phi_0\in\DD(\RR)$ such that $0\leq \phi_0\leq 1$, $\phi_0(\lambda)=\phi_0(-\lambda)$, $\phi_0=1$ on $[-\varepsilon_0/2,\varepsilon_0/2]$ and $\supp\phi_0\subseteq (-\varepsilon_0,\varepsilon_0)$. Set $\varphi_1(x'):=\phi_0(|x'|)$, $x'\in\RR^k$; clearly $\varphi_1\in\DD(\RR^k)$ with $\supp\varphi_1\subseteq O_1$. Pick $\varphi_2\in\DD(\RR^{m-k})\backslash\{0\}$ with $\supp\varphi_2\subseteq O_2$. Let $V_1$ be the open cone $\RR_+ B(\xi'_0,\varepsilon/2)$ in $\RR^k$ and notice that $\{\xi'\in V_1\,|\, |\xi'|\geq 1\}\times B(0_{m-k},\varepsilon/2)\subseteq V$. Denoting $\varphi:=\varphi_1\otimes \varphi_2$, we infer (cf. Corollary \ref{lemma-for-sem-wav-fr-set-spa})
\begin{align*}
\infty&>\int_V |\mathcal{F}(\varphi(u^{(k)}_{r'_2+k-n/2}\otimes \mathbf{1}_{\RR^{m-k}}))(\xi)|^2\langle\xi\rangle^{2r_1}d\xi\\
&\geq \int_{V_1,\, |\xi'|\geq 1}\langle \xi'\rangle^{2r_1} |\mathcal{F}(\varphi_1 u^{(k)}_{r'_2+k-n/2})(\xi')|^2 d\xi' \int_{B(0,\varepsilon/2)}|\mathcal{F}\varphi_2(\xi'')|^2d\xi''.
\end{align*}
Since $\varphi_2$ has compact support, $\mathcal{F}\varphi_2$ is entire and hence the very last integral is strictly positive. As $\langle\cdot \rangle^{r_1}\mathcal{F}(\varphi_1 u^{(k)}_{r'_2+k-n/2})\in L^2(B(0_k,1))$, we deduce $\langle\cdot \rangle^{r_1} \mathcal{F}(\varphi_1 u^{(k)}_{r'_2+k-n/2})\in L^2(V_1)$. Notice that $\mathcal{F}(\varphi_1 u^{(k)}_{r'_2+k-n/2})(\Phi \xi')=\mathcal{F}(\varphi_1 u^{(k)}_{r'_2+k-n/2})(\xi')$, $\xi'\in\RR^k$, $\Phi\in\operatorname{O}(k)$. Hence, the above together with the compactness of the unit sphere yields that $\langle\cdot \rangle^{r_1} \mathcal{F}(\varphi_1 u^{(k)}_{r'_2+k-n/2})\in L^2(\RR^k)$. In view of \eqref{far-for-bel-insoff}, the latter implies that $\langle\cdot \rangle^{r_1} \mathcal{F} u^{(k)}_{r'_2+k-n/2}\in L^2(\RR^k)$ which is straightforward to check that it is not true.\\
\indent Since $H^{r_2}_{\loc}(\RR^n)\subseteq \DD'^{r_2}_L(\RR^n)$ continuously, the second part of $(ii)$ follows from applying the first part with $\widetilde{L}:=f^*L$; the conditions on $L$ imply that $f^*L$ satisfies \eqref{con-for-non-exofesxonconsofr} in view of \eqref{equ-for-puba-of-coni-setfr}. This completes the proof.
\end{proof}


\begin{thebibliography}{999}

\bibitem{adams} R. A. Adams, J. J. F. Fournier, \textit{Sobolev spaces}, Academic press, 2003.

\bibitem{ant-laz} N. Antoni\'c, M. Lazar, \textit{Parabolic H-measures}, J. Funct. Anal. 265(7) (2013), 1190-1239.

\bibitem{bal-f-t-tes} A. Baldi, B. Franchi, N. Tchou, M. C. Tesi, \textit{Compensated compactness for differential forms in Carnot groups and applications}, Adv. Math. 223(5) (2010), 1555-1607.

\bibitem{bourbaki} N. Bourbaki, \textit{Integration I}, Springer Berlin, Heidelberg, 2004.

\bibitem{D1} C. Brouder, N. V. Dang, F. H\'elein, \textit{Continuity of the fundamental operations on distributions having a specified wave front set (with a counterexample by Semyon Alesker)}, Stud. Math. 232(3) (2016), 201-226.

\bibitem{dab-1} Y. Dabrowski, \textit{Functional properties of Generalized Hörmander spaces of distributions I : Duality theory, completions and bornologifications}, Preprint, arXiv:1411.3012.

\bibitem{dab-2} Y. Dabrowski, \textit{Functional properties of Generalized Hörmander spaces of distributions II : Multilinear maps and applications to spaces of functionals with wave front set conditions}, Preprint, arXiv:1412.1749.

\bibitem{BD} Y. Dabrowski, C. Brouder, \textit{Functional properties of H\"omander’s space of distributions having a specified wavefront set}, Commun. Math. Phys. 332 (2014), 1345-1380.

\bibitem{dap-r-scl} C. Dappiaggi, P. Rinaldi, F. Sclavi, \textit{Besov wavefront set}, Anal. Math. Phys. 13(6) (2023), Paper No. 95.

\bibitem{d-lr-le} B. Dehman, J. Le Rousseau, M. L\'eautaud, \textit{Controllability of two coupled wave equations on a compact manifold}, Arch. Ration. Mech. Anal. 211(1) (2014), 113-187.

\bibitem{ND} N. Denkerk, \textit{On the Propagation of polarization sets for systems of real principal type}, J. Funct. Anal. 46 (1982), 351-372.

\bibitem{d-m} D. Donatelli, P. Marcati, \textit{Analysis of oscillations and defect measures for the quasineutral limit in plasma physics}, Arch. Ration. Mech. Anal. 206(1) (2012), 159-188.

\bibitem{dui-book} J. J. Duistermaat, \textit{Fourier integral operators}, Progress in Mathematics, Springer, 1996.

\bibitem{dui-hor} J. J. Duistermaat, L. H\"ormander, \textit{Fourier integral operators. II}, Acta Math. 128 (1972), 183-269.

\bibitem{dur-rod} A. J. Duran, P. Lopez-Rodriguez, \textit{The $L^p$ space of a positive definite matrix of measures and density of matrix polynomials in $L^1$}, J. Approximation Theory 90(2) (1997), 299-318.

\bibitem{E} L. C. Evans, \textit{Weak convergence methods for nonlinear partial differential equations}, Conference Board of Mathematical Sciences, CBMS Regional Conference Series in Mathematics, AMS Providence, Rhod Island, 1990, 82 pp.

\bibitem{F} G. A Francfort, \textit{An Introduction to H-measures}, Progress in Nonlinear Differential Equations and Their Applications, Birkh\" auser Verlag, Basel, 68, 85-110, 2006.

\bibitem{G1} P. G\' erard, \textit{Compacit\'e par compensation et r\'egularite 2-microlocale}, S\'emin. \'Equations D\'eriv. Partielles (1988-1989), Exp No. 6, 18p.

\bibitem{Ger} P. G\'{e}rard,  \textit{Microlocal defect measures}, Commun. Partial Differ. Equations 16(11) (1991), 1761-1794.

\bibitem{G3} P. G\' erard, \textit{Oscillations and concentration effects in semilinear dispersive wave equations}, J. Funct. Anal. 141(1) (1996), 60-98.

\bibitem{GS} A. Grigis, J. Sj\"ostrand, \textit{Microlocal analysis for differential operators}, Cambridge University Press, Cambridge, 1994.

\bibitem{has-mel-vas} A. Hassell, R. Melrose, A. Vasy, \textit{Microlocal propagation near radial points and scattering for symbolic potentials of order zero}, Anal. PDE 1(2) (2008), 127-196.

\bibitem{HH} H. Hance-Olsen, H. Holden, \textit{The Kolmogorov-Riesz compactness theorem}, Expo. Math. 28(4) (2010), 385-394; addendum ibid. 34(2) (2016), 243-245.

\bibitem{hanouzet} B. Hanouzet, \textit{Applications bilin\'eaires compatibles avec un syst\'eme a coefficients variables continuite dans les espaces de Besov}, Commun. Partial Differ. Equations 10 (1985), 433-465.

\bibitem{hanou-jol} B. Hanouzet, J.-L. Joly, \textit{Applications bilin\'eaires compatibles avec un op\'erateur hyperbolique}, Ann. Inst. Henri Poincar\'e, Anal. Non Lin\'eaire 4 (1987), 357-376.

\bibitem{hormander} L. H\"ormander, \textit{Fourier integral operators. I}, Acta Math. 127 (1971), 79-183.

\bibitem{hormander1} L. H\"ormander, \textit{The Weyl calculus of pseudo-differential operators}, Comm. Pure Appl. Math. 32(3) (1979), 359-443.

\bibitem{hor} L. H\"ormander, \textit{The analysis of linear partial differential operators I. Distribution theory and fourier analysis}, Springer, 2003.

\bibitem{hor2} L. H\"ormander, \textit{The analysis of linear partial differential operators III. Pseudo-differential operators}, Springer, 2007.

\bibitem{hor1} L. H\"ormander, \textit{Lectures on Nonlinear Hyperbolic Differential Equations}, Springer, 1997.

\bibitem{mel-vasy-wun} R. B. Melrose, A. Vasy, J. Wunsch, \textit{Diffraction of singularities for the wave equation on manifolds with corners}, Ast\'erisque 351 (2013), 135p.

\bibitem{mis-mit} M. Mi\v sur, D. Mitrovi\'c, \textit{On a generalization of compensated compactness in the $L^p-L^q$ setting}, J. Funct. Anal. 268(7) (2015), 1904-1927.

\bibitem{hus-lfspp} O. Hustad, \textit{A note on inductive limits of linear spaces}, Math. Scand. 12 (1963), 163-166.

\bibitem{jik-koz-ole} V. V. Jikov, S. M. Kozlov, O. A. Oleinik, \textit{Homogenization of differential operators and integral functionals}, Springer, 1994.

\bibitem{kothe1} G.~K\"{o}the, \textit{Topological vector spaces I}, Springer, New York, 1969.

\bibitem{kothe2} G.~K\"{o}the, \textit{Topological vector spaces II}, Springer, New York, 1979.

\bibitem{lee} J. M. Lee, \textit{Introduction to Smooth Manifolds}, Springer, New York, 2013.

\bibitem{lernerB} N. Lerner, \textit{Metrics on the phase space and non-selfadjoint pseudo-differential operators}, Vol. 3. Springer Science \& Business Media, 2011.

\bibitem{lions1} P.-L. Lions, \textit{The concentration-compactness principle in the calculus of variations. The locally compact case. I}, Ann. Inst. Henri Poincar\'e, Anal. Non Lin\'eaire 1 (1984), 109-145.

\bibitem{lions2} P.-L. Lions, \textit{The concentration-compactness principle in the calculus of variations. The locally compact case. II}, Ann. Inst. Henri Poincar\'e, Anal. Non Lin\'eaire 1 (1984), 223-283 .

\bibitem{lions3} P.-L. Lions, \textit{The concentration-compactness principle in the calculus of variations. The limit case. I}, Rev. Mat. Iberoam. 1(1) (1985), 145-201.

\bibitem{lions4} P.-L. Lions, \textit{The concentration-compactness principle in the calculus of variations. The limit case. II}, Rev. Mat. Iberoam. 1(2) (1985), 45-121.

\bibitem{luke-mis} G. Luke, A. S. Mishchenko, \textit{Vector bundles and their applications}, Mathematics and its Applications, Springer Science \& Business Media, 1998.

\bibitem{mur1} F. Murat, \textit{Compacit\'e par compensation}, Ann. Sc. Norm. Super. Pisa, Cl. Sci., IV. Ser. 5 (1978), 489-507.

\bibitem{mur2}  F. Murat, \textit{Compacit\'e  par compensation : condition n\'eccessaire et suffisante de continuit\'e faible sous une hypoth\'ese de rang constant}, Ann. Sc. Norm. Super. Pisa, Cl. Sci., IV. Ser. 8 (1981), 69-102.

\bibitem{Pan} E. Y. Panov, \textit{Existence and strong pre-compactness properties for entropy solutions of a first-order quasilinear equation with discontinuous flux}, Arch. Ration. Mech. Anal. 195(2) (2010), 643-673; erratum ibid. 196(3) (2010), 1077-1078.

\bibitem{pan2} E. Yu. Panov, \textit{Ultra-parabolic H-measures and compensated compactness}, Ann. Inst. Henri Poincar\'e, Anal. Non Lin\'eaire 28(1) (2011), 47-62.

\bibitem{R} F. Rindler, \textit{Directional oscillations, concentrations, and compensated compactness via microlocal compactness forms}, Arch. Ration. Mech. Anal. 215(1) (2015), 1-63.

\bibitem{rosenberg} M. Rosenberg, \textit{The square-integrability of matrix-valued functions with respect to a non-negative Hermitian measure}, Duke Math. J. 31 (1964), 291-298.

\bibitem{Sch} H. H. Schaefer, \textit{Topological Vector Spaces}, Springer-Verlag, New York Heidelberg Berlin, 1970.

\bibitem{Tar-1} L. Tartar, \textit{Compensated compactness and applications to partial differential equations}, In: Nonlinear Analysis and Mechanics: Heriot-Watt Symposium, Pitman Boston Mass, IV, 136-212, 1979.

\bibitem{Tar-0} L. Tartar, \textit{The compensated compactness method applied to systems of conservation laws}, In: Systems of Nonlinear Partial Differential Equations (Oxford, 1982), NATO Adv. Sci. Inst. Ser. C Math. Phys. Sci., Reidel, Dordrecht, 111, 263-285, 1983.

\bibitem{Tar} L. Tartar, \textit{$H$-measures, a new approach for studying homogenization, oscillations and concentration effects in partial differential equations}, Proc. R. Soc. Edinb., Sect. A 115(3-4) (1990), 193-230.

\bibitem{Tar1} L. Tartar, \textit{The General Theory of Homogenization}, Lecture Notes of the Unione Matematica Italiana 7, Springer, 2009.

\bibitem{vasy} A. Vasy, \textit{Propagation of singularities for the wave equation on manifolds with corners}, Ann. Math. 168(3) (2008), 749-812; correction ibid. 177(2) (2013), 783-785.

\bibitem{vasy-wro} A. Vasy, M. Wrochna, \textit{Quantum fields from global propagators on asymptotically Minkowski and extended de Sitter spacetimes}, Ann. Henri Poincar\'e 19(5) (2018), 1529-1586.

\end{thebibliography}
\end{document}